\documentclass[10pt,a4paper]{amsart}
\usepackage[utf8]{inputenc}
\usepackage[T1]{fontenc}
\usepackage{lmodern}
\usepackage{mathrsfs}
\usepackage{amsmath,amssymb,amsfonts}
\usepackage[x11names]{xcolor}
\usepackage[unicode,pdfborder={0 0 0},final]{hyperref}
\hypersetup{
    colorlinks = true,
    linkcolor = DarkOrchid3,
    urlcolor  = Coral4,
    citecolor = RoyalBlue3
}
\usepackage{enumitem}
\usepackage{multicol}
\usepackage{euscript}

\usepackage{amsmath}
\usepackage{thmtools}
\declaretheoremstyle[
	spaceabove=6pt plus 3pt minus 3pt, spacebelow=6pt plus 3pt minus 3pt,%
	headfont=\normalfont\bfseries,%
	notefont=\normalfont, notebraces={(}{)},%
	bodyfont=\normalfont,%
	postheadspace=.5em,
]{defn}
\declaretheorem[numberwithin=section,name=Definition,style=defn]{defi}

\declaretheorem[sibling=defi,name=Example,style=defn]{exe}

\declaretheoremstyle[
	spaceabove=6pt plus 3pt minus 3pt, spacebelow=6pt plus 3pt minus 3pt,
	headfont=\normalfont\bfseries,
	notefont=\normalfont, notebraces={(}{)},
	bodyfont=\itshape,
	postheadspace=.5em,
	qed=
]{proposition}
\declaretheorem[sibling=defi,name=Proposition,style=proposition]{prop}

\declaretheorem[sibling=defi,name=Theorem,style=proposition]{theo}
\declaretheorem[numbered=no,name=Theorem,style=proposition]{theonn}

\declaretheorem[sibling=defi,name=Lemma,style=proposition]{lem}

\declaretheorem[sibling=defi,name=Corollary,style=proposition]{cor}

\declaretheoremstyle[
	spaceabove=6pt plus 3pt minus 3pt, spacebelow=6pt plus 3pt minus 3pt,
	headfont=\normalfont\itshape,
	notefont=\normalfont, notebraces={(}{)},
	bodyfont=\normalfont,
	postheadspace=.5em,
	qed=
]{remarque}
\declaretheorem[sibling=defi,name=Remark,style=remarque]{rema}

\makeatletter
\DeclareFontFamily{OMX}{MnSymbolE}{}
\DeclareSymbolFont{MnLargeSymbols}{OMX}{MnSymbolE}{m}{n}
\SetSymbolFont{MnLargeSymbols}{bold}{OMX}{MnSymbolE}{b}{n}
\DeclareFontShape{OMX}{MnSymbolE}{m}{n}{
    <-6>  MnSymbolE5
   <6-7>  MnSymbolE6
   <7-8>  MnSymbolE7
   <8-9>  MnSymbolE8
   <9-10> MnSymbolE9
  <10-12> MnSymbolE10
  <12->   MnSymbolE12
}{}
\DeclareFontShape{OMX}{MnSymbolE}{b}{n}{
    <-6>  MnSymbolE-Bold5
   <6-7>  MnSymbolE-Bold6
   <7-8>  MnSymbolE-Bold7
   <8-9>  MnSymbolE-Bold8
   <9-10> MnSymbolE-Bold9
  <10-12> MnSymbolE-Bold10
  <12->   MnSymbolE-Bold12
}{}

\let\llangle\@undefined
\let\rrangle\@undefined
\DeclareMathDelimiter{\llangle}{\mathopen}%
                     {MnLargeSymbols}{'164}{MnLargeSymbols}{'164}
\DeclareMathDelimiter{\rrangle}{\mathclose}%
                     {MnLargeSymbols}{'171}{MnLargeSymbols}{'171}
\makeatother

\usepackage{tikz}
\usetikzlibrary{arrows,matrix,cd,babel}
\usepackage{quiver}
\newcommand{\Zb}{\mathbb{Z}}
\newcommand{\Rb}{\mathbb{R}}

\newcommand{\Cb}{\mathbb{C}}

\newcommand{\Pb}{\mathbb{P}}
\newcommand{\Abb}{\mathbb{A}}

\newcommand{\Er}{\mathrm{E}}

\newcommand{\Kr}{\mathrm{K}}

\newcommand{\GW}{\mathrm{GW}}

\newcommand{\Sr}{\mathrm{S}}

\newcommand{\CH}{\mathrm{CH}}
\newcommand{\Hr}{\mathrm{H}}

\newcommand{\Cr}{\mathrm{C}}

\newcommand{\Kbf}{\mathbf{K}}
\newcommand{\Ibf}{\mathbf{I}}
\newcommand{\Ir}{\mathrm{I}}

\newcommand{\Fr}{\mathrm{F}}

\newcommand{\Wr}{\mathrm{W}}
\newcommand{\et}{\textup{\'et}}

\newcommand{\Mr}{\mathrm{M}}
\newcommand{\Wbf}{\mathbf{W}}

\newcommand{\Ch}{\mathrm{Ch}}

\newcommand{\Jbf}{\mathbf{J}}
\newcommand{\Qbf}{\mathbf{Q}}

\newcommand{\alg}{\mathrm{alg}}
\newcommand{\Osc}{\mathscr{O}}
\newcommand{\Esc}{\mathscr{E}}

\newcommand{\Sm}{\mathrm{Sm}}
\newcommand{\Lc}{\mathcal{L}}

\newcommand{\Fbf}{\mathbf{F}}
\newcommand{\Gm}{\mathbb{G}_m}

\newcommand{\Hsc}{\mathscr{H}}

\newcommand{\Fsc}{\mathscr{F}}

\newcommand{\Zar}{\mathrm{Zar}}

\newcommand{\Vsc}{\mathscr{V}}
\newcommand{\Cbf}{\mathbf{C}}

\newcommand{\Leu}{\EuScript{L}}

\newcommand{\Gra}{\mathrm{Gr}}

\newcommand{\GWb}{\mathbf{GW}}

\DeclareMathOperator{\Ker}{Ker}

\DeclareMathOperator{\Id}{Id}

\DeclareMathOperator{\Spec}{Spec}

\DeclareMathOperator{\Coker}{Coker}

\DeclareMathOperator{\sign}{sign}
\DeclareMathOperator{\rk}{rk}

\DeclareMathOperator{\Pic}{Pic}

\DeclareMathOperator{\Tra}{Tr}
\DeclareMathOperator{\Sq}{Sq}

\DeclareMathOperator{\cl}{cl}

\DeclareMathOperator{\Bra}{Br}

\renewcommand{\Im}{\operatorname{Im}}

\usepackage[backend=biber,style=ext-alphabetic, maxbibnames=50]{biblatex}
\DefineBibliographyExtras{english}{\DeclarePunctuationPairs{comma}{*?!}}

\DeclareBibliographyDriver{article}{%
  \usebibmacro{bibindex}%
  \usebibmacro{begentry}%
  \usebibmacro{author}%
  \setunit{\addcomma\space}
  \printfield[emph]{title}
  \setunit{\addcomma\space}
  \printfield{journaltitle}
  \setunit{\addspace}%
  \textbf{\printfield{volume}}
  \setunit{\addspace}%
  \printfield[parens]{year}
  \setunit{\addcomma\space}
  \printfield{number}
  \setunit{\addcomma\space}
  \printfield{pages}
  \usebibmacro{finentry}%
}

\DeclareFieldFormat[article]{title}{#1}
\DeclareFieldFormat[article]{journaltitle}{#1}
\DeclareFieldFormat[article]{pages}{#1}
\DeclareFieldFormat[article]{number}{\bibstring{number}~#1}

\DeclareBibliographyDriver{book}{%
  \usebibmacro{bibindex}%
  \usebibmacro{begentry}%
  \printnames{author}
  \setunit{\addcomma\space}
  \printfield{title}
  \setunit{\addcomma\space}
  \printfield{edition}
  \setunit{\addcomma\space}
  \printfield{series}
  \setunit{\addcomma\space}
  \printfield{volume}
  \setunit{\addcomma\space}
  \printfield{number}
  \setunit{\addcomma\space}
  \printlist{publisher}
  \setunit{\addcomma\space}
  \printlist{location}
  \setunit{\addcomma\space}
  \printfield{year}
  \usebibmacro{finentry}%
}

\DeclareFieldFormat[book]{volume}{\bibstring{volume}~#1}
\DeclareFieldFormat[book]{number}{\bibstring{number}~#1}
\DeclareFieldFormat[book]{publisher}{#1}

\DeclareBibliographyDriver{incollection}{%
  \usebibmacro{bibindex}%
  \usebibmacro{begentry}%
  \printnames{author}
  \setunit{\addcomma\space}
  \printfield{title}
  \setunit{\addperiod\space}
  \usebibmacro{in:}
  \printfield[emph]{booktitle}
  \setunit{\addcomma\space}
  \usebibmacro{byeditor+others}
  \setunit{\addcomma\space}
  \printfield{edition}
  \setunit{\addcomma\space}
  \printfield{series}
  \setunit{\addcomma\space}
  \printfield{volume}
  \setunit{\addcomma\space}
  \printfield{number}
  \setunit{\addcomma\space}
  \printlist{publisher}
  \setunit{\addcomma\space}
  \printlist{location}
  \setunit{\addcomma\space}
  \printfield{year}
  \setunit{\addcomma\space}
  \printfield{pages}
  \usebibmacro{finentry}%
}

\DeclareFieldFormat[incollection]{volume}{\bibstring{volume}~#1}
\DeclareFieldFormat[incollection]{number}{\bibstring{number}~#1}
\DeclareFieldFormat[incollection]{series}{#1,\addspace}

\DefineBibliographyExtras{english}{%
}%

\numberwithin{equation}{section}

\usepackage[foot]{amsaddr}

\renewcommand*\bar[1]{\rlap{$\smash{\overline{#1}}$}\phantom{#1}}

\begin{document}



\title{Witt groups of smooth real curves and surfaces}
\author{Samuel Lerbet}
\address{DMA, École normale supérieure, Université PSL, CNRS, 75005 Paris, France}
 \email{samuel.lerbet@ens.psl.eu}
\date{\today}

\begin{abstract}
We study the $\Ibf^*$-cohomology of a smooth real algebraic curve in terms of its real locus and its geometric genus. We notably extend results of Monnier to the twisted case, which is crucial to the understanding of proper pushforwards of Witt groups. We also perform some computations related to transfers along the finite étale extension $\Cb/\Rb$. We further describe how to compute twisted Witt groups of surfaces, extending work of Sujatha, and the image of the global signature homomorphism following Monnier. As an application of the main methods of the paper, we describe the shifted and twisted Witt groups of smooth anisotropic quadrics over $\Rb$ of dimension $\leq 3$.
\end{abstract}

\maketitle


\section{Introduction}

By convention, an algebraic variety over a field $k$ is a separated $k$-scheme of finite type. Let $X$ be a smooth algebraic variety over $k$ with $2\in k^\times$ and let $\Leu$ be a line bundle on $X$. With these data, we may associate the \emph{Grothendieck--Witt groups} $\GW^i(X,\Leu)$ of $X$ twisted by $\Leu$ \cite{walterGrothendieckWittGroupsTriangulated}. These groups, which are functorial in $X$ and only depend on the residue of $i$ mod~$4$, are analogues of the $\Kr$-theory group $\Kr_0(X)$ of $X$ for vector bundles (or, more accurately, perfect complexes) equipped with additional bilinear data encoded by the abstract notion of duality \cite{balmerTriangularWittGroups2000}. Thus the group $\GW(X,\Leu)=\GW^0(X,\Leu)$ deals with symmetric bilinear forms, while $\GW^2(X,\Leu)$ controls symplectic forms, and the groups $\GW^i(X,\Leu)$ for odd $i$ govern the theory of formations in the sense of \cite{ranickiAlgebraicTheoryFoundations1973} (see \cite[Theorems 6.1, 7.1 and 8.1]{walterGrothendieckWittGroupsTriangulated}). Closely related are the \emph{Witt groups} $\Wr^i(X,\Leu)$ of $X$ \cite{balmerTriangularWittGroups2000,balmerTriangularWittGroups2001}: they are obtained from the Grothendieck--Witt groups by neglecting the hyperbolic forms, obtained from vector bundles without bilinear data in a systematic and straightforward manner. In fact, there is \emph{Karoubi periodicity} exact sequence \[\Kr_0(X)\xrightarrow{\Hr_\Leu^i}\GW^i(X,\Leu)\to\Wr^i(X,\Leu)\to 0\] where $\Hr_\Leu^i$ is the $i$-th shifted $\Leu$-twisted hyperbolic form homomorphism, see \cite[Proposition 2.2 (c)]{walterGrothendieckWittGroupsTriangulated}.

Although Hermitian $\Kr$-theory, the cohomology theory underlying the groups $\GW^i(X,\Leu)$ \cite{schlichtingHermitianKtheoryDerived2017} (see also the series \cite{calmesHermitianKtheoryStable2022,calmesHermitianKtheoryStable2025,calmesHermitianKtheoryStable2026}\footnote{In this paper, we work with schemes defined over $\Rb$, in particular in characteristic $0$, so Schlichting's theory is perfectly adequate for our purposes.}), is similar in many ways to algebraic $\Kr$-theory, the manipulation of Grothendieck--Witt groups is technically more delicate than that of the $\Kr$-theory groups. The conceptual reason is that Hermitian $\Kr$-theory is not an orientable cohomology theory, hence the presence of the twisting line bundle $\Leu$. This twist is essential, \emph{e.g.}, in the use of \emph{Gysin morphisms} or pushforward along proper maps. If $f:Y\to X$ is a proper morphism of smooth $k$-schemes, then there is an induced pushforward map \[f_*:\GW^i(Y,\omega_{f}\otimes f^*\Leu)\to\GW^{i-\delta}(X,\Leu)\] of Grothendieck--Witt groups, where $\omega_{f}$ is the orientation sheaf of $f$ and $\delta=\dim(Y)-\dim(X)$ is its relative dimension (see \cite{calmesPushforwardsWittGroups2011} in the context of Witt groups). Thus the elucidation of the twisted groups $\GW^i(X,\Leu)$ is essential to take full advantage of the cohomological structure of Grothendieck--Witt groups.

Now suppose that $k=\Rb$ is the field of real numbers. In this situation, Witt groups have been studied by M. Knebusch \cite{knebuschAlgebraicCurvesReal1976a} and J.-P. Monnier \cite{monnierWittGroupTorsion2002} for arbitrary smooth curves, and by R. Sujatha \cite{sujathaWittGroupsReal1990} for smooth projective surfaces, with additional literature in special cases such as the work of Sujatha and J. van Hamel \cite{sujathaLevelWittGroups2000} on Enriques surfaces; one can also cite the work of J.-L. Colliot-Thélène and Sujatha \cite{colliot-theleneUnramifiedWittGroups1994} on smooth real anisotropic projective quadrics in any dimension. These computations only concern the untwisted Witt groups and, usually, the Witt group of symmetric bilinear forms specifically. The description of Witt groups obtained in the aforementioned articles depend essentially on the topology of the real locus and on geometric data, such as the genus for curves; the required information is in general contained in the mod $2$ étale cohomology of the variety under consideration and that of its complexification.

Such a topological description was also obtained for the untwisted Witt group $\Wr^2$ of symplectic forms in special cases by J. Barge and M. Ojanguren in \cite{bargeFibresAlgebriquesSurface1987}, and the main result of this article is a blueprint for the type of assertions that we seek*. In \cite{bargeFibresAlgebriquesSurface1987}, the authors study the classification of vector bundles on a smooth real affine surface $X$. General arguments reduce this classification to the study of rank $2$ bundles. If such a bundle $E$ is further orientable (that is, if $E$ has trivial determinant line bundle), then the exterior product $x\otimes y\mapsto x\wedge y$ induces a symplectic form $\varphi_E:E\otimes E\to\det E\simeq\Osc$ and thus an element $[E,\varphi_E]$ of $\GW^2(X)$. The map taking the isomorphism class $\{E\}$ of an orientable rank $2$ bundle $E$ to $[E,\varphi_E]-[\Osc^2,\varphi_{\Osc^2}]$ is then a bijection from the set $\Vsc_2^o(X)$ of isomorphism classes of such bundles to the subgroup of $\GW^2(X)$ composed of rank $0$ classes. Thus $\GW^2(X)$ contains all the information required to understand the theory of orientable vector bundles on smooth real affine surfaces. According to the Karoubi periodicity exact sequence, the group $\GW^2(X)$ splits into a $K$-theoretic part, via the hyperbolic homomorphism, and a Witt-theoretic part $\Wr^2(X)$. The main result \cite[Théorème 6.2]{bargeFibresAlgebriquesSurface1987} of Barge--Ojanguren's work identifies $\Wr^2(X)$ in terms of essentially topological information on the real locus $X(\Rb)$ of the smooth affine surface $X$. The following theorem is then a direct generalisation of this result and, in essence, of its proof.

\begin{theonn}[Corollary \ref{cor:small_d_w_d}]
Let $X$ be a smooth algebraic variety over $\Rb$ of dimension $d\leq 3$ and let $\Leu$ be a line bundle on $X$; denote by $L=\Leu(\Rb)$ the real topological line bundle associated with $\Leu$ and by $\Zb(L)$ the local system of abelian groups determined by $L$. Let $d_L:\Hr^{d-1}(X(\Rb),\Zb/2)\to\Hr^{d}(X(\Rb),\Zb(L))$ be the connecting homomorphism in the cohomology long exact sequence associated with the epimorphism $\Zb(L)\to\Zb/2$ of sheaves. If $X$ is not proper or $X(\Rb)$ is not empty, there is a canonical isomorphism \[\Wr^d(X,\Leu)\cong\Hr^d(X(\Rb),\Zb(L))/d_L\Hr_\alg^{d-1}(X(\Rb),\Zb/2)\] of abelian groups, where $\Hr_\alg^{d-1}(X(\Rb),\Zb/2)$ is the subgroup of $\Hr^{d-1}(X(\Rb),\Zb/2)$ generated by the mod $2$ fundamental classes of algebraic curves on $X$.
\end{theonn}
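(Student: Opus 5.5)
The plan is to run the Gersten--Witt (coniveau) spectral sequence for $\Wr^*(X,\Leu)$ and identify its relevant terms with real topology. Balmer--Walter give a spectral sequence with $E_1$-page the twisted Gersten--Witt complexes. Via the filtration of $\Wbf$ by the powers $\Ibf^j$, this ties $\Wr^d(X,\Leu)$ to the $\Ibf^j$-cohomology groups $\Hr^p(X,\Ibf^j,\Leu)$. Because $\Wr^i$ is $4$-periodic and the Gersten complex of a $d$-dimensional variety has length $d+1$, the top shifted group $\Wr^d(X,\Leu)$ has a simple structure. Its top filtration step is $\Hr^d(X,\Wbf,\Leu)$. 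The only other contributions come from differentials hitting row $d$ from $\Hr^{p}(X,\Wbf,\Leu)$ with $p\equiv d \pmod 4$ shifted appropriately, and for $d\le 3$ there are no other rows contributing to $\Wr^d$. This is why $d\le3$ is needed. So first I would establish $\Wr^d(X,\Leu)\cong\Hr^d(X,\Wbf,\Leu)$ for $d\le 3$, by checking that no $d_r$ differential can reach or leave position $(d,\ast)$ when $d\le 3$.

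Next I would compare $\Wbf$ with $\Ibf^j$ for large $j$. Using the exact sequences $0\to\Ibf^{j+1}\to\Ibf^j\to\Kbf^M_j/2\to0$ and the Milnor conjecture, together with the fact that the top cohomology $\Hr^d(X,\Kbf^M_j/2)$ vanishes for $j>d$ over $\Rb$ in the stable range (by Arason/Jacobson-type results: $\Ibf^j\cong \Ibf^{j+1}$ via multiplication by $\langle\langle -1\rangle\rangle$ for $j\ge d+1$), I would get $\Hr^d(X,\Wbf,\Leu)\cong \Hr^d(X,\Ibf^d,\Leu)/(\text{image})$. More precisely, I would use the exact sequence
\[\Hr^{d-1}(X,\Kbf^M_{d-1}/2)\to\Hr^d(X,\Ibf^{d},\Leu)\to\Hr^d(X,\Ibf^{d-1},\Leu)\to\Hr^d(X,\Kbf^M_{d-1}/2)=0,\]
where the vanishing holds for dimension reasons since $\Kbf^M_{d-1}$ lives below the top degree at points. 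Iterating down to $\Ibf^0=\Wbf$, the cokernel of the connecting map is the Witt cohomology. Then, via Jacobson's theorem, $\Hr^d(X,\Ibf^j,\Leu)\cong\Hr^d(X(\Rb),\Zb(L))$ for $j\ge d+1$, twisted version via the signature on residue fields. The sequence for $\Ibf^{d+1}\subset\Ibf^d$ plus $\Hr^{d-1}(X,\Kbf^M_d/2)=\CH^{d-1}$-type group mod $2$ pairs with the cycle class map. This identifies the relevant image with $d_L$ applied to the image of the cycle class $\CH^{d-1}(X)/2\to\Hr^{d-1}(X(\Rb),\Zb/2)$, whose image is by definition $\Hr^{d-1}_\alg$.

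The main obstacle is the compatibility step: showing that the Bockstein $\Hr^{d-1}(X,\Kbf^M_{d}/2)\to\Hr^{d}(X,\Ibf^{d+1},\Leu)$ corresponds, under Jacobson's isomorphism and the Borel--Haefliger/real cycle class map, to the topological connecting map $d_L$. This must be done in the twisted setting. I would first check this locally at a codimension-$(d-1)$ point, where it reduces to residue maps on real curves, and then use functoriality of the Gersten complexes with respect to the twisted signature. The hypothesis that $X$ is non-proper or $X(\Rb)\neq\emptyset$ enters in controlling $\Hr^d(X,\Kbf^M_d/2)=\CH_0(X)/2$. When $X$ is proper with no real points this group can be nonzero, with a complex point of odd degree class, and it would obstruct surjectivity onto the topological side. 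Otherwise it is killed or matched by real points, which I would verify via degree arguments.
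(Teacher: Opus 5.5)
Your first half matches the paper. The Gersten--Witt spectral sequence collapses for $d\leq 3$, so $\Wr^d(X,\Leu)\cong\Hr^d(X,\Wbf_\Leu)$. Then $\Hr^d(X,\Wbf_\Leu)\cong\Hr^d(X,\Ibf^{d-1}_\Leu)\cong\Coker\bigl(\Ch^{d-1}(X)\to\Hr^d(X,\Ibf^d_\Leu)\bigr)$, because $\Hr^p(X,\bar{\Ibf}^j)=0$ for $p>j$.

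\textbf{The gap.} To finish you need $\gamma^d:\Hr^d(X,\Ibf^d_\Leu)\to\Hr^d(X(\Rb),\Zb(L))$ to be an isomorphism. This is one step below Jacobson's stable range. It is the only non-formal input (the paper imports it from \cite[Theorem 3.5]{lerbetImageHigherSignature2026}), and it is exactly where $(*)$ is used. Your proposal never establishes it.

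Instead you pass to the sequence for $\Ibf^{d+1}\subset\Ibf^d$ and treat $\Hr^{d-1}(X,\Kbf_d^\Mr/2)$ as a ``$\CH^{d-1}$-type group''. It is not one: it is $\Hr^{d-1}(X,\bar{\Ibf}^d)\cong\Hr^{d-1}(X,\Hsc^d)$. Under $(*)$ its cycle class map $\bar{\gamma}_d^{d-1}$ to $\Hr^{d-1}(X(\Rb),\Zb/2)$ is surjective (Remark~\ref{rema:cases_mod_2_cycle_class_surjective}). So the Bockstein you single out has image corresponding to $d_L\Hr^{d-1}(X(\Rb),\Zb/2)=\Hr^d(X(\Rb),\Zb(L))[2]$, not to $d_L\Hr_\alg^{d-1}(X(\Rb),\Zb/2)$. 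Its cokernel is the image of $\Hr^d(X,\Ibf^{d+1}_\Leu)$ in $\Hr^d(X,\Ibf^d_\Leu)$, not $\Wr^d(X,\Leu)$. Followed literally, your plan gives $\Wr^1(X,\Leu)\cong\Zb^{T\setminus T'}$ for a curve, which is wrong as soon as $|T'|\geq 2$ (Corollary~\ref{cor:W1_curve}).

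A smaller error: $\Hr^d(X,\Kbf_j^\Mr/2)$ does not vanish for $j>d$; it is $\Hr^d(X(\Rb),\Zb/2)$.

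\textbf{The repair.} Keep the two Bocksteins apart.

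First, apply the five lemma to the map of exact sequences
\[\Hr^{d-1}(X,\bar{\Ibf}^d)\to\Hr^d(X,\Ibf^{d+1}_\Leu)\to\Hr^d(X,\Ibf^d_\Leu)\to\Ch^d(X)\to 0\]
over
\[\Hr^{d-1}(X(\Rb),\Zb/2)\xrightarrow{d_L}\Hr^d(X(\Rb),\Zb(L))\xrightarrow{\cdot 2}\Hr^d(X(\Rb),\Zb(L))\to\Hr^d(X(\Rb),\Zb/2)\to 0.\]
The inputs are Jacobson's isomorphism $\gamma_{d+1}^d$, the surjectivity of $\bar{\gamma}_d^{d-1}$, and the isomorphism $\bar{\gamma}^d:\Ch^d(X)\cong\Hr^d(X(\Rb),\Zb/2)$ from \cite[Theorem 3.2]{colliot-theleneZerocyclesCohomologyReal1996}. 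That last isomorphism is where $(*)$ really enters, and your remark about $\CH_0(X)/2$ belongs here. The conclusion is that $\gamma^d$ is an isomorphism.

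Second, use the ladder for $\Ibf^d\subset\Ibf^{d-1}$. Its connecting map has source $\Hr^{d-1}(X,\bar{\Ibf}^{d-1})=\Ch^{d-1}(X)$ and corresponds to $d_L\circ\bar{\gamma}^{d-1}$. This is the only place algebraic classes appear.

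\textbf{The step you flag as the main obstacle is formal.} The normalised signatures $\sign_t:\Ibf^t(\Leu)\to\iota_*\Zb(L)$ give a morphism of short exact sequences of sheaves, ladder~(\ref{eq:ladder_normalised_signature}). Its bottom row is $0\to\iota_*\Zb(L)\xrightarrow{\cdot 2}\iota_*\Zb(L)\to\iota_*\Zb/2\to 0$ for every $t$. Hence both connecting maps are compatible with $d_L$ by naturality, twist included, and no local residue computation is needed.
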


The group $\Wr^d(X,\Leu)$ can also be described in terms of cohomology operations on the mod $2$ Chow groups when $X$ is proper and $X(\Rb)$ is empty.

If $X$ is a smooth variety of dimension $d\leq 3$, inspection of the Gersten--Witt spectral sequence shows that $\Wr^i(X,\text{--})=0$ if the remainder of $i$ mod $4$ does not lie in $\{0,\ldots,d\}$. Hence the above theorem only leaves the group $\Wr^0(X,\Leu)=\Wr(X,\Leu)$ to elucidate for curves. Here again, the answer is topological up to $2$-torsion, and the $2$-torsion information is of geometric nature in that it can be computed after extension of scalars to $\Cb$ once the topology is known. Call a closed point $y$ of an $\Rb$-variety complex if its residue field $\kappa(y)$ is isomorphic to $\Cb$.

\begin{theonn}[Theorem \ref{theo:twisted_W_0_curve}]
Let $X$ be a smooth algebraic curve over $\Rb$ and let $\Leu$ be a line bundle on $X$; set $L=\Leu(\Rb)$. Let $Y$ be a smooth compactification of $X$; denote by $c$ the number of complex points of $Y\setminus X$ and by $g$ the genus of $Y_\Cb=Y\times_\Rb\Cb$. If $X$ is not proper or $X(\Rb)$ is not empty, then there is a split exact sequence \[0\to(\Zb/2)^{g+c}\to\Wr(X,\Leu)\xrightarrow{\gamma^0}\Hr^0(X(\Rb),\Zb(L))\] of abelian groups induced by the global signature $\gamma^0$. If $X$ is proper and $X(\Rb)$ is empty, then $\Wr(X,\Leu)\simeq(\Zb/2)^{g+1}$ if $\Leu$ is not a square.
\end{theonn}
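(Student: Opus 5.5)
Since $X$ is a curve, I will work with the Gersten--Witt complex of $(X,\Leu)$. It has two terms,
\[\Wr(\Rb(X),\Leu_\eta)\xrightarrow{\partial}\bigoplus_{x\in X^{(1)}}\Wr(\kappa(x),\omega_x\otimes\Leu_x).\]
Purity for regular schemes of dimension $\le 3$ gives $\Wr(X,\Leu)=\Ker\partial$ and $\Wr^1(X,\Leu)=\Coker\partial$. Equivalently, $\Wr(X,\Leu)=\Hr^0(X,\Wbf_\Leu)$. I will filter this group by the twisted $\Ibf^j$-cohomology, using the fundamental-ideal filtration. The pieces are controlled by $\Hr^0(X,\Ibf^j_\Leu)$, and their quotients $\Hr^0(X,\bar\Ibf^j)$ identify, via Milnor's conjecture, with $\Hr^0_{\Zar}(X,\Hsc^j_\et(\Zb/2))$. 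The global signature $\gamma^0$ is obtained by passing to the limit over $j$. For large $j$, the real cycle class / Jacobson isomorphism identifies $\Hr^0(X,\Ibf^j_\Leu)$ with $\Hr^0(X(\Rb),2^j\Zb(L))$; for twisted $\Ibf^j$ on a curve I take this as known from the earlier sections. So I will prove the following two statements. First, the kernel of $\gamma^0$ is the $2$-primary torsion of $\Wr(X,\Leu)$, and it is killed by $2$. Second, it has dimension $g+c$.

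\textbf{Step 1 (the kernel is $2$-torsion and splits).} An element of the kernel has signature zero at every real point. By Pfister's local-global principle for the function field, it is torsion; in fact it lies in the kernel of $\Wr(\Rb(X))\to\prod\Wr(\text{real closures})$. On a curve over $\Rb$, the cohomological dimension of $\Rb(X)(\sqrt{-1})$ is $1$. Hence $\Ibf^2$ is torsion free over $\Rb(X)$, and $\Ibf^j$ maps injectively by signatures for $j\ge2$. So the torsion lies in the part detected by $\Hr^0(X,\bar\Ibf^0)$ and $\Hr^0(X,\bar\Ibf^1)$, which are $\Zb/2$-vector spaces; I expect $2\cdot\mathrm{torsion}=0$ to follow from this. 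Splitting is then automatic: the image of $\gamma^0$ is a subgroup of the free abelian group $\Hr^0(X(\Rb),\Zb(L))$, hence free. So the sequence splits as soon as exactness on the left is established.

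\textbf{Step 2 (counting the kernel).} I will compute $\dim_{\Zb/2}\Ker\gamma^0$ by comparing with the untwisted case (Knebusch, Monnier). In that case the answer is $(\Zb/2)^{g+c}$, coming from the $2$-torsion of $\Pic(X_\Cb)$ and from the units/Brauer classes at the complex points at infinity. The key observation is that, locally on $X$, twisting by $\Leu$ is trivial. So $\Ibf^j_\Leu$ and $\Ibf^j$ have isomorphic graded pieces $\bar\Ibf^j$: these do not depend on the twist. They differ only through the extension data, which is topological and recorded by the local system $\Zb(L)$ on $X(\Rb)$. I will therefore run the long exact sequences
\[0\to\Ibf^{j+1}_\Leu\to\Ibf^j_\Leu\to\bar\Ibf^j\to0\]
in Zariski cohomology, and likewise for the untwisted sheaves, and compare the two. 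At $j=0,1$, the groups $\Hr^0(X,\bar\Ibf^j)$ and $\Hr^1(X,\bar\Ibf^j)$ are étale cohomology data: $\Hr^0(X,\Zb/2)$, $\Pic(X)/2$, the unramified Brauer group, and so on. Their dimensions are given by $g$, $c$, the number $s$ of components of $X(\Rb)$ (and of the compact ones), through Weichold/Krasnov-type formulas. Euler-characteristic-style bookkeeping should then give $g+c$ in both the twisted and the untwisted setting.

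\textbf{Step 3 (the anisotropic proper case).} Suppose $X$ is proper and $X(\Rb)=\emptyset$. Then $-1$ is a sum of squares in $\Rb(X)$, the level is $2$, and $\Ibf^2=0$ on the function field. The filtration has length two: $\Hr^0(X,\bar\Ibf^0)=\Zb/2$, and $\Hr^0(X,\bar\Ibf^1)=\Hr^0(X,\Hsc^1)$, which I compute from $\Hr^1_\et(X,\Zb/2)$ and $\Pic$. The twist only changes the rank map. If $\Leu$ is not a square, the rank-parity class cannot be realised by a $\Leu$-valued form: its boundary in $\Hr^1(X,\Ibf^1_\Leu)$ is the class of $\Leu$ in $\Pic(X)/2$, and this is nonzero. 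Tracking this boundary replaces the rank contribution by an extra $\Zb/2$ inside the $\Ibf^1$ layer, which yields $(\Zb/2)^{g+1}$.

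The main obstacle is Step 2. Showing that the twisted kernel has the same size $g+c$ requires a precise computation of the connecting maps $\Hr^0(X,\bar\Ibf^j)\to\Hr^1(X,\Ibf^{j+1}_\Leu)$ in the twisted setting. In particular, I need to check that the twist affects only the image of $\gamma^0$ and not its kernel. I would first attempt this by reducing to the case where $\Leu$ has odd degree on each compact real component, and comparing with a double cover on which $\Leu$ becomes trivial.
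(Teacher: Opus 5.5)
Your Step~2 is where the whole theorem lives, and it is not carried out. Euler-characteristic bookkeeping cannot decide how the twisted connecting maps $\partial^{0,j}_\Leu$ behave, and their behaviour is exactly what distinguishes $\Hr^0(X,\Ibf_\Leu)$ from $\Hr^0(X,\Ibf)$. The Knebusch--Monnier answer does not transfer to the twisted groups. The double-cover reduction changes $g$, $c$ and the real locus, with no stated way back.

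The missing idea is that on a curve the twist drops out of the kernel of the signature, because Jacobson's maps $\gamma_2^0$ and $\gamma_2^1$ are isomorphisms. Concretely, the ladder (\ref{eq:ladder_normalised_signature}) gives $\gamma_2^1\circ\partial^{0,1}_\Leu=d_L\circ\bar{\gamma}_1^0$. Hence every $y\in\Ker(\bar{\gamma}_1^0:\Hr^0(X,\bar{\Ibf})\to\Hr^0(X(\Rb),\Zb/2))$ lifts to $\Hr^0(X,\Ibf_\Leu)$. Using surjectivity of $\gamma_2^0$, the lift can be chosen with $\gamma_1^0=0$. Injectivity of $\gamma_2^0$, together with torsion-freeness of $\Hr^0(X(\Rb),\Zb(L))$, shows that reduction is injective on $\Ker\gamma_1^0$. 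Therefore $\Ker\gamma_1^0\cong\Hr^0(X,\bar{\Kbf}^1)$ for every $\Leu$. This is the sheaf isomorphism $\Kbf^1(\Leu)\cong\bar{\Kbf}^1$ of Lemma \ref{lem:2_torsion_to_(-1)_torsion}, which is what the paper uses.

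The count $h^0(X,\bar{\Kbf}^1)=g+c$, which you also leave to unspecified bookkeeping, is then pure étale cohomology. It uses Cox's formula $\dim\Hr^1_\et(Y)=g+\sigma$ (when $Y(\Rb)\neq\emptyset$), and the localisation sequence from $Y$ to $X$, which gives $h^0(X,\bar{\Ibf})=g+c+s$. It also uses surjectivity of $\bar{\gamma}_1^0$ onto $(\Zb/2)^s$, which follows from $\Hr^1(X,\bar{\Kbf}^1)=0$ under $(*)$. For $X$ proper and pointless, the real--complex sequence and a parity count give $g+1$ instead. Finally, for $\Leu$ not a square, Lemma \ref{lem:baby_pardon} gives $\Wr(X,\Leu)=\Hr^0(X,\Ibf_\Leu)$, on which $\gamma^0=2\gamma_1^0$, so $\Ker\gamma^0=\Ker\gamma_1^0$.

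Step~1 has a hole as well. Graded pieces being $\Zb/2$-vector spaces does not make the torsion $2$-torsion: a filtration with two $\Zb/2$ layers can be $\Zb/4$. What is needed is even rank. When $\Leu$ is a square and $X(\Rb)\neq\emptyset$, vanishing signature forces even rank. When $\Leu$ is not a square, every element of $\Hr^0(X,\Wbf_\Leu)$ has even rank (Lemma \ref{lem:baby_pardon}). Then $2\alpha=\llangle -1\rrangle\alpha\in\Hr^0(X,\Ibf^2_\Leu)$ has zero signature and so vanishes.

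In the remaining case, $X$ is affine with $X(\Rb)=\emptyset$, and $\Pic(X)/2\cong\Hr^1(X(\Rb),\Zb/2)=0$ forces $\Leu$ to be a square. There your expectation fails, and so does the displayed sequence. Take $X$ to be the conic $x^2+y^2+z^2=0$ minus a closed point. Then $g=0$, $c=1$ and $\Hr^0(X(\Rb),\Zb)=0$, yet $\langle 1\rangle$ has order $4$ in $\Wr(X)$ because $\Rb(X)$ has level $2$. This is why Theorem \ref{theo:twisted_W_0_curve} in the body assumes $\Leu$ is not a square.

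Lastly, in Step~3 the value $g+1$ is asserted, not derived, and the mechanism is misdescribed. The rank layer simply drops out and $\Ibf^2_\Leu=0$, so $\Wr(X,\Leu)=\Hr^0(X,\bar{\Ibf})\cong\Hr^1_\et(X)$. Its dimension $g+1$, which already counts $\omega$, comes from the real--complex sequence and a parity argument. No extra $\Zb/2$ replaces the rank contribution.
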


Using the methods presented below, we could also obtain a description of $\Wr(X,\Leu)$ when $\Leu$ is a square, the variety $X$ is proper and $X(\Rb)$ is empty, but this was already done by Monnier \cite[Theorem 2.9]{monnierWittGroupTorsion2002}. We also note that the image of the global signature $\gamma^0$ was computed in \cite[Proposition 5.6]{lerbetImageHigherSignature2026} (including in the twisted case) so the above theorem gives an essentially complete description of $\Wr(X,\Leu)$. In Appendix \ref{appendix:transfers}, we investigate another piece of structure of the Witt groups given by the transfer along the finite étale extension $X\times_\Rb\Spec\Cb\to X$ computing in particular the dimension of its kernel in topological and geometric terms as above. Although these results are technical, we still feel that they might be useful. We then study the Grothendieck--Witt groups of curves and obtain the following description.

\begin{theonn}[Propositions \ref{prop:GW_0_curve} and \ref{prop:GW_1_curve}]
Let $X$ be a smooth real curve and let $\Leu$ be a line bundle on $X$; set $L=\Leu(\Rb)$. Denote by $m$ the number of connected components $V$ of $X(\Rb)$ such that $L_{|V}$ is nontrivial, by $s$ the number of connected components of $X(\Rb)$ and by $t$ its number of compact connected components. Let $Y$ be a smooth compactification of $X$, let $g$ be the genus of $Y\times_\Rb\Spec\Cb$ and let $c$ be the number of complex points of $Y\setminus X$ such that $\kappa(y)\simeq\Cb$. Assume that $X$ is not proper or $X(\Rb)$ is not empty. Then there is an exact sequence \[0\to(\Zb/2)^{t-e(m)}\to\GW(X,\Leu)\to(\Zb/2)^{g+c}\oplus\Zb^{s-m+1}\to 0\] where $e(m)=\min(m,1)$. Furthermore, if $m=0$ or $s\leq 1$, then there is an isomorphism \[\GW^1(X,\Leu)\cong\Hr^1(X(\Rb),\Zb(L))\times_{\Hr^1(X(\Rb),\Zb/2)}\CH^1(X)\] of abelian groups.
\end{theonn}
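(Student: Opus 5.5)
The plan is to derive both statements from the Karoubi periodicity sequence $\Kr_0(X)\xrightarrow{\Hr^i_\Leu}\GW^i(X,\Leu)\to\Wr^i(X,\Leu)\to 0$. We already control the Witt side: for $i=0$ by the theorem describing $\Wr(X,\Leu)$ (Theorem \ref{theo:twisted_W_0_curve}) together with the computation of the image of the global signature in \cite[Proposition 5.6]{lerbetImageHigherSignature2026}, and for $i=1$ by Corollary \ref{cor:small_d_w_d}. On the $\Kr$-theory side we use that for a smooth curve $\Kr_0(X)\cong\Zb\oplus\Pic(X)$ via rank and determinant. It then remains to identify the image of the hyperbolic map and to see how it sits in $\GW^i$.

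\textbf{Degree $0$.} I would use rank and signature to map $\GW(X,\Leu)$ onto a lattice. The hyperbolic form $\Hr^0_\Leu(E)$ has signature $0$ on every real component and rank $2\rk E$. Consider the map $(\rk,\gamma^0)\colon\GW(X,\Leu)\to\Zb\oplus\Hr^0(X(\Rb),\Zb(L))$. On a component $V$ where $L_{|V}$ is nontrivial, $\Zb(L)$ has no global sections, so the signature vanishes there. Hence the target is effectively $\Zb\oplus\Zb^{s-m}$. I expect the image to be free of rank $s-m+1$, with the $(\Zb/2)^{g+c}$ coming from the torsion of $\Wr(X,\Leu)$, which lifts because the sequence of Theorem \ref{theo:twisted_W_0_curve} splits.

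The kernel of $\GW\to$ (image) $\oplus(\Zb/2)^{g+c}$ then consists of the hyperbolic classes of rank zero, that is, the image of $\Pic(X)$ via $\Lc\mapsto \Hr(\Lc)-\Hr(\Osc)$, modulo those that are trivial in $\GW$. The key computation is $\Hr^0_\Leu(\Lc)-\Hr^0_\Leu(\Osc)$. Since $\Hr(\Lc)$ is determined by the rank-$2$ form on $\Lc\oplus\Lc^\vee\otimes\Leu$, this class should only depend on the image of $\Lc$ in $\Pic(X)/2$ and on the topology of $\Lc(\Rb)$. Via $w_1$, this yields a map $\Pic(X)\to\Hr^1(X(\Rb),\Zb/2)\cong(\Zb/2)^{t}$ over the compact components. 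I would show two things. First, the kernel of $\GW\to\Wr$ restricted to rank $0$ is the image of this map in $(\Zb/2)^t$, which is surjective by density of algebraic line bundles on curves (Knebusch/Witt). Second, when $m\geq1$ one relation is lost: a twisted hyperbolic plane over a component where $L$ is nontrivial is isometric to an orthogonal sum of two twisted lines, which kills one diagonal class. This accounts for $e(m)$. This step is the main obstacle. I would attack it by computing the determinant and discriminant of $\Hr(\Lc)$ as classes in $\Hr^1(X(\Rb),\Zb/2)$ modulo $w_1(L)$, and comparing with the diagonal form $\langle \Lc\rangle\perp\langle\Lc^{-1}\otimes\Leu\rangle$.

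\textbf{Degree $1$.} Corollary \ref{cor:small_d_w_d} gives $\Wr^1(X,\Leu)\cong\Hr^1(X(\Rb),\Zb(L))/d_L\Hr^0_\alg$. For a curve, $\Hr^0_\alg=\Hr^0$. The image of $\Hr^1_\Leu$ is $\Pic(X)=\CH^1(X)$ up to the rank part, since $\GW^1$ has no rank. I would construct a map $\GW^1(X,\Leu)\to\Hr^1(X(\Rb),\Zb(L))\times_{\Hr^1(X(\Rb),\Zb/2)}\CH^1(X)$. Its first component is a lift of the Witt invariant via the Gersten--Witt/$\Ibf^1$-cohomology map, using $\GW^1\to\widetilde{\CH}^1$-type Chow--Witt description. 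Its second component is the $\Kr_0$-determinant under the forgetful map. Compatibility over $\Hr^1(X(\Rb),\Zb/2)$ is given by the cycle class map. Bijectivity is then a five-lemma argument comparing the Karoubi sequence with the obvious sequence $\Pic\to$ fiber product $\to\Hr^1(\Zb(L))/d_L\Hr^0\to0$. The hypothesis $m=0$ or $s\leq1$ is used to make $d_L\Hr^0$ coincide with the image of $\ker(\CH^1\to\Hr^1(\Zb/2))$, and so to ensure injectivity of $\Hr^1_\Leu$ on the relevant kernel.
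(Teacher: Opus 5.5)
Your proposal has a genuine gap in each degree. In degree $0$ you stop exactly at the step that carries the content.

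\textbf{Degree $0$.} The kernel of $(\rk,\text{Witt class})\colon\GW(X,\Leu)\to\Zb\times_{\Zb/2}\Wr(X,\Leu)\cong\Hr^0(X,\GWb(\Leu))$ is the image of the map $\Pic(X)\to\GW(X,\Leu)$, $\Lc\mapsto\Hr_\Leu(\Lc)-\Hr_\Leu(\Osc_X)$. Use this fibre product as the quotient, rather than ``image of $(\rk,\gamma^0)$ plus the torsion of $\Wr$''. The latter is wrong when $X(\Rb)=\emptyset$ and $\Leu$ is a square, since then $\langle 1\rangle$ is torsion. Because $\Hr_\Leu(\Lc)\cong\Hr_\Leu(\Lc^\vee\otimes\Leu)$, the map formally factors through $\Pic(X)/(2\Pic(X)+\Zb\cdot[\Leu])=\Ch^1(X)/\bar{c}_1(\Leu)\cong\Hr^1(X(\Rb),\Zb/2)/w_1(L)\simeq(\Zb/2)^{t-e(m)}$. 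The global relation $\Hr_\Leu(\Leu)\cong\Hr_\Leu(\Osc_X)$ is the real source of $e(m)$, not a componentwise isometry. What remains is injectivity of the factored map, and the tool you suggest cannot give it: $\Hr_\Leu(\Lc)$ has determinant $\Leu$ and hyperbolic discriminant for every $\Lc$. In fact no invariant of the topological form on $X(\Rb)$ can work once $m\geq2$. Take $\Lc$ with $w_1(\Lc(\Rb))$ equal to the class of a single component $V$ on which $L$ is nontrivial. Then $\Lc(\Rb)$ is trivial off $V$ and isomorphic to $L$ on $V$, so the real topological forms of $\Hr_\Leu(\Lc)$ and $\Hr_\Leu(\Osc_X)$ are isometric (swap the two summands over $V$). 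Yet the statement, by an order count, forces their difference to be nonzero in $\GW(X,\Leu)$. The paper gets the needed algebraic input as follows. The Gersten--Grothendieck--Witt spectral sequence collapses for a curve and gives $0\to\Hr^1(X,\GWb_1^0(\Leu))\to\GW(X,\Leu)\to\Hr^0(X,\GWb(\Leu))\to 0$. Asok--Fasel then identify $\Hr^1(X,\GWb_1^0(\Leu))$ with $\Coker(\cup\bar{c}_1(\Leu)\colon\Ch^0(X)\to\Ch^1(X))$.

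\textbf{Degree $1$.} The five-lemma step is not available, because Karoubi's sequence is only right exact. Its continuation $\GW(X,\Leu)\xrightarrow{F}\Kr_0(X)\xrightarrow{\Hr^1_\Leu}\GW^1(X,\Leu)$ shows you would need $\Ker\Hr^1_\Leu=\Im F$, which you never determine. Note that $F(\Hr_\Leu(\Lc))=[\Lc]+[\Lc^\vee\otimes\Leu]$ is always killed, so ``injectivity of $\Hr^1_\Leu$ on the relevant kernel'' has no clear meaning. Your first component also presupposes $\GW^1(X,\Leu)\cong\widetilde{\CH}{}^1(X,\Leu)=\Hr^1(X,\Jbf^1(\Leu))$, and that identification is the actual work. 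It comes from the collapse of the Gersten--Grothendieck--Witt spectral sequence, $\GW^1(F)=0$ for fields, and Barge--Lannes' isomorphism $\GWb_1^1(\Leu)\cong\Ibf(\Leu)\times_{\bar{\Ibf}}\Kbf_1^\Mr$. Once you have it, Karoubi's sequence is unnecessary. Compare $0\to\Ibf^2(\Leu)\to\Jbf^1(\Leu)\to\Kbf_1^\Mr\to 0$ with $0\to\iota_*\Zb(L)\xrightarrow{2}\iota_*\Zb(L)\to\iota_*\Zb/2\to 0$ via the signatures. Use Jacobson's isomorphism $\Hr^1(X,\Ibf^2_\Leu)\cong\Hr^1(X(\Rb),\Zb(L))$ and $\Hr^1(X,\Kbf_1^\Mr)=\CH^1(X)$. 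A diagram chase then yields the cartesian square, provided the sign map $\Osc(X)^\times=\Hr^0(X,\Kbf_1^\Mr)\to\Hr^0(X(\Rb),\Zb/2)$ is onto. That is exactly where $s\leq1$ enters; when $\Leu$ is a square one uses Lemma~\ref{lem:Sujatha} and $\Hr^1(X,\Ibf)\cong\Hr^1(X(\Rb),\Zb)$ instead. Your description of how the hypothesis is used does not match this mechanism. Finally, $\Hr^0_\alg(X(\Rb),\Zb/2)=\Zb/2\cdot 1$ (the image of $\Ch^0(X)$), not all of $\Hr^0(X(\Rb),\Zb/2)$. This is harmless under the hypotheses, but it would give the wrong $\Wr^1(X,\Leu)$ when $m\geq 2$.
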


The description of $\GW^1(X,\Leu)$ as a fibre product extends \cite[Proposition 2.2.5]{asokSplittingVectorBundles2025}, which mostly does not apply to curves (see Remark \ref{rema:fibre_product_chow_witt}).

We finally turn to surfaces, for which, as noted before, Corollary \ref{cor:small_d_w_d} only leaves $\Wr^0(X,\Leu)$ and $\Wr^1(X,\Leu)$ to investigate.

\begin{theonn}[Proposition \ref{prop:exact_sequence_twisted_I} and Remark \ref{rema:computation_twisted_I_étale_cohomology}, Theorem \ref{theo:W0_twisted_proj_surface}, Corollary \ref{cor:W1_surface}]
Let $X$ be a smooth real surface.
\begin{itemize}
	\item Let $\Leu$ be a line bundle on $X$. If $\Leu$ is not a square, then there is an exact sequence \[0\to\Hr^0(X,\Ibf^2(\Leu))\to\Wr(X,\Leu)\to\Hr_\et^1(X,\Zb/2)\xrightarrow{\cup c_1^\et(\Leu)}\Hr_\et^3(X,\Zb/2)\] where $c_1^\et(\Leu)\in\Hr_\et^2(X,\Zb/2)$ is the étale Chern class of $\Leu$; moreover, the torsion subgroup of $\Wr(X,\Leu)$ is determined as an abelian group by the real-complex exact sequence of $X$ for mod $2$ étale cohomology and the map $\cup c_1^\et(\Leu):\Hr_\et^1(X,\Zb/2)\to\Hr_\et^3(X,\Zb/2)$.
	\item There is an exact sequence \[0\to\Hr^1(X,\Ibf^2)\xrightarrow{\varphi}\Wr^1(X)\to\Ker\Sq^2\to 0\] where $\Sq^2:$ is the squaring operation on the mod $2$ Chow group $\CH^1(X)/2$.
\end{itemize}
\end{theonn}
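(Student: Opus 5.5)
The plan is to run the Gersten--Witt spectral sequence, equivalently the filtration of Witt sheaves by the powers $\Ibf^j(\Leu)$ of the fundamental ideal, on the smooth surface $X$. The graded pieces $\Ibf^j/\Ibf^{j+1}\cong\Kbf^M_j/2$ are handled by the Milnor conjecture, and $\Hr^p(X,\Kbf^M_j/2)$ is identified with Zariski cohomology of the étale sheaves $\Hsc^j_\et(\Zb/2)$ via Bloch--Ogus. Since $\dim X=2$, only finitely many terms contribute. I treat the two bullets separately.

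\textbf{First bullet.} I start from the exact sequence of sheaves
\[0\to\Ibf^2(\Leu)\to\Ibf(\Leu)\to\Kbf^M_1/2\to 0,\]
together with $\Wr(X,\Leu)\cong\Hr^0(X,\Wbf(\Leu))$. That identification holds for surfaces because the $E_2$-page of the Gersten--Witt spectral sequence has the right shape in degree $0$. I also use $\Wbf(\Leu)/\Ibf(\Leu)\cong\Zb/2$.

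The key input is the boundary $\Hr^0(X,\Zb/2)\to\Hr^1(X,\Ibf(\Leu))$, or rather the rank map $\Wbf(\Leu)\to\Zb/2$. Its obstruction is the twisted Bockstein, which is nonzero exactly when $\Leu$ is not a square. So when $\Leu$ is not a square, the rank-$1$ class does not lift, and $\Hr^0(\Wbf(\Leu))=\Hr^0(\Ibf(\Leu))$ (for $X$ connected).

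Taking cohomology of the displayed sequence gives
\[0\to\Hr^0(\Ibf^2(\Leu))\to\Hr^0(\Ibf(\Leu))\to\Hr^0(\Kbf_1^M/2)\to\Hr^1(\Ibf^2(\Leu)).\]
Bloch--Ogus identifies $\Hr^0(X,\Kbf^M_1/2)$ with $\Hr^1_\et(X,\Zb/2)$, because $\Hr^1_\et$ is unramified. The remaining step is to identify the kernel of the boundary map with the kernel of $\cup c_1^\et(\Leu)$. I would compose with $\Hr^1(\Ibf^2(\Leu))\to\Hr^1(\Kbf^M_2/2)$, which is the twisted differential $\Sq^2+c_1(\Leu)\cup$ type operation. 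Through the coniveau description, $\Hr^1(X,\Hsc^2)\hookrightarrow\Hr^3_\et(X,\Zb/2)$ modulo $\Hr^0(\Hsc^3)$-type corrections, and this becomes cup product with $c_1^\et(\Leu)$ on $\Hr^1_\et$.

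\textbf{Main obstacle.} The hardest step is proving that the image of $\Hr^0(\Ibf(\Leu))$ is exactly $\Ker(\cup c_1^\et)$. Two things are needed:
\begin{itemize}
\item[(a)] $\Hr^1(X,\Ibf^3(\Leu))\to\Hr^1(X,\Ibf^2(\Leu))$ does not interfere.
\item[(b)] The Bloch--Ogus map into $\Hr^3_\et$ is injective on the relevant piece.
\end{itemize}
I plan to get (a) from $\Ibf^j\cong\Ibf^{j+1}$ via multiplication by $\langle\langle -1\rangle\rangle$ for $j\ge 3$ over $\Rb$ (in dimension $2$, Jacobson's theorem), so these higher terms are real-topological and torsion free. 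I would then check compatibility of the twisted differential with étale cup products using the known formula $\partial=\beta_\Leu$.

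For the torsion statement, I use the real-complex sequence $\cdots\to\Hr^n_\et(X_\Cb)\to\Hr^n_\et(X)\xrightarrow{\cup(-1)}\Hr^{n+1}_\et(X)\to\cdots$. Torsion in $\Wr(X,\Leu)$ is detected by the kernel of $\langle\langle-1\rangle\rangle^N$, which by Jacobson corresponds to the kernel of the signature. On étale cohomology this is the kernel of iterated $\cup(-1)$, and combining it with $\Ker(\cup c_1^\et)$ determines the torsion subgroup.

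\textbf{Second bullet.} Here the twist is trivial and the degree is $1$. From $0\to\Ibf^2\to\Ibf\to\Kbf^M_1/2\to0$ and the Gersten--Witt description $\Wr^1(X)\cong\Hr^1(X,\Wbf)$ up to an $\Hr^2$ correction, I get
\[\Hr^0(\Kbf^M_1/2)\to\Hr^1(\Ibf^2)\xrightarrow{\varphi}\Hr^1(\Ibf)\to\Hr^1(\Kbf^M_1/2)=\CH^1(X)/2\to\Hr^2(\Ibf^2).\]
For surjectivity onto the kernel, I identify the boundary $\CH^1/2\to\Hr^2(\Ibf^2)\to\Hr^2(\Kbf^M_2/2)=\CH^2/2$ with $\Sq^2$ (Totaro/Asok--Fasel), and show that $\Hr^2(\Ibf^3)\to\Hr^2(\Ibf^2)$ does not contribute. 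For injectivity of $\varphi$, I show that the first map vanishes: the units mod squares lift to $\Hr^0(\Ibf)$ via $\langle\langle u\rangle\rangle$. Finally I pass from $\Hr^1(\Ibf)$ to $\Wr^1$, using that $\Hr^1(\Wbf)=\Hr^1(\Ibf)$ because the rank sheaf is Zariski-flasque-acyclic.
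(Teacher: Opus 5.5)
\textbf{A genuine gap at the step you flag as the main obstacle.} Your skeleton matches the paper's: the cohomology sequence of $0\to\Ibf^2(\Leu)\to\Ibf(\Leu)\to\bar{\Ibf}\to 0$, the Euler-class obstruction to lifting $\langle 1\rangle$, and the identifications $\pi^{1,2}_\Leu\circ\partial^{0,1}_\Leu=\cup\bar{c}_1(\Leu)$ and $\pi^{2,2}\circ\partial^{1,1}=\Sq^2$. The trouble is the key step. For exactness at $\Hr^1_\et(X)$ you need $\pi^{1,2}_\Leu$ to be injective on the $2$-torsion group $\Im\partial^{0,1}_\Leu$. You derive this from torsion-freeness of $\Hr^1(X,\Ibf^3_\Leu)\cong\Hr^1(X(\Rb),\Zb(L))$, and that claim is false exactly in the non-square case of the first bullet. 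Suppose $L=\Leu(\Rb)$ is nontrivial on a component $V$ of $X(\Rb)$. Then $\Hr^0(V,\Zb(L))=0$, so the Bockstein $\Hr^0(V,\Zb/2)\to\Hr^1(V,\Zb(L))$ is injective and produces nonzero $2$-torsion. For instance, $X=\Pb^2_\Rb$ with $\Leu=\Osc(1)$ gives $\Zb/2$.

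The second bullet has the same problem. There you need $\pi^{2,2}$ injective on $\Im\partial^{1,1}$, but you only assert that $\Hr^2(X,\Ibf^3)$ ``does not contribute''. In fact $\Hr^2(X,\Ibf^3)\cong\Hr^2(X(\Rb),\Zb)$ contains a $\Zb/2$ for every non-orientable compact component.

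\textbf{What the paper uses instead.} The missing idea is Lemma~\ref{lem:projection_dimension_filtration_isomorphism}. It replaces torsion-freeness by three inputs:
\begin{itemize}
\item[(i)] $4$-torsion equals $2$-torsion in $\Hr^i(X(\Rb),\Zb(L))$ for $i=1,2$ (Lemma~\ref{lem:4-torsion=2-torsion}, Remark~\ref{rema:top_twisted_cohomology});
\item[(ii)] the isomorphism $\Kbf^2(\Leu)\cong\bar{\Kbf}^2$ (Lemma~\ref{lem:2_torsion_to_(-1)_torsion});
\item[(iii)] surjectivity of $\bar{\gamma}_2^{i-1}$ (Remark~\ref{rema:cases_mod_2_cycle_class_surjective}).
\end{itemize}
The argument runs as follows. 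Let $y$ be $2$-torsion with $\pi^{i,2}_\Leu(y)=0$. Then $\gamma_2^i(y)=2\beta$, where $\beta$ is $4$-torsion, hence $2$-torsion by (i), so $\gamma_2^i(y)=0$. Therefore $y$ comes from $\Hr^i(X,\Kbf^2_\Leu)$, which is isomorphic to $\Hr^i(X,\bar{\Kbf}^2)$ by (ii). By (iii) the latter injects into $\Hr^i(X,\bar{\Ibf}^2)$, which forces $y=0$.

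\textbf{The ``moreover'' clause is not proved.} Saying that torsion is the kernel of the signature and relating it to $\Ker(\cup c_1^\et(\Leu))$ leaves an extension problem unresolved. The paper settles it in Proposition~\ref{prop:torsion_I_projective_surface} with four ingredients:
\begin{itemize}
\item[(a)] the exact sequence $0\to\Hr^0(X,\bar{\Kbf}^2)\to\Hr^0_t(X,\Ibf_\Leu)\to\Hr^0_t(X,\Hsc^1)\cap\Ker(\cup\bar{c}_1(\Leu))\to 0$, which uses surjectivity of $\gamma_2^0$;
\item[(b)] the exponent bound $4\Hr^0_t(X,\Ibf_\Leu)=0$, coming from $\Hr^0_t(X,\Ibf^3_\Leu)=0$;
\item[(c)] an identification of the $2$-torsion subgroup as an extension of $N\cap\Ker(\cup\bar{c}_1(\Leu))$ by $\Hr^0(X,\bar{\Kbf}^2)$, using that $\Hr^0(X,\Ibf^3_\Leu)$ is torsion free;
\item[(d)] order and $2$-rank, which then yield $(\Zb/2)^m\oplus(\Zb/4)^n$.
\end{itemize}

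\textbf{Smaller points.}
\begin{itemize}
\item The edge map $\Hr^1(X,\Hsc^2)\to\Hr^3_\et(X)$ is injective outright; there are no ``corrections''.
\item Dropping the $\Sq^2$ part of $\Phi_{0,1,\Leu}$ needs Lemma~\ref{lem:Sujatha}.
\item $\Wr^1(X)\cong\Hr^1(X,\Wbf)$ holds exactly for surfaces, with no $\Hr^2$ correction.
\item Injectivity of $\Hr^1(X,\Ibf)\to\Hr^1(X,\Wbf)$ comes from $\langle 1\rangle$ being a global section. Flasqueness of the constant sheaf only gives surjectivity.
\end{itemize}
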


\vspace{-4pt}

We can also describe $\Wr^1(X,\Leu)$ when $\Leu$ is not a square using an exact sequence similar to the above involving an $\Ibf^*$-cohomology group $\Hr^1(X,\Ibf_\Leu^2)$ and the kernel of a twisted cohomology operation $\Sq^2_\Leu$ but this is slightly less convenient to state. Contrary to what happens for the $2$-shifted Witt group $\Wr^2(X,\Leu)$ (determined by Corollary \ref{cor:small_d_w_d} for surfaces), even forgetting the contribution coming from Steenrod operations on Chow groups, the group $\Wr^1(X,\Leu)$ cannot be described in terms of $\Hr^1(X(\Rb),\Zb(\Leu(\Rb)))$ and a subgroup of ``algebraic'' cycles in $\Hr^0(X(\Rb),\Zb/2)$ (Remark \ref{rema:no_analogue_W1_surfaces}) and seems significantly more delicate to access in general. Let us however mention that the group $\Hr^1(X,\Ibf^2(\Leu))$ appearing in the description of $\Wr^1(X,\Leu)$ in Proposition \ref{prop:easy_exact_sequence_W1_surface} is reasonably explicit: it is the direct sum of $\Hr^1(X(\Rb),\Zb(\Leu(\Rb)))$ and of a $\Zb/2$-vector space whose dimension is determined by mod $2$ Betti numbers of $X(\Rb)$ and the mod $2$ étale cohomology group $\Hr_\et^3(X,\Zb/2)$ (Proposition \ref{prop:H1I2_surface}).

In the last section of the main body of the paper, we apply the ideas developed in the previous sections to compute the (Chow--)Witt groups of smooth real anisotropic projective quadrics of dimension $\leq 3$ with arbitrary twists, see in particular Theorems \ref{theo:Witt_groups_anisotropic_conic}, \ref{theo:witt_groups_anisotropic_quadric_surface} and \ref{theo:chow_witt_surface}, and Propositions \ref{prop:top_witt_group_anisotropic_quadric_threefold}, \ref{prop:2_witt_group_anisotropic_quadric_threefold}, \ref{prop:1_witt_group_anisotropic_quadric_threefold} and \ref{prop:witt_group_anisotropic_quadric_threefold} as well as Theorem \ref{theo:chow_witt_threefold}.

\subsubsection*{Contents}

The organisation of the article is as follows. Section \ref{section:preliminaries} collects preliminary notions and results for use in the sequel. Section \ref{section:top_witt} is dedicated to the proof of Corollary \ref{cor:small_d_w_d} on the top Witt group in small dimension. Section \ref{section:curves} discusses the twisted and shifted Witt groups of real curves, and Section \ref{section:surfaces} the case of surfaces, generalising in particular \cite{sujathaWittGroupsReal1990} for the unshifted Witt group and \cite{bargeFibresAlgebriquesSurface1987} and systematising computations of the image of the signature due to Monnier \cite{monnierImageTotalSignature1997, monnierUnramifiedCohomologyQuadratic2000}. The final section of the main part of the paper is dedicated to a topological computation of the shifted and twisted (Chow--)Witt groups of smooth real anisotropic quadrics of dimension $\leq 3$ (see Remark \ref{rema:topological_computation} for more details on what we mean by the epithet \emph{topological}). In the appendix, we discuss transfers along the extension $X\times_\Rb\Spec\Cb\to X$ for a smooth real curve $X$.

\subsection*{Ackwnowledgements}

The author thanks Olivier Benoist for useful discussions, particularly around Example \ref{exe:connecting_homomorphism_nontrivial}. Part of this research was carried out while he was a graduate student and he thanks his advisor Jean Fasel for numerous formative discussions around this material as well as for helpful comments. He was supported by ANR project CYCLADES, grant number ANR-23-CE40-0011, during the later stages of this project.

\section{Preliminaries}\label{section:preliminaries}

The letter $k$ denotes a perfect field of characteristic not $2$. The reader may safely take $k$ to be of characteristic $0$ in view of our eventual applications.

\subsection{\'Etale cohomology}

We will use étale cohomology $\Hr_\et^*(\text{--},\Fsc)$ with coefficients in a torsion sheaf $\Fsc$ as presented, \emph{e.g.}, in \cite{milneEtaleCohomology1980}. We usually do not explicitly denote the cup-product in étale cohomology, writing $ab$ for $a\cup b$.

Recall from \cite[Chapter VI, §9]{milneEtaleCohomology1980} the (mod $2$) \emph{étale cycle class map}. Given $n\geq 0$ and a smooth variety $X$ over $k$, this is a homomorphism \[\gamma_\et^n(X):\CH^n(X)\to\Hr_\et^{2n}(X,\mu_2^{\otimes n})\] where $\CH^n(X)$ is the Chow group of codimension $n$ cycles on $X$ and $\mu_2$ is the sheaf of second roots of unity. The maps $\gamma_\et^n(X)$ are constructed as usual using fundamental classes in étale cohomology (namely Gysin morphisms). For ease of notation, we remove $(X)$ from the notation of the étale cycle class maps if this does not cause confusion. The étale cycle class maps are compatible with pullback and pushforward homomorphisms. There is also an intersection product $(\alpha,\beta)\mapsto\alpha\beta$ on Chow groups that gives $\CH^*(X)$ the structure of a graded ring and the étale cycle class maps assemble into a homomorphism $\gamma_\et^*:\CH^*(X)\to\Hr_\et^{2*}(X,\mu_2^{\otimes *})$ of graded rings. We also use the notation $\gamma_\et^n$ for the factorisation of this map by $\Ch^n(X)=\CH^n(X)/2$. When $n=1$, the homomorphism $\gamma_\et^1:\CH^1(X)\to\Hr_\et^2(X,\mu_2)$ can also be regarded as a map $\Pic(X)\to\Hr_\et^2(X,\mu_2)$: it takes the isomorphism class of a line bundle $\Leu$ to its \emph{étale Chern class} $c_1^\et(\Leu)$ by construction of the latter in \cite[Chapter VI, §10]{milneEtaleCohomology1980}.

\emph{In this paper, we mostly use étale cohomology with coefficients in $\Zb/2$.} For ease of notation, we suppress the coefficients in étale cohomology groups with coefficients in $\Zb/2$. Note that there is a canonical isomorphism $\mu_2^{\otimes q}=\Zb/2$ of étale sheaves as we work in characteristic not $2$.

Observe that $\Hr_\et^1(k)=k^\times/k^{\times 2}$ by the Kummer exact sequence: we denote by $(a)$ the class of $a\in k^\times$ in this group and set $\omega=(-1)$. If $X$ is any $k$-scheme, we also denote by $\omega$ the pullback of this latter class in $\Hr_\et^1(X)$ unless confusion arises from this practice.

\subsection{(Grothendieck--)Witt groups}

Let $X$ be a smooth algebraic variety over $k$ and let $\Leu$ be a line bundle. We denote by $\GW_j^i(X,\Leu)$ the $j$-th $i$-shifted Grothendieck--Witt group of $X$ twisted by $\Leu$ as defined by Schlichting in \cite[§9]{schlichtingHermitianKtheoryDerived2017}. These groups only depend on the residue of~$i$ mod~$4$ up to canonical isomorphism, and on the class of $\Leu$ in $\Pic(X)/2$ up to isomorphism (see \cite[(1), 2.1 Notation]{balmerBasesTotalWitt2012}); in particular, we omit $\Leu$ from the notation when $\Leu$ is a square in the sequel as these constructions then essentially reduce to the untwisted case where $\Leu=\Osc_X$.

For $j=0$, the group $\GW^i(X,\Leu)$ coincides with the $i$-th shifted Grothendieck--Witt group of the triangulated category of strictly perfect complexes on $X$ (with duality given by the internal Hom with values in $\Leu$ sitting in degree $0$) in the sense of \cite{walterGrothendieckWittGroupsTriangulated}. For $j<0$, the group $\GW_j^i(X,\Leu)$ is the $i$-th shifted Witt group $\Wr^{i-j}(X,\Leu)$ of this category as defined in \cite{balmerTriangularWittGroups2000}; see \cite[Remark 3.14]{schlichtingHermitianKtheoryDerived2017}. There is a quotient morphism $\GW^i(X,\Leu)\to\Wr^i(X,\Leu)$ whose kernel is generated by the classes of hyperbolic forms $\Hr_\Leu^i(\Esc)$ where $\Esc$ is a perfect complex on $X$. We omit $i$ from the notation when $i=0$ mod $4$.

The group $\GW(X,\Leu)$ is the Grothendieck--Witt group of $\Leu$-valued symmetric bilinear forms on $X$ and the tensor product of such forms induces an operation $\GW(X,\Leu)\times\GW(X,\Leu')\to\GW(X,\Leu\otimes\Leu')$. This turns $\GW(X)$ into a ring whose unit is the class of the bilinear form $\langle 1\rangle:x\otimes y\mapsto xy$ on $\Osc_X$, and $\GW(X,\Leu)$ into a module over $\GW(X)$. These remarks also apply to the Witt groups of symmetric forms. We denote by $\widehat{\Ir}(X,\Leu)$ the subgroup of $\GW(X,\Leu)$ of elements of rank $0$, and by $\Ir(X,\Leu)$ its image in $\Wr(X,\Leu)$ which is the subgroup of even rank forms. When $\Leu$ is a square, the subgroup $\widehat{\Ir}(X)$ is an ideal of the ring $\GW(X)$ called its fundamental ideal and its image $\Ir(X)$ in $\Wr(X)$ is also called the fundamental ideal of $\Wr(X)$. We denote the $n$-th power of these ideals by $\widehat{\Ir}^n(X)$ and $\Ir^n(X)$ respectively and we set $\widehat{\Ir}^n(X)=\GW(X)$ and $\Ir^n(X)=\Wr(X)$ if $n\leq 0$. Finally, we set $\bar{\Ir}^n(X)=\Ir^n(X)/\Ir^{n+1}(X)$ for every $n$, so that $\overline{\Wr}(X)=\Wr(X)/\Ir(X)=\Hr^0(X,\Zb/2)$ and $\bar{\Ir}^n(X)=0$ for $n<0$.

The groups $\GW_j^i(X,\Leu)$ are best understood when $X$ is the spectrum of a field. Conversely, the Grothendieck--Witt groups of fields control those of general smooth varieties via the Gersten--Grothendieck--Witt and Gersten--Witt spectral sequences (\cite{faselChowWittGroups2009,balmerGerstenWittSpectral2002}), which take the following form: 
\begin{equation}\label{eq:GGW}
\Er(n)_1^{p,q}=\bigoplus_{x\in X^{(p)}}\GW_{n-p-q}^{n-p}(\kappa(x),\omega_{x/X}\otimes\Leu(x))\Rightarrow\GW_{n-(p+q)}^n(X,\Leu),
\end{equation}
\begin{equation}\label{eq:GW}
\Er_1^{p,q}=\bigoplus_{x\in X^{(p)}}\Wr^{q}(\kappa(x),\omega_{x/X}\otimes\Leu(x))\Rightarrow\Wr^{p+q}(X,\Leu)
\end{equation}
In these expressions, we denote by $X^{(p)}$ the set of codimension $p$ points of $X$, by $\omega_{x/X}$ the orientation sheaf of the inclusion $x\hookrightarrow X$ and by $\Leu(x)$ the fibre of $\Leu$ at $x$. The Gersten--Witt spectral sequence in particular simplifies considerably as $\Wr^i(F,L)=0$ if $F$ is a field and $i\neq 0$ mod $4$ (see \cite[Proposition 5.2]{balmerGerstenWittSpectral2002} for odd $i$; the case $i=2$ mod $4$ follows easily from the classification of symplectic forms over fields). Consequently, if $X$ has dimension $\leq 3$, the Gersten--Witt spectral sequence collapses and yields an isomorphism $\Wr^i(X,\Leu)\cong\Er_2^{i,0}$ for every $i\in\{0,\ldots,3\}$.

We denote by $X_{\Zar}$ the small Zariski site of $X$ whose objects are the open subsets of $X$. We denote by $\GWb_j^i(\Leu)$ the sheaf on $X_\Zar$ associated with the presheaf $(U\hookrightarrow X)\mapsto\GW_j^i(U,\Leu_{|U})$; for any $q_0\in\Zb$, the line $q=q_0$ of the first page of (\ref{eq:GGW}), denoted by $\Cr(X,\GWb_{n-q_0}^n(\Leu))$, then computes the cohomology $\Hr^*(X,\GWb_{n-q_0}^n(\Leu))$ of $X$ with coefficients in $\GWb_{n-q_0}^n(\Leu)$. Moreover, the lines $q=0$ mod $4$ of the first page of (\ref{eq:GW}), denoted by $\Cr(X,\Wbf(\Leu))$, compute the cohomology of $X$ with coefficients in $\Wbf(\Leu)=\GWb_{-1}^{-1}(\Leu)$ so $\Er_2^{p,0}=\Hr^p(X,\Wbf(\Leu))$. In particular, if $X$ has dimension $\leq 3$, this yields an isomorphism $\Wr^p(X,\Leu)\cong\Hr^p(X,\Wbf(\Leu))$ for every $p\in\{0,\ldots,3\}$.

Let $\Ibf^n(\Leu)$ be the sheaf on $X_\Zar$ associated with the presheaf $U\mapsto\Ir^n(U)\Wr(U,\Leu)$ (for ease of notation, we also use the notation $\Ibf_\Leu^n$, and thus $\Wbf_\Leu$ for $n\leq 0$, for this sheaf). The maps in the line $q=0$ mod $4$ of the Gersten--Witt spectral sequence are compatible with the powers of the fundamental ideal (\cite[Théorème 9.2.4]{faselGroupesChowWitt2008}). Thus there is an induced complex $\Cr(X,\Ibf^n(\Leu))$ of abelian groups whose degree $p$ term is given by \[\bigoplus_{x\in X^{(p)}}\Ir^{n-p}(\kappa(x),\omega_{x/X}\otimes\Leu(x)),\] whose cohomology groups compute $\Hr^*(X,\Ibf^n(\Leu))$. The inclusions $\Ir^{n+1}(\text{--})\subseteq\Ir^n(\text{--})$ induce an inclusion $\Cr(X,\Ibf^{n+1}_\Leu)\subseteq\Cr(X,\Ibf^n_\Leu)$ of complexes; the quotient $\Cr(X,\bar{\Ibf}^n)$ has groups of the form \[\bigoplus_{x\in X^{(p)}}\bar{\Ir}^{n-p}(\kappa(x))\] (here we use the canonical isomorphism $\bar{\Ir}^{t}(F)\cong\bar{\Ir}^t(F,L)$ of \cite[Lemme E.1.3]{faselGroupesChowWitt2008}), and its cohomology computes $\Hr^*(X,\bar{\Ibf}^n)$ where $\bar{\Ibf}^n=\Ibf^n/\Ibf^{n+1}$.

\subsection{The Bloch--Ogus spectral sequence} 

Let $X$ be a smooth $k$-variety. There is a Bloch--Ogus, or coniveau, spectral sequence \cite{blochGerstensConjectureHomology1974} 
\begin{equation}\label{eq:bloch_ogus}
\Er_1^{p,q}=\bigoplus_{x\in X^{(p)}}\Hr_\et^{q-p}(\kappa(x))\Rightarrow\Hr_\et^{p+q}(X)
\end{equation}
converging étale cohomology with coefficients in $\Zb/2$ filtered by codimension of the support. Its line $q=n$ is a complex $\Cr(X,\Hsc^n)$ of abelian groups computing the cohomology of $X$ with coefficients in the sheaf $\Hsc^n$ on $X_\Zar$ associated with the presheaf $U\mapsto\Hr_\et^n(U)$ (this is a special case of the main result of \cite{blochGerstensConjectureHomology1974}).

The affirmation of the Milnor conjecture (\cite{voevodskyMotivicCohomology2coefficients2003,orlovExactSequenceKM22007}) yields a natural isomorphism $\Hr_\et^*(F)\cong\bar{\Ir}^*(F)$ of graded rings for every field $F$ of characteristic not $2$, carrying $(a)\in\Hr_\et^1(F)$ to the class of the diagonal rank $2$ form $\llangle a\rrangle=\langle 1,-a\rangle$ in degree $1$. In particular, the class $\omega$ corresponds to $\llangle -1\rrangle=\langle 1,1\rangle$ under this isomorphism. It follows that there is an identification $\Cr(X,\bar{\Ibf}^n)\cong\Cr(X,\Hsc^n)$ of complexes of abelian groups, in particular an identification $\bar{\Ibf}^n\cong\Hsc^n$ of sheaves on $X_\Zar$. This allows one to control the cohomology of $X$ with coefficients in $\bar{\Ibf}^*$ in terms of the étale cohomology of $X$.

\begin{rema}
The Bloch--Ogus theorem yields an identification \[\Hr^n(X,\Hsc^n)\cong\Ch^n(X).\] Thus in view of the above identification $\bar{\Ibf}^*\cong\Hsc^*$, we obtain an isomorphism $\Hr^n(X,\bar{\Ibf}^n)\cong\Ch^n(X)$.
\end{rema}

An important structural feature of (\ref{eq:bloch_ogus}) is that since étale cohomology vanishes in negative degrees, one has $\Er_1^{p,q}=0$ for $p>q$. Thus $\Hr^p(X,\Hsc^q)=0$ for $p>q$ and the differentials $d_r^{p,q}$ vanish for every $r\geq 2$ if $p\geq q-1$. It follows that $\Er_\infty^{p,q}$ is a quotient of $\Hr^p(X,\Hsc^q)$ if $p\geq q-1$. This yields an edge homomorphism \[\Hr^{p}(X,\Hsc^q)\to\Hr_\et^{p-1+q}(X)\] for $p\geq q-1$. Since $\Ch^n(X)=\Hr^n(X,\Hsc^n)$, this observation yields an edge homomorphism \[\Ch^n(X)\twoheadrightarrow\Er_{\infty}^{n,n}\hookrightarrow\Hr_\et^{2n}(X).\] It follows from the construction of the Bloch--Ogus spectral sequence and of the étale cycle class map that this is precisely the morphism $\gamma_\et^n$. By inspection of (\ref{eq:bloch_ogus}), we then see that $\gamma_\et^1:\Ch^1(X)\to\Hr_\et^2(X)$ is injective.

\subsection{Motivic cohomology}

Let $\Hr_\Mr^{*,*}$ denote motivic cohomology with coefficients in $\Zb/2$, as defined in \cite{mazzaLectureNotesMotivic2006}. We then have $\Hr_\Mr^{2n,n}(X)=\Ch^n(X)$ for any $n\geq 0$ naturally in $X$ (see in particular \cite[Lecture 19]{mazzaLectureNotesMotivic2006}). There is a motivic cycle class map \[\mathrm{cl}_\Mr^{p,q}:\Hr_\Mr^{p,q}(X)\to\Hr_\et^p(X,\mu_2^{\otimes q}).\] It is an isomorphism if $p\leq q$ and a monomorphism if $p=q+1$ by the affirmation of the Milnor conjecture (\cite{voevodskyMotivicCohomology2coefficients2003}; in this form, see \cite[Theorem 6.17]{voevodskyMotivicCohomologyLcoefficients2011} together with \cite[Theorem 6.1]{voevodskyMotivicCohomology2coefficients2003}).

By \cite[Theorem 1.3]{totaroNONINJECTIVITYMAPWITT2003}, for every $p\geq q\geq 0$, there is a canonical map \[\Hr_\Mr^{p,q}(X)\to\Hr^{p-q}(X,\Hsc^q).\] If $p\geq 2q-1$, then it is an isomorphism whose composite with the edge morphism $\Hr^{p-q}(X,\Hsc^q)\to\Hr_\et^{p}(X)$ in (\ref{eq:bloch_ogus}) is $\cl_\Mr^{p,q}$ (see \cite[Theorem 2.6]{geWeilRestrictionMotivic2026}). Denoting by $\tau$ the nontrivial class in $\Hr_\Mr^{0,1}(k)=\Hr_\et^0(k,\mu_2)=\mu_2(k)$ (also called the weight-shifting class), the square
\[\begin{tikzcd}
	{\Hr_\Mr^{p,q}(X)} & {\Hr^{p-q}(X,\Hsc^q)} \\
	{\Hr_\Mr^{p,q+p(X)}} & {\Hr_\et^{p}(X)}
	\arrow[from=1-1, to=1-2]
	\arrow["{\cdot\tau^{p}}"', from=1-1, to=2-1]
	\arrow[from=1-2, to=2-2]
	\arrow[from=2-1, to=2-2]
\end{tikzcd}\]
is then commutative by definition of the maps (see the discussion above \cite[Theorem 1.3]{totaroNONINJECTIVITYMAPWITT2003}). For example, the étale cycle class map $\Ch^n(X)\to\Hr_\et^{2n}(X)$ is identified with product with $\tau^n\in\Hr_\Mr^{0,n}(X)$. The affirmation of the Milnor conjecture also yields an isomorphism $\Hr^p(X,\Hsc^q)\cong\Hr^p(X,\bar{\Ibf}^q)$ for every $p,q\geq 0$ so that $\Hr_\Mr^{p,q}(X)\cong\Hr^{p-q}(X,\bar{\Ibf}^q)$ for $p\geq 2q-1$.

Finally, let us mention that Voevodsky has constructed Steenrod operations \[\Sq^{2i}:\Hr_\Mr^{p,q}\to\Hr_\Mr^{p+2i,q+i},\;\Sq^{2i+1}:\Hr_\Mr^{p,q}\to\Hr_\Mr^{p+2i+1,q+i}\] on (reduced) motivic cohomology with coefficients in $\Zb/2$ in \cite{voevodskyReducedPowerOperations2003}.

\subsection{General remarks on $\Ibf$-cohomology}

We gather a few facts concerning the $\Ibf$-cohomology of smooth varieties.

\subsubsection{Steenrod operations and $\Ibf$-cohomology}

We denote by $\Sq^2:\Ch^*\to\Ch^{*+1}$ the Steenrod squares on Chow groups mod $2$ of Brosnan \cite{brosnanSteenrodOperationsChow2003} and Voevodsky \cite{voevodskyReducedPowerOperations2003}.\footnote{This notation is not Brosnan's, who writes $\Sq^1$ for this map, but it is more compatible with the indexing in motivic cohomology. Let us note that Brosnan's operation $\Sq^1$ coincides with Voevodsky's operation $\Sq^2$ on Chow groups mod $2$ by the argument of \cite[Corollary 4.1.3]{asokSecondaryCharacteristicClasses2015} (we learned this from Fasel).} Let $X$ be a smooth variety over $k$ and let $\Leu$ be a line bundle on $X$. Given $i,j\in\Zb$, we then denote by $\partial_\Leu^{i,j}(X):\Hr^i(X,\bar{\Ibf}^j)\to\Hr^{i+1}(X,\Ibf^{j+1}_\Leu)$ the connecting homomorphism in the long exact sequence associated with the epimorphism $\Ibf^j(\Leu)\to\bar{\Ibf}^j$ and by $\pi_\Leu^{i,j}(X):\Hr^i(X,\Ibf^j_\Leu)\to\Hr^i(X,\bar{\Ibf}^j)$ the map induced in cohomology in degree $i$ by this epimorphism. Following \cite{asokSecondaryCharacteristicClasses2015}, we denote by $\Phi_{i,j,\Leu}(X)$ the composite \[\Hr^i(X,\bar{\Ibf}^j)\xrightarrow{\partial_\Leu^{i,j}(X)}\Hr^{i+1}(X,\Ibf^{j+1}_\Leu)\xrightarrow{\pi_\Leu^{i+1,j+1}(X)}\Hr^{i+1}(X,\bar{\Ibf}^{j+1}).\] We omit $\Leu$ from the notation if $\Leu$ is a square, and $(X)$ whenever there is no risk of confusion.

\begin{theo}[Totaro]\label{theo:Totaro}
Let $X$ be a smooth variety over $k$ and let $n\geq 0$. Modulo the isomorphisms $\Ch^p(X)\cong\Hr^p(X,\bar{\Ibf}^p)$, the map $\Sq^2:\Ch^n(X)\to\Ch^{n+1}(X)$ and the morphism $\Phi_{n,n}$ coincide.
\end{theo}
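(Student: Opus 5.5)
The plan is to reduce the statement to a comparison of two natural transformations on the cycle complexes and then check agreement on a generating class. The map $\Phi_{n,n}$ is defined on the Rost--Schmid type complexes: take a cycle $\alpha=\sum_x \bar a_x$ in $\Cr(X,\bar{\Ibf}^n)$ in degree $n$. Each $\bar a_x\in\bar{\Ir}^0(\kappa(x))=\Zb/2$ for $x\in X^{(n)}$. Lift it to $\sum_x a_x$ with $a_x\in\Wr(\kappa(x),\omega_{x/X})$, for instance $a_x$ the class of a rank one form. Apply the Gersten--Witt differential to get an element of $\Cr(X,\Ibf^{n+1})$ in degree $n+1$, and reduce mod $\Ibf^{n+2}$. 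We must show the resulting class in $\Ch^{n+1}(X)$ equals $\Sq^2$ of the cycle $\sum_x\overline{\{x\}}$.

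First, reduce to the case of a single integral subvariety $Z=\overline{\{x\}}$ of codimension $n$, by additivity of both sides. Then use a resolution of $Z$ by a proper morphism from a smooth variety: since we work mod $2$ in Chow groups, one may push forward. Both $\Sq^2$ and $\Phi$ satisfy an explicit Riemann--Roch formula under proper pushforward. For $\Sq^2$ this is Brosnan's formula $\Sq^2 f_*=f_*(\Sq^2+c_1(N_f)\cdot)$, with $N_f$ the virtual normal bundle. For $\Phi$ one uses the twisted pushforwards of $\Ibf$-cohomology along $f$. There the connecting homomorphism $\partial$ commutes with $f_*$ up to twisting by $\omega_f$, and $\Phi_{\Leu}$ for a twist $\Leu$ differs from the untwisted one by $\cup\, c_1(\Leu)$ mod $2$. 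This reduces to the case $n=0$ on a smooth $\tilde Z$ with twist $\omega_f$, i.e. to showing $\Phi_{0,0,\Leu}(1)=c_1(\Leu)\in\Ch^1$, while $\Sq^2(1)=0$.

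The case $n=0$ is a direct computation. A rank one form $\langle u\rangle$ on the generic point with values in $\Leu$ has residues at codimension one points $y$ equal to $v_y(u)$ mod $2$, up to units. Its image in $\bar{\Ibf}^1$ is therefore the divisor of a rational section of $\Leu$ mod $2$, that is $c_1(\Leu)$ in $\Ch^1$. The untwisted case gives $0$, consistent with $\Sq^2$ on $\Ch^0$.

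The main obstacle is the compatibility of $\Phi$ with proper pushforward, including the twist correction, together with identifying the correction with Brosnan's $c_1(N_f)$ term. Here one must carefully match the orientation conventions $\omega_{x/X}$ in the Gersten--Witt complex with $\omega_f$. An alternative I would try first, if that fails, is Totaro's own route. It identifies $\Phi_{n,n}$ with the Bockstein-type operation in motivic cohomology via $\Hr_\Mr^{2n,n}\cong\Hr^n(X,\bar{\Ibf}^n)$, and uses $\Sq^2=\mathrm{Sq}^1$-style relations with $\tau$ together with the exact sequence $0\to\Ibf^{n+1}\to\Ibf^n\to\Hsc^n\to 0$ viewed as an extension of $\Zb/2$-motivic sheaves.
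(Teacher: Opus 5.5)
The paper gives no argument for this statement; it simply defers to Totaro's Theorem~1.1. Your first route is a correct and essentially self-contained proof. Take $f\colon\tilde Z\to X$ proper and birational onto $Z$, with $\tilde Z$ smooth. Pushforward in twisted $\Ibf^j$-cohomology is defined on Gersten complexes, because transfers preserve powers of the fundamental ideal. It therefore commutes with the connecting maps, and gives $\Phi_{n,n}([Z])=f_*\Phi_{0,0,\omega_f}(1)=f_*\bar{c}_1(\omega_f)$. Your residue computation for a rank one form is exactly what the last equality needs, so you do not even need the Asok--Fasel twisted formula. On the other side, $\Sq^2([Z])=f_*\bar{c}_1(N_f)$ by Brosnan's Riemann--Roch. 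The step you single out as the main obstacle is a one-line check once the pushforward is normalised with $\omega_f=\omega_{\tilde Z}\otimes f^*\omega_X^{\vee}$: $c_1(N_f)=f^*c_1(T_X)-c_1(T_{\tilde Z})=c_1(\omega_{\tilde Z})-f^*c_1(\omega_X)=c_1(\omega_f)$. Signs, and the choice between a line bundle and its dual, are invisible mod $2$.

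One caveat concerns the base field. The standing hypothesis is only that $k$ is perfect of characteristic not $2$, and resolution of singularities is not available in positive characteristic. You do not need it, nor odd-degree alterations: take $f$ to be the normalisation of $Z$. Since $\tilde Z$ is normal and $k$ is perfect, its non-smooth locus has codimension $\geq 2$ in $\tilde Z$, so its image $W$ has codimension $\geq n+2$ in $X$. Both $\Sq^2$ and $\Phi_{n,n}$ commute with restriction to $X\setminus W$, and $\Ch^{n+1}(X)\to\Ch^{n+1}(X\setminus W)$ is an isomorphism. So you may assume $\tilde Z$ smooth and $f$ finite.

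Compared with the bare citation, your argument explains why the twist correction $\bar{c}_1(\omega_f)$ on the Witt side matches the normal bundle correction in Riemann--Roch for Steenrod squares. It is specific to bidegree $(n,n)$, where every class is pushed forward from $\Ch^0$ of a smooth variety. This is why the companion statement for $\Phi_{i,i+1}$ (Theorem~\ref{theo:differential_pardon_steenrod}) goes through the motivic Steenrod algebra instead. Your motivic fallback is too vague to count as a proof: the sequence $0\to\Ibf^{n+1}\to\Ibf^n\to\bar{\Ibf}^n\to 0$ is not an extension of $2$-torsion sheaves. It is not needed, though.
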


\begin{proof}
See \cite[Theorem 1.1]{totaroNONINJECTIVITYMAPWITT2003}.
\end{proof}

\begin{theo}[Voevodsky/Hoyois--Kelly--Østvær, Asok--Fasel]\label{theo:differential_pardon_steenrod}
Let $i\geq 0$ and let $X$ be a smooth $k$-variety. The square
\[\begin{tikzcd}
	{\Hr_\Mr^{2i+1,i+1}(X)} & {\Hr_\Mr^{2i+3,i+2}(X)} \\
	{\Hr^i(X,\bar{\Ibf}^{i+1})} & {\Hr^{i+1}(X,\bar{\Ibf}^{i+2})}
	\arrow["{\Sq^2}", from=1-1, to=1-2]
	\arrow["\wr"', from=1-1, to=2-1]
	\arrow["\wr", from=1-2, to=2-2]
	\arrow["{\Phi_{i,i+1}}"', from=2-1, to=2-2]
\end{tikzcd}\]
is then commutative.
\end{theo}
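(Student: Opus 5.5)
We plan to prove the commutativity of the square by reducing it to two known results, in the spirit of how Theorem \ref{theo:Totaro} is deduced from Totaro's work. The vertical isomorphisms are the composites $\Hr_\Mr^{2i+1,i+1}(X)\to\Hr^{i}(X,\Hsc^{i+1})\cong\Hr^i(X,\bar{\Ibf}^{i+1})$ given by \cite[Theorem 1.3]{totaroNONINJECTIVITYMAPWITT2003} (an isomorphism since $p=2i+1\geq 2q-1=2i+1$) and the Milnor conjecture identification $\Hsc^*\cong\bar{\Ibf}^*$. So the statement says that, under Totaro's isomorphism, Voevodsky's $\Sq^2$ in bidegree $(2i+1,i+1)$ is the composite of the Pardon-type connecting map $\partial^{i,i+1}$ and the reduction $\pi^{i+1,i+2}$.

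The first step is to make the statement independent of $X$ and $i$. The groups $\Hr^i(X,\bar{\Ibf}^{i+1})$ and $\Hr_\Mr^{2i+1,i+1}(X)$ are both represented in the $\Abb^1$-homotopy category: the former by the Eilenberg--MacLane object $K(\bar{\Ibf}^{i+1},i)$ and the latter by the motivic Eilenberg--MacLane space, and the comparison map is induced by a map of representing objects (Hoyois--Kelly--Østvær show the motivic Steenrod algebra acts stably and is determined by its action on representing spectra). Both $\Sq^2$ and $\Phi_{i,i+1}$ are natural transformations of functors on $\Sm_k$, so by Yoneda it suffices to compare the two operations on a universal class, or equivalently to compare two elements of the group of natural transformations $\Hr^i(\text{--},\bar{\Ibf}^{i+1})\to\Hr^{i+1}(\text{--},\bar{\Ibf}^{i+2})$.

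The second step is to compute this group of operations. Here we follow Asok--Fasel \cite{asokSecondaryCharacteristicClasses2015}: since $\bar{\Ibf}^{i+1}\cong\Hsc^{i+1}$ is a strictly $\Abb^1$-invariant sheaf, one computes the relevant cohomology of $K(\bar{\Ibf}^{i+1},i)$ in low degrees, using the Hoyois--Kelly--Østvær description of the mod $2$ motivic Steenrod algebra (valid over our perfect field of characteristic not $2$). In the relevant bidegree the only stable operations raising bidegree by $(2,1)$ are spanned by $\Sq^2$ and $\rho\Sq^1$ (with $\rho=\omega$ the class of $-1$), perhaps together with $\tau\Sq^1\Sq^1$-type terms which vanish. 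Thus $\Phi_{i,i+1}=a\Sq^2+b\,\rho\Sq^1$ for constants, and both $\Phi$ and $\Sq^2$ are compatible with suspension ($\Gm$-suspension changes $i$), so the constants are independent of $i$.

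The final step, and the main obstacle, is determining the constants, which we would do on test cases. For $i=0$, $\Hr^0(X,\bar{\Ibf}^1)$ contains classes $(a)$ with $a$ a unit, and $\Phi_{0,1}$ is the Bockstein from $\Ibf^2\to\bar\Ibf^1$-type sequence; its value can be computed explicitly on $X=\Gm$ or on $\Gm\times\Gm$, where $\partial(a)$ is represented by the residue of $\llangle a\rrangle$. Comparing with $\Sq^2(a)=0$ (degree reasons: $\Sq^2$ vanishes on classes of weight $1$ below... ) and $\Sq^1$ computed via the Bockstein, together with compatibility with Theorem \ref{theo:Totaro} in the Chow degree, pins down $a=1$, $b=0$. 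I expect the delicate point to be matching the Pardon-type differential $\Phi$ with the motivic Bockstein and sign conventions, rather than the formal reduction.
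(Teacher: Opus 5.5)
The paper gives no argument of its own here; it cites \cite[Corollary 4.1.3]{asokSecondaryCharacteristicClasses2015}, which relies on the computation of the motivic Steenrod algebra by Voevodsky and Hoyois--Kelly--{\O}stv{\ae}r. Your framework is of the same kind: realise $\Phi_{i,i+1}$ as a universal operation, expand it in the Steenrod algebra, then normalise on examples. The normalisation is the only non-formal step, and it fails as you set it up.

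\emph{The coefficient space is larger than you state.} The candidates are $a\Sq^2+\{c\}\Sq^1$ with $\{c\}$ ranging over $\Hr_\Mr^{1,1}(k)=k^\times/k^{\times 2}$, not only multiples of $\rho\Sq^1$.

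\emph{Your test classes detect no coefficient.} Three facts combine:
\begin{itemize}
\item $\Phi_{0,1}$ vanishes identically on every smooth scheme (Lemma \ref{lem:Sujatha}, Remark \ref{rema:steenrod_trivial}).
\item $\Sq^2$ vanishes on all of $\Hr_\Mr^{1,1}$ (Voevodsky's Lemma 9.9).
\item For a unit $u$, the class $(u)$ is the reduction of an integral class, so $\Sq^1(u)=0$. Equivalently, $\llangle u\rrangle$ is unramified and there is no residue to compute.
\end{itemize}
Hence on $\Gm$ or $\Gm\times\Gm$ every candidate agrees with $\Phi_{0,1}$, and the $\Sq^2$-coefficient is invisible at $i=0$ altogether.

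\emph{Theorem \ref{theo:Totaro} cannot remove the $\Sq^1$-term.} It only concerns Chow classes, which $\Sq^1$ kills. Transporting it to bidegree $(2i+1,i+1)$ by $x\mapsto[t]\cdot x$ on $X\times\Gm$ again only produces classes killed by $\Sq^1$. Separately, your suspension step is misstated: $\Gm$-suspension sends $(2i+1,i+1)$ to $(2i+2,i+2)$, outside the range $p\geq 2q-1$ where the vertical maps are isomorphisms. The line $p=2q-1$ is preserved by $\Pb^1$-suspension, not $\Gm$-suspension.

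\emph{The ambiguity is genuine.} Take $X=\Pb^2\setminus C$ with $C$ a smooth conic, and $x\in\Hr_\Mr^{1,1}(X)$ lifting the $2$-torsion, non-square line bundle $\Osc(1)_{|X}$. Then $\Sq^1(x)=\bar{h}\neq 0$ in $\Pic(X)/2$. Moreover $\{c\}\bar{h}\neq 0$ in $\Hr_\Mr^{3,2}(X)$ for every non-square $c$: the Gysin map from $\Hr_\Mr^{1,1}(C)=k^\times/k^{\times 2}$ has image $k^\times/k^{\times 2}\cdot[C]=0$ because $[C]=2h$, so restriction from $\Hr_\Mr^{3,2}(\Pb^2)=k^\times/k^{\times 2}\cdot h$ is injective. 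Yet $\Phi_{0,1}(x)=0$. So nothing in your plan excludes $\Sq^2+\{c\}\Sq^1$, which differs from $\Sq^2$ on smooth schemes.

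What is missing is a set of test classes that actually detect the coefficients.
\begin{itemize}
\item The class $x$ above kills the $\Sq^1$-coefficient at $i=0$.
\item The $\Sq^2$-coefficient can be fixed at $i=1$ on $\Pb^2\times\Gm$. Multiplication by the unramified form $\llangle t\rrangle$ maps $0\to\Ibf^2\to\Ibf\to\bar{\Ibf}\to 0$ to $0\to\Ibf^3\to\Ibf^2\to\bar{\Ibf}^2\to 0$. This gives $\Phi_{1,2}([t]h)=[t]\,\Phi_{1,1}(h)=[t]h^2\neq 0$ by Theorem \ref{theo:Totaro}, while $\Sq^2([t]h)=[t]h^2$ and $\Sq^1([t]h)=0$.
\item $\Pb^1$-suspensions of these classes detect the coefficients at higher levels $i$.
\end{itemize}

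Finally, the reduction to a universal operation is more than a Yoneda formality. The motivic Steenrod algebra computes the motivic cohomology of $K(\Zb/2(q),p)$, not the $\bar{\Ibf}$-cohomology of $K(\bar{\Ibf}^{q},p-q)$ as you write. These objects are not equivalent: $K(\Zb/2(q),p)$ has further homotopy sheaves $\Hsc^m$ for $m<q$. The vertical maps are isomorphisms only on smooth schemes. So identifying $\Phi_{i,i+1}$ with an element of $\Hr_\Mr^{2i+3,i+2}(K(\Zb/2(i+1),2i+1))$ needs its own argument; natural transformations of functors on $\Sm_k$ are not the same as maps between representing objects.
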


\begin{proof}
See \cite[Corollary 4.1.3]{asokSecondaryCharacteristicClasses2015}. This uses the computation of the motivic Steenrod algebra in \cite{voevodskyMotivicEilenbergMacLaneSpaces2010} and \cite{hoyoisMotivicSteenrodAlgebra2017}.
\end{proof}

\begin{theo}[Asok--Fasel]\label{theo:twisted_differential_pardon}
Let $X$ be a smooth variety over $k$. If $\Leu$ is a line bundle on $X$, then for any $i,j\geq 0$, the morphism $\Phi_{i,j,\Leu}$ is the sum of $\Phi_{i,j}$ and of cup-product with $\bar{c}_1(\Leu)\in\Ch^1(X)=\Hr^1(X,\bar{\Ibf}^1)$.
\end{theo}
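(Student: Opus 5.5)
The plan is to work at the level of Gersten-type complexes and reduce the statement to a local computation on a single residue map. By definition, $\Phi_{i,j,\Leu}=\pi_\Leu^{i+1,j+1}\circ\partial_\Leu^{i,j}$. The connecting map $\partial^{i,j}_\Leu$ is computed by the snake lemma on the short exact sequence of complexes
\[0\to\Cr(X,\Ibf^{j+1}_\Leu)\to\Cr(X,\Ibf^j_\Leu)\to\Cr(X,\bar{\Ibf}^j)\to 0.\]
Start with a cycle $\alpha\in\Cr^i(X,\bar{\Ibf}^j)$. Lift it componentwise to $\tilde\alpha\in\bigoplus_{x\in X^{(i)}}\Ir^{j-i}(\kappa(x),\omega_{x/X}\otimes\Leu(x))$ and apply the twisted differential $d_\Leu$. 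Reducing modulo $\Ibf^{j+2}$ then gives $\Phi_{i,j,\Leu}(\alpha)$ as the class of $d_\Leu(\tilde\alpha)$ in $\Cr^{i+1}(X,\bar{\Ibf}^{j+1})$. The same recipe with trivial twist computes $\Phi_{i,j}(\alpha)$. The statement is Zariski-local in the following sense: both sides are natural for open immersions, and the differentials are sums of local residue maps at pairs $x\in X^{(i)}$, $y\in X^{(i+1)}$ with $y\in\overline{\{x\}}$. So it suffices to compare the twisted and untwisted residue maps $\partial_y^x$ on the level of $\bar{\Ir}$.

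Choose an open cover on which $\Leu$ is trivial, with local trivialisation $\Leu_{|U}\cong\Osc_U$. Across the divisor $D$ of a rational section $s$ of $\Leu$, the twisted complex $\Cr(X,\Wbf_\Leu)$ is identified with $\Cr(X,\Wbf)$ by multiplying each component at $x$ by $\langle s(x)\rangle$, where this makes sense. More precisely, fix a rational section $s$ and use it to identify $\Leu(x)\cong\kappa(x)$ at points $x$ not contained in $\operatorname{supp}D$. Then the twisted residue $\partial_{y,\Leu}^x$ equals $\partial_y^x(\langle u\rangle\cdot\,\text{--})$, where $u$ is the unit ratio of trivialisations at $x$ versus $y$. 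By the standard formula for second residues, $\partial_y^x(\langle \pi\rangle\beta)$ differs from $\partial_y^x(\beta)$ by a term involving $\llangle\pi\rrangle\beta$, which lies one step deeper in the filtration. Modulo $\Ir^{j+2}$, this correction is exactly the cup product of $\bar\beta$ with the class of $(u)$, i.e.\ with the residue symbol of $\operatorname{div}(s)$ at $y$.

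Summing over all pairs, the difference $\Phi_{i,j,\Leu}(\alpha)-\Phi_{i,j}(\alpha)$ is represented by the product of $\alpha$ with the cycle $[\operatorname{div}s]\in\Cr^1(X,\bar{\Ibf}^1)$. This product uses the multiplicative structure $\Cr(X,\bar{\Ibf}^*)\otimes\Cr(X,\bar{\Ibf}^*)\to\Cr(X,\bar{\Ibf}^*)$, i.e.\ the Rost cycle-module product, which is well defined on cohomology. The class $[\operatorname{div}s]$ represents $\bar c_1(\Leu)\in\Ch^1(X)$. Finally, independence of the choice of $s$ is handled by noting that changing $s$ by a rational function $f$ changes the correction by a coboundary of $(f)$.

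The main obstacle is making this product with the divisor cycle legitimate at the level of complexes. Cycles $\alpha$ supported on points lying in $\operatorname{supp}(\operatorname{div}s)$ cause trouble. I would first try to avoid this by a moving argument: replace $s$ by a section whose divisor meets the support of $\alpha$ properly, which is possible after restricting to a semilocal setting. Failing that, I would use the deformation to the normal cone and $\Ab^1$-invariance to reduce to the universal case of the total space of $\Leu$ minus the zero section. There $\Leu$ has a tautological trivialising section, and the formula can be checked on the Thom class.
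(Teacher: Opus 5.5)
The paper does not prove this statement itself; it quotes \cite[Theorem 3.4.1]{asokSecondaryCharacteristicClasses2015}. Your chain-level strategy is reasonable, and its local core is right once stated correctly. The gap is at the step that turns the local corrections into $\bar{c}_1(\Leu)\cup\alpha$.

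\textbf{The local computation, corrected.} At each point $x$ of the support of $\alpha$, choose a basis $e_x$ of $\Leu(x)$ (for instance the value of a rational section $s$ that generates $\Leu$ at these finitely many points) and an untwisted lift $\tilde\beta_x$ of the component $\bar\beta_x$. Use $\tilde\beta_x\otimes e_x$ as the twisted lift. At $y$ of codimension $i+1$, pick a generator $t_y$ of $\Leu$ near $y$ and write $e_x=f_{x,y}t_y$ with $f_{x,y}\in\kappa(x)^\times$. The $y$-component of the difference of the two boundaries is then $\sum_x\partial^x_y\bigl((\langle f_{x,y}\rangle-1)\tilde\beta_x\bigr)=-\sum_x\partial^x_y(\llangle f_{x,y}\rrangle\tilde\beta_x)$. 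Its class in $\bar{\Ir}^{j-i}(\kappa(y))$ is $\sum_x\bar\partial^x_y(\{f_{x,y}\}\cdot\bar\beta_x)$, where $\bar\partial^x_y$ is the residue on $\bar{\Ir}^*\cong\Kr^\Mr_*/2$. This is the residue of a product, not a product of $\bar\beta$ with a symbol.

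With a global $s$ and the choice $t_y=s$ off $\operatorname{supp}\operatorname{div}(s)$, the correction is concentrated on $\operatorname{div}(s)$. There $f_{x,y}$ is a local equation of the divisor and in general not a unit at $y$, contrary to your ``unit ratio''; that is exactly where the correction comes from. Individual terms depend on $t_y$; only their sum over $x$ does not, by the cycle condition on $\alpha$.

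\textbf{The gap.} You conclude by calling this chain the product of $\alpha$ with $[\operatorname{div}s]$ under a multiplication $\Cr(X,\bar{\Ibf}^*)\otimes\Cr(X,\bar{\Ibf}^*)\to\Cr(X,\bar{\Ibf}^*)$. No such product exists on the Gersten complex of a single $X$, for the same reason cycles carry no chain-level intersection product. Rost's product exists only on cohomology, built from the exterior product $\Cr(X)\otimes\Cr(Y)\to\Cr(X\times Y)$ and a Gysin pullback along the diagonal. So moving $s$ is not the real obstacle; pointwise choices of $e_x$ already avoid it. What is missing is a proof that the chain $y\mapsto\sum_x\bar\partial^x_y(\{f_{x,y}\}\bar\beta_x)$ represents $\bar c_1(\Leu)\cup[\alpha]$ for the cup product on $\Hr^*(X,\bar{\Ibf}^*)$ coming from the ring structure of $\bar{\Ibf}^*$. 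This is a genuine compatibility between the Fulton/Rost chain-level first Chern class and the sheaf cup product. It does not follow formally from $[\operatorname{div}s]$ representing $\bar c_1(\Leu)$. Your fallback does not supply it either: on $V(\Leu)\setminus 0$ the pullback of $\Leu$ is trivial, so both sides agree there for trivial reasons.

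\textbf{How to close it.} You can avoid chain-level products entirely by comparing extension classes. By naturality of connecting maps, $\Phi_{i,j,\Leu}=\pi^{i+1,j+1}_\Leu\circ\partial^{i,j}_\Leu$ is the connecting map of $0\to\bar{\Ibf}^{j+1}\to\Ibf^j(\Leu)/\Ibf^{j+2}(\Leu)\to\bar{\Ibf}^j\to 0$. It is therefore the Yoneda product with the class $e_\Leu\in\operatorname{Ext}^1(\bar{\Ibf}^j,\bar{\Ibf}^{j+1})$ of this extension.
\begin{itemize}
\item On an open $U_a$ with $\Leu_{|U_a}=\Osc t_a$, the trivialisation identifies this extension with the untwisted one, compatibly with the canonical identifications of sub and quotient.
\item If $t_b=g_{ab}t_a$, two such identifications differ on $U_a\cap U_b$ by multiplication by $\langle g_{ab}\rangle=1-\llangle g_{ab}\rrangle$, that is, by $\mathrm{id}-\iota\circ(\{g_{ab}\}\cdot)\circ q$.
\item Standard gluing of locally isomorphic extensions shows that $e_\Leu-e$ is the image of the Čech class $(\{g_{ab}\})\in\check{\Hr}^1(X,\bar{\Ibf}^1)$ under $\bar{\Ibf}^1\to\mathcal{H}om(\bar{\Ibf}^j,\bar{\Ibf}^{j+1})$.
\item This Čech class is the image of $[\Leu]\in\Hr^1(X,\Osc^\times)$, hence equals $\bar c_1(\Leu)$ via $\Hr^1(X,\Osc^\times)\cong\Hr^1(X,\Kbf_1^\Mr)=\CH^1(X)$ and reduction mod $2$.
\end{itemize}
Taking Yoneda products gives the formula for all $i,j\geq 0$.

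Alternatively, the correct form of your fallback is dévissage. $\Cr(X,\Ibf^*_\Leu)$ is, up to a shift by one, the complex of $V(\Leu)$ with supports in the zero section. Under this identification $\Phi_{i,j,\Leu}$ becomes the untwisted $\Phi$ on $V(\Leu)$ with supports. The derivation property of $\Phi$, together with $\Phi(\mathrm{th})=\mathrm{th}\cdot p^*\bar c_1(\Leu)$, then gives the result.
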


\begin{proof}
See \cite[Theorem 3.4.1]{asokSecondaryCharacteristicClasses2015}.
\end{proof}

\begin{rema}\label{rema:twisted_steenrod_square}
In particular, one has $\Phi_{p,q,\Leu}=\Sq^2+\bar{c}_1(\Leu)\cup$ if $p\geq q-1$. We also denote this homomorphism by $\Sq^2_\Leu$ and call it the \emph{twisted} Steenrod square. We sometimes write $\Sq_x^2$ for the map $y\mapsto\Sq^2(y)+xy$, so that $\Sq_\Leu^2=\Sq_{\overline{c}_1(\Leu)}^2$.
\end{rema}

\subsubsection{The Pardon spectral sequence}\label{subsubection:pardon}

Given a smooth $k$-variety $X$ of dimension $d$ and a line bundle $\Leu$ on $X$, this is the spectral sequence associated with the complex $\Cr(X,\Wbf_\Leu)$ filtered by its subcomplexes $\Fr^q\Cr(X,\Wbf_\Leu)=\Cr(X,\Ibf^q_\Leu)$. We write its $\Er_2$-page as 
\begin{equation}\label{eq:pardon}
\Er_2^{p,q}(\Leu)=\Hr^p(X,\bar{\Ibf}^q)
\end{equation}
The differentials $d_r^{*,\star}(\Leu)$ of the $\Er_r$-page have bidegree $(1,r-1)$, contrary to the Bloch--Ogus spectral sequence (\ref{eq:bloch_ogus}), whose terms are indexed the same way up to the isomorphism $\bar{\Ibf}^*\cong\Hsc^*$ provided by the Milnor conjecture, but whose differentials have the usual bidegree $(r,-r+1)$ on the $\Er_r$-page. Note in particular that $\Er_2^{p,q}(\Leu)=0$ if $q<p$ (this is a particular case of Lemma \ref{lem:easy_higher_pardon} below). We omit the twist $\Leu$ from the notation if $\Leu$ is a square. Beware that, although the groups on the $\Er_2$-page of (\ref{eq:pardon}) do not depend on $\Leu$, the differentials certainly do: by definition, one has $d_2^{p,q}(\Leu)=\Phi_{p,q,\Leu}$ with the previous notation. For example, if $p\geq q-1$, then the differential $d_2^{p,q}(\Leu)$ is given by the twisted Steenrod square $\Sq_\Leu^2$ according to Remark \ref{rema:twisted_steenrod_square}.

The filtration of $\Wbf(\Leu)$ by the powers of the fundamental ideal induces a filtration on $\Hr^p(X,\Wbf_\Leu)$ whose $q$-th term is the image $\Fr^q\Hr^p(X,\Wbf_\Leu)$ of the homomorphism $i_q:\Hr^p(X,\Ibf^q_\Leu)\to\Hr^p(X,\Wbf_\Leu)$ induced by the inclusion $\Ibf^q(\Leu)\subseteq\Wbf(\Leu)$. In particular, denoting by $i_{q',q}:\Hr^p(X,\Ibf^{q'}_\Leu)\to\Hr^p(X,\Ibf^q_\Leu)$ the morphism induced by the inclusion $\Ibf^{q'}(\Leu)\subseteq\Ibf^q(\Leu)$, so that $i_q=i_{q,0}$, the graded pieces of this filtration are given by \[\Gra^q\Hr^p(X,\Wbf_\Leu)=\frac{\Hr^p(X,\Ibf^q_\Leu)}{\Ker i_q+\Im i_{q+1,q}}.\] By the general formalism of the spectral sequence of a filtered complex, this is a subquotient of the abutment $\Er_\infty^{p,q}(\Leu)$ of the Pardon spectral sequence (see \cite[\href{https://stacks.math.columbia.edu/tag/012Q}{Tag 012Q} (b)]{stacks-project}), and in fact, since the filtration is bounded below (more precisely, one has $\Fr^q\Wbf(\Leu)=\Wbf(\Leu)$ for $q\leq 0$), a subobject. In general, the resulting canonical injection \[\Gra^q\Hr^p(X,\Wbf_\Leu)\to\Er_\infty^{p,q}(\Leu),\] which is simply induced by the reduction morphism $\pi_\Leu^{p,q}:\Hr^p(X,\Ibf^q_\Leu)\to\Hr^p(X,\bar{\Ibf}^q)$, need not be surjective; note that the filtration by the powers of the fundamental ideal on, \emph{e.g.}, the Witt group $\Wr(\Rb)$ of $\Rb$ is not bounded above. We come back to this problem for smooth real varieties below.

\subsubsection{Reduction to the stable range} 

The aim of the following two lemmas is to reduce the computation of $\Ibf$-cohomology to the case of higher powers of the fundamental ideal, which are closer to the ``stable range'' (above the dimension) and thus easier to understand.

\begin{lem}\label{lem:easy_higher_pardon}
Let $X$ be a smooth connected $k$-scheme and let $\Leu$ be a line bundle on $X$. Let $k\geq j>$ be integers. Then $\Hr^{i}(X,\Ibf^j_\Leu/\Ibf^k_\Leu)=0$ for every $i\geq k$. Thus the inclusion $\Ibf^k(\Leu)\to\Ibf^j(\Leu)$ induces an isomorphism $\Hr^i(X,\Ibf^k_\Leu)\cong\Hr^i(X,\Ibf^j_\Leu)$ for every $i\geq k+1$.
\end{lem}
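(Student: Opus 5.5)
Since the quotient $\Ibf^j_\Leu/\Ibf^k_\Leu$ is built out of the graded pieces $\bar{\Ibf}^m$, $j\leq m<k$, the plan is to induct on $k-j$ and kill everything using vanishing of $\bar{\Ibf}$-cohomology above the diagonal. For $k=j$ there is nothing to prove. If $k=j+1$, the sheaf $\Ibf^j_\Leu/\Ibf^{j+1}_\Leu$ is identified with $\bar{\Ibf}^j\cong\Hsc^j$ (using the canonical isomorphism $\bar{\Ir}^t(F,L)\cong\bar{\Ir}^t(F)$), and its cohomology is computed by the complex $\Cr(X,\bar{\Ibf}^j)$ whose degree $i$ term is $\bigoplus_{x\in X^{(i)}}\bar{\Ir}^{j-i}(\kappa(x))$. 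This vanishes for $i>j$ since $\bar{\Ir}^n=0$ for $n<0$; hence $\Hr^i(X,\bar{\Ibf}^j)=0$ for $i>j$, in particular for $i\geq k=j+1$. This is the Bloch--Ogus vanishing $\Hr^p(X,\Hsc^q)=0$ for $p>q$ recalled earlier.

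For the inductive step, I would use the short exact sequence of Zariski sheaves
\[0\to\Ibf^{k-1}_\Leu/\Ibf^k_\Leu\to\Ibf^j_\Leu/\Ibf^k_\Leu\to\Ibf^j_\Leu/\Ibf^{k-1}_\Leu\to 0,\]
or, more concretely, the corresponding short exact sequence of Gersten-type complexes $\Cr(X,\Ibf^{k-1}_\Leu)/\Cr(X,\Ibf^k_\Leu)\to\Cr(X,\Ibf^j_\Leu)/\Cr(X,\Ibf^k_\Leu)\to\Cr(X,\Ibf^j_\Leu)/\Cr(X,\Ibf^{k-1}_\Leu)$, which is exact termwise because the complexes are nested subcomplexes. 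For $i\geq k$, the left term contributes $\Hr^i(X,\bar{\Ibf}^{k-1})=0$ as $i>k-1$, and the right term contributes $\Hr^i(X,\Ibf^j_\Leu/\Ibf^{k-1}_\Leu)=0$ by induction since $i\geq k-1$. The long exact sequence then gives the vanishing in the middle. There is an even shorter route: the degree $i$ term of the quotient complex is $\bigoplus_{x\in X^{(i)}}\Ir^{j-i}(\kappa(x),\ldots)/\Ir^{k-i}(\kappa(x),\ldots)$, and for $i\geq k$ both exponents are $\leq 0$, so both groups equal $\Wr(\kappa(x),\ldots)$ and the term is zero. I would present this direct argument, since it avoids any identification of sheaf cohomology beyond the fact that these quotient complexes compute it.

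For the second assertion, I would use the long exact sequence attached to $0\to\Ibf^k_\Leu\to\Ibf^j_\Leu\to\Ibf^j_\Leu/\Ibf^k_\Leu\to0$:
\[\Hr^{i-1}(X,\Ibf^j_\Leu/\Ibf^k_\Leu)\to\Hr^i(X,\Ibf^k_\Leu)\to\Hr^i(X,\Ibf^j_\Leu)\to\Hr^i(X,\Ibf^j_\Leu/\Ibf^k_\Leu).\]
For $i\geq k+1$, both outer groups vanish because $i-1\geq k$, which gives the isomorphism. The only real point to check is that the sheaf cohomology of the quotient is computed by the quotient of Gersten complexes. This follows from exactness of the Gersten resolutions (the paper's standing identification of $\Hr^*(X,\Ibf^n_\Leu)$ with the cohomology of $\Cr(X,\Ibf^n_\Leu)$) together with the five lemma. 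Alternatively, one can work directly with the complexes throughout, which is how the statement is meant to be read. I expect no serious obstacle beyond this bookkeeping. One should also note that the statement's hypothesis ``$k\geq j>$'' presumably means $j\in\Zb$ arbitrary (or $j>0$), and the argument above is uniform in $j$.
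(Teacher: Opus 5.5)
Your proposal is correct, and the direct argument you choose to present is exactly the paper's proof: for $i\geq k$ both $\Ir^{j-i}$ and $\Ir^{k-i}$ equal $\Wr$, so the degree $i$ term of the quotient complex $\Cr(X,\Ibf^j_\Leu)/\Cr(X,\Ibf^k_\Leu)$ vanishes, and the long exact sequence for $0\to\Ibf^k_\Leu\to\Ibf^j_\Leu\to\Ibf^j_\Leu/\Ibf^k_\Leu\to 0$ then gives the isomorphism for $i\geq k+1$. The inductive route through the graded pieces $\bar{\Ibf}^m$ that you sketch first is also valid, but it is not needed.
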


\begin{proof}
The first statement implies the second by examination of the cohomology long exact sequence associated with the inclusion $\Ibf^k(\Leu)\subseteq\Ibf^j(\Leu)$. Now this inclusion induces an inclusion $\Cr(X,\Ibf^k_\Leu)\subseteq\Cr(X,\Ibf^j_\Leu)$ of complexes and the cohomology of the cokernel $\Cr(X,\Ibf^j_\Leu/\Ibf^k_\Leu)$ of this inclusion computes $\Hr^p(X,\Ibf_\Leu^j/\Ibf^k_\Leu)$. But in fact, for every $p\geq k$, the degree $p$ term of the former complexes is given by \[\Cr(X,\Ibf^k_\Leu)^p=\Cr(X,\Ibf^j_\Leu)^p=\bigoplus_{x\in X^{(p)}}\Wr(\kappa(x),\omega_{x/X}\otimes\Leu(x)).\] Therefore $\Cr(X,\Ibf^j_\Leu/\Ibf^k_\Leu)^p=0$ for every $p\geq k$ and thus $\Hr^p(X,\Ibf^j_\Leu/\Ibf^k_\Leu)=0$, as required.
\end{proof}

Let $X$ be a smooth $k$-variety and let $\Leu$ be a line bundle on $X$. We consider the connecting homomorphism $\partial_\Leu^{0,0}=\partial_\Leu:\Hr^0(X,\bar{\Wbf})\to\Hr^1(X,\Ibf_\Leu)$ in the cohomology long exact sequence for the epimorphism $\Wbf(\Leu)\to\bar{\Wbf}$ of sheaves on $X$. We let $e(\Leu)$ denote the image of $\langle 1\rangle\in\Hr^0(X,\bar{\Wbf})$ under $\partial_\Leu$; this notation is justified by the fact that according to \cite[Lemma 3.1]{faselProjectiveBundleTheorem2013}, this cohomology class is indeed the ($\Ibf$-cohomological) Euler class of $\Leu$ in the sense of \cite[§3]{faselProjectiveBundleTheorem2013}.

\begin{lem}\label{lem:baby_pardon}
The inclusion $\Ibf(\Leu)\to\Wbf(\Leu)$ induces an isomorphism $\Hr^1(X,\Wbf_\Leu)\cong\Hr^1(X,\Ibf_\Leu)/e(\Leu)$. Thus the map $\Hr^1(X,\Ibf)\to\Hr^1(X,\Wbf)$ is an isomorphism and if $\Leu$ is not a square, then $\Hr^1(X,\Wbf_\Leu)=\Hr^1(X,\Ibf_\Leu)/\Zb/2$ and the inclusion $\Hr^0(X,\Ibf_\Leu)\subseteq\Hr^0(X,\Wbf_\Leu)$ is surjective.
\end{lem}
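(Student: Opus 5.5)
The plan is to use the long exact sequence in cohomology attached to the short exact sequence of Zariski sheaves $0\to\Ibf(\Leu)\to\Wbf(\Leu)\to\bar{\Wbf}\to 0$. Here $\bar{\Wbf}=\Wbf/\Ibf$ is the sheaf $\bar{\Ibf}^0\cong\Hsc^0$, which is $\Zb/2$ via the rank mod $2$. Since $X$ is connected, $\Hr^0(X,\bar{\Wbf})=\Zb/2$, generated by $\langle 1\rangle$. For $\Hr^1(X,\bar{\Wbf})$, we use $\Hr^p(X,\bar{\Ibf}^q)=\Hr^p(X,\Hsc^q)=0$ for $p>q$, recorded in the Bloch--Ogus discussion; this gives $\Hr^1(X,\bar{\Wbf})=0$. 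Alternatively, one can read this off the complex $\Cr(X,\bar{\Ibf}^0)$, which is concentrated in degree $0$ because $\bar{\Ir}^{-p}=0$ for $p>0$. The relevant portion of the long exact sequence is therefore
\[\Hr^0(X,\Ibf_\Leu)\to\Hr^0(X,\Wbf_\Leu)\to\Zb/2\xrightarrow{\partial_\Leu}\Hr^1(X,\Ibf_\Leu)\to\Hr^1(X,\Wbf_\Leu)\to 0.\]
Exactness, together with the definition $e(\Leu)=\partial_\Leu(\langle 1\rangle)$, immediately gives $\Hr^1(X,\Wbf_\Leu)\cong\Hr^1(X,\Ibf_\Leu)/e(\Leu)$, since the image of $\partial_\Leu$ is the subgroup generated by $e(\Leu)$.

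For the untwisted case, $\langle 1\rangle$ is a global section of $\Wbf$ lifting the generator of $\Hr^0(X,\bar{\Wbf})$. Hence $\Hr^0(X,\Wbf)\to\Hr^0(X,\bar{\Wbf})$ is surjective, $\partial=0$, so $e(\Osc)=0$ and $\Hr^1(X,\Ibf)\to\Hr^1(X,\Wbf)$ is an isomorphism. The same conclusion holds whenever $\Leu$ is a square, because twists depend only on the class in $\Pic(X)/2$.

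For $\Leu$ not a square, the key step is to show that $e(\Leu)\neq 0$, equivalently that $\Hr^0(X,\Wbf_\Leu)\to\Zb/2$ is zero. In other words, every unramified class of $\Wr(k(X),\Leu)$ has even rank. I would argue by reducing mod $\Ibf^2$: compose $\partial_\Leu$ with $\pi^{1,1}_\Leu$. By Theorem \ref{theo:twisted_differential_pardon}, this composite $\Phi_{0,0,\Leu}$ equals $\Phi_{0,0}+\bar c_1(\Leu)\cup$, and $\Phi_{0,0}=\Sq^2$ on $\Ch^0$ vanishes by Theorem \ref{theo:Totaro} (or simply because $\Phi_{0,0}(\langle1\rangle)=\pi\partial(\langle 1\rangle)=0$ as shown above). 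Thus $\pi^{1,1}_\Leu(e(\Leu))=\bar c_1(\Leu)\in\Ch^1(X)$, and this is nonzero exactly when $\Leu$ is not a square. Hence $e(\Leu)$ is a nonzero element of order dividing $2$, so $\Hr^1(X,\Wbf_\Leu)=\Hr^1(X,\Ibf_\Leu)/(\Zb/2)$. By exactness the map $\Hr^0(X,\Wbf_\Leu)\to\Zb/2$ is then zero, so $\Hr^0(X,\Ibf_\Leu)\to\Hr^0(X,\Wbf_\Leu)$ is surjective; it is also injective, since $\Hr^{-1}(X,\bar{\Wbf})=0$.

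The main obstacle is identifying $\pi(e(\Leu))$ with $\bar c_1(\Leu)$, which depends on applying the Asok--Fasel formula in degree $(0,0)$ and on the sign and normalisation conventions there. As a fallback, I would compute directly with the Gersten complex. Take a rational section of $\Leu$ with divisor $D$. Locally at a codimension $1$ point $x$, the form $\langle 1\rangle$ twisted by a local generator $f^{v_x(D)}s$ has residue equal to $\langle 1\rangle$ exactly when $v_x(D)$ is odd. Hence $e(\Leu)$ is represented by the mod $2$ cycle $D$, whose class in $\Ch^1(X)$ is $\bar c_1(\Leu)$.
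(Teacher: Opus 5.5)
Your proof is correct and has the same skeleton as the paper's. It uses the long exact sequence of $0\to\Ibf(\Leu)\to\Wbf(\Leu)\to\bar{\Wbf}\to 0$, the vanishing of $\Hr^1(X,\bar{\Wbf})$ (the paper quotes Lemma~\ref{lem:easy_higher_pardon}, which is exactly your Gersten-complex remark), and the identity $\pi^{1,1}_\Leu(e(\Leu))=\bar{c}_1(\Leu)$.

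The difference is how you obtain that identity. The paper treats $e(\Leu)$ as the $\Ibf$-cohomological Euler class via \cite[Lemma 3.1]{faselProjectiveBundleTheorem2013} and reads off its reduction from \cite[Example 3.3.1]{fultonIntersectionTheory1998}. You instead evaluate the Asok--Fasel formula of Theorem~\ref{theo:twisted_differential_pardon} at $(i,j)=(0,0)$ on $\langle 1\rangle$, using $\Phi_{0,0}(\langle 1\rangle)=0$.

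Your route keeps everything inside the twisted Pardon formalism. It is the same fact $\Sq^2_\Leu(1)=\bar{c}_1(\Leu)$ on $\Ch^0$ that the paper uses in Corollary~\ref{cor:W1_curve} and Section~\ref{section:quadrics}. The cost is invoking a deep theorem for a degree-zero statement.

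Your Gersten-complex fallback is the elementary version, and it is essentially what the paper's two citations package. In it, the residue at $x$ is a rank-one form $\langle\bar{u}_x\rangle$ rather than literally $\langle 1\rangle$. Only its image in $\bar{\Wr}(\kappa(x))=\Zb/2$ matters, so this does not affect the conclusion.

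On one point your write-up is tighter than the paper's. For $\Leu$ a square, the paper infers $e(\Leu)=0$ from $\pi^{1,1}(e(\Leu))=\bar{c}_1(\Leu)=0$. By itself this only shows that $e(\Leu)$ comes from $\Hr^1(X,\Ibf^2_\Leu)$. Your observation that $\langle 1\rangle$ lifts to a global section after untwisting, so that $\partial_\Leu=0$, is the correct justification.

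Both arguments tacitly take $X$ connected, and the final surjectivity claim needs this.
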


\begin{proof}
Note that $e(\Leu)$, being the image under the connecting homomorphism $\partial_\Leu$ of the $2$-torsion element $\langle 1\rangle$ of $\Hr^0(X,\bar{\Wbf})=\Hr^0(X,\Zb/2)$, is $2$-torsion. In view of the cohomology exact sequence 
\[\begin{tikzcd}
	0 & {\Hr^0(X,\Ibf_\Leu)} & {\Hr^0(X,\Wbf_\Leu)} & {\Hr^0(X,\Wbf/\Ibf)} & \\
	& {\Hr^1(X,\Ibf_\Leu)} & {\Hr^0(X,\Wbf_\Leu)} & {\Hr^1(X,\Wbf/\Ibf)} & 
	\arrow[from=1-1, to=1-2]
	\arrow[from=1-2, to=1-3]
	\arrow[from=1-3, to=1-4]
	\arrow[from=1-4, to=2-2]
	\arrow[from=2-2, to=2-3]
	\arrow[from=2-3, to=2-4]
\end{tikzcd}\]
and since $\Hr^p(X,\Wbf/\Ibf)=0$ for $p>0$ by the previous lemma, the map $\Hr^1(X,\Ibf_\Leu)\to\Hr^1(X,\Wbf_\Leu)$ induces an isomorphism $\Hr^1(X,\Wbf_\Leu)\cong\Hr^1(X,\Ibf_\Leu)/\Zb/2\cdot e(\Leu)$ as in the statement of the lemma. By \cite[Example 3.3.1]{fultonIntersectionTheory1998} (and \cite[Lemma 3.1]{faselProjectiveBundleTheorem2013}), the image of $e(\Leu)$ under the reduction homomorphism $\Hr^1(X,\Ibf_\Leu)\to\Hr^1(X,\bar{\Ibf})=\Ch^1(X)$ is the mod $2$ first Chern class $\bar{c}_1(\Leu)$ of $\Leu$. Therefore, if $\Leu$ is a square, then $e(\Leu)=0$ and thus $\Hr^1(X,\Ibf)=\Hr^1(X,\Wbf)$. If $\Leu$ is not a square, then $e(\Leu)$ is nonzero so the map $\partial_\Leu:\Zb/2\cdot\langle 1\rangle\to\Hr^1(X,\Ibf_\Leu)$, having image generated by $e(\Leu)$, is injective and thus the inclusion $\Hr^0(X,\Ibf_\Leu)\to\Hr^0(X,\Wbf_\Leu)$, whose cokernel is $\Ker\partial_\Leu$, is surjective.
\end{proof}

\subsubsection{A lemma}

We learned the following fact from \cite[proof of Lemma 2.1]{sujathaWittGroupsReal1990}.

\begin{lem}[Sujatha]\label{lem:Sujatha}
Let $X$ be a connected smooth variety over $k$. The homomorphism $\pi^{0,1}:\Hr^0(X,\Ibf)\to\Hr^0(X,\bar{\Ibf})$ is then surjective.
\end{lem}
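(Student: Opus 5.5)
The lemma says that every unramified class in $\Hr^0(X,\bar{\Ibf})=\Hr^0(X,\Hsc^1)$ lifts to an unramified element of $\Hr^0(X,\Ibf)$. The plan is to do this concretely on the Gersten complexes.

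Let $K=k(X)$. By the identification $\bar{\Ibf}^1\cong\Hsc^1$, the group $\Hr^0(X,\bar{\Ibf})$ is the kernel of the residue map
\[\Ir(K)/\Ir^2(K)\cong K^\times/K^{\times 2}\longrightarrow\bigoplus_{x\in X^{(1)}}\Wr(\kappa(x))/\Ir(\kappa(x))=\bigoplus_{x\in X^{(1)}}\Zb/2 .\]
So a class is represented by some $(f)$ with $f\in K^\times$ such that $v_x(f)$ is even for every codimension $1$ point $x$.

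Similarly, $\Hr^0(X,\Ibf)$ is the kernel of the residue map $\Ir(K)\to\bigoplus_{x}\Wr(\kappa(x),\omega_{x/X})$, where the target term is $\Ir^0=\Wr$ in degree $1$. The map $\pi^{0,1}$ is induced by $\Ir(K)\to\Ir(K)/\Ir^2(K)$. It therefore suffices to produce, for each such $f$, an element of $\Ir(K)$ with trivial residues everywhere whose class modulo $\Ir^2(K)$ is $(f)$.

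The natural candidate is the Pfister form $\llangle f\rrangle=\langle 1,-f\rangle$, which maps to $(f)$ under the Milnor isomorphism. Its second residue at $x$, computed with respect to a uniformizer $\pi_x$ (after choosing a local trivialization of $\omega_{x/X}$), is determined by the parity of $v_x(f)$:
\begin{itemize}
\item if $v_x(f)$ is even, write $f=u\pi_x^{2m}$ with $u$ a unit; then $\langle -f\rangle=\langle -u\rangle$ has zero second residue, and so does $\langle 1\rangle$;
\item if $v_x(f)$ is odd, the residue would be nonzero.
\end{itemize}
In our situation only the first case occurs, so $\llangle f\rrangle$ lies in $\Hr^0(X,\Ibf)$ and maps to the given class. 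Hence $\pi^{0,1}$ is surjective.

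The only point requiring care is that the residue maps in the Gersten–Witt complex are the second residue homomorphisms, up to the twist by $\omega_{x/X}$. That twist only affects an isomorphism of the target and not whether the residue vanishes, so the argument goes through. Connectedness of $X$ is used only to work with a single function field $K$.
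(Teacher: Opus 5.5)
Your proof is correct and follows essentially the same route as the paper: you represent the class as $\llangle f\rrangle=\langle 1,-f\rangle$ modulo $\Ir^2$, note that vanishing of the mod $2$ residues forces $v_x(f)$ to be even at every codimension $1$ point, and conclude that the second residues of $\langle 1,-f\rangle$ vanish. Hence $\langle 1,-f\rangle$ defines an element of $\Hr^0(X,\Ibf)$ lifting the given class.
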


\begin{proof}
Let $\alpha\in\Hr^0(X,\bar{\Ibf})$. Then $\alpha=\langle 1,-a\rangle$ mod $\Ir^2$ for some $a\in\kappa(X)^\times$, where $\kappa(X)$ is the function field of $X$. It suffices to show that $\langle 1,-a\rangle\in\Ir(\kappa(X))$ is unramified, namely is a cycle of $\Cr(X,\Ibf)$. By the computation of the differential $d:\Cr(X,\Ibf)^0\to\Cr(X,\Ibf)^1$ in \cite[Lemma 8.4 (a)]{balmerGerstenWittSpectral2002}, it suffices to show that for every $x\in X^{(1)}$, if $\pi_x$ is a uniformiser of the discrete valuation ring $\Osc_{X,x}$, one has $\partial^{\pi_x}(\langle 1,-a\rangle)=0$ where $\partial^{\pi_x}$ is the usual differential for Witt groups of \cite[Chapter IV]{milnorSymmetricBilinearForms1973}. Writing $a=u_x\pi_x^n$ where $u_x\in\Osc_{X,x}$ is a unit and $n\in\Zb$, the residue of $\langle 1,-a\rangle$ at $x$ relative to the uniformiser $\pi_x$ is given by $\langle\bar{u_x}\rangle$ if $n$ is odd (where $\bar{u_x}$ is the reduction of $u_x$ modulo the maximal ideal) and by $0$ if $n$ is even. But since the residue of $\alpha$ at $x$ is trivial, the integer $n$ must be even. It follows that $\partial^{\pi_x}(\langle 1,-a\rangle)=0$, as required.
\end{proof}

\begin{rema}\label{rema:steenrod_trivial}
In particular, the connecting map $\partial^{0,1}:\Hr^0(X,\bar{\Ibf})\to\Hr^1(X,\Ibf^2)$ is trivial; \emph{a fortiori}, so is the operation $\Phi_{0,1}$. Note that this weaker conclusion also follows from Theorem \ref{theo:differential_pardon_steenrod} in view of \cite[Lemma 9.9]{voevodskyReducedPowerOperations2003}.
\end{rema}

\subsection{The cohomology of algebraic varieties over $\Rb$}

In the sequel, if $X$ is an $\Rb$-scheme, we denote by $X_\Cb$ its complexification $X_\Cb=X\times_\Rb\Spec\Cb$.

\subsubsection{A hypothesis on real algebraic varieties}

The following hypothesis on smooth varieties $X$ over $\Rb$ will appear many times in the sequel so we give it a specific notation for ease of reference.

\begin{itemize}
	\item[$(*)$] The variety $X$ is not proper or its real locus $X(\Rb)$ is not empty.
\end{itemize}

\subsubsection{The real-complex exact sequence in mod $2$ étale cohomology}

Let $X$ be a smooth $\Rb$-variety; denote by $\pi:X_\Cb=X\times_\Rb\Spec\Cb\to X$ the projection. Then there is an exact sequence \[0\to\Zb/2\to\pi_*\Zb/2\to\Zb/2\to 0\] of étale sheaves on $X$ \cite[(2.3)]{colliot-theleneZerocyclesCohomologyReal1996}. It induces an exact sequence
\begin{equation}\label{eq:real_complex_exact_sequence}
\cdots\to\Hr_\et^{p-1}(X)\xrightarrow{\partial}\Hr_\et^{p}(X)\xrightarrow{\pi^*}\Hr_\et^p(X_\Cb)\xrightarrow{\pi_*}\Hr_\et^p(X)\to\cdots
\end{equation}
of abelian groups \cite[(2.4)]{colliot-theleneZerocyclesCohomologyReal1996} (recall that we suppress the coefficients $\Zb/2$ in étale cohomology groups). We call it the \emph{real-complex exact sequence} for mod $2$ étale cohomology. The connecting homomorphism $\partial$ is given by cup-product by the class $\omega\in\Hr_\et^1(X)$, the map $\pi^*$ is the pullback along the morphism $\pi$ in étale cohomology and $\pi_*$ is the pushforward homomorphism (norm) for the finite map $\pi$. This exact sequence is especially useful because the mod $2$ étale cohomology of $X_\Cb$ is isomorphic to the singular cohomology of the complex manifold $X(\Cb)$ with $\Zb/2$ coefficients by Artin's comparison theorem \cite[Théorème 4.1]{artinExpXVITheoreme1973}, an invariant for which many computational tools are available.

\begin{rema}\label{rema:real_complex_H1}
If $X$ is geometrically connected, then the homomorphism $\Hr_\et^0(X)\to\Hr_\et^0(X_\Cb)$ is surjective so (\ref{eq:real_complex_exact_sequence}) determines an exact sequence \[0\to\Zb/2\cdot\omega\to\Hr_\et^1(X)\to\Hr_\et^1(X_\Cb)\] of abelian groups.
\end{rema}

\begin{lem}\label{lem:top_étale_cohomology_proper_empty}
Let $X$ be a geometrically connected smooth algebraic variety of dimension $d$ over $\Rb$. Suppose that $X$ does not satisfy $(*)$. Then the pushforward homomorphism $\pi_*:\Hr_\et^{2d}(X_\Cb)\to\Hr_\et^{2d}(X)$ is an isomorphism.
\end{lem}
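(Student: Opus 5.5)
The plan is to compute both sides topologically and then identify $\pi_*$ with a transfer map. By Artin's comparison theorem, $\Hr_\et^{2d}(X_\Cb)\cong\Hr^{2d}(X(\Cb),\Zb/2)$. Since $X$ is proper, smooth and geometrically connected, $X(\Cb)$ is a compact connected manifold of real dimension $2d$, so this group is $\Zb/2$.

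For $X$ itself, I would use the comparison between mod $2$ étale cohomology of a real variety and the $G$-equivariant cohomology of $X(\Cb)$, where $G=\Zb/2$ acts by complex conjugation $\sigma$ (Cox, Scheiderer). Concretely, the Hochschild--Serre spectral sequence $\Hr^p(G,\Hr_\et^q(X_\Cb))\Rightarrow\Hr_\et^{p+q}(X)$ is identified with the Cartan--Leray spectral sequence computing $\Hr_G^*(X(\Cb),\Zb/2)$. Because $X$ does not satisfy $(*)$, the locus $X(\Rb)=X(\Cb)^G$ is empty, so $G$ acts freely. Hence $\Hr_G^*(X(\Cb),\Zb/2)\cong\Hr^*(M,\Zb/2)$, where $M=X(\Cb)/G$ is a compact connected topological manifold of dimension $2d$. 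This gives $\Hr_\et^{2d}(X)\cong\Zb/2$ and $\Hr_\et^{2d+1}(X)=0$.

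Under these identifications, $\pi:X_\Cb\to X$ corresponds to the double covering $p:X(\Cb)\to M$. The pushforward $\pi_*$ then corresponds to the transfer $p_!:\Hr^{2d}(X(\Cb),\Zb/2)\to\Hr^{2d}(M,\Zb/2)$. Mod $2$ Poincaré duality, which needs no orientability, carries $p_!$ to $p_*:\Hr_0(X(\Cb),\Zb/2)\to\Hr_0(M,\Zb/2)$. Both groups are $\Zb/2$ by connectedness, and $p_*$ sends the class of a point to the class of a point, so it is an isomorphism. Therefore $\pi_*$ is an isomorphism.

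An alternative for half of the argument is available directly from the real-complex exact sequence (\ref{eq:real_complex_exact_sequence}). There $\Coker\pi_*$ injects into $\Hr_\et^{2d+1}(X)$, which vanishes because the étale cohomological dimension of $X$ is $2d$ when $X(\Rb)=\emptyset$. This gives surjectivity. Since the source is $\Zb/2$, injectivity then reduces to showing $\Hr_\et^{2d}(X)\neq0$, which again comes from the identification with $\Hr^{2d}(M,\Zb/2)$.

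The main obstacle is justifying the compatibility of identifications: that the étale pushforward (norm) $\pi_*$ matches the topological transfer $p_!$ under the comparison of $\Hr_\et^*(X)$ with $\Hr_G^*(X(\Cb))$. I would argue this via the short exact sequence $0\to\Zb/2\to\pi_*\Zb/2\to\Zb/2\to0$, whose topological analogue on $M$ is $0\to\Zb/2\to p_*\Zb/2\to\Zb/2\to0$ with the second map the trace. The two long exact sequences then match term by term, and $\pi_*$ corresponds to the map induced by the trace, that is, the transfer.
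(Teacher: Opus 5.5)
Your proposal is correct. Your ``alternative'' has the same skeleton as the paper's proof. The paper gets surjectivity of $\pi_*$ from (\ref{eq:real_complex_exact_sequence}) together with $\Hr_\et^{2d+1}(X)\cong\bigoplus_p\Hr^p(X(\Rb),\Zb/2)=0$ (Colliot-Th\'el\`ene--Scheiderer, Theorem 2.3.1). It then concludes because the source is $\Zb/2$ and the target is nonzero.

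The only difference is how you show $\Hr_\et^{2d}(X)\neq 0$. The paper argues algebraically: $\Ch^d(X)=\Zb/2$ and the cycle class map $\gamma_\et^d:\Ch^d(X)\to\Hr_\et^{2d}(X)$ is injective (Theorems 1.3 (b) and 3.2 (c) of the same paper), so the class of a closed point is a nonzero element. You instead use the Cox--Scheiderer comparison $\Hr_\et^*(X)\cong\Hr_G^*(X(\Cb),\Zb/2)\cong\Hr^*(X(\Cb)/G,\Zb/2)$ for the free conjugation action. This yields both $\Hr_\et^{2d}(X)\cong\Zb/2$ and $\Hr_\et^{2d+1}(X)=0$ from one comparison theorem plus the topology of a closed $2d$-manifold.

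Your primary argument is a genuinely different and more conceptual route: you identify $\pi_*$ with the topological transfer and apply mod $2$ Poincar\'e duality. It does carry the extra burden you flag, namely that the comparison isomorphism is natural in the coefficient sheaf, so that the \'etale norm matches the trace on $p_*\Zb/2$. The counting argument makes this unnecessary. Once the source is $\Zb/2$, the target is nonzero and the cokernel vanishes, no identification of the map itself is needed.

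The paper's choice has a side benefit: it exhibits the generator of $\Hr_\et^{2d}(X)$ as the cycle class of a closed point. It also rests on the injectivity of $\gamma_\et^d$, which the paper reuses later, for instance for the quadrics $Q_2$ and $Q_3$.
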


\begin{proof}
By (\ref{eq:real_complex_exact_sequence}), the cokernel of $\pi_*$ is a subgroup of $\Hr_\et^{2d+1}(X)$. By \cite[Theorem 2.3.1]{colliot-theleneZerocyclesCohomologyReal1996}, there is an isomorphism \[\Hr_\et^{2d+1}(X)\cong\bigoplus_{p\geq 0}\Hr^p(X(\Rb),\Zb/2).\] Since $X(\Rb)$ is empty, the right-hand side vanishes so $\Hr_\et^{2d+1}(X)=0$. Hence $\pi_*$ is an epimorphism. Since $X$ is proper, so is $X_\Cb$ hence $\Hr_\et^{2d}(X_\Cb)=\Zb/2$ (\emph{e.g.} by Artin's comparison theorem, since $X(\Cb)$ is compact and connected). Moreover, by \cite[Theorem 3.2 (c)]{colliot-theleneZerocyclesCohomologyReal1996}, the map $\gamma_\et^{d}:\Ch^d(X)\to\Hr_\et^{2d}(X)$ is injective: since $\Ch^d(X)=\Zb/2$ by \cite[Theorem 1.3 (b)]{colliot-theleneZerocyclesCohomologyReal1996}, the group $\Hr_\et^{2d}(X)$ is nonzero. Therefore $\pi_*$ is an epimorphism from $\Zb/2$ to a nonzero group: consequently, it is an isomorphism.
\end{proof}

\begin{cor}\label{cor:end_real_complex_proper_empty}
Assume further in Lemma \ref{lem:top_étale_cohomology_proper_empty} that $\Hr_\et^{2d-1}(X_\Cb)=0$. Then $\pi^*:\Hr_\et^{2d}(X)\to\Hr_\et^{2d}(X_\Cb)$ vanishes and $\cup\omega:\Hr_\et^{2d-1}(X)\to\Hr_\et^{2d}(X)$ is an isomorphism.
\end{cor}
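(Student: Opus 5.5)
We read off both claims from the real-complex exact sequence (\ref{eq:real_complex_exact_sequence}) in degrees $2d-1$, $2d$ and $2d+1$, using Lemma \ref{lem:top_étale_cohomology_proper_empty}. The relevant portion is
\[\Hr_\et^{2d-1}(X_\Cb)\xrightarrow{\pi_*}\Hr_\et^{2d-1}(X)\xrightarrow{\cup\omega}\Hr_\et^{2d}(X)\xrightarrow{\pi^*}\Hr_\et^{2d}(X_\Cb)\xrightarrow{\pi_*}\Hr_\et^{2d}(X)\xrightarrow{\cup\omega}\Hr_\et^{2d+1}(X),\]
where we use that the connecting homomorphism $\partial$ is cup-product with $\omega$.

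First we show that $\pi^*$ vanishes in degree $2d$. By Lemma \ref{lem:top_étale_cohomology_proper_empty}, the map $\pi_*:\Hr_\et^{2d}(X_\Cb)\to\Hr_\et^{2d}(X)$ is an isomorphism, in particular injective. Exactness at $\Hr_\et^{2d}(X_\Cb)$ gives $\Im\pi^*=\Ker\pi_*=0$, so $\pi^*:\Hr_\et^{2d}(X)\to\Hr_\et^{2d}(X_\Cb)$ is zero.

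Next, exactness at $\Hr_\et^{2d}(X)$ gives $\Im(\cup\omega)=\Ker\pi^*=\Hr_\et^{2d}(X)$, so $\cup\omega:\Hr_\et^{2d-1}(X)\to\Hr_\et^{2d}(X)$ is surjective. Exactness at $\Hr_\et^{2d-1}(X)$ gives $\Ker(\cup\omega)=\Im(\pi_*:\Hr_\et^{2d-1}(X_\Cb)\to\Hr_\et^{2d-1}(X))$. This image is zero by the extra hypothesis $\Hr_\et^{2d-1}(X_\Cb)=0$, so $\cup\omega$ is also injective, hence an isomorphism.

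There is no real obstacle here. The only point to watch is that the maps in the exact sequence are correctly identified: the connecting map is $\cup\omega$, and the maps in degree $2d$ are the $\pi^*$ and $\pi_*$ of Lemma \ref{lem:top_étale_cohomology_proper_empty}. Both identifications are part of the description of (\ref{eq:real_complex_exact_sequence}) recalled above.
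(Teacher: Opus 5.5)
Your proposal is correct and is the same argument as the paper, which simply says the corollary is clear from the preceding lemma and the real-complex exact sequence (\ref{eq:real_complex_exact_sequence}); you have written out the three exactness checks explicitly. The vanishing of $\pi^*$ uses only the injectivity of $\pi_*$ in degree $2d$. The extra hypothesis $\Hr_\et^{2d-1}(X_\Cb)=0$ is needed only for the injectivity of $\cup\omega$.
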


\begin{proof}
This is clear from Lemma \ref{lem:top_étale_cohomology_proper_empty} and (\ref{eq:real_complex_exact_sequence}).
\end{proof}

\subsubsection{Quadratic real cycle class maps}

Let $X$ be a geometrically connected smooth algebraic variety over $\Rb$. First note that if $\Fsc$ is an abelian sheaf on the real locus $X(\Rb)$ of $X$, it induces a sheaf $\iota_*\Fsc$ on $X_\Zar$ given by $\iota_*\Fsc(U)=\Fsc(U(\Rb))$. There is an induced map $\Hr^*(X,\iota_*\Fsc)\to\Hr^*(X(\Rb),\Fsc)$ which is an isomorphism if $\Fsc$ is locally constant due to results of Coste--Roy (\cite[Corollaire 3.7]{costeTopologieSpectreReel1982}, \cite[(19.2)]{scheidererRealEtaleCohomology1994}). Now let $\Leu$ be a line bundle on $X$; we denote the associated topological line bundle on $X(\Rb)$ by $L=\Leu(\Rb)$. If $U\hookrightarrow X$ is open, there is an induced line bundle $L_{|U}$ on $U(\Rb)$. It determines a local system $\Zb(L_{|U})$ of abelian groups with stalk $\Zb$ (see \cite[Example 2.3]{lerbetImageHigherSignature2026} and the preceding discussion there for more details). We denote by $\iota_*\Zb(L)$ the sheaf on $X_\Zar$ associated with the presheaf $U\mapsto\Hr^0(U(\Rb),\Zb(L_{|U}))$ (this sheaf cohomology group coincides with the usual integral singular cohomology of $U(\Rb)$ in degree $0$ if $L_{|U}$ is trivial). Using the normalised signature of symmetric bilinear forms on $X$, one may define homomorphisms $\sign_t:\Ibf^t(\Leu)\to\iota_*\Zb(L)$ of abelian sheaves on $X_\Zar$ (this construction is essentially due to Jacobson \cite{jacobsonRealCohomologyPowers2017}). Using the work of Coste--Roy to identify $\Hr^*(X,\iota_*\Zb(L))$ and $\Hr^*(X(\Rb),\Zb(L))$, by change of sheaf of coefficients, we obtain a morphism \[\gamma_t^c(X,\Leu):\Hr^c(X,\Ibf^t_\Leu)\xrightarrow{(\sign_t)_*}\Hr^c(X(\Rb),\Zb(L)).\] We omit $(X,\Leu)$ from the notation unless confusion arises from this practice. We call the morphisms $\gamma_t^c$ the \emph{quadratic real cycle class maps}; we simply write $\gamma^c$ for $\gamma_c^c$ when $c=t$.

Multiplication by the rank $2$ form $\llangle -1\rrangle=\langle 1\rangle+\langle 1\rangle\in\Wr(\Rb)$ satisfies the equality $\sign_{t+1}\circ(\otimes\llangle -1\rrangle)=\sign_t:\Ibf^t(\Leu)\to\iota_*\Zb(L)$, so that $\gamma_{t+1}^c\circ(\otimes\llangle -1\rrangle)_*=\gamma_t^c$ for every $c,t\geq 0$. There is a commutative ladder (\cite[(2-3)]{lerbetImageHigherSignature2026})
\begin{equation}\label{eq:ladder_normalised_signature}
\begin{tikzcd}
	0 & {\Ibf^{t+1}(\Leu)} & {\Ibf^t(\Leu)} & {\bar{\Ibf}^n} & 0 \\
	0 & {\iota_*\Zb(L)} & {\iota_*\Zb(L)} & {\iota_*\Zb/2} & 0
	\arrow[from=1-1, to=1-2]
	\arrow[from=1-2, to=1-3]
	\arrow["{\sign_{t+1}}"', from=1-2, to=2-2]
	\arrow[from=1-3, to=1-4]
	\arrow["{\sign_t}"', from=1-3, to=2-3]
	\arrow[from=1-4, to=1-5]
	\arrow["{\bar{\sign}_n}"', dotted, from=1-4, to=2-4]
	\arrow[from=2-1, to=2-2]
	\arrow["{\cdot 2}"', from=2-2, to=2-3]
	\arrow[from=2-3, to=2-4]
	\arrow[from=2-4, to=2-5]
\end{tikzcd}
\end{equation}
in which $\iota_*\Zb/2$ is the sheaf associated with the presheaf $U\mapsto\Hr^0(U(\Rb),\Zb/2)$ on $\Sm_\Rb$. By the previously mentioned result of Coste--Roy, we have a natural isomorphism $\Hr^*(X,\iota_*\Zb/2)\cong\Hr^*(X(\Rb),\Zb/2)$ so the map $\bar{\sign}_t$, which is by definition the morphism induced by $\sign_t$ on cokernels, induces homomorphisms $\bar{\gamma}_t^c(X):\Hr^c(X,\bar{\Ibf}^t)\to\Hr^c(X(\Rb),\Zb/2)$ (as usual, we often omit $(X)$ from the notation). Modulo the isomorphism $\bar{\Ibf}^t\cong\Hsc^t$, the map $\bar{\sign}_t$ coincides with the morphism $h_t:\Hsc^t\to\iota_*\Zb/2$ defined in \cite[§2.1]{colliot-theleneRealComponentsAlgebraic1990}. In particular, the morphism $\bar{\gamma}^c=\overline{\gamma}_c^c:\Ch^c(X)=\Hr^c(X,\overline{\Ibf}^c)\to\Hr^c(X(\Rb),\Zb/2)$ is the usual mod $2$ cycle class map for Chow groups taking a codimension $c$ cycle on $X$ to the mod $2$ fundamental class of its real locus. We refer to \cite[§2.5]{lerbetImageHigherSignature2026} for more details.

Given an integer $t\geq 0$, we denote by $\Kbf^t(\Leu)$ or by $\Kbf_\Leu^t$ the kernel of the homomorphism $\otimes\llangle -1\rrangle:\Ibf^t(\Leu)\to\Ibf^{t+1}(\Leu)$ of sheaves induced by tensor product with the diagonal form $\llangle -1\rrangle=\langle 1\rangle+\langle 1\rangle$, and by $\bar{\Kbf}^t$ the kernel of the homomorphism $\cup\omega:\Hsc^t\to\Hsc^{t+1}$ given by cup-product with the class $\omega$. The isomorphism $\bar{\Ibf}^t\cong\Hsc^t$ provided by the affirmation of the Milnor conjecture identifies the homomorphism $\otimes\llangle -1\rrangle:\bar{\Ibf}^t\to\bar{\Ibf}^{t+1}$ with the map $\cup\omega:\Hsc^t\to\Hsc^{t+1}$. In particular, there is a natural map $\Kbf^t(\Leu)\to\bar{\Kbf}^t$ between kernels induced by the quotient map $\Ibf^t(\Leu)\to\bar{\Ibf}^t\cong\Hsc^t$. The following observation eventually leads to most of the results in \cite{lerbetImageHigherSignature2026}.

\begin{lem}[\protect{\cite[Lemma 3.2]{lerbetImageHigherSignature2026}}]\label{lem:2_torsion_to_(-1)_torsion}
Let $X$ be a smooth $\Rb$-variety of dimension $d$ and let $\Leu$ be a line bundle on $X$. Then the homomorphism $\Kbf^d(\Leu)\to\bar{\Kbf}^d$ of sheaves on $X_\Zar$ is an isomorphism.
\end{lem}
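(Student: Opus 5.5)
The plan is to check the statement stalkwise. Both sheaves are Zariski sheaves on the smooth variety $X$, and since $\Leu$ is locally trivial we may, on stalks, replace $\Ibf^t(\Leu)$ by $\Ibf^t$ after choosing a local trivialisation. The signs are harmless because the maps involved are $\Wr$-linear. We then use the Gersten (purity) property of $\Ibf^t$ and of $\Hsc^t$. For a point $x$ with local ring $\Osc_{X,x}$, the group $\Ibf^t_x$ is the subgroup of $\Ir^t(F)$, where $F=\kappa(X)$, of elements unramified at all codimension one points $y$ specialising to $x$; here unramified means killed by the residue $\partial^y$ with values in $\Ir^{t-1}(\kappa(y))$ (twisted). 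The same description holds for $\Hsc^t_x\subseteq\Hr_\et^t(F)$. Hence $\Kbf^d_x$ and $\bar{\Kbf}^d_x$ are the unramified parts of the corresponding kernels over $F$, and the whole problem reduces to statements about the fields $F$ and $\kappa(y)$, which have virtual cohomological dimension $d$ and $d-1$ respectively.

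The field inputs are classical facts for a field $E$ of transcendence degree $n$ over $\Rb$, valid for every $m\geq n+1$ (Arason--Elman--Jacob, via the local--global principle and the signature):
\begin{enumerate}[label=(\roman*)]
\item $\Ir^{m}(E)$ is torsion-free;
\item multiplication by $\llangle -1\rrangle=2$ is an isomorphism $\Ir^{m}(E)\to\Ir^{m+1}(E)$.
\end{enumerate}
Both follow from the fact that the total signature identifies $\Ir^{m}(E)$ with a group of continuous functions on the space of orderings with values in $2^{m}\Zb$.

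For injectivity, let $a\in\Ir^d(F)$ satisfy $\llangle -1\rrangle a=0$ and suppose its image in $\Hr_\et^d(F)=\bar{\Ir}^d(F)$ vanishes. Then $a\in\Ir^{d+1}(F)$ and $2a=0$, so $a=0$ by (i); injectivity on stalks follows from injectivity into $F$.

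For surjectivity, take $\bar a\in\Hsc^d_x$ with $\omega\bar a=0$. Lift it to $a\in\Ibf^d_x\subseteq\Ir^d(F)$; the lift exists by surjectivity of $\Ibf^d\to\bar{\Ibf}^d\cong\Hsc^d$ as sheaves. The condition $\omega\bar a=0$ means $2a\in\Ir^{d+2}(F)$. By (ii) there is $b\in\Ir^{d+1}(F)$ with $2b=2a$, and then $a-b$ lies in the kernel and lifts $\bar a$.

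The main obstacle is to check that $b$ is unramified at $x$, so that $a-b$ is a section of $\Kbf^d$ near $x$ and not just an element over $F$. For each codimension one point $y$ through $x$, the residue $\partial^y b\in\Ir^{d}(\kappa(y))$ satisfies
\[2\,\partial^y b=\partial^y(2b)=\partial^y(2a)=2\,\partial^y a=0,\]
since $a$ is unramified. Because $\kappa(y)$ has transcendence degree $d-1$, fact (i) applies to $\Ir^{d}(\kappa(y))$, so $\partial^y b=0$. In the twisted situation one should check that the compatibility of residues with multiplication by $\llangle -1\rrangle$ holds for the twisted residue maps; it does, because $\llangle -1\rrangle$ comes from the base field. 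Purity then gives $b\in\Ibf^{d+1}_x$, which finishes surjectivity.
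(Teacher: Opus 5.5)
Your proof is correct. This paper does not prove the lemma; it only cites \cite[Lemma 3.2]{lerbetImageHigherSignature2026}. The surrounding text points to a short sheaf-level argument. By Jacobson's theorem, $\sign_t\colon\Ibf^t(\Leu)\to\iota_*\Zb(L)$ is an isomorphism for $t>d$. Together with $\sign_{t+1}\circ(\otimes\llangle -1\rrangle)=\sign_t$, this makes $\otimes\llangle -1\rrangle\colon\Ibf^{d+1}(\Leu)\to\Ibf^{d+2}(\Leu)$ an isomorphism. Now map $0\to\Ibf^{d+1}(\Leu)\to\Ibf^d(\Leu)\to\bar{\Ibf}^d\to 0$ by $\otimes\llangle -1\rrangle$ to the same sequence shifted up one degree; the snake lemma then identifies $\Kbf^d(\Leu)$ with $\bar{\Kbf}^d$.

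Your argument is this same diagram chase, carried out on stalks. Your injectivity step is injectivity of $\otimes\llangle -1\rrangle$ on $\Ibf^{d+1}_x$. Your surjectivity step is surjectivity of $\otimes\llangle -1\rrangle\colon\Ibf^{d+1}_x\to\Ibf^{d+2}_x$.

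What differs is that you prove this isomorphism yourself rather than quoting Jacobson. You use the field-level signature description of $\Ir^m(E)$ for $m$ above the transcendence degree, plus Gersten purity. The one extra observation is that the residues $\partial^y b$ are $2$-torsion in the torsion-free group $\Ir^d(\kappa(y))$, so they vanish. This makes your proof independent of Jacobson's sheaf-theoretic comparison, since you reprove the part of it you need. The cost is relying on the Gersten description of the stalks of both $\Ibf^t(\Leu)$ and $\Hsc^t$.

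One point of precision concerns fact (ii). The signature must send $\Ir^m(E)$ \emph{onto} $C(X_E,2^m\Zb)$, where $X_E$ is the space of orderings; at minimum you need $\sign(\Ir^{m+1}(E))=2\sign(\Ir^m(E))$. Landing merely inside $C(X_E,2^m\Zb)$ is not enough. The onto statement holds for $m$ above the virtual $2$-cohomological dimension. It is the harder half of the classical result and deserves a reference.
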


Let again $X$ be a smooth $\Rb$-variety of dimension $d$ and let $\Leu$ be a line bundle on $X$; set $L=\Leu(\Rb)$. Results of Jacobson \cite[Corollary 8.11]{jacobsonRealCohomologyPowers2017} show that for every $t>d$, the morphism $\sign_t:\Ibf^t(\Leu)\to\iota_*\Zb(L)$ is an isomorphism of sheaves on $X_\Zar$. This is an integral lift of \cite[Theorem 2.3.2]{colliot-theleneRealComponentsAlgebraic1990} which asserts that $h_t:\Hsc^t\to\iota_*\Zb/2$ is an isomorphism of sheaves on $X_\Zar$ for every $t>d$, so that the induced morphism $\bar{\gamma}_t^c:\Hr^c(X,\Hsc^t)\to\Hr^c(X(\Rb),\Zb/2)$ is an isomorphism. Jacobson's result also implies that $\Ker\sign_d=\Kbf^d(\Leu)$; moreover, the morphism $\sign_d$ is an epimorphism of sheaves on $X_\Zar$ by \cite[Lemma 3.1]{lerbetImageHigherSignature2026}, and similarly, the morphism $\bar{\sign}_d:\bar{\Ibf}^d\to\iota_*\Zb/2$ is an epimorphism \cite[Theorem 2.3.2 (b)]{colliot-theleneZerocyclesCohomologyReal1996}. Then there is a commutative ladder
\begin{equation}\label{eq:commutative_ladder_signature_variety}
\begin{tikzcd}
	0 & {\Kbf^d(\Leu)} & {\Ibf^d(\Leu)} & {\iota_*\Zb(L)} & 0 \\
	0 & {\bar{\Kbf}^d} & {\bar{\Ibf}^d} & {\iota_*\Zb/2} & 0
	\arrow[from=1-1, to=1-2]
	\arrow[from=1-2, to=1-3]
	\arrow[from=1-2, to=2-2]
	\arrow["{\sign_d}", from=1-3, to=1-4]
	\arrow[from=1-3, to=2-3]
	\arrow[from=1-4, to=1-5]
	\arrow["{\text{mod}\;2}", from=1-4, to=2-4]
	\arrow[from=2-1, to=2-2]
	\arrow[from=2-2, to=2-3]
	\arrow["{\bar{\sign}_d}", from=2-3, to=2-4]
	\arrow[from=2-4, to=2-5]
\end{tikzcd}
\end{equation}
of sheaves on $X_\Zar$ with exact rows.

Jacobson's results also imply that if $X(\Rb)=\emptyset$, then $\Ibf^{d+1}(\Leu)=0$ as a sheaf on $X_\Zar$ so the quotient morphism $\Ibf^d(\Leu)\to\bar{\Ibf}^d$ is an isomorphism. Thus there is an induced isomorphism
\begin{equation}\label{eq:projection_isomorphism}
\pi_\Leu^{i,d}:\Hr^{i}(X,\Ibf^{d}_\Leu)\xrightarrow{\sim}\Hr^{i}(X,\bar{\Ibf}^{d})\;\;\text{if}\;X(\Rb)=\emptyset,
\end{equation}
for any $i$. We can partially generalise the injectivity property in (\ref{eq:projection_isomorphism}). In the sequel, given an abelian group $A$ and an integer $n$, we denote by $A[n]$ the $n$-torsion subgroup of $A$.

\begin{lem}\label{lem:projection_dimension_filtration_isomorphism}
Let $X$ be a smooth $\Rb$-variety of dimension $d$ and let $\Leu$ be a line bundle on $X$; set $L=\Leu(\Rb)$. Let $i\in\{0,\ldots,d\}$. Suppose that $\bar{\gamma}_d^{i-1}:\Hr^{i-1}(X,\bar{\Ibf}^d)\to\Hr^{i-1}(X(\Rb),\Zb/2)$ is surjective and that there is an equality \[\Hr^{i}(X(\Rb),\Zb(L))[4]=\Hr^{i}(X(\Rb),\Zb(L))[2]\] of torsion subgroups. The homomorphism $\pi_\Leu^{i,d}:\Hr^{i}(X,\Ibf^{d}_\Leu)\to\Hr^{i}(X,\bar{\Ibf}^{d})$ is then injective on the $2$-torsion subgroup of $\Hr^{i}(X,\Ibf^{d}_\Leu)$.
\end{lem}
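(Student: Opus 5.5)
We must show: if $\alpha\in\Hr^i(X,\Ibf^d_\Leu)$ satisfies $2\alpha=0$ and $\pi^{i,d}_\Leu(\alpha)=0$, then $\alpha=0$.

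\emph{Step 1: lift $\alpha$ to $\Ibf^{d+1}$.} Since $\pi^{i,d}_\Leu(\alpha)=0$, the long exact sequence for $0\to\Ibf^{d+1}(\Leu)\to\Ibf^d(\Leu)\to\bar{\Ibf}^d\to 0$ gives $\beta\in\Hr^i(X,\Ibf^{d+1}_\Leu)$ mapping to $\alpha$. The map $\sign_{d+1}:\Ibf^{d+1}(\Leu)\to\iota_*\Zb(L)$ is an isomorphism by Jacobson, so with Coste--Roy we may regard $\beta$ as a class $b\in\Hr^i(X(\Rb),\Zb(L))$. The ladder (\ref{eq:ladder_normalised_signature}) with $t=d$ identifies the inclusion $\Ibf^{d+1}(\Leu)\to\Ibf^d(\Leu)$, composed with $\sign_d$, with multiplication by $2$ on $\iota_*\Zb(L)$. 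Hence $\gamma_d^i(\alpha)=2b$.

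\emph{Step 2: use the torsion hypothesis.} Since $2\alpha=0$, we get $4b=2\gamma^i_d(\alpha)=0$. The hypothesis $\Hr^i(X(\Rb),\Zb(L))[4]=\Hr^i(X(\Rb),\Zb(L))[2]$ then forces $2b=0$. The Bockstein sequence for $0\to\Zb(L)\xrightarrow{2}\Zb(L)\to\Zb/2\to0$ therefore gives $b=\delta(c)$ for some $c\in\Hr^{i-1}(X(\Rb),\Zb/2)$.

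\emph{Step 3: use the surjectivity hypothesis.} By surjectivity of $\bar{\gamma}^{i-1}_d$, choose $c'\in\Hr^{i-1}(X,\bar{\Ibf}^d)$ with $\bar{\gamma}^{i-1}_d(c')=c$. Let $\partial(c')\in\Hr^i(X,\Ibf^{d+1}_\Leu)$ be its image under the connecting map of the sequence $0\to\Ibf^{d+1}(\Leu)\to\Ibf^d(\Leu)\to\bar{\Ibf}^d\to0$. Because (\ref{eq:ladder_normalised_signature}) is a morphism of short exact sequences, connecting maps are compatible, so $\sign_{d+1}(\partial c')=\delta(\bar{\gamma}^{i-1}_d c')=\delta(c)=b$. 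Since $\sign_{d+1}$ is an isomorphism in cohomology, $\beta=\partial(c')$. Then $\alpha$, being the image of $\beta$ in $\Hr^i(X,\Ibf^d_\Leu)$, is the composite of two consecutive maps in the long exact sequence, hence $\alpha=0$.

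\emph{Main obstacle.} The delicate point is Step 3: checking that the connecting homomorphisms correspond under the ladder with the correct identifications. One must confirm that the twisted local system $\Zb(L)$ appears in both rows, and that Coste--Roy's identification commutes with the Bockstein $\delta$. Naturality of connecting maps for a morphism of short exact sequences of sheaves on $X_\Zar$, followed by applying the Coste--Roy isomorphisms (which are natural in the coefficient sheaf), should settle this.
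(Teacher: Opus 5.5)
Your proof is correct, but it takes a different route from the paper's. The paper works with the ladder (\ref{eq:commutative_ladder_signature_variety}), built on $0\to\Kbf^d(\Leu)\to\Ibf^d(\Leu)\xrightarrow{\sign_d}\iota_*\Zb(L)\to 0$:
\begin{itemize}
\item surjectivity of $\bar{\gamma}_d^{i-1}$ gives injectivity of $\Hr^i(X,\bar{\Kbf}^d)\to\Hr^i(X,\bar{\Ibf}^d)$;
\item Lemma \ref{lem:2_torsion_to_(-1)_torsion} makes $\Hr^i(X,\Kbf^d_\Leu)\to\Hr^i(X,\bar{\Kbf}^d)$ an isomorphism;
\item the snake lemma then shows that $\gamma_d^i$ is injective on $\Ker\pi_\Leu^{i,d}$.
\end{itemize}
The torsion hypothesis is used exactly as in your Step~2, to get $\gamma_d^i(\alpha)=0$; your Step~2 already gives $\gamma_d^i(\alpha)=2b=0$. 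The paper treats the case $X(\Rb)=\emptyset$ separately via (\ref{eq:projection_isomorphism}).

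You instead use the filtration sequence $0\to\Ibf^{d+1}(\Leu)\to\Ibf^d(\Leu)\to\bar{\Ibf}^d\to 0$ together with Jacobson's isomorphism $\sign_{d+1}$, and show directly that the lift $\beta$ lies in the image of the connecting map. This buys three things:
\begin{itemize}
\item it avoids Lemma \ref{lem:2_torsion_to_(-1)_torsion} and the snake lemma, although that lemma can itself be deduced from the same Jacobson isomorphism, so the essential input is unchanged;
\item it covers the empty real locus uniformly, since then $\Ibf^{d+1}(\Leu)=0$;
\item it shows the torsion hypothesis is sharp: when $\bar{\gamma}_d^{i-1}$ is surjective, your chase gives $\Ker\pi_\Leu^{i,d}\cong H/H[2]$ with $H=\Hr^i(X(\Rb),\Zb(L))$, whose $2$-torsion subgroup is $H[4]/H[2]$.
\end{itemize}
The paper's version instead identifies $\Ker\pi_\Leu^{i,d}$, via $\gamma_d^i$, with a subgroup of the real cohomology.

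The step you flag as the main obstacle is harmless: you never need the Coste--Roy isomorphism to commute with Bocksteins. Run Steps~2 and~3 on $X_\Zar$ with the sheaves $\iota_*\Zb(L)$ and $\iota_*\Zb/2$. There the bottom row of (\ref{eq:ladder_normalised_signature}) is a short exact sequence of sheaves, and compatibility of connecting maps is plain naturality. The hypotheses transfer for two reasons. First, $\Hr^i(X,\iota_*\Zb(L))\cong\Hr^i(X(\Rb),\Zb(L))$ as abstract groups, so the equality of $4$- and $2$-torsion holds for either. Second, $\bar{\gamma}_d^{i-1}$ is by definition the map $\Hr^{i-1}(X,\bar{\Ibf}^d)\to\Hr^{i-1}(X,\iota_*\Zb/2)$ followed by an isomorphism, so its surjectivity is equivalent to that of the former map.
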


\begin{proof}
If $X(\Rb)$ is empty, this follows from the more precise (\ref{eq:projection_isomorphism}) so we may assume that $X(\Rb)\neq\emptyset$. We have a commutative ladder
\[\begin{tikzcd}
	& {\Hr^i(X,\Kbf^d_\Leu)} & {\Hr^i(X,\Ibf^d_\Leu)} & {\Hr^i(X(\Rb),\Zb(L))} \\
	0 & {\Hr^i(X,\bar{\Kbf}^d)} & {\Hr^i(X,\bar{\Ibf}^d)} & {\Hr^i(X(\Rb),\Zb/2)}
	\arrow[from=1-2, to=1-3]
	\arrow["\psi"', from=1-2, to=2-2]
	\arrow["{\gamma_d^i}", from=1-3, to=1-4]
	\arrow["{\pi_\Leu^{i,d}}"', from=1-3, to=2-3]
	\arrow["{\rho(L)}"', from=1-4, to=2-4]
	\arrow[from=2-1, to=2-2]
	\arrow[from=2-2, to=2-3]
	\arrow["{\bar{\gamma}_d^i}"', from=2-3, to=2-4]
\end{tikzcd}\]
with exact rows where $\rho(L)$ is induced by the epimorphism $\Zb(L)\to\Zb/2$ of sheaves. It is obtained by taking cohomology long exact sequences in (\ref{eq:commutative_ladder_signature_variety}). Only exactness in $\Hr^i(X,\bar{\Kbf}^d)$ requires justification: it follows from our assumption that $\bar{\gamma}_d^{i-1}$ is surjective. The snake lemma then yields an exact sequence \[\Ker\psi\to\Ker\pi_\Leu^{i,d}\to\Ker(\Im\gamma_d^i\to\Hr^i(X(\Rb),\Zb/2)\to\Coker\psi.\] The morphism $\psi$ is an isomorphism by Lemma \ref{lem:2_torsion_to_(-1)_torsion} hence both outer terms in this exact sequence vanish. It follows that $\gamma_d^i$ induces an isomorphism 
\begin{equation}\label{eq:cycle_class_iso_kernels}
\Ker\pi_\Leu^{i,d}\xrightarrow{\sim}\Ker\left(\Im\gamma_d^i\hookrightarrow\Hr^i(X(\Rb),\Zb(L))\to\Hr^i(X(\Rb),\Zb/2)\right).
\end{equation}
Now let $\alpha\in\Hr^i(X,\Ibf^d_\Leu)$ be $2$-torsion and suppose that $\pi_\Leu^{i,d}(\alpha)=0$. We have to show that $\alpha=0$. We observe that \[\rho(L)\circ\gamma_d^i(\alpha)=\bar{\gamma}_d^i\circ\pi_\Leu^{i,d}(\alpha)=0\] so in view of the exact sequence \[\Hr^i(X(\Rb),\Zb(L))\xrightarrow{\cdot 2}\Hr^i(X(\Rb),\Zb(L))\xrightarrow{\rho(L)}\Hr^i(X(\Rb),\Zb/2)\] associated with the epimorphism $\Zb(L)\to\Zb/2$ of sheaves, there exists a cohomology class $\beta\in\Hr^i(X(\Rb),\Zb(L))$ such that $\gamma_d^i(\alpha)=2\beta$. Since $\alpha$ is $2$-torsion, the class $\beta$ is $4$-torsion, hence $2$-torsion by our assumption on the torsion subgroups of $\Hr^i(X(\Rb),\Zb(L))$. Thus $\gamma_d^i(\alpha)=2\beta=0$: we then have $\alpha=0$ by (\ref{eq:cycle_class_iso_kernels}), as required.
\end{proof}

\begin{cor}\label{cor:kernel_connecting_steenrod}
Under the assumptions of Lemma \ref{lem:projection_dimension_filtration_isomorphism}, there is an equality \[\Ker\partial_\Leu^{i-1,d-1}=\Ker\Phi_{i-1,d-1,\Leu}\] of subgroups of $\Hr^{i-1}(X,\bar{\Ibf}^{d-1})$. Consequently, the epimorphism $\Ibf^{d-1}(\Leu)\to\bar{\Ibf}^{d-1}$ induces an exact sequence \[\Hr^{i-1}(X,\Ibf^d_\Leu)\to\Hr^{i-1}(X,\Ibf^{d-1}_\Leu)\to\Hr^{i-1}(X,\bar{\Ibf}^{d-1})\xrightarrow{\Phi_{i-1,d-1,\Leu}}\Hr^{i}(X,\bar{\Ibf}^{d})\] of abelian groups. 
\end{cor}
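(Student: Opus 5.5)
The plan is to compare the two maps through the factorisation $\Phi_{i-1,d-1,\Leu}=\pi_\Leu^{i,d}\circ\partial_\Leu^{i-1,d-1}$. This factorisation gives the inclusion $\Ker\partial_\Leu^{i-1,d-1}\subseteq\Ker\Phi_{i-1,d-1,\Leu}$ at once. For the reverse inclusion, I will check two things about the image of $\partial_\Leu^{i-1,d-1}$: that it lies in the $2$-torsion subgroup of $\Hr^{i}(X,\Ibf^{d}_\Leu)$, and that $\pi_\Leu^{i,d}$ is injective on that subgroup. The second point is exactly Lemma~\ref{lem:projection_dimension_filtration_isomorphism}.

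For the $2$-torsion claim, I would use that $\Hr^{i-1}(X,\bar{\Ibf}^{d-1})$ is an $\Fbf_2$-vector space. This holds because $\bar{\Ibf}^{d-1}\cong\Hsc^{d-1}$ is a sheaf of $\Zb/2$-modules, or directly because $\bar{\Ir}^n(F)$ is killed by $2$ for every field $F$, so that the complex $\Cr(X,\bar{\Ibf}^{d-1})$ is a complex of $\Zb/2$-vector spaces. The connecting homomorphism $\partial_\Leu^{i-1,d-1}$ is a group homomorphism, so every element of its image is killed by $2$.

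With both facts in hand, let $x\in\Ker\Phi_{i-1,d-1,\Leu}$. Then $y=\partial_\Leu^{i-1,d-1}(x)$ is $2$-torsion and satisfies $\pi_\Leu^{i,d}(y)=0$, so $y=0$ by Lemma~\ref{lem:projection_dimension_filtration_isomorphism}. This proves the equality of kernels.

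The exact sequence then comes from the long exact cohomology sequence of $0\to\Ibf^d(\Leu)\to\Ibf^{d-1}(\Leu)\to\bar{\Ibf}^{d-1}\to 0$. That sequence gives exactness of
\[\Hr^{i-1}(X,\Ibf^d_\Leu)\to\Hr^{i-1}(X,\Ibf^{d-1}_\Leu)\to\Hr^{i-1}(X,\bar{\Ibf}^{d-1})\xrightarrow{\partial_\Leu^{i-1,d-1}}\Hr^{i}(X,\Ibf^d_\Leu).\]
Exactness is a statement about the kernel of the last map only, and that kernel equals $\Ker\Phi_{i-1,d-1,\Leu}$. So $\partial_\Leu^{i-1,d-1}$ can be replaced by $\Phi_{i-1,d-1,\Leu}$, which has target $\Hr^{i}(X,\bar{\Ibf}^d)$.

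I expect no step to be a genuine obstacle, since the hard input is already contained in Lemma~\ref{lem:projection_dimension_filtration_isomorphism}. The only point needing care is that $\partial_\Leu^{i-1,d-1}$ lands in $\Ibf^{d}_\Leu$-cohomology in the correct degree. This holds because $\Ibf^{(d-1)+1}=\Ibf^d$ and the degree shifts from $i-1$ to $i$, which matches the hypotheses of the lemma.
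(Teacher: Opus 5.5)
Your proposal is correct and matches the paper's proof: the image of $\partial_\Leu^{i-1,d-1}$ is $2$-torsion because its source is, and Lemma~\ref{lem:projection_dimension_filtration_isomorphism} makes $\pi_\Leu^{i,d}$ injective on that image. Hence $\Ker\partial_\Leu^{i-1,d-1}=\Ker(\pi_\Leu^{i,d}\circ\partial_\Leu^{i-1,d-1})=\Ker\Phi_{i-1,d-1,\Leu}$, and the exact sequence follows by substituting this into the long exact sequence defining $\partial_\Leu^{i-1,d-1}$.
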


\begin{proof}
The group $\Hr^{i-1}(X,\bar{\Ibf}^{d-1})$ is $2$-torsion hence so is $\Im\partial_\Leu^{i-1,d-1}$. Therefore according to Lemma \ref{lem:projection_dimension_filtration_isomorphism}, the map $\pi_\Leu^{i,d}$ is injective on $\Im\partial_\Leu^{i-1,d-1}$ so that \[\Ker\partial_\Leu^{i-1,d-1}=\Ker(\pi_\Leu^{i,d}\circ\partial_\Leu^{i-1,d-1}=\Phi_{i-1,d-1,\Leu}).\] This completes the proof of the first assertion. The second follows from the first and the exact sequence \[\Hr^{i-1}(X,\Ibf^d_\Leu)\to\Hr^{i-1}(X,\Ibf^{d-1}_\Leu)\to\Hr^{i-1}(X,\bar{\Ibf}^{d-1})\xrightarrow{\partial_{\Leu}^{i-1,d-1}}\Hr^{i}(X,\Ibf^d_\Leu)\] provided by the definition of $\partial_{\Leu}^{i-1,d-1}$.
\end{proof}

We now verify the assumptions of Lemma \ref{lem:projection_dimension_filtration_isomorphism} in certain cases.

\begin{lem}\label{lem:4-torsion=2-torsion}
Let $T$ be a paracompact Hausdorff topological space and let $L$ be a line bundle on $T$. Then $\Hr^1(T,\Zb(L))[4]=\Hr^1(T,\Zb(L))[2]$.
\end{lem}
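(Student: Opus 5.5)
We need to show that every 4-torsion class in $\Hr^1(T,\Zb(L))$ is already killed by $2$. The plan is to identify $\Hr^1(T,\Zb(L))$ with a group classifying torsors, and then compute torsion by hand.

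The local system $\Zb(L)$ is the sheaf of locally constant sections of $\Zb\times^{\Zb/2}P$. Here $P\to T$ is the double cover, i.e.\ the $\{\pm1\}$-torsor of orientations of $L$, and $\{\pm 1\}$ acts on $\Zb$ by sign. Since $T$ is paracompact Hausdorff, sheaf cohomology agrees with Čech cohomology. Therefore $\Hr^1(T,\Zb(L))$ classifies torsors under the sheaf $\Zb(L)$, which are locally covering spaces with fibre $\Zb$ twisted by $L$.

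Set $A=\Zb(L)$ and use the short exact sequence $0\to A\xrightarrow{\cdot 2}A\to\Zb/2\to 0$. It gives the exact sequence
\[\Hr^0(T,A)\to\Hr^0(T,\Zb/2)\xrightarrow{\delta}\Hr^1(T,A)\xrightarrow{\cdot 2}\Hr^1(T,A),\]
so $\Hr^1(T,A)[2]=\delta\Hr^0(T,\Zb/2)$. Likewise, $0\to A\xrightarrow{\cdot 4}A\to\Zb/4\otimes A\to0$ gives $\Hr^1(T,A)[4]=\delta_4\Hr^0(T,A/4)$. Note that $A/4=\Zb/4(L)$ is twisted, since $-1\neq 1$ in $\Zb/4$.

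So it suffices to show the following: for every global section $s$ of $\Zb/4(L)$, the class $\delta_4(s)$ lies in $\delta\Hr^0(T,\Zb/2)$.

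We decompose $T$ into three clopen pieces according to $s$. Locally, $s$ takes values in $\Zb/4$, well defined up to sign. The subsets $T_0=\{s\equiv 0\}$, $T_2=\{s\equiv 2\}$ and $T_{\pm1}=\{s \text{ odd}\}$ are sign-invariant and locally constant, hence clopen. Cohomology splits over this decomposition, so we treat each piece separately.

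On $T_0$ the class vanishes, since $s$ lifts (to $0$) to a section of $A$.

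On $T_2$, we have $s=2\bar u$ where $\bar u$ is the constant section $1$ of $\Zb/2$. Compatibility of the Bocksteins under $A/2\xrightarrow{\cdot2}A/4$ gives $\delta_4(2\bar u)=\delta(\bar u)$, which is $2$-torsion.

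On $T_{\pm1}$, the section $s$ is odd, so it gives a nowhere-vanishing choice of sign, i.e.\ a section of $P$. Hence $L$ is trivial there, $A\cong\Zb$, and $s$ becomes a locally constant $\Zb/4$-valued function with values in $\{1,3\}$. After possibly changing the trivialisation by $-1$ on a clopen set, $s$ lifts to the constant section $1$ of $\Zb$. Thus $\delta_4(s)=0$.

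The main obstacle is making the Bockstein compatibility $\delta_4\circ(\cdot 2)=\delta$ precise and checking the trivialisation argument on $T_{\pm1}$ carefully in the twisted setting. Both follow from maps of short exact sequences of sheaves, using the morphism $(\cdot 2,\mathrm{id},\cdot2)$ from $0\to A\xrightarrow{2}A\to A/2\to0$ to $0\to A\xrightarrow{4}A\to A/4\to0$.
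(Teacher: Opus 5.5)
Your argument is correct, and after the common first step it takes a different route from the paper. Both proofs use the sequence for $\cdot 4$ to identify $\Hr^1(T,\Zb(L))[4]$ with the image of $\delta_4:\Hr^0(T,\Zb/4(L))\to\Hr^1(T,\Zb(L))$.

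The paper then reduces to $T$ connected and splits into two cases. If $L$ is trivial, $\Hr^0(T,\Zb)\to\Hr^0(T,\Zb/4)$ is onto, so there is no $4$-torsion at all. If $L$ is nontrivial, it shows that $\Hr^0(T,\Zb/4(L))$ is $2$-torsion, using the extension $0\to\Zb/2\to\Zb/4(L)\to\Zb/2\to 0$ and the identification, cited from Greenblatt, of its Bockstein on $1$ with $w_1(L)\neq 0$.

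You instead split $T$ into clopen pieces according to the values of the given section $s$. You replace the $w_1$ computation by the elementary fact that an odd section of $\Zb/4(L)$ is a section of the orientation double cover, so it trivialises $\Zb(L)$ and lifts.

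What each route buys:
\begin{itemize}
\item Your version is self-contained and needs no case split on $L$. It also works verbatim for an arbitrary paracompact Hausdorff $T$. The paper's ``working componentwise'' is only literally justified when connected components are open, as for the manifolds where the lemma is applied.
\item The paper's version gives a little more in the connected case: torsion-freeness when $L$ is trivial, and the structure of $\Hr^0(T,\Zb/4(L))$ recorded in the remark that follows. Its $w_1$ formulation also matches how the connecting map $d_L$ is used later.
\end{itemize}

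Two small points.
\begin{itemize}
\item The morphism of short exact sequences you name, $(\cdot 2,\mathrm{id},\cdot 2)$, does not commute: the left square would force $2a=8a$. The correct one is $(\mathrm{id},\cdot 2,\cdot 2)$, and naturality of connecting maps then gives $\delta_4\circ(\cdot 2)=\delta$.
\item You do not need that compatibility anyway. On $T_0\cup T_2$ one has $2s=0$, so $2\delta_4(s)=\delta_4(2s)=0$ by linearity. This is exactly the mechanism behind the paper's reduction to showing that $\Hr^0(T,\Zb/4(L))$ is $2$-torsion. So the step you flag as the main obstacle is not one.
\end{itemize}

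Your opening remark about \v{C}ech cohomology and torsors plays no role in the argument and can be dropped.
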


\begin{proof}
Working componentwise, we may assume that $T$ is connected. The exact sequence \[0\to\Zb(L)\xrightarrow{\cdot 4}\Zb(L)\to\Zb/4(L)\to 0\] of sheaves on $T$ induces an exact sequence \[\Hr^0(T,\Zb(L))\to\Hr^0(T,\Zb/4(L))\to\Hr^1(T,\Zb(L))\xrightarrow{\cdot 4}\Hr^1(T,\Zb(L))\] of abelian groups. If $L$ is trivial, then the map $\Hr^0(T,\Zb(L))\to\Hr^0(T,\Zb/4(L))$ is the reduction mod~$4$ map $\Zb\to\Zb/4$ (because $T$ is connected) and is therefore surjective so $\Hr^1(T,\Zb(L))$ has no $4$-torsion (in fact, it is torsion free) and the claim is clear. Therefore we may assume that $L$ is nontrivial. The above exact sequence shows that to prove that $\Hr^1(T,\Zb(L))[4]$ is $2$-torsion, it suffices to prove that $\Hr^0(T,\Zb/4(L))$ is $2$-torsion. To this end, we consider the exact sequence \[0\to\Zb/2\xrightarrow{\cdot 2}\Zb/4(L)\to\Zb/2\to 0\] of sheaves on $T$. It induces an exact sequence \[0\to\Hr^0(T,\Zb/2)\to\Hr^0(T,\Zb/4(L))\to\Hr^0(T,\Zb/2)\xrightarrow{\partial}\Hr^1(T,\Zb/2)\] of abelian groups. But by \cite{greenblattHomologyLocalCoefficients2006}, the connecting homomorphism $\partial$ takes the nonzero element of the group $\Hr^0(T,\Zb/2)=\Zb/2$ to the first Stiefel--Whitney class $w_1(L)$ of $L$ which is nonzero since $L$ is nontrivial. Consequently, the morphism $\partial$ is injective and thus the map $\Hr^0(T,\Zb/2)\to\Hr^0(T,\Zb/4(L))$ is surjective which completes the proof.
\end{proof}

\begin{rema}
The above argument shows more precisely that if $T$ is connected and $L$ is nontrivial, then the $2$-torsion section $\bar{2}$ of $\Hr^0(T,\Zb/4)$ induces a section of $\Hr^0(T,\Zb/4(L))$ which generates this $2$-torsion group.
\end{rema}

\begin{rema}\label{rema:top_twisted_cohomology}
Let $M$ be a smooth manifold of dimension $d$ and let $L$ be a line bundle on $M$. Denote by $T$ the set of compact connected components of $M$ and by $T'$ the set of such components $V$ such that $L_{|V}$ is \emph{not} isomorphic to the orientation sheaf of $V$. Then twisted Poincaré duality yields an isomorphism \[\Hr^d(M,\Zb(L))\cong\Zb^{T\setminus T'}\oplus(\Zb/2)^{T'}\] (see \cite[Lemma 1.2.2]{faselVasersteinSymbolReal2018} for details and further references). In particular, its $4$- and $2$-torsion subgroups agree.
\end{rema}

\begin{rema}\label{rema:cases_mod_2_cycle_class_surjective}
Let $X$ is a smooth $\Rb$-variety of dimension $d$ and let $i\geq 0$. Then $\bar{\gamma}_d^{i-1}(X)$ is surjective in the following cases.
\begin{itemize}
	\item The integer $i$ is equal to $0$. This is either meaningless or trivial depending on conventions; we prefer to define all groups as $0$ in negative cohomological degree so this fact is trivial.
	\item The integer $i$ is equal to $1$. This is a consequence of \cite[Corollary 3.2]{krasnovEQUIVARIANTGROTHENDIECKCOHOMOLOGY1995a} if $X$ is projective, and \cite[Theorem 2.8]{hamelTorsionZerocyclesAbelJacobi2000} in general (see also \cite[Remark 1.6 (ii)]{benoistIntegralHodgeConjecture2020} for a more canonical argument valid over any real closed field).
	\item The integer $i$ is equal to $d$. If $X$ satisfies $(*)$, this follows from \cite[Theorem 3.2 (d)]{colliot-theleneZerocyclesCohomologyReal1996}; else, one has $X(\Rb)=\emptyset$ so $\Hr^{d-1}(X(\Rb),\Zb/2)=0$ and thus the surjectivity of $\bar{\gamma}_d^{d-1}$ is obvious.
\end{itemize}
In particular, Lemma \ref{lem:projection_dimension_filtration_isomorphism} applies in cohomological degree $d$ by Remark \ref{rema:top_twisted_cohomology}, in cohomological degree $1$ by Lemma \ref{lem:4-torsion=2-torsion} and in cohomological degree $0$ as the abelian group $\Hr^0(X(\Rb),\Zb(L))$ is free, hence in all cohomological degrees if the variety under consideration is a surface.
\end{rema}

\subsubsection{Convergence of the Pardon spectral sequence for some smooth $\Rb$-varieties}

Let $X$ be a smooth $k$-variety and let $\Leu$ be a line bundle on $X$. Recall the Pardon spectral sequence \[\Er_2^{p,q}(\Leu)=\Hr^p(X,\bar{\Ibf}^q)\Rightarrow\Hr^p(X,\Wbf_\Leu)\] associated with the filtration of $\Cr(X,\Wbf_\Leu)$ by its subcomplexes $\Cr(X,\Ibf^q_\Leu)$. As mentioned in §\ref{subsubection:pardon}, the induced filtration on $\Hr^p(X,\Wbf_\Leu)$ is given by \[\Fr^q\Hr^p(X,\Ibf^q_\Leu)=\Im\left(\Hr^p(X,\Ibf^q_\Leu)\to\Hr^p(X,\Wbf_\Leu)\right).\] The $q$-th graded piece $\Gra^q\Hr^p(X,\Wbf_\Leu)$ of this filtration is a subobject of the abutment $\Er_\infty^{p,q}(\Leu)$ of the Pardon spectral sequence, but is in general distinct from it (note that the filtration on $\Cr(X,\Wbf_\Leu)$ need not be finite). However, we have the following lemma, which will suffice for our purposes:

\begin{prop}\label{prop:convergence_pardon_no_odd}
Let $X$ be a smooth $\Rb$-variety and let $\Leu$ be a line bundle on $X$; set $L=\Leu(\Rb)$. Let $p,q\geq 0$. Suppose that the odd torsion subgroup of $\Hr^p(X(\Rb),\Zb(L))$ is trivial. Then the canonical map $\Gra^{q-1}\Hr^{p-1}(X,\Wbf_\Leu)\to\Er_\infty^{p-1,q-1}(\Leu)$ is surjective.
\end{prop}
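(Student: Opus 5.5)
The plan is to work directly with the spectral sequence of the filtered complex $\Cr(X,\Wbf_\Leu)\supseteq\Cr(X,\Ibf^q_\Leu)$. I will characterise the classes in $\Er_\infty^{p-1,q-1}(\Leu)$ through the connecting homomorphism $\partial^{p-1,q-1}_\Leu:\Hr^{p-1}(X,\bar{\Ibf}^{q-1})\to\Hr^p(X,\Ibf^q_\Leu)$, and then use the stable range together with the hypothesis on odd torsion to kill the obstruction.

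\emph{Step 1: describe $\Er_\infty$.} By the standard formalism, $\Er_\infty^{p-1,q-1}(\Leu)$ is a quotient of the permanent cycles $\bigcap_r Z_r$. Here a class $\bar{x}\in\Hr^{p-1}(X,\bar{\Ibf}^{q-1})$ lies in $Z_{r}$ exactly when it lifts to a chain $x_r\in\Cr(X,\Ibf^{q-1}_\Leu)^{p-1}$ with $dx_r\in\Cr(X,\Ibf^{q-1+r}_\Leu)^p$. Equivalently, $\partial_\Leu^{p-1,q-1}(\bar{x})$ lies in the image of $i_{q-1+r,q}:\Hr^p(X,\Ibf^{q-1+r}_\Leu)\to\Hr^p(X,\Ibf^q_\Leu)$. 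Since the boundaries only enlarge the denominator, surjectivity of $\Gra^{q-1}\to\Er_\infty$ reduces to the following claim. If $c=\partial_\Leu^{p-1,q-1}(\bar{x})$ lies in $\bigcap_{r}\Im i_{q-1+r,q}$, then $c=0$. Indeed, once $c=0$, the long exact sequence for $\Ibf^q_\Leu\subseteq\Ibf^{q-1}_\Leu$ shows that $\bar{x}=\pi^{p-1,q-1}_\Leu(y)$ for some $y\in\Hr^{p-1}(X,\Ibf^{q-1}_\Leu)$. Hence the class of $\bar x$ is the image of the class of $y$ in $\Gra^{q-1}$.

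\emph{Step 2: the stable range.} Fix $N>\max(d,q)$ with $d=\dim X$. By Jacobson's theorem, $\sign_n:\Ibf^n(\Leu)\to\iota_*\Zb(L)$ is an isomorphism for all $n\ge N$, so $\gamma^p_n:\Hr^p(X,\Ibf^n_\Leu)\cong\Hr^p(X(\Rb),\Zb(L))=:A$ via Coste--Roy. The ladder (\ref{eq:ladder_normalised_signature}) shows that, under these identifications, the inclusion $\Ibf^{n+1}(\Leu)\subseteq\Ibf^n(\Leu)$ becomes multiplication by $2$ on $A$. Therefore, for $M\ge N$,
\[\Im i_{M,q}=i_{N,q}\bigl(2^{M-N}\Hr^p(X,\Ibf^N_\Leu)\bigr),\]
and so $\bigcap_M\Im i_{M,q}=i_{N,q}\bigl(\bigcap_k 2^kA\bigr)$. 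One inclusion here is immediate. For the other, use that the images form a decreasing chain and that $\bigcap_k2^kA$ is attained for $k$ large, because $A$ is finitely generated; this is what Step 3 supplies.

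\emph{Step 3: use finiteness and the odd-torsion hypothesis.} The space $X(\Rb)$ is a semi-algebraic set and hence has the homotopy type of a finite CW complex, so $A$ is a finitely generated abelian group. Write $A\cong\Zb^a\oplus A_2\oplus A_{\mathrm{odd}}$. Then $\bigcap_k2^kA=A_{\mathrm{odd}}$, and the chain $2^kA$ stabilises once $2^k$ kills $A_2$. By hypothesis $A_{\mathrm{odd}}=0$, so $\bigcap_M\Im i_{M,q}=0$, hence $c=0$, which concludes the argument.

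The main obstacle is Step 1: correctly identifying the permanent cycles with classes whose connecting image is infinitely "deep" in the filtration of $\Hr^p(X,\Ibf^*_\Leu)$. The filtration is not finite, so care is needed to use $\bigcap_r Z_r$ rather than the cycles of the total complex. Step 2 then only requires checking that the identifications of Jacobson and Coste--Roy are compatible with the inclusion maps, and this is exactly the content of (\ref{eq:ladder_normalised_signature}).
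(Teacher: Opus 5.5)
Your reduction in Step 1 and your use of Jacobson's isomorphism in Step 2 match the paper's strategy. The gap is the interchange $\bigcap_M\Im i_{M,q}=i_{N,q}\bigl(\bigcap_k 2^k\Hr^p(X,\Ibf^N_\Leu)\bigr)$. You justify it by saying that the chain $2^kA$ stabilises once $2^k$ kills $A_2$. This is false as soon as $A$ has a free part, e.g.\ $X(\Rb)=\Sr^1$, $p=1$, $A=\Zb$. There $2^kA\supseteq 2^k\Zb^a$ keeps shrinking, and $A_{\mathrm{odd}}$ is never attained. Finite generation of $A$ alone does not give the interchange. For the reduction map $f:\Zb\to\Zb/3$ one has $\bigcap_k f(2^k\Zb)=\Zb/3$ but $f(\bigcap_k 2^k\Zb)=0$. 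So Steps 2--3 as written do not show that $c=0$.

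The missing ingredient is control of $\Ker i_{N,q}$, which is the heart of the paper's proof.
\begin{itemize}
\item By the long exact sequence for $\Ibf^N(\Leu)\subseteq\Ibf^q(\Leu)$, this kernel is the image of the connecting map from $\Hr^{p-1}(X,\Ibf^q_\Leu/\Ibf^N_\Leu)$.
\item That group is killed by $2^{N-q}$, because $2^{N-q}=\llangle -1\rrangle^{\otimes(N-q)}\in\Ir^{N-q}(\Rb)$ carries $\Ibf^q(\Leu)$ into $\Ibf^N(\Leu)$.
\item Hence $\Ker i_{N,q}$ lies in the $2^{N-q}$-torsion of $\Hr^p(X,\Ibf^N_\Leu)\cong A$, which is finite because $A$ is finitely generated.
\item Choose $a_k\in 2^k\Hr^p(X,\Ibf^N_\Leu)$ with $i_{N,q}(a_k)=c$, using your Step 2. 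All $a_k$ lie in the finite coset $a_0+\Ker i_{N,q}$, so some value $a$ occurs for infinitely many $k$.
\item Since the chain is decreasing, $\gamma^p_N(a)\in\bigcap_k 2^kA=A_{\mathrm{odd}}=0$. Thus $a=0$ and $c=i_{N,q}(a)=0$.
\end{itemize}
The paper packages the same idea differently. It chooses $n$ so that no nonzero element of $\gamma^p_t(y_t)-\Hr^p(X(\Rb),\Zb(L))[2^{t-q}]$ is divisible by $2^n$. It then notes that $\gamma^p_t(y_t)-\gamma^p_t(\delta z)$ is divisible by $2^n$ while $\delta z$ is $2^{t-q}$-torsion. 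With the torsion bound on $\Ker i_{N,q}$ added, your argument goes through.
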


\begin{proof}
Recall that $\Gra^{q-1}\Hr^{p-1}(X,\Wbf_\Leu)$ is a quotient of $\Hr^{p-1}(X,\Ibf_\Leu^{q})$ and that the inclusion $\Gra^{q-1}\Hr^{p-1}(X,\Wbf_\Leu)\to\Er_\infty^{p-1,q-1}(\Leu)$ is simply induced by the quotient map $\pi_{\Leu}^{p-1,q-1}:\Hr^{p-1}(X,\Ibf_\Leu^{q})\to\Hr^{p-1}(X,\bar{\Ibf}^{q-1})$. The surjectivity of this inclusion then amounts to the following statement: Let $\alpha\in\Hr^{p-1}(X,\bar{\Ibf}^{q-1})$ be a permanent cycle of (\ref{eq:pardon}). Then $\partial_\Leu^{p-1,q}\alpha=0$, and thus $\alpha$ lifts along $\pi_\Leu^{p-1,q-1}$ to an element of $\Hr^{p-1}(X,\Ibf_\Leu^{q-1})$.

Since $\alpha$ is a permanent cycle, for every $t\geq q$, there exists $y_t\in\Hr^p(X,\Ibf_\Leu^t)$ such that $\beta=\partial_\Leu^{p-1,q-1}\alpha$ is the image of $y_t$ under the homomorphism $i_\Leu^{p,t,q}$, where \[i_\Leu^{n,s,r}:\Hr^n(X,\Ibf_\Leu^s)\to\Hr^p(X,\Ibf_\Leu^{r})\] is induced by the inclusion $\Ibf^s(\Leu)\subseteq\Ibf^r(\Leu)$. Choose such an element $y_{t}$ of $\Hr^p(X,\Ibf^{t}_\Leu)$ where $t=\min(d+1,q)$. Since $\Hr^p(X(\Rb),\Zb(L))$ has no nontrivial odd torsion element, no nonzero element of $\Hr^p(X(\Rb),\Zb(L))$ is infinitely $2$-divisible. The group $\Hr^p(X(\Rb),\Zb(L))$ is finitely generated so its $2^{t-q}$-torsion subgroup is finite. We conclude that there exists $n\geq 0$ such that for every $x\in\Hr^p(X(\Rb),\Zb(L))$ such that $2^{t-q}x=0$, the element $\gamma_{t}^p(y_{t})-x$ of $\Hr^p(X(\Rb),\Zb(L))$ is either zero or not divisible by $2^n$.

Now there exists $y_{t+n}\in\Hr^p(X,\Ibf_\Leu^{t+n})$ such that $i_\Leu^{p,t+n,q}(y_s)=\beta$. Consequently, the difference $y_{t}-i_\Leu^{p,t+n,t}(y_{t+n})$ lies in the kernel of $i_\Leu^{p,t,q}$, hence in the image of the connecting homomorphism \[\delta:\Hr^{p-1}(X,\Ibf_\Leu^{q}/\Ibf_\Leu^t)\to\Hr^{p}(X,\Ibf_\Leu^{t})\] for the cohomology long exact sequence associated with the monomorphism $\Ibf_\Leu^t\subseteq\Ibf_\Leu^{q}$ of sheaves. We can therefore write $y_t-\delta z=i_\Leu^{p,t+n,t}(y_{t+n})$, where $z$ lies in the group $\Hr^{p-1}(X,\Ibf_\Leu^{q}/\Ibf_\Leu^t)$. In view of the commutative diagram
\[\begin{tikzcd}
	{\Hr^p(X,\Ibf_\Leu^{t+n})} & {\Hr^p(X,\Ibf_\Leu^t)} \\
	{\Hr^p(X(\Rb),\Zb(L))} & {\Hr^p(X(\Rb),\Zb(L))}
	\arrow["{i_\Leu^{p,s,t}}", from=1-1, to=1-2]
	\arrow["{\gamma_{t+n}^p}"', from=1-1, to=2-1]
	\arrow["{\gamma_t^p}", from=1-2, to=2-2]
	\arrow["{\cdot 2^{n}}"', from=2-1, to=2-2]
\end{tikzcd}\]
one then has $\gamma_t^p(y_t)-\gamma_t^p(\delta z)=2^{n}\gamma_{t+n}^p(y_s)$. The sheaf $\Ibf_\Leu^{q}/\Ibf_\Leu^t$ is $2^{t-q}$-torsion: indeed, this follows from the fact that $\llangle -1\rrangle=2$ lies in $\Ir(\Rb)$, thus $2^{t-q}=\llangle -1\rrangle^{\otimes(t-q)}$ lies in $\Ir^{t-q}(\Rb)$, so that $2^{t-q}\Ibf_\Leu^{q}$ is a subsheaf of $\Ibf_\Leu^t$. Therefore the cohomology groups $\Hr^*(X,\Ibf_\Leu^{q}/\Ibf_\Leu^t)$ are $2^{t-q}$-torsion, hence $\delta z$ is $2^{t-q}$-torsion: consequently, the element $\gamma_t^p(\delta z)$ of $\Hr^p(X(\Rb),\Zb(L))$ is $2^{t-q}$-torsion. Since $\gamma_t^p(y_t)-\gamma_t^p(\delta z)$ is $2^n$-divisible, this implies by choice of $n$ that $\gamma_t^p(y_t)-\gamma_t^p(\delta z)=0$. As $t\geq d+1$, the map $\gamma_t^p$ is an isomorphism by \cite[Corollary 8.11]{jacobsonRealCohomologyPowers2017} so $y_t-\delta z=0$, hence $i^{p,t,q}(y_t-\delta z)=0$. But $i^{p,t,q}\circ\delta=0$ by definition of $\delta$. Thus $i^{p,t,q}(y_t)=0$: since $i^{p,t,q}(y_t)=\partial_\Leu^{p-1,q-1}(\alpha)$ by choice of $y_t$, this implies that $\partial_\Leu^{p-1,q-1}(\alpha)=0$, as desired.
\end{proof}

%

Consequently, if $\Hr^*(X(\Rb),\Zb(L))$ has no nontrivial odd torsion, then (\ref{eq:pardon}) weakly converges to $\Fr^*\Hr^\star(X,\Wbf_\Leu)$.

\begin{rema}
Let $X$ be a smooth $\Rb$-variety of dimension $d$ such that $X(\Rb)=\emptyset$. Proposition \ref{prop:convergence_pardon_no_odd} evidently applies to $X$. However, in this particular case (which is the essential one in this paper), there is a much easier proof, which consists in observing that the filtration on $\Cr(X,\Wbf_\Leu)$ is in fact bounded above: one has $\Cr(X,\Ibf^t_\Leu)=0$ for every $t\geq d+1$. Indeed, the Aras\'on--Pfister \emph{Hauptsatz} \cite[Theorem 23.8]{elmanAlgebraicGeometricTheory2008} states that \[\bigcap_{t\geq 0}\Ir^t(F)=0\] for any field $F$. Consequently, to show that $\Cr(X,\Ibf^t_\Leu)=0$ for a fixed $t$, it suffices to prove that $\Cr(X,\bar{\Ibf}^q)=0$ for every $q\geq t$ and thus, in view of the affirmation of the Milnor conjecture, that $\Cr(X,\Hsc^q)=0$ for every $q\geq t$. Now if $U$ is a connected $\Rb$-variety of dimension $n$ with empty real locus, then its function field has $2$-cohomological dimension $n$ \cite[Proposition 1.2.1]{colliot-theleneRealComponentsAlgebraic1990} (this fact goes back to Artin). If $x$ is a point of $X$ of codimension $p$, then the closure $\overline{x}$ of $x$ in $X$ is an $\Rb$-variety of dimension $d-p$ with empty real locus so its function field $\kappa(x)$ has $2$-cohomological dimension $d-p$. It follows that $\Hr_\et^{q-p}(\kappa(x))=0$ for every $q>d$, hence that $\Cr(X,\Hsc^q)$ vanishes if $q>d$. This completes the proof.
\end{rema}

\begin{rema}
Let $M$ be a smooth manifold. By the Nash--Tognoli theorem (see, \emph{e.g.}, \cite[Theorem 14.1.10]{bochnakRealAlgebraicGeometry1998}), there exists a smooth $\Rb$-variety $X$ such that $X(\Rb)$ is diffeomorphic to $M$. If $M$ is the lens space $M=\Sr^3/(\Zb/3)$, then by construction, one has $\Hr^2(X(\Rb),\Zb)=\Zb/3$. Consequently, Proposition \ref{prop:convergence_pardon_no_odd} does not apply to such a variety $X$.
\end{rema}

\section{The top shifted Witt group in low dimension}\label{section:top_witt}

\begin{prop}\label{prop:small_d_w_d}
Let $X$ be a smooth algebraic variety of dimension $d$ over $\Rb$ and let $\Leu$ be a line bundle over $X$; set $L=\Leu(\Rb)$. Let $d_L:\Hr^{d-1}(X(\Rb),\Zb/2)\to\Hr^d(X(\Rb),\Zb(L))$ denote the connecting morphism for the cohomology long exact sequence associated with the epimorphism $\Zb(L)\to\Zb/2$, and let $\Hr_\alg^{d-1}(X(\Rb),\Zb/2)$ denote the image of the homomorphism \[\bar{\gamma}^{d-1}:\Ch^{d-1}(X)\to\Hr^{d-1}(X(\Rb),\Zb/2).\] If $(*)$ holds, there exists a canonical isomorphism \[\chi:\Hr^d(X,\Wbf_\Leu)\xrightarrow{\cong}\Hr^d(X(\Rb),\Zb(L))/d_L\Hr_\alg^{d-1}(X(\Rb),\Zb/2)\] of abelian groups. Else, there is a canonical isomorphism \[\Hr^d(X,\Wbf_\Leu)\cong\Coker(\Sq^2_\Leu:\Ch^{d-1}(X)\to\Ch^d(X))\] of abelian groups.
\end{prop}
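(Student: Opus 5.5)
We reduce to $\Ibf^d$-cohomology and then pass to topology via the signature. In degree $d$ the complex $\Cr(X,\Wbf_\Leu)$ agrees with $\Cr(X,\Ibf^j_\Leu)$ for every $j\leq d$, and in degree $d+1$ all of these complexes vanish. Hence $\Hr^d(X,\Wbf_\Leu)$ is the cokernel of the last differential of $\Cr(X,\Wbf_\Leu)$. Lemma \ref{lem:easy_higher_pardon}, together with an inspection of the degree $d-1$ terms, then shows that $\Hr^d(X,\Wbf_\Leu)$ is the quotient of $\Hr^d(X,\Ibf^d_\Leu)$ by the image of $\Hr^{d-1}(X,\Ibf^{d-1}_\Leu/\Ibf^d_\Leu)=\Hr^{d-1}(X,\bar{\Ibf}^{d-1})$ under the connecting map $\partial_\Leu^{d-1,d-1}$. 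Indeed, the long exact sequence for $\Ibf^d_\Leu\subseteq\Ibf^{d-1}_\Leu$ gives
\[\Hr^{d-1}(X,\bar{\Ibf}^{d-1})\xrightarrow{\partial^{d-1,d-1}_\Leu}\Hr^d(X,\Ibf^d_\Leu)\to\Hr^d(X,\Ibf^{d-1}_\Leu)\to 0,\]
and $\Hr^d(X,\Ibf^{d-1}_\Leu)=\Hr^d(X,\Wbf_\Leu)$ because the quotient complexes $\Cr(X,\Wbf_\Leu/\Ibf^{d-1}_\Leu)$ vanish in degrees $\geq d-1$. So
\[\Hr^d(X,\Wbf_\Leu)\cong\Hr^d(X,\Ibf^d_\Leu)/\partial_\Leu^{d-1,d-1}\Ch^{d-1}(X).\]

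\emph{Case where $(*)$ fails.} Then $X(\Rb)=\emptyset$, so (\ref{eq:projection_isomorphism}) identifies $\Hr^d(X,\Ibf^d_\Leu)$ with $\Hr^d(X,\bar{\Ibf}^d)=\Ch^d(X)$ via $\pi^{d,d}_\Leu$. Under this identification $\partial^{d-1,d-1}_\Leu$ becomes $\Phi_{d-1,d-1,\Leu}$, which is $\Sq^2_\Leu$ by Theorem \ref{theo:Totaro} and Remark \ref{rema:twisted_steenrod_square}. This gives the cokernel description.

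\emph{Case where $(*)$ holds.} Apply the ladder (\ref{eq:commutative_ladder_signature_variety}) in degree $d$. The map $\gamma^d_d$ appears in the exact sequence
\[\Hr^d(X,\Kbf^d_\Leu)\to\Hr^d(X,\Ibf^d_\Leu)\xrightarrow{\gamma^d_d}\Hr^d(X(\Rb),\Zb(L))\to 0,\]
whose right end is zero since $\Kbf^d$ has no $\Hr^{d+1}$. Via Lemma \ref{lem:2_torsion_to_(-1)_torsion}, $\Hr^d(X,\Kbf^d_\Leu)\cong\Hr^d(X,\bar{\Kbf}^d)$. Exactly as in the proof of Lemma \ref{lem:projection_dimension_filtration_isomorphism} (whose hypotheses hold in degree $d$ by Remarks \ref{rema:top_twisted_cohomology} and \ref{rema:cases_mod_2_cycle_class_surjective}), the snake lemma identifies $\Ker\gamma^d_d$ with the image of $\Hr^d(X,\bar{\Kbf}^d)$, which maps isomorphically to $\Ker(\bar{\gamma}^d_d:\Ch^d(X)\to\Hr^d(X(\Rb),\Zb/2))$. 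The aim is to show that this kernel is exactly the image of $\partial^{d-1,d-1}_\Leu$ restricted to $\Ker\bar{\gamma}^{d-1}$, while the remaining part of the image maps under $\gamma^d_d$ to $d_L\Hr^{d-1}_\alg$. The latter is the compatibility $\gamma^d_d\circ\partial^{d-1,d-1}_\Leu=d_L\circ\bar{\gamma}^{d-1}$, coming from naturality of connecting maps in the ladder (\ref{eq:ladder_normalised_signature}) with $t=d-1$. Granting these, $\gamma^d_d$ induces the isomorphism
\[\chi:\Hr^d(X,\Wbf_\Leu)\xrightarrow{\;\cong\;}\Hr^d(X(\Rb),\Zb(L))/d_L\Hr^{d-1}_\alg(X(\Rb),\Zb/2).\]

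\emph{Main obstacle.} The hard step is proving $\Ker\gamma^d_d\subseteq\Im\partial^{d-1,d-1}_\Leu$, i.e.\ that every class in $\Ker(\Ch^d(X)\to\Hr^d(X(\Rb),\Zb/2))$ is hit by $\partial^{d-1,d-1}_\Leu$. I would use Colliot-Thélène--Scheiderer: under $(*)$, $\Ch^d(X)\to\Hr^d(X(\Rb),\Zb/2)$ has kernel generated by classes of complex closed points, i.e.\ it is the image of $\pi_*:\Ch^d(X_\Cb)\to\Ch^d(X)$. A complex point pushed forward is the image under the connecting map of the class $\llangle -1\rrangle$-torsion element supported on a curve through it (a transfer from $\Cb$). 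Concretely, for a complex point $y$ on a curve $C$, the rational function giving $\langle 1,1\rangle$-type residue shows $[y]$ is a boundary in $\Cr(X,\Ibf^d_\Leu)$ modulo $\partial$ of a degree-$(d-1)$ chain. Equivalently, one can argue that $\Ker\gamma^d_d$ is $2$-torsion, hence by Lemma \ref{lem:projection_dimension_filtration_isomorphism} it injects into $\Ch^d(X)$. Then it remains to check that complex-point classes lie in $\Im\Phi_{d-1,d-1,\Leu}+\Im\partial$, i.e.\ that they vanish in $\Hr^d(X,\Wbf_\Leu)$, since the Witt class of a form over $\Cb$ is killed by the trace. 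I expect this vanishing to be the delicate point.
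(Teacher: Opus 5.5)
Your reduction $\Hr^d(X,\Wbf_\Leu)\cong\Hr^d(X,\Ibf^d_\Leu)/\partial^{d-1,d-1}_\Leu\Ch^{d-1}(X)$ matches the paper's argument. So do your treatment of the case where $(*)$ fails and the compatibility $\gamma^d\circ\partial^{d-1,d-1}_\Leu=d_L\circ\bar{\gamma}^{d-1}$.

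The gap is in the case where $(*)$ holds. You correctly see that everything hinges on $\Ker\gamma^d\subseteq\Im\partial^{d-1,d-1}_\Leu$, but you never prove it. The routes you sketch are not carried out and are not needed: writing a complex point as a boundary via a curve through it, or arguing that Witt classes over $\Cb$ die under the trace. They also rest on a mistaken picture, namely that complex points give nonzero classes in $\Ch^d(X)$ that must be shown to lie in $\Im\partial^{d-1,d-1}_\Leu$. As written, the isomorphism $\chi$ is not established.

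The missing input is that this kernel is zero. Under $(*)$, Colliot-Th\'el\`ene--Scheiderer \cite[Theorem 3.2 (d)]{colliot-theleneZerocyclesCohomologyReal1996} show that $\bar{\gamma}^d:\Ch^d(X)\to\Hr^d(X(\Rb),\Zb/2)$ is an isomorphism. In particular, classes of complex closed points already vanish in $\Ch^d(X)$.

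Your own snake-lemma identification $\Ker\gamma^d\cong\Ker\bar{\gamma}^d$ then gives what you need. Equivalently, $\Hr^d(X,\bar{\Kbf}^d)=0$, hence $\Hr^d(X,\Kbf^d_\Leu)=0$ by Lemma \ref{lem:2_torsion_to_(-1)_torsion}. So $\gamma^d:\Hr^d(X,\Ibf^d_\Leu)\to\Hr^d(X(\Rb),\Zb(L))$ is injective. Your exact sequence ending in $0$ shows it is surjective, so it is an isomorphism; this is the result \cite[Theorem 3.5]{lerbetImageHigherSignature2026} that the paper quotes.

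Once $\gamma^d$ is bijective, the commutative square $\gamma^d\circ\partial^{d-1,d-1}_\Leu=d_L\circ\bar{\gamma}^{d-1}$ directly identifies $\Coker\partial^{d-1,d-1}_\Leu$ with $\Hr^d(X(\Rb),\Zb(L))/d_L\Hr^{d-1}_\alg(X(\Rb),\Zb/2)$. No analysis of complex points is required.
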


\begin{proof}
This is essentially what is established in \cite[Theorem 3.3]{lerbetCohomologicalClassificationVector2026}. For the convenience of the reader, we give some details. By Lemma \ref{lem:easy_higher_pardon}, the inclusion $\Ibf^{d-1}(\Leu)\subseteq\Wbf(\Leu)$ induces an isomorphism $\Hr^d(X,\Ibf^{d-1}_\Leu)\cong\Hr^d(X,\Wbf_\Leu)$. Consequently, in the proof of the above proposition, we may replace $\Hr^d(X,\Wbf_\Leu)$ by $\Hr^d(X,\Ibf^{d-1}_\Leu)$, which we do from now on.

First assume that $X$ is not proper or $X(\Rb)$ is not empty. We then consider the following commutative ladder:
\[\begin{tikzcd}
	{\Hr^{d-1}(X,\bar{\Ibf}^{d-1})=\Ch^{d-1}(X)} & {\Hr^d(X,\Ibf^d_\Leu)} & {\Hr^d(X,\Ibf^{d-1}_\Leu)} \\
	{\Hr^{d-1}(X(\Rb),\Zb/2)} & {\Hr^d(X(\Rb),\Zb(L))}
	\arrow["{\partial_\Leu^{d-1,d-1}}", from=1-1, to=1-2]
	\arrow["{\bar{\gamma}^{d-1}}"', from=1-1, to=2-1]
	\arrow[from=1-2, to=1-3]
	\arrow["{\gamma^d}", from=1-2, to=2-2]
	\arrow["{d_L}"', from=2-1, to=2-2]
\end{tikzcd}\]
of cohomology long exact sequences (see \cite[p. 190]{lerbetImageHigherSignature2026}). The cokernel of the map $\Hr^d(X,\Ibf^d_\Leu)\to\Hr^d(X,\Ibf^{d-1}_\Leu)$ is a subgroup of $\Hr^d(X,\bar{\Ibf}^{d-1})$, which vanishes by Lemma \ref{lem:easy_higher_pardon} so $\Hr^d(X,\Ibf^{d-1}_\Leu)=\Coker\partial_\Leu^{d-1,d-1}$. Moreover, since $X$ is not proper or $X(\Rb)$ is not empty, the map $\gamma^d:\Hr^d(X,\Ibf^d_\Leu)\to\Hr^d(X(\Rb),\Zb(L))$ is an isomorphism by \cite[Theorem 3.5]{lerbetImageHigherSignature2026}. Therefore it induces an isomorphism \[\chi:\Hr^d(X,\Ibf^{d-1}_\Leu)\to\Hr^d(X(\Rb),\Zb(L))/d_L\Im\bar{\gamma}^{d-1}\] on cokernels, where $\Im\bar{\gamma}^{d-1}=\Hr_\alg^{d-1}(X(\Rb),\Zb/2)$ by definition.

Now assume that $X$ is proper and $X(\Rb)$ is empty. We consider the map $\pi_\Leu^{d,d}:\Hr^d(X,\Ibf^d_\Leu)\to\Hr^d(X,\bar{\Ibf}^d)$ induced by the epimorphism $\Ibf^d(\Leu)\to\bar{\Ibf}^d$. By (\ref{eq:projection_isomorphism}), this morphism is an isomorphism. The composite \[\Hr^{d-1}(X,\bar{\Ibf}^{d-1})\xrightarrow{\partial_\Leu^{d-1,d-1}}\Hr^d(X,\Ibf^d_\Leu)\xrightarrow{\pi_\Leu^{d,d}}\Hr^d(X,\bar{\Ibf}^d)\] is precisely $\Sq^2_\Leu:\Ch^{d-1}(X)\to\Ch^{d}(X)$ (Remark \ref{rema:twisted_steenrod_square}). Since $\Hr^d(X,\Ibf^{d-1}_\Leu)=\Coker\partial_\Leu^{d-1,d}$, this completes the proof.
\end{proof}

\begin{rema}\label{rema:top_twisted_witt_group_poincaré_duality}
Let $X$ be a smooth real algebraic variety of dimension $d$. Denote by $T$ the set of compact connected components of $X(\Rb)$ and by $T'$ the set of compact connected components $V$ of $X(\Rb)$ such that $L_{|V}$ is \emph{not} isomorphic to the orientation sheaf $\omega_{V}$ of $V$. Recall from Remark \ref{rema:top_twisted_cohomology} Fthat $\Hr^d(X(\Rb),\Zb(L))\cong\Zb^{T\setminus T'}\oplus(\Zb/2)^{T'}$. In particular, this group is $2$-torsion free if $L_{|V}\simeq\omega_V$ for each compact component $V$ of $X(\Rb)$ so the map $d_L$ is trivial in this case, yielding an isomorphism $\Wr^d(X,\Leu)\cong\Zb^T$ in this case. In general, this also implies that $\Wr^d(X,\Leu)[\frac{1}{2}]\cong\Zb[\frac{1}{2}]^{T\setminus T'}$ and $\Wr^d(X,\Leu)$ is entirely governed by topology modulo $2$-torsion.
\end{rema}

\begin{rema}
The essential input in the case where $X$ is proper and $X(\Rb)$ is empty is the vanishing of the groups $\Hr^i(X,\Ibf^{d+1}_\Leu)$ for $i\in\{d,d+1\}$ (along with the observation of Remark \ref{rema:twisted_steenrod_square}). This vanishing holds as soon as $X$ is a smooth variety and the function field of $X$ has cohomological dimension $\leq d$ (see also \cite[Theorem 4.1]{zibrowiusWittGroupsCurves2014}). This is indeed the case if the base field is $\Rb$ and $X$ is a smooth $\Rb$-variety of dimension $d$ such that $X(\Rb)=\emptyset$ by \cite[Proposition 1.2.1]{colliot-theleneRealComponentsAlgebraic1990}.
\end{rema}

As noted in the introduction, the following corollary, which is an immediate consequence of Proposition \ref{prop:small_d_w_d} and of inspection of (\ref{eq:GW}), is a generalisation of the main theorem of \cite{bargeFibresAlgebriquesSurface1987}.

\begin{cor}\label{cor:small_d_w_d}
In Proposition \ref{prop:small_d_w_d}, let us further assume that $d\leq 3$. Then there is a canonical isomorphism \[\Wr^d(X,\Leu)\cong\Hr^d(X(\Rb),\Zb(L))/d_L\Hr_\alg^{d-1}(X(\Rb),\Zb/2)\] if $X$ satisfies $(*)$ and a canonical isomorphism \[\Wr^d(X,\Leu)\cong\Coker(\Sq_\Leu^2:\Ch^{d-1}(X)\to\Ch^{d}(X))\] otherwise.
\end{cor}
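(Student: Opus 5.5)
The plan is to reduce the corollary directly to Proposition \ref{prop:small_d_w_d}. The only additional input needed is the identification $\Wr^d(X,\Leu)\cong\Hr^d(X,\Wbf_\Leu)$ for $d\leq 3$, and this was already recorded in the preliminaries.

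First, I recall the structure of the Gersten--Witt spectral sequence (\ref{eq:GW}). Its $\Er_1$-terms $\Er_1^{p,q}$ are direct sums of groups $\Wr^q(\kappa(x),\omega_{x/X}\otimes\Leu(x))$ over points of codimension $p$. These groups vanish unless $q\equiv 0$ mod $4$, since shifted Witt groups of fields vanish in degrees not divisible by $4$. Moreover $\Er_1^{p,q}=0$ unless $0\leq p\leq d$.

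When $d\leq 3$, the nonzero rows are therefore spaced $4$ apart, while the columns occupy a strip of width $d+1\leq 4$. A differential $d_r$ with $r\geq 2$ has bidegree $(r,1-r)$. Such a differential can only connect two nonzero rows if $1-r\equiv 0$ mod $4$, which forces $r\geq 5$. But then it shifts the column index by at least $5>d$, so it vanishes. Hence the spectral sequence degenerates at $\Er_2$.

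Next I examine the abutment $\Wr^d(X,\Leu)$. Its filtration has graded pieces $\Er_2^{p,q}$ with $p+q=d$ and $q\equiv 0$ mod $4$. Because $0\leq p\leq d\leq 3$, the only possibility is $q=0$ and $p=d$. This gives $\Wr^d(X,\Leu)\cong\Er_2^{d,0}=\Hr^d(X,\Wbf_\Leu)$, canonically, as stated just before the definition of the sheaves $\Ibf^n(\Leu)$.

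Composing this isomorphism with the isomorphisms of Proposition \ref{prop:small_d_w_d} finishes the proof. In case $(*)$ we use $\chi$. Otherwise we use the identification of $\Hr^d(X,\Wbf_\Leu)$ with $\Coker\Sq^2_\Leu$.

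There is no genuine obstacle here. The only point needing care is the bookkeeping of the degeneration and of the edge map, to confirm that the resulting isomorphism is canonical. This holds because it is the edge homomorphism of a spectral sequence with a single nonzero graded piece on the relevant diagonal.
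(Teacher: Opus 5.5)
Your proposal is correct and follows the paper's route: the corollary is deduced from Proposition \ref{prop:small_d_w_d} together with the degeneration of the Gersten--Witt spectral sequence (\ref{eq:GW}) for $d\leq 3$, which gives $\Wr^d(X,\Leu)\cong\Er_2^{d,0}=\Hr^d(X,\Wbf_\Leu)$. Your bidegree bookkeeping for why no differential $d_r$ with $r\geq 2$ survives, and why only $(p,q)=(d,0)$ contributes to the abutment, is exactly the ``inspection'' the paper refers to.
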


We now specialise the previous corollary to the case of curves. Recall that if $M$ is a connected compact smooth manifold of dimension $1$, then $M$ is diffeomorphic to $\Sr^1$ so there is (up to isomorphism) a unique non trivial line bundle $L$ on $M$. Since $M$ is orientable, the bundle $L$ is not isomorphic to $\omega_M$ so $\Hr^1(M,\Zb(L))=\Zb/2$ as noted in Remark \ref{rema:top_twisted_witt_group_poincaré_duality}.

\begin{cor}\label{cor:W1_curve}
Let $X$ be a smooth real curve and let $\Leu$ be a line bundle on $X$; set $L=\Leu(\Rb)$. Let $T$ be the set of compact connected components of $X(\Rb)$ and let $T'$ be the set of components $V\in T$ such that $L_{|V}$ is nontrivial or, equivalently, is not isomorphic to the orientation sheaf; denote the diagonal element $(\bar{1},\ldots,\bar{1})$ of $(\Zb/2)^{T'}$ by $\bar{\varepsilon}$. If $(*)$ holds, then $\Wr^1(X,\Leu)\cong\Zb^T$ if $T'$ is empty and $\Wr^1(X,\Leu)\cong\Zb^{T\setminus T'}\oplus(\Zb/2)^{T'}/\Zb/2\cdot\bar{\varepsilon}$ if $T'\neq\emptyset$.\footnote{Since $\Pic(X)/2\cong\Hr^1(X(\Rb),\Zb/2)$ under $(*)$, the hypothesis that $T'$ is (non)empty is equivalent to the assumption that $\Leu$ is (not) a square.} Else, one has $\Wr^1(X,\Leu)=\Zb/2$ if $\Leu$ is a square and $\Wr^1(X,\Leu)=0$ otherwise.
\end{cor}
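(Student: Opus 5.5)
We specialise Corollary \ref{cor:small_d_w_d} to $d=1$ and treat the two cases of $(*)$ separately.

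\emph{Case $(*)$ holds.} By Corollary \ref{cor:small_d_w_d}, $\Wr^1(X,\Leu)\cong\Hr^1(X(\Rb),\Zb(L))/d_L\Hr_\alg^0(X(\Rb),\Zb/2)$. Remark \ref{rema:top_twisted_cohomology} gives $\Hr^1(X(\Rb),\Zb(L))\cong\Zb^{T\setminus T'}\oplus(\Zb/2)^{T'}$, since every compact component is a circle, hence orientable, and $L_{|V}\not\simeq\omega_V$ exactly when $L_{|V}$ is nontrivial.

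The group $\Hr_\alg^0(X(\Rb),\Zb/2)$ is the image of $\bar{\gamma}^0:\Ch^0(X)=\Zb/2\to\Hr^0(X(\Rb),\Zb/2)$, assuming $X$ connected. This image is generated by the constant function $\bar 1$ on $X(\Rb)$. If $X(\Rb)=\emptyset$, every group involved vanishes and the claim is trivially true with $T=\emptyset$.

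It remains to compute $d_L(\bar 1)$. We work componentwise.
\begin{itemize}
\item On a noncompact component $V\cong\Rb$, the group $\Hr^1(V,\Zb(L))$ vanishes, so nothing survives there.
\item On a compact component $V$ with $L_{|V}$ trivial, $d_L$ is the Bockstein of $\Zb\to\Zb/2$. It kills $\bar 1$ because $\bar 1$ lifts to the integral class $1$.
\item On $V\in T'$, the connecting map $\Hr^0(V,\Zb/2)\to\Hr^1(V,\Zb(L))=\Zb/2$ is injective. To see this, use the exact sequence $\Hr^0(V,\Zb(L))\to\Hr^0(V,\Zb/2)\to\Hr^1(V,\Zb(L))$ with $\Hr^0(V,\Zb(L))=0$, since a nontrivial local system has no global sections. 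So $\bar 1$ maps to the generator.
\end{itemize}
Hence $d_L(\bar 1)=(0,\bar\varepsilon)$. Quotienting by it gives $\Zb^T$ when $T'=\emptyset$ and $\Zb^{T\setminus T'}\oplus(\Zb/2)^{T'}/\Zb/2\cdot\bar\varepsilon$ otherwise.

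One could also verify the cokernel claim via the componentwise naturality of $d_L$ with respect to restriction to the components of $X(\Rb)$.

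\emph{Case $(*)$ fails,} so $X$ is proper with $X(\Rb)=\emptyset$. Corollary \ref{cor:small_d_w_d} gives $\Wr^1(X,\Leu)\cong\Coker(\Sq^2_\Leu:\Ch^0(X)\to\Ch^1(X))$. On $\Ch^0$ we have $\Sq^2=0$, by degree reasons or Remark \ref{rema:steenrod_trivial}. Hence $\Sq^2_\Leu(1)=\bar c_1(\Leu)$, and the cokernel is $\Ch^1(X)/\langle\bar c_1(\Leu)\rangle$.

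We need $\Ch^1(X)=\Zb/2$. A proper curve with no real points has even-degree divisors only, and the kernel of the degree map is $2$-divisible: the real points of the Jacobian form a group whose component group injects into $\Pic$ classes hit by norms. Rather than arguing this directly, we cite \cite[Theorem 1.3 (b)]{colliot-theleneZerocyclesCohomologyReal1996}, which gives $\Ch^d(X)=\Zb/2$ with $d=1$, as already used in Lemma \ref{lem:top_étale_cohomology_proper_empty}. Since $\bar c_1(\Leu)=0$ exactly when $\Leu$ is a square, the cokernel is $\Zb/2$ or $0$ accordingly.

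The main obstacle is the sign and injectivity bookkeeping for $d_L$ on twisted components. Everything else is a direct substitution into earlier results.
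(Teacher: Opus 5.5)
Your proof is correct. Its overall structure matches the paper's:
\begin{itemize}
\item specialise Corollary \ref{cor:small_d_w_d} to $d=1$;
\item use Remark \ref{rema:top_twisted_cohomology} for $\Hr^1(X(\Rb),\Zb(L))$;
\item in the non-$(*)$ case, combine $\Ch^1(X)=\Zb/2$ from Colliot-Th\'el\`ene--Scheiderer with $\Sq^2_\Leu(1)=\bar{c}_1(\Leu)$.
\end{itemize}

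The genuine difference is how you compute $d_L$ of the fundamental class. The paper argues globally. It invokes Greenblatt's formula $\rho(L)\circ d_L=\Sq^1+w_1(L)\cup$ and notes that $\Sq^1$ vanishes in degree $0$. Since $d_L$ of a $2$-torsion class lands in the torsion summand $(\Zb/2)^{T'}$, on which $\rho(L)$ is injective, $d_L(\bar{1})$ must be the unique preimage of $w_1(L)$, namely $\bar{\varepsilon}$.

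You instead split over the components of $X(\Rb)$ and use only the long exact sequence of $0\to\Zb(L)\xrightarrow{2}\Zb(L)\to\Zb/2\to 0$ on each circle. Where $L$ is trivial, $\bar{1}$ lifts integrally. Where $L$ is nontrivial, $\Hr^0(V,\Zb(L))=0$ forces the connecting map to be injective. Your route is more elementary and self-contained, since it avoids the external twisted Bockstein formula. The paper's route is shorter given that formula, and the formula holds in every degree. That is what one needs for the analogous computation of $d_L$ on $\Hr^1_{\mathrm{alg}}$ for surfaces, where there is no reduction to evaluating a fundamental class on circles.

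Two small points.
\begin{itemize}
\item The vanishing of $\Sq^2$ on $\Ch^0$ is not the content of Remark \ref{rema:steenrod_trivial}, which concerns $\Phi_{0,1}$ rather than $\Phi_{0,0}$. Cite \cite[Lemma 9.9]{voevodskyReducedPowerOperations2003} instead, or simply note that $\langle 1\rangle$ lifts to $\Hr^0(X,\Wbf)$.
\item Your aside about the real points of the Jacobian is not really an argument. Since you rely on the Colliot-Th\'el\`ene--Scheiderer citation anyway, it can be dropped.
\end{itemize}
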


\begin{proof}
We use Corollary \ref{cor:small_d_w_d} with $d=1$. First assume that $X$ is proper and $X(\Rb)$ is empty. Then $\CH^1(X)/2=\Zb/2$ by \cite[Theorem 1.3 (b)]{colliot-theleneZerocyclesCohomologyReal1996} and $\Sq^2_\Leu:\Ch^0(X)=\Zb/2\cdot[X]\to\Ch^1(X)$ takes the unit $[X]$ of the graded ring $\Ch^*(X)$ to $\bar{c}_1(\Leu)$. Thus $\Sq^2_\Leu$ is the zero morphism if $\Leu$ is a square, and is an isomorphism of groups isomorphic to $\Zb/2$ otherwise. Since $\Wr^1(X,\Leu)=\Coker\Sq^2_\Leu$, this yields the lemma.

Now assume that $X$ satisfies $(*)$. In this case, one has $\CH^0(X)=\Zb\cdot[X]$ so $\Hr_\alg^0(X(\Rb),\Zb/2)$ is the fundamental class of $X(\Rb)$. According to \cite{greenblattHomologyLocalCoefficients2006}, the composite \[\Hr^0(X(\Rb),\Zb/2)\xrightarrow{d_L}\Hr^1(X(\Rb),\Zb(L))\xrightarrow{\rho(L)}\Hr^1(X(\Rb),\Zb/2)\] of $d_L$ and of reduction mod $2$ of the coefficients is the sum of the first Steenrod square $\Sq^1:\Hr^0(X(\Rb),\Zb/2)\to\Hr^1(X(\Rb),\Zb/2)$, which vanishes, and of cup-product with the first Stiefel--Whitney class $w_1(L)$ of $L$. Therefore \[\rho(L)\circ d_L(\Hr_\alg^0(X(\Rb),\Zb/2))=\Zb/2\cdot w_1(L)\] Note that $d_L\Hr_\alg^0(X(\Rb),\Zb/2)$ is contained in $(\Zb/2)^{T'}\subseteq\Hr^1(X(\Rb),\Zb(L))$ because $\Hr_\alg^0(X(\Rb),\Zb/2)$ is $2$-torsion, and $\rho(L)$ is an isomorphism from $(\Zb/2)^{T'}$ onto its image. Consequently, the group $d_L(\Hr_\alg^0(X(\Rb),\Zb/2))$ is generated by the preimage of $w_1(L)$ by $\rho(L)$: by definition, this preimage is the diagonal element $\bar{\varepsilon}$ of $(\Zb/2)^{T'}$ as required. 
\end{proof}

\begin{exe}\label{exe:untwisted_witt_group_curve}
If $X$ is a smooth real curve satisfying $(*)$, then $\Wr^1(X)$ is the free abelian group on the compact connected components of $X(\Rb)$, as noted in \cite[(11.1)]{knebuschAlgebraicCurvesReal1976a}.
\end{exe}

\section{Curves}\label{section:curves}

\subsection{$\Ibf^*$-cohomology}

Let $X$ be a geometrically connected smooth real algebraic curve and let $\Leu$ be a line bundle on $X$; we let $L=\Leu(\Rb)$ be the associated real topological line bundle over $X(\Rb)$. We let $Y$ be a smooth compactification of $X$, whose genus we denote by $g$, and denote the number of real (resp. complex) points of $Z=Y\setminus X$ by $r$ (resp. by $c$). We denote the number of compact connected components of $X(\Rb)$ by $t$; it is then easy to see that the number of connected components of $X(\Rb)$ is $s=r+t$. We let $m$ denote the number of connected components $V$ of $X(\Rb)$ such that $L_{|V}$ is nontrivial. We denote by $\varepsilon$ the dimension of $\CH^1(X_\Cb)/2$ as a $\Zb/2$-vector space; thus $\varepsilon=1$ if $X$ is proper and $\varepsilon=0$ else. We also recall from (\ref{eq:commutative_ladder_signature_variety}) that there is a commutative ladder
\begin{equation}\label{eq:exact_sequence_signature_curve}
\begin{tikzcd}
	0 & {\Kbf^1(\Leu)} & {\Ibf(\Leu)} & {\iota_*\Zb(L)} & 0 \\
	0 & {\bar{\Kbf}^1} & {\bar{\Ibf}} & {\iota_*\Zb/2} & 0
	\arrow[from=1-1, to=1-2]
	\arrow[from=1-2, to=1-3]
	\arrow[from=1-2, to=2-2]
	\arrow["{\sign_1}", from=1-3, to=1-4]
	\arrow[from=1-3, to=2-3]
	\arrow[from=1-4, to=1-5]
	\arrow["{\mod 2}", from=1-4, to=2-4]
	\arrow[from=2-1, to=2-2]
	\arrow[from=2-2, to=2-3]
	\arrow["{\bar{\sign}_1}"', from=2-3, to=2-4]
	\arrow[from=2-4, to=2-5]
\end{tikzcd}
\end{equation}
of sheaves on $X_\Zar$ with exact rows, and similarly over $Y_\Zar$ for the bottom row.

\begin{lem}\label{lem:top_cohomology_2_torsion}
One has $\Hr^1(X,\bar{\Kbf}^1)=0$ if $X$ satisfies $(*)$.
\end{lem}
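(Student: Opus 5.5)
The plan is to read off $\Hr^1(X,\bar{\Kbf}^1)$ from the cohomology long exact sequence of the bottom row of (\ref{eq:exact_sequence_signature_curve}), namely the short exact sequence $0\to\bar{\Kbf}^1\to\bar{\Ibf}\xrightarrow{\bar{\sign}_1}\iota_*\Zb/2\to 0$ of sheaves on $X_\Zar$. Using the Coste--Roy identification $\Hr^*(X,\iota_*\Zb/2)\cong\Hr^*(X(\Rb),\Zb/2)$ and the isomorphism $\Hr^1(X,\bar{\Ibf}^1)\cong\Ch^1(X)$, the relevant segment is
\[\Hr^0(X,\bar{\Ibf})\xrightarrow{\bar{\gamma}_1^0}\Hr^0(X(\Rb),\Zb/2)\to\Hr^1(X,\bar{\Kbf}^1)\to\Ch^1(X)\xrightarrow{\bar{\gamma}^1}\Hr^1(X(\Rb),\Zb/2).\]
The vanishing of $\Hr^1(X,\bar{\Kbf}^1)$ is therefore equivalent to two statements: $\bar{\gamma}_1^0$ is surjective, and $\bar{\gamma}^1$ is injective.

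For the surjectivity of $\bar{\gamma}_1^0$, I will invoke Remark \ref{rema:cases_mod_2_cycle_class_surjective} with $d=1$ and $i=1$. It asserts exactly that $\bar{\gamma}_d^{i-1}=\bar{\gamma}_1^0$ is surjective, by Krasnov and van Hamel, and this holds without any hypothesis. One could also argue by hand: a unit of $\kappa(X)$ with prescribed signs on the components of $X(\Rb)$ exists by approximation, and its class is unramified modulo $\Ir^2$.

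For the injectivity of the mod $2$ real cycle class map $\bar{\gamma}^1:\Ch^1(X)\to\Hr^1(X(\Rb),\Zb/2)$, this is where $(*)$ enters, and it is the only substantive step. I will use the known fact, recalled in the footnote to Corollary \ref{cor:W1_curve}, that under $(*)$ one has $\Pic(X)/2\cong\Hr^1(X(\Rb),\Zb/2)$ via the mod $2$ fundamental class of the real locus. This is the result of Colliot-Thélène--Scheiderer (\cite[Theorem 3.2]{colliot-theleneZerocyclesCohomologyReal1996}) for zero-cycles on a smooth curve: the cycle class map on $\CH_0/2$ is injective, and in fact bijective, onto $\Hr^0$ of the real locus in the appropriate degree when $(*)$ holds.

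The main obstacle is checking that the map appearing in the long exact sequence, namely the one induced by $\bar{\sign}_1\cong h_1$ on $\Hr^1$, coincides with this classical cycle class map. This is stated in the preliminaries ("$\bar{\gamma}^c$ is the usual mod $2$ cycle class map"), so I will simply cite that identification.

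If a self-contained argument is preferred for the proper case, I would use the real-complex sequence instead. When $X(\Rb)\neq\emptyset$, a point of odd degree exists, so I would reduce to the kernel of the degree map and apply the parity of real points lying on divisors of functions. I expect to rely on the citation rather than this route.
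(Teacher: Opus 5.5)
Your argument is correct and is essentially the paper's. The paper simply cites \cite[Theorem 3.2 (d)]{colliot-theleneZerocyclesCohomologyReal1996}, as packaged in \cite[Proposition 3.3]{lerbetImageHigherSignature2026}. That result supplies exactly what you feed into the long exact sequence of $0\to\bar{\Kbf}^1\to\bar{\Ibf}\to\iota_*\Zb/2\to 0$: the surjectivity of $\bar{\gamma}_1^0$, and the injectivity of $\bar{\gamma}^1$ under $(*)$, in fact its bijectivity onto $\Hr^1(X(\Rb),\Zb/2)$ rather than $\Hr^0$ as you wrote.

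Drop your optional ``by hand'' remark, though. Approximation controls the signs of a function but not the parity of its valuations at complex points, so the resulting class need not be unramified. For example, $x^2+1$ on $\Pb^1_\Rb$ is positive on the real locus but ramified at the complex point $x^2+1=0$.
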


\begin{proof}
This follows from \cite[Theorem 3.2 (d)]{colliot-theleneZerocyclesCohomologyReal1996} (see \cite[Proposition 3.3]{lerbetImageHigherSignature2026}).
\end{proof}

If $\Fbf$ is a $2$-torsion sheaf on a topological space $T$, we set $h^i(T,\Fbf)$ to be the dimension of the $\Zb/2$-vector space $\Hr^i(T,\Fbf)$.

\begin{lem}\label{lem:all_cohomology_of_2_torsion}
We have $h^0(X,\Kbf^1_\Leu)=g+c$ if $X$ satisfies $(*)$ and $h^0(X,\Kbf^1_\Leu)=g+1$ otherwise.
\end{lem}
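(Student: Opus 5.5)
By Lemma \ref{lem:2_torsion_to_(-1)_torsion} with $d=1$, the natural map $\Kbf^1(\Leu)\to\bar{\Kbf}^1$ is an isomorphism of sheaves on $X_\Zar$. Hence $h^0(X,\Kbf^1_\Leu)=h^0(X,\bar{\Kbf}^1)$ does not depend on $\Leu$, and it suffices to compute a purely mod $2$ étale invariant. I would take cohomology of the bottom row of (\ref{eq:exact_sequence_signature_curve}). Using $\Hr^0(X,\bar{\Ibf})=\Hr^0(X,\Hsc^1)$ and Coste--Roy for $\iota_*\Zb/2$, this gives an exact sequence
\[0\to\Hr^0(X,\bar{\Kbf}^1)\to\Hr^0(X,\Hsc^1)\to\Hr^0(X(\Rb),\Zb/2)\to\Hr^1(X,\bar{\Kbf}^1)\to\Ch^1(X)\to\Hr^1(X(\Rb),\Zb/2)\to 0.\]
If $(*)$ holds, Lemma \ref{lem:top_cohomology_2_torsion} kills $\Hr^1(X,\bar{\Kbf}^1)$, so $h^0(X,\bar{\Kbf}^1)=h^0(X,\Hsc^1)-s$. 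If $(*)$ fails, then $X(\Rb)=\emptyset$ and $h^0(X,\bar{\Kbf}^1)=h^0(X,\Hsc^1)$.

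Next I would identify $\Hr^0(X,\Hsc^1)$ with $\Hr_\et^1(X)$ via the Bloch--Ogus spectral sequence (\ref{eq:bloch_ogus}). Its edge terms are $\Hr^1(X,\Hsc^0)=0$ (the constant sheaf on an irreducible space) and $\Hr^2(X,\Hsc^1)=0$ (dimension one), so $\Hr_\et^1(X)\cong\Hr^0(X,\Hsc^1)$. The same spectral sequence gives $0\to\Ch^1(X)\to\Hr_\et^2(X)\to\Hr^0(X,\Hsc^2)\to 0$. Since $2>d$, the map $h_2:\Hsc^2\to\iota_*\Zb/2$ is an isomorphism by Colliot-Thélène--Parimala, so $\Hr^0(X,\Hsc^2)\cong\Hr^0(X(\Rb),\Zb/2)$. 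For $\Ch^1(X)$, under $(*)$ the displayed sequence together with Lemma \ref{lem:top_cohomology_2_torsion} gives $\Ch^1(X)\cong\Hr^1(X(\Rb),\Zb/2)$, of dimension $t$. Otherwise $\Ch^1(X)=\Zb/2$ by Colliot-Thélène--Ischebeck. Hence $h^2_\et(X)=t+s$ under $(*)$ and $h^2_\et(X)=1$ otherwise. Moreover $\Hr_\et^3(X)\cong\Hr^0(X(\Rb),\Zb/2)\oplus\Hr^1(X(\Rb),\Zb/2)$ by the result quoted in Lemma \ref{lem:top_étale_cohomology_proper_empty}, so $h^3_\et(X)=s+t$, which is $0$ when $(*)$ fails.

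The remaining unknown $h^1_\et(X)$ I would extract from the real-complex exact sequence (\ref{eq:real_complex_exact_sequence}). It is truncated at $\Hr_\et^2(X)\xrightarrow{\cup\omega}\Hr_\et^3(X)\to\Hr_\et^3(X_\Cb)=0$ and starts with $0\to\Hr_\et^0(X)\to\Hr_\et^0(X_\Cb)\to\Hr_\et^0(X)$. Taking the alternating sum of dimensions gives
\[2h^1_\et(X)=2h^2_\et(X)+1+h^1(X(\Cb))-h^2(X(\Cb))-h^3_\et(X).\]
By Artin comparison, $X(\Cb)$ is a genus $g$ surface with $r+2c$ punctures. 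Thus $h^1(X(\Cb))=2g+r+2c-1$ and $h^2(X(\Cb))=0$ when $X$ is not proper, while $h^1=2g$ and $h^2=1$ when $X$ is proper.

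Plugging in the three cases and using $s=r+t$ should give $h^1_\et(X)=s+g+c$ under $(*)$, hence $h^0=g+c$; in the proper case with nonempty real locus one has $r=c=0$, which is consistent. When $(*)$ fails, $s=t=r=c=0$ and we get $h^1_\et(X)=g+1$, hence $h^0=g+1$. The main obstacle is the bookkeeping of $h^2_\et(X)$, in particular justifying that the Bloch--Ogus filtration on $\Hr_\et^2(X)$ has only the two pieces above, with $\Hr^2(X,\Hsc^0)=0$ and no surviving differential. If that step proves delicate, I would instead get $\dim\Hr_\et^2(X)$ directly from the Kummer sequence and $\Pic(X)/2$.
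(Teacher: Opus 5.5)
Your argument is correct, and it follows a different route from the paper's proof.

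\textbf{The paper's route.} The paper works on the compactification $Y$ first. When $Y(\Rb)\neq\emptyset$ it quotes Cox's computation $\Hr_\et^1(Y)\simeq(\Zb/2)^{g+\sigma}$. When $Y(\Rb)=\emptyset$ it runs the real-complex sequence on $Y$ and fixes the single unknown rank $\delta$ by a parity argument. In both cases it then descends from $Y$ to $X$ through the localisation sequence for $\bar{\Ibf}$ with supports in $Z=Y\setminus X$, using purity and $h^0(Z,\bar{\Wbf})=r+c$.

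\textbf{Your route.} You do one Euler-characteristic count in the real-complex sequence of $X$ itself. The inputs are:
\begin{itemize}
\item $h^2_\et(X)$, from the two-step Bloch--Ogus filtration together with $\Hr^0(X,\Hsc^2)\cong\Hr^0(X(\Rb),\Zb/2)$ and $\Ch^1(X)\cong\Hr^1(X(\Rb),\Zb/2)$ (or $\Zb/2$ in the proper, empty case);
\item $h^3_\et(X)=s+t$, from Colliot-Thélène--Scheiderer;
\item the Betti numbers of the $(r+2c)$-punctured genus $g$ surface $X(\Cb)$.
\end{itemize}
Running your formula for $2h^1_\et(X)$ through the cases, with $s=r+t$, gives $h^1_\et(X)=g+c+s$ under $(*)$ (this includes the proper case, where $r=c=0$) and $h^1_\et(X)=g+1$ otherwise. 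That yields the claimed values after subtracting $s$ or $0$.

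\textbf{Comparison.} Your count is uniform across the cases and needs neither Cox's theorem nor the localisation/purity step; in the proper case it actually reproves Cox's formula $h^1_\et=g+s$. The cost is that you need $\Hr^2_\et$ and $\Hr^3_\et$ of $X$ in every case, whereas the paper only uses them in the case $Y(\Rb)=\emptyset$.

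The step you flag as the main obstacle is not one. On a curve $\Hr^p(X,\Hsc^q)=0$ for $p\geq 2$, because the Gersten complexes have length two. So the Bloch--Ogus spectral sequence has only two nonzero columns and degenerates at $E_2$, and the short exact sequence $0\to\Ch^1(X)\to\Hr_\et^2(X)\to\Hr^0(X,\Hsc^2)\to 0$ is immediate. One small correction: the $d_2$ leaving $\Hr^0(X,\Hsc^1)$ lands in $\Hr^2(X,\Hsc^0)$, not $\Hr^2(X,\Hsc^1)$. Both groups vanish, so nothing changes.
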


\begin{proof}
We will compute $h^0(X,\bar{\Kbf}^1)$; this suffices by Lemma \ref{lem:2_torsion_to_(-1)_torsion}. Suppose first that $Y(\Rb)$ is not empty. Inspection of the Bloch--Ogus spectral sequence (\ref{eq:bloch_ogus}) shows that $\Hr^0(Y,\bar{\Ibf})=\Hr_\et^1(Y)$. This étale cohomology group was computed by Cox in \cite{coxEtaleHomotopyType1979} to be isomorphic to $(\Zb/2)^{g+\sigma}$ where $\sigma$ is the number of connected components of $Y(\Rb)$. Since $Y(\Rb)\neq\emptyset$, the group $\Hr^1(Y,\bar{\Kbf}^1)$ vanishes and the bottom row of (\ref{eq:exact_sequence_signature_curve}), there is an exact sequence \[0\to\Hr^0(Y,\bar{\Kbf}^1)\to\Hr^0(Y,\bar{\Ibf})\to\Hr^0(Y,\bar{\Ibf}^2)=\Hr^0(Y(\Rb),\Zb/2)\to\Hr^1(Y,\bar{\Kbf}^1)=0\] where $h^0(Y(\Rb),\Zb/2)=\sigma$. This yields $h^0(Y,\bar{\Kbf}^1)=g$. There is a localisation exact sequence relating the cohomology of $Y$, of $Y$ with support in $Z$ and of $X$ with coefficients in the sheaf $\bar{\Ibf}$. Since $Z$ is smooth (as a disjoint union of spectra of fields) of codimension $1$, purity identifies the cohomology of $Y$ with support in $Z$ in coefficients in $\bar{\Ibf}$ and the cohomology of $Z$ with coefficients in $\bar{\Wbf}$ (\emph{e.g.} \cite[§2.2]{faselLecturesChowWittGroups2020}). Since $Z$ has dimension $0$, the group $\Hr^1(Z,\bar{\Wbf})$ vanishes. Thus the localisation exact sequence reads 
\begin{equation}\label{eq:localisation_exact_sequence_compactification_curve}
0\to\Hr^0(Y,\bar{\Ibf})\to\Hr^0(X,\bar{\Ibf})\to\Hr^0(Z,\bar{\Wbf})\to\Hr^1(Y,\bar{\Ibf})\to\Hr^1(X,\bar{\Ibf})\to 0.
\end{equation}
Since $Y(\Rb)\neq\emptyset$, the subset $Y(\Rb)$ of $Y$ is Zariski-dense \cite[Proposition 1.1]{benoistHilberts17thProblem2017} so it meets the nonempty open subset $X$ of $Y$ hence $X(\Rb)$ is not empty. It then follows from \cite[Theorem 3.2 (d)]{colliot-theleneZerocyclesCohomologyReal1996} that $h^1(X,\bar{\Ibf})=h^1(X(\Rb),\Zb/2)=t$ and $h^1(Y,\bar{\Ibf})=h^1(Y(\Rb),\Zb/2)=\sigma$. On the other hand, since $\bar{\Wbf}=\Zb/2$ as sheaves on $Z_\Zar$ and $Z$ has $r+c$ connected components, one has $h^0(Z,\bar{\Wbf})=r+c$. Taking the alternating sum of the dimensions in (\ref{eq:localisation_exact_sequence_compactification_curve}) then yields $h^0(X,\bar{\Ibf})=g+c+s$ and the exact sequence \[0\to\Hr^0(X,\bar{\Kbf}^1)\to\Hr^0(X,\bar{\Ibf})\to\Hr^0(X(\Rb),\Zb/2)=(\Zb/2)^s\to 0\] induced by the bottom row of (\ref{eq:exact_sequence_signature_curve}) shows that $\Hr^0(X,\bar{\Kbf}^1)=(\Zb/2)^{g+c}$.

Now assume that $Y(\Rb)=\emptyset$. In particular, one has $r=0$ by definition. To compute $\dim\Hr_\et^1(Y)$, we use the real-complex exact sequence (\ref{eq:real_complex_exact_sequence}), which induces an exact sequence 
\begin{equation}\label{eq:exact_sequence_quad_ext}
0\to\Zb/2\cdot\omega\to\Hr_\et^1(Y)\to\Hr_\et^1(Y_\Cb)\to\Hr_\et^1(Y)\to\Hr_\et^2(Y)
\end{equation}
(Remark \ref{rema:real_complex_H1}) and yields an exact sequence $\Hr_\et^2(Y_\Cb)\to\Hr_\et^2(Y)\to\Hr_\et^3(Y)$. Since $Y$ has dimension $1$, he group $\Hr_\et^3(Y)$ is isomorphic to $\Hr^0(Y(\Rb),\Zb/2)\oplus\Hr^1(Y(\Rb),\Zb/2)$ by \cite[Theorem 2.3.1 (b)]{colliot-theleneZerocyclesCohomologyReal1996} and therefore vanishes so $\dim\Hr_\et^2(Y)\leq\dim\Hr_\et^2(Y_\Cb)=1$. In particular, the dimension $\delta$ of the image of the homomorphism $\Hr_\et^1(Y)\to\Hr_\et^2(Y)$ is less than or equal to $1$. Taking the alternating sum of the dimensions in (\ref{eq:exact_sequence_quad_ext}) then yields $1-x+2g-x+\delta=0$ where $1=\dim\Hr_\et^0(Y)$ and $2g=\dim\Hr_\et^1(Y_\Cb)$. Since $2x$ is even, we must have $\delta=1$ and thus $x=g+1$. Therefore $h^0(Y,\bar{\Ibf})=g+1$ so the exact sequence \[0\to\Hr^0(Y,\bar{\Kbf}^1)\to\Hr^0(Y,\bar{\Ibf})\to\Hr^0(Y(\Rb),\Zb/2)=0\] determined by the bottom row of (\ref{eq:exact_sequence_signature_curve}) yields $h^0(Y,\bar{\Kbf}^1)=g+1$. This completes the computation of $h^0(X,\bar{\Kbf}^1)$ if $X$ is proper as $X=Y$ in this case so assume from now on that $X$ is not proper. We use again the localisation exact sequence \[0\to\Hr^0(X,\bar{\Ibf})\to\Hr^0(Y,\bar{\Ibf})\to\Hr^0(Z,\bar{\Wbf})\to\Hr^1(Y,\bar{\Ibf})\to\Hr^1(X,\bar{\Ibf})\to 0.\] Since $X$ is not proper and $X(\Rb)\subseteq Y(\Rb)$ is empty, we have $\Hr^1(X,\bar{\Ibf})=0$ and $\Hr^1(Y,\bar{\Ibf})=\CH^1(Y)/2=\Zb/2$ by \cite[Theorem 1.3]{colliot-theleneZerocyclesCohomologyReal1996}. Taking as before the alternating sum of the dimensions over $\Zb/2$, we obtain $h^0(X,\bar{\Ibf})=g+1+c-1=g+c$ as required.
\end{proof}

\begin{rema}
We have chosen to perform the above computation in the spirit of the rest of this paper. However, the group $\Hr^0(X,\Kbf^1_\Leu)$ being independent of $\Leu$ (because of Lemma \ref{lem:2_torsion_to_(-1)_torsion}), to obtain the previous statement, one can restrict to the case where $\Leu$ is trivial and use Monnier's work \cite{monnierWittGroupTorsion2002} or Knebusch's original computations \cite{knebuschAlgebraicCurvesReal1976a} to recover the untwisted group $\Hr^0(X,\Kbf^1)$. This was the route taken in the proof of \cite[Lemma 4.6.6]{lerbetRealAnalogueHodge2025} to compute this group (in the complete case; we deduced the computation in the non-complete case from the localisation exact sequence).
\end{rema}

\begin{cor}\label{cor:computation_i_cohomology}
The sequence \[0\to\Hr^0(X,\Kbf^1_\Leu)\to\Hr^0(X,\Ibf_\Leu)\xrightarrow{\gamma_1^0}\Hr^0(X(\Rb),\Zb(L))\to 0\] of abelian groups is split exact. Consequently, if $m$ denotes the number of connected components $V$ of $X(\Rb)$ such that $\Leu(\Rb)_{|V}$ is non-trivial, then $\Hr^0(X,\Ibf_\Leu)\simeq(\Zb/2)^{g+c}\oplus\Zb^{s-m}$ if $X$ satisfies $(*)$, and $\Hr^0(X,\Ibf_\Leu)\simeq(\Zb/2)^{g+1}$ otherwise.
\end{cor}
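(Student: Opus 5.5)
We take global sections of the top row of (\ref{eq:exact_sequence_signature_curve}). This gives an exact sequence
\[0\to\Hr^0(X,\Kbf^1_\Leu)\to\Hr^0(X,\Ibf_\Leu)\xrightarrow{\gamma_1^0}\Hr^0(X(\Rb),\Zb(L))\to\Hr^1(X,\Kbf^1_\Leu).\]
By Lemma \ref{lem:2_torsion_to_(-1)_torsion} with $d=1$, the sheaf $\Kbf^1(\Leu)$ is isomorphic to $\bar{\Kbf}^1$. Hence, under $(*)$, Lemma \ref{lem:top_cohomology_2_torsion} gives $\Hr^1(X,\Kbf^1_\Leu)=0$, and $\gamma_1^0$ is surjective. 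If $(*)$ fails, then $X(\Rb)=\emptyset$, so the target $\Hr^0(X(\Rb),\Zb(L))$ is $0$ and surjectivity is trivial.

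The sequence splits because $\Hr^0(X(\Rb),\Zb(L))$ is a free abelian group. Indeed, it is the product over the connected components $V$ of $X(\Rb)$ of $\Hr^0(V,\Zb(L_{|V}))$. This factor is $\Zb$ when $L_{|V}$ is trivial, and $0$ when it is not, since a nontrivial local system with stalk $\Zb$ on a connected space has no nonzero global section because the monodromy acts by $-1$. Consequently
\[\Hr^0(X(\Rb),\Zb(L))\cong\Zb^{s-m},\]
which is projective, so the epimorphism $\gamma_1^0$ admits a section.

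The kernel is $\Hr^0(X,\Kbf^1_\Leu)\simeq(\Zb/2)^{g+c}$ under $(*)$, and $\simeq(\Zb/2)^{g+1}$ otherwise, by Lemma \ref{lem:all_cohomology_of_2_torsion}; this group is $2$-torsion since $\Kbf^1(\Leu)\cong\bar{\Kbf}^1$. Combining this with the splitting gives the stated descriptions of $\Hr^0(X,\Ibf_\Leu)$.

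The only step requiring any care is the identification $\Hr^0(X(\Rb),\Zb(L))\cong\Zb^{s-m}$, which is elementary. Everything else is assembled directly from the lemmas already established.
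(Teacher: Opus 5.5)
Your proposal is correct and follows essentially the same route as the paper: the same exact sequence from the top row of (\ref{eq:exact_sequence_signature_curve}), vanishing of $\Hr^1(X,\Kbf^1_\Leu)$ under $(*)$ (resp.\ emptiness of $X(\Rb)$ otherwise), splitting because $\Hr^0(X(\Rb),\Zb(L))\cong\Zb^{s-m}$ is free, and the kernel computed by Lemma \ref{lem:all_cohomology_of_2_torsion}. Your explicit appeal to Lemma \ref{lem:2_torsion_to_(-1)_torsion} to pass from $\bar{\Kbf}^1$ to $\Kbf^1_\Leu$ makes precise a step the paper leaves implicit.
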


\begin{rema}
The group $\Hr^1(X,\Ibf_\Leu)$ was computed in \cite[Theorem 3.5, Remark 3.7]{lerbetImageHigherSignature2026}.
\end{rema}

\begin{proof}
We use the exact sequence (\ref{eq:exact_sequence_signature_curve}) of sheaves on $X_\Zar$. It induces an exact sequence \[0\to\Hr^0(X,\Kbf^1_\Leu)\to\Hr^0(X,\Ibf_\Leu)\to\Hr^0(X(\Rb),\Zb(L))\xrightarrow{\partial}\Hr^1(X,\Kbf^1_\Leu).\] If $X$ satisfies $(*)$, then $\Hr^1(X,\Kbf^1_\Leu)=0$ by Lemma \ref{lem:top_cohomology_2_torsion} and thus $\partial=0$. If $X$ does not satisfy $(*)$, then $X(\Rb)=\emptyset$ so $\Hr^0(X(\Rb),\Zb(L))=0$ and again $\partial=0$. In all cases, we obtain an exact sequence \[0\to\Hr^0(X,\Kbf^1_\Leu)\to\Hr^0(X,\Ibf_\Leu)\to\Hr^0(X(\Rb),\Zb(L))\to 0\] of abelian groups as in the statement of Corollary \ref{cor:computation_i_cohomology}. The group $\Hr^0(X(\Rb),\Zb(L))$ is the direct summand of $\Hr^0(X(\Rb),\Zb)$ generated by the connected components $V$ of $X(\Rb)$ such that $L_{|V}$ is trivial. In particular, this is a free abelian group so the above exact sequence splits, and there is an isomorphism $\Hr^0(X(\Rb),\Zb(L))\simeq\Zb^{s-m}$. The statements of Corollary \ref{cor:computation_i_cohomology} then follow from Lemma \ref{lem:all_cohomology_of_2_torsion}.
\end{proof}

\begin{theo}\label{theo:twisted_W_0_curve}
Assume that $\Leu$ is not a square. Then there is an isomorphism $\Wr(X,\Leu)\simeq(\Zb/2)^{g+c}\oplus\Zb^{s-m}$ if $X$ satisfies $(*)$ and $\Wr(X,\Leu)\simeq(\Zb/2)^{g+1}$ otherwise.
\end{theo}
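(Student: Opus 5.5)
The plan is to reduce the computation of $\Wr(X,\Leu)$ to the $\Ibf$-cohomology group already determined in Corollary \ref{cor:computation_i_cohomology}. Since $X$ is a curve, the Gersten--Witt spectral sequence (\ref{eq:GW}) collapses, and the discussion in the preliminaries gives $\Wr(X,\Leu)\cong\Hr^0(X,\Wbf_\Leu)$. Everything therefore comes down to comparing $\Hr^0(X,\Wbf_\Leu)$ with $\Hr^0(X,\Ibf_\Leu)$.

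For this comparison I will use Lemma \ref{lem:baby_pardon}. It says that when $\Leu$ is not a square, the inclusion $\Hr^0(X,\Ibf_\Leu)\subseteq\Hr^0(X,\Wbf_\Leu)$ is surjective, hence an isomorphism. The only input is that the Euler class $e(\Leu)\in\Hr^1(X,\Ibf_\Leu)$ is nonzero. This holds because its reduction is $\bar{c}_1(\Leu)\in\Ch^1(X)=\Pic(X)/2$, which is nonzero exactly when $\Leu$ is not a square. Consequently the connecting map $\partial_\Leu$ is injective on $\Hr^0(X,\bar{\Wbf})=\Zb/2$, which is the required surjectivity. I should check that $X$ is connected, so that $\Hr^0(X,\bar{\Wbf})=\Zb/2$. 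This is fine, since $X$ is assumed geometrically connected in this section.

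It then remains to apply Corollary \ref{cor:computation_i_cohomology}. It gives $\Hr^0(X,\Ibf_\Leu)\simeq(\Zb/2)^{g+c}\oplus\Zb^{s-m}$ under $(*)$ and $(\Zb/2)^{g+1}$ otherwise. Combining this with the two isomorphisms above yields the statement.

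There is no serious obstacle. The one point needing care is that ``$\Leu$ is not a square'' is read in $\Pic(X)/2$, so that it is equivalent to $\bar{c}_1(\Leu)\neq0$ in $\Ch^1(X)$; this equivalence is immediate since $\Ch^1(X)=\CH^1(X)/2=\Pic(X)/2$. The remaining step is to verify that the signature description of the torsion-free part (the $\Zb^{s-m}$) is transported compatibly along the inclusion $\Ibf_\Leu\subseteq\Wbf_\Leu$. That compatibility is automatic, because the map is an isomorphism of groups.
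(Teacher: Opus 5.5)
Your proposal is correct and follows the paper's proof: the Gersten--Witt edge isomorphism $\Wr(X,\Leu)\cong\Hr^0(X,\Wbf_\Leu)$, then Lemma \ref{lem:baby_pardon} to identify $\Hr^0(X,\Ibf_\Leu)$ with $\Hr^0(X,\Wbf_\Leu)$ since $\Leu$ is not a square, then Corollary \ref{cor:computation_i_cohomology}. Your closing remark about transporting the signature is not needed for an abstract isomorphism; note also that under this identification the global signature on $\Wr(X,\Leu)$ corresponds to $2\gamma_1^0$, not $\gamma_1^0$, which does not affect the statement.
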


\begin{proof}
Recall that the edge morphism $\Wr(X,\Leu)\to\Hr^0(X,\Wbf_\Leu)$ in the Gersten--Witt spectral sequence is an isomorphism. The map $\Hr^0(X,\Ibf_\Leu)\to\Hr^0(X,\Wbf_\Leu)$ is an isomorphism by Lemma \ref{lem:baby_pardon}. The conclusion then follows from Corollary \ref{cor:computation_i_cohomology}.
\end{proof}

\begin{rema}
We could also deduce $\Wr(X)$ (the ``untwisted'' case) from Corollary \ref{cor:computation_i_cohomology}, but this was already done by Monnier \cite{monnierWittGroupTorsion2002} (and the proofs would be very similar).
\end{rema}

\begin{rema}
Beware that although the map $\Hr^0(X,\Ibf_\Leu)\to\Hr^0(X,\Wbf_\Leu)$ is an isomorphism, the signature homomorphism $\gamma^0:\Wr(X,\Leu)\to\Hr^0(X(\Rb),\Zb(L))$ does not coincide with $\gamma_1^0$. The reason is the commutative square:
\[\begin{tikzcd}
	{\Hr^0(X,\Ibf_\Leu)} & {\Hr^0(X,\Wbf_\Leu)} \\
	{\Hr^0(X(\Rb),\Zb(L))} & {\Hr^0(X(\Rb),\Zb(L))}
	\arrow["\cong", from=1-1, to=1-2]
	\arrow["{\gamma_1^0}"', from=1-1, to=2-1]
	\arrow["{\gamma^0}", from=1-2, to=2-2]
	\arrow["{\cdot 2}"', from=2-1, to=2-2]
\end{tikzcd}\]
of abelian groups.
\end{rema}

\subsection{Grothendieck--Witt groups}\label{subsection:GW_curves}

For these, we have the following result:

\begin{prop}\label{prop:GW_0_curve}
Let $X$ be a connected smooth real curve and let $\Leu$ be a line bundle on $X$; set $L=\Leu(\Rb)$. There is an isomorphism $\Hr^0(X,\GWb(\Leu))\simeq\Hr^0(X,\Ibf_\Leu)\oplus\Zb$. If $X$ satisfies $(*)$, then there is an exact sequence \[0\to\Hr^1(X(\Rb),\Zb/2)/(\Zb/2\cdot w_1(L))\to\GW(X,\Leu)\to\Hr^0(X,\GWb(\Leu))\to 0\] of abelian groups If $X$ does not satisfy $(*)$, then there is an exact sequence \[0\to\Zb/2\to\GW(X)\to\Hr^0(X,\GWb)\to 0\] if $\Leu$ is a square and the map $\GW(X,\Leu)\to\Hr^0(X,\GWb(\Leu))$ is an isomorphism if $\Leu$ is not a square.
\end{prop}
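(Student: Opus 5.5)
We will run the Gersten--Grothendieck--Witt spectral sequence (\ref{eq:GGW}) with $n=0$ on the curve $X$. Since $X$ has dimension $1$, only the columns $p=0,1$ are nonzero. The column $p=0$ gives $\Hr^0(X,\GWb(\Leu))$, and the column $p=1$ has terms $\GW^{-1}_{\star}(\kappa(x),\ldots)$ at closed points. The abutment on total degree $0$ is therefore a two-step filtration. It yields an exact sequence
\[0\to\Hr^1(X,\GWb^{-1}_{-1}(\Leu))\to\GW(X,\Leu)\to\Hr^0(X,\GWb(\Leu))\to 0.\]
The right map is surjective because no differential leaves the $p=0$ column into anything nonzero beyond $p=1$, and the relevant $d_1$ has already been taken into account in forming $\Hr^*$. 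Next we use $\GWb^{-1}_{-1}(\Leu)=\Wbf(\Leu)$ from the definitions (Schlichting's $\GW^{i}_{j}$ for $j<0$ equals $\Wr^{i-j}$). The kernel is therefore $\Hr^1(X,\Wbf_\Leu)$. The first step is to check this indexing carefully.

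\textbf{Degree $0$ term.} To describe $\Hr^0(X,\GWb(\Leu))$, we use the fibre square of sheaves $\GWb(\Leu)=\Wbf(\Leu)\times_{\bar{\Wbf}}\Zb$, with rank mod $2$ on the right. Equivalently, there is an exact sequence $0\to\Ibf(\Leu)\to\GWb(\Leu)\xrightarrow{\rk}\Zb\to 0$, since the rank-zero part of $\GW$ of a local ring maps isomorphically to $\Ir$. Taking $\Hr^0$ on connected $X$, the rank map onto $\Zb$ is split surjective. Over a field, $\Ir$ is already unramified, so $\Hr^0(X,\Zb)=\Zb$. This gives $\Hr^0(X,\GWb(\Leu))\simeq\Hr^0(X,\Ibf_\Leu)\oplus\Zb$.

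\textbf{Degree $1$ term.} By Lemma \ref{lem:baby_pardon}, $\Hr^1(X,\Wbf_\Leu)=\Hr^1(X,\Ibf_\Leu)/\Zb/2\cdot e(\Leu)$.

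Under $(*)$, we take the top row of (\ref{eq:exact_sequence_signature_curve}) in degree $1$. Lemma \ref{lem:top_cohomology_2_torsion} and Lemma \ref{lem:2_torsion_to_(-1)_torsion} give $\Hr^1(X,\Kbf^1_\Leu)=0$, so $\gamma^1_1:\Hr^1(X,\Ibf_\Leu)\to\Hr^1(X(\Rb),\Zb(L))$ is an isomorphism (this is \cite[Theorem 3.5]{lerbetImageHigherSignature2026}). The issue is then that the claimed answer is $\Hr^1(X(\Rb),\Zb/2)/w_1(L)$ rather than $\Hr^1(X(\Rb),\Zb(L))/(\ldots)$. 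I expect this identification to be the main obstacle. I would reconsider which term actually sits in the $p=1$ column: for $\GW^0_0$ with $p=1$ the coefficient is $\GW^{-1}_{-q}$, and on the diagonal $q=0$ it is $\GW^{-1}_0(\kappa(x))$, which vanishes or is $\Kr$-theoretic. The correct line may instead be $\GWb^{-1}_{0}$, whose $\Hr^1$ is the cokernel of hyperbolic-type maps and equals $\Hr^1(X,\bar{\Wbf})$-like data, i.e. $\Hr^1(X,\bar{\Ibf}^0\text{-quotient})$. Concretely, I would identify the kernel with $\Coker\bigl(\Hr^0(X,\bar{\Wbf})\xrightarrow{\partial}\Hr^1(X,\bar{\Ibf})\bigr)$, twisted appropriately, i.e. $\Ch^1(X)/\bar c_1(\Leu)$. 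Then $\bar\gamma^1:\Ch^1(X)\to\Hr^1(X(\Rb),\Zb/2)$ is an isomorphism under $(*)$ by \cite[Theorem 3.2]{colliot-theleneZerocyclesCohomologyReal1996}, and it carries $\bar c_1(\Leu)$ to $w_1(L)$. This produces exactly $\Hr^1(X(\Rb),\Zb/2)/\Zb/2\cdot w_1(L)$, so I would commit to this identification of the $E_\infty^{1,\star}$ term via the $\GW$-version of the ladder, using that $\Hr^1(X,\Ibf^2_\Leu)$-contributions are killed by the forgetful map.

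\textbf{When $(*)$ fails.} In that case $\Ch^1(X)=\Zb/2$, and the same cokernel of $\cup\bar c_1(\Leu)$ is $\Zb/2$ if $\Leu$ is a square and $0$ otherwise. This matches the last two assertions, with surjectivity onto $\Hr^0(X,\GWb(\Leu))$ following from the collapse of the spectral sequence.
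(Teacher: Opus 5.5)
There is a genuine gap, located exactly where you suspected. Your first identification of the kernel comes from taking the wrong row of (\ref{eq:GGW}). With $n=0$, the term $\Er(0)^{p,q}$ abuts to $\GW^0_{-(p+q)}(X,\Leu)$, so $\GW(X,\Leu)$ only sees $p+q=0$. Its $p=1$ piece is therefore $\Er(0)_2^{1,-1}=\Hr^1(X,\GWb^0_1(\Leu))$. The Gersten complex of this sheaf is $\GW^0_1(\kappa(X),\ldots)\to\bigoplus_{x\in X^{(1)}}\GW^{-1}_0(\kappa(x),\omega_{x/X}\otimes\Leu(x))$, and each $\GW^{-1}_0(\kappa(x),\ldots)=\GW^3(\kappa(x),\ldots)$ is $\Zb/2$; it is not zero.

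The row you used, with codimension-one terms $\GW^{-1}_{-1}(\kappa(x),\ldots)=\Wr(\kappa(x),\ldots)$, is the row $q=0$. It sits in total degree $1$ and contributes to $\GW^0_{-1}(X,\Leu)=\Wr^1(X,\Leu)$. That is why you landed on $\Hr^1(X,\Wbf_\Leu)$. This group is $\Zb^t$ when $\Leu$ is trivial and $(*)$ holds (Example \ref{exe:untwisted_witt_group_curve}), so it is incompatible with the statement. Your fallback ``the correct line may be $\GWb^{-1}_0$'' still names the coefficient sheaf after its fibres at closed points; the sheaf is $\GWb^0_1(\Leu)$.

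More seriously, you guess the correct target $\Coker(\cup\overline{c}_1(\Leu)\colon\Ch^0(X)\to\Ch^1(X))$ but give no argument that $\Hr^1(X,\GWb^0_1(\Leu))$ is isomorphic to it. The phrases ``the $\GW$-version of the ladder'' and ``$\Hr^1(X,\Ibf^2_\Leu)$-contributions are killed by the forgetful map'' do not produce any map relating $\GWb^0_1(\Leu)$ to $\overline{\Wbf}$, $\overline{\Ibf}$ or $\Ch^*$. This identification is the real content of the proposition.

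The paper imports it from \cite[Theorem 3.7.1]{asokSplittingVectorBundles2014}, which gives an exact sequence $\Ch^0(X)\xrightarrow{\cup\overline{c}_1(\Leu)}\Ch^1(X)\to\Hr^1(X,\GWb^0_1(\Leu))\to 0$. Without that input, you would have to show directly that the image of the Gersten differential into $\bigoplus_{x}\Zb/2$ is spanned by principal divisors mod $2$ together with a divisor of $\Leu$. Once this is in place, your final step is exactly the paper's: under $(*)$, $\overline{\gamma}^1$ is an isomorphism carrying $\overline{c}_1(\Leu)$ to $w_1(L)$ and the unit of $\Ch^0(X)$ to the unit; otherwise $\Ch^1(X)=\Zb/2$.

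A smaller slip in the degree-$0$ part: if $\Leu$ is not a square, the rank map on $\Hr^0(X,\GWb(\Leu))$ has image $2\Zb$, not $\Zb$, since there are no odd-rank forms by Lemma \ref{lem:baby_pardon}. The isomorphism with $\Hr^0(X,\Ibf_\Leu)\oplus\Zb$ survives only because $2\Zb$ is free.
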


\begin{proof}
We consider the Gersten--Grothendieck--Witt spectral sequence (\ref{eq:GGW}) with $n=0$: \[\Er(0)_1^{p,q}=\bigoplus_{x\in X^{(p)}}\GW_{-p-q}^{-p}(\kappa(x),\omega_{x/X}\otimes\Leu(x)).\] Since $X^{(p)}$ is empty for $p\notin\{0,1\}$, one has $\Er(0)_1^{p,q}=0$ if $p\notin\{0,1\}$ and thus this spectral sequence collapses at the $\Er_2$-page, which we recall is given by $\Er(0)_2^{p,q}=\Hr^p(X,\GWb_{-q}^0(\Leu))$. This yields an exact sequence \[0\to\Hr^1(X,\GWb_1^0(\Leu))\to\GW(X,\Leu)\to\Hr^0(X,\GWb(\Leu))\to 0.\] Now let $\widehat{\Ibf}(\Leu)$ be the sheaf on $X_\Zar$ associated with the presheaf $U\mapsto\widehat{\Ir}(U,\Leu)$. We then have an exact sequence \[0\to\widehat{\Ibf}(\Leu)\to\GWb(\Leu)\xrightarrow{\rk}\Zb\to 0\] of sheaves on $X_\Zar$ induced by the rank homomorphism. Since the quotient map $\widehat{\Ir}(F)\to\Ir(F)$ is an isomorphism for every field $f$, by comparing the resolutions of these sheaves, we see that the quotient morphism $\widehat{\Ibf}(\Leu)\to\Ibf(\Leu)$ of sheaves is an isomorphisms. We then have a sequence \[0\to\Hr^0(X,\Ibf_\Leu)\to\Hr^0(X,\GWb(\Leu))\xrightarrow{\rk}\Hr^0(X,\Zb)=\Zb.\] The rank of the hyperbolic form $\Hr_\Leu(\Osc_X)$ on $\Osc_X$ has rank $2$ so the image of the homomorphism $\Hr^0(X,\GWb(\Leu))\to\Zb$ is either $\Zb$, if $\Leu$ is a square (the unit form $\langle 1\rangle\in\GW(X)\simeq\GW(X,\Leu)$ mapping to $1$ in this case), or $2\Zb$ if $\Leu$ is not a square.\footnote{When $\Leu$ is not a square, one has $\Hr^0(X,\Wbf_\Leu)=\Hr^0(X,\Ibf_\Leu)$ so there are no odd rank forms in $\Hr^0(X,\GWb(\Leu))\subseteq\GW(\kappa(X),\Leu\otimes\kappa(X))$ (we let $\kappa(X)$ denote the function field of $X$).} In any case, the homomorphism $\Hr^0(X,\GWb(\Leu))\to\Zb$ is split onto its image and we obtain an isomorphism $\Hr^0(X,\GWb(\Leu))\simeq\Zb\oplus\Hr^0(X,\Ibf_\Leu)$. 

Now it follows from \cite[Theorem 3.7.1]{asokSplittingVectorBundles2014} that there is an exact sequence \[\Ch^0(X)\xrightarrow{\cup\bar{c}_1(\Leu)}\Ch^1(X)\to\Hr^1(X,\GWb_1^0(\Leu))\to 0\] of abelian groups, where $\bar{c}_1(\Leu)$ is the mod $2$ first Chern class of $\Leu$. Assuming first that $X$ satisfies $(*)$, the mod $2$ cycle class maps $\bar{\gamma}^i:\Ch^i(X)\to\Hr^i(X(\Rb),\Zb/2)$ are compatible with cup-products and $\bar{\gamma}^1$ carries $\bar{c}_1(\Leu)$ to $w_1(L)$ by \cite[Théorème 4]{kahnConstructionClassesChern1987} so there is a commutative square
\[\begin{tikzcd}
	{\Ch^0(X)} & {\Ch^1(X)} \\
	{\Hr^0(X(\Rb),\Zb/2)} & {\Hr^1(X(\Rb),\Zb/2)}
	\arrow["{\cup\bar{c}_1(\Leu)}", from=1-1, to=1-2]
	\arrow["{\bar{\gamma}^0}"', from=1-1, to=2-1]
	\arrow["{\bar{\gamma}^1}", from=1-2, to=2-2]
	\arrow["{\cup w_1(L)}"', from=2-1, to=2-2]
\end{tikzcd}\]
Moreover, the map $\bar{\gamma}^1$ is an isomorphism by \cite[Theorem 3.2 (d)]{colliot-theleneZerocyclesCohomologyReal1996} since $X$ satisfies $(*)$, and the map $\Ch^0(X)\to\Hr^0(X(\Rb),\Zb/2)$ carries the generator of $\Ch^0(X)=\Zb/2$ to the unit in $\Hr^0(X(\Rb),\Zb/2)$, hence both composites in the above square have image $w_1(L)\in\Hr^1(X(\Rb),\Zb/2)$. We then have an isomorphism $\Hr^1(X,\GWb_1^0(\Leu))\simeq\Hr^1(X(\Rb),\Zb/2)/(\Zb/2\cdot w_1(L))$ on cokernels of the horizontal morphisms in the previous square as in the proof of Proposition \ref{prop:small_d_w_d}. This yields the exact sequence of the above statement and completes the proof under the assumption that $X$ satisfies $(*)$. If $X$ does not satisfy $(*)$, then $\Ch^1(X)=\Zb/2$ by \cite[Theorem 1.3 (b)]{colliot-theleneZerocyclesCohomologyReal1996} and the statements above follow easily.
\end{proof}

\begin{rema}
The exact sequence of Proposition \ref{prop:GW_0_curve} induces an exact sequence \[0\to\Hr^1(X(\Rb),\Zb/2)/(\Zb/2\cdot w_1(L))\to\widehat{\Ir}(X,\Leu)\to\Hr^0(X,\Ibf_\Leu)\to 0\] of abelian groups. Indeed, the elements of \[\Hr^1(X(\Rb),\Zb/2)/(\Zb/2\cdot w_1(L))\subseteq\GW(X,\Leu)\] being $2$-torsion, their rank in the torsion free group $\Zb$ is zero so they lie in $\widehat{\Ir}(X,\Leu)$. We do not how to solve this extension problem in general.
\end{rema}

In the following proposition, the map $\bar{\gamma}^1:\CH^1(X)\to\Hr^1(X(\Rb),\Zb/2)$ is the composite of the reduction mod $2$ map $\CH^1(X)\to\Ch^1(X)$ and of the map $\Ch^1(X)\cong\Hr^1(X,\bar{\Ibf}^1)\to\Hr^1(X(\Rb),\Zb/2)$.

\begin{prop}\label{prop:GW_1_curve}
Let $X$ be a connected smooth real curve and let $\Leu$ be a line bundle on $X$; set $L=\Leu(\Rb)$. If $X(\Rb)$ is empty or connected or if $\Leu$ is a square, then $\GW^1(X,\Leu)$ sits in a fibre product square
\[\begin{tikzcd}
	{\GW^1(X,\Leu)} & {\CH^1(X)} \\
	{\Hr^1(X(\Rb),\Zb(L))} & {\Hr^1(X(\Rb),\Zb/2)}
	\arrow[from=1-1, to=1-2]
	\arrow[from=1-1, to=2-1]
	\arrow["{\bar{\gamma}^1}", from=1-2, to=2-2]
	\arrow["{\textup{mod}\;2}"', from=2-1, to=2-2]
\end{tikzcd}\]
of abelian groups.
\end{prop}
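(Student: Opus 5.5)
We run the Gersten--Grothendieck--Witt spectral sequence (\ref{eq:GGW}) with $n=1$, as in the proof of Proposition \ref{prop:GW_0_curve}. Since $X$ is a curve, only the columns $p\in\{0,1\}$ are nonzero, so the sequence collapses at $\Er_2$ and gives
\[0\to\Hr^1(X,\GWb_1^1(\Leu))\to\GW^1(X,\Leu)\to\Hr^0(X,\GWb^1(\Leu))\to 0.\]
The $\Er_1$-terms are $\GW^1_{1-p}(\kappa(x),\dots)$. For $p=0$ this is $\GW^1(F)$, which vanishes for a field because there are no nonzero formations up to metabolic ones in degree $1$. For $p=1$ we get $\GW_0^0(\kappa(x),\omega)=\GW(\kappa(x))$ in the relevant line. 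Hence $\Hr^0(X,\GWb^1(\Leu))=0$ and $\GW^1(X,\Leu)\cong\Hr^1(X,\GWb^1_1(\Leu))$.

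To identify this group, we compare the $q$-line of the spectral sequence with the Rost--Schmid complexes. The complex computing $\Hr^*(X,\GWb_1^1(\Leu))$ is
\[\Kr_1^{MW}(\kappa(X),\Leu)\to\bigoplus_{x\in X^{(1)}}\GW(\kappa(x),\omega_x\otimes\Leu),\]
so $\Hr^1(X,\GWb^1_1(\Leu))$ is the twisted Chow--Witt group $\widetilde{\CH}^1(X,\Leu)$. This is the standard identification of \cite{faselChowWittGroups2009} / \cite{asokSplittingVectorBundles2014}. There is then the fibre product square of Hornbostel--Wendt type,
\[\widetilde{\CH}^1(X,\Leu)\to\CH^1(X)\times_{\Ch^1(X)}\ker\bigl(\partial_\Leu\bigr)\subseteq\Hr^1(X,\Ibf^1_\Leu),\]
coming from the cartesian square of sheaves $\Kbf^{MW}_1=\Kbf^M_1\times_{\bar{\Ibf}}\Ibf$ (twisted). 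Since $\Hr^2(X,-)=0$ on a curve, the long exact sequences show that $\widetilde{\CH}^1(X,\Leu)\to\CH^1(X)\times_{\Ch^1(X)}\Hr^1(X,\Ibf_\Leu)$ is surjective. Its kernel is a quotient of $\Hr^0(X,\bar{\Ibf})$ by the images of $\Hr^0(X,\Kbf^M_1)$ and $\Hr^0(X,\Ibf_\Leu)$. This kernel vanishes: Lemma \ref{lem:Sujatha} gives surjectivity of $\Hr^0(X,\Ibf)\to\Hr^0(X,\bar{\Ibf})$ in the untwisted case, and twisting by a square changes nothing.

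Next we replace $\Hr^1(X,\Ibf_\Leu)$ by topology. Under $(*)$, $\gamma^1_1:\Hr^1(X,\Ibf_\Leu)\to\Hr^1(X(\Rb),\Zb(L))$ is an isomorphism by Lemma \ref{lem:easy_higher_pardon} together with \cite[Theorem 3.5]{lerbetImageHigherSignature2026}. Indeed, $\Hr^1(X,\Ibf_\Leu)=\Hr^1(X,\Ibf^2_\Leu)$ with $\Ibf^2_\Leu\cong\iota_*\Zb(L)$ by Jacobson, and this change of index is compatible with the normalisations. By the ladder (\ref{eq:ladder_normalised_signature}), reduction $\Hr^1(X,\Ibf_\Leu)\to\Ch^1(X)$ corresponds to reduction mod $2$ followed by $(\bar{\gamma}^1)^{-1}$. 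When $X(\Rb)=\emptyset$ with $X$ non-proper, every group in sight vanishes or the statement is trivial. When $X(\Rb)=\emptyset$ with $X$ proper, $\Hr^1(X(\Rb),-)=0$ and we need $\widetilde{\CH}^1(X,\Leu)\cong\CH^1(X)$. This follows from (\ref{eq:projection_isomorphism}) together with the kernel of $\CH^1\to\Ch^1$ computation, using $\Hr^1(X,\Ibf_\Leu)\cong\Ch^1(X)$.

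\textbf{Main obstacle.} The hardest step is the kernel/surjectivity in the twisted case, since Lemma \ref{lem:Sujatha} is untwisted. The hypothesis is needed exactly there. If $\Leu$ is a square we reduce to the untwisted case. If $X(\Rb)$ is connected or empty, the obstruction is the cokernel of $\Hr^0(X,\Ibf_\Leu)\to\Hr^0(X,\bar{\Ibf})$. By the ladder (\ref{eq:exact_sequence_signature_curve}), this cokernel maps injectively to the cokernel of $\Hr^0(X(\Rb),\Zb(L))\to\Hr^0(X(\Rb),\Zb/2)$, which is $(\Zb/2)^m$. This cokernel must be shown to be killed by $\partial_\Leu$, i.e.\ by $e(\Leu)$ via Lemma \ref{lem:baby_pardon}. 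When $s\le1$ we have $m\le1$, and the single class is exactly the one accounted for by $e(\Leu)$, which gives the required vanishing. I would first verify this last point carefully via $d_L$ and Corollary \ref{cor:W1_curve}.
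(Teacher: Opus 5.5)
\textbf{Gap: the twisted case with $X(\Rb)$ connected.}
The reduction $\GW^1(X,\Leu)\cong\Hr^1(X,\GWb_1^1(\Leu))=\widetilde{\CH}{}^1(X,\Leu)$ is sound. So is the Mayer--Vietoris description for $\Jbf^1(\Leu)=\Kbf_1^\Mr\times_{\bar{\Ibf}}\Ibf(\Leu)$. It shows that $\widetilde{\CH}{}^1(X,\Leu)\to\CH^1(X)\times_{\Ch^1(X)}\Hr^1(X,\Ibf_\Leu)$ is onto, with kernel $\Hr^0(X,\bar{\Ibf})/(\Im s_1+\Im\pi_\Leu^{0,1})$, where $s_1:\Hr^0(X,\Kbf_1^\Mr)=\Osc(X)^\times\to\Hr^0(X,\bar{\Ibf})$. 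In the square case this is essentially the paper's argument.

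The gap is the case where $\Leu$ is not a square and $X(\Rb)$ is connected, which is exactly where the hypothesis matters. There you drop $\Im s_1$ and declare the obstruction to be $\Coker\pi_\Leu^{0,1}$. By your own computation this cokernel is $(\Zb/2)^m$, which is $\Zb/2\neq 0$ when $X(\Rb)$ is connected and $L$ is nontrivial, so it does not vanish. The class $e(\Leu)$ does not help: it is $\partial_\Leu^{0,0}(\langle 1\rangle)\in\Hr^1(X,\Ibf_\Leu)$, and Lemma \ref{lem:baby_pardon} describes $\Hr^1(X,\Wbf_\Leu)$, not $\widetilde{\CH}{}^1(X,\Leu)$.

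What you must show is that $\Im s_1$ covers $\Coker\pi_\Leu^{0,1}$. The missing idea is the unit $-1\in\Osc(X)^\times$. Its image $s_1(\{-1\})=\llangle -1\rrangle$ has real cycle class the constant function $1$ on $X(\Rb)$. Hence $\Hr^0(X,\Kbf_1^\Mr)\to\Hr^0(X(\Rb),\Zb/2)$ is surjective when $X(\Rb)$ is empty or connected. The ladder (\ref{eq:exact_sequence_signature_curve}) and Lemma \ref{lem:2_torsion_to_(-1)_torsion} give $\Ker\bar{\gamma}_1^0=\Hr^0(X,\bar{\Kbf}^1)\subseteq\Im\pi_\Leu^{0,1}$. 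Together these yield $\Hr^0(X,\bar{\Ibf})=\Im\pi_\Leu^{0,1}+\Zb/2\cdot\llangle -1\rrangle$, so the kernel vanishes.

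The paper uses exactly this surjectivity, but never passes through $\Hr^1(X,\Ibf_\Leu)$. It maps $\Hr^0(X,\Kbf_1^\Mr)\to\Hr^1(X,\Ibf^2_\Leu)\to\widetilde{\CH}{}^1(X,\Leu)\to\CH^1(X)\to 0$ to the long exact sequence of $0\to\Zb(L)\xrightarrow{2}\Zb(L)\to\Zb/2\to 0$ on $X(\Rb)$. It then uses Jacobson's isomorphism $\gamma_2^1$ and concludes by a diagram chase.

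\textbf{Smaller errors.}
Your justification that $\gamma^1:\Hr^1(X,\Ibf_\Leu)\to\Hr^1(X(\Rb),\Zb(L))$ is an isomorphism is false as written. Lemma \ref{lem:easy_higher_pardon} identifies $\Hr^i(X,\Ibf^2_\Leu)$ with $\Hr^i(X,\Ibf_\Leu)$ only for $i\geq 3$. On a curve satisfying $(*)$, the inclusion $\Ibf^2(\Leu)\subseteq\Ibf(\Leu)$ induces multiplication by $2$ on $\Hr^1(X(\Rb),\Zb(L))$ by (\ref{eq:ladder_normalised_signature}), and its cokernel is $\Ch^1(X)$, so it is not an isomorphism. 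The isomorphism $\gamma^1$ itself is true, but it comes from \cite[Theorem 3.5]{lerbetImageHigherSignature2026} alone.

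Also, for non-proper $X$ with $X(\Rb)=\emptyset$, $\CH^1(X)$ need not vanish; only $\Ch^1(X)$ does. The correct reason is that $\Hr^1(X,\Ibf^2_\Leu)\cong\Hr^1(X(\Rb),\Zb(L))=0$ and $\Hr^2(X,\Ibf^2_\Leu)=0$, so $\widetilde{\CH}{}^1(X,\Leu)\cong\CH^1(X)$.
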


\begin{proof}
We consider again the Gersten--Grothendieck--Witt spectral sequence \[\Er(1)_1^{p,q}=\bigoplus_{x\in X^{(p)}}\GW_{1-p-q}^{1-p}(\kappa(x),\omega_{x/X}\otimes\Leu(x))\Rightarrow\GW^1(X,\Leu)\] described in (\ref{eq:GGW}). It collapses at the second page because $X^{(p)}$ is empty for $p\notin\{0,1\}$, so that $\Er(1)_1^{p,q}=0$ for $p\notin\{0,1\}$, and yields an exact sequence \[0\to\Hr^1(X,\GWb_1^1(\Leu))\to\GW^1(X,\Leu)\to\Hr^0(X,\GWb_0^1(\Leu))\to 0.\] The group $\Hr^0(X,\GWb_0^1(\Leu))$ is a subgroup of $\GW^1(\kappa(X),\Leu(X))$, where $\kappa(X)$ is the function field of $X$, and $\GW^1(F)=0$ for any field $F$ of characteristic not $2$ (\cite[Lemma 2.2]{faselStablyFreeModules2012}), so $\Hr^0(X,\GWb_0^1(\Leu))$. We conclude that the homomorphism $\Hr^1(X,\GWb_1^1(\Leu))\to\GW^1(X,\Leu)$ is an isomorphism. Therefore to prove the above proposition, we may replace $\GW^1(X,\Leu)$ by $\Hr^1(X,\GWb_1^1(\Leu))$.

Now it follows from \cite[Corollaire 4.5.1.5]{bargeSuitesSturmIndice2008} that there is an explicit isomorphism \[\GWb_1^1(\Leu)\to\Ibf(\Leu)\times_{\bar{\Ibf}}\Kbf_1^\Mr\] of sheaves on $X_\Zar$; we write $\Kbf_1^\Mr$ for the first unramified Milnor $\Kr$-theory sheaf, which is simply the multiplicative group sheaf $\Gm$. The maps in the fibre product are given by the map $\Kbf_1^\Mr\to\bar{\Ibf}$ carrying the unit $a$ to the class mod $\Ir^2$ of $\langle 1,-a\rangle$ and the quotient map $\Ibf(\Leu)\to\bar{\Ibf}$. Therefore we may replace $\GWb_1^1(\Leu)$ by this fibre product of sheaves, which we denote by $\Jbf^1(\Leu)$. By definition, it sits in a commutative ladder:
\begin{equation}\label{eq:mw_k_theory_degree_1}
\begin{tikzcd}
	0 & {\Ibf^2(\Leu)} & {\Jbf^1(\Leu)} & {\Kbf_1^\Mr} & 0 \\
	0 & {\Ibf^2(\Leu)} & {\Ibf(\Leu)} & {\Kbf_1^\Mr/2} & 0
	\arrow[from=1-1, to=1-2]
	\arrow[from=1-2, to=1-3]
	\arrow["{=}"{description}, from=1-2, to=2-2]
	\arrow[from=1-3, to=1-4]
	\arrow[from=1-3, to=2-3]
	\arrow["\lrcorner"{anchor=center, pos=0.125}, draw=none, from=1-3, to=2-4]
	\arrow[from=1-4, to=1-5]
	\arrow[from=1-4, to=2-4]
	\arrow[from=2-1, to=2-2]
	\arrow[from=2-2, to=2-3]
	\arrow[from=2-3, to=2-4]
	\arrow[from=2-4, to=2-5]
\end{tikzcd}
\end{equation}
of sheaves on $X_\Zar$.

Now assume that $\Leu$ is a square. Since the global sections functor is left exact, the fibre product square defining $\Jbf^1$ yields a fibre product square
\[\begin{tikzcd}
	{\Hr^0(X,\Jbf^1)} & {\Hr^0(X,\Kbf_1^\Mr)} \\
	{\Hr^0(X,\Ibf)} & {\Hr^1(X,\bar{\Ibf})}
	\arrow[from=1-1, to=1-2]
	\arrow[from=1-1, to=2-1]
	\arrow["\lrcorner"{anchor=center, pos=0.125}, draw=none, from=1-1, to=2-2]
	\arrow[from=1-2, to=2-2]
	\arrow[from=2-1, to=2-2]
\end{tikzcd}\]
of abelian groups, in which the bottom horizontal map is surjective by Lemma \ref{lem:Sujatha}; hence so is the top horizontal map $\Hr^0(X,\Jbf^1)\to\Hr^0(X,\Kbf_1^\Mr)$. The commutative ladder (\ref{eq:mw_k_theory_degree_1}) then induces a commutative ladder
\[\begin{tikzcd}
	0 & {\Hr^1(X,\Ibf^2)} & {\Hr^1(X,\Jbf^1)} & {\Hr^1(X,\Kbf_1^\Mr)} & 0 \\
	0 & {\Hr^1(X,\Ibf^2)} & {\Hr^1(X,\Ibf)} & {\Hr^1(X,\bar{\Ibf})} & 0
	\arrow[from=1-1, to=1-2]
	\arrow[from=1-2, to=1-3]
	\arrow["{=}"{description}, from=1-2, to=2-2]
	\arrow[from=1-3, to=1-4]
	\arrow[from=1-3, to=2-3]
	\arrow[from=1-4, to=1-5]
	\arrow[from=1-4, to=2-4]
	\arrow[from=2-1, to=2-2]
	\arrow[from=2-2, to=2-3]
	\arrow[from=2-3, to=2-4]
	\arrow[from=2-4, to=2-5]
\end{tikzcd}\]
of cohomology long exact sequences, in which exactness at $\Hr^1(X,\Kbf_1^\Mr)$ and $\Hr^1(X,\bar{\Ibf})$ follows from the fact that $X$ has dimension $1$ (the cokernel of the relevant maps being given by $\Hr^2(X,\Ibf^2)=0$). An easy homological algebra argument then implies that the right inner square
\[\begin{tikzcd}
	{\Hr^1(X,\Jbf^1)} & {\Hr^1(X,\Kbf_1^\Mr)} \\
	{\Hr^1(X,\Ibf)} & {\Hr^1(X,\bar{\Ibf})}
	\arrow[from=1-1, to=1-2]
	\arrow[from=1-1, to=2-1]
	\arrow[from=1-2, to=2-2]
	\arrow[from=2-1, to=2-2]
\end{tikzcd}\]
is cartesian.\footnote{In fact, this square is cartesian for any smooth curve $X$ over an arbitrary field (perfect of characteristic not $2$), essentially because of Lemma \ref{lem:Sujatha}.} If $X(\Rb)$ is empty, then $\Hr^1(X,\Ibf^2)\simeq\Hr^1(X(\Rb),\Zb)=0$ by \cite[Corollary 8.11]{jacobsonRealCohomologyPowers2017} so the map $\Hr^1(X,\Jbf^1)\to\CH^1(X)$ is an isomorphism and the square of  the statement is trivially cartesian. Therefore we may assume that $X(\Rb)$ is nonempty; in particular, the curve $X$ satisfies $(*)$ and thus, in the commutative square
\[\begin{tikzcd}
	{\Hr^1(X,\Ibf)} & {\Hr^1(X,\bar{\Ibf})} \\
	{\Hr^1(X(\Rb),\Zb)} & {\Hr^1(X(\Rb),\Zb/2)}
	\arrow[from=1-1, to=1-2]
	\arrow[from=1-1, to=2-1]
	\arrow[from=1-2, to=2-2]
	\arrow[from=2-1, to=2-2]
\end{tikzcd}\]
both vertical maps are isomorphisms. In particular, this square is cartesian. Now noting that $\Hr^1(X,\Kbf_1^\Mr)=\CH^1(X)$ (Bloch's formula)\footnote{This formula follows from the fact that the cohomology of the complex $\Cr(X,\Kbf_1^\Mr)$ given by $0\to\Osc(X)^\times\xrightarrow{\mathrm{ord}}\bigoplus_{x\in X^{(1)}}\Zb\to 0$ induced by Fulton's order map $\mathrm{ord}$ computes $\Hr^*(X,\Kbf_1^\Mr)$, and that one has $\Hr^1(\Cr(X,\Kbf_1^\Mr))=\CH^1(X)$ by definition.} and pasting this square with the previous one gives a fibre product square
\[\begin{tikzcd}
	{\Hr^1(X,\Jbf^1)} & {\Hr^1(X,\Kbf_1^\Mr)} \\
	{\Hr^1(X(\Rb),\Zb)} & {\Hr^1(X(\Rb),\Zb/2)}
	\arrow[from=1-1, to=1-2]
	\arrow[from=1-1, to=2-1]
	\arrow[from=1-2, to=2-2]
	\arrow[from=2-1, to=2-2]
\end{tikzcd}\]
as desired.

Assume now that $X(\Rb)$ is empty or connected, but $\Leu$ is no longer necessarily a square. We then compose the maps $\Jbf^1\to\Ibf$ and $\Kbf_1^\Mr\to\bar{\Ibf}$ with the maps $\sign_1:\Ibf\to\iota_*\Zb$ and $\bar{\sign}_1:\bar{\Ibf}\to\iota_*\Zb/2$ of sheaves on $X_\Zar$. This induces a commutative square
\[\begin{tikzcd}
	{\Jbf^1(\Leu)} & {\Kbf_1^\Mr} \\
	{\iota_*\Zb(L)} & {\iota_*\Zb/2}
	\arrow[from=1-1, to=1-2]
	\arrow[from=1-1, to=2-1]
	\arrow[from=1-2, to=2-2]
	\arrow[from=2-1, to=2-2]
\end{tikzcd}\]
of sheaves on $X_\Zar$, hence a commutative ladder
\[\small{\begin{tikzcd}
	{\Hr^0(X,\Kbf_1^\Mr)} & {\Hr^1(X,\Ibf^2_\Leu)} & {\Hr^1(X,\Jbf^1(\Leu))} & {\CH^1(X)} \\
	{\Hr^0(X(\Rb),\Zb/2)} & {\Hr^1(X(\Rb),\Zb(L))} & {\Hr^1(X(\Rb),\Zb(L))} & {\Hr^1(X(\Rb),\Zb/2)}
	\arrow[from=1-1, to=1-2]
	\arrow[from=1-1, to=2-1]
	\arrow[from=1-2, to=1-3]
	\arrow["{=}"{description}, from=1-2, to=2-2]
	\arrow[from=1-3, to=1-4]
	\arrow[from=1-3, to=2-3]
	\arrow[from=1-4, to=2-4]
	\arrow[from=2-1, to=2-2]
	\arrow[from=2-2, to=2-3]
	\arrow[from=2-3, to=2-4]
\end{tikzcd}}\]
in which the map $\gamma_2^1:\Hr^1(X,\Ibf^2_\Leu)\to\Hr^1(X(\Rb),\Zb(L))$ is indeed an isomorphism by \cite[Corollary 8.11]{jacobsonRealCohomologyPowers2017} whose rows are cohomology long exact sequences. The map $\Hr^0(X,\Kbf_1^\Mr)\to\Hr^0(X(\Rb),\Zb/2)$ is surjective: indeed, if $X(\Rb)$ is empty, then $\Hr^0(X(\Rb),\Zb/2)=0$ and this is obvious; if $X(\Rb)$ is connected, then this map takes $1\in\Osc(X)^\times=\Hr^0(X,\Kbf_1^\Mr)$ to $1\in\Hr^0(X(\Rb),\Zb/2)=\Zb/2$ and is consequently surjective. It then follows as before from an elementary homological algebra lemma that the right inner square in the above diagram is a fibre product square. This is again the square of the lemma.
\end{proof}

\begin{rema}\label{rema:fibre_product_chow_witt}
The group $\Hr^1(X,\GWb_1^1(\Leu))$ is perhaps more commonly referred to as the first \emph{Chow--Witt group} $\widetilde{\CH}{}^1(X,\Leu)$ of $X$ twisted by $\Leu$ in the sense of \cite{bargeGroupeChowCycles2000} and as developed in \cite{faselGroupesChowWitt2008,faselChowWittRing2007} (see \cite[Theorem 33]{faselChowWittGroups2009}). With this notation, the reader will easily notice the analogy with \cite[Proposition 2.2.5]{asokSplittingVectorBundles2025} in which it was shown that if $X$ is any smooth variety over $\Rb$ of dimension $d$ and $\Leu$ is a line bundle on $X$, if $\CH^d(X\times_\Rb\Spec\Cb)$ is $2$-torsion free, then there is a fibre product formula \[\widetilde{\CH}{}^d(X,\Leu)\to\Hr^d(X(\Rb),\Zb(L))\times_{\Hr^d(X(\Rb),\Zb/2)}\CH^d(X)\] whose structure maps are the same as those described in the proof of Proposition \ref{prop:GW_1_curve}. The hypothesis on $\CH^d(X_\Cb)$ guarantees the surjectivity of a certain mod $2$ cycle class map $\Hr^{d-1}(X,\Kbf_d^\Mr)\to\Hr^{d-1}(X(\Rb),\Zb/2)$ such as the one used in the above proof (by appeal to \cite[Corollary 4.3 (b)]{colliot-theleneZerocyclesCohomologyReal1996}). This hypothesis is not very meaningful for curves as it forces the genus of $X_\Cb$ to be $0$, namely the curve $X$ itself to be either an open subset of $\Pb_\Rb^1$ or an open subset of the conic without real points given by the equation $\{x^2+y^2+z^2=0\}$ in $\Pb_\Rb^2$ (for which the argument of \cite[Proposition 2.2.5]{asokSplittingVectorBundles2025} then apply). The hypotheses of Proposition \ref{prop:GW_1_curve} seem like appropriate substitutes. For example, if $X$ is projective (and geometrically connected), then $\Hr^0(X,\Kbf_1^\Mr)=\Osc(X)^\times=\Rb^\times$. Since the map $\Hr^0(X,\Kbf_1^\Mr)\to\Hr^0(X(\Rb),\Zb/2)$ takes a unit $a$ to the sign of $a$ regarded as a function on $X(\Rb)$ and all such units are constant, its image is generated by the diagonal element $(\bar{1},\ldots,\bar{1})$ of $\Hr^0(X(\Rb),\Zb/2)$. Then the map $\Hr^0(X,\Kbf_1^\Mr)\to\Hr^0(X(\Rb),\Zb/2)$ is surjective if, and only if, the real locus $X(\Rb)$ of $X$ is empty or connected.
\end{rema}

\begin{rema}
If $X$ is a smooth curve over $k$, we can deduce from the Gersten--Grothendieck--Witt spectral sequence the following facts.
\begin{itemize}
	\item The map $\GW^2(X,\Leu)\to\Hr^0(X,\GWb_0^2(\Leu))$ is an isomorphism, and the rank $\rk:\Hr^0(X,\GWb_0^2(\Leu))\to\Zb$ induces an isomorphism onto $2\Zb$.
	\item The group $\GW^3(X,\Leu)$ is independent of $\Leu$ and $\GW^3(X)$ sits in an exact sequence \[0\to\CH^1(X)\to\GW^3(X)\to\Hr^0(X,\GWb_0^3)=\Zb/2\to 0\] where the epimorphism $\GW^3(X)\to\Zb/2$ is split by the morphism $\Zb/2=\GW^3(k)\to\GW^3(X)$ induced by the projection $X\to\Spec k$.
\end{itemize}
The situation does not seem to specify pleasantly to the case where $k=\Rb$ so we do not give more details here (for the structure of $\CH^d(X)$ when $X$ is an algebraic variety of dimension $d$, not necessarily smooth, over $\Rb$, see \cite[Theorem 1.3]{colliot-theleneZerocyclesCohomologyReal1996}).
\end{rema}

\section{Surfaces}\label{section:surfaces}

\subsection{Shifted Witt groups of surfaces}

We first study the Witt group of smooth real surfaces following \cite{sujathaWittGroupsReal1990}.

\begin{prop}\label{prop:exact_sequence_twisted_I}
Let $X$ be a smooth real surface and let $\Leu$ be a line bundle on $X$. Then there exists an exact sequence \[0\to\Hr^0(X,\Ibf^2_\Leu)\to\Hr^0(X,\Ibf_\Leu)\to\Hr^0(X,\bar{\Ibf})\xrightarrow{\cup\overline{c}_1(\Leu)}\Hr^1(X,\overline{\Ibf}^2)\] of abelian groups.
\end{prop}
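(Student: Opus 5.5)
The exact sequence is the beginning of the cohomology long exact sequence attached to the short exact sequence of sheaves $0\to\Ibf^2(\Leu)\to\Ibf(\Leu)\to\bar{\Ibf}^1\to 0$ on $X_\Zar$. That sequence reads
\[0\to\Hr^0(X,\Ibf^2_\Leu)\to\Hr^0(X,\Ibf_\Leu)\xrightarrow{\pi_\Leu^{0,1}}\Hr^0(X,\bar{\Ibf})\xrightarrow{\partial_\Leu^{0,1}}\Hr^1(X,\Ibf^2_\Leu).\]
So it suffices to show two things. First, $\Ker\partial_\Leu^{0,1}=\Ker\Phi_{0,1,\Leu}$. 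Second, $\Phi_{0,1,\Leu}$ is cup-product with $\bar{c}_1(\Leu)$.

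For the first point I will apply Corollary \ref{cor:kernel_connecting_steenrod} with $d=2$ and $i=1$. Its hypotheses are those of Lemma \ref{lem:projection_dimension_filtration_isomorphism} in cohomological degree $1$, namely:
\begin{itemize}
\item the surjectivity of $\bar{\gamma}_2^{0}:\Hr^0(X,\bar{\Ibf}^2)\to\Hr^0(X(\Rb),\Zb/2)$, which is the case $i=1$ of Remark \ref{rema:cases_mod_2_cycle_class_surjective};
\item the equality $\Hr^1(X(\Rb),\Zb(L))[4]=\Hr^1(X(\Rb),\Zb(L))[2]$, which is Lemma \ref{lem:4-torsion=2-torsion}.
\end{itemize}
Remark \ref{rema:cases_mod_2_cycle_class_surjective} records that these hold for surfaces in all degrees. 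The corollary then gives $\Ker\partial_\Leu^{0,1}=\Ker\Phi_{0,1,\Leu}$, and hence exactness of
\[\Hr^0(X,\Ibf^2_\Leu)\to\Hr^0(X,\Ibf_\Leu)\to\Hr^0(X,\bar{\Ibf})\xrightarrow{\Phi_{0,1,\Leu}}\Hr^1(X,\bar{\Ibf}^2).\]
Injectivity on the left comes for free from the long exact sequence, since $\Hr^{-1}$ vanishes.

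For the second point, Theorem \ref{theo:twisted_differential_pardon} gives $\Phi_{0,1,\Leu}=\Phi_{0,1}+\bar{c}_1(\Leu)\cup$. By Remark \ref{rema:steenrod_trivial}, which rests on Lemma \ref{lem:Sujatha}, the untwisted operation $\Phi_{0,1}$ vanishes. Hence $\Phi_{0,1,\Leu}=\cup\bar{c}_1(\Leu)$, which is the map in the statement. Here the cup-product is the pairing $\Hr^0(X,\bar{\Ibf}^1)\times\Hr^1(X,\bar{\Ibf}^1)\to\Hr^1(X,\bar{\Ibf}^2)$, with $\bar{c}_1(\Leu)\in\Ch^1(X)=\Hr^1(X,\bar{\Ibf}^1)$.

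If $X$ is disconnected, I will work componentwise wherever needed, for instance in Lemma \ref{lem:Sujatha}. I do not expect a serious obstacle. The only delicate step is checking that the torsion and surjectivity hypotheses of Lemma \ref{lem:projection_dimension_filtration_isomorphism} hold in degree $i=1$ for $d=2$, and these are covered verbatim by Remark \ref{rema:cases_mod_2_cycle_class_surjective} together with Lemma \ref{lem:4-torsion=2-torsion}.
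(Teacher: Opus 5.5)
Your proposal is correct and follows essentially the same route as the paper: apply Corollary~\ref{cor:kernel_connecting_steenrod} with $d=2$, $i=1$ (its hypotheses being supplied by Remark~\ref{rema:cases_mod_2_cycle_class_surjective} and Lemma~\ref{lem:4-torsion=2-torsion}), then identify $\Phi_{0,1,\Leu}$ with $\cup\bar{c}_1(\Leu)$ via Theorem~\ref{theo:twisted_differential_pardon} and the vanishing of $\Phi_{0,1}$ from Remark~\ref{rema:steenrod_trivial}. Your remark about reducing to connected components for Lemma~\ref{lem:Sujatha} is a harmless extra precision that the paper leaves implicit.
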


\begin{proof}
By Corollary \ref{cor:kernel_connecting_steenrod} (see Remark \ref{rema:cases_mod_2_cycle_class_surjective}), there is an exact sequence \[0\to\Hr^0(X,\Ibf^2_\Leu)\to\Hr^0(X,\Ibf_\Leu)\to\Hr^0(X,\bar{\Ibf})\xrightarrow{\Phi_{0,1,\Leu}}\Hr^1(X,\bar{\Ibf}^2).\] Since $\Phi_{0,1}=0$ (Remark \ref{rema:steenrod_trivial}), the morphism $\Phi_{0,1,\Leu}$ is given by cup-product with $\bar{c}_1(\Leu)\in\Hr^1(X,\bar{\Ibf}^1)$. This completes the proof.
\end{proof}

\begin{rema}\label{rema:computation_twisted_I_étale_cohomology}
The computation of the image of $\pi_\Leu^{0,1}:\Hr^0(X,\Ibf_\Leu)\to\Hr^0(X,\bar{\Ibf})$ in Proposition \ref{prop:exact_sequence_twisted_I} can also be performed in étale cohomology, which is much more amenable to calculation. In detail, let us keep the notation of this proposition. We first use the isomorphisms $\Hr^p(X,\overline{\Ibf}^q)\cong\Hr^p(X,\Hsc^q)$ provided by the affirmation of the Milnor conjecture. Then, for $p\geq 0$, let $\Fr^1\Hr_\et^p(X)$ denote the subgroup of $\Hr_\et^p(X)$ of classes having coniveau $\geq 1$, that is, that are trivial in restriction to the complement of a closed subset of codimension $\geq 1$. Then $c_1^\et(\Leu)$ lies in $\Fr^1\Hr_\et^2(X)$ as $\Leu$ is locally trivial so $\cup c_1^\et(X):\Hr_\et^1(X)\to\Hr_\et^3(X)$ takes values in $\Fr^1\Hr_\et^3(X)$. The Bloch--Ogus spectral sequence (\ref{eq:bloch_ogus}) yields edge isomorphisms \[\Hr_\et^1(X)\xrightarrow{\sim}\Hr^0(X,\Hsc^1),\;\;\Fr_\et^1\Hr_\et^3(X)\xrightarrow{\sim}\Hr^1(X,\Hsc^2)\] such that the square
\[\begin{tikzcd}
	{\Hr_\et^1(X)} & {\Fr^1\Hr_\et^3(X)} \\
	{\Hr^0(X,\Hsc^1)} & {\Hr^1(X,\Hsc^2)}
	\arrow["{\cup c_\et^1(\Leu)}", from=1-1, to=1-2]
	\arrow["\wr"', from=1-1, to=2-1]
	\arrow["\wr", from=1-2, to=2-2]
	\arrow["{\cup\bar{c}_1(\Leu)}"', from=2-1, to=2-2]
\end{tikzcd}\]
is commutative. In particular, this yields an exact sequence \[0\to\Hr^0(X,\Ibf^2_\Leu)\to\Hr^0(X,\Ibf_\Leu)\to\Hr_\et^1(X)\xrightarrow{\cup c_1^\et(\Leu)}\Hr_\et^3(X)\] of abelian groups.
\end{rema}

\begin{exe}\label{exe:connecting_homomorphism_nontrivial}
The following example shows that there exists a smooth real surface~$X$, a line bundle $\Leu$ on $X$ and a class $\alpha\in\Hr^0(X,\overline{\Ibf})$ such that $\partial_\Leu^{0,1}(\alpha)\neq 0$ but $\gamma_2^1(\partial_\Leu^{0,1}(\alpha))=0$. Let $E$ be an elliptic curve over~$\Rb$; in particular, its real locus $E(\Rb)$ is nonempty, hence $E$ satisfies $(*)$ and is geometrically connected. Then there is an exact sequence \[0\to\Hr^0(E,\bar{\Kbf}^1)\to\Hr^0(E,\bar{\Ibf})\to\Hr^0(E(\Rb),\Zb/2)\] of abelian groups and $\Hr^0(E,\bar{\Kbf}^1)=\Zb/2$ by Lemma \ref{lem:all_cohomology_of_2_torsion} (with the notation of this lemma, we have $c=0$ and $g=1$). We let $\alpha$ be the nonzero element of $\Hr^0(E,\bar{\Kbf}^1)$. Recall that $\Hr^0(E,\bar{\Ibf})$ is naturally isomorphic to $\Hr_\et^1(E)$ (Remark \ref{rema:computation_twisted_I_étale_cohomology}) so we may view $\alpha$ as an element of $\Hr_\et^1(E)$. The real-complex exact sequence relating the mod $2$ étale cohomology of $E$ and $E_\Cb$ reads \[0\to\Zb/2\cdot\omega\to\Hr_\et^1(E)\to\Hr_\et^1(E_\Cb)\] (Remark \ref{rema:real_complex_H1}). Since $\bar{\gamma}_1^0(\alpha)=0$ by choice, the map $\bar{\gamma}_1^0(\omega):E(\Rb)\to\Zb/2$ is constant at $1$ and $E(\Rb)$ is nonempty, we have $\bar{\gamma}_0^1(\alpha)\neq\bar{\gamma}_0^1(\omega)$, hence $\alpha\neq\omega$. Consequently, the image $\alpha_{|\Cb}$ of $\alpha$ in $\Hr_\et^1(E_\Cb)$ is nontrivial.

Let again $E'$ be an elliptic curve over $\Rb$ (the reader may take $E=E'$, but for psychological and notational reasons that will be apparent, we do not make this choice explicit, and it would play no role in this example). We write $E'(\Cb)=\Cb/\Lambda'$ where $\Lambda'$ is a lattice; a basis of $\Lambda'$ induces a dual basis $(x_2,y_2)$ of $\Hr^1(E'(\Cb),\Zb)$. The cohomology ring $\Hr^*(E'(\Cb),\Zb)$ is then the exterior algebra on $\Hr^1(E'(\Cb),\Zb)$; in particular, the group $\Hr^2(E'(\Cb),\Zb)=\Zb$ is generated by $x_2\wedge y_2$, which is therefore the class of a point of $E'$ (up to a sign). In particular, it is defined over $\Rb$. Consequently, there exists a line bundle $\Leu$ on $E'$ such that the étale Chern class $c_1^\et(\Leu_\Cb)$ of the line bundle $\Leu_\Cb$ pulled back from $\Leu$ to $E'_\Cb$ is the nontrivial element of $\Hr_\et^2(E'_\Cb,\Zb/2)\cong\Hr^2(E'(\Cb),\Zb/2)$, namely the reduction mod $2$ of $x_2\wedge y_2$. We set $\zeta=c_1^\et(\Leu)$.

Now let $A$ be the abelian surface $A=E\times E'$. We still denote by $x_2$ and $y_2$ the pullback of these classes in $\Hr^1(A(\Cb),\Zb)$ and by $\bar{x_2}$ and $\bar{y_2}$ their image in $\Hr^1(A(\Cb),\Zb/2)\cong\Hr_\et^1(A_\Cb)$. We denote by $\alpha$ and $\zeta$ the pullback to $\Hr_\et^1(A)$ of the classes constructed previously in $\Hr_\et^1(E)$ and $\Hr_\et^1(E')$ respectively, and by $\Leu$ the pullback of the bundle previously constructed on $E'$ to $A$.

We claim that $\alpha\zeta$ is nonzero. Indeed, write $E(\Cb)=\Cb/\Lambda$ where $\Lambda$ is a lattice; there is an induced basis $(x_1,y_1)$ of $\Hr^1(E(\Cb),\Zb)$ as explained previously and again, we still use the same notation for the pullback of these classes in $\Hr^1(A(\Cb),\Zb)$. Since $\alpha$ has nontrivial image in $\Hr^1(E(\Cb),\Zb/2)$, its image is of the form $\alpha_{|\Cb}=a\bar{x_1}+b\bar{y_1}$ where $a$ and $b$ are elements of $\Zb/2$ not both equal to~$0$. The graded ring $\Hr^*(A(\Cb),\Zb/2)$ is an exterior algebra on $\Hr^1(A(\Cb),\Zb/2)$, which itself has $(\bar{x_1},\bar{y_1},\bar{x_2},\bar{y_2})$ as a basis. In particular, it is then easy to check that $\alpha_{|\Cb}(\bar{x_2}\wedge\bar{y_2})$ is nontrivial in $\Hr^3(A(\Cb),\Zb/2)$. This implies by choice of $\Leu$ that the image of $\alpha\zeta$ under the extension of scalars homomorphism $\Hr_\et^3(A)\to\Hr_\et^3(A_\Cb)$ is nonzero: consequently, the class $\alpha\zeta$ itself is nonzero.

On the other hand, by the same argument as \cite[4.3 Proposition]{hornbostelRealCycleClass2021}, one has \[\bar{\gamma}_2^1(\alpha\bar{c}_1(\Leu))=\bar{\gamma}_1^0(\alpha)\bar{\gamma}_1^1(\bar{c}_1(\Leu))=0\] (the product being taken in the graded ring $\Hr^*(A(\Rb),\Zb/2)$), since $\bar{\gamma}_1^0(\alpha)=0$ by choice of $\alpha$ (and by functoriality).

Now the class $\alpha$ constructed above defines an element of $\Hr^0(A,\bar{\Ibf}^1)$ whose cup-product with $\bar{c}_1(\Leu)$ is a nonzero element of $\Hr^1(A,\bar{\Ibf}^2)$. Moreover, the equality $\bar{\gamma}_1^2(\alpha\bar{c}_1(\Leu))=0$ holds. In particular, the class $\beta=\partial_\Leu^{0,1}(\alpha)$ is a nonzero class in $\Hr^1(A,\Ibf^2_\Leu)$ such that $\gamma_2^1(\beta)=0$.\footnote{Constructing examples where one simply requires that $\partial_\Leu^{0,1}$ be nonzero is much easier. It suffices to take a smooth curve $C$ with nonempty real locus and a nontrivial line bundle $\Leu$ on $C$ and pullback to $X=\Pb_\Rb^1\times C$. The connecting homomorphism is then nontrivial because the map $\Hr^0(C,\bar{\Ibf})\to\Hr^1(C,\Ibf^2_\Leu)$ is nontrivial; this is perhaps easiest to see topologically, and comes from the fact that by choice of $\Leu$, the group $\Hr^1(C(\Rb),\Zb(\Leu(\Rb)))$ has nontrivial $2$-torsion. However, since the map $\gamma_2^1:\Hr^1(C,\Ibf^2_\Leu)\to\Hr^1(C(\Rb),\Zb(\Leu(\Rb)))$ is an isomorphism (by Jacobson's theorem, because $C$ is a curve), the image of $\partial_\Leu^{0,1}$ has zero intersection with $\Hr^1(X,\Kbf^2_\Leu)$.}
\end{exe}

In the sequel, we set $\Hr_t^0(X,\Ibf^n_\Leu)=\Ker\gamma_n^0$ (resp. $\Hr_t^0(X,\Hsc^n)=\Ker\bar{\gamma}_n^0$).

\begin{lem}\label{lem:torsion_subgroup_is_torsion}
Let $X$ be a smooth $\Rb$-variety, let $\Leu$ be a line bundle on $X$ and let $n\geq 0$. Then $\Hr_t^0(X,\Ibf^n_\Leu)$ is the torsion subgroup of $\Hr^0(X,\Ibf^n_\Leu)$. Moreover, we have $\Hr_t^0(X,\Ibf^n_\Leu)=\Hr_t^0(X,\Wbf_\Leu)\cap\Hr^0(X,\Ibf^n_\Leu)$.
\end{lem}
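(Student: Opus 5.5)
We need to show that $\Ker\gamma_n^0$ is exactly the torsion subgroup of $\Hr^0(X,\Ibf^n_\Leu)$, and that $\Ker\gamma_n^0=\Ker\gamma^0\cap\Hr^0(X,\Ibf^n_\Leu)$ inside $\Hr^0(X,\Wbf_\Leu)$; here $\gamma^0=\gamma^0_0$ is the global signature on $\Hr^0(X,\Wbf_\Leu)$. Since $\Hr^0(X,\Ibf^n_\Leu)\subseteq\Hr^0(X,\Wbf_\Leu)$, it suffices to compare the two signature maps on this subgroup and then prove the torsion statement.

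\emph{Comparison of the signatures.} From the relation $\sign_{t+1}\circ(\otimes\llangle -1\rrangle)=\sign_t$, the normalised signatures on $\Ibf^n(\Leu)\subseteq\Wbf(\Leu)$ satisfy $2^n\sign_n=\sign_0$ restricted to $\Ibf^n(\Leu)$. This follows from the top row of the ladder (\ref{eq:ladder_normalised_signature}), whose left vertical map composed with $\cdot 2$ is the restriction of $\sign_t$. Hence $\gamma^0|_{\Hr^0(X,\Ibf^n_\Leu)}=2^n\gamma_n^0$. The target $\Hr^0(X(\Rb),\Zb(L))$ is torsion free: it is a subgroup of $\Hr^0(X(\Rb),\Zb)$, which is a product of copies of $\Zb$. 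Therefore multiplication by $2^n$ is injective there, and $\Ker\gamma_n^0=\Ker\gamma^0\cap\Hr^0(X,\Ibf^n_\Leu)$. This is the second assertion.

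\emph{Torsion.} Because the target is torsion free, every torsion element of $\Hr^0(X,\Ibf^n_\Leu)$ lies in $\Ker\gamma_n^0$. By the comparison above, the converse reduces to $n=0$: we must show that a global section of $\Wbf_\Leu$ with zero total signature is torsion. Such a section $\alpha$ is unramified, so it lies in $\Wr(\kappa(X),\Leu_\eta)$ if $X$ is connected, which we may assume by working componentwise. After trivialising $\Leu$ at the generic point, this is an ordinary Witt group of the function field $K=\kappa(X)$.

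The key input is Pfister's local–global principle: the torsion of $\Wr(K)$ is exactly the kernel of the total signature over all orderings of $K$, and it is $2$-primary. It remains to show that vanishing of the signatures at the points of $X(\Rb)$ where $\alpha$ is defined forces vanishing at every ordering of $K$. I would use the Artin–Lang density: the orderings of $K$ are detected by the real points of $X$, since every ordering is a limit of orderings centred at smooth real points. More concretely, the signature of $\alpha$ is a locally constant function on the real spectrum of $K$. This is the content of the identification through Coste–Roy and Jacobson underlying the definition of $\sign_0$. By \cite[Proposition 1.1]{benoistHilberts17thProblem2017}-type density, or by the semialgebraic fact that every ordering of $K$ specialises into $X(\Rb)$ in the real spectrum of $X$, the signature of $\alpha$ at each ordering equals its value on a nearby real point, which is $0$. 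If $X(\Rb)=\emptyset$, then $K$ is nonreal and $\Wr(K)$ is already torsion, in fact $2$-primary torsion.

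\emph{Expected obstacle.} The main difficulty is making the passage from the topological signature on $X(\Rb)$ to the signatures at all orderings of $\kappa(X)$ rigorous. I would handle it through the real spectrum: $\Hr^0(X,\iota_*\Zb(L))$ equals locally constant functions on $\Spec_r X$, and the unramified class $\alpha$ defines such a function that is continuous on the whole real spectrum. Its restriction to the dense subset of generic orderings is then $0$ whenever its restriction to the closed points vanishes, using the specialisation of each ordering to a closed point.
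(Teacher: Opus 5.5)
Your argument is the paper's: torsion-freeness of $\Hr^0(X(\Rb),\Zb(L))$, the identity $\gamma^0=2^n\gamma_n^0$ on $\Hr^0(X,\Ibf^n_\Leu)$, and Pfister's local--global principle to show that $\Ker\gamma^0$ is torsion in $\Hr^0(X,\Wbf_\Leu)$. The paper simply cites Pfister there and does not explain why vanishing of signatures at the points of $X(\Rb)$ forces vanishing at every ordering of $\kappa(X)$. You fill that step in two ways, and only the density one is sound in the stated generality.

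\textbf{The specialisation argument fails for non-proper $X$.} It is true that every ordering of $\kappa(X)$ specialises to a point of $X(\Rb)$ when $X$ is proper, but not otherwise. For instance, the ordering of $\Rb(t)$ with $t$ infinitely large is already a closed point of the real spectrum of $\Abb^1_\Rb$, and it lies over no real point. So the closed points of the real spectrum are not the real points, and the argument in your last paragraph does not go through. The lemma is stated, and used later, for arbitrary smooth $X$, so this case matters.

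\textbf{The fix.} Argue by density via the Artin--Lang theorem, which is stronger than the Zariski density of $X(\Rb)$ you cite. Diagonalise $\alpha=\langle a_1,\ldots,a_m\rangle$ over $\kappa(X)$ and restrict to a dense open $U$ on which the $a_i$ are units. If some ordering of $\kappa(X)$ gives $\alpha$ a nonzero signature, Artin--Lang produces $x\in U(\Rb)$ at which each $a_i$ has the same sign as at that ordering. Hence $\gamma^0(\alpha)$ is nonzero at $x$, a contradiction.
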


\begin{proof}
If $\alpha\in\Hr^0(X,\Ibf^n_\Leu)$ is torsion, then $\gamma_n^0(\alpha)$ is a torsion element of the free abelian group $\Hr^0(X(\Rb),\Zb(L))$ so $\gamma_n^0(\alpha)=0$. Moreover, Pfister's local-global principle \cite[Theorem 31.25]{elmanAlgebraicGeometricTheory2008} implies that $\Hr_t^0(X,\Wbf_\Leu)$ is the torsion subgroup of $\Hr^0(X,\Wbf_\Leu)$. If $\alpha\in\Hr_t^0(X,\Ibf^n_\Leu)$, then $\gamma^0(\alpha)=2^n\gamma_n^0(\alpha)=0$ so $\alpha$ is in the torsion subgroup of $\Hr^0(X,\Wbf_\Leu)$; it particular, it lies in the torsion subgroup of $\Hr^0(X,\Ibf^n_\Leu)$ and we have $\alpha\in\Hr_t^0(X,\Wbf_\Leu)\cap\Hr^0(X,\Ibf^n_\Leu)$. Conversely, if $\alpha\in\Hr_t^0(X,\Wbf_\Leu)\cap\Hr^0(X,\Ibf^n_\Leu)$, then $\gamma_n^0(\alpha)$ is well-defined and $0\gamma^0(\alpha)=2^n\gamma_n^0(\alpha)$ in the torsion-free group $\Hr^0(X(\Rb),\Zb(L))$, so that $\gamma_n^0(\alpha)=0$ and $\alpha$ lies in $\Hr_t^0(X,\Ibf^n_\Leu)$ as required.
\end{proof}

\begin{cor}\label{cor:torsion_I_surface}
Let $X$ be a smooth real surface and let $\Leu$ be a line bundle on $X$ with mod $2$ first Chern class $\bar{c}_1(\Leu)$. There is an exact sequence \[0\to\Hr^0(X,\bar{\Kbf}^2)\to\Hr_t^0(X,\Ibf_\Leu)\to\Hr_t^0(X,\Hsc^1)\cap\Ker(\cup\bar{c}_1(\Leu))\to 0\] of abelian groups.
\end{cor}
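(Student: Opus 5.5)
Restrict the exact sequence of Proposition \ref{prop:exact_sequence_twisted_I} to torsion subgroups. That sequence reads
\[0\to\Hr^0(X,\Ibf^2_\Leu)\to\Hr^0(X,\Ibf_\Leu)\xrightarrow{\pi_\Leu^{0,1}}\Hr^0(X,\bar{\Ibf})\xrightarrow{\cup\bar{c}_1(\Leu)}\Hr^1(X,\bar{\Ibf}^2).\]
By Lemma \ref{lem:torsion_subgroup_is_torsion}, the torsion subgroup of $\Hr^0(X,\Ibf^n_\Leu)$ is $\Hr_t^0(X,\Ibf^n_\Leu)=\Ker\gamma_n^0$ for each $n$. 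Restricting $\pi_\Leu^{0,1}$ to $\Hr_t^0(X,\Ibf_\Leu)$ therefore yields a sequence whose kernel is $\Hr^0(X,\Ibf^2_\Leu)\cap\Hr_t^0(X,\Ibf_\Leu)=\Hr_t^0(X,\Ibf^2_\Leu)$. Three things remain to be proved:
\begin{itemize}
\item $\Hr_t^0(X,\Ibf^2_\Leu)\cong\Hr^0(X,\bar{\Kbf}^2)$;
\item the image of $\Hr_t^0(X,\Ibf_\Leu)$ lies in $\Hr_t^0(X,\Hsc^1)\cap\Ker(\cup\bar{c}_1(\Leu))$;
\item the restricted map surjects onto this intersection.
\end{itemize}

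For the kernel, take global sections in the top row of (\ref{eq:commutative_ladder_signature_variety}) with $d=2$. This gives $0\to\Hr^0(X,\Kbf^2_\Leu)\to\Hr^0(X,\Ibf^2_\Leu)\xrightarrow{\gamma_2^0}\Hr^0(X(\Rb),\Zb(L))$, so $\Hr_t^0(X,\Ibf^2_\Leu)=\Hr^0(X,\Kbf^2_\Leu)$. Lemma \ref{lem:2_torsion_to_(-1)_torsion} then identifies this group with $\Hr^0(X,\bar{\Kbf}^2)$.

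For the containment, the ladder (\ref{eq:ladder_normalised_signature}) with $t=1$ gives $\bar{\gamma}_1^0\circ\pi_\Leu^{0,1}=(\mathrm{mod}\;2)\circ\gamma_1^0$. Hence torsion classes map into $\Ker\bar{\gamma}_1^0=\Hr_t^0(X,\Hsc^1)$, and exactness in the sequence above puts them in $\Ker(\cup\bar{c}_1(\Leu))$.

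Surjectivity is the main obstacle. Let $\alpha\in\Hr_t^0(X,\Hsc^1)$ with $\alpha\bar{c}_1(\Leu)=0$. By exactness, $\alpha=\pi_\Leu^{0,1}(a)$ for some $a\in\Hr^0(X,\Ibf_\Leu)$, but $a$ need not be torsion. The ladder gives $\gamma_1^0(a)\equiv 0$ mod $2$, so $\gamma_1^0(a)=2b$ for some $b\in\Hr^0(X(\Rb),\Zb(L))$. I would correct $a$ by an element of $\Hr^0(X,\Ibf^2_\Leu)$ with signature $b$; this changes neither $\alpha$ nor the conclusion. Since $\gamma_1^0\circ i_{2,1}=2\gamma_2^0$ (by $\sign_2=\sign_1$ composed with the inclusion, after renormalisation as in (\ref{eq:ladder_normalised_signature})), it suffices to show that $\gamma_2^0:\Hr^0(X,\Ibf^2_\Leu)\to\Hr^0(X(\Rb),\Zb(L))$ is surjective. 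Taking cohomology of the top row of (\ref{eq:commutative_ladder_signature_variety}), its cokernel injects into $\Hr^1(X,\Kbf^2_\Leu)\cong\Hr^1(X,\bar{\Kbf}^2)$, via Lemma \ref{lem:2_torsion_to_(-1)_torsion}. Using the bottom row, exactness there is equivalent to the surjectivity of $\bar{\gamma}_2^0:\Hr^0(X,\bar{\Ibf}^2)\to\Hr^0(X(\Rb),\Zb/2)$, which holds by Remark \ref{rema:cases_mod_2_cycle_class_surjective} with $i=1$ and $d=2$. So it remains to lift a given $b$ through $\gamma_2^0$. I would first reduce mod $2$ and lift along $\bar{\gamma}_2^0$, then work componentwise: on each component where $L$ is trivial, a $\pm1$-valued sign function should lift to an element of $\Ibf^2_\Leu$ with prescribed signature. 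A cleaner route, if available, is the epimorphism $\sign_2:\Ibf^2(\Leu)\to\iota_*\Zb(L)$ together with the vanishing of the relevant $\Hr^1(X,\Kbf^2_\Leu)$-obstruction, which is exactly the exactness statement above. Once this surjectivity is established, $a-i_{2,1}(c)$ with $\gamma_2^0(c)=b$ is torsion and maps to $\alpha$, completing the exact sequence.
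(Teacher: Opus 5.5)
Your proposal is correct and is essentially the paper's proof. The paper applies the snake lemma to the ladder with rows $0\to\Hr^0(X,\Ibf^2_\Leu)\to\Hr^0(X,\Ibf_\Leu)\to\Ker(\cup\bar{c}_1(\Leu))\to 0$ and $0\to\Hr^0(X(\Rb),\Zb(L))\xrightarrow{\cdot 2}\Hr^0(X(\Rb),\Zb(L))\to\Hr^0(X(\Rb),\Zb/2)$, whose connecting map is your lift-and-correct step; it identifies the kernel term via Lemma~\ref{lem:2_torsion_to_(-1)_torsion} as you do, and cites \cite[Proposition 5.2]{lerbetImageHigherSignature2026} for the surjectivity of $\gamma_2^0$.

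Your own derivation of that surjectivity is already complete. By naturality and the isomorphism $\psi$, the connecting map $\Hr^0(X(\Rb),\Zb(L))\to\Hr^1(X,\Kbf^2_\Leu)$ factors through the mod~$2$ connecting map, and the latter vanishes because $\bar{\gamma}_2^0$ is surjective. So the componentwise lifting sketch you append afterwards is unnecessary.
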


\begin{proof}
We consider the following commutative ladder:
\[\begin{tikzcd}
	0 & {\Hr^0(X,\Ibf^2_\Leu)} & {\Hr^0(X,\Ibf_\Leu)} & {\Ker(\cup\bar{c}_1(\Leu))} & 0 \\
	0 & {\Hr^0(X(\Rb),\Zb(L))} & {\Hr^0(X(\Rb),\Zb(L))} & {\Hr^0(X(\Rb),\Zb/2)} & 
	\arrow[from=1-1, to=1-2]
	\arrow[from=1-2, to=1-3]
	\arrow["{\gamma_2^0}"', from=1-2, to=2-2]
	\arrow[from=1-3, to=1-4]
	\arrow["{\gamma_1^0}"', from=1-3, to=2-3]
	\arrow[from=1-4, to=1-5]
	\arrow["{\bar{\gamma}_1^0}"', from=1-4, to=2-4]
	\arrow[from=2-1, to=2-2]
	\arrow[from=2-2, to=2-3]
	\arrow[from=2-3, to=2-4]
\end{tikzcd}\]
determined by the quadratic real cycle class maps. Its rows are exact, in the case of the top row by Proposition \ref{prop:exact_sequence_twisted_I}. By definition of the torsion subgroups introduced above and according to the snake lemma, it induces an exact sequence \[0\to\Hr_t^0(X,\Ibf^2_\Leu)\to\Hr_t^0(X,\Ibf_\Leu)\to\Ker(\cup\bar{c}_1(\Leu))\cap\Hr_t^0(X,\Hsc^1)\to\Coker\gamma_2^0.\] But $\gamma_2^0$ is surjective by \cite[Proposition 5.2]{lerbetImageHigherSignature2026}. Moreover, one has $\Hr_t^0(X,\Ibf^2_\Leu)=\Hr^0(X,\Kbf^2_\Leu)$ by definition, and $\Hr^0(X,\Kbf^2_\Leu)\cong\Hr^0(X,\bar{\Kbf}^2)$ by Lemma \ref{lem:2_torsion_to_(-1)_torsion}. This gives the exact sequence of the statement.
\end{proof}

The following lemma was already noticed by Sujatha in \cite{sujathaWittGroupsReal1990}, see also \cite[Corollary B1]{sujathaLevelWittGroups2000}.

\begin{lem}\label{lem:global_section_top_torsion_surface}
Let $X$ be any smooth surface over $\Rb$. Then $\Hr^0(X,\bar{\Kbf}^2)$ is of dimension $k-s$ as a $\Zb/2$-vector space, where $k$ is the dimension of the $2$-torsion subgroup $\Bra(X)[2]$ of the Brauer group $\Bra(X)$ of $X$ and $s$ is the number of connected components of $X(\Rb)$.
\end{lem}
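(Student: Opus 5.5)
We use the bottom row of the ladder (\ref{eq:commutative_ladder_signature_variety}) with $d=2$, namely the exact sequence $0\to\bar{\Kbf}^2\to\bar{\Ibf}^2\xrightarrow{\bar{\sign}_2}\iota_*\Zb/2\to 0$ of sheaves on $X_\Zar$, together with Bloch--Ogus. Taking global sections gives an exact sequence
\[0\to\Hr^0(X,\bar{\Kbf}^2)\to\Hr^0(X,\Hsc^2)\xrightarrow{\bar{\gamma}_2^0}\Hr^0(X(\Rb),\Zb/2)\to\Hr^1(X,\bar{\Kbf}^2).\]
The count $k-s$ will follow from two facts: $h^0(X,\Hsc^2)=k$, and $\bar{\gamma}_2^0$ is surjective onto $\Hr^0(X(\Rb),\Zb/2)\cong(\Zb/2)^s$.

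For the first fact, we identify $\Hr^0(X,\Hsc^2)$ with $\Bra(X)[2]$. Starting from the Bloch--Ogus spectral sequence (\ref{eq:bloch_ogus}), we have $\Er_2^{p,q}=\Hr^p(X,\Hsc^q)$, with $\Hr^p(X,\Hsc^q)=0$ for $p>q$. This gives the standard low-degree exact sequence
\[0\to\Hr^1(X,\Hsc^1)\to\Hr_\et^2(X)\to\Hr^0(X,\Hsc^2)\to\Hr^2(X,\Hsc^1)=0.\]
The vanishing on the right holds because $p=2>q=1$. Here $\Hr^1(X,\Hsc^1)=\Ch^1(X)$, and the first map is the injective cycle class map $\gamma_\et^1$. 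The Kummer sequence identifies $\Hr_\et^2(X,\mu_2)/(\Pic(X)/2)$ with $\Bra(X)[2]$, so $\Hr^0(X,\Hsc^2)\cong\Bra(X)[2]$. This is exactly Sujatha's argument, and it requires only that the Kummer quotient be compatible with the cycle class map, which holds by construction of $c_1^\et$.

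For the second fact, we show that $\bar{\gamma}_2^0$ is surjective. We produce, for each connected component $V$ of $X(\Rb)$, a class in $\Hr^0(X,\Hsc^2)$ whose image is the indicator of $V$. The natural route is to use the map $\Hr^0(X,\Hsc^2)\to\Hr^0(X,\Hsc^3)\cong\Hr^0(X(\Rb),\Zb/2)$ given by $\cup\omega$, together with the identification $\bar{\sign}_2=\bar{\sign}_3\circ(\cup\omega)$. Alternatively, we can show directly that $\Hr^1(X,\bar{\Kbf}^2)\to\Hr^1(X,\Hsc^2)$ is injective. The cleanest option is to invoke \cite[Theorem 2.3.2 (b)]{colliot-theleneZerocyclesCohomologyReal1996}/Colliot-Thélène--Parimala: for $d=\dim X$, the map $\Hr^0(X,\Hsc^d)\to\Hr^0(X(\Rb),\Zb/2)$ is surjective (as in Remark \ref{rema:cases_mod_2_cycle_class_surjective}, case $i=d$, in degree $d-2=0$). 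Concretely, one uses the fact that the real-étale/étale comparison map $\Hr_\et^n(X)\to\bigoplus_p\Hr^p(X(\Rb),\Zb/2)$ is surjective for $n\geq 2d+1$, combined with the identification $\Hsc^{n}\cong\iota_*\Zb/2$ for $n>d$ and multiplication by $\omega$. Alternatively, one can lift indicators via $\gamma_2^0$, which is surjective by \cite[Proposition 5.2]{lerbetImageHigherSignature2026} (used in the proof of Corollary \ref{cor:torsion_I_surface}), and then reduce mod $2$ using (\ref{eq:commutative_ladder_signature_variety}). That last route is the most economical: surjectivity of $\gamma_2^0$ implies surjectivity of $\bar{\gamma}_2^0$ by commutativity of the ladder, since reduction mod $2$ on $\Hr^0(X(\Rb),\Zb(L))\to\Hr^0(X(\Rb),\Zb/2)$ with $\Leu$ trivial is surjective.

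The main obstacle is the surjectivity of $\bar{\gamma}_2^0$. This is where real geometry genuinely enters, and I would rely on the cited \cite[Proposition 5.2]{lerbetImageHigherSignature2026} applied with $\Leu=\Osc_X$. Once both facts are in hand, dimensions give $h^0(X,\bar{\Kbf}^2)=k-s$.
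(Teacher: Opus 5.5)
Your argument is correct and is essentially the paper's proof: take global sections of $0\to\bar{\Kbf}^2\to\Hsc^2\to\iota_*\Zb/2\to 0$, identify $\Hr^0(X,\Hsc^2)\cong\Bra(X)[2]$ via Bloch--Ogus and Kummer, and use surjectivity of $\bar{\gamma}_2^0$ onto $\Hr^0(X(\Rb),\Zb/2)\simeq(\Zb/2)^s$. The paper takes that surjectivity directly from Krasnov/van Hamel, i.e.\ the case $i=1$ of Remark~\ref{rema:cases_mod_2_cycle_class_surjective}. Your ``cleanest option'' mis-cites this: the case $i=d$ concerns $\bar{\gamma}_d^{d-1}$, and \cite[Theorem 2.3.2 (b)]{colliot-theleneZerocyclesCohomologyReal1996} only gives a sheaf-level epimorphism. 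Your chosen route, through the integral surjectivity of $\gamma_2^0$ from \cite[Proposition 5.2]{lerbetImageHigherSignature2026}, is valid but invokes a stronger statement than needed.
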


\begin{proof}
The group $\Hr^0(X,\Hsc^2)$ can be identified with $\Bra(X)[2]$ by means of the Bloch--Ogus spectral sequence and fits in an exact sequence \[0\to\Hr^0(X,\bar{\Kbf}^2)\to\Hr^0(X,\Hsc^2)\to\Hr^0(X(\Rb),\Zb/2)\to 0\] where exactness at $\Hr^0(X(\Rb),\Zb/2)$ follows from a result of Krasnov as explained in Remark \ref{rema:cases_mod_2_cycle_class_surjective}. Thus $h^0(X,\bar{\Kbf}^2)=h^0(X,\Hsc^2)-h^0(X(\Rb),\Zb/2)=k-s$.
\end{proof}

Let $X$ be a smooth surface over $\Rb$; denote its complexification by $X_\Cb=X\times_\Rb\Spec\Cb$. We let $N$  be the kernel of the homomorphism \[\omega\cup:\Hr^0(X,\Hsc^1)\to\Hr^0(X,\Hsc^2).\] 

\begin{lem}
There is a short exact sequence
\begin{equation}\label{eq:exact_sequence_N}
0\to\Hr_\et^1(X_\Cb)/\pi^*\Hr_\et^1(X)\to N\to\Pic'(X)/2\to 0
\end{equation}
where $\Pic'(X)/2$ is the kernel of the extension of scalars morphism $\Pic(X)/2\to\Pic(X_\Cb)/2$.
\end{lem}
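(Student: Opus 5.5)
The plan is to pass entirely to mod $2$ étale cohomology and read $N$ off the real-complex exact sequence (\ref{eq:real_complex_exact_sequence}). We take cup-products on the étale side and transport them to the sheaves $\Hsc^*$ through the edge maps of (\ref{eq:bloch_ogus}).

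\emph{Step 1: describe $N$ étale-theoretically.} In the Bloch--Ogus spectral sequence we have $\Er_2^{p,q}=\Hr^p(X,\Hsc^q)=0$ for $p>q$. In low degrees this gives:
\begin{itemize}
\item an edge isomorphism $\Hr_\et^1(X)\cong\Hr^0(X,\Hsc^1)$;
\item an exact sequence $0\to\Hr^1(X,\Hsc^1)\to\Hr_\et^2(X)\to\Hr^0(X,\Hsc^2)$.
\end{itemize}
Here $\Hr^1(X,\Hsc^1)=\Ch^1(X)=\Pic(X)/2$, and the first map is $\gamma_\et^1$, which the preliminaries showed is injective. The edge maps are compatible with cup-product by $\omega\in\Hr_\et^1(\Rb)$, since this is the module structure over the cohomology of the base field. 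Hence
\[N=\{a\in\Hr_\et^1(X)\;:\;\omega a\in\gamma_\et^1(\Pic(X)/2)\}.\]

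\emph{Step 2: define the map $N\to\Pic(X)/2$.} Send $a\in N$ to the unique $\lambda\in\Pic(X)/2$ with $\gamma_\et^1(\lambda)=\omega a$; uniqueness comes from the injectivity of $\gamma_\et^1$. This map is a homomorphism.

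We next check that its image lies in $\Pic'(X)/2$. In (\ref{eq:real_complex_exact_sequence}) the composite $\pi^*\circ(\omega\cup)$ vanishes. The cycle class map commutes with pullback, so
\[\gamma_\et^1(\pi^*\lambda)=\pi^*(\omega a)=0 .\]
Injectivity of $\gamma_\et^1$ on $X_\Cb$ then gives $\pi^*\lambda=0$ in $\Pic(X_\Cb)/2$.

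\emph{Step 3: surjectivity onto $\Pic'(X)/2$.} Let $\lambda\in\Pic'(X)/2$. Then $\pi^*\gamma_\et^1(\lambda)=\gamma_\et^1(\pi^*\lambda)=0$. By exactness of (\ref{eq:real_complex_exact_sequence}) at $\Hr_\et^2(X)$, we get $\gamma_\et^1(\lambda)=\omega a$ for some $a\in\Hr_\et^1(X)$. Such an $a$ lies in $N$ and maps to $\lambda$.

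\emph{Step 4: identify the kernel.} The kernel of the map in Step 2 is $\{a:\omega a=0\}$. By (\ref{eq:real_complex_exact_sequence}), this is the image of $\pi_*:\Hr_\et^1(X_\Cb)\to\Hr_\et^1(X)$. Again by exactness, the kernel of $\pi_*$ is $\pi^*\Hr_\et^1(X)$. So $\pi_*$ induces an isomorphism from $\Hr_\et^1(X_\Cb)/\pi^*\Hr_\et^1(X)$ onto this kernel, which gives (\ref{eq:exact_sequence_N}).

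The step needing the most care is Step 1. One must make sure that the identification $\Hr^0(X,\Hsc^2)\leftarrow\Hr_\et^2(X)$ has kernel exactly $\gamma_\et^1(\Ch^1(X))$; this is the coniveau filtration $\Fr^1\Hr_\et^2(X)$, whose $\Er_\infty^{1,1}$ is $\Hr^1(X,\Hsc^1)$ because the relevant differential $d_2$ lands in $\Er_2^{3,0}=0$. One must also make sure that $\omega\cup$ on $\Hsc^1\to\Hsc^2$ matches $\omega\cup$ on étale cohomology under the edge maps.
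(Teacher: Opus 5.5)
Your proof is correct and is essentially the paper's argument (recalled from Sujatha's Lemma 3.1). Both use the Bloch--Ogus edge maps to identify $N$ with the set of $a\in\Hr_\et^1(X)$ such that $\omega a\in\gamma_\et^1(\Ch^1(X))$, and then read off the sequence from the real-complex exact sequence together with the injectivity of $\gamma_\et^1$ on $X$ and on $X_\Cb$; your Steps 2--4 just spell out the diagram chase that the paper leaves to the reader.
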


\begin{proof}
This is essentially \cite[Lemma 3.1]{sujathaWittGroupsReal1990}.\footnote{In this lemma, the group $\Pic(X_\Cb)[2]/\Pic(X)[2]$ appears instead of $\Hr_\et^1(X_\Cb)/\pi^*\Hr_\et^1(X)$: this is because Sujatha assumes that $X$ is projective so $\Hr_\et^1(X_\Cb)=\Pic(X_\Cb)[2]$ and $\Hr_\et^1(X)\simeq\Pic(X)[2]\oplus\Zb/2\cdot\omega$ where $\pi^*(\omega)=0$ and $\pi^*:\Pic(X)[2]\to\Pic(X_\Cb)[2]$ is injective.} For the convenience of the reader, we recall the proof. There is a commutative diagram %
\[
\small{\begin{tikzcd}
	&&& 0 & 0 \\
	&&& {\Ch^1(X)} & {\Ch^1(X_\Cb)} \\
	{\Hr_\et^1(X)} & {\Hr_\et^1(X_\Cb)} & {\Hr_\et^1(X)} & {\Hr_\et^2(X)} & {\Hr_\et^2(X_\Cb)} \\
	0 & N & {\Hr^0(X,\Hsc^1)} & {\Hr^0(X,\Hsc^2)} \\
	&&& 0
	\arrow[from=1-4, to=2-4]
	\arrow[from=1-5, to=2-5]
	\arrow["{\pi^*}", from=2-4, to=2-5]
	\arrow[from=2-4, to=3-4]
	\arrow[from=2-5, to=3-5]
	\arrow["{\pi^*}", from=3-1, to=3-2]
	\arrow["{\pi_*}", from=3-2, to=3-3]
	\arrow["{\cup\omega}", from=3-3, to=3-4]
	\arrow["\sim"', from=3-3, to=4-3]
	\arrow["{\pi^*}", from=3-4, to=3-5]
	\arrow[from=3-4, to=4-4]
	\arrow[from=4-1, to=4-2]
	\arrow[from=4-2, to=4-3]
	\arrow["{\cup\omega}"', from=4-3, to=4-4]
	\arrow[from=4-4, to=5-4]
\end{tikzcd}}\]
with exact rows and columns. In particular, modulo the isomorphism $\Hr_\et^1(X)\cong\Hr^0(X,\Hsc^1)$, the group $N$ is the subgroup of $\alpha\in\Hr_\et^1(X)$ such that $\omega\alpha$ is lies in the image of the map $\Ch^1(X)\to\Hr_\et^2(X)$. The above diagram then immediately yields the claim.
\end{proof}

We denote by $j(\Leu)$, resp. by $l(\Leu)$, the dimension of $\Hr_t^0(X,\Hsc^1)\cap\Ker\cup\bar{c}_1(\Leu)$, resp. of $N(\Leu)=N\cap\Ker\cup\bar{c}_1(\Leu)$.

\begin{prop}\label{prop:torsion_I_projective_surface}
We have an isomorphism $\Hr_t^0(X,\Ibf_\Leu)\simeq(\Zb/2)^{m}\oplus(\Zb/4)^{n}$ where $n=j(\Leu)-l(\Leu)$ and $m=2l(\Leu)+s-k-j(\Leu)$.
\end{prop}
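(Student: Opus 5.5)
The argument starts from Corollary \ref{cor:torsion_I_surface}, which places $T=\Hr_t^0(X,\Ibf_\Leu)$ in an exact sequence
\[0\to\Hr^0(X,\bar{\Kbf}^2)\to T\to\Hr_t^0(X,\Hsc^1)\cap\Ker(\cup\bar{c}_1(\Leu))\to 0.\]
By Lemma \ref{lem:global_section_top_torsion_surface}, the kernel has dimension $k-s$, and the quotient has dimension $j(\Leu)$ by definition.

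First, $T$ is killed by $4$. Every element of $T$ is torsion in $\Hr^0(X,\Wbf_\Leu)$ by Lemma \ref{lem:torsion_subgroup_is_torsion}. Also, $2\alpha=\llangle -1\rrangle\alpha$ lies in $\Hr^0(X,\Ibf^2_\Leu)$, and it lies in $\Hr_t^0(X,\Ibf^2_\Leu)=\Hr^0(X,\Kbf^2_\Leu)$, which is killed by $\llangle -1\rrangle$. Hence $4\alpha=0$, and $T\simeq(\Zb/2)^{m}\oplus(\Zb/4)^{n}$ with $n=\dim_{\Zb/2}(2T)$. The two dimension counts then give $m+2n=(k-s)+j(\Leu)$.

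So the task is to prove $\dim 2T=j(\Leu)-l(\Leu)$. This gives $n=j-l$ and
\[m=(k-s)+j-2(j-l)=2l+k-s-j.\]
This differs from the stated $2l+s-k-j$ by the sign of $k-s$. I would double-check the sign convention in the dimension of $\Hr^0(X,\bar{\Kbf}^2)$, expecting a typo either there or in the statement. Either way, the structure of the argument does not change.

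To compute $2T$, I would consider the map $T\to\Hr^0(X,\bar{\Kbf}^2)$ given by $\alpha\mapsto\llangle -1\rrangle\alpha=2\alpha$. By Lemma \ref{lem:2_torsion_to_(-1)_torsion}, $\Hr^0(X,\Kbf^2_\Leu)\cong\Hr^0(X,\bar{\Kbf}^2)$. This map factors through the quotient $\bar\alpha\in\Hr_t^0(X,\Hsc^1)\cap\Ker(\cup\bar{c}_1(\Leu))$, because elements of $\Ibf^2$ times $\llangle -1\rrangle$ land in $\Ibf^3$, and on $\Kbf^2$ this product is $0$. Reducing modulo $\Ibf^3$, the map becomes $\bar\alpha\mapsto\omega\cup\bar\alpha\in\Hr^0(X,\Hsc^2)$, using the Milnor conjecture identification of $\otimes\llangle -1\rrangle$ with $\cup\omega$. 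This reduction is injective on $\Hr^0(X,\Kbf^2_\Leu)\cong\Hr^0(X,\bar{\Kbf}^2)\subseteq\Hr^0(X,\Hsc^2)$.

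It follows that the kernel of the induced map $T/\Hr^0(X,\Kbf^2_\Leu)\to 2T$ is exactly $\Ker(\omega\cup)\cap\Hr_t^0(X,\Hsc^1)\cap\Ker(\cup\bar{c}_1(\Leu))$. Now $N\subseteq\Hr_t^0(X,\Hsc^1)$: if $\omega\alpha=0$, then $\bar{\gamma}_1^0(\alpha)=0$, since $\bar{\gamma}$ is multiplicative and sends $\omega$ to the unit, which makes $\bar{\gamma}^0_2(\omega\alpha)=\bar{\gamma}^0_1(\alpha)$. So this kernel equals $N(\Leu)$, of dimension $l(\Leu)$, and therefore $\dim 2T=j-l$.

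The main obstacle is justifying that the reduction mod $\Ibf^3$ of $\llangle -1\rrangle\cdot\tilde\alpha$ is $\omega\cup\bar\alpha$ in the twisted setting. One has to check that the product $\Ibf\cdot\Ibf_\Leu\to\Ibf^2_\Leu$ is compatible with the untwisted identification $\bar{\Ibf}^n_\Leu\cong\bar{\Ibf}^n$ of \cite[Lemme E.1.3]{faselGroupesChowWitt2008}. This works locally, where $\Leu$ is trivial, and then globalises because these are sheaf maps.
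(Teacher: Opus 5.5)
Your argument is correct and is essentially the paper's. The paper gets $|T|$ for $T=\Hr_t^0(X,\Ibf_\Leu)$ from Corollary \ref{cor:torsion_I_surface}, and gets $|T[2]|$ from the exact sequence $0\to\Hr^0(X,\Kbf^2_\Leu)\to T[2]\to N(\Leu)\to 0$. That sequence is the same fact as your statement that $T/\Hr^0(X,\Kbf^2_\Leu)\to 2T$ has kernel $N(\Leu)$. You deduce $2\alpha=0$ from injectivity of reduction on $\Hr^0(X,\Kbf^2_\Leu)$, whereas the paper uses torsion-freeness of $\Hr^0(X,\Ibf^3_\Leu)$. You also justify the inclusion $N\subseteq\Hr_t^0(X,\Hsc^1)$, which the paper uses without comment.

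Your value $m=2l(\Leu)+k-s-j(\Leu)$ is the right one, and Lemma \ref{lem:global_section_top_torsion_surface} is fine. The sign in the statement comes from the paper's proof dividing by $2^{k-s}$ instead of multiplying in both order counts. The exact sequences actually give $|T[2]|=2^{l(\Leu)+k-s}$ and $|T|=2^{j(\Leu)+k-s}$. This leaves $n=j(\Leu)-l(\Leu)$ unchanged and flips the sign of $k-s$ in $m$. As a sanity check, the stated formula would give $\dim T[2]=l(\Leu)-(k-s)$. That is incompatible with $T[2]\supseteq\Hr^0(X,\Kbf^2_\Leu)\cong(\Zb/2)^{k-s}$ whenever $l(\Leu)<2(k-s)$.
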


\begin{proof}
The group $4\Hr_t^0(X,\Ibf_\Leu)$ is a subgroup of $\Hr_t^0(X,\Ibf^3_\Leu)$, which vanishes since $X$ is a surface (\cite[Corollary 8.11]{jacobsonRealCohomologyPowers2017}), so $4\Hr_t^0(X,\Ibf_\Leu)=0$. Hence there is an isomorphism $\Hr_t^0(X,\Ibf_\Leu)\simeq(\Zb/2)^{m}\oplus(\Zb/4)^{n}$ for a unique pair $(m,n)$ of integers. We note that $\Hr_t^0(X,\Ibf_\Leu)[2]\simeq(\Zb/2)^{m+n}$ whereas $|\Hr_t^0(X,\Ibf_\Leu)|=2^{m+2n}$ so to determine $m$ and $n$, it suffices to understand $\Hr_t^0(X,\Ibf_\Leu)[2]$ and $\Hr_t^0(X,\Ibf_\Leu)$.

We claim that there is an exact sequence \[0\to\Hr_t^0(X,\Ibf^2_\Leu)[2]=\Hr_t^0(X,\Ibf^2_\Leu)\to\Ir_t(X,\Leu)[2]\to N(\Leu)\to 0.\] Indeed note that the first equality holds as $X$ is a surface thus the torsion in $\Hr_t^0(X,\Ibf^2_\Leu)$ is $2$-torsion (this also follows from the fact that $\Hr_t^0(X,\Ibf^2_\Leu)$ is the direct sum of $\Hr^0(X,\bar{\Kbf}^2)$, which is $2$-torsion by definition, and of the free abelian group $\Hr^0(X(\Rb),\Zb(L))$). By definition, the image of $\Hr_t^0(X,\Ibf_\Leu)[2]$ in $\Hr^0(X,\Hsc^1)$ is inside $N(\Leu)$. Conversely, let $y\in N(\Leu)$. Since $y\in\Hr_t^0(X,\Hsc^1)\cap\Ker(\cup\bar{c}_1(\Leu))$, there exists $x\in\Hr_t^0(X,\Ibf_\Leu)$ with image $y$ in $\Hr^0(X,\Hsc^1)$. Then $2x$ has image $0$ in $\Hr^0(X,\Hsc^2)$ so $2x$ lifts to an element~$w$ of $\Hr^0(X,\Ibf^3_\Leu)$. Since $2x$ is torsion, so is $w$ is torsion: since $\Hr^0(X,\Ibf^3_\Leu)$ is torsion-free, we conclude that $w=0$ and thus $2x=0$. Therefore \[|\Hr_t^0(X,\Ibf_\Leu)[2]|=\frac{|N(\Leu)|}{|\Hr_t^0(X,\Ibf^2_\Leu)[2]|}=\frac{2^{l(\Leu)}}{2^{k-s}}=2^{l(\Leu)-k+s},\] using Lemma \ref{lem:global_section_top_torsion_surface} to compute $|\Hr_t^0(X,\Ibf^2_\Leu)[2]|=|\Hr^0(X,\bar{\Kbf}^2)|$.

Now the exact sequence \[0\to\Hr_t^0(X,\Ibf^2_\Leu)\to\Hr_t^0(X,\Ibf_\Leu)\to\Hr_t^0(X,\Hsc^1)\cap\Ker(\cup\bar{c}_1(\Leu))\to 0\] of Corollary \ref{cor:torsion_I_surface} shows that $|\Hr_t^0(X,\Ibf_\Leu)|=\frac{2^{j(\Leu)}}{2^{k-s}}$. Therefore $n=j(\Leu)-l(\Leu)$ and $m=2l(\Leu)+s-k-j(\Leu)$ as required.
\end{proof}

In the following theorem, we only consider the case where the twisting line bundle is nontrivial as the untwisted case is covered by \cite{sujathaWittGroupsReal1990}.

\begin{theo}\label{theo:W0_twisted_proj_surface}
Let $X$ be a smooth projective surface over $\Rb$ and let $\Leu$ be a line bundle on $X$ which is not a square. The group $\Wr(X,\Leu)$ fits in a split exact sequence \[0\to\Hr_t^0(X,\Ibf_\Leu)=\Hr_t^0(X,\Wbf_\Leu)\to\Wr(X,\Leu)\to\Hr^0(X(\Rb),\Zb(L))\] of abelian groups, where $\Hr_t^0(X,\Ibf_\Leu)$ is described by Proposition \ref{prop:torsion_I_projective_surface}.
\end{theo}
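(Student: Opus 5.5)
We identify $\Wr(X,\Leu)$ with $\Hr^0(X,\Wbf_\Leu)$ through the edge morphism of the Gersten--Witt spectral sequence (\ref{eq:GW}), which is an isomorphism since $X$ is a surface. Since $\Leu$ is not a square, Lemma \ref{lem:baby_pardon} shows that the inclusion $\Hr^0(X,\Ibf_\Leu)\to\Hr^0(X,\Wbf_\Leu)$ is an isomorphism. Hence $\Wr(X,\Leu)\cong\Hr^0(X,\Ibf_\Leu)$, and it remains to describe $\Hr^0(X,\Ibf_\Leu)$ together with its signature.

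The global signature $\gamma^0:\Hr^0(X,\Wbf_\Leu)\to\Hr^0(X(\Rb),\Zb(L))$ restricts on $\Hr^0(X,\Ibf_\Leu)$ to $2\gamma_1^0$ (compatibility $\sign_{t+1}\circ\llangle -1\rrangle=\sign_t$, as in the remark after Theorem \ref{theo:twisted_W_0_curve}). Its kernel therefore equals $\Ker\gamma_1^0=\Hr_t^0(X,\Ibf_\Leu)$, because $\Hr^0(X(\Rb),\Zb(L))$ is torsion-free. By Lemma \ref{lem:torsion_subgroup_is_torsion}, $\Hr_t^0(X,\Ibf_\Leu)$ is the torsion subgroup of $\Hr^0(X,\Ibf_\Leu)$, and also $\Hr_t^0(X,\Wbf_\Leu)\cap\Hr^0(X,\Ibf_\Leu)$. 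Since the inclusion into $\Hr^0(X,\Wbf_\Leu)$ is onto, the latter intersection is all of $\Hr_t^0(X,\Wbf_\Leu)$, which gives the claimed equality $\Hr_t^0(X,\Ibf_\Leu)=\Hr_t^0(X,\Wbf_\Leu)$. This yields the exact sequence $0\to\Hr_t^0(X,\Ibf_\Leu)\to\Wr(X,\Leu)\xrightarrow{\gamma^0}\Hr^0(X(\Rb),\Zb(L))$.

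For splitting, the image of $\gamma^0$ is a subgroup of the free abelian group $\Hr^0(X(\Rb),\Zb(L))$, which is finitely generated because $X(\Rb)$ has finitely many components. The image is therefore free, and the sequence onto this image splits. Equivalently, $\Wr(X,\Leu)$ is the direct sum of its torsion subgroup and a free quotient. The description of the torsion subgroup is exactly Proposition \ref{prop:torsion_I_projective_surface}.

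The only delicate point is making sure that "split exact" is meant onto the image, since $\gamma^0$ need not be surjective (its image was computed elsewhere). The plan is to state the splitting for the corestriction to $\Im\gamma^0$, which is free. Projectivity of $X$ enters only through the finiteness of the invariants $k$, $s$, $j(\Leu)$ and $l(\Leu)$ in Proposition \ref{prop:torsion_I_projective_surface}.
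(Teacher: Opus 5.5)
Your proposal is correct and follows essentially the paper's proof. Identify $\Wr(X,\Leu)$ with $\Hr^0(X,\Wbf_\Leu)$ via the Gersten--Witt spectral sequence, use Lemma~\ref{lem:baby_pardon} to get $\Hr^0(X,\Ibf_\Leu)=\Hr^0(X,\Wbf_\Leu)$, and deduce $\Hr_t^0(X,\Ibf_\Leu)=\Hr_t^0(X,\Wbf_\Leu)$ from Lemma~\ref{lem:torsion_subgroup_is_torsion}; your observation that $\gamma^0$ restricts to $2\gamma_1^0$ is a valid direct check of the same point. The splitting then comes from the freeness of $\Im\gamma^0$.

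One small correction to your closing remark: projectivity plays no role in the argument, not even for finiteness. The paper's proof never uses it, and the invariants $k$, $s$, $j(\Leu)$, $l(\Leu)$ are finite for any smooth real surface.
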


\begin{proof}
There is an exact sequence \[0\to\Hr_t^0(X,\Wbf_\Leu)\to\Hr^0(X,\Wbf_\Leu)\to\Hr^0(X(\Rb),\Zb(L))\] whose last map is the global signature map $\gamma^0$. Since $\Hr^0(X(\Rb),\Zb(L))$ is a free abelian group, so is its subgroup $\Im\gamma^0$ hence the last map in the above exact sequence splits. Moreover, since $X$ is a surface, the evident map $\Wr(X,\Leu)\to\Hr^0(X,\Wbf_\Leu)$ is an isomorphism by examination of the Gersten--Witt spectral sequence. Therefore we only need to justify the equality $\Hr_t^0(X,\Ibf_\Leu)=\Hr_t^0(X,\Wbf_\Leu)$. But in fact, by Lemma \ref{lem:baby_pardon}, we have $\Hr^0(X,\Ibf_\Leu)=\Hr^0(X,\Wbf_\Leu)$, since $\Leu$ is nontrivial. The claimed equality now easily follows from Lemma \ref{lem:torsion_subgroup_is_torsion}.
\end{proof}

\begin{rema}
When $\Leu$ is not a square, even if $X(\Rb)=\emptyset$, the group $\Hr_t^0(X,\Wbf_\Leu)$ is $4$-torsion, contrary to the untwisted case of \cite{sujathaWittGroupsReal1990}. The reason is that there is no form $\langle 1\rangle$ of rank $1$ with coefficients in a nontrivial bundle, and this form is precisely the one that would generate the $\Zb/8$ factor in the case $X(\Rb)=\emptyset$.
\end{rema}


Turning to symplectic forms, Corollary \ref{cor:small_d_w_d} computes $\Wr^2(X,\Leu)$ for surfaces as follows.

\begin{prop}\label{prop:W2_smooth_surface}
Let $X$ be a smooth surface over $\Rb$ and let $\Leu$ be a line bundle on $X$. We denote by $d_L:\Hr^1(X(\Rb),\Zb/2)\to\Hr^2(X(\Rb),\Zb(L))$ the connecting homomorphism for the long exact sequence associated with the epimorphism $\Zb(L)\to\Zb/2$. Then there is a canonical isomorphism \[\Wr^2(X,\Leu)\cong\Hr^2(X(\Rb),\Zb(L))/d_L\Hr_\alg^1(X(\Rb),\Zb/2)\] if $X$ is not proper or $X(\Rb)$ is not empty, and $\Wr^2(X,\Leu)=\Coker(\Sq^2_\Leu:\Ch^1(X)\to\Ch^2(X))$ if $X$ is proper and $X(\Rb)$ is empty.
\end{prop}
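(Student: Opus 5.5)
This is the case $d=2$ of Corollary \ref{cor:small_d_w_d}, so the plan is simply to specialise that corollary. First, I would recall why $\Wr^2(X,\Leu)\cong\Hr^2(X,\Wbf_\Leu)$ for a surface. The Gersten--Witt spectral sequence (\ref{eq:GW}) has $\Er_1^{p,q}=0$ unless $q\equiv 0 \bmod 4$, and also unless $0\leq p\leq 2$, because the field Witt groups vanish in non-zero shifts mod $4$. Hence the sequence collapses, and its edge map identifies $\Wr^2(X,\Leu)$ with $\Er_2^{2,0}=\Hr^2(X,\Wbf_\Leu)$, as noted in the preliminaries.

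Next I would apply Proposition \ref{prop:small_d_w_d} with $d=2$ and split into the two cases of the statement.

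\emph{Case $(*)$ holds.} We get the canonical isomorphism $\chi$ from $\Hr^2(X,\Wbf_\Leu)$ onto $\Hr^2(X(\Rb),\Zb(L))/d_L\Hr_\alg^1(X(\Rb),\Zb/2)$. Here $\Hr_\alg^1(X(\Rb),\Zb/2)$ is the image of $\bar{\gamma}^1\colon\Ch^1(X)\to\Hr^1(X(\Rb),\Zb/2)$, and $d_L$ is the connecting map of the sequence $0\to\Zb(L)\xrightarrow{\cdot 2}\Zb(L)\to\Zb/2\to 0$, which are exactly the objects named in the statement. Composing $\chi$ with the edge isomorphism gives the first isomorphism.

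\emph{Case $X$ proper and $X(\Rb)$ empty.} The proposition gives $\Hr^2(X,\Wbf_\Leu)\cong\Coker(\Sq^2_\Leu\colon\Ch^1(X)\to\Ch^2(X))$, and composing with the edge isomorphism again yields the second claim.

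There is essentially no obstacle. The only point to check is that the identification of $\Hr^2(X,\Ibf^{1}_\Leu)$ with $\Hr^2(X,\Wbf_\Leu)$ (Lemma \ref{lem:easy_higher_pardon}, used inside the proof of Proposition \ref{prop:small_d_w_d}) is compatible with the Gersten--Witt edge map. This holds by construction, since both come from the inclusion of complexes $\Cr(X,\Ibf^{1}_\Leu)\subseteq\Cr(X,\Wbf_\Leu)$, which is the identity in degree $2$. So the isomorphism is canonical, as claimed.
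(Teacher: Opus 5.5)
Your proposal is correct and matches the paper, which obtains this proposition as the case $d=2$ of Corollary \ref{cor:small_d_w_d}, i.e.\ Proposition \ref{prop:small_d_w_d} combined with the collapse of the Gersten--Witt spectral sequence. The compatibility check in your last paragraph is harmless but not needed: Proposition \ref{prop:small_d_w_d} already describes $\Hr^2(X,\Wbf_\Leu)$ itself, so composing with the edge isomorphism suffices.
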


In the projective case, the subgroup $\Hr_\alg^1(X(\Rb),\Zb/2)$ of algebraic classes has been abundantly studied (\cite{mangolteCyclesAlgebriquesSurfaces1997,mangolteAlgebraicCyclesTopology1998}). The following two examples show that $\Wr^2(X,\Leu)$ depends on $\Leu$ in all situations where Proposition \ref{prop:W2_smooth_surface} applies.

\begin{exe}
Let $E$ be the plane cubic curve $E=\{y^2z=x^3-xz^2\}$ and set $X=E\times\mathbb{P}^1$. Since $E(\Rb)$ is nonempty, the mod $2$ cycle class map induces an isomorphism $\Ch^1(E)\cong\Hr^1(E(\Rb),\Zb/2)$ by \cite[Theorem 3.2]{colliot-theleneZerocyclesCohomologyReal1996} so there exists a line bundle $\Leu$ on $E$ such that $L=\Leu(\Rb)$ is nontrivial on each connected component of $E(\Rb)$. We still denote by $\Leu$ the pullback of this line bundle to $X$. As explained in \cite[Lemma 4.1.6]{asokSplittingVectorBundles2025}, the map $\Sq_L:\Hr_\alg^1(X(\Rb),\Zb/2)\to\Hr^2(X(\Rb),\Zb/2)$ is not surjective: in fact, the group $\Hr^2(X(\Rb),\Zb/2)=\Hr^1(E(\Rb),\Zb/2)\otimes\Hr^1(\mathbb{P}^1(\Rb),\Zb/2)$ is the direct sum of two copies of $\Zb/2$, generated by $x\otimes h$ and $y\otimes h$ respectively where $x$ and $y$ are real points of $E$ in distinct connected components, and $\Sq_L$ has image generated by $x\otimes h+y\otimes h$. The bundle $L$ is nontrivial on each connected component of $X(\Rb)$, whereas the orientation sheaf of $X(\Rb)$ is trivial. This yields an isomorphism \[\Wr^2(X,\Leu)=\Coker(\Sq_L:\Hr_\alg^1(X(\Rb),\Zb/2)\to\Hr^2(X(\Rb),\Zb(L)))\cong\Zb/2\] (generated by $x\otimes h=y\otimes h$). This is consistent with the projective bundle formula of \cite[Theorem 9.10]{schlichtingHermitianKtheoryDerived2017}, according to which there is an explicit isomorphism $\Wr^2(X,\Leu)\cong\Wr^2(E,\Leu)\oplus\Wr^1(E,\Leu)$. Indeed, by examination of the Gersten--Witt spectral sequence, since $E$ is a curve, one has $\Wr^2(E,\Leu)=0$ and an isomorphism $\Wr^1(E,\Leu)\cong\Hr^1(E,\Wbf_\Leu)$ and this latter group is isomorphic to $\Zb/2$. On the contrary, if $\Leu$ is trivial, then since $X(\Rb)$ is orientable, by Remark \ref{rema:top_twisted_witt_group_poincaré_duality}, we have $\Wr^2(X)\cong\Zb^2$.
\end{exe}

\begin{exe}\label{exe:W2_proper_empty_real_locus}
Let $X\subseteq\Pb_\Rb^3$ be a smooth hypersurface such that $X(\Rb)=\emptyset$ and let $h\in\CH^1(X)$ be the class of a hyperplane section. This implies that the degree $\delta$ of $X$ is even; assume further that $\delta$ is congruent to $2$ mod $4$. Then as noted in \cite[Lemma 3.2.4]{asokSplittingVectorBundles2025}, one has $h^2\neq 0$ in $\Ch^2(X)$. Recall that $\Sq^2=x\mapsto x^2:\Ch^1(X)\to\Ch^2(X)$ by \cite[Proposition 9.4]{brosnanSteenrodOperationsChow2003}. One has $\Ch^2(X)=\Zb/2$ by \cite[Theorem 1.3 (b)]{colliot-theleneZerocyclesCohomologyReal1996} so $\Sq^2$ is in fact surjective in this case. We conclude that $\Wr^2(X)=\Coker\Sq^2=0$.

On the other hand, according to the Noether--Lefschetz theorem (see \cite[Proposition 3.2.2]{asokSplittingVectorBundles2025} for the form of the statement needed here), if $X\subseteq\mathbb{P}_\Rb^3$ is a very general hypersurface of degree $\geq 4$, then $\CH^1(X)=\Zb\cdot h$ where $h\in\CH^1(X)$ is a hyperplane section. In particular, there exists a smooth surface $X$ with this property of degree $\delta=6$ such that $X(\Rb)$ is empty. Then $\bar{c}_1(\Osc_X(1))=h\in\Ch^1(X)$ so $\Sq^2_{\Osc_X(1)}(h)=2h^2=0$ in $\Ch^2(X)=\Zb/2$. It follows that $\Sq^2_{\Osc_X(1)}:\Ch^1(X)\to\Ch^2(X)$ is the zero map and thus $\Wr^2(X,\Osc_X(1))=\Zb/2$.
\end{exe}


We conclude this subsection with a few remarks on the first shifted Witt group. In view of Lemma \ref{lem:baby_pardon}, we begin with the group $\Hr^1(\text{--},\Ibf(\text{--}))$.

\begin{prop}\label{prop:easy_exact_sequence_W1_surface}
Let $X$ be a smooth real surface and let $\Leu$ be a line bundle on $X$. There is an exact sequence
\begin{equation}\label{eq:exact_sequence_W1_surface}
\Hr^0(X,\bar{\Ibf}^1)\xrightarrow{\partial_{\Leu}^{0,1}}\Hr^1(X,\Ibf^2_\Leu)\to\Hr^1(X,\Ibf_\Leu)\to\Ker\Sq^2_\Leu\to 0
\end{equation}
of abelian groups, where the morphism $\Sq^2_\Leu:\Ch^1(X)\to\Ch^2(X)$ is given by $x\mapsto x^2+\bar{c}_1(\Leu)x$. If $\Leu$ is a square, then the homomorphism $\Hr^1(X,\Ibf^2)\to\Hr^1(X,\Ibf)$ is injective.
\end{prop}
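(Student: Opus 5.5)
The sequence (\ref{eq:exact_sequence_W1_surface}) will come from the cohomology long exact sequence of $0\to\Ibf^2(\Leu)\to\Ibf(\Leu)\to\bar{\Ibf}\to 0$ on $X_\Zar$. In degree $1$ this sequence reads
\[\Hr^0(X,\bar{\Ibf})\xrightarrow{\partial_\Leu^{0,1}}\Hr^1(X,\Ibf^2_\Leu)\to\Hr^1(X,\Ibf_\Leu)\xrightarrow{\pi_\Leu^{1,1}}\Hr^1(X,\bar{\Ibf})\xrightarrow{\partial_\Leu^{1,1}}\Hr^2(X,\Ibf^2_\Leu).\]
The first three maps are already as in the statement. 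It therefore suffices to identify $\Im\pi_\Leu^{1,1}=\Ker\partial_\Leu^{1,1}$ with $\Ker\Sq^2_\Leu$, where $\Hr^1(X,\bar{\Ibf})=\Ch^1(X)$.

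For this I will apply Corollary \ref{cor:kernel_connecting_steenrod} with $d=2$ and $i=2$. Its hypotheses are checked in Remark \ref{rema:cases_mod_2_cycle_class_surjective}: $\bar{\gamma}_2^{1}$ is surjective because $i=d$, and $\Hr^2(X(\Rb),\Zb(L))[4]=\Hr^2(X(\Rb),\Zb(L))[2]$ by Remark \ref{rema:top_twisted_cohomology}. The corollary gives $\Ker\partial_\Leu^{1,1}=\Ker\Phi_{1,1,\Leu}$. By Theorem \ref{theo:twisted_differential_pardon} and Theorem \ref{theo:Totaro} (see Remark \ref{rema:twisted_steenrod_square}), $\Phi_{1,1,\Leu}=\Sq^2+\bar{c}_1(\Leu)\cup$ on $\Ch^1(X)$. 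Finally, $\Sq^2(x)=x^2$ on $\Ch^1$ by \cite[Proposition 9.4]{brosnanSteenrodOperationsChow2003}, so this map is $x\mapsto x^2+\bar{c}_1(\Leu)x$ and exactness follows.

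For injectivity when $\Leu$ is a square, I need $\partial^{0,1}=0$. By Lemma \ref{lem:Sujatha}, $\pi^{0,1}:\Hr^0(X,\Ibf)\to\Hr^0(X,\bar{\Ibf})$ is surjective; this is exactly Remark \ref{rema:steenrod_trivial}. The lemma is stated for connected $X$, so I apply it componentwise. Since $\Leu$ is a square we may take $\Leu=\Osc_X$, and exactness then forces $\partial^{0,1}=0$, hence the map $\Hr^1(X,\Ibf^2)\to\Hr^1(X,\Ibf)$ is injective.

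The only real content is the application of Corollary \ref{cor:kernel_connecting_steenrod}. It is needed because a priori $\partial_\Leu^{1,1}$ could have nonzero image in $\Ker\pi_\Leu^{2,2}$. The torsion hypothesis in top degree, from Poincaré duality, is what rules this out, and this is the step to check carefully.
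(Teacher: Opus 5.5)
Your proof is correct and follows the paper's argument essentially verbatim. It uses the long exact sequence for $\Ibf(\Leu)\to\bar{\Ibf}$, then Corollary~\ref{cor:kernel_connecting_steenrod} with $d=i=2$ to identify $\Ker\partial_\Leu^{1,1}$ with $\Ker\Sq^2_\Leu$, then Lemma~\ref{lem:Sujatha} to kill $\partial^{0,1}$ when $\Leu$ is a square. The only differences are cosmetic: the paper cites Voevodsky rather than Brosnan for $\Sq^2(x)=x^2$ on $\Ch^1$, and you are slightly more explicit about checking the corollary's hypotheses and about applying Lemma~\ref{lem:Sujatha} componentwise.
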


\begin{proof}
The epimorphism $\Ibf(\Leu)\to\bar{\Ibf}$ induces a cohomology long exact sequence 
\begin{equation}\label{eq:cohomology_long_exact_sequence_W1_surface}
\begin{tikzcd}
	& {\Hr^0(X,\bar{\Ibf})} & \\
	{\Hr^1(X,\Ibf^2_\Leu)} & {\Hr^1(X,\Ibf_\Leu)} & {\Hr^1(X,\bar{\Ibf})=\Ch^1(X)} \\
	& {\Hr^2(X,\Ibf^2_\Leu)}
	\arrow["{\partial_{\Leu}^{0,1}}"{description}, from=1-2, to=2-1]
	\arrow[from=2-1, to=2-2]
	\arrow[from=2-2, to=2-3]
	\arrow["{\partial_{\Leu}^{1,1}}"{description}, from=2-3, to=3-2]
\end{tikzcd}
\end{equation}
By Corollary \ref{cor:kernel_connecting_steenrod}, one has \[\Ker\partial_\Leu^{1,1}=\Ker(\Sq_\Leu^2:\Ch^1(X)\to\Ch^2(X))\] where $\Sq_\Leu^2$ is the twisted Steenrod square. By \cite[Lemma 9.8]{voevodskyReducedPowerOperations2003}, the untwisted Steenrod square $\Sq^2:\Ch^1(X)\to\Ch^2(X)$ is the squaring operation $x\mapsto x^2$. This completes the proof of the first statement as $\Sq_\Leu^2=\Sq^2+\bar{c}_1(\Leu)\cup$ by definition.

Assuming now that $\Leu$ is a square, by Lemma \ref{lem:Sujatha}, the map $\Hr^0(X,\Ibf)\to\Hr^0(X,\bar{\Ibf})$ is surjective so $\partial^{0,1}=0$ and the homomorphism $\Hr^1(X,\Ibf^2)\to\Hr^1(X,\Ibf)$ in (\ref{eq:cohomology_long_exact_sequence_W1_surface}) is injective, which completes the proof.
\end{proof}

We can be more precise about the group $\Hr^1(X,\Ibf^2_\Leu)$ in Proposition \ref{prop:easy_exact_sequence_W1_surface}. In the following lemma and its proof, the base field for dimension is $\Zb/2$.

\begin{lem}\label{lem:torsion_H1_I2}
Let $X$ be a smooth variety over $\Rb$ of dimension $d$. Then the $\Zb/2$-vector space $\Hr^{d-1}(X,\Hsc^d)$ has dimension $\dim\Hr_\et^{2d-1}(X)-b_*+b_{d-1}+b_d$, where $b_i=\dim\Hr^i(X(\Rb),\Zb/2)$ is the $i$-th mod $2$ Betti number of $X(\Rb)$ and $b_*=\sum_i b_i$. Thus if $d=2$, then $\Hr^{1}(X,\bar{\Kbf}^2)$ has dimension $a=\dim\Hr_\et^3(X)-b_*+b_2$.
\end{lem}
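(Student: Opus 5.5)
Here is the plan. I compute $\dim\Hr_\et^{2d-1}(X)$ using the Bloch--Ogus spectral sequence (\ref{eq:bloch_ogus}) together with the known structure of its rows above the dimension. The groups $\Hr^p(X,\Hsc^q)$ vanish for $p>q$, and also for $p>d$. In total degree $2d-1$, the possibly nonzero terms are $\Er_2^{p,q}$ with $p+q=2d-1$ and $p\leq\min(q,d)$. These are $\Hr^{d-1}(X,\Hsc^d)$, with $(p,q)=(d-1,d)$, and $\Hr^{p}(X,\Hsc^{2d-1-p})$ for $p\leq d-2$, where $q=2d-1-p\geq d+1$. For $q>d$ the Colliot-Thélène--Parimala theorem quoted before Lemma \ref{lem:projection_dimension_filtration_isomorphism} identifies $\Hsc^q\cong\iota_*\Zb/2$. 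Hence $\Hr^p(X,\Hsc^q)\cong\Hr^p(X(\Rb),\Zb/2)$ has dimension $b_p$, for $p=0,\ldots,d-2$. So I expect
\[\dim\Hr_\et^{2d-1}(X)=h^{d-1}(X,\Hsc^d)+b_0+\cdots+b_{d-2},\]
which gives exactly the claimed formula, since $b_*-b_{d-1}-b_d=b_0+\cdots+b_{d-2}$.

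The main obstacle is to justify degeneration in total degree $2d-1$: no nonzero differentials enter or leave these terms. I would argue via the sheaf-theoretic comparison rather than differential by differential. The truncation $\tau_{>d}R\epsilon_*\Zb/2$, with $\epsilon$ the change of sites from étale to Zariski, is quasi-isomorphic to $\bigoplus_{q>d}\iota_*\Zb/2[-q]$; this is the content of the decomposition used in \cite[Theorem 2.3.1]{colliot-theleneZerocyclesCohomologyReal1996} in the proof of Lemma \ref{lem:top_étale_cohomology_proper_empty}. Moreover the map $\Hr_\et^{n}(X)\to\bigoplus_{p}\Hr^p(X(\Rb),\Zb/2)$ is split surjective for $n\geq 2d$ or so. If this split surjectivity is not directly available in degree $2d-1$, I would fall back on the following observation. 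The differentials $d_r$ of (\ref{eq:bloch_ogus}) out of or into the terms $(p,2d-1-p)$ with $p\leq d-2$ are compatible with the real-cohomology maps. These maps are the restrictions of the isomorphisms $h_q$ and commute with cup product by $\omega$, which is an isomorphism $\Hsc^q\to\Hsc^{q+1}$ for $q>d$. By this $\omega$-periodicity they coincide with differentials in arbitrarily high degree. Cohomological dimension bounds show the latter vanish: $\Hr_\et^n(X)\cong\bigoplus_p\Hr^p(X(\Rb),\Zb/2)$ for large $n$ by the cited theorem, and comparing dimensions forces every differential there to vanish.

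Incoming differentials to $\Er^{d-1,d}$ come from terms $(d-1-r,d+r-1)$, which sit in the periodic zone; outgoing ones land in $(d-1+r,\ldots)$ with $p>d$, hence zero. The incoming ones are again images of periodic differentials, so they vanish by the same argument. Once degeneration is established, the dimension count above follows.

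For the final assertion with $d=2$, I combine this with Lemma \ref{lem:projection_dimension_filtration_isomorphism}-style exactness. The bottom row of (\ref{eq:commutative_ladder_signature_variety}) gives
\[\Hr^0(X,\bar{\Ibf}^2)\to\Hr^0(X(\Rb),\Zb/2)\to\Hr^1(X,\bar{\Kbf}^2)\to\Hr^1(X,\Hsc^2)\to\Hr^1(X(\Rb),\Zb/2)\to\Hr^2(X,\bar{\Kbf}^2).\]
The first map is surjective by Remark \ref{rema:cases_mod_2_cycle_class_surjective} with $i=1$ (Krasnov/van Hamel). $\Hr^2(X,\bar{\Kbf}^2)$ vanishes under $(*)$ by the analogue of Lemma \ref{lem:top_cohomology_2_torsion}; if $X(\Rb)=\emptyset$ the real terms vanish anyway. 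Hence $\dim\Hr^1(X,\bar{\Kbf}^2)=h^1(X,\Hsc^2)-b_1=\dim\Hr_\et^3(X)-b_*+b_2$.
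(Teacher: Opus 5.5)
You correctly identify the $\Er_2$-terms in total degree $2d-1$, and your dimension count and $d=2$ exact sequence match the paper's proof. The gap is in the degeneration step.

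Your $\omega$-periodicity argument only works for a differential $d_r\colon\Er_r^{p,q}\to\Er_r^{p+r,q-r+1}$ whose \emph{target} also lies in a row $>d$. There $\omega^N$ is injective on $\Er_2$, and all differentials touching high total degree vanish by your dimension count. So $\omega^N d_r(x)=d_r(\omega^N x)=0$ forces $d_r(x)=0$.

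The differentials that matter here, however, cross from rows $q>d$ into rows $q\leq d$:
\begin{itemize}
\item those entering $\Er_r^{d-1,d}$ from $\Er_r^{d-1-r,d+r-1}$;
\item those leaving $\Er_r^{p,2d-1-p}$, $p\leq d-2$, towards $\Er_r^{d,d}$.
\end{itemize}
On these targets $\omega^N$ is not injective. On $\Hr^{d-1}(X,\Hsc^d)$ it is $\bar{\gamma}_d^{d-1}$ (up to the Colliot-Th\'el\`ene--Parimala isomorphism). Its kernel is the image of $\Hr^{d-1}(X,\bar{\Kbf}^d)$, which is precisely the part you are trying to count. On $\Ch^d(X)$ it is the real cycle class map, which kills $\Ch^d(X)=\Zb/2$ when $X$ is proper without real points. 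So $\omega^N d_r(x)=0$ tells you nothing, and your claim that these differentials ``coincide with differentials in arbitrarily high degree'' is false.

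Your first route has the same hole. A quasi-isomorphism $\tau_{>d}R\epsilon_*\Zb/2\simeq\bigoplus_{q>d}\iota_*\Zb/2[-q]$ controls only the spectral sequence of the truncation. It says nothing about the connecting map $\tau_{>d}R\epsilon_*\Zb/2\to(\tau_{\leq d}R\epsilon_*\Zb/2)[1]$, which is what produces the crossing differentials. Split surjectivity in degrees $\geq 2d$ does not address degree $2d-1$ either.

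The vanishing of all $d_r^{p,q}$ with $q>d$ is a genuine input. The paper takes it from \cite[Theorem 2.1]{hamelTorsionZerocyclesAbelJacobi2000}; with that citation your argument becomes the paper's.

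For $d=2$ alone you can avoid it. The only differential touching total degree $3$ that is not trivially zero is $d_2\colon\Hr^0(X,\Hsc^3)\to\Ch^2(X)$. The map $\omega\colon\Hr^0(X,\Hsc^2)\to\Hr^0(X,\Hsc^3)$ is surjective; this is the surjectivity of $\bar{\gamma}_2^0$ you already invoke. So every $x$ equals $\omega y$, and then $d_2(x)=\omega\, d_2(y)$ with $d_2(y)\in\Hr^2(X,\Hsc^1)=0$. For general $d$ the same trick would need surjectivity of $\bar{\gamma}_d^p$ for all $p\leq d-2$, which is not available.
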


\begin{proof}
In the Bloch--Ogus spectral sequence (\ref{eq:bloch_ogus}), the differentials $d_2^{p,q}$ vanish for $q>d$ by \cite[Theorem 2.1]{hamelTorsionZerocyclesAbelJacobi2000} and $\Er_1^{p,q}=0$ for $p<0$ and $p>q$. It follows that $\Hr^{p}(X,\Hsc^q)=\Er_\infty^{p,q}$ for $q>d$ and for $q=d$ and $p\geq d-1$. Moreover, if $q>d$, then $\Hr^p(X,\Hsc^q)\cong\Hr^p(X(\Rb),\Zb/2)$ by the main theorem of \cite{colliot-theleneRealComponentsAlgebraic1990}. Since (\ref{eq:bloch_ogus}) converges to the mod $2$ étale cohomology of $X$ (filtered by coniveau), we then have \[\dim\Hr_\et^{2d-1}(X)=\sum_{p=0}^{d-1}\Er_\infty^{p,2d-1-p}=\dim\Hr^{d-1}(X,\Hsc^d)+\sum_{p=0}^{d-2}\dim\Hr^p(X(\Rb),\Zb/2)\] so $\dim\Hr^{d-1}(X,\Hsc^d)=\dim\Hr_\et^{2d-1}(X)-b_*+b_{d-1}+b_d$. When further $X$ is a surface, so that $d=2$, then by surjectivity of $\overline{\gamma}_2^i$ for $i\in\{0,1\}$ (see Remark \ref{rema:cases_mod_2_cycle_class_surjective}), there is an exact sequence \[0\to\Hr^1(X,\bar{\Kbf}^2)\to\Hr^1(X,\Hsc^2)\xrightarrow{\bar{\gamma}_2^1}\Hr^1(X(\Rb),\Zb/2)\to 0\] of $\Zb/2$-vector spaces. It shows that $\dim\Hr^1(X,\bar{\Kbf}^2)=\dim\Hr^1(X,\Hsc^2)-b_1$ so \[\dim\Hr^1(X,\bar{\Kbf}^2)=\dim\Hr_\et^{3}(X)-b_*+b_2.\] This completes the proof.
\end{proof}

\begin{exe}\label{exe:torsion_H1_I2}
Let $X$ be a smooth $\Rb$-variety of dimension $d$ satisfying $(*)$. By \cite[Lemma 2.2.2, Theorem 2.3.1 (b)]{colliot-theleneZerocyclesCohomologyReal1996}, there is an isomorphism \[\Hr_\et^{2d}(X)\cong\bigoplus_{p=0}^d\Hr^p(X(\Rb),\Zb/2)\simeq(\Zb/2)^{b_*}.\] Now further assume that the pullback homomorphism $\Hr_\et^{2d-1}(X)\to\Hr_\et^{2d-1}(X_\Cb)$ is surjective (this holds, \emph{e.g.}, if the group $\Hr_\et^{2d-1}(X_\Cb)$ vanishes; this is the case if $X$ is affine or is a hypersurface in $\Pb_\Rb^3$). Then (\ref{eq:real_complex_exact_sequence}) yields an exact sequence \[0\to\Hr_\et^{2d-1}(X)\xrightarrow{\cup\omega}\Hr_\et^{2d}(X)\xrightarrow{\pi^*}\Hr_\et^{2d}(X_\Cb).\] The group $\Hr_\et^{2d}(X_\Cb)$ is isomorphic to $(\Zb/2)^\varepsilon$ where $\varepsilon=1$ if $X$ is proper and $\varepsilon=0$ otherwise. In particular, the map $\pi^*$ is surjective if $X$ is not proper. If $X$ is proper, then since $X$ satisfies $(*)$, the space $X(\Rb)$ is nonempty; if $x\in X(\Rb)$, the pullback $\pi^*[x]$ of the class $[x]$ of $x$ in $\Hr_\et^{2d}(X)$ is then a generator of $\Hr_\et^{2d}(X_\Cb)$ so again $\pi^*$ is surjective. We conclude that $\pi^*:\Hr_\et^{2d}(X)\to\Hr_\et^{2d}(X_\Cb)$ is surjective in all cases under the assumption $(*)$, and thus that the above exact sequence extends to an exact sequence \[0\to\Hr_\et^{2d-1}(X)\xrightarrow{\cup\omega}\Hr_\et^{2d}(X)\simeq(\Zb/2)^{b_*}\xrightarrow{\pi^*}\Hr_\et^{2d}(X_\Cb)=(\Zb/2)^{\varepsilon}\to 0.\] Therefore $\Hr_\et^{2d-1}(X)\simeq(\Zb/2)^{b_*-\varepsilon}$. Assuming now that $d=2$, by Lemma \ref{lem:torsion_H1_I2}, one has \[\dim\Hr^1(X,\bar{\Kbf}^2)=\dim\Hr^2(X(\Rb),\Zb/2)-\varepsilon\] and thus $\Hr^1(X,\bar{\Kbf}^2)\simeq(\Zb/2)^{t-\varepsilon}$ where $t=b_2$ is the number of compact connected components of $X(\Rb)$. For instance, if $X$ is proper and $X(\Rb)$ is connected (in particular nonempty) and the pullback homomorphism $\Hr_\et^{3}(X)\to\Hr_\et^{3}(X_\Cb)$ is surjective, or if $X$ is affine and $X(\Rb)$ has no compact connected component, then $\Hr^1(X,\bar{\Kbf}^2)=0$.
\end{exe}

\begin{prop}\label{prop:H1I2_surface}
Let $X$ be a smooth real surface and let $\Leu$ be a line bundle on $X$; set $L=\Leu(\Rb)$ and let $a$ denote the integer $a=\dim\Hr_\et^3(X)-b_*+b_2$. Then $\gamma_2^1$ induces a split exact sequence \[0\to(\Zb/2)^a\to\Hr^1(X,\Ibf^2(\Leu))\xrightarrow{\gamma_2^1}\Hr^1(X(\Rb),\Zb(L))\to 0\] of abelian groups.
\end{prop}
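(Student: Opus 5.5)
We apply the cohomology functor to the top row of the commutative ladder (\ref{eq:commutative_ladder_signature_variety}) with $d=2$. This row is the short exact sequence $0\to\Kbf^2(\Leu)\to\Ibf^2(\Leu)\xrightarrow{\sign_2}\iota_*\Zb(L)\to 0$ of sheaves on $X_\Zar$. Using the Coste--Roy identification $\Hr^*(X,\iota_*\Zb(L))\cong\Hr^*(X(\Rb),\Zb(L))$, we obtain the long exact sequence
\[\Hr^0(X,\Ibf^2_\Leu)\xrightarrow{\gamma_2^0}\Hr^0(X(\Rb),\Zb(L))\to\Hr^1(X,\Kbf^2_\Leu)\to\Hr^1(X,\Ibf^2_\Leu)\xrightarrow{\gamma_2^1}\Hr^1(X(\Rb),\Zb(L))\to\Hr^2(X,\Kbf^2_\Leu).\]
Lemma \ref{lem:2_torsion_to_(-1)_torsion} identifies $\Kbf^2(\Leu)$ with $\bar{\Kbf}^2$. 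Lemma \ref{lem:torsion_H1_I2} then gives $\Hr^1(X,\Kbf^2_\Leu)\cong\Hr^1(X,\bar{\Kbf}^2)\simeq(\Zb/2)^a$. So the proof reduces to three claims: the left connecting map vanishes, $\gamma_2^1$ is surjective, and the resulting sequence splits.

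For the first claim, $\gamma_2^0$ is surjective by \cite[Proposition 5.2]{lerbetImageHigherSignature2026}, the same input used in Corollary \ref{cor:torsion_I_surface}. Hence $\Hr^1(X,\Kbf^2_\Leu)\to\Hr^1(X,\Ibf^2_\Leu)$ is injective.

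For the second claim, it suffices to show that $\Hr^1(X(\Rb),\Zb(L))\to\Hr^2(X,\bar{\Kbf}^2)$ vanishes. Consider the bottom row of (\ref{eq:commutative_ladder_signature_variety}). Its cohomology sequence contains $\Hr^1(X,\bar{\Ibf}^2)\xrightarrow{\bar{\gamma}_2^1}\Hr^1(X(\Rb),\Zb/2)\to\Hr^2(X,\bar{\Kbf}^2)$, and the first map is surjective by Remark \ref{rema:cases_mod_2_cycle_class_surjective}. So the connecting map $\Hr^1(X(\Rb),\Zb/2)\to\Hr^2(X,\bar{\Kbf}^2)$ is zero. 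By naturality of connecting maps with respect to the ladder, the top connecting map factors as $\Hr^1(X(\Rb),\Zb(L))\xrightarrow{\rho(L)}\Hr^1(X(\Rb),\Zb/2)\to\Hr^2(X,\bar{\Kbf}^2)$, using that the leftmost vertical arrow $\Kbf^2(\Leu)\to\bar{\Kbf}^2$ is an isomorphism. It therefore vanishes, and $\gamma_2^1$ is surjective.

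The splitting is the step I expect to be the main obstacle. The group $\Hr^1(X(\Rb),\Zb(L))$ is finitely generated but may contain $2$-torsion, for instance $\Zb/2$ summands coming from components where $L$ is nontrivial, so it need not be free. I would write $\Hr^1(X(\Rb),\Zb(L))=F\oplus T$ with $F$ free and $T$ torsion; by Lemma \ref{lem:4-torsion=2-torsion}, the $2$-primary part of $T$ has exponent $2$. Lifts over $F$ and over the odd part of $T$ are automatic, since the kernel $(\Zb/2)^a$ has no odd torsion. It remains to lift each element $\tau\in T[2]$ to a $2$-torsion element of $\Hr^1(X,\Ibf^2_\Leu)$. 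I would pick any lift $x$ of $\tau$. Then $2x$ lies in $\Ker\gamma_2^1\simeq(\Zb/2)^a$, so $4x=0$, and we must show that $2x=0$ after adjusting $x$ by an element of the kernel. Adjusting by the kernel changes $2x$ by twice a $2$-torsion element, which is zero, so the adjustment does not help. Instead I would prove directly that $2\Hr^1(X,\Ibf^2_\Leu)\cap\Ker\gamma_2^1=0$. The approach: multiplication by $2=\llangle -1\rrangle$ factors through $\Hr^1(X,\Ibf^3_\Leu)\cong\Hr^1(X(\Rb),\Zb(L))$ (Jacobson), followed by the inclusion-induced map $\Hr^1(X,\Ibf^3_\Leu)\to\Hr^1(X,\Ibf^2_\Leu)$. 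Using the relation $\gamma_2^1\circ i=2\cdot\gamma_3^1$ (from (\ref{eq:ladder_normalised_signature})), an element $2x$ with $\gamma_2^1(2x)=0$ comes from $y\in\Hr^1(X,\Ibf^3_\Leu)$ whose signature is $2$-torsion. I would then check that such $y$ maps to zero in $\Hr^1(X,\Ibf^2_\Leu)$, using the exact sequence for $\Ibf^3\subseteq\Ibf^2$ together with the fact that $\bar{\gamma}_2^0$ is surjective, arguing as in Lemma \ref{lem:projection_dimension_filtration_isomorphism}. This last verification is the delicate point.
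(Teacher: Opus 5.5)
Your argument is correct; for injectivity and surjectivity it differs only mildly from the paper, but the splitting goes a genuinely different way.

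\emph{Injectivity and surjectivity.} You get injectivity from surjectivity of $\gamma_2^0$. The paper instead observes that the composite $\Hr^1(X,\Kbf^2_\Leu)\to\Hr^1(X,\Ibf^2_\Leu)\xrightarrow{\pi_\Leu^{1,2}}\Hr^1(X,\bar{\Ibf}^2)$ equals $\psi$ followed by the inclusion $\Hr^1(X,\bar{\Kbf}^2)\hookrightarrow\Hr^1(X,\bar{\Ibf}^2)$, which is injective because $\bar{\gamma}_2^0$ is onto. For surjectivity of $\gamma_2^1$, the paper uses $\Hr^2(X,\bar{\Kbf}^2)=0$ under $(*)$. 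Your naturality argument with $\bar{\gamma}_2^1$ onto is fine and handles both cases uniformly.

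\emph{The step you flagged as delicate.} It closes, but not along the lines of Lemma \ref{lem:projection_dimension_filtration_isomorphism}. The kernel of $\Hr^1(X,\Ibf^3_\Leu)\to\Hr^1(X,\Ibf^2_\Leu)$ is $\Im\partial_\Leu^{0,2}$. One has $\gamma_3^1\circ\partial_\Leu^{0,2}=d_L\circ\bar{\gamma}_2^0$, where $d_L$ is the degree-$0$ connecting map, $\bar{\gamma}_2^0$ is onto, and $\gamma_3^1$ is an isomorphism by Jacobson. Hence this kernel is $(\gamma_3^1)^{-1}\bigl(d_L\Hr^0(X(\Rb),\Zb/2)\bigr)=(\gamma_3^1)^{-1}\bigl(\Hr^1(X(\Rb),\Zb(L))[2]\bigr)$. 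Your $y$, with $\gamma_3^1(y)=\gamma_2^1(x)$ $2$-torsion, therefore dies. So every lift of a $2$-torsion class is $2$-torsion, and your section exists.

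\emph{Comparison of the splittings.} The paper builds a retraction rather than a section. It takes the same composite as above, followed by a retraction $\bar{r}$ of the inclusion of $\Zb/2$-vector spaces $\Hr^1(X,\bar{\Kbf}^2)\hookrightarrow\Hr^1(X,\bar{\Ibf}^2)$, and then by $\psi^{-1}$. The result $\psi^{-1}\circ\bar{r}\circ\pi_\Leu^{1,2}$ retracts $\Hr^1(X,\Kbf^2_\Leu)\to\Hr^1(X,\Ibf^2_\Leu)$. This needs neither Jacobson's theorem for $\Ibf^3$ nor any knowledge of the torsion of $\Hr^1(X(\Rb),\Zb(L))$, and a single diagram gives both injectivity and splitting. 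Your route costs more but yields extra information: $(\gamma_2^1)^{-1}\bigl(\Hr^1(X(\Rb),\Zb(L))[2]\bigr)$ is exactly the $2$-torsion subgroup of $\Hr^1(X,\Ibf^2_\Leu)$.
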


\begin{proof}
The ladder in (\ref{eq:commutative_ladder_signature_variety}) induces a commutative ladder
\[\begin{tikzcd}
	{\Hr^1(X,\Kbf_\Leu^2)} & {\Hr^1(X,\Ibf_\Leu^2)} & {\Hr^1(X(\Rb),\Zb(L))} \\
	{\Hr^1(X,\bar{\Kbf}^2)} & {\Hr^1(X,\bar{\Ibf}^2)} & {\Hr^1(X(\Rb),\Zb/2)}
	\arrow[from=1-1, to=1-2]
	\arrow["\psi"', from=1-1, to=2-1]
	\arrow["{\gamma_2^1}", from=1-2, to=1-3]
	\arrow["{\pi_\Leu^{1,2}}"', from=1-2, to=2-2]
	\arrow[from=1-3, to=2-3]
	\arrow[from=2-1, to=2-2]
	\arrow["{\overline{\gamma}_2^1}"', from=2-2, to=2-3]
\end{tikzcd}\]
The map $\Hr^1(X,\bar{\Kbf}^2)\to\Hr^1(X,\bar{\Ibf}^2)$ is injective by surjectivity of $\overline{\gamma}_2^0$, and the map $\overline{\gamma}_2^1$ is surjective (see Remark \ref{rema:cases_mod_2_cycle_class_surjective}); moreover, the left vertical morphism $\psi$ is an isomorphism by Lemma \ref{lem:2_torsion_to_(-1)_torsion}. Consequently, the map $\Hr^1(X,\Kbf_\Leu^2)\to\Hr^1(X,\Ibf_\Leu^2)$ is also injective by diagram chase. Furthermore, the map $\gamma_2^1$ is surjective: if $X$ satisfies $(*)$, this follows from the fact that $\Hr^d(X,\Kbf_\Leu^d)=0$ by \cite[Theorem 3.2]{colliot-theleneZerocyclesCohomologyReal1996} (as extracted in \cite[Proposition 3.3]{lerbetImageHigherSignature2026}), which guarantees the vanishing of $\Hr^d(X,\bar{\Kbf}^d)$, and Lemma \ref{lem:2_torsion_to_(-1)_torsion}; if $X$ does not satisfy $(*)$, then $X(\Rb)$ is empty and the surjectivity of $\gamma_2^1$ is clear. We therefore obtain an exact sequence as in the statement of the above proposition. Since the map $\Hr^1(X,\bar{\Kbf}^2)\to\Hr^1(X,\bar{\Ibf}^2)$ is a morphism of $\Zb/2$-vector spaces, it admits a retraction $\overline{r}$. The map $r=\psi^{-1}\circ\overline{r}\circ\pi_\Leu^{1,2}$ is then a retraction of the morphism $\Hr^1(X,\Kbf_\Leu^2)\to\Hr^1(X,\Ibf_\Leu^2)$, showing that this exact sequence indeed splits.
\end{proof}

Thus $\Hr^1(X,\Ibf_\Leu^2)$ is governed by the topology of $X(\Rb)$ and the mod $2$ étale cohomology of $X$ and, in certain cases (Example \ref{exe:torsion_H1_I2}), by the topology of $X(\Rb)$.

\begin{cor}\label{cor:W1_surface}
Let $X$ be a smooth real surface. There is an exact sequence \[0\to\Hr^1(X,\Ibf^2)\to\Wr^1(X)\to\Ker(\Sq^2:\Ch^1(X)\to\Ch^2(X))\to 0\] of abelian groups. If $\Leu$ is a line bundle on $X$ and $\Leu$ is not a square, then there is an isomorphism $\Wr^1(X,\Leu)=\Hr^1(X,\Ibf_\Leu)/\Zb/2$ where $\Zb/2$ embeds into $\Hr^1(X,\Ibf_\Leu)$ via the Euler class $e(\Leu)$ of $\Leu$.
\end{cor}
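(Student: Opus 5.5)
The plan is to identify $\Wr^1(X,\Leu)$ with $\Hr^1(X,\Wbf_\Leu)$ and then read off both statements from Lemma \ref{lem:baby_pardon} and Proposition \ref{prop:easy_exact_sequence_W1_surface}.

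First, since $X$ is a surface, hence of dimension $\leq 3$, the Gersten--Witt spectral sequence (\ref{eq:GW}) collapses. This gives a canonical isomorphism $\Wr^1(X,\Leu)\cong\Hr^1(X,\Wbf_\Leu)$ for every line bundle $\Leu$, as recalled in the preliminaries. So it suffices to describe $\Hr^1(X,\Wbf_\Leu)$.

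In the untwisted case, Lemma \ref{lem:baby_pardon} says the inclusion $\Ibf\subseteq\Wbf$ induces an isomorphism $\Hr^1(X,\Ibf)\cong\Hr^1(X,\Wbf)$. We then apply Proposition \ref{prop:easy_exact_sequence_W1_surface} with $\Leu$ trivial. It gives the exact sequence
\[\Hr^0(X,\bar{\Ibf})\xrightarrow{\partial^{0,1}}\Hr^1(X,\Ibf^2)\to\Hr^1(X,\Ibf)\to\Ker\Sq^2\to 0,\]
in which the twisted square $\Sq^2_\Leu$ reduces to $\Sq^2\colon x\mapsto x^2$ on $\Ch^1(X)$ because $\bar{c}_1(\Leu)=0$. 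The second part of that proposition says $\Hr^1(X,\Ibf^2)\to\Hr^1(X,\Ibf)$ is injective; equivalently, $\partial^{0,1}=0$ by Lemma \ref{lem:Sujatha} and Remark \ref{rema:steenrod_trivial}. Replacing $\Hr^1(X,\Ibf)$ by $\Wr^1(X)$ then yields the short exact sequence of the statement.

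In the twisted case, with $\Leu$ not a square, Lemma \ref{lem:baby_pardon} gives directly $\Hr^1(X,\Wbf_\Leu)\cong\Hr^1(X,\Ibf_\Leu)/\Zb/2\cdot e(\Leu)$. Its proof shows that $e(\Leu)$ is nonzero, since its reduction in $\Ch^1(X)$ is $\bar{c}_1(\Leu)\neq 0$, and that it is $2$-torsion. Hence $\Zb/2$ indeed embeds via $e(\Leu)$, and composing with the Gersten--Witt isomorphism gives the claim.

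No step is a real obstacle, since everything is assembled from earlier results. The only point needing care is to check that the identification of the Gersten--Witt edge map is compatible with the map $\Hr^1(X,\Ibf_\Leu)\to\Hr^1(X,\Wbf_\Leu)$ used in Lemma \ref{lem:baby_pardon}, so that the quotient description is canonical.
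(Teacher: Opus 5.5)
Your proposal is correct and follows essentially the same route as the paper. The paper likewise identifies $\Wr^1(X,\Leu)$ with $\Hr^1(X,\Wbf_\Leu)$ through the collapsing Gersten--Witt spectral sequence and then invokes Lemma \ref{lem:baby_pardon}. Its untwisted short exact sequence comes, as in your argument, from Proposition \ref{prop:easy_exact_sequence_W1_surface}.
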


\begin{proof}
Since $X$ has dimension $2$, the Gersten--Witt spectral sequence (\ref{eq:GW}) yields an isomorphism $\Hr^1(X,\Wbf_\Leu)\cong\Wr^1(X,\Leu)$. Thus the above statement follows from Lemma \ref{lem:baby_pardon}.
\end{proof}

\begin{exe}
Let $X$ be a very general hypersurface in $\Pb_\Rb^3$ (in particular, the variety $X$ is smooth) such that $X(\Rb)=\emptyset$; denote by $i$ the closed immersion $X\hookrightarrow\Pb_\Rb^3$. By the Noether--Lefschetz theorem, the group $\CH^1(X)$ is freely generated by the class of a hyperplane section $h$ (see, \emph{e.g.}, \cite[Proposition 3.2.2]{asokSplittingVectorBundles2025}). Now assume that the degree $\delta$ of $X$ is congruent to $2$ mod $4$. Then $h^2\neq 0$ by \cite[Lemma 3.2.4]{asokSplittingVectorBundles2025}. We then have $\Hr_\et^3(X)=\Zb/2$ by Corollary \ref{cor:end_real_complex_proper_empty} (and the proof of Lemma \ref{lem:top_étale_cohomology_proper_empty}), thus $\Hr^1(X,\bar{\Kbf}^2)=\Zb/2$ by Lemma \ref{lem:torsion_H1_I2}. Since $X(\Rb)$ is empty, this implies that $\Hr^1(X,\Ibf^2)=\Zb/2$; it then follows from Corollary \ref{cor:W1_surface} that $\Wr^1(X)=\Zb/2$. The same argument shows that if instead $\delta$ is congruent to $0$ mod $4$, then $\Hr^1(X,\Ibf_{\Osc(1)}^2)=\Zb/2$ and thus $\Wr^1(X,\Osc(1))=0$ by Lemma \ref{lem:baby_pardon}. 
\end{exe}

\begin{rema}
The exact sequence (\ref{eq:exact_sequence_W1_surface}) need not split. For example, let $C$ be a smooth curve such that $\Wr(C)$ has $4$-torsion (this holds if $C$ is proper and geometrically connected and $C(\Rb)$ is empty by \cite[Theorem 2.9]{monnierWittGroupTorsion2002}) and set $X=C\times\Pb_\Rb^1$. Since $X(\Rb)=\emptyset$, one has $\Hr^1(X,\bar{\Kbf}^2)=\Hr^1(X,\Ibf^2)$ so $\Hr^1(X,\Ibf^2)$ is $2$-torsion, and $\Ker\Sq^2\subseteq\Ch^1(X)$ is evidently $2$-torsion, if (\ref{eq:exact_sequence_W1_surface}) split, the group $\Wr^1(X)$ would be $2$-torsion. However, the projective bundle formula for Witt groups (\cite[Theorem 9.10]{schlichtingHermitianKtheoryDerived2017}) shows that $\Wr(C)$, which is not $2$-torsion by choice, is a direct summand of $\Wr^1(X)$. Nertheless, Proposition \ref{prop:easy_exact_sequence_W1_surface} and Corollary \ref{cor:W1_surface} that the torsion subgroup of $\Wr^1(X,\Leu)$ is $4$-torsion for any smooth real surface $X$ and any line bundle $\Leu$ on~$X$.
\end{rema}

\begin{rema}\label{rema:no_analogue_W1_surfaces}
Let $X$ be a smooth real surface and let $\Leu$ be a line bundle on $X$; set $L=\Leu(\Rb)$. The analogue of Corollary \ref{cor:small_d_w_d} cannot hold for $\Wr^1(X,\Leu)$ since the homomorphism $\gamma_2^1:\Hr^1(X,\Ibf^2_\Leu)\to\Hr^1(X(\Rb),\Zb(L))$ is not injective (see Lemma \ref{lem:torsion_H1_I2}). One could, however, hope for an exact sequence \[0\to\Hr^1(X,\Kbf^2_\Leu)\oplus\frac{\Hr^1(X(\Rb),\Zb(L))}{d_L\Im\bar{\gamma}_1^0}\to\Hr^1(X,\Ibf_\Leu)\to\Ker\Sq^2_\Leu\to 0,\] in other words that the injectivity defect of $\gamma_2^1$ survives in $\Hr^1(X,\Ibf_\Leu)$ and the contribution of the connecting homomorphism $\partial_\Leu^{0,1}:\Hr^0(X,\bar{\Ibf})\to\Hr^1(X,\Ibf^2_\Leu)$ to $\Hr^1(X,\Ibf_\Leu)$ can be detected purely topologically (modulo the knowledge of the image of $\bar{\gamma}_1^0:\Hr^0(X,\bar{\Ibf})\to\Hr^0(X(\Rb),\Zb/2)$, which can certainly be accessed in special cases). Unfortunately, as we saw in Example \ref{exe:connecting_homomorphism_nontrivial}, this is not the case: if $X$ is a product of elliptic curves, the intersection $\Im\partial_\Leu^{0,1}\cap\Hr^1(X,\Kbf^2_\Leu)$ is nontrivial for specific choices of $\Leu$. It is unclear to us whether one can expect a manageable description of $\Coker\partial_\Leu^{0,1}$ in general. If $\Hr^1(X,\bar{\Kbf}^2)=0$, then of course there is an exact sequence as above by exactly the same proof as was provided for Proposition \ref{prop:small_d_w_d}.
\end{rema}

\subsection{The image of the global signature}\label{subsection:image_global_signature_surfaces}

In this subsection, we study the image of the global signature homomorphism \[\gamma^0:\Hr^0(X,\Wbf_\Leu)\to\Hr^0(X(\Rb),\Zb(L))\] for a smooth integral surface $X$ over $\Rb$ together with a line bundle $\Leu$ (as usual, we set $L=\Leu(\Rb)$). The general framework of this investigation is given by the following lemma.

\begin{lem}\label{lem:image_signature_I_surface}
Let $\alpha\in\Hr^0(X(\Rb),\Zb(L))$. Then $\alpha$ lies in the image of the map $\gamma_1^0$ if, and only if, its reduction mod $2$ in $\Hr^0(X(\Rb),\Zb/2)$ lies in the image of the restriction $\bar{\gamma}_1^0:(\Ker\cup\bar{c}_1(\Leu)\subseteq\Hr^0(X,\Hsc^1))\to\Hr^0(X(\Rb),\Zb/2)$.
\end{lem}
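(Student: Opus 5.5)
The plan is to compare, via the quadratic real cycle class maps, the short exact sequence coming from Proposition~\ref{prop:exact_sequence_twisted_I} with the coefficient sequence $0\to\Zb(L)\xrightarrow{\cdot 2}\Zb(L)\to\Zb/2\to 0$ on $X(\Rb)$. This produces the commutative ladder already used in the proof of Corollary~\ref{cor:torsion_I_surface}:
\[
0\to\Hr^0(X,\Ibf^2_\Leu)\to\Hr^0(X,\Ibf_\Leu)\xrightarrow{\pi^{0,1}_\Leu}\Ker(\cup\bar{c}_1(\Leu))\to 0,
\]
mapping by $(\gamma_2^0,\gamma_1^0,\bar{\gamma}_1^0)$ to
\[
0\to\Hr^0(X(\Rb),\Zb(L))\xrightarrow{\cdot 2}\Hr^0(X(\Rb),\Zb(L))\xrightarrow{\bmod 2}\Hr^0(X(\Rb),\Zb/2).
\]
Commutativity of the left square is the identity $\gamma_1^0\circ i=2\gamma_2^0$, from the ladder (\ref{eq:ladder_normalised_signature}). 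Commutativity of the right square is the definition of $\bar{\sign}_1$ as the map induced on cokernels.

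The \emph{only if} direction is immediate. If $\alpha=\gamma_1^0(x)$, then $\alpha \bmod 2=\bar{\gamma}_1^0(\pi^{0,1}_\Leu(x))$. Here $\pi^{0,1}_\Leu(x)$ lies in $\Ker(\cup\bar{c}_1(\Leu))$ by exactness of the top row.

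For the \emph{if} direction, suppose $\alpha \bmod 2=\bar{\gamma}_1^0(y)$ with $y\in\Ker(\cup\bar{c}_1(\Leu))$.
\begin{enumerate}
\item By surjectivity of $\pi^{0,1}_\Leu$ onto $\Ker(\cup\bar{c}_1(\Leu))$ (Proposition~\ref{prop:exact_sequence_twisted_I}), lift $y$ to some $x\in\Hr^0(X,\Ibf_\Leu)$.
\item The difference $\alpha-\gamma_1^0(x)$ then has zero reduction mod $2$, so it equals $2\beta$ for some $\beta\in\Hr^0(X(\Rb),\Zb(L))$, by exactness of the bottom row.
\item By \cite[Proposition 5.2]{lerbetImageHigherSignature2026}, $\gamma_2^0$ is surjective, as already invoked in Corollary~\ref{cor:torsion_I_surface}. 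Write $\beta=\gamma_2^0(z)$.
\item Then $\alpha=\gamma_1^0(x+i(z))$, where $i$ denotes the inclusion-induced map $\Hr^0(X,\Ibf^2_\Leu)\to\Hr^0(X,\Ibf_\Leu)$.
\end{enumerate}

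The only nonformal input is the surjectivity of $\gamma_2^0$ for a surface. This is Jacobson-type stable-range behaviour at the top degree $d=2$, where $\sign_2$ is an epimorphism and the relevant $\Hr^1$ of the kernel vanishes. I would rely on the cited result rather than reprove it; this is the step I expect to be the main obstacle if one wanted to be self-contained. I would also check that the right square commutes with the identification $\bar{\Ibf}\cong\Hsc^1$ used in stating the lemma, which is the remark after (\ref{eq:ladder_normalised_signature}) identifying $\bar{\sign}_t$ with $h_t$.
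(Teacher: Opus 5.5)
Your proposal is correct and is the paper's proof: the same ladder of quadratic real cycle class maps, the same two inputs (the image of $\pi^{0,1}_\Leu$ from Proposition~\ref{prop:exact_sequence_twisted_I} and the surjectivity of $\gamma_2^0$ from \cite{lerbetImageHigherSignature2026}), with the four-lemma argument written out as an explicit chase. One side remark that does not affect the proof, since you only cite the surjectivity: your heuristic that ``the relevant $\Hr^1$ of the kernel vanishes'' is false in general, because $\Hr^1(X,\Kbf^2_\Leu)\cong\Hr^1(X,\bar{\Kbf}^2)$ need not be zero (cf.\ Lemma~\ref{lem:torsion_H1_I2} and Example~\ref{exe:torsion_H1_I2}); what actually vanishes is the connecting map $\Hr^0(X(\Rb),\Zb(L))\to\Hr^1(X,\Kbf^2_\Leu)$, which factors through reduction mod $2$ and the connecting map of the bottom row of (\ref{eq:commutative_ladder_signature_variety}), and the latter is zero because $\bar{\gamma}_2^0$ is surjective (Remark~\ref{rema:cases_mod_2_cycle_class_surjective}).
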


\begin{rema}
This lemma is analogous to \cite[Proposition 4.4]{lerbetImageHigherSignature2026} (or \cite[Corollary 5.3]{hornbostelFewComputationsReal2024}). However, contrary to the situation in these statements, it would not be reasonable to call the classes in the image of $\bar{\gamma}_1^0$ algebraic as $\Hr^0(X,\Hsc^1)=\Hr_\et^1(X)$ is not a group of (cohomology classes of) algebraic cycles.
\end{rema}

\begin{proof}
Let $\rho(L):\Hr^0(X(\Rb),\Zb(L))\to\Hr^0(X(\Rb),\Zb/2)$ be reduction mod $2$. We then have a commutative ladder with exact rows:
\[\begin{tikzcd}
	{\Hr^0(X,\Ibf^2_\Leu)} & {\Hr^0(X,\Ibf_\Leu)} & {\Hr^0(X,\Hsc^1)} \\
	{\Hr^0(X(\Rb),\Zb(L))} & {\Hr^0(X(\Rb),\Zb(L))} & {\Hr^0(X(\Rb),\Zb/2)}
	\arrow[from=1-1, to=1-2]
	\arrow["{\gamma_2^0}"', from=1-1, to=2-1]
	\arrow[from=1-2, to=1-3]
	\arrow["{\gamma_1^0}"', from=1-2, to=2-2]
	\arrow["{\bar{\gamma}_1^0}"', from=1-3, to=2-3]
	\arrow[from=2-1, to=2-2]
	\arrow["\rho(L)"', from=2-2, to=2-3]
\end{tikzcd}\]
in which $\gamma_2^0$ is surjective by \cite[Corollary 5.2]{lerbetImageHigherSignature2026}. Moreover, the morphism $\Hr^0(X,\Ibf_\Leu)\to\Hr^0(X,\Hsc^1)$ has image $\Ker(\cup\bar{c}_1(\Leu))$ by Proposition \ref{prop:exact_sequence_twisted_I}. The claim then follows from the four lemma.
\end{proof}

To compute the image of $\gamma^0:\Hr^0(X,\Wbf_\Leu)\to\Hr^0(X(\Rb),\Zb(L))$, one then proceeds as follows. By definition of the normalised signature maps, one has a commutative square
\[\begin{tikzcd}
	{\mathrm{H}^0(X,\mathbf{I}_\Leu)} & {\mathrm{H}^0(X,\mathbf{W}_\Leu)} \\
	{\mathrm{H}^0(X(\mathbb{R}),\mathbb{Z}(L))} & {\mathrm{H}^0(X(\mathbb{R}),\mathbb{Z}(L))}
	\arrow[from=1-1, to=1-2]
	\arrow["{\gamma_1^0}"', from=1-1, to=2-1]
	\arrow["{\gamma^0}", from=1-2, to=2-2]
	\arrow["{\cdot 2}"', from=2-1, to=2-2]
\end{tikzcd}\]
If $\Leu$ is not a square, then the map $\Hr^0(X,\Ibf_\Leu)\to\Hr^0(X,\Wbf_\Leu)$ is surjective (hence an isomorphism) by Lemma \ref{lem:baby_pardon}. Therefore $\Im\gamma^0=2\Im\gamma_1^0$. If $\Leu$ is a square (hence is omitted from the notation), then the exact sequence \[0\to\Hr^0(X,\Ibf)\to\Hr^0(X,\Wbf)\to\Hr^0(X,\Wbf/\Ibf)\to 0\] used in Lemma \ref{lem:baby_pardon} shows that $\Hr^0(X,\Wbf)$ is generated by $\langle 1\rangle$ and $\Hr^0(X,\Ibf)$. Hence by the above commutative square, the image of $\gamma^0$ is generated by $\gamma^0(\langle 1\rangle)$, which is the diagonal element $\varepsilon=(1,\ldots,1)$ of $\Hr^0(X(\Rb),\Zb)$ by definition, and by $2\Im\gamma_1^0$.

\begin{exe}
Let $X$ be any geometrically connected smooth real surface. The real-complex exact sequence then reads \[0\to\Zb/2\cdot\omega\to\Hr_\et^1(X)\to\Hr_\et^1(X_\Cb)\] (Remark \ref{rema:real_complex_H1}). If moreover the group $\Hr_\et^1(X_\Cb)$ vanishes, we may conclude from this sequence that the group $\Hr_\et^1(X)=\Zb/2$ is generated by the class $\omega$ and thus \[\Im\gamma^0=\Zb\cdot\varepsilon+4\Hr^0(X(\Rb),\Zb).\]

For example, one has $\Hr_\et^1(X_\Cb)=0$ if $X$ is proper and $X(\Cb)$ is simply connected, for instance a K3-surface: indeed, the group $\Hr_\et^1(X_\Cb)$ can then be identified with the set of group homomorphisms $\pi_1(X_\Cb)\to\Zb/2$ by the Hurewicz theorem and the universal coefficients theorem. We also have $\Hr_\et^1(X_\Cb)=0$ if $X$ is proper and geometrically rational since $\Hr_\et^1(X_\Cb)$ is a birational invariant of $X_\Cb$; our computation of $\Im\gamma^0$ in this case recovers \cite[Theorem 5.2 (i)]{monnierUnramifiedCohomologyQuadratic2000}. As another example, let $C$ be a smooth curve of odd degree in $\mathbb{P}_\Rb^2$ and let $X$ be the open affine complement. The cohomology long exact sequence with support reads
\begin{equation}\label{eq:H1_complement_odd_degree_curve}
\Hr_\et^1(\mathbb{P}_\Cb^2)\to\Hr_\et^1(X_\Cb)\to\Hr_{\et,C_\Cb}^2(\mathbb{P}_\Cb^2)\to\Hr_\et^2(\mathbb{P}_\Cb^2).
\end{equation}
One has $\Hr_\et^1(\Cb)=0$ as $\Cb$ is quadratically closed, hence $\Hr_\et^1(\Pb_\Cb^2)=0$ by birational invariance of $\Hr_\et^1(\text{--})$. Since $C$ is smooth, purity for étale cohomology applies and yields an isomorphism $\Hr_\et^{j-2}(C_\Cb)\cong\Hr_{\et,C_\Cb}^{j}(\mathbb{P}_\Cb^2)$. In particular, there is an isomorphism $\Hr_{\et,C_\Cb}^2(\mathbb{P}_\Cb^2)\cong\Hr_\et^0(C_\Cb)$; modulo this isomorphism, the homomorphism $\Hr_{\et,C_\Cb}^2(\mathbb{P}_\Cb^2)\to\Hr^2(\mathbb{P}_\Cb^2)$ in (\ref{eq:H1_complement_odd_degree_curve}) is the pushforward map \[i_*:\Hr_\et^0(C_\Cb)\to\Hr_\et^2(\mathbb{P}_\Cb^2)\] for the closed immersion $i:C_\Cb\hookrightarrow\mathbb{P}_\Cb^2$. The map $i_*$ takes $1\in\Hr_\et^0(C_\Cb)=\Zb/2$ to the cohomology class $[C_\Cb]$ of $C_\Cb$. Since $C$ has odd degree, the class $[C_\Cb]$ is nonzero in $\Hr_\et^2(\Pb_\Cb^2)$ and thus $i_*$ is an isomorphism of groups isomorphic to $\Zb/2$. We conclude that the forgetful map $\Hr_{\et,C_\Cb}^2(\mathbb{P}_\Cb^2)\to\Hr_\et^2(\mathbb{P}_\Cb^2)$ is injective. The exact sequence of (\ref{eq:H1_complement_odd_degree_curve}) then shows that $\Hr_\et^1(X_\Cb)=0$, as required.
\end{exe}

\begin{exe}\label{exe:signature_complement_even_degree}
Let $C$ be a smooth curve in a smooth real surface $Y$ and set $X=Y\setminus Z$. Assume that the restriction homomorphism $j^*:\Hr^1(Y,\Hsc^1)\to\Hr^1(X,\Hsc^1)$ induced by the open immersion $j:X\hookrightarrow Y$ is injective, and that the homomorphism $\bar{\gamma}_0^1:\Hr^0(Y,\Hsc^1)\to\Hr^0(Y(\Rb),\Zb/2)$ is surjective (this holds, \emph{e.g.}, if $Y(\Rb)$ is connected). Since $C$ is smooth, the localisation exact sequence for $\Hsc^*$-cohomology associated with the previous data reads \[\Hr^0(Y,\Hsc^1)\xrightarrow{j^*}\Hr^0(X,\Hsc^1)\xrightarrow{\partial}\Hr^0(C,\Hsc^0)\to\Hr^1(Y,\Hsc^1)\xrightarrow{j^*}\Hr^1(X,\Hsc^1).\] Our assumption on $j^*$ then implies that the morphism $\partial$ is surjective. The exact sequence above is the top row of a commutative ladder
\[\begin{tikzcd}
	{\mathrm{H}^0(Y,\mathscr{H}^1)} & {\mathrm{H}^0(X,\mathscr{H}^1)} & {\mathrm{H}^0(C,\mathscr{H}^0)} & 0 \\
	{\mathrm{H}^0(Y(\mathbb{R}),\mathbb{Z}/2)} & {\mathrm{H}^0(X(\mathbb{R}),\mathbb{Z}/2)} & {\mathrm{H}^0(C(\mathbb{R}),\mathbb{Z}/2)} & {\mathrm{H}^1(Y(\mathbb{R}),\mathbb{Z}/2)}
	\arrow["{j^*}", from=1-1, to=1-2]
	\arrow["{\bar{\gamma}_1^0(Y)}"', from=1-1, to=2-1]
	\arrow["\partial", from=1-2, to=1-3]
	\arrow["{\bar{\gamma}_1^0(X)}"', from=1-2, to=2-2]
	\arrow[from=1-3, to=1-4]
	\arrow["{\bar{\gamma}^0(C)}"', from=1-3, to=2-3]
	\arrow[from=1-4, to=2-4]
	\arrow["{j^*}"', from=2-1, to=2-2]
	\arrow["\partial"', from=2-2, to=2-3]
	\arrow[from=2-3, to=2-4]
\end{tikzcd}\]
with exact rows whose bottom row is again a localisation exact sequence for singular cohomology of the real locus with $\Zb/2$-coefficients. The map $\bar{\gamma}_0^1(Y)$ is surjective by assumption. By the four lemma, the image of the map $\bar{\gamma}_1^0:\Hr^0(X,\Hsc^1)\to\Hr^0(X(\Rb),\Zb/2)$ is the inverse image under $\partial$ of $\Im\bar{\gamma}^0(C)$. Note that $\Hr^0(C,\Hsc^0)=\Zb/2\cdot 1$ so $\Im\bar{\gamma}^0(C)$ is generated by the diagonal element $\bar{\varepsilon}_C=(\bar{1},\ldots,\bar{1})$ in the group $\Hr^0(C(\Rb),\Zb/2)=(\Zb/2)^{\pi_0(C(\Rb))}$. The residue map $\partial:\Hr^0(X(\Rb),\Zb/2)\to\Hr^0(C(\Rb),\Zb/2)$ takes a connected component $V$ of $X(\Rb)$ to the sum over the connected components of $C(\Rb)$ meeting the closure $\bar{V}$ (for the Euclidean topology) in $Y(\Rb)$. It follows that $\partial^{-1}(\bar{\varepsilon}_C)$ is the set of tuples $(n_V)_{V\in\pi_0(X(\Rb))}$ such that for every connected component $W$ of $C(\Rb)$, the sum \[\sum_{\emptyset\neq\bar{V}\cap W\subseteq\Pb^2(\Rb)}n_V\] is nonzero in $\Zb/2$ (in other words, for every $W\in\pi_0(C(\Rb))$, the number of components $V$ such that $n_V$ is nonzero and $\bar{V}$ meets $W$ is odd) and thus that $\Im\bar{\gamma}_0^1(X)$ is generated by this set of tuples and $j^*\Hr^0(Y(\Rb),\Zb/2)$. Lemma \ref{lem:image_signature_I_surface} then determines the image of $\gamma^0$.

For example, this argument applies if $Y=\Abb_\Rb^2$ since $\Abb_\Rb^2(\Rb)=\Rb^2$ is then connected and $\Hr^1(Y,\Hsc^1)=\Ch^1(Y)=0$ by $\Abb^1$-invariance of Chow groups. It is then not too difficult to recover the description of the image of the signature obtained in \cite[proof of Theorem 4, p. 155]{monnierImageTotalSignature1997} for complements of affine hyperelliptic curves. It also applies if $Y=\Pb_\Rb^2$ and $C$ has even degree $\delta$: indeed, the class of $C$ in $\CH^1(\Pb_\Rb^2)$ is then $c_1(\Osc_{\Pb_\Rb^2}(\delta))$ and thus vanishes in $\Ch^1(\Pb_\Rb^2)=\CH^1(\Pb_\Rb^2)/2$, so that the morphism $\Ch^1(\Pb_\Rb^2)\to\Ch^1(X)$ is injective.
\end{exe}

\begin{exe}
Let $X$ be a real Enriques surface such that $X(\Rb)$ is not empty. Recall that $\Pic(X_\Cb)[2]$ is generated by the class of the canonical bundle $\omega_{X_\Cb}$. It then follows from the real-complex exact sequence in mod $2$ étale cohomology that $\Hr_\et^1(X)=\Zb/2\cdot\omega\oplus\Pic(X)[2]$ is generated as a $\Zb/2$-vector space by $\omega$ and any lift $\alpha$ to $\Hr_\et^1(X)$ of the class $[\omega_{X}]$ of $\omega_X$ in $\Pic(X)[2]$; choose such a lift. The elements $\alpha$ and $\alpha+\omega$ correspond to étale double covers $Y_1\to X$ and $Y_2\to X$, respectively, whose total spaces $Y_i$ are real K3 surfaces (both real forms of the same complex K3 surface, namely the usual K3 cover of $X_\Cb$), and there is an induced decomposition $X(\Rb)=X(\Rb)^1\coprod X(\Rb)^2$ of $X(\Rb)$ in components called the \emph{halves} of $X$ \cite{degtyarevHalvesRealEnriques1996} in which $X(\Rb)^i$ is the image of the map $Y_i(\Rb)\to X(\Rb)$ induced on real points.

Now given a disjoint union $W$ of connected components of $X(\Rb)$, we denote by $[W]$ the sum of the generators of $\Hr^0(X(\Rb),\Zb)$ whose union is $W$. One then has $\bar{\gamma}_1^0(\alpha)=[X(\Rb)\setminus X(\Rb)^1]=[X(\Rb)^2]$ and $\bar{\gamma}_1^0(\alpha+\omega)=[X(\Rb)^1]$. To see this, recall the definition of $\bar{\gamma}_1^0$: this map takes a class $\xi\in\Hr_\et^1(X)$ to the locally constant map $x\mapsto x^*\xi$ from $X(\Rb)$ to $\Zb/2$. If $p:Z\to X$ is the double cover classifying $\xi$, then $x^*\xi$ is classified by the double cover $Z_x\to x=\Spec\Rb$ and is therefore zero if $Z_x=\Spec\Rb\coprod\Spec\Rb$ and $\omega\in\Hr_\et^1(\Rb)$ if $Z_x=\Spec\Cb$. We conclude that $\bar{\gamma}_1^0(\xi)$ is the tuple $(n_C)_{C\in\pi_0(X(\Rb))}$ where $n_C=0$ if there exists $z\in Z(\Rb)$ such that $p(z)$ lies in $C$ and $n_C=1$ else. The previous formulae now follow from the definition of the halves.

It then follows from Lemma \ref{lem:image_signature_I_surface} that $\Im\gamma_1^0$ is generated by $[X(\Rb)^1]$ and $[X(\Rb)^2]$, and $2\Hr^0(X(\Rb),\Zb)$ (note that the diagonal element $\varepsilon=(1,\ldots,1)$ of $\Hr^0(X(\Rb),\Zb)$ is equal to $[X(\Rb)^1]+[X(\Rb)^2]$ since $X(\Rb)$ is the union of $X(\Rb)^1$ and $X(\Rb)^2$). Moreover, the image of $\gamma^0:\Hr^0(X,\Wbf)\to\Hr^0(X(\Rb),\Zb)$ is generated by the diagonal element $\varepsilon=(1,\ldots,1)$, by $2[Y]$ where $Y$ is any half of $X$ and by $4\Hr^0(X(\Rb),\Zb)$. In particular, the image of $\gamma^0$ is equal to $\Zb\cdot\varepsilon+4\Hr^0(X(\Rb),\Zb)$ if, and only if, one of the halves of $X(\Rb)$ is empty. This completely recovers and extends the results of \cite[§6]{monnierUnramifiedCohomologyQuadratic2000} regarding the image of the global signature.
\end{exe}

\begin{rema}
Let $X$ be a smooth $\Rb$-variety of dimension $d$. In \cite{monnierUnramifiedCohomologyQuadratic2000}, Monnier investigates the following question apparently first studied in \cite{parimalaGradedWittRing1992} and that may be regarded as a global version of Milnor's conjecture. Consider the graded ring homomorphism \[e^X=(e_n^X)_n:\bigoplus_{n\geq 0}\frac{\Hr^0(X,\Ibf^n)}{\Hr^0(X,\Ibf^{n+1})}\to\bigoplus_{n\geq 0}\Hr^0(X,\Hsc^n)\] induced by the homomorphisms \[\Hr^0(X,\Ibf^n)\xrightarrow{\pi^{0,n}}\Hr^0(X,\bar{\Ibf}^n)\cong\Hr^0(X,\Hsc^n)\] where the isomorphism is provided by the affirmation of the Milnor conjecture. Is the morphism $e^X$ an isomorphism? It is clearly injective by examination of the cohomology long exact sequence associated with the epimorphism $\Ibf^n\to\bar{\Ibf}^n$ of sheaves so the only question is whether $e^X$ is also surjective. In degree $\leq 1$, this is either easy (if $n=0$) or true by Lemma \ref{lem:Sujatha}. If $n\geq d$, then $e_n^X$ is also surjective. Indeed, the cokernel of $\pi^{0,n}$ is then a $2$-torsion subgroup of $\Hr^1(X,\Ibf^{n+1})$; this group injects into (and in fact is isomorphic to) $\Hr^1(X(\Rb),\Zb)$ via $\gamma_{n+1}^1$ by \cite[Corollary 8.11]{jacobsonRealCohomologyPowers2017} and is therefore torsion free. In particular, the morphism $e^X$ is an isomorphism for any smooth real surface $X$.

By a different argument \cite[Theorem 3.1]{monnierUnramifiedCohomologyQuadratic2000}, Monnier managed to prove that $e_n^X$ is surjective for every $n\geq d+1$, leaving only the surjectivity of $e_2$ open for surfaces in \cite{monnierUnramifiedCohomologyQuadratic2000}. Monnier gave many positive results on the surjectivity of $e_2^X$ in the rest of his article using \cite[Theorem 4.5]{monnierUnramifiedCohomologyQuadratic2000} according to which if $X$ is a surface such that $\Coker\gamma^0(X)$ is killed by $4$, then $e_2^X$ is surjective. We established $2^d\Coker\gamma^0(X)=0$ if $X$ has dimension $d$ in \cite[Proposition 4.5]{lerbetImageHigherSignature2026} so together with Monnier's results, this gives another proof of the surjectivity of $e_2^X$ for surfaces. However, this argument is substantially similar to the one of the previous paragraph as the proof of \cite[Proposition 4.5]{lerbetImageHigherSignature2026} relies on the injectivity of the map $\gamma_{d+1}^1:\Hr^1(X,\Ibf^{d+1})\to\Hr^1(X(\Rb),\Zb)$ (due to Jacobson). In any case, it should be noted that the answer to the above question for surfaces seems to require one to consider the whole cohomology theory $\Hr^*(\text{--},\Ibf^\star)$ and not only the groups in cohomological degree $0$. On the other hand, we do not know any smooth real threefold $X$ for which $e_3^X$ is not surjective or, equivalently (by Lemma \ref{lem:projection_dimension_filtration_isomorphism}), for which the operation $\Phi_{0,2}:\Hr^0(X,\bar{\Ibf}^2)\to\Hr^1(X,\bar{\Ibf}^3)$ is nontrivial.

We could also consider the \emph{twisted} version of this question, where the groups $\Hr^0(X,\Ibf^n)$ are twisted by a line bundle $\Leu$ on $X$, leading to a graded morphism $e_\Leu^X=(e_{\Leu,n}^X)_{n\geq 0}$. One can then ask whether $e_{\Leu}^X$ is surjective for any $\Leu$. This twisted question is, in contrast, not very interesting (or at least heavily depends on $\Leu$). Indeed, the morphism $e_{\Leu,0}^X$ is trivial if $\Leu$ is not a square (Lemma \ref{lem:baby_pardon}), the map $e_{\Leu,1}^X$ can fail to be surjective (Example \ref{exe:connecting_homomorphism_nontrivial}) and if $n\geq d$, then the homomorphism $e_{\Leu,n}^X$ is surjective if, and only if, the topological line bundle $L=\Leu(\Rb)$ is trivial: this follows from the fact that $\Hr^0(X(\Rb),\Zb(L))$ is the subgroup of $\Hr^0(X(\Rb),\Zb)$ generated by those connected components of $X(\Rb)$ on which $L$ is trivial.
\end{rema}

\section{Application: the shifted (Chow--)Witt groups of real anisotropic quadrics of dimension $\leq 3$}\label{section:quadrics}

Let $d\geq 0$. We let $Q_d$ be the quadric hypersurface of dimension $d$ in $\Pb_\Rb^{d+1}$ defined by the equation \[Q_d=\{x_0^2+\cdots+x_{d+1}^2=0\}\hookrightarrow\Pb_\Rb^{d+1}.\] Our aim in this paragraph is to compute the shifted and twisted Witt groups and the twisted Chow--Witt groups of the quadrics $Q_d$ for $d\leq 3$.

\begin{rema}\label{rema:topological_computation}
Xie's computations in \cite{xieWittGroupsSmooth2019} contain the above results in the untwisted case (see also \cite{xieHermitianKtheoryQuadric2026} in general). However, our methods are rather different and mostly topological (relying on calculations in étale cohomology and in fact in equivariant cohomology, using \cite{benoistSteenrodOperationsAlgebraic2025} and \cite{benoistWuRelationsReal2026}) so we feel that their exposition is still of interest.
\end{rema}

We begin with the following remark.

\begin{lem}\label{lem:vanishing_cohomology_function_field_anisotropic_quadric}
Let $d\geq 2$. Denote by $F_d$ the function field of $Q_d$. Then $\omega^d=0$ in $\Hr_\et^d(F_d,\Zb/2)$.
\end{lem}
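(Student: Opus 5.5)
We plan to deduce the vanishing from the affirmation of the Milnor conjecture together with the basic fact that isotropic Pfister forms are hyperbolic. Under the isomorphism $\Hr_\et^*(F)\cong\bar{\Ir}^*(F)$ of graded rings recalled in the preliminaries, the class $\omega=(-1)$ corresponds to $\llangle -1\rrangle=\langle 1,1\rangle$. Hence $\omega^d$ corresponds to the class modulo $\Ir^{d+1}(F_d)$ of the $d$-fold Pfister form
\[
\llangle -1,\ldots,-1\rrangle=\langle 1,1\rangle^{\otimes d}=2^d\langle 1\rangle .
\]
It therefore suffices to show that this form is hyperbolic over $F_d$, i.e.\ that it vanishes already in $\Wr(F_d)$.

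The first step is to observe that the form $(d+2)\langle 1\rangle=\langle 1,\ldots,1\rangle$ is isotropic over $F_d$. Indeed, the generic point of $Q_d$ gives a nontrivial zero of $x_0^2+\cdots+x_{d+1}^2$ with coordinates in $F_d$. Concretely, the ratios $x_i/x_0$ are elements of $F_d$, not all zero, whose squares, together with $1$, sum to $0$.

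The second step uses the hypothesis $d\geq 2$, which guarantees $d+2\leq 2^d$. Thus $(d+2)\langle 1\rangle$ is a subform of $2^d\langle 1\rangle$, and so the Pfister form $2^d\langle 1\rangle$ is isotropic over $F_d$.

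The third step is to invoke the classical fact that an isotropic Pfister form is hyperbolic, for instance from \cite{elmanAlgebraicGeometricTheory2008}. It follows that $2^d\langle 1\rangle=0$ in $\Wr(F_d)$, hence a fortiori its class in $\bar{\Ir}^d(F_d)$ vanishes. By the Milnor conjecture isomorphism, $\omega^d=0$ in $\Hr_\et^d(F_d)$.

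An equivalent formulation is that the level of $F_d$ is at most $d+1<2^d$; since the level of a field is a power of $2$, it is at most $2^{d-1}$, which again kills $2^d\langle 1\rangle$. There is no real obstacle here. The only point requiring care is the inequality $d+2\leq 2^d$, which is exactly where $d\geq 2$ enters; for $d=1$ the statement indeed fails, since $\omega\neq 0$ on the anisotropic conic.
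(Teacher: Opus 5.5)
Your proposal is correct and follows the paper's proof step for step. Both use the isotropy of $(d+2)\langle 1\rangle$ over $F_d$ and the inequality $d+2\leq 2^d$ to make the Pfister form $2^d\langle 1\rangle$ isotropic, hence hyperbolic, and then transfer the vanishing to $\omega^d$ via the Milnor conjecture isomorphism $\bar{\Ir}^d(F_d)\cong\Hr_\et^d(F_d)$.
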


\begin{proof}
By definition of $Q_d$, the form $(d+2)\langle 1\rangle$ over $F_d$ is isotropic. Since $d\geq 2$, one has $2^d\geq d+2$ so $\llangle -1\rrangle^{\otimes d}=2^d\langle 1\rangle$ is isotropic over $F_d$. Since this form is a Pfister form, it is hyperbolic by \cite[Corollary 6.3]{elmanAlgebraicGeometricTheory2008} hence $\llangle -1\rrangle^{\otimes d}=0$ in the Witt group $\Wr(F_d)$. Since $\llangle -1\rrangle^{\otimes d}$ lies in $\Ir^d(F_d)$ by definition, there is then an equality $\llangle -1\rrangle^{\otimes d}=0$ in $\bar{\Ir}^d(F_d)$. The Milnor conjecture yields an isomorphism $\bar{\Ir}^d(F_d)\cong\Hr_\et^d(F_d,\Zb/2)$ modulo which $\llangle -1\rrangle^{\otimes d}=\omega^{d}$. This completes the proof.
\end{proof}

\subsection{Witt groups of anisotropic quadrics}

\subsubsection{The case of dimension $0$}

We consider the quadric \[Q_0=\{x_0^2+x_1^2=0\}\subseteq\Pb_\Rb^1.\] We then have $Q_0=\Spec\Cb$. Thus there is only the trivial twist to consider; since $\Wr^p(F)=0$ for $p\neq 0$ mod $4$ for any field $F$ of characteristic not $2$, we then see that \[\Wr^0(Q_0)=\Wr(\Cb)=\Zb/2,\;\Wr^p(Q_0)=0\] if $p$ is not congruent to $0$ mod $4$. 

\subsubsection{The conic without real points}

We now consider the quadric $Q_1=\{x^2+y^2+z^2=0\}$ in $\Pb_\Rb^2$. Since $Q_1$ is proper and has empty real locus, one has $\Ch^1(Q_1)=\Zb/2$ by \cite[Theorem 1.3 (b)]{colliot-theleneZerocyclesCohomologyReal1996}; the nonzero element is in fact the mod $2$ Chern class of $\Osc_{Q_1}(1)$ (we write $\Osc(1)$ for convenience in this subsection). We recall that the Gersten--Witt spectral sequence (\ref{eq:GW}) yields an isomorphism $\Hr^p(Q_1,\Wbf(\text{--}))\cong\Wr^p(Q_1,\text{--})$, so that $\Wr^p(Q_1,\text{--})=0$ for $p\in\{2,3\}$ and it suffices to consider $\Hr^p(Q_1,\Wbf(\text{--}))$, which we do implicitly from now on.

\begin{theo}\label{theo:Witt_groups_anisotropic_conic}
One has the following description \[\Wr(Q_1)=\Zb/4\cdot\langle 1\rangle,\;\Wr^1(Q_1)=\Zb/2\]\[\Wr(Q_1,\Osc(1))=\Zb/2,\;\Wr^1(Q_1,\Osc(1))=0\] of the Witt groups of $Q_1$.
\end{theo}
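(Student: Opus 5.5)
We treat the four groups separately, using that $Q_1$ is a smooth projective geometrically connected conic of genus $g=0$ with $Q_1(\Rb)=\emptyset$. In particular $Q_1$ does not satisfy $(*)$, and $\Osc(1)$ is not a square, since its mod $2$ Chern class is the nonzero element of $\Ch^1(Q_1)=\Zb/2$.

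\emph{The shifted groups.} These come straight from Corollary \ref{cor:W1_curve}, in the case where $(*)$ fails. It gives $\Wr^1(Q_1)=\Zb/2$ because $\Osc$ is a square. It gives $\Wr^1(Q_1,\Osc(1))=0$ because $\Osc(1)$ is not a square, so $\Sq^2_{\Osc(1)}:\Ch^0\to\Ch^1$ sends $[Q_1]$ to $\bar{c}_1(\Osc(1))\neq 0$ and is therefore surjective.

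\emph{The twisted group $\Wr(Q_1,\Osc(1))$.} Theorem \ref{theo:twisted_W_0_curve}, in the non-$(*)$ case with $\Leu$ not a square, gives $(\Zb/2)^{g+1}=\Zb/2$.

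\emph{The untwisted group $\Wr(Q_1)$.} This is the only step that needs real work. Lemma \ref{lem:baby_pardon} and its proof give the exact sequence
\[0\to\Hr^0(Q_1,\Ibf)\to\Wr(Q_1)\to\Hr^0(Q_1,\bar{\Wbf})=\Zb/2\to 0,\]
and the last map is surjective since $e(\Osc)=0$. Corollary \ref{cor:computation_i_cohomology} gives $\Hr^0(Q_1,\Ibf)\simeq(\Zb/2)^{g+1}=\Zb/2$. So $\Wr(Q_1)$ has order $4$ and is generated by $\langle 1\rangle$ together with $\Hr^0(Q_1,\Ibf)$, and it remains to decide whether it is $\Zb/4$ or $(\Zb/2)^2$.

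To settle this I will show that $2\langle 1\rangle=\llangle -1\rrangle$ is nonzero in $\Wr(Q_1)$. Its image in $\Hr^0(Q_1,\bar{\Ibf})=\Hr_\et^1(Q_1)$ is $\omega$. By Remark \ref{rema:real_complex_H1}, $\omega$ generates the kernel of $\Hr_\et^1(Q_1)\to\Hr_\et^1(Q_{1,\Cb})$, which is nonzero. So $\omega\neq 0$, hence $2\langle 1\rangle\neq 0$, and $\langle 1\rangle$ has order $4$.

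Since the group has order $4$ and contains an element of order $4$, we get $\Wr(Q_1)=\Zb/4\cdot\langle 1\rangle$. As a consistency check, $4\langle 1\rangle=\llangle -1\rrangle^{\otimes 2}$ vanishes by the same Pfister argument as in Lemma \ref{lem:vanishing_cohomology_function_field_anisotropic_quadric}: $3\langle 1\rangle$ is isotropic over the function field, so $4\langle 1\rangle$ is too.

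The main obstacle is the verification that $\omega\neq 0$ in $\Hr_\et^1(Q_1)$. The real-complex exact sequence handles it cleanly, because $\Hr_\et^0(Q_1)\to\Hr_\et^0(Q_{1,\Cb})$ is surjective for geometrically connected $Q_1$, so the connecting map $\Zb/2\to\Hr_\et^1(Q_1)$ is injective.
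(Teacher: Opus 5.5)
Your proof is correct, and for two of the four groups it takes a different route from the paper's.

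For $\Wr(Q_1)$, the paper simply cites Monnier's Theorem 2.9. You instead derive it internally from three ingredients:
\begin{itemize}
\item the extension $0\to\Hr^0(Q_1,\Ibf)\to\Wr(Q_1)\to\Zb/2\to 0$ from Lemma \ref{lem:baby_pardon};
\item the count $\Hr^0(Q_1,\Ibf)\simeq\Zb/2$ from Corollary \ref{cor:computation_i_cohomology};
\item the nonvanishing of $\omega$ in $\Hr^0(Q_1,\bar{\Ibf})$, which rules out $(\Zb/2)^2$.
\end{itemize}
This is the same mechanism the paper later uses for $\Wr(Q_2)$ and $\Wr(Q_3)$, with the $Q_2$ case relying on the same observation that $\llangle -1\rrangle\neq 0$ because $\omega\neq 0$. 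Your version makes the conic case self-contained at the cost of a few extra lines.

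For the shifted groups, the paper works directly with Lemma \ref{lem:baby_pardon}. It quotients $\Hr^1(Q_1,\Ibf_\Leu)\cong\Ch^1(Q_1)$, an isomorphism cited from the author's earlier work, by the Euler class $e(\Leu)$. You quote Corollary \ref{cor:W1_curve} instead. Underneath, this is the same argument packaged as a cokernel of $\Sq^2_\Leu$ on $\Ch^0$, since $e(\Leu)$ reduces to $\bar{c}_1(\Leu)=\Sq^2_\Leu(1)$.

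Your treatment of $\Wr(Q_1,\Osc(1))$ coincides with the paper's, via Theorem \ref{theo:twisted_W_0_curve}. Note that the paper's proof there contains a typo asserting $(\Zb/2)^2$. Your value $\Zb/2$ is the one consistent with the statement and with Corollary \ref{cor:computation_i_cohomology} for $g=0$.
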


\begin{proof}
The description of $\Wr(Q_1)$ follows from \cite[Theorem 2.9]{monnierWittGroupTorsion2002}. By Lemma \ref{lem:baby_pardon}, one has $\Hr^1(Q_1,\Wbf)=\Hr^1(Q_1,\Ibf)$ and the reduction homomorphism $\Hr^1(Q_1,\Ibf)\to\Hr^1(Q_1,\bar{\Ibf})=\Ch^1(Q_1)$ is an isomorphism by \cite[Remark 3.7]{lerbetImageHigherSignature2026}. The equality $\Hr^1(Q_1,\Wbf)=\Zb/2$ then follows from \cite[Theorem 1.3 (b)]{colliot-theleneZerocyclesCohomologyReal1996} as previously observed.

We now consider the twisted situation. The map \[\Hr^0(Q_1,\Ibf_{\Osc(1)})\to\Hr^0(Q_1,\Wbf_{\Osc(1)})\] is an isomorphism by Lemma \ref{lem:baby_pardon} so to show that $\Wr(Q_1,\Osc(1))=(\Zb/2)^2$, it suffices to prove that $\Hr^0(Q_1,\Ibf_{\Osc(1)})=\Zb/2$. The curve $Q_1$ has genus $0$ as $Q_{1,\Cb}=Q_1\times_\Rb\Spec\Cb$ is isomorphic to $\mathbb{P}_\Cb^1$ so this conclusion follows from Corollary \ref{cor:computation_i_cohomology}. Since $\Hr^1(Q_1,\Ibf_{\Osc(1)})=\Hr^1(Q_1,\bar{\Ibf})=\Zb/2$ by \cite[Remark 3.7]{lerbetImageHigherSignature2026}, one then has $\Hr^1(Q_1,\Wbf_{\Osc(1)})=0$ by Lemma \ref{lem:baby_pardon}.
\end{proof}

\subsubsection{The anisotropic quadric surface}

We now consider the quadric surface $Q_2$. We begin with the determination of $\CH^1(Q_2)=\Pic(Q_2)$. To this end, note that the complexification $Q_{2,\Cb}$ of $Q_2$ is isomorphic to $\Pb_\Cb^1\times_\Cb\Pb_\Cb^1$ so $\CH^1(Q_{2,\Cb})=\Zb e_1\oplus\Zb e_2$ where $e_1$ and $e_2$ are lines contained in $Q_{2,\Cb}$. We let $\pi:Q_{2,\Cb}\to Q_2$ be the projection.

\begin{lem}\label{lem:picard_group_quadric_surface}
The group $\CH^1(Q_2)$ is freely generated by elements $f$ and $h$ such that $\pi^*f=2e_1$ and $\pi^*h=e_1+e_2$, where $\pi^*:\CH^1(Q_{2})\to\CH^1(Q_{2,\Cb})$ is the pullback homomorphism induced by $\pi$ on Chow groups.
\end{lem}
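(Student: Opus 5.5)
We want $\CH^1(Q_2)=\Pic(Q_2)$. The approach is Galois descent along $\pi$: we identify the image of the pullback $\pi^*$ with a subgroup of the Galois invariants of $\CH^1(Q_{2,\Cb})=\Zb e_1\oplus\Zb e_2$, and then check that $\pi^*$ is injective.

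\textbf{Injectivity of $\pi^*$.} The composite $\pi_*\pi^*$ is multiplication by $2$. Since $\Pic(Q_{2,\Cb})$ is torsion free, the kernel of $\pi^*$ is $2$-torsion. We then use the Hochschild--Serre exact sequence $0\to\Pic(Q_2)\to\Pic(Q_{2,\Cb})^G\to\Bra(\Rb)\to\Bra(Q_2)$, where $G=\mathrm{Gal}(\Cb/\Rb)$. It applies because $\Hr^1(G,\Cb^\times)=0$ and $\Cb[Q_{2,\Cb}]^\times=\Cb^\times$, since $Q_2$ is proper and geometrically integral. This shows directly that $\pi^*$ is injective with image inside $\Pic(Q_{2,\Cb})^G$.

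\textbf{Galois action and invariants.} Complex conjugation exchanges the two rulings of $Q_{2,\Cb}$. Indeed, if a line $\ell\subseteq Q_{2,\Cb}$ were $G$-stable in class, the ruling would descend to a conic-bundle structure $Q_2\to C$, and then the conjugate of $\ell$ would lie in the same ruling. The cleaner argument is the following. Suppose conjugation fixed $e_1$. Then the map $\Pic(Q_2)\to\Pic(Q_{2,\Cb})^G=\Zb e_1\oplus\Zb e_2$ has cokernel inside $\Bra(\Rb)=\Zb/2$, so $2e_1$ would come from a line bundle $\Leu$ with $\deg$ on $e_2$ equal to $0$. Restricting to a real point is impossible, so instead we use the discriminant: the anisotropic form of rank $4$ has discriminant $1$, and by the classical fact (e.g.\ Elman--Karpenko--Merkurjev) the rulings are defined over $\Rb(\sqrt{\mathrm{disc}})=\Rb$. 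So actually the action is trivial.

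\textbf{Image of $\pi^*$.} Hence $\Pic(Q_{2,\Cb})^G=\Zb e_1\oplus\Zb e_2$, and $\Pic(Q_2)$ has index at most $2$ in it, with cokernel inside $\Bra(\Rb)$. The hyperplane class gives $h$ with $\pi^*h=e_1+e_2$. The key step is to show that $e_1$ itself does not descend while $2e_1$ does.
\begin{itemize}
\item \emph{$2e_1$ descends:} the obstruction to descending a $G$-invariant class lies in $\Bra(\Rb)=\Zb/2$, so twice any invariant class descends. This gives $f$ with $\pi^*f=2e_1$.
\item \emph{$e_1$ does not descend:} if it did, its projection $Q_2\to\Pb^1$ over $\Rb$ would make $Q_2\simeq C\times C'$, a product of conics. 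By the classical fact that an anisotropic $2$-fold Pfister-neighbour quadric of discriminant $1$ corresponds to $C\times C$ with $C=Q_1$, this is consistent, so we argue directly instead. Then $\pi^*(f-2\cdot)$ would yield a class of bidegree $(1,0)$, whose self-intersection is $0$, while $\deg(e_1\cdot h)=1$ would produce a zero-cycle of odd degree on $Q_2$. This is impossible since $Q_2(\Rb)=\emptyset$ forces $\CH_0$ degrees to be even (Colliot-Thélène--Ischebeck, \cite[Theorem 1.3]{colliot-theleneZerocyclesCohomologyReal1996}).
\end{itemize}
Thus $\pi^*\Pic(Q_2)$ has index $2$ in $\Zb e_1\oplus\Zb e_2$ and contains $2e_1$ and $e_1+e_2$. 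These generate a subgroup of index $2$, so by injectivity of $\pi^*$ they give the free basis $f,h$.

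\textbf{Main obstacle.} The main obstacle is correctly pinning down the Galois action on the rulings. I will settle it using the discriminant of $4\langle1\rangle$, which is trivial, so the action is trivial. The odd-degree zero-cycle argument then excludes $e_1$ cleanly.
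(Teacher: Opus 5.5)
Your argument is correct in substance, but it takes a genuinely different route from the paper. The paper's proof consists of two citations. \cite[(2.3) Lemma]{karpenkoAlgebrogeometricInvariantsQuadratic1991} gives injectivity of $\pi^*$ on $\CH^1$, hence torsion-freeness. \cite[(2.7)]{karpenkoAlgebrogeometricInvariantsQuadratic1991} gives the description of $\CH^1(Q_2)$ modulo torsion. Karpenko's results hold for arbitrary anisotropic quadrics and in all codimensions, which is why the paper can reuse them for $Q_3$.

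You instead give a self-contained descent argument that works only in codimension one, where $\CH^1=\Pic$:
\begin{itemize}
\item the Hochschild--Serre sequence gives injectivity of $\pi^*$, with cokernel in $\Pic(Q_{2,\Cb})^G$ contained in $\Bra(\Rb)=\Zb/2$;
\item the trivial signed discriminant of $4\langle 1\rangle$ shows that $G$ fixes $e_1$ and $e_2$;
\item the $2$-torsion of $\Bra(\Rb)$ produces $f$;
\item if $\pi^*x=e_1$, then $x\cdot h$ would be a zero-cycle of degree $\deg(e_1\cdot(e_1+e_2))=1$. This is impossible, since every closed point of $Q_2$ has residue field $\Cb$. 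This step is elementary and needs no citation.
\end{itemize}
What your route buys is a transparent explanation of why the index is exactly $2$: the obstruction is the quaternion class in $\Bra(\Rb)$, that is, the absence of real points.

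Two small simplifications are available. You may take $f=\pi_*e_1$, since $\pi^*\pi_*=\Id+\sigma^*=2$ on $\CH^1(Q_{2,\Cb})$ once the action is trivial. Also, once $e_1\notin\Im\pi^*$, the fact that the image contains the index-$2$ subgroup generated by $2e_1$ and $e_1+e_2$ already forces equality.

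You should clean up the write-up, though.
\begin{itemize}
\item The first sentence of your Galois paragraph (``complex conjugation exchanges the two rulings'') is false; conjugation preserves each ruling, as your own discriminant argument shows.
\item The conic-bundle sentence, the remark about ``restricting to a real point'', the comment that $Q_2\simeq C\times C$ ``is consistent'', and the expression ``$\pi^*(f-2\cdot)$'' are each either wrong or irrelevant. Only the discriminant argument should remain, stated once.
\item The opening observation that $\pi_*\pi^*=2$ is never used and can be dropped.
\end{itemize}
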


\begin{proof}
By \cite[(2.3) Lemma]{karpenkoAlgebrogeometricInvariantsQuadratic1991}, the map $\pi^*$ is injective so $\CH^1(Q_2)$ is torsion free. On the other hand, the description of the lemma holds for $\CH^1(Q_2)$ modulo its torsion subgroup by \cite[(2.7)]{karpenkoAlgebrogeometricInvariantsQuadratic1991}, which completes the proof.
\end{proof}

Therefore $\Ch^1(Q_2)=\Zb/2\cdot f\oplus\Zb/2\cdot h$. In fact, the element $h$ of Lemma \ref{lem:picard_group_quadric_surface} is the class of a hyperplane section in $\CH^1(Q_2)$ by definition, namely the first Chern class of $\Osc_{Q_2}(1)$ (until the end of this subsection, we write $\Osc$ for $\Osc_{Q_2}$). We further denote by $\lambda\in\Hr_\et^2(Q_2)$ the cohomology class of a hyperplane section of $Q_2$, so that $\lambda=\gamma_\et^1(h)$.

\begin{lem}\label{lem:global_sections_torsion_quadric_surface}
One has $\Hr^0(Q_2,\Hsc^2)=0$ and $\Hr^1(Q_2,\bar{\Kbf}^2)=\Zb/2\cdot\omega h$ where $h\in\Hr^1(Q_2,\Hsc^1)=\Ch^1(Q_2)$.
\end{lem}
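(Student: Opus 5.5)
Since $Q_2(\Rb)=\emptyset$, we have $\bar{\Kbf}^2=\Hsc^2$: by Colliot-Thélène--Parimala the sheaf $\iota_*\Zb/2$ vanishes, so the bottom row of (\ref{eq:commutative_ladder_signature_variety}) gives $\bar{\Kbf}^2=\bar{\Ibf}^2\cong\Hsc^2$. Both statements are therefore about $\Hr^p(Q_2,\Hsc^2)$ for $p=0,1$. I would compute these with the Bloch--Ogus spectral sequence (\ref{eq:bloch_ogus}), taking the mod $2$ étale cohomology of $Q_2$ as input from the real-complex exact sequence (\ref{eq:real_complex_exact_sequence}). Since $Q_{2,\Cb}\cong\Pb^1_\Cb\times\Pb^1_\Cb$, we have $\Hr_\et^*(Q_{2,\Cb})=\Zb/2,0,(\Zb/2)^2,0,\Zb/2$.

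\textbf{Étale cohomology.} First compute $\Hr^n_\et(Q_2)$ for $n\leq 4$. By Remark \ref{rema:real_complex_H1} and $\Hr^1_\et(Q_{2,\Cb})=0$, we get $\Hr^1_\et(Q_2)=\Zb/2\cdot\omega$. Corollary \ref{cor:end_real_complex_proper_empty} (with $d=2$) shows that $\cup\omega:\Hr^3_\et(Q_2)\to\Hr^4_\et(Q_2)\cong\Zb/2$ is an isomorphism. To get $\Hr^2_\et(Q_2)$ and $\Hr^3_\et(Q_2)$, I would use the real-complex sequence together with Lemma \ref{lem:picard_group_quadric_surface}. That lemma gives $\Ch^1(Q_2)=\langle f,h\rangle\hookrightarrow\Hr^2_\et(Q_2)$, with $\pi^*f=0$ and $\pi^*h=e_1+e_2\neq 0$ mod $2$. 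The expected answers are $\Hr^2_\et(Q_2)=\langle\gamma_\et^1(f),\lambda\rangle$ (with $\omega^2=0$, which I expect can be identified with one of these classes, probably $\gamma^1_\et(f)$) and $\Hr^3_\et(Q_2)=\Zb/2\cdot\omega\lambda$. These dimensions should be confirmed by the alternating-sum count in the real-complex sequence, using $\Hr^n_\et(Q_2)$ for large $n$ (which vanishes since $Q_2(\Rb)=\emptyset$).

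\textbf{Vanishing of $\Hr^0(Q_2,\Hsc^2)$.} Since $F_2$ has cohomological dimension $2$, the group $\Hr^0(Q_2,\Hsc^2)$ is the unramified $\Hr^2$ of the function field, i.e.\ $\Bra(Q_2)[2]$. From Bloch--Ogus, $\Hr^2_\et(Q_2)$ has a filtration with graded pieces $\Ch^1(Q_2)$ (the coniveau $1$ part $\Er_\infty^{1,1}$), the piece $\Er^{0,2}_\infty$, a subgroup of $\Hr^0(Q_2,\Hsc^2)$, and $\Hr^2(Q_2,\Hsc^0)=0$. The dimension count, $\dim\Hr^2_\et(Q_2)=2=\dim\Ch^1(Q_2)$, forces $\Er^{0,2}_\infty=0$. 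It then remains to rule out a nonzero $d_2:\Hr^0(Q_2,\Hsc^2)\to\Hr^2(Q_2,\Hsc^1)=0$; the target vanishes since $p>q$. So $\Hr^0(Q_2,\Hsc^2)=\Er^{0,2}_\infty=0$. Alternatively, I would quote the classical fact $\Bra(Q_2)=0$ for the anisotropic quadric surface via its Severi--Brauer structure, since $Q_2$ is a Weil restriction of a conic. Lemma \ref{lem:vanishing_cohomology_function_field_anisotropic_quadric} is consistent with this, giving $\omega^2=0$ already at the generic point.

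\textbf{Computing $\Hr^1(Q_2,\Hsc^2)$.} For $\Hr^3_\et(Q_2)$ the coniveau pieces are $\Er^{0,3}$ (zero, since $\Hr^0(Q_2,\Hsc^3)$ vanishes by cohomological dimension), $\Er^{1,2}=\Hr^1(Q_2,\Hsc^2)$ (no differentials, since $p\geq q-1$), and nothing else. Hence $\Hr^1(Q_2,\Hsc^2)\cong\Hr^3_\et(Q_2)=\Zb/2$. The edge map is compatible with the cup product with $\omega$ on $\Ch^1=\Hr^1(Q_2,\Hsc^1)$. So the generator is $\omega h$, corresponding to $\omega\lambda$, and it is nonzero because $\omega\cdot\omega\lambda$ generates $\Hr^4_\et(Q_2)$ by Corollary \ref{cor:end_real_complex_proper_empty}. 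An alternative route to the dimension is Lemma \ref{lem:torsion_H1_I2} with $b_*=0$, which gives $\dim\Hr^1=\dim\Hr^3_\et(Q_2)$.

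\textbf{Main obstacle.} The hardest step is pinning down $\Hr^3_\et(Q_2)$ precisely, namely showing it is one-dimensional and spanned by $\omega\lambda$ rather than by $\omega\gamma^1_\et(f)$. I would settle this by checking $\pi^*$, $\pi_*$ in degrees $2,3$ of the real-complex sequence. The key inputs are $\pi_*(e_1)=f$ and $\pi_*(e_1+e_2)=2h=0$, which follow from the projection formula $\pi_*\pi^*=2$ together with $\pi^*f=2e_1$, $\pi^*h=e_1+e_2$. From these, one reads off that $\omega\gamma_\et^1(f)=0$ while $\omega\lambda\neq 0$.
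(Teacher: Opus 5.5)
Your proof is correct, and for the first claim it takes a genuinely different route from the paper.

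\textbf{The paper's route.} The paper gets $\Hr^0(Q_2,\Hsc^2)=0$ by quoting Kahn's theorem on unramified cohomology of quadrics. Like you, it then identifies $\Hr^1(Q_2,\bar{\Kbf}^2)=\Hr^1(Q_2,\Hsc^2)$ with $\Hr^3_\et(Q_2)$ through Bloch--Ogus. It takes the generator $\omega\lambda$ from Proposition~\ref{prop:nontrivial_class_h3_anisotropic_quadric_surface}, which is a forward reference. That proposition's proof in turn uses the bijectivity of $\gamma^1_\et\colon\Ch^1(Q_2)\to\Hr^2_\et(Q_2)$, which the paper deduces from the first half of this lemma.

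\textbf{Your route.} You reverse the order. The real-complex sequence is finite because $\Hr^p_\et(Q_2)=0$ for $p\geq 5$. Its Euler characteristic gives $a_0-a_1+a_2-a_3+a_4=\tfrac12\chi(Q_{2,\Cb})=2$, where $a_p=\dim\Hr^p_\et(Q_2)$. With $a_0=a_1=a_4=1$ and $a_3=a_4$, this forces $a_2=2=\dim\Ch^1(Q_2)$, so your hedged ``should be confirmed'' does go through. Bloch--Ogus gives $0\to\Ch^1(Q_2)\to\Hr^2_\et(Q_2)\to\Hr^0(Q_2,\Hsc^2)\to 0$, with no differentials involved. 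This yields $\Hr^0(Q_2,\Hsc^2)=0$ and the bijectivity of $\gamma^1_\et$ at the same time.

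\textbf{What each buys.} Your version removes the dependence on Kahn and makes the chain of lemmas linear. It costs only Karpenko's computation $\CH^1(Q_2)\cong\Zb^2$, which the paper uses anyway, and it fits the topological method of Remark~\ref{rema:topological_computation}. Your deduction of $\pi_*(e_1)=f$ from $\pi_*\pi^*=2$ and torsion-freeness (so that $\pi_*e_2=2h-f$) is also more economical than the paper's use of $\pi^*\pi_*=1+\sigma^*$.

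Three points to tidy.
\begin{itemize}
\item Your sentence asserting $\omega\lambda\neq0$ ``because $\omega\cdot\omega\lambda$ generates $\Hr^4_\et(Q_2)$'' is circular. Corollary~\ref{cor:end_real_complex_proper_empty} only says that $\cup\omega\colon\Hr^3_\et(Q_2)\to\Hr^4_\et(Q_2)$ is bijective, so it cannot decide whether $\omega\lambda$ vanishes. The real justification is in your last paragraph. There, $\Ker(\cup\omega\colon\Hr^2_\et(Q_2)\to\Hr^3_\et(Q_2))=\Im\pi_*=\Zb/2\cdot\gamma^1_\et(f)$, because $\Hr^2_\et(Q_{2,\Cb})$ is spanned by the cycle classes of $e_1,e_2$ and cycle classes commute with $\pi_*$. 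Moreover $\lambda=\gamma^1_\et(h)\notin\{0,\gamma^1_\et(f)\}$ by injectivity of $\gamma^1_\et$.
\item $\omega^2$ is not zero in $\Hr^2_\et(Q_2)$: it equals $\gamma^1_\et(f)$ and only vanishes at the generic point.
\item $Q_2$ is not a Weil restriction of a conic. It has trivial discriminant and is isomorphic to $Q_1\times Q_1$.
\end{itemize}
None of these slips affects your main argument.
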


\begin{proof}
One has $\Hr^0(Q_2,\bar{\Kbf}^2)=0$ by \cite[Theorem 4.5]{kahnMotivicCohomologyUnramified2000}. Now inspection of (\ref{eq:bloch_ogus}) yields an exact sequence \[0\to\Hr^1(Q_2,\Hsc^2)\to\Hr_\et^3(Q_2)\to\Hr^0(Q_2,\Hsc^3).\] Since $Q_2$ is a surface, one has $\Hr^0(Q_2,\Hsc^3)\cong\Hr^0(Q_2(\Rb),\Zb/2)=0$ by the main result of \cite{colliot-theleneRealComponentsAlgebraic1990}. Thus $\Hr^1(Q_2,\Hsc^2)\cong\Hr_\et^3(Q_2)=\Zb/2\cdot\omega\lambda$. The proof of Proposition \ref{prop:exact_sequence_twisted_I} shows that $\Hr^1(Q_2,\Hsc^2)$ is the direct sum of $\Hr^1(Q_2(\Rb),\Zb/2)$, which vanishes because $Q_2$ has no real points, and $\Hr^1(Q_2,\bar{\Kbf}^2)$. Thus $\Hr^1(Q_2,\bar{\Kbf}^2)=\Hr^1(Q_2,\Hsc^2)=\Zb/2\cdot\omega h$, as claimed.
\end{proof}

\begin{lem}\label{lem:first_étale_cycle_class_map_iso_quadric_surface}
The étale cycle class map $\gamma_\et^1:\Ch^1(Q_2)\to\Hr_\et^2(Q_2)$ is an isomorphism.
\end{lem}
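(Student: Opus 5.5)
Injectivity comes for free. The Bloch--Ogus discussion in the preliminaries shows that $\gamma_\et^1:\Ch^1(X)\to\Hr_\et^2(X)$ is injective for every smooth $X$, since $\Ch^1(X)=\Hr^1(X,\Hsc^1)$ and this group equals $\Er_\infty^{1,1}$, a subgroup of $\Hr_\et^2(X)$. By Lemma \ref{lem:picard_group_quadric_surface}, $\Ch^1(Q_2)\cong(\Zb/2)^2$. So the lemma reduces to showing $\dim\Hr_\et^2(Q_2)\leq 2$, or equivalently that the coniveau filtration on $\Hr_\et^2(Q_2)$ has no other graded pieces.

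The plan is to run through the Bloch--Ogus spectral sequence (\ref{eq:bloch_ogus}) in total degree $2$. The graded pieces are $\Er_\infty^{0,2}$, $\Er_\infty^{1,1}$ and $\Er_\infty^{2,0}$; the last vanishes because $\Er_1^{p,q}=0$ for $p>q$.

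The remaining piece $\Er_\infty^{0,2}$ is a subgroup of $\Hr^0(Q_2,\Hsc^2)$, and Lemma \ref{lem:global_sections_torsion_quadric_surface} states that this group is $0$. Note that lemma only quotes Kahn's result for $\Hr^0(Q_2,\bar{\Kbf}^2)=0$. To upgrade this to $\Hr^0(Q_2,\Hsc^2)=0$, I use the exact sequence $0\to\bar{\Kbf}^2\to\Hsc^2\to\iota_*\Zb/2$ together with $Q_2(\Rb)=\emptyset$, which gives $\Hr^0(Q_2,\bar{\Kbf}^2)=\Hr^0(Q_2,\Hsc^2)$. Alternatively, Lemma \ref{lem:global_section_top_torsion_surface} with $s=0$ identifies this group with $\Bra(Q_2)[2]$.

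It follows that $\Hr_\et^2(Q_2)=\Er_\infty^{1,1}$. The differentials into $\Er^{1,1}$ come only from positions with $p<0$, which vanish, and by the paper's general remark the edge map $\Ch^1\twoheadrightarrow\Er_\infty^{1,1}\hookrightarrow\Hr_\et^2$ is precisely $\gamma_\et^1$. Hence $\gamma_\et^1$ is surjective as well as injective.

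The only delicate point is this vanishing of $\Hr^0(Q_2,\Hsc^2)$, which is the unramified cohomology $\Hr^2_{nr}$ of the anisotropic quadric. That is exactly Kahn's theorem, already cited, so I expect no real obstacle.

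As a sanity check, I would run the real--complex sequence (\ref{eq:real_complex_exact_sequence}). It gives $\Hr_\et^1(Q_2)=\Zb/2\cdot\omega$, because $\Hr_\et^1(Q_{2,\Cb})=0$. The map $\pi^*:\Hr^2_\et(Q_2)\to\Hr_\et^2(Q_{2,\Cb})$ has image containing $\pi^*\lambda=e_1+e_2$. The kernel of $\pi^*$ is $\omega\Hr_\et^1=\Zb/2\cdot\omega^2$, and $\omega^2$ should correspond to $f+h$ modulo the identifications. This confirms dimension $2$.
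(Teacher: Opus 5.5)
Your argument is correct and is essentially the paper's proof: the Bloch--Ogus spectral sequence in total degree $2$ gives $0\to\Ch^1(Q_2)\xrightarrow{\gamma_\et^1}\Hr_\et^2(Q_2)\to\Hr^0(Q_2,\Hsc^2)$, and the last term vanishes by Lemma~\ref{lem:global_sections_torsion_quadric_surface}. The detour through $\dim\Hr_\et^2(Q_2)\leq 2$ is unnecessary, since this sequence gives the isomorphism directly.

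There is one slip in your sanity check. The class $\omega^2$ is $\gamma_\et^1(f)$, not $\gamma_\et^1(f+h)$. Indeed, $\pi^*f=2e_1$ vanishes modulo $2$, whereas $\pi^*(f+h)\equiv e_1+e_2\neq 0$, so only $\gamma_\et^1(f)$ lies in $\Ker\pi^*=\Zb/2\cdot\omega^2$.
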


\begin{proof}
Inspection of (\ref{eq:bloch_ogus}) yields an exact sequence \[0\to\Ch^1(Q_2)\xrightarrow{\gamma_\et^1}\Hr_\et^2(Q_2)\to\Hr^0(Q_2,\Hsc^2)\] whose last term vanishes by Lemma \ref{lem:global_sections_torsion_quadric_surface}. Thus $\gamma_\et^1$ is an isomorphism.
\end{proof}

\begin{lem}\label{lem:computation_first_étale_cycle_class_quadric_surface}
One has $\gamma_\et^1(f)=\omega^{2}$ in $\Hr_\et^2(Q_2)$.
\end{lem}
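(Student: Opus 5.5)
The strategy is to pin down $\omega^2$ inside the group $\Hr_\et^2(Q_2)$, which we already understand completely. By Lemma \ref{lem:first_étale_cycle_class_map_iso_quadric_surface} and Lemma \ref{lem:picard_group_quadric_surface}, $\Hr_\et^2(Q_2)\cong\Ch^1(Q_2)$ is a $\Zb/2$-vector space with basis $\gamma_\et^1(f),\lambda=\gamma_\et^1(h)$. So it suffices to show two things: $\omega^2$ lies in the kernel of $\pi^*:\Hr_\et^2(Q_2)\to\Hr_\et^2(Q_{2,\Cb})$, and this kernel is $\Zb/2\cdot\gamma_\et^1(f)$; and $\omega^2\neq 0$.

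First step: compute $\pi^*$ on the basis. The étale cycle class maps are compatible with pullback, so $\pi^*\gamma_\et^1(f)=\gamma_\et^1(\pi^*f)=\gamma_\et^1(2e_1)=0$. Similarly $\pi^*\lambda=\gamma_\et^1(e_1+e_2)$. Over $\Cb$, $Q_{2,\Cb}\simeq\Pb_\Cb^1\times\Pb_\Cb^1$, and $\Ch^1(Q_{2,\Cb})\to\Hr_\et^2(Q_{2,\Cb})\cong\Hr^2(\Pb^1(\Cb)\times\Pb^1(\Cb),\Zb/2)$ is an isomorphism. Hence $e_1+e_2$ maps to a nonzero class, and $\Ker\pi^*=\Zb/2\cdot\gamma_\et^1(f)$. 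On the other hand, $\pi^*\omega=0$ because $-1$ is a square in $\Cb$, so $\pi^*(\omega^2)=(\pi^*\omega)^2=0$. Therefore $\omega^2\in\{0,\gamma_\et^1(f)\}$.

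Second step: show $\omega^2\neq0$. We use the real-complex exact sequence (\ref{eq:real_complex_exact_sequence}) in degree $1$:
\[\Hr_\et^1(Q_{2,\Cb})\xrightarrow{\pi_*}\Hr_\et^1(Q_2)\xrightarrow{\cup\omega}\Hr_\et^2(Q_2).\]
Here $\Hr_\et^1(Q_{2,\Cb})\cong\Hr^1(\Pb^1(\Cb)\times\Pb^1(\Cb),\Zb/2)=0$ by Artin's comparison theorem, since $\Sr^2\times\Sr^2$ is simply connected. Hence $\cup\omega$ is injective on $\Hr_\et^1(Q_2)$. Moreover $\omega\neq0$ in $\Hr_\et^1(Q_2)$ by Remark \ref{rema:real_complex_H1}, because $Q_2$ is geometrically connected. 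Thus $\omega^2=\omega\cup\omega\neq0$, and combining with the first step gives $\gamma_\et^1(f)=\omega^2$.

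The only delicate point is the bookkeeping in the first step, namely that the two basis vectors are separated by $\pi^*$. This reduces to the complex computation that $e_1+e_2$ is nonzero mod $2$ in $\Hr^2(\Pb^1(\Cb)\times\Pb^1(\Cb),\Zb/2)$, which is standard via the Künneth formula. As a consistency check, Lemma \ref{lem:vanishing_cohomology_function_field_anisotropic_quadric} says $\omega^2$ vanishes at the generic point, so it must have coniveau $\geq1$; this agrees with it being an algebraic cycle class.
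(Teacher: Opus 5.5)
Your proof is correct. It is essentially the paper's argument with the roles of $\gamma_\et^1(f)$ and $\omega^2$ exchanged.

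The paper uses exactness of the real-complex sequence at $\Hr_\et^2(Q_2)$. Since $\pi^*\gamma_\et^1(f)=0$, it gets $\gamma_\et^1(f)\in\omega\cdot\Hr_\et^1(Q_2)=\{0,\omega^2\}$, using $\Hr_\et^1(Q_2)=\Zb/2\cdot\omega$ (which holds because $\Hr_\et^1(Q_{2,\Cb})=0$). It then concludes from $\gamma_\et^1(f)\neq 0$, which follows from the injectivity of $\gamma_\et^1:\Ch^1\to\Hr_\et^2$. That injectivity holds for any smooth variety by inspection of the Bloch--Ogus spectral sequence.

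You instead compute $\Ker\pi^*=\Zb/2\cdot\gamma_\et^1(f)$ from an explicit basis of $\Hr_\et^2(Q_2)$. You then use exactness one step earlier, at $\Hr_\et^1(Q_2)$, to show $\omega^2\neq 0$.

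The paper's route is slightly more economical. It needs neither the surjectivity of $\gamma_\et^1:\Ch^1(Q_2)\to\Hr_\et^2(Q_2)$ (the preceding lemma, which rests on Kahn's vanishing $\Hr^0(Q_2,\Hsc^2)=0$) nor the check that $\pi^*\lambda\neq 0$ over $\Cb$. Yours, in exchange, proves $\omega^2\neq 0$ without using cycle classes at all, and it makes the kernel of $\pi^*$ in degree $2$ explicit.
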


\begin{proof}
Since $\pi^*f=2e_1$ is a multiple of $2$, one has $\pi^*f=0$ in $\Ch^1(Q_{2,\Cb})$ hence $\pi^*\gamma_\et^1(f)=0$. In view of the real-complex exact sequence, this means that $\gamma_\et^1(f)=\omega\alpha$ for some $\alpha\in\Hr_\et^1(Q_2)$. The group $\Hr_\et^1(Q_{2,\Cb})$ is trivial because $Q_{2,\Cb}$ is a hypersurface so $\Hr_\et^1(Q_2)=\Zb/2\cdot\omega$ by Remark \ref{rema:real_complex_H1}. Since $\gamma_\et^1(f)\neq 0$ as $\gamma_\et^1$ is injective, we must have $\gamma_\et^1(f)=\omega\cdot\omega=\omega^{2}$ as claimed.
\end{proof}

\begin{lem}\label{lem:image_pushforward_pic_quadric_surface}
The subgroup $\Im(\pi_*:\CH^1(Q_{2,\Cb})\to\CH^1(Q_2))$ is generated by $f$ and $2h$. In particular, the group $\Coker(\pi_*:\Ch^1(Q_{2,\Cb})\to\Ch^1(Q_2))$ is equal to $\Zb/{2}\cdot h$.
\end{lem}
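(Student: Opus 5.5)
The statement concerns the image of $\pi_*:\CH^1(Q_{2,\Cb})\to\CH^1(Q_2)$. The plan is to compute $\pi_*$ on the basis $e_1,e_2$ of $\CH^1(Q_{2,\Cb})$ by pulling back along $\pi$ and using the injectivity of $\pi^*$ from Lemma \ref{lem:picard_group_quadric_surface} (via Karpenko).

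\textbf{Step 1: reduce to computing $\pi^*\pi_*$.} Write $\sigma$ for complex conjugation acting on $\CH^1(Q_{2,\Cb})$. Since $\pi$ is finite étale of degree $2$ with Galois group generated by $\sigma$, the standard formula gives $\pi^*\pi_*(x)=x+\sigma(x)$.

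\textbf{Step 2: determine $\sigma$ on $e_1,e_2$.} Galois acts on the two rulings of $Q_{2,\Cb}\cong\Pb^1_\Cb\times\Pb^1_\Cb$, and the only possibilities are $\sigma(e_1)=e_1$ or $\sigma(e_1)=e_2$.

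Suppose both rulings were fixed. The line classes would then be Galois-invariant. Since $\Pic$ of a proper variety over $\Rb$ can differ from the Galois-invariant part $\Pic(Q_{2,\Cb})^{\sigma}$ only by an obstruction in $\Bra(\Rb)$, the class $e_1$ or $2e_1$ would descend. This is consistent with $\pi^*f=2e_1$. However, $h$ is Galois-invariant and satisfies $\pi^*h=e_1+e_2$, and $f,h$ generate $\CH^1(Q_2)$ freely. If $\sigma$ fixed $e_1$, then $2e_2=2(e_1+e_2)-2e_1=\pi^*(2h-f)$ would lie in the image of $\pi^*$, and nothing would distinguish the two rulings.

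The cleaner way to fix the action is to use that $\sigma$ preserves intersection numbers and that the conjugate of a ruling line is a line of a ruling. In either case the lemma's data can be checked directly:
\begin{itemize}
\item if $\sigma$ fixes both rulings, then $\pi^*\pi_*e_1=2e_1=\pi^*f$ and $\pi^*\pi_*e_2=2e_2=\pi^*(2h-f)$;
\item if $\sigma$ swaps them, then $\pi^*\pi_*e_i=e_1+e_2=\pi^*h$, which would force $\pi_*e_i=h$.
\end{itemize}
The second case would put $h$ in the image, and the claimed cokernel is $\Zb/2\cdot h$. So the intended reading is that $\sigma$ fixes each ruling. This is forced by the existence of $f$ with $\pi^*f=2e_1$: if $\sigma$ swapped the rulings, every invariant class would be a multiple of $e_1+e_2$, but $\pi^*f=2e_1$ is invariant. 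This gives $\sigma(e_i)=e_i$.

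\textbf{Step 3: conclude.} By injectivity of $\pi^*$, we get $\pi_*e_1=f$ and $\pi_*e_2=2h-f$. Hence the image of $\pi_*$ is generated by $f$ and $2h$. Reducing mod $2$, the image of $\pi_*:\Ch^1(Q_{2,\Cb})\to\Ch^1(Q_2)=\Zb/2\cdot f\oplus\Zb/2\cdot h$ is $\Zb/2\cdot f$. The cokernel is therefore $\Zb/2\cdot h$. This uses that $\Ch^1(Q_{2,\Cb})\to\Ch^1(Q_2)$ is induced by $\pi_*$ on $\CH^1$ and that reduction mod $2$ is surjective.

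The only delicate point is Step 2. The invariance argument from $\pi^*f=2e_1$ settles it cleanly, so I expect no real obstacle there.
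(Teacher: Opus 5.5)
Your proof is correct and is essentially the paper's: both rest on $\pi^*\pi_*=\Id+\sigma^*$, the triviality of $\sigma^*$ on $\CH^1(Q_{2,\Cb})$, and the injectivity of $\pi^*$.

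The differences are minor. The paper cites Colliot-Thélène for $\sigma^*=\Id$. You instead deduce it from $\sigma^*\pi^*=\pi^*$ applied to $\pi^*f=2e_1$ and $\pi^*h=e_1+e_2$, using that $\CH^1(Q_{2,\Cb})$ is torsion-free. This is self-contained and makes the ruling dichotomy unnecessary. You also compute $\pi_*e_2=2h-f$ directly, where the paper uses an index-two argument.

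Do discard the sentence that infers the action of $\sigma$ from the claimed cokernel ("the intended reading"). It argues from the conclusion, and the invariance argument alone suffices.
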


\begin{proof}
Note that since the finite map $\pi$ has degree $2$, one has $\pi_*\pi^*=2\Id$ so $2h$ lies in $\Im\pi_*$. Conversely, let $\sigma$ denote the automorphism of $\Rb$-varieties of $Q_{2,\Cb}$ induced by complex conjugation. It induces an automorphism $\sigma^*$ of $\CH^1(Q_{2,\Cb})$ and $\pi^*\pi_*=\Id+\sigma^*$ \cite[(59.1)]{elmanAlgebraicGeometricTheory2008}. The automorphism $\sigma^*$ is the identity of $\CH^1(Q_{2,\Cb})$ by \cite[Remark 1]{colliot-thlneRealRationalSurfaces1992} so $\pi^*\pi_*$ is the multiplication by $2$ endomorphism of $\CH^1(Q_{2,\Cb})$. Since $\pi^*f=2e_1=\pi^*\pi_*e_1$ and as $\pi^*$ is injective, this yields $\pi_*e_1=f$. Moreover, since $\pi^*h=e_1+e_2$ is not a multiple of $2$, the element $h$ of $\CH^1(Q_{2})$ does not lie in the image of $\pi_*$. We conclude that $\Im\pi_*$ is not equal to $\CH^1(Q_2)$, but contains the subgroup of $\CH^1(Q_2)$ generated by $(f,2h)$ which has index $2$. Thus $\Im\pi_*$ is generated by $(f,2h)$ as required.
\end{proof}

\begin{prop}\label{prop:nontrivial_class_h3_anisotropic_quadric_surface}
One has $\Hr_\et^3(Q_2)=\Zb/2$, generated by $\omega\lambda$ where $\lambda\in\Hr_\et^2(Q_2)$ is the cohomology class of a hyperplane section. In particular, one has $\omega\lambda\neq 0$.
\end{prop}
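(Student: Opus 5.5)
We prove that $\Hr_\et^3(Q_2)\cong\Zb/2$ and that $\omega\lambda\neq 0$; together these give the proposition. The tool is the real-complex exact sequence (\ref{eq:real_complex_exact_sequence}) for $\pi:Q_{2,\Cb}\to Q_2$.

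Since $Q_{2,\Cb}\cong\Pb_\Cb^1\times\Pb_\Cb^1$, Artin's comparison theorem gives $\Hr_\et^3(Q_{2,\Cb})=0$, $\Hr_\et^1(Q_{2,\Cb})=0$, $\Hr_\et^2(Q_{2,\Cb})=(\Zb/2)^2$ (spanned by $e_1,e_2$) and $\Hr_\et^4(Q_{2,\Cb})=\Zb/2$. Corollary \ref{cor:end_real_complex_proper_empty} applies with $d=2$, since $Q_2$ is proper, geometrically connected and has no real points. It shows that $\cup\omega:\Hr_\et^3(Q_2)\to\Hr_\et^4(Q_2)$ is an isomorphism, and Lemma \ref{lem:top_étale_cohomology_proper_empty} identifies $\Hr_\et^4(Q_2)\cong\Hr_\et^4(Q_{2,\Cb})=\Zb/2$. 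Hence $\Hr_\et^3(Q_2)\cong\Zb/2$.

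To identify the generator, consider the segment
\[\Hr_\et^2(Q_{2,\Cb})\xrightarrow{\pi_*}\Hr_\et^2(Q_2)\xrightarrow{\cup\omega}\Hr_\et^3(Q_2)\xrightarrow{\pi^*}\Hr_\et^3(Q_{2,\Cb})=0.\]
It shows that $\cup\omega:\Hr_\et^2(Q_2)\to\Hr_\et^3(Q_2)$ is surjective with kernel $\pi_*\Hr_\et^2(Q_{2,\Cb})$. By Lemma \ref{lem:first_étale_cycle_class_map_iso_quadric_surface}, $\Hr_\et^2(Q_2)=\gamma_\et^1(\Ch^1(Q_2))$, which has basis $\gamma_\et^1(f)$ and $\lambda=\gamma_\et^1(h)$. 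On the complex side, $\Hr_\et^2(Q_{2,\Cb})$ is the image of $\Ch^1(Q_{2,\Cb})$. Because the cycle class maps commute with pushforward, $\pi_*\Hr_\et^2(Q_{2,\Cb})=\gamma_\et^1(\pi_*\Ch^1(Q_{2,\Cb}))$. Lemma \ref{lem:image_pushforward_pic_quadric_surface} says this image is spanned by $\gamma_\et^1(f)$, since $2h\equiv 0$ modulo $2$.

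Therefore the kernel of $\cup\omega$ is $\Zb/2\cdot\gamma_\et^1(f)$. As $\lambda$ does not lie in this kernel, $\omega\lambda\neq 0$ and $\omega\lambda$ generates $\Hr_\et^3(Q_2)$. As a consistency check, Lemma \ref{lem:computation_first_étale_cycle_class_quadric_surface} gives $\gamma_\et^1(f)=\omega^2$, so $\omega\cdot\gamma_\et^1(f)=\omega^3$. This is compatible with Lemma \ref{lem:vanishing_cohomology_function_field_anisotropic_quadric} for $d=2$, since $\omega^2=\gamma_\et^1(f)$ vanishes generically.

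The step needing most care is the compatibility of the étale cycle class map with $\pi_*$, which the preliminaries state. Equally important is that $\gamma_\et^1$ is surjective for $Q_{2,\Cb}$, so that $\pi_*$ on étale $\Hr^2$ really is computed by Lemma \ref{lem:image_pushforward_pic_quadric_surface}. This surjectivity holds because $\Pb^1\times\Pb^1$ has all of its $\Hr^2$ algebraic.
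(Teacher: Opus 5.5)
Your proof is correct and follows essentially the same route as the paper. The paper also gets $\Hr_\et^3(Q_2)\cong\Zb/2$ from Corollary \ref{cor:end_real_complex_proper_empty}. It then uses the compatibility of $\gamma_\et^1$ with $\pi_*$, the algebraicity of $\Hr_\et^2(Q_{2,\Cb})$ and Lemma \ref{lem:image_pushforward_pic_quadric_surface} to show that $\pi_*\Hr_\et^2(Q_{2,\Cb})=\Zb/2\cdot\gamma_\et^1(f)$ does not contain $\lambda$, so $\omega\lambda\neq 0$ by the real-complex exact sequence.

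Your own dimension count already gives $\Hr_\et^3(Q_2)\cong\Zb/2$: $\cup\omega$ is a surjection from the two-dimensional $\Hr_\et^2(Q_2)$ with one-dimensional kernel. So the appeal to the corollary is redundant.
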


\begin{proof}
By Corollary \ref{cor:end_real_complex_proper_empty}, one has $\Hr_\et^3(Q_2)=\Zb/2$. Thus it suffices to show that $\omega\lambda$ to conclude. There is a commutative square
\[\begin{tikzcd}
	{\Ch^1(Q_{2,\Cb})} & {\Ch^1(Q_2)} \\
	{\Hr_\et^2(Q_{2,\Cb})} & {\Hr_\et^2(Q_2)}
	\arrow["{\pi_*}", from=1-1, to=1-2]
	\arrow["{\gamma_\et^1(Q_{2,\Cb})}"', from=1-1, to=2-1]
	\arrow["{\gamma_\et^1(Q_2)}", from=1-2, to=2-2]
	\arrow["{\pi_*}"', from=2-1, to=2-2]
\end{tikzcd}\]
whose right vertical map is an isomorphism by Lemma \ref{lem:first_étale_cycle_class_map_iso_quadric_surface}. Since $Q_{2,\Cb}$ has a cellular decomposition in the sense of \cite[Example 1.9.1]{fultonIntersectionTheory1998}, the map $\gamma_\et^2(Q_{2,\Cb})$ is an isomorphism \cite[Example 19.1.11]{fultonIntersectionTheory1998}. By Lemma \ref{lem:image_pushforward_pic_quadric_surface}, the image of $\pi_*:\Ch^1(Q_{2,\Cb})\to\Ch^1(Q_2)$ is generated by $f$. It follows that the image of $\pi_*:\Hr_\et^2(Q_{2,\Cb})\to\Hr_\et^2(Q_2)$ is generated by $\gamma_\et^1(f)=\omega^{2}$. Finally, one notes that $\lambda=\gamma_\et^1(h)$ by definition so $\lambda\neq\omega^{2}$ in $\Hr_\et^2(Q_2)$ by injectivity of $\gamma_\et^1$ and thus $\lambda\notin\Im\pi_*$. Thus the real-complex exact sequence shows that $\omega\lambda\neq 0$, as required.
\end{proof}

\begin{rema}\label{rema:connecting_homomorphism_nontrivial_empty}
In particular, the connecting homomorphism $\partial_{\Osc(1)}^{0,1}:\Hr^0(Q_2,\bar{\Ibf})\to\Hr^1(Q_2,\Ibf_{\Osc(1)}^2)$ carries the class of $\llangle -1\rrangle$ (namely the class $\omega$ modulo the isomorphism $\Hr^0(Q_2,\bar{\Ibf})\cong\Hr^0(Q_2,\Hsc^1)$) to a nonzero element, which automatically lies in $\Hr^1(Q_2,\Kbf_{\Osc(1)}^2)$ as $Q_2$ has empty real locus. This is another instance of the phenomenon of Example \ref{exe:connecting_homomorphism_nontrivial}, in the present case on a surface without real points.

We do not know if there exists a smooth real surface $X$ such that $X(\Rb)$ is nonempty and a line bundle $\Leu$ on $X$ such that $\omega c_1^\et(\Leu)\neq 0$ in $\Hr_\et^3(X)$, but such that $\partial_\Leu^{0,1}(\llangle -1\rrangle)$ lies in $\Hr^1(X,\Kbf_\Leu^2)$. There is a commutative square:
\[\begin{tikzcd}
	{\Hr^0(X,\bar{\Ibf})} & {\Hr^1(X,\bar{\Ibf}^2)} \\
	{\Hr^0(X(\Rb),\Zb/2)} & {\Hr^0(X(\Rb),\Zb/2)}
	\arrow["{\cup\overline{c}_1(\Lc)}", from=1-1, to=1-2]
	\arrow["{\overline{\gamma}_0^1}"', from=1-1, to=2-1]
	\arrow["{\gamma_1^2}", from=1-2, to=2-2]
	\arrow["{\cup w_1(L)}"', from=2-1, to=2-2]
\end{tikzcd}\]
by compatibility of the mod $2$ cycle class maps with cup-product (see \cite[§4.B]{hornbostelRealCycleClass2021}) and because $\overline{\gamma}^1(\overline{c}_1(\Lc))=w_1(L)$ by \cite[Théorème 4]{kahnConstructionClassesChern1987}. Since $\overline{\gamma}_1^0(\llangle -1\rrangle)$ is the unit of $\Hr^*(X(\Rb),\Zb/2)$, the condition that $\partial_\Leu^{0,1}(\llangle -1\rrangle)$ lie in $\Hr^1(X,\Kbf_\Leu^2)$ is in fact equivalent to the equality $w_1(L)=0$ in $\Hr^1(X(\Rb),\Zb/2)$ where $L=\Leu(\Rb)$ and $w_1(L)$ is its (first) Stiefel--Whitney class, and thus to the triviality of the topological line bundle $L$. Consequently, the existence question asked at the beginning of the present paragraph is equivalent to the following one: Let $X$ be a smooth real surface and let $\Leu$ be a line bundle on $X$; suppose that $X(\Rb)$ is nonempty and that $\Leu(\Rb)$ is the trivial real topological line bundle on $X(\Rb)$. Do we then have $\omega c_1^\et(\Leu)=0$ in $\Hr_\et^3(X)$? We note that if $X$ is a smooth surface over $\Rb$ with nonempty real locus, then the morphism \[h_4:\Hr_\et^4(X)\to\bigoplus_{p\geq 0}\Hr^p(X(\Rb),\Zb/2)\] of \cite[Theorem 2.3.2 (b)]{colliot-theleneZerocyclesCohomologyReal1996} is an isomorphism. It follows from the definitions (see \cite{benoistIntegralHodgeConjecture2020} for related formulas) that $h_4$ takes $\omega^2 c_1^\et(\Leu)$ to $(0,w_1(\Leu(\Rb)),0)$ for any line bundle $\Leu$ on $X$ and thus $\omega^2 c_1^\et(\Leu)=0$ in $\Hr_\et^4(X)$. Consequently, if a pair $(X,\Leu)$ exists as in the previous question, then $\omega c_1^\et(\Leu)$ is a nonzero element of the kernel of the homomorphism $\cup\omega:\Hr_\et^3(X)\to\Hr_\et^4(X)$, hence, by (\ref{eq:real_complex_exact_sequence}), in the image of the norm map $\pi_*:\Hr_\et^3(X_\Cb)\to\Hr_\et^3(X)$. Thus the group $\Hr_\et^3(X_\Cb)$ is necessarily nonzero. This excludes many possibilities for the underlying surface $X$: it cannot be affine or a hypersurface in $\Pb_\Rb^3$.
\end{rema}

We note that the previous analysis yields the following corollary.

\begin{cor}\label{cor:mod_2_étale_cohomology_anisotropic_quadric_surface}
Let $a$ and $b$ be indeterminates of degree $1$ and $2$ respectively. The morphism $\Zb/2[a,b]\to\Hr_\et^*(Q_2)$ carrying $a$ to $\omega$ and $b$ to $\lambda$ is surjective and its kernel is generated by $(a^3,b^3,a^2b-b^2)$.
\end{cor}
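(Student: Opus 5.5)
We compute $\Hr_\et^p(Q_2)$ degree by degree, check that the relations hold, and then compare dimensions. The quotient ring $\Zb/2[a,b]/(a^3,b^3,a^2b-b^2)$ has the following monomial basis:
\begin{itemize}
	\item degree $0$: $1$;
	\item degree $1$: $a$;
	\item degree $2$: $a^2,b$;
	\item degree $3$: $ab$ (note $a^3=0$);
	\item degree $4$: $a^2b=b^2$ (note $ab\cdot a=a^2b$);
	\item degree $5$: $ab^2=a^3b=0$;
	\item degree $6$: $b^3=0$.
\end{itemize}
So its dimensions are $1,1,2,1,1$ in degrees $0$ to $4$ and $0$ beyond. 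The plan is to show that $\Hr_\et^*(Q_2)$ has the same dimensions, that the map is surjective, and that the three relations hold. The map then factors through the quotient, and a surjection between spaces of equal finite dimension is an isomorphism.

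\textbf{Low degrees.} We have $\Hr_\et^0(Q_2)=\Zb/2$. The group $\Hr_\et^1(Q_2)=\Zb/2\cdot\omega$ was shown in the proof of Lemma \ref{lem:computation_first_étale_cycle_class_quadric_surface}. By Lemma \ref{lem:first_étale_cycle_class_map_iso_quadric_surface}, $\Hr_\et^2(Q_2)\cong\Ch^1(Q_2)$ has basis $\gamma_\et^1(f)=\omega^2$ (Lemma \ref{lem:computation_first_étale_cycle_class_quadric_surface}) and $\lambda$. Proposition \ref{prop:nontrivial_class_h3_anisotropic_quadric_surface} gives $\Hr_\et^3(Q_2)=\Zb/2\cdot\omega\lambda$.

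\textbf{High degrees.} Since $Q_2(\Rb)=\emptyset$, \cite[Theorem 2.3.1]{colliot-theleneZerocyclesCohomologyReal1996} gives $\Hr_\et^p(Q_2)=0$ for $p\geq 5$. Lemma \ref{lem:top_étale_cohomology_proper_empty} gives $\Hr_\et^4(Q_2)\cong\Hr_\et^4(Q_{2,\Cb})=\Zb/2$. We have $\Hr_\et^3(Q_{2,\Cb})=0$ since $Q_{2,\Cb}\cong\Pb^1\times\Pb^1$, so Corollary \ref{cor:end_real_complex_proper_empty} shows that $\cup\omega:\Hr_\et^3\to\Hr_\et^4$ is an isomorphism. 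Hence $\omega^2\lambda$ generates $\Hr_\et^4(Q_2)$. This establishes surjectivity in every degree and the dimension count.

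\textbf{Relations.} The relations $\omega^3=0$ and $\lambda^3=0$ hold because $\Hr_\et^3(Q_2)$ and $\Hr_\et^6(Q_2)$ force them: $\omega^3$ lies in $\Hr_\et^3=\Zb/2\cdot\omega\lambda$, and $\lambda^3$ lies in $\Hr_\et^6=0$. For $\omega^3$ we must exclude $\omega^3=\omega\lambda$. The cleanest argument is $\omega^3=\omega\gamma_\et^1(f)$, where $f=\pi_*e_1$ by Lemma \ref{lem:image_pushforward_pic_quadric_surface}. The projection formula then gives $\omega\cdot\pi_*(\cdot)=\pi_*(\pi^*\omega\cdot)=0$, since $\pi^*\omega=0$, so $\omega^3=0$.

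The remaining relation is $\lambda^2=\omega^2\lambda$ in $\Hr_\et^4(Q_2)\cong\Zb/2$. Since $\pi_*$ is an isomorphism in degree $4$ and $\Hr_\et^4(Q_2)\cong\Zb/2$, I would detect classes by pulling back: $\pi^*$ vanishes on $\Hr_\et^4$ by Corollary \ref{cor:end_real_complex_proper_empty}, so it cannot distinguish them. Instead, I would use the cycle class on $\Ch^2(Q_2)=\Zb/2$, which is injective. We have $\lambda^2=\gamma_\et^2(h^2)$, and $h^2$ is the class of a degree-$2$ zero-cycle (a conic section meeting a line), namely a complex point, so $h^2$ is the generator of $\Ch^2(Q_2)$. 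On the other hand, $\gamma_\et^2$ of a closed point with residue field $\Cb$ is $\pi_*$ of the point class. Hence $\lambda^2$ is the nonzero element of $\Hr_\et^4$, and so is $\omega^2\lambda$. This gives $\lambda^2=\omega^2\lambda$.

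I expect this degree-$4$ identification to be the main obstacle: one must verify that $h^2$ is nonzero in $\Ch^2(Q_2)$. The argument is that $h^2$ is represented by a single closed point of degree $2$, and that class is the nonzero element by \cite[Theorem 1.3 (b)]{colliot-theleneZerocyclesCohomologyReal1996}.
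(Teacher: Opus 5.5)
Your proof is correct and is essentially the paper's argument: $\omega^3=\omega\,\gamma_\et^1(f)=\omega\,\pi_*\gamma_\et^1(e_1)=0$, then $\omega^2\lambda\neq 0$ by injectivity of $\cup\omega$ on $\Hr_\et^3(Q_2)$, then $\lambda^2\neq 0$ forces $\lambda^2=\omega^2\lambda$ in $\Hr_\et^4(Q_2)=\Zb/2$, then $\lambda^3=0$ because $\Hr_\et^6(Q_2)=0$, and finally a degree-by-degree dimension count. The only local difference is how you get $\lambda^2\neq 0$: you write $h^2$ as the class of a single $\Cb$-point, hence as $\pi_*$ of a point class on $Q_{2,\Cb}$, and use that $\pi_*$ is an isomorphism in degree $4$, whereas the paper combines $h^2\neq 0$ in $\Ch^2(Q_2)$ (Karpenko) with injectivity of $\gamma_\et^2$; your route is self-contained and makes your final appeal to the cited Theorem 1.3 (b) unnecessary.
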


\begin{proof}
Since $\gamma_\et^1(f)=\gamma_\et^1(\pi_*e_1)=\pi_*\gamma_\et^1(f)$, the real-complex exact sequence shows that $\omega\gamma_\et^1(f)=0$. As $\gamma_\et^1(f)=\omega^{2}$ by Lemma \ref{lem:computation_first_étale_cycle_class_quadric_surface}, it follows that $\omega^{3}=0$ in $\Hr_\et^3(Q_2)$.\footnote{In fact, if $X$ is any smooth surface with empty real locus, the relation $\omega^{3}=0$ holds in $\Hr_\et^3(X)$ by \cite[Proposition 4.5 (ii)]{benoistWuRelationsReal2026}.} Since $\cup\omega:\Hr_\et^3(Q_2)\to\Hr_\et^4(Q_2)$ is injective (see Corollary \ref{cor:end_real_complex_proper_empty}) and $\omega\lambda\in\Hr_\et^3(Q_2)$ is nonzero, the element $\omega^{2}\lambda$ of $\Hr_\et^4(Q_2)$ is nonzero. Since $h^2\neq 0$ in $\Ch^2(Q_2)$ by \cite[(2.6) Proposition]{karpenkoAlgebrogeometricInvariantsQuadratic1991} and $\gamma_\et^2:\Ch^2(Q_2)\to\Hr_\et^4(Q_2)$ is injective by \cite[Theorem 3.2 (c)]{colliot-theleneZerocyclesCohomologyReal1996}, we see that $\gamma_\et^2(h^2)=\lambda^{2}$ is nonzero in $\Hr_\et^4(Q_2)$. Since $\Hr_\et^4(Q_2)=\Zb/2$ by Lemma \ref{lem:top_étale_cohomology_proper_empty} (and its proof), we see that the nonzero elements $\omega^{2}\lambda$ and $\lambda^2$ of $\Hr_\et^4(Q_2)$ agree. Finally, by \cite[Theorem 2.3.1]{colliot-theleneZerocyclesCohomologyReal1996}, for every $t>2\dim(Q_2)=4$, there is an isomorphism \[\Hr_\et^t(Q_2)\cong\bigoplus_{p\geq 0}\Hr^p(Q_2(\Rb),\Zb/2);\] since $Q_2(\Rb)$ is empty, the right hand side vanishes and thus $\Hr_\et^t(Q_2)=0$. It follows in particular that $\lambda^3=0$. Thus there is a morphism \[\Zb/2[a,b]/\langle a^3,a^2b-b^2,b^3\rangle\to\Hr_\et^*(Q_2)\] as in the statement of the corollary. To see that it is bijective, it suffices to do a dimension count in each degree (noting in particular that both the source and the target vanish in degree $\geq 5$).
\end{proof}

\begin{lem}\label{lem:differentials_pardon_quadric_surface}
Let $\Leu\in\Pic(Q_2)/2$. The map $\Sq_\Leu^2:\Ch^1(Q_2)\to\Ch^2(Q_2)$ is then surjective if $\bar{c}_1(\Leu)\neq f$ in $\Ch^1(Q_2)$ and vanishes otherwise. The operation $\Phi_{0,1,\Leu}:\Hr^0(Q_2,\bar{\Ibf})\to\Hr^1(Q_2,\bar{\Ibf}^2)$ is trivial if $\bar{c}_1(\Leu)$ lies in $\Zb/2\cdot f$ and is an isomorphism otherwise.
\end{lem}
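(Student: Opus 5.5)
Both assertions reduce to explicit computations in the mod $2$ étale cohomology ring of $Q_2$, which Corollary \ref{cor:mod_2_étale_cohomology_anisotropic_quadric_surface} gives as $\Zb/2[\omega,\lambda]/(\omega^3,\lambda^3,\omega^2\lambda-\lambda^2)$. The plan is to transport the relevant groups into étale cohomology via the injective cycle class maps and then calculate there.

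\emph{The map $\Sq^2_\Leu$.} By Remark \ref{rema:twisted_steenrod_square} and \cite[Lemma 9.8]{voevodskyReducedPowerOperations2003}, $\Sq^2_\Leu(x)=x^2+\bar{c}_1(\Leu)x$ on $\Ch^1(Q_2)=\Zb/2\cdot f\oplus\Zb/2\cdot h$. The target is $\Ch^2(Q_2)=\Zb/2$ (\cite[Theorem 1.3 (b)]{colliot-theleneZerocyclesCohomologyReal1996}), and $\gamma_\et^2$ maps it isomorphically onto $\Hr_\et^4(Q_2)=\Zb/2\cdot\lambda^2$. It therefore suffices to compute products of the classes $\gamma_\et^1(f)=\omega^2$ (Lemma \ref{lem:computation_first_étale_cycle_class_quadric_surface}) and $\gamma_\et^1(h)=\lambda$ in $\Hr_\et^4(Q_2)$. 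The ring relations give $f^2\mapsto\omega^4=0$, $fh\mapsto\omega^2\lambda=\lambda^2\neq0$ and $h^2\mapsto\lambda^2\neq0$. So the quadratic form $x\mapsto x^2$ is $h^*$, and $x\mapsto cx$ for $c=\bar{c}_1(\Leu)$ is given by the bilinear pairing with matrix $\begin{pmatrix}0&1\\1&1\end{pmatrix}$. Checking the four values of $c$:
\begin{itemize}
\item[] $c=0$: $\Sq^2_\Leu(h)=h^2\neq0$, so the map is surjective.
\item[] $c=h$: $\Sq^2_\Leu(f)=fh\neq0$, so the map is surjective.
\item[] $c=f+h$: $\Sq^2_\Leu(f)=f^2+f^2+fh\neq0$, so the map is surjective.
\item[] $c=f$: $\Sq^2_\Leu(f)=0$ and $\Sq^2_\Leu(h)=h^2+fh=0$, so the map vanishes.
\end{itemize}
This gives the first assertion.

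\emph{The operation $\Phi_{0,1,\Leu}$.} By Theorem \ref{theo:twisted_differential_pardon} and Remark \ref{rema:steenrod_trivial}, $\Phi_{0,1,\Leu}=\cup\bar{c}_1(\Leu)$. Using Remark \ref{rema:computation_twisted_I_étale_cohomology}, this is identified with $\cup c_1^\et(\Leu)\colon\Hr_\et^1(Q_2)\to\Fr^1\Hr_\et^3(Q_2)\cong\Hr^1(Q_2,\Hsc^2)$. Here $\Hr_\et^1(Q_2)=\Zb/2\cdot\omega$ (proof of Lemma \ref{lem:computation_first_étale_cycle_class_quadric_surface}), and $\Hr^1(Q_2,\Hsc^2)=\Hr_\et^3(Q_2)=\Zb/2\cdot\omega\lambda$ by Lemma \ref{lem:global_sections_torsion_quadric_surface} and Proposition \ref{prop:nontrivial_class_h3_anisotropic_quadric_surface}. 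The map sends $\omega\mapsto\omega\,c_1^\et(\Leu)$, which is
\begin{itemize}
\item[] $0$ when $c_1^\et(\Leu)=0$;
\item[] $\omega^3=0$ when $c_1^\et(\Leu)=\omega^2$, i.e. $\bar{c}_1(\Leu)=f$;
\item[] $\omega\lambda\neq0$ when $c_1^\et(\Leu)\in\{\lambda,\lambda+\omega^2\}$.
\end{itemize}
Thus $\Phi_{0,1,\Leu}$ is trivial for $\bar{c}_1(\Leu)\in\Zb/2\cdot f$ and otherwise is a nonzero map between two groups of order $2$, hence an isomorphism.

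The main point needing care is the compatibility of cup products on $\Hr^*(Q_2,\Hsc^*)$ with étale cup products through the edge maps. For $\Ch^*$ this is the ring homomorphism property of $\gamma_\et^*$. For the degree $(0,1)\to(1,2)$ product it is exactly the commutative square in Remark \ref{rema:computation_twisted_I_étale_cohomology}, which I will take as given.
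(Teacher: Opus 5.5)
Your proof is correct and follows the paper's argument essentially step for step. You move $\Sq^2_\Leu$ and $\Phi_{0,1,\Leu}=\cup\bar{c}_1(\Leu)$ into $\Hr_\et^*(Q_2)$, using the injective cycle class maps and the edge isomorphism $\Hr^1(Q_2,\Hsc^2)\cong\Hr_\et^3(Q_2)$, and then read off $\omega^4=0$, $\omega^2\lambda=\lambda^2\neq 0$, $\omega^3=0$ and $\omega\lambda\neq 0$ from the presentation of the mod $2$ étale cohomology ring of $Q_2$. Incidentally, your value $\gamma_\et^2(fh)=\omega^2\lambda$ is the correct one; the paper's text has the typo $\omega^2\lambda^2$.
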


\begin{proof}
First note that $\Sq^2(h)=h^2$ is nonzero in $\Ch^2(Q_2)=\Zb/2\cdot h^2$ (for the latter equality, see \cite[(2.6) Proposition]{karpenkoAlgebrogeometricInvariantsQuadratic1991}). Therefore $\Sq^2$ is surjective. Moreover, one has $\gamma_\et^2(f^2)=\gamma_\et(f)^2=\omega^4=0$ (Corollary \ref{cor:mod_2_étale_cohomology_anisotropic_quadric_surface}): since $\gamma_\et^2$ is injective by \cite[Theorem 3.2 (c)]{colliot-theleneZerocyclesCohomologyReal1996}, we conclude that $f^2=\Sq^2(f)$ vanishes. We also see that \[\gamma_\et^2(fh)=\omega^2\lambda^2\] which is nonzero again by Corollary \ref{cor:mod_2_étale_cohomology_anisotropic_quadric_surface} so $fh\neq 0$ in $\Ch^2(Q_2)$. It follows that $\Sq_h^2(f)=f^2+fh=f h$ so $\Sq_h^2$ is also surjective and \[\Sq_{f+h}^2(f)=f^2+f(f+h)=fh\neq 0\] so again $\Sq_{f+h}^2$ is surjective. On the other hand, these computations easily imply that $\Sq_f^2$ is trivial. This implies the first statement of the lemma.

The group $\Hr^0(Q_2,\Hsc^3)$ is isomorphic to $\Hr^0(Q_2(\Rb),\Zb/2)$ since $Q_2$ has dimension $2$ so by inspection of (\ref{eq:bloch_ogus}), the edge morphism $\Hr^1(Q_2,\Hsc^2)\to\Hr_\et^3(Q_2)$ is an isomorphism. By Lemma \ref{lem:Sujatha}, the operation $\Phi_{0,1}$ is trivial so $\Phi_{0,1,\Leu}$ is given by cup-product with $\bar{c}_1(\Leu)$. Moreover, one has $\Hr^0(Q_2,\Hsc^1)=\Hr_\et^1(Q_2)=\Zb/2\cdot\omega$. Thus $\Phi_{0,1,\Leu}$ is trivial if, and only if, the equality $\omega c_1^\et(\Leu)=0$ holds in $\Hr_\et^3(Q_2)$. By examination of Corollary \ref{cor:mod_2_étale_cohomology_anisotropic_quadric_surface}, we see that this happens precisely if $c_1^\et(\Leu)\in\Zb/2\cdot\omega^2$, namely if $\bar{c}_1(\Leu)\in\Zb/2\cdot f$ (by Lemmas \ref{lem:first_étale_cycle_class_map_iso_quadric_surface} and \ref{lem:computation_first_étale_cycle_class_quadric_surface}). Otherwise, the map $\Phi_{0,1,\Leu}$ is a nontrivial morphism between groups isomorphic to $\Zb/2$ so it is an isomorphism.
\end{proof}

\begin{theo}\label{theo:witt_groups_anisotropic_quadric_surface}
Let $\Leu_0$ be a line bundle on $Q_2$ such that $\bar{c}_1(\Leu_0)=f$ in $\Ch^1(Q_2)$. The twisted Witt groups of $Q_2$ are then as follows.
\begin{itemize}
	\item One has $\Wr(Q_2)=\Zb/4\cdot\langle 1\rangle$ and $\Wr^2(Q_2)=0$. Moreover, the group $\Wr^1(Q_2)$ sits in an exact sequence \[0\to\Zb/2\to\Wr^1(Q_2)\to\Zb/2\to 0\] of abelian groups.
	\item One has $\Wr(Q_2,\Leu_0)=0$ and $\Wr^2(Q_2,\Leu_0)=\Zb/2$, and $\Wr^1(Q_2,\Leu_0)$ sits in an exact sequence \[0\to\Zb/2\to\Wr^1(Q_2,\Leu_0)\to\Zb/2\to 0\] of abelian groups.
	\item One has $\Wr^p(Q_2,\Osc(1))=0$ and $\Wr^p(Q_2,\Leu_0\otimes\Osc(1))=0$ for every $p\geq 0$.\vspace{-\topsep}
\end{itemize}
\end{theo}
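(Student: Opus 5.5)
The plan is to compute separately, for each class of $\Ch^1(Q_2)=\{0,f,h,f+h\}$, which represent the four twists $\Osc$, $\Leu_0$, $\Osc(1)$ and $\Leu_0\otimes\Osc(1)$, the groups $\Wr^0$, $\Wr^1$ and $\Wr^2$. Since $\dim Q_2=2$, one has $\Wr^3=0$ and $\Wr^p\cong\Hr^p(Q_2,\Wbf_\Leu)$. Three inputs recur throughout:
\begin{itemize}
\item $Q_2(\Rb)=\emptyset$, so $\Ibf^3_\Leu=0$ and $\Ibf^2_\Leu\cong\bar{\Ibf}^2$;
\item the cohomology ring computed in Corollary \ref{cor:mod_2_étale_cohomology_anisotropic_quadric_surface};
\item the computation of $\Sq^2_\Leu$ and $\Phi_{0,1,\Leu}$ in Lemma \ref{lem:differentials_pardon_quadric_surface}.
\end{itemize}

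\emph{Top group $\Wr^2$.} By Corollary \ref{cor:small_d_w_d} in the proper, empty real locus case, $\Wr^2(Q_2,\Leu)=\Coker(\Sq^2_\Leu:\Ch^1\to\Ch^2)$ with $\Ch^2(Q_2)=\Zb/2\cdot h^2$. Lemma \ref{lem:differentials_pardon_quadric_surface} then gives $0$ for $\bar c_1(\Leu)\in\{0,h,f+h\}$ and $\Zb/2$ for $\bar c_1(\Leu)=f$. This step is immediate.

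\emph{Degree $0$.} Lemma \ref{lem:global_sections_torsion_quadric_surface} gives $\Hr^0(Q_2,\bar{\Ibf}^2)=0$, hence $\Hr^0(Q_2,\Ibf^2_\Leu)=0$. Proposition \ref{prop:exact_sequence_twisted_I} then identifies $\Hr^0(Q_2,\Ibf_\Leu)$ with $\Ker(\Phi_{0,1,\Leu})\subseteq\Hr^0(Q_2,\Hsc^1)=\Zb/2\cdot\omega$. For $\bar c_1\in\{h,f+h\}$ this kernel is $0$, so $\Wr=\Hr^0(\Ibf_\Leu)=0$ by Lemma \ref{lem:baby_pardon}. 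For the trivial twist, $\Hr^0(Q_2,\Ibf)=\Zb/2\cdot\llangle-1\rrangle$, and $\Hr^0(\Wbf)$ is generated by $\langle1\rangle$ and this ideal. Since $2\langle1\rangle=\llangle-1\rrangle\neq0$ (it reduces to $\omega\neq0$), we get $\Wr(Q_2)=\Zb/4$.

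The twist $\Leu_0$ is where I expect the real difficulty. Lemma \ref{lem:differentials_pardon_quadric_surface} says $\omega\cdot\omega^2=0$, so $\Phi_{0,1,\Leu_0}=0$. Read naively, this would make $\omega$ lift to $\Hr^0(\Ibf_{\Leu_0})$, contradicting $\Wr(Q_2,\Leu_0)=0$. So I would re-examine this step directly. First, check whether Corollary \ref{cor:kernel_connecting_steenrod} really identifies $\Ker\partial^{0,1}_{\Leu_0}$ with $\Ker\Phi_{0,1,\Leu_0}$ here. Second, compute $\partial_{\Leu_0}^{0,1}(\omega)$ by hand, using $\llangle-1\rrangle=2\langle1\rangle$ and the Euler class $e(\Leu_0)$ together with the relation $\partial_{\Leu_0}(\langle1\rangle)=e(\Leu_0)$ from Lemma \ref{lem:baby_pardon}. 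The aim is to show that no unramified form with values in $\Leu_0$ reduces to $\omega$, for instance by using the $2$-divisibility forced by $e(\Leu_0)$ with $\pi(e(\Leu_0))=f$.

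\emph{Degree $1$.} Use Proposition \ref{prop:easy_exact_sequence_W1_surface} together with $\Hr^1(Q_2,\Wbf_\Leu)=\Hr^1(Q_2,\Ibf_\Leu)/\langle e(\Leu)\rangle$ from Lemma \ref{lem:baby_pardon}. The relevant inputs are the following.
\begin{itemize}
\item $\Hr^1(Q_2,\Ibf^2_\Leu)=\Hr^1(Q_2,\bar{\Ibf}^2)=\Zb/2\cdot\omega h$ by Lemma \ref{lem:global_sections_torsion_quadric_surface}.
\item The connecting map $\partial^{0,1}_\Leu(\omega)$ corresponds to $\omega c_1^\et(\Leu)$ (Remark \ref{rema:connecting_homomorphism_nontrivial_empty}). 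It vanishes for $\bar c_1\in\{0,f\}$ and is onto for $\bar c_1\in\{h,f+h\}$.
\item The kernels of $\Sq^2_\Leu$ follow from $f^2=0$, $fh=h^2\neq0$:
\[\Ker\Sq^2=\{0,f\},\quad\Ker\Sq^2_f=\Ch^1,\quad\Ker\Sq^2_h=\{0,h\},\quad\Ker\Sq^2_{f+h}=\{0,f+h\}.\]
\end{itemize}
Since $e(\Leu)$ maps to $\bar c_1(\Leu)$, quotienting by $e(\Leu)$ gives the following.
\begin{itemize}
\item Trivial twist: $0\to\Zb/2\to\Wr^1(Q_2)\to\Zb/2\to0$.
\item Twist $\Leu_0$: $0\to\Zb/2\to\Wr^1(Q_2,\Leu_0)\to\Ch^1/\langle f\rangle\cong\Zb/2\to0$.
\item Twists $\Osc(1)$ and $\Leu_0\otimes\Osc(1)$: $\Hr^1(Q_2,\Ibf_\Leu)\cong\Zb/2$ is generated by $e(\Leu)$, so $\Wr^1=0$.
\end{itemize}
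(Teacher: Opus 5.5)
There is a genuine gap at the twist $\Leu_0$ in degree $0$. You set out to show that $\omega$ does not lift to $\Hr^0(Q_2,\Ibf_{\Leu_0})$, but it does, so neither of your proposed re-examinations can succeed. Your ``naive'' reading is the correct one.

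Since $Q_2(\Rb)=\emptyset$, the map $\pi^{1,2}_{\Leu_0}$ is an isomorphism by (\ref{eq:projection_isomorphism}). Hence $\partial^{0,1}_{\Leu_0}$ is identified with $\Phi_{0,1,\Leu_0}=\cup f$. This map kills $\omega$, because $\omega^3=\omega\cdot\pi_*\gamma_\et^1(e_1)=\pi_*(\pi^*\omega\cdot\gamma_\et^1(e_1))=0$. Combined with $\Hr^0(Q_2,\Ibf^2_{\Leu_0})=\Hr^0(Q_2,\Hsc^2)=0$ and Lemma~\ref{lem:baby_pardon}, this gives $\Wr(Q_2,\Leu_0)=\Hr^0(Q_2,\Ibf_{\Leu_0})\cong\Zb/2$.

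You can even write the lift down explicitly.
\begin{itemize}
\item Let $\ell\subset Q_{2,\Cb}$ be a line of class $e_1$. Then $\overline{\ell}\neq\ell$ (there are no real points) and $\overline{\ell}$ lies in the same ruling.
\item So $D=\pi(\ell)$ satisfies $D_\Cb=\ell\sqcup\overline{\ell}$, $[D]=f$ and $\kappa(D)\cong\Cb(t)$.
\item Let $s$ be the canonical rational section of $\Osc(D)$, which agrees with $\Leu_0$ modulo squares. The form $\langle s\rangle\otimes\llangle -1\rrangle$ is unramified away from $D$.
\item Its residue along $D$ is a multiple of $\llangle -1\rrangle=0\in\Wr(\Cb(t))$.
\end{itemize}
Hence it is an unramified $\Leu_0$-valued form reducing to $\omega\neq 0$.

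This is exactly what the paper's proof finds: $\Er_3^{0,1}(\Leu_0)=\Zb/2\cdot\omega$ and $\Er_3^{0,0}(\Leu_0)=0$, hence $\Wr(Q_2,\Leu_0)=\Zb/2$. Theorem~\ref{theo:chow_witt_surface} later uses $\Hr^0(Q_2,\Ibf_{\Leu_0})=\Zb/2$. So the value $0$ in the statement is a misprint, and you should conclude $\Wr(Q_2,\Leu_0)\cong\Zb/2$ rather than try to refute your computation.

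Your proposal is also internally inconsistent on this point. Your degree-$1$ computation for $\Leu_0$ uses $\partial^{0,1}_{\Leu_0}=0$ to embed $\Hr^1(Q_2,\Ibf^2_{\Leu_0})=\Zb/2$ into $\Wr^1(Q_2,\Leu_0)$. By exactness, $\partial^{0,1}_{\Leu_0}(\omega)=0$ is equivalent to $\omega$ lifting to $\Hr^0(Q_2,\Ibf_{\Leu_0})$, that is, to $\Wr(Q_2,\Leu_0)\neq 0$. Had your degree-$0$ argument succeeded, $\Wr^1(Q_2,\Leu_0)$ would shrink to $\Ch^1(Q_2)/\Zb/2\cdot f\cong\Zb/2$, and the stated extension of order $4$ would fail.

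Apart from this, everything you wrote is correct and is a valid repackaging of the paper's argument. The paper runs the twisted Pardon spectral sequence, checks that it degenerates at $\Er_3$, and reads off the graded pieces. You instead use Corollary~\ref{cor:small_d_w_d}, the exact sequences of Propositions~\ref{prop:exact_sequence_twisted_I} and~\ref{prop:easy_exact_sequence_W1_surface}, and the Euler-class quotient of Lemma~\ref{lem:baby_pardon}. Since $\Ibf^3_\Leu=0$ on $Q_2$, these carry the same information, and your version avoids the collapse discussion.
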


\vspace{-12pt}

\begin{proof}
Let $\mathcal{L}$ be any line bundle on $Q_2$. We analyse the Pardon spectral sequence (\ref{eq:pardon}):
\begin{equation}\label{eq:pardon_quadric_surface}
\Er_2^{p,q}(\mathcal{L})=\Hr^p(Q_2,\bar{\Ibf}^q)\Rightarrow\Hr^p(Q_2,\Wbf(\mathcal{L}))
\end{equation}
Note that $\Hr^0(Q_2,\Hsc^2)=0$ by Lemma \ref{lem:global_sections_torsion_quadric_surface} hence $\Hr^0(Q_2,\bar{\Ibf}^2)=0$. Moreover, one has $\Hr^p(Q_2,\bar{\Ibf}^q)\cong\Hr^p(Q_2(\Rb),\Zb/2)=0$ by the main result of \cite{colliot-theleneRealComponentsAlgebraic1990}. It follows that in (\ref{eq:pardon_quadric_surface}), the only possibly nontrivial differential on a page with index higher than $3$ is the map \[d_3^{0,0}(\Leu):\Er_3^{0,0}(\mathcal{L})\to\Er_3^{1,2}(\mathcal{L}).\] If $\mathcal{L}$ is nontrivial, then the operation $\Phi_{0,0,\mathcal{L}}=d_2^{0,0}(\Leu)$ is injective so $\Er_3^{0,0}(\mathcal{L})=0$ and thus $d_3^{0,0}(\Leu)=0$. If $\mathcal{L}$ is trivial, then the nonzero element $\langle 1\rangle$ in $\Er_2^{0,0}=\Hr^0(Q_2,\bar{\Wbf})$ is clearly a permanent cycle in (\ref{eq:pardon_quadric_surface}) as $\langle 1\rangle\in\Hr^0(Q_2,\Wbf)$ is nonzero (in fact, we even have $\llangle -1\rrangle=\langle 1\rangle+\langle 1\rangle\neq 0$ as $\omega\in\Hr^0(Q_2,\bar{\Ibf})$ is nonzero). Therefore the differential $d_3^{0,0}(\Leu)$ is necessarily trivial. We conclude that (\ref{eq:pardon_quadric_surface}) collapses at the $\Er_3$-page.

One has $\Wr^2(Q_2,\Leu)=\Hr^2(Q_2,\Wbf_\Leu)$ by the Gersten--Witt spectral sequence (\ref{eq:GW}), and $\Hr^2(Q_2,\Wbf_\Leu)=\Coker(\Sq_\Leu^2:\Ch^1(Q_2)\to\Ch^2(Q_2))$ by inspection of (\ref{eq:pardon_quadric_surface}). It now follows from Lemma \ref{lem:differentials_pardon_quadric_surface} that $\Wr^2(Q_2,\Leu)=0$ if $\bar{c}_1(\Leu)\neq f$ and $\Wr^2(Q_2,\Leu_0)=\Zb/2$.
We now inspect the column $p=1$ of (\ref{eq:pardon_quadric_surface}), which yields a filtration on the group $\Hr^1(Q_2,\Wbf_\Leu)=\Wr^1(Q_2,\Leu)$. Suppose first that $\Leu\neq\Leu_0$ in $\Pic(Q_2)/2$. Then $\Ker\Sq_\Leu^2\simeq\Zb/2$ by Lemma \ref{lem:differentials_pardon_quadric_surface} and if $\Leu$ is not a square, then $\bar{c}_1(\Leu)=\Sq_\Mc^2(1)$ is a nonzero element of this kernel. This shows that \[\Er_3^{1,1}(\Leu)=\frac{\Ker(\Sq_\Leu^2:\Ch^1(Q_2)\to\Ch^2(Q_2))}{\Im(\Sq_\Leu^2:\Ch^0(Q_2)\to\Ch^1(Q_2))}=0\] if $\Leu\notin\Zb/2\cdot\Leu_0\subseteq\Pic'(Q_2)$. Under this hypothesis, the operation $\Phi_{0,1,\Leu}=d_2^{0,1}(\Leu)$ is surjective by Lemma \ref{lem:differentials_pardon_quadric_surface} so $\Er_3^{1,2}(\Leu)=0$. This proves that $\Wr^1(Q_2,\Leu_0\otimes\Osc(1))=\Wr^1(Q_2,\Osc(1))=0$. Suppose now that $\Leu=\Osc$ in $\Pic(Q_2)/2$. Then $\Sq^2:\Ch^0(Q_2)\to\Ch^1(Q_2)$ and $\Phi_{0,1}:\Hr^0(Q_2,\bar{\Ibf})\to\Hr^1(Q_2,\bar{\Ibf}^2)$ are trivial by \cite[Lemma 9.9]{voevodskyReducedPowerOperations2003} and Lemma \ref{lem:Sujatha} respectively, the groups $\Gra^*\Hr^1(Q_2,\Wbf)$ are given by \[\Gra^1\Hr^1(Q_2,\Wbf)=\Ker(\Sq^2:\Ch^1(Q_2)\to\Ch^2(Q_2))=\Zb/2,\]\[\Gra^2\Hr^1(Q_2,\Wbf)=\Coker\Phi_{0,1}=\Hr^1(Q_2,\bar{\Ibf}2)=\Hr_\et^3(Q_2)=\Zb/2\] as noted in the proof of Lemma \ref{lem:differentials_pardon_quadric_surface}. This gives the exact sequence of the theorem. Suppose finally that $\Leu=\Leu_0$. Then again $\Phi_{0,1,\Leu_0}=0$ by Lemma \ref{lem:differentials_pardon_quadric_surface} so $\Gra^2\Hr^1(Q_2,\Wbf(\Leu_0))=\Zb/2$, and $\Sq_{\Leu_0}^2=0:\Ch^1(Q_2)\to\Ch^2(Q_2)$ is trivial by Lemma \ref{lem:differentials_pardon_quadric_surface}, hence its kernel contains $\Sq_{\Leu_0}^2(1)=\bar{c}_1(\Leu_0)$. We conclude that \[\Gra^1\Hr^1(Q_2,\Wbf)=\Ker(\Sq^2:\Ch^1(Q_2)\to\Ch^2(Q_2))=\frac{\Ch^1(Q_2)}{\Zb/2\cdot\bar{c}_1(\Leu_0)}=\Zb/2.\] This again yields the exact sequence of the theorem.

We finally examine the column $p=0$ of (\ref{eq:pardon_quadric_surface}) in which we can read the graded pieces in the filtration of the group $\Hr^0(Q_2,\Wbf_\Leu)=\Wr(Q_2,\Leu)$. Suppose first that $\Leu\notin\Zb/2\cdot\Leu_0$ in $\Pic(Q_2)$. Then the operation $\Phi_{0,1,\Leu}$ is injective by Lemma \ref{lem:differentials_pardon_quadric_surface} so $\Er_3^{0,1}(\Leu)=0$, and $\Sq_\Leu^2:\Ch^0(Q_2)\to\Ch^1(Q_2)$ carries $1\in\Ch^0(Q_2)=\Zb/2$ to $\bar{c}_1(\Leu)$, which is nonzero by hypothesis, so it is also nontrivial, hence $\Er_{3}^{0,0}(\Leu)=0$. Thus $\Wr(Q_2,\Osc(1))=\Wr(Q_2,\Osc(1)\otimes\Leu_0)=0$. Suppose that $\Leu=\Leu_0$. Then $\Phi_{0,1,\Leu_0}=0$ by Lemma \ref{lem:differentials_pardon_quadric_surface} so $\Er_3^{0,1}(\Leu_0)=\Hr^0(Q_2,\bar{\Ibf})=\Zb/2\cdot\omega$ and $\Sq_{\Leu_0}^2:\Ch^0(Q_2)\to\Ch^1(Q_2)$ is injective as before so $\Er_3^{0,0}(\Leu_0)=0$. We conclude that $\Wr(Q_2,\Leu_0)=\Hr^0(Q_2,\bar{\Ibf})=\Zb/2$. Finally, a similar analysis shows that the graded pieces of the filtration of $\Wr(Q_2)$ are generated by powers of $\llangle -1\rrangle\in\Hr^0(Q_2,\bar{\Ibf})$. Hence $\Wr(Q_2)$ is generated by $\langle 1\rangle$, which has order $4$ by the argument of Lemma \ref{lem:vanishing_cohomology_function_field_anisotropic_quadric}. This completes the proof.
\end{proof}

\begin{rema}
We do not know whether the two extensions of $\Zb/2$ by itself appearing in Theorem \ref{theo:witt_groups_anisotropic_quadric_surface}, describing $\Wr^1(Q_2)$ and $\Wr^1(Q_2,\Lc_0)$, have the same class. Let us however note that the morphisms $\Wr^1(Q_2)\to\Ch^1(Q_2)$ and $\Wr^1(Q_2,\Lc_0)\to\Ch^1(Q_2)$ do not have the same image: the image of the first is generated by $f$, and that of the second by $h$.
\end{rema}

As an aside, we apply these computations to the calculation of $\Wr^2(\Sr_\Rb^3)$, where $\Sr_\Rb^3$ is the real algebraic $3$-sphere given by the equation \[\Sr_\Rb^3=\{x_0^2+\cdots+x_3^2=1\}\subseteq\Abb_\Rb^4.\] To do this, we regard $\Sr_\Rb^3$ as the principal open subset $D_+(x_4)$ of the quadric threefold $Q'_3=\{x_0^2+\cdots+x_3^2=x_4^2\}\subseteq\Pb_\Rb^4$ and $Q_2=V_+(x_4)$ as the complementary closed subset. Note that $Q'_3(\Rb)=\Sr_\Rb^3(\Rb)=\Sr^3$ is the $3$-dimensional sphere in $\Rb^4$.

\begin{lem}\label{lem:bloch_ogus_isotropic_quadric}
One has $\Hr^2(Q'_3,\Hsc^3)=0$.
\end{lem}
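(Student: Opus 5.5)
Since $Q'_3$ is a smooth threefold with $Q'_3(\Rb)=\Sr^3$, the plan is to compare $\Hr^2(Q'_3,\Hsc^3)$ with étale cohomology through the Bloch--Ogus spectral sequence (\ref{eq:bloch_ogus}). The argument is essentially the one in Lemma \ref{lem:torsion_H1_I2}, but in the threefold case with $d=3$, $p=2$, $q=3$. In that range the edge homomorphism $\Hr^2(Q'_3,\Hsc^3)\to\Hr_\et^{4}(Q'_3)$ is injective onto $\Er_\infty^{2,2}$: indeed $p\geq q-1$, so no differential leaves the spot, and incoming differentials come from $\Er_r^{2-r,3+r-1}$. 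Those terms have $q>d$, and the differentials out of them vanish by \cite[Theorem 2.1]{hamelTorsionZerocyclesAbelJacobi2000}, as already used in the proof of Lemma \ref{lem:torsion_H1_I2}. Consequently $\Hr^2(Q'_3,\Hsc^3)=\Er_\infty^{2,3}$ is a graded piece of $\Hr_\et^{5}(Q'_3)$, and the remaining task is a dimension count in $\Hr_\et^5$.

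Concretely, I would reuse the formula of Lemma \ref{lem:torsion_H1_I2}, which gives
\[\dim\Hr^2(Q'_3,\Hsc^3)=\dim\Hr_\et^5(Q'_3)-b_*+b_2+b_3.\]
For the Betti numbers of $Q'_3(\Rb)=\Sr^3$ we have $b_0=b_3=1$ and $b_1=b_2=0$, so $b_*=2$, and the lemma reduces to showing that $\dim\Hr_\et^5(Q'_3)=1$.

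To compute $\Hr_\et^5(Q'_3)$ I would use Example \ref{exe:torsion_H1_I2}. The variety $Q'_3$ is proper with nonempty real locus, so it satisfies $(*)$. Moreover $Q'_{3,\Cb}$ is a smooth quadric threefold, which is cellular with cohomology concentrated in even degrees, so $\Hr_\et^5(Q'_{3,\Cb})=0$. The example then yields $\Hr_\et^{5}(Q'_3)\simeq(\Zb/2)^{b_*-\varepsilon}$ with $\varepsilon=1$, hence $\Hr_\et^5(Q'_3)\simeq\Zb/2$. Substituting into the formula gives $\dim\Hr^2(Q'_3,\Hsc^3)=1-2+0+1=0$.

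The main point requiring care is the applicability of Lemma \ref{lem:torsion_H1_I2} to a threefold: its statement is general in $d$, but the convergence bookkeeping must be checked. Specifically, for $\Hr_\et^{2d-1}$ the only contribution with $q\leq d$ is $\Er_\infty^{d-1,d}$, while the terms with $q>d$ are identified with $\Hr^p(Q'_3(\Rb),\Zb/2)$ by \cite{colliot-theleneRealComponentsAlgebraic1990}. I would also double-check the vanishing of odd cohomology of the complex quadric threefold, which is standard from its cellular decomposition \cite[Example 19.1.11]{fultonIntersectionTheory1998}.
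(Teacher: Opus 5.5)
Your argument is correct and is essentially the paper's proof. The paper carries out inline the $d=3$ case of the count in Lemma \ref{lem:torsion_H1_I2}, using $\Hr^1(Q'_3,\Hsc^4)=0$ and $\Hr^0(Q'_3,\Hsc^5)=\Zb/2$, and then obtains $\Hr_\et^5(Q'_3)=\Zb/2$ from the real-complex sequence exactly as in Example \ref{exe:torsion_H1_I2}. One indexing slip in your first paragraph: the edge map goes to $\Hr_\et^5(Q'_3)$ with image $\Er_\infty^{2,3}$, not to $\Hr_\et^4(Q'_3)$ and $\Er_\infty^{2,2}$, as you yourself use in the next sentence.
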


\begin{proof}
We examine (\ref{eq:bloch_ogus}) for $Q'_3$. Since $Q'_3$ has dimension $3$, by \cite[Theorem 2.1]{hamelTorsionZerocyclesAbelJacobi2000}, the differential $d_2^{p,q}$ in this spectral sequence vanishes if $q>3$. In particular, the differential $\Hr^0(Q'_3,\Hsc^4)\to\Hr^2(Q'_3,\Hsc^3)$ vanishes. Moreover, by the main result of \cite{colliot-theleneRealComponentsAlgebraic1990}, we have \[\Hr^1(Q'_3,\Hsc^4)\cong\Hr^1(Q'_3(\Rb),\Zb/2)=\Hr^1(\Sr^3,\Zb/2)=0\] and $\Hr^0(Q'_3,\Hsc^5)\cong\Hr^0(Q'_3(\Rb),\Zb/2)=\Zb/2$. Inspection of (\ref{eq:bloch_ogus}) then yields an exact sequence \[0\to\Hr^2(Q'_3,\Hsc^3)\to\Hr_\et^5(Q'_3)\to\Hr^0(Q_3,\Hsc^5)=\Zb/2\to 0.\] To conclude, it suffices by dimension count to show that that $\Hr_\et^5(Q'_3)=\Zb/2$. Since $\Hr_\et^5(Q'_3\times_\Rb\Spec\Cb)=0$ by comparison with topology, by (\ref{eq:real_complex_exact_sequence}), there is an exact sequence \[0\to\Hr_\et^5(Q'_3)\to\Hr_\et^6(Q'_3)\to\Hr_\et^6(Q'_3\times_\Rb\Cb)\to\Hr_\et^6(Q'_3).\] By \cite[Lemma 2.2.2]{colliot-theleneZerocyclesCohomologyReal1996}, the last map in this exact sequence vanishes so $\Hr_\et^6(Q'_3)\to\Hr_\et^6(Q'_3\times_\Rb\Cb)$ is a surjection onto $\Hr_\et^6(Q'_3\times_\Rb\Cb)=\Zb/2$ (here again, we compare with topology). By \cite[Lemma 2.2.2, Theorem 2.3.1 (b)]{colliot-theleneZerocyclesCohomologyReal1996}, there is an isomorphism $\Hr_\et^6(Q'_3)\cong\bigoplus_p\Hr^p(Q'_3(\Rb),\Zb/2)=(\Zb/2)^2$. Dimension count in the previous exact sequence then shows that $\Hr_\et^5(Q'_3)=\Zb/2$ as required.
\end{proof}

%

\begin{lem}\label{lem:chow_group_isotropic_quadric_threefold}
One has $\CH^2(Q'_3)=\Zb\cdot h^2$ where $h\in\CH^1(Q'_3)$ is a hyperplane section.
\end{lem}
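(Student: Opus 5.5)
We will compute $\CH^2(Q'_3)$ from the localisation exact sequence attached to the closed immersion $i:Q_2\hookrightarrow Q'_3$ with open complement $j:\Sr_\Rb^3\hookrightarrow Q'_3$. This sequence reads
\[\CH^1(Q_2)\xrightarrow{i_*}\CH^2(Q'_3)\xrightarrow{j^*}\CH^2(\Sr_\Rb^3)\to 0.\]
The plan has three steps: show that $\CH^2(\Sr_\Rb^3)=0$, compute the image of $i_*$ using Lemma \ref{lem:picard_group_quadric_surface}, and finally rule out torsion and identify the generator with $h^2$.

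For the first step, we will use that $\Sr_\Rb^3$ is the complement of a smooth hyperplane section in the projective quadric $Q'_3$. Its complexification is the smooth affine quadric $\{x_0^2+\cdots+x_3^2=1\}$, which is isomorphic to $\mathrm{SL}_2$ over $\Cb$, so $\CH^2(\Sr_\Cb^3)=0$; equivalently, one can invoke the cellular structure of split quadrics. To descend to $\Rb$, we will argue with the relation $\pi_*\pi^*=2$: it shows that $\CH^2(\Sr_\Rb^3)$ is $2$-torsion, and the torsion part should then be killed by a direct argument. A cleaner route, which we would try first, is to avoid computing $\CH^2(\Sr_\Rb^3)$ separately. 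Instead we will show directly that $\CH^2(Q'_3)$ is generated by classes of lines/conics supported on a hyperplane section. The point is that $Q'_3$ is isotropic (it has real points), so by the standard results on Chow groups of isotropic quadrics (Karpenko, or Elman--Karpenko--Merkurjev) we have a decomposition $\CH^*(Q'_3)=\CH^*(\mathrm{pt})\oplus\CH^{*-1}(Q_1')\oplus\CH^{*-3}(\mathrm{pt})$ involving the anisotropic part. Here the quadric of the anisotropic part is $\{x_0^2+\cdots+x_3^2=0\}$ in dimension $1$ after splitting off one hyperbolic plane, that is $Q_1$. In codimension $2$ this gives $\CH^2(Q'_3)\cong\CH^1(Q_1)\cong\Zb$, generated by the class of a conic $\ell$.

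For the second step, we use the localisation sequence to pin down the generator. We take $i_*$ of $\CH^1(Q_2)=\Zb f\oplus\Zb h$ and compare degrees after pulling back to $Q'_{3,\Cb}$, where $\CH^2$ is freely generated by the class of a line $e$ and $h^2=2e$. There $i_*h$ pulls back to $h^2=2e$, and $i_*f$ pulls back to $\pi^*f=2e_1\mapsto 2e$. Hence the pullback $\CH^2(Q'_3)\to\CH^2(Q'_{3,\Cb})=\Zb e$ has image $2\Zb e$, provided no real class pulls back to $e$. This holds because a line defined over $\Rb$ in $Q'_3$ would force Witt index $2$, whereas $x_0^2+x_1^2+x_2^2+x_3^2-x_4^2$ has Witt index $1$. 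Granting torsion-freeness, $\CH^2(Q'_3)$ is then infinite cyclic, generated by an element mapping to $2e$, and $h^2$ is such an element, so $\CH^2(Q'_3)=\Zb\cdot h^2$.

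The main obstacle is the injectivity of $\pi^*:\CH^2(Q'_3)\to\CH^2(Q'_{3,\Cb})$, that is, the absence of $2$-torsion in $\CH^2(Q'_3)$. We plan to deduce it either from Karpenko's theorem that $\CH^2$ of a quadric is torsion-free except for specific anisotropic Pfister-neighbour cases (which do not occur here in dimension $3$ with Witt index $1$), or from the motivic decomposition above together with $\CH^1(Q_1)=\Zb$ being torsion-free, following the argument of Lemma \ref{lem:picard_group_quadric_surface}.
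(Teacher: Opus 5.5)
Your first ingredient is the same as the paper's. The decomposition for the isotropic quadric $Q'_3$, whose form is $\mathbb{H}\perp\langle 1,1,1\rangle$, gives $\CH^2(Q'_3)\cong\CH^1(Q_1)\cong\Zb$. Hence $\CH^2(Q'_3)$ is torsion-free, and $\pi^*$ is injective since $\pi_*\pi^*=2$.

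After that, the whole content of the lemma is that $h^2$ is not divisible by $2$. Equivalently, $\pi^*\CH^2(Q'_3)=2\Zb e$ rather than $\Zb e$, i.e.\ every $1$-cycle on $Q'_3$ has even degree. This is exactly where your argument has a gap.

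You justify it by noting that a line defined over $\Rb$ would force Witt index $2$. That only rules out an \emph{effective} real line. It does not rule out a rational class pulling back to $e$, such as an odd-degree real curve or a difference of cycles. Passing from ``no rational subvariety'' to ``no rational class'' is a genuine theorem. For $0$-cycles it is Springer's theorem; for lines it is the result that the class $l_1$ is rational only when the Witt index is at least $2$ \cite{elmanAlgebraicGeometricTheory2008}, which you would have to invoke explicitly.

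The localisation sequence does not fill this in either. You never establish $\CH^2(\Sr^3_\Rb)=0$, and ``killed by a direct argument'' is not an argument. So computing $\pi^*i_*$ on $\Zb f\oplus\Zb h$ says nothing about classes not supported on $Q_2$.

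The paper closes this step topologically. The mod $2$ real cycle class maps commute with pushforward along $\iota:Q'_3\hookrightarrow\Pb^4_\Rb$. On $Q'_3$, $\overline{\gamma}^2$ takes values in $\Hr^2(Q'_3(\Rb),\Zb/2)=\Hr^2(\Sr^3,\Zb/2)=0$. On the other side, $\overline{\gamma}^3:\Ch^3(\Pb^4_\Rb)\to\Hr^3(\Pb^4(\Rb),\Zb/2)$ is an isomorphism. Hence $\iota_*\CH^2(Q'_3)\subseteq 2\CH^3(\Pb^4_\Rb)$, and since $\iota_*h^2=2$, the class $h^2$ must generate $\CH^2(Q'_3)\cong\Zb$.

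Alternatively, you could justify your unproved remark that the generator is a conic. The isomorphism $\CH^1(Q_1)\to\CH^2(Q'_3)$ sends a closed point $y$ of $Q_1$ to the cone over $y$ with vertex a real point of $Q'_3$. Since $Q_1(\Rb)=\emptyset$, the generating point $y$ has residue field $\Cb$. So the cone is a pair of conjugate lines with $\pi^*$-image $2e=\pi^*h^2$, and injectivity of $\pi^*$ gives that $h^2$ equals the generator. Either fix works; the ``no real line'' reason as stated does not.
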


\begin{proof}
According to \cite[(2.2)]{karpenkoAlgebrogeometricInvariantsQuadratic1991}, one has $\CH^2(Q'_3)\cong\CH^1(Q_1)$, and $\CH^1(Q_1)\cong\Zb$ by \cite[(2.6) Proposition]{karpenkoAlgebrogeometricInvariantsQuadratic1991}. Let $\iota:Q'_3\hookrightarrow\Pb_\Rb^4$ be the closed immersion underlying $Q'_3$; consider the induced pushforward homomorphism $\iota_*:\CH_*(Q'_3)\to\CH_*(\Pb_\Rb^4)$ on Chow groups. Then $\iota_*(1)=2\in\CH^1(\Pb_\Rb^4)$ since $Q'_3$ is a degree $2$ hypersurface so $\iota_*(h^2)=2\in\CH^3(\Pb_\Rb^4)$ by the projection formula. On the other hand, there is a commutative square
\[\begin{tikzcd}
	{\Ch^2(Q'_3)} & {\Ch^3(\Pb_\Rb^4)} \\
	{0=\Hr^2(Q'_3(\Rb),\Zb/2)} & {\Hr^3(\Pb^4(\Rb),\Zb/2)}
	\arrow["{\iota_*}", from=1-1, to=1-2]
	\arrow["{\bar{\gamma}^2}"', from=1-1, to=2-1]
	\arrow["{\bar{\gamma}^3}", from=1-2, to=2-2]
	\arrow["{\iota_*}"', from=2-1, to=2-2]
\end{tikzcd}\]
The right vertical map is an isomorphism by \cite[5.3 Proposition]{hornbostelRealCycleClass2021} as $\Pb_\Rb^4$ is a cellular variety. We conclude that the map $\iota_*:\Ch^2(Q'_3)\to\Ch^3(\Pb_\Rb^4)$ induced on Chow groups mod $2$ is the zero morphism. It follows that $\iota_*:\CH^2(Q'_3)\to\CH^3(\Pb_\Rb^4)$ maps into $2\CH^3(\Pb_\Rb^4)$ so it has image $2\Zb$. Since it carries $h^2$ to $2$, the group $\CH^2(Q'_3)$ is therefore (freely) generated by $h^2$.
\end{proof}

\begin{prop}\label{prop:computation_witt_group_isotropic_quadric}
One has $\Wr^2(Q'_3)=0$.
\end{prop}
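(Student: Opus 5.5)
The plan is to compute $\Wr^2(Q'_3)\cong\Hr^2(Q'_3,\Wbf)$ from the filtration by powers of the fundamental ideal. This isomorphism comes from the Gersten--Witt spectral sequence, since $\dim Q'_3=3$. I will bound $\Hr^2(Q'_3,\Wbf)$ by the image of $\Hr^2(Q'_3,\Ibf^2)$ and then show that this image is killed by a connecting homomorphism. The argument avoids the convergence issues of the Pardon spectral sequence.

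\emph{Step 1: surjectivity of $\Hr^2(Q'_3,\Ibf^2)\to\Hr^2(Q'_3,\Wbf)$.} In degree $p=2$, the complex $\Cr(Q'_3,\Wbf/\Ibf^2)$ has term $\bigoplus_{x\in X^{(2)}}\Wr/\Ir^{0}=0$, and its degree $3$ term also vanishes. Hence $\Hr^2(Q'_3,\Wbf/\Ibf^2)=0$, which is the argument of Lemma \ref{lem:easy_higher_pardon}. The long exact sequence then shows that $\Hr^2(Q'_3,\Ibf^2)\to\Hr^2(Q'_3,\Wbf)$ is surjective. This map factors through $\Hr^2(Q'_3,\Ibf)$.

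\emph{Step 2: computation of $\Hr^2(Q'_3,\Ibf^2)$.} First I show that $\Hr^2(Q'_3,\Ibf^3)=0$. The exact sequence
\[\Hr^2(Q'_3,\Ibf^4)\to\Hr^2(Q'_3,\Ibf^3)\to\Hr^2(Q'_3,\bar{\Ibf}^3)\]
has right term $\Hr^2(Q'_3,\Hsc^3)=0$ by Lemma \ref{lem:bloch_ogus_isotropic_quadric}. Its left term is $\Hr^2(\Sr^3,\Zb)=0$ by Jacobson's theorem \cite[Corollary 8.11]{jacobsonRealCohomologyPowers2017}, since $4>3$. Consequently the reduction $\pi^{2,2}:\Hr^2(Q'_3,\Ibf^2)\to\Ch^2(Q'_3)$ is injective. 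By Lemma \ref{lem:chow_group_isotropic_quadric_threefold}, its target is $\Ch^2(Q'_3)=\Zb/2\cdot h^2$, with $h^2\neq 0$ mod $2$.

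\emph{Step 3: the generator comes from a connecting map.} Take $h\in\Ch^1(Q'_3)=\Hr^1(Q'_3,\bar{\Ibf})$ and consider $\partial^{1,1}(h)\in\Hr^2(Q'_3,\Ibf^2)$. By Theorem \ref{theo:Totaro}, together with the identification $\Sq^2(x)=x^2$ on $\Ch^1$ from \cite[Lemma 9.8]{voevodskyReducedPowerOperations2003}, we get
\[\pi^{2,2}(\partial^{1,1}h)=\Phi_{1,1}(h)=\Sq^2(h)=h^2\neq 0.\]
By Step 2, $\Hr^2(Q'_3,\Ibf^2)=\Zb/2\cdot\partial^{1,1}(h)$. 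Since $\partial^{1,1}(h)$ is a connecting image, it maps to zero in $\Hr^2(Q'_3,\Ibf)$, hence also in $\Hr^2(Q'_3,\Wbf)$. Combined with Step 1, this gives $\Hr^2(Q'_3,\Wbf)=0$.

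The step needing most care is Step 2, specifically the vanishing of $\Hr^2(Q'_3,\Ibf^3)$, which combines Lemma \ref{lem:bloch_ogus_isotropic_quadric} with Jacobson's isomorphism. The other point to verify is that $h^2$ is nonzero modulo $2$, which is exactly what Lemma \ref{lem:chow_group_isotropic_quadric_threefold} provides.
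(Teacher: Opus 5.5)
Your proof is correct. It unwinds the paper's spectral-sequence argument into explicit long exact sequences rather than using different ingredients.

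The paper works with the Pardon spectral sequence. It invokes Proposition \ref{prop:convergence_pardon_no_odd}, using that $\Hr^3(\Sr^3,\Zb)=\Zb$ has no odd torsion, to identify the graded pieces of $\Hr^2(Q'_3,\Wbf)$ with the $\Er_\infty^{2,q}$. It then kills $\Er_2^{2,3}$ by Lemma \ref{lem:bloch_ogus_isotropic_quadric}, $\Er_3^{2,2}$ because $\Sq^2(h)=h^2$ generates $\Ch^2(Q'_3)$, and $\Er_2^{2,q}$ for $q>3$ because $\Hr^2(\Sr^3,\Zb/2)=0$. Your steps correspond to this as follows. Step 1 is the vanishing of the pieces with $q<2$. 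Step 2 packages everything with $q\geq 3$ into the single vanishing $\Hr^2(Q'_3,\Ibf^3)=0$, using Jacobson at level $4$. Step 3 is exactly the $d_2$-differential killing $\Er^{2,2}$.

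What your route buys:
\begin{itemize}
\item It needs no convergence statement at all. For a vanishing result, only the automatic injection $\Gra^q\hookrightarrow\Er_\infty^{2,q}$ is relevant anyway.
\item It makes explicit the finiteness of the filtration, via $\Hr^2(Q'_3,\Ibf^q)\cong\Hr^2(\Sr^3,\Zb)=0$ for $q\geq 4$. This is what actually lets one pass from vanishing graded pieces to vanishing of $\Hr^2(Q'_3,\Wbf)$.
\item It gives the extra information $\Hr^2(Q'_3,\Ibf^2)=\Zb/2\cdot\partial^{1,1}(h)$.
\end{itemize}
The paper's route buys uniformity with the spectral-sequence bookkeeping used for the other quadrics in that section.
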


\begin{proof}
First note that by (\ref{eq:GW}), there is an isomorphism $\Wr^2(Q'_3)\cong\Hr^2(Q'_3,\Wbf)$. Since $\Hr^3(Q'_3(\Rb),\Zb)=\Hr^3(\Sr^3,\Zb)=\Zb$ is torsion free, by Proposition \ref{prop:convergence_pardon_no_odd}, the graded pieces of the filtration of $\Hr^2(Q'_3,\Wbf)$ by the images of the maps $\Hr^2(Q'_3,\Ibf^q)\to\Hr^2(Q'_3,\Wbf)$ are isomorphic to the groups $\Er_\infty^{2,q}$ of the abutment of (\ref{eq:pardon}) (with trivial twist). Therefore it suffices to prove that the groups $\Er_\infty^{2,q}$ are trivial. We see that $\Er_2^{2,3}=\Hr^2(Q'_3,\bar{\Ibf}^3)=0$ by Lemma \ref{lem:bloch_ogus_isotropic_quadric}, hence $\Er_\infty^{2,3}=0$. On the other hand, one has \[\Er_3^{2,2}=\frac{\Ker(\Sq^2:\Ch^2(Q'_3)\to\Ch^3(Q'_3))}{\Im(\Sq^2:\Ch^1(Q'_3)\to\Ch^2(Q'_3))}.\] By Lemma \ref{lem:chow_group_isotropic_quadric_threefold}, one has $\Ch^2(Q'_3)=\Zb/2\cdot h^2$ where $h^2=\Sq^2(h)$ by \cite[Lemma 9.8]{voevodskyReducedPowerOperations2003}. Therefore $\Er_3^{2,2}=0$, and thus $\Er_\infty^{2,3}=0$. Finally, if $q>3$, then $\Er_2^{2,q}=\Hr^2(Q'_3,\bar{\Ibf}^q)=\Hr^2(Q'_3(\Rb),\Zb/2)$ as $Q'_3$ has dimension $3$, and $\Hr^2(Q'_3(\Rb),\Zb/2)=0$ since $Q'_3(\Rb)$ is a sphere: consequently, one has $\Er_\infty^{2,q}=0$. This completes the proof.
\end{proof}

\begin{cor}
The group $\Wr^2(\Sr_\Rb^3)$ vanishes.
\end{cor}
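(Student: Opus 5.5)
We deduce the vanishing of $\Wr^2(\Sr_\Rb^3)$ from Proposition \ref{prop:computation_witt_group_isotropic_quadric} via the localisation sequence for the closed immersion $i:Q_2=V_+(x_4)\hookrightarrow Q'_3$ with open complement $j:\Sr_\Rb^3=D_+(x_4)\hookrightarrow Q'_3$. Both $Q'_3$ and $Q_2$ are smooth, and $Q_2$ has codimension $1$ in $Q'_3$. The localisation sequence for Witt groups, combined with dévissage/purity for the regular closed immersion $i$ (as in \cite{calmesPushforwardsWittGroups2011}), gives an exact sequence
\[\Wr^2(Q'_3)\xrightarrow{j^*}\Wr^2(\Sr_\Rb^3)\xrightarrow{\partial}\Wr^{3-1+1}_{Q_2}(Q'_3)\cong\Wr^{2}(Q_2,\omega_{Q_2/Q'_3})\xrightarrow{i_*}\Wr^3(Q'_3).\]
Here the support group $\Wr^3_{Q_2}(Q'_3)$ is identified with $\Wr^{2}(Q_2,\mathcal{N})$, where $\mathcal{N}=\omega_{Q_2/Q'_3}$ is the normal bundle. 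Since $Q_2$ is a hyperplane section, $\mathcal{N}\cong\Osc_{Q_2}(1)$.

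The left term vanishes by Proposition \ref{prop:computation_witt_group_isotropic_quadric}, so $\Wr^2(\Sr_\Rb^3)$ injects into $\Wr^2(Q_2,\Osc(1))$. By Theorem \ref{theo:witt_groups_anisotropic_quadric_surface}, $\Wr^p(Q_2,\Osc(1))=0$ for every $p$, in particular for $p=2$. Hence $\Wr^2(\Sr_\Rb^3)=0$.

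An equivalent route, closer to the paper's methods, is to run the argument on the Gersten--Witt complexes. Since all dimensions are $\le3$, $\Wr^p\cong\Hr^p(-,\Wbf)$. The localisation exact sequence in $\Wbf$-cohomology with purity reads
\[\Hr^2(Q'_3,\Wbf)\to\Hr^2(\Sr^3_\Rb,\Wbf)\to\Hr^2(Q_2,\Wbf_{\mathcal N})\to\cdots,\]
where the normal twist enters through the orientation sheaves $\omega_{x/X}$ in the Gersten complex. The same two inputs finish the proof.

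The main obstacle is the bookkeeping for the shift and the twist in the purity isomorphism. We must check that the support term is $\Wr^2(Q_2,\Osc(1))$, and not $\Wr^2(Q_2)$ or a different shift. This matters because $\Wr^1(Q_2)$ and $\Wr^2(Q_2,\Leu_0)$ are nonzero, so a wrong index or twist would break the argument. In the Gersten-complex formulation, the codimension-$p$ points of $Q_2$ are codimension-$(p+1)$ points of $Q'_3$, with $\omega_{x/Q'_3}=\omega_{x/Q_2}\otimes\mathcal{N}|_x$. This gives $\Hr^p_{Q_2}(Q'_3,\Wbf)\cong\Hr^{p-1}(Q_2,\Wbf_{\mathcal N})$, and the connecting map from degree $2$ lands in $p-1=2$. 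The twist is fixed by $\mathcal{N}\cong\Osc(1)$. Since Theorem \ref{theo:witt_groups_anisotropic_quadric_surface} gives vanishing in all degrees for this twist, the exact value of the shift is not needed once the twist is right.
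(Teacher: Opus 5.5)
Your proposal is correct and is essentially the paper's proof: localisation plus dévissage for the divisor $Q_2\hookrightarrow Q'_3$, whose normal (orientation) twist is $\Osc(1)$, gives the exact sequence $\Wr^2(Q'_3)\to\Wr^2(\Sr_\Rb^3)\to\Wr^2(Q_2,\Osc(1))$, and the outer terms vanish by Proposition~\ref{prop:computation_witt_group_isotropic_quadric} and Theorem~\ref{theo:witt_groups_anisotropic_quadric_surface}. Your Gersten-complex variant is the same argument, restated at the level of $\Wbf$-cohomology.
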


\begin{proof}
The orientation sheaf of the closed immersion $Q_2\hookrightarrow Q'_3$ is $\Osc(1)$ in $\Pic(Q_2)/2$ since $Q_2=V_+(x_3)$. As $Q_2$ has codimension $1$ in $Q'_3$, the localization exact sequence and dévissage for Witt groups (see, \emph{e.g.}, \cite[p. 532 ,(11)]{balmerGeometricDescriptionConnecting2009} in the form that we need) induce an exact sequence \[\Wr^2(Q'_3)\to\Wr^2(\Sr^3)\to\Wr^2(Q_2,\Osc(1)).\] By Theorem \ref{theo:witt_groups_anisotropic_quadric_surface}, we have $\Wr^2(Q_2,\Osc(1))=0$. The claim now follows from Proposition \ref{prop:computation_witt_group_isotropic_quadric}.
\end{proof}

\begin{rema}
The computation of $\Wr^2(\Sr_\Rb^3)$ is the bulk of the calculational work of \cite{faselProjectiveModulesReal2011} where Fasel showed that all vector bundles over $\Sr_\Rb^3$ are free, see in particular \cite[§6]{faselProjectiveModulesReal2011}. We believe that the above method is quite a bit simpler than Fasel's (although it uses facts that were not available at the time of \cite{faselProjectiveModulesReal2011}, such as the results of \cite{asokSecondaryCharacteristicClasses2015} to identify the differentials in the twisted Pardon spectral sequence). More recently, the Witt groups of real algebraic spheres in any dimension were obtained by Xie \cite{xieWittRingReal2026}, again using rather different methods.
\end{rema}

\subsubsection{The anisotropic quadric threefold}

We finally consider the Witt groups of $Q_3\subseteq\Pb_\Rb^4$. We begin with the Chow groups of this quadric. Let $h\in\CH^1(Q_3)$ denote the class of hyperplane section; denote the projection $Q_{3,\Cb}=Q_3\times_\Rb\Spec\Cb\to Q_3$ by $\pi$. We further denote by $\lambda\in\Hr_\et^2(Q_3)$ the cohomology class of a hyperplane section, so that $\gamma_\et^1(h)=\lambda$, and by $\lambda_\Cb\in\Hr_\et^2(Q_{3,\Cb})$ its pullback along $\pi$, which is the cohomology class of a hyperplane section of $Q_{3,\Cb}$.

\begin{prop}\label{prop:chow_groups_anisotropic_quadric_threefold}
The group $\CH^i(Q_3)$ is freely generated by $h^i$ for $i\in\{1,3\}$ and $\CH^2(Q_3)=\Zb\cdot h^2\oplus\Zb/2\cdot\alpha$ for some class $\alpha$.
\end{prop}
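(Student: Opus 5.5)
The plan is to lean on Karpenko's computations of Chow groups of anisotropic quadrics, as in Lemmas \ref{lem:picard_group_quadric_surface} and \ref{lem:chow_group_isotropic_quadric_threefold}. The statement splits into three degrees, which I will treat separately.

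\textbf{Codimension $1$.} Since $Q_3$ is a smooth quadric of dimension $3\geq 3$, the Picard group is generated by the hyperplane class. To see this, consider $\pi^*:\CH^1(Q_3)\to\CH^1(Q_{3,\Cb})=\Zb\cdot h_\Cb$. It is injective by \cite[(2.3) Lemma]{karpenkoAlgebrogeometricInvariantsQuadratic1991}, and $\pi^*h=h_\Cb$. Hence $\CH^1(Q_3)=\Zb h$.

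\textbf{Codimension $3$.} Here $Q_3$ is proper with empty real locus. By \cite[Theorem 1.3 (b)]{colliot-theleneZerocyclesCohomologyReal1996}, $\CH_0(Q_3)$ is generated by closed points of degree $2$ and $\Ch^3(Q_3)=\Zb/2$. The degree map identifies $\CH_0(Q_3)$ with a subgroup of $2\Zb$, since there are no real points. For anisotropic quadrics the degree map is moreover injective on $\CH_0$; this is Swan/Karpenko's result, \cite[(2.6)]{karpenkoAlgebrogeometricInvariantsQuadratic1991}. It follows that $\CH^3(Q_3)=\Zb\cdot[\text{conic point}]$. We then check that $h^3$ has degree $2$, by computing $\deg h^3=\deg Q_3=2$. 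Thus $h^3$ generates $\CH^3(Q_3)$.

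\textbf{Codimension $2$.} This is the main step, and I would treat it in two parts.
\begin{itemize}
\item \emph{Free part.} I would first show that $\CH^2(Q_3)$ modulo torsion is $\Zb h^2$. The composite $\deg(\,\cdot\,h)$ sends $h^2$ to $2$, while $\pi^*$ lands in $\CH^2(Q_{3,\Cb})=\Zb\ell$ with $h_\Cb^2=2\ell$. Karpenko's description of $\CH^*$ of anisotropic quadrics modulo torsion \cite[(2.7)]{karpenkoAlgebrogeometricInvariantsQuadratic1991} says that the image of $\pi^*$ in middle codimension is generated by $h^2_\Cb$, because $\ell$ is not defined over $\Rb$ since the quadric is anisotropic. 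Hence the torsion-free quotient is $\Zb h^2$, which splits off.
\item \emph{Torsion.} Next I would show that the torsion subgroup is exactly $\Zb/2$. Karpenko's theorem on torsion in $\CH^2$ of quadrics gives that $\mathrm{Tors}\,\CH^2$ of a projective quadric is $0$ or $\Zb/2$, and is $\Zb/2$ exactly when the form is a $5$-dimensional neighbour of an anisotropic $3$-fold Pfister form. Here $5\langle 1\rangle$ is a neighbour of $\llangle -1\rrangle^{\otimes 3}=8\langle 1\rangle$, which is anisotropic over $\Rb$. So the torsion is $\Zb/2\cdot\alpha$.
\end{itemize}

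If I want to avoid citing that theorem, I would instead try an étale-cohomological check of nontriviality. One route is to use the cycle class map $\gamma_\et^2:\Ch^2(Q_3)\to\Hr_\et^4(Q_3)$ and produce an element of $\Ch^2(Q_3)$ independent of $h^2$. A natural candidate for $\alpha$ is $\pi_*(\ell)-h^2$, which is torsion because $\pi^*\pi_*\ell=2\ell=\pi^*h^2$. The hard part is showing this class is nonzero. I expect that to need either Karpenko's theorem or a computation in the style of Corollary \ref{cor:mod_2_étale_cohomology_anisotropic_quadric_surface} showing $\gamma_\et^2(\pi_*\ell)\neq\lambda^2$. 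That nonvanishing is the main obstacle; the other steps are bookkeeping.
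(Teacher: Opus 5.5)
Your proposal is correct and follows the paper's proof. The paper gets $\CH^1$ from injectivity of $\pi^*$ (Karpenko (2.3)) together with $\pi^*h=h_\Cb$, $\CH^3$ from Karpenko (2.6), the split torsion-free quotient $\Zb h^2$ of $\CH^2(Q_3)$ from Karpenko (2.7), and the torsion $\Zb/2$ from Karpenko's theorem applied to $5\langle 1\rangle=\llangle -1\rrangle^{\otimes 2}\perp\langle 1\rangle$, a neighbour of the anisotropic form $\llangle -1\rrangle^{\otimes 3}$, exactly as you do. One blemish: your recollection of Karpenko's criterion as ``exactly when the form is a $5$-dimensional neighbour'' is too narrow as a general statement, but you only use the implication for your particular form, so the argument stands.
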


\begin{proof}
The statement about $\CH^3(Q_3)$ is contained in \cite[(2.6) Proposition]{karpenkoAlgebrogeometricInvariantsQuadratic1991}. Note that by \cite[(2.1)]{karpenkoAlgebrogeometricInvariantsQuadratic1991}, one has $\CH^1(Q_{3,\Cb})=\Zb\cdot h_\Cb$ where $h_\Cb\in\CH^1(Q_{3,\Cb})$ is the class of a hyperplane section. Then $\pi^*:\CH^1(Q_3)\to\CH^1(Q_{3,\Cb})$ is injective (\cite[(2.3) Lemma]{karpenkoAlgebrogeometricInvariantsQuadratic1991}) and carries $h$ to $h_\Cb$ so it is an isomorphism, which implies that $\CH^1(Q_3)=\Zb\cdot h$. Finally, let $T$ be the torsion subgroup of $\CH^2(Q_3)$. Note that $\CH^2(Q_3)/T$ is freely generated by $h^2$ by \cite[(2.7)]{karpenkoAlgebrogeometricInvariantsQuadratic1991}. In particular, the exact sequence \[0\to T\to\CH^2(Q_3)\to\CH^2(Q_3)/T\to 0\] splits. Moreover, the quadratic form $q$ defining $Q_3$ may be written as $q=\llangle -1\rrangle^{\otimes 2}\perp\langle 1\rangle$ so according to \cite[(5.3) Theorem]{karpenkoAlgebrogeometricInvariantsQuadratic1991}, we have $T=\Zb/2$. This completes of the description of $\CH^2(Q_3)$ claimed in the above statement.
\end{proof}

\begin{lem}
The relations
\begin{equation}\label{eq:relations_étale_cohomology_quadric_threefold}
\omega^4+\omega^2\lambda+\lambda^2=0,\;\omega^3\lambda=0,\;\omega^6=\lambda^3
\end{equation}
hold in $\Hr_\et^*(Q_3)$.
\end{lem}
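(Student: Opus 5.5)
The plan is to deduce all three relations from the computations already carried out for $Q_2$, viewed as a hyperplane section $i:Q_2\hookrightarrow Q_3$ (namely $Q_2=Q_3\cap\{x_4=0\}$), using the Gysin pushforward $i_*:\Hr_\et^{*}(Q_2)\to\Hr_\et^{*+2}(Q_3)$. Two properties of $i_*$ will be used. The first is the projection formula $i_*(i^*x)=x\lambda$, where $i_*(1)=\lambda$ is the class of a hyperplane section. The second is that $i_*$ is an isomorphism in top degree, $\Hr_\et^4(Q_2)\to\Hr_\et^6(Q_3)$. Both groups are $\Zb/2$ by Lemma \ref{lem:top_étale_cohomology_proper_empty} and its proof, since both varieties are proper with empty real locus. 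Moreover, both groups are generated by images of cycle classes of closed points, which are compatible with pushforward; a degree $2$ point of $Q_2$ goes to a degree $2$ point of $Q_3$, and $\gamma_\et^{d}$ is injective by \cite[Theorem 3.2 (c)]{colliot-theleneZerocyclesCohomologyReal1996}. By construction $i^*\omega=\omega$ and $i^*\lambda=\lambda$.

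First, the relation $\omega^3\lambda=0$ is immediate: $\omega^3\lambda=i_*(i^*\omega^3)=i_*(\omega^3)=0$, since $\omega^3=0$ in $\Hr_\et^3(Q_2)$ by Corollary \ref{cor:mod_2_étale_cohomology_anisotropic_quadric_surface}. Second, for $\omega^6=\lambda^3$, I would show that both sides are the nonzero element of $\Hr_\et^6(Q_3)=\Zb/2$. For $\lambda^3$ this holds because $\lambda^3=i_*(\lambda^2)$ and $\lambda^2\neq0$ in $\Hr_\et^4(Q_2)$; alternatively, $\lambda^3=\gamma_\et^3(h^3)$, with $h^3$ generating $\CH^3(Q_3)$ (Proposition \ref{prop:chow_groups_anisotropic_quadric_threefold}) and $\gamma_\et^3$ injective. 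For $\omega^6$ I would use the real-complex sequence. Since $\Hr_\et^{\mathrm{odd}}(Q_{3,\Cb})=0$ (the cohomology of a smooth complex quadric threefold is even), Corollary \ref{cor:end_real_complex_proper_empty} gives that $\cup\omega:\Hr_\et^5\to\Hr_\et^6$ is an isomorphism. I would then need $\omega^5\neq0$, which I would obtain by the same kind of argument one degree lower, checking injectivity of $\cup\omega$ on the relevant classes via (\ref{eq:real_complex_exact_sequence}) and the vanishing of $\Hr_\et^{t}(Q_3)$ for $t\geq 7$. Alternatively, I would try to write $\omega^6$ as $i_*$ of a nonzero class in $\Hr_\et^4(Q_2)$, although $\omega^4=0$ on $Q_2$ makes this less direct.

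The main obstacle is the degree-$4$ relation $s=\omega^4+\omega^2\lambda+\lambda^2=0$. My strategy is to locate all three terms in the image of the cycle class map $\gamma_\et^2:\Ch^2(Q_3)\to\Hr_\et^4(Q_3)$, where $\Ch^2(Q_3)=\Zb/2\cdot h^2\oplus\Zb/2\cdot\alpha$ by Proposition \ref{prop:chow_groups_anisotropic_quadric_threefold}, and then to detect $s$ by intersection numbers. For the first step, $\lambda^2=\gamma_\et^2(h^2)$ is clear. For $\omega^4$ and $\omega^2\lambda$, I would show they have coniveau $\geq 2$. By Lemma \ref{lem:vanishing_cohomology_function_field_anisotropic_quadric}, $\omega^3$ vanishes on the function field of $Q_3$; on codimension-one points, $\omega^2$ or $\omega^3$ should vanish on the residue fields by the same Pfister argument applied to the hyperplane-type subvarieties, though this needs care. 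The Bloch--Ogus spectral sequence (\ref{eq:bloch_ogus}) then identifies coniveau-$2$ classes in degree $4$ with $\Im\gamma_\et^2$. Writing $s=\gamma_\et^2(a h^2+b\alpha)$, I would determine $a$ and $b$ by two tests. The first test is cup product with $\lambda$ into $\Hr_\et^6=\Zb/2$: here $s\lambda=\omega^4\lambda+\omega^2\lambda^2+\lambda^3$. The term $\omega^4\lambda$ vanishes by the relation $\omega^3\lambda=0$ just proved, and $\omega^2\lambda^2=i_*(\omega^2\lambda)$ and $\lambda^3=i_*(\lambda^2)$ agree because $\omega^2\lambda=\lambda^2$ in $\Hr_\et^4(Q_2)$. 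Hence $s\lambda=0$. The second test is pullback along $\pi$ to $Q_{3,\Cb}$: $\omega$ dies, and $\lambda_\Cb^2$ is twice the class of a line, hence zero mod $2$, so $\pi^*s=0$. Together with the torsion class $\alpha$ satisfying $\alpha h=0$, these two tests should force $a=b=0$. If the detection of $\alpha$ turns out not to suffice, my fallback is the relation $\gamma_\et^2(\alpha)=\omega^2\lambda$ or $\omega^4$, obtained by pushing forward $f$ from $Q_2$ via $i_*\gamma_\et^1(f)=i_*(\omega^2)=\omega^2\lambda$.
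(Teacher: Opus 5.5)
You have a genuine gap in the degree-$4$ relation. Your arguments for $\omega^3\lambda=0$ (as $i_*(i^*\omega^3)=i_*(\omega^3)=0$) and for $\lambda^3=i_*(\lambda^2)\neq 0$ are correct. This hyperplane-section route is genuinely different from the paper's, which reads all three relations off the Wu relations of \cite[Proposition 4.5 (iii)]{benoistWuRelationsReal2026} after computing $\bar{c}(T_{Q_3})=1+h$ mod $2$ from $c=(1+h)^5/(1+2h)$.

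Your two tests for $s=\omega^4+\omega^2\lambda+\lambda^2$ are blind to exactly the component at stake. Write $s=\gamma_{\et}^2(ah^2+b\alpha)$.
\begin{itemize}
\item Cup product with $\lambda$ kills the $\alpha$-part, because $\alpha h=0$.
\item Pullback to $Q_{3,\Cb}$ kills everything: $\pi^*\omega=0$; $\pi^*\alpha=0$ because $\CH^2(Q_{3,\Cb})$ is torsion free; and $\lambda_\Cb^2=2[L]=0$.
\end{itemize}
So you get $a=0$ and no information on $b$. Your tests cannot distinguish $\lambda^2=\omega^2\lambda$ from $\lambda^2=\omega^2\lambda+\omega^4$; the fact $\alpha h=0$ is why they fail, not why they succeed. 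The fallback does not close the gap either. The identity $\gamma_{\et}^2(i_*f)=i_*(\omega^2)=\omega^2\lambda$ is fine, but you do not know whether $i_*f=\pi_*l$ equals $h^2$ or $h^2+\alpha$, nor the value of $\gamma_{\et}^2(\alpha)$. In the paper, both $\gamma_{\et}^2(\alpha)=\omega^4$ and $\pi_*l=h^2+\alpha$ are deduced from this very lemma.

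The missing ingredient is cup product with $\omega^2$, and it makes your coniveau step unnecessary. From the real-complex sequence and $\Hr^{\mathrm{odd}}_{\et}(Q_{3,\Cb})=0$ you get the following.
\begin{itemize}
\item $\Hr^2_{\et}(Q_3)$ is spanned by $\omega^2$ and $\lambda$.
\item $\cup\omega$ is injective on $\Hr^2_{\et}(Q_3)$ and $\Hr^3_{\et}(Q_3)$, since the kernels are $\pi_*\pi^*\lambda=0$ and $\pi_*\Hr^3_{\et}(Q_{3,\Cb})=0$.
\item $\cup\omega:\Hr^2_{\et}(Q_3)\to\Hr^3_{\et}(Q_3)$ is onto.
\end{itemize}
Since $\omega^2\lambda\neq0$ and $\omega^3\lambda=0$, you get $\pi_*[L]=\omega^2\lambda$. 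Hence $\pi^*=0$ on $\Hr^4_{\et}(Q_3)$, and $\Hr^4_{\et}(Q_3)=\omega\Hr^3_{\et}(Q_3)$ is spanned by $\omega^4$ and $\omega^2\lambda$.

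This is also what your step $\omega^5\neq 0$ really needs. The map $\cup\omega$ is not injective on $\Hr^4_{\et}(Q_3)$ (its kernel is $\Zb/2\cdot\omega^2\lambda$), so it is not ``the same argument one degree lower''. You must check $\omega^4\notin\{0,\omega^2\lambda\}$. This holds because $\omega^2\neq\lambda$ (compare images under $\pi^*$) and $\cup\omega^2$ is injective on $\Hr^2_{\et}(Q_3)$. Then $\omega^6\neq0$.

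Next, $\lambda^3=\omega^2\lambda^2$ is nonzero: use $i_*$ and the relation $\omega^2\lambda=\lambda^2$ on $Q_2$. Pair the basis $\omega^4,\omega^2\lambda$ with $\omega^2,\lambda$ into $\Hr^6_{\et}(Q_3)=\Zb/2$. Since $\omega^4\lambda=0$, the pairing matrix is diagonal with entries $\omega^6$ and $\lambda^3$, both nonzero. Finally $s\lambda=\lambda^3+\lambda^3=0$ and $s\omega^2=\omega^6+\lambda^3=0$, which force $s=0$. With these additions your approach becomes a valid, more elementary alternative to the paper's appeal to the Wu relations.
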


\begin{proof}
Recall from \cite[Example 3.2.12]{fultonIntersectionTheory1998} that the total Chern class $c$ of the tangent bundle of $Q_3$ satisfies the equality \[c=\frac{(1+h)^5}{1+2h}\;\text{in}\;\CH^*(Q_3).\] In particular, one has $\bar{c}=1+h$ in $\CH^*(Q_3)/2$, since $h^4=0$ (recall that $Q_3$ has dimension $3$ so $\CH^p(Q_3)=0$ for every $p>3$) and the coefficients before $t^2$ and $t^3$ in $(1+t)^5$ are even. Thus
\[\bar{c}_1=h\;\text{in}\;\Ch^1(Q_3)\;\text{and}\;\bar{c}_j=0\;\text{in}\;\Ch^j(Q_3)\;(j\in\{2,3\})\]
where $\bar{c}_j$ is the $j$-th mod $2$ Chern class of the tangent bundle of $Q_3$. Now \cite[Proposition 4.5 (iii)]{benoistWuRelationsReal2026} yields the relations of the statement.
\end{proof}

\begin{lem}\label{lem:étale_cycle_class_torsion}
The equality $\gamma_\et^2(\alpha)=\omega^4$ holds in $\Hr_\et^4(Q_3)$.
\end{lem}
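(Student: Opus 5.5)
The plan mirrors the proof of Lemma \ref{lem:computation_first_étale_cycle_class_quadric_surface}. Since $\alpha$ is $2$-torsion and $\CH^2(Q_{3,\Cb})$ is torsion free (it is $\Zb$, $Q_{3,\Cb}$ being a cellular quadric), $\pi^*\alpha=0$, hence $\pi^*\gamma_\et^2(\alpha)=0$. By the real-complex exact sequence (\ref{eq:real_complex_exact_sequence}) it follows that $\gamma_\et^2(\alpha)=\omega\beta$ for some $\beta\in\Hr_\et^3(Q_3)$. Moreover $\gamma_\et^2(\alpha)\neq 0$, because $\gamma_\et^2$ is injective on $\Ch^2$ for a proper variety with empty real locus in top-minus-one degree. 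More precisely, I would use the Bloch--Ogus spectral sequence (\ref{eq:bloch_ogus}): the edge map $\Ch^2(Q_3)\to\Hr_\et^4(Q_3)$ has kernel the image of $d_2:\Hr^0(Q_3,\Hsc^3)\to\Hr^2(Q_3,\Hsc^2)$. One option is to show that $\Hr^0(Q_3,\Hsc^3)=0$, using Kahn's results on unramified cohomology of quadrics as in Lemma \ref{lem:global_sections_torsion_quadric_surface}. An alternative is to invoke \cite[Theorem 3.2]{colliot-theleneZerocyclesCohomologyReal1996}/Krasnov-type injectivity. I flag this injectivity as the main obstacle, and would first try the Kahn vanishing of $\Hr^0(Q_3,\Hsc^3)$ for anisotropic $3$-dimensional quadrics, which is known (its unramified $\Hr^3$ is spanned by $\omega^3$, which dies on $Q_3$ by Lemma \ref{lem:vanishing_cohomology_function_field_anisotropic_quadric}).

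Next I would identify $\Hr_\et^3(Q_3)$. Since $\Hr_\et^1(Q_{3,\Cb})=\Hr_\et^3(Q_{3,\Cb})=0$ (odd cohomology of a smooth complex quadric of odd dimension vanishes), the real-complex sequence gives $\Hr_\et^1(Q_3)=\Zb/2\cdot\omega$. The map $\cup\omega$ sends $\Hr_\et^2(Q_3)$ onto $\Hr_\et^3(Q_3)$, and $\Hr_\et^2(Q_3)$ is spanned by $\omega^2$ and $\lambda$: via $0\to\Hr_\et^1(Q_3)\omega\to\Hr_\et^2(Q_3)\to\Hr_\et^2(Q_{3,\Cb})=\Zb/2\cdot\lambda_\Cb$, $\lambda$ lifts $\lambda_\Cb$. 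Hence $\Hr_\et^3(Q_3)$ is spanned by $\omega^3$ and $\omega\lambda$, and $\gamma_\et^2(\alpha)\in\{\omega^4,\omega^2\lambda,\omega^4+\omega^2\lambda\}\setminus\{0\}$.

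To pin down the value, I would use $\omega^3\lambda=0$ from (\ref{eq:relations_étale_cohomology_quadric_threefold}) together with the fact that $\gamma_\et^2(\alpha)\cdot\gamma_\et^1(h)=\gamma_\et^3(\alpha h)$. Since $\CH^3(Q_3)=\Zb h^3$ is torsion free, $\alpha h=0$, so $\gamma_\et^2(\alpha)\lambda=0$. Now compute each candidate times $\lambda$ using the relations. First, $\omega^4\lambda=\omega\cdot\omega^3\lambda=0$. Second, $\omega^2\lambda^2=\omega^2(\omega^4+\omega^2\lambda)=\omega^6$. So the candidates containing $\omega^2\lambda$ are excluded provided $\omega^6\neq 0$. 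For this, $\omega^6=\lambda^3=\gamma_\et^3(h^3)$, and $h^3$ is nonzero in $\Ch^3(Q_3)=\Zb/2$; injectivity of $\gamma_\et^3$ on zero-cycles \cite[Theorem 3.2 (c)]{colliot-theleneZerocyclesCohomologyReal1996} then gives $\omega^6\neq0$. The only remaining candidate is $\gamma_\et^2(\alpha)=\omega^4$.
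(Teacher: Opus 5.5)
Your argument is correct, but it takes a different route from the paper's. You proceed in three steps:
\begin{itemize}
\item You confine $\gamma_\et^2(\alpha)$ to the span of $\omega^4$ and $\omega^2\lambda$, using $\pi^*\alpha=0$ and the real--complex sequence.
\item You kill the $\omega^2\lambda$-component multiplicatively: $\gamma_\et^2(\alpha)\lambda=\gamma_\et^3(\alpha h)=0$, whereas $(\omega^2\lambda)\lambda=\omega^6\neq 0$.
\item You exclude $0$ by injectivity of $\gamma_\et^2$ on $\Ch^2(Q_3)$, which you get from $\Hr^0(Q_3,\Hsc^3)=0$ (Kahn--Sujatha).
\end{itemize}
The input in the third step is legitimate: the paper cites exactly this vanishing a little later, for the Pardon spectral sequence of $Q_3$.

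The paper never needs injectivity of $\gamma_\et^2$. It first shows $\omega^2\lambda\neq 0$ and $\omega^3\lambda=0$, and deduces $\pi_*[L]=\omega^2\lambda$ for the class of a line $L\subseteq Q_{3,\Cb}$. It then writes $\pi_*l=nh^2+u\alpha$, with $n=1$ forced by $\pi_*\pi^*h^2=2h^2$. The relation $\lambda^2=\omega^4+\omega^2\lambda$ then gives $u\,\gamma_\et^2(\alpha)=\omega^4$. Since $\omega^4\neq 0$ (because $\omega^6\neq 0$), both $u=1$ and $\gamma_\et^2(\alpha)=\omega^4$ follow.

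So the paper's route is self-contained modulo the Wu relations, and it produces $\pi_*l=h^2+\alpha$ and $\gamma_\et^2(\alpha)\neq 0$ as outputs. Yours is shorter but imports the unramified-cohomology vanishing.

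One correction to a side remark: the alternative you mention, \cite[Theorem 3.2]{colliot-theleneZerocyclesCohomologyReal1996}, gives injectivity of the cycle class map only on zero-cycles. That is how the paper uses it for $\gamma_\et^3$. It says nothing about $\Ch^2$ of a threefold, so only the Kahn--Sujatha route is available for that step.
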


\begin{proof}
Let $l\in\CH^2(Q_{3,\Cb})$ be the class of a line $L$ in the split quadric $Q_{3,\Cb}$; denote by $[L]\in\Hr_\et^4(Q_3)$ the cohomology class of $L$, so that $\gamma_\et^2(l)=[L]$. The element $\omega^2\lambda$ of $\Hr_\et^4(Q_3)$ is nonzero. Indeed, by (\ref{eq:real_complex_exact_sequence}), the kernel of $\cup\omega:\Hr_\et^3(Q_3)\to\Hr_\et^4(Q_3)$ is a quotient of $\Hr_\et^3(Q_{3,\Cb})$, which vanishes since $Q_{3,\Cb}$ is split, and there is an exact sequence \[\Hr_\et^2(Q_3)\xrightarrow{\pi^*}\Hr_\et^2(Q_{3,\Cb})\xrightarrow{\pi_*}\Hr_\et^2(Q_3)\xrightarrow{\cup\omega}\Hr_\et^3(Q_3)\] where $\pi^*\lambda=\lambda_\Cb$ is the generator of $\Hr_\et^2(Q_{3,\Cb})$ so $\cup\omega:\Hr_\et^2(Q_3)\to\Hr_\et^3(Q_3)$ is also injective. On the other hand, we have $\omega^3\lambda=0$ by (\ref{eq:relations_étale_cohomology_quadric_threefold}). Thus $\omega^2\lambda$ lies in the image of $\pi_*:\Hr_\et^4(Q_{3,\Cb})\to\Hr_\et^4(Q_3)$: since $\Hr_\et^4(Q_{3,\Cb})=\Zb/2\cdot[L]$, it follows that $\pi_*([L])=\omega^2\lambda$. Now we write $\pi_*(l)=nh^2+u\alpha$ where $n\in\Zb$ and $u\in\Zb/2$. Since $Q_{3,\Cb}$ has degree $2$, one has $\pi^*h^2=2l$, while $\pi_*\pi^*h=2h^2$ since $\pi$ has degree $2$. Therefore \[2h^2=\pi_*2l=2\pi_*l=2nh^2+2u\alpha=2nh^2.\] We conclude that $n=1$ (because $h^2$ is torsion free in $\CH^2(Q_3)$), so that $\pi_*l=h^2+u\alpha$. Thus $\gamma_\et^2(\pi_*l)=\lambda^2+u\gamma_\et^2(\alpha)$. On the other hand, by commutation of étale cycle class maps with pushforwards, the equality $\gamma_\et^2(\pi_*l)=\pi_*([L])=\omega^2\lambda$. Therefore \[\lambda^2+u\gamma_\et^2(\alpha)=\omega^2\lambda.\] It then follows from (\ref{eq:relations_étale_cohomology_quadric_threefold}) that $u\gamma_\et^2(\alpha)=\omega^4$. Since $\omega^6=\lambda^3=\gamma_\et^3(h^3)$ and $\gamma_\et^3$ is injective by \cite[Theorem 3.2 (c)]{colliot-theleneZerocyclesCohomologyReal1996}, so that $\gamma_\et^3(h^3)\neq 0$ in view of Proposition \ref{prop:chow_groups_anisotropic_quadric_threefold}, the element $\omega^6$ of $\Hr_\et^6(Q_3)$ is nonzero so certainly $\omega^4$ is nonzero. We conclude that $u=1\in\Zb/2$ and $\gamma_\et^2(\alpha)=\omega^4$, as announced.
\end{proof}

\begin{lem}\label{lem:steenrod_square_torsion}
One has $\Sq^2(\alpha)=h^3$ in $\Ch^3(Q_3)$.
\end{lem}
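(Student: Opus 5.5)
We will work entirely in mod $2$ étale cohomology and transport the question there through the étale cycle class map. First we check that $\gamma_\et^3:\Ch^3(Q_3)\to\Hr_\et^6(Q_3)$ is injective, which is \cite[Theorem 3.2 (c)]{colliot-theleneZerocyclesCohomologyReal1996}. We also know $\Ch^3(Q_3)=\Zb/2\cdot h^3$ by Proposition \ref{prop:chow_groups_anisotropic_quadric_threefold}, and $\gamma_\et^3(h^3)=\lambda^3=\omega^6\neq 0$ by the proof of Lemma \ref{lem:étale_cycle_class_torsion}. The claim therefore reduces to the identity
\[\gamma_\et^3(\Sq^2\alpha)=\omega^6\quad\text{in }\Hr_\et^6(Q_3).\]

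To compute the left-hand side, we use the compatibility of the étale cycle class map with Steenrod operations. Modulo $\gamma_\et^*$, the motivic operation $\Sq^2$ on $\Ch^*=\Hr_\Mr^{2*,*}$ corresponds to the topological/étale square $\Sq^2$ on $\Hr_\et^{2*}$. This follows from the commutative square relating $\Hr_\Mr^{p,q}$, multiplication by $\tau$ and $\Hr_\et^p$ recalled in §2, together with the fact that étale realisation carries Voevodsky's $\Sq^{2i}$ to the classical operation; in the equivariant setting this is the input from \cite{benoistSteenrodOperationsAlgebraic2025}. Granting this, we get
\[\gamma_\et^3(\Sq^2\alpha)=\Sq^2(\gamma_\et^2(\alpha))=\Sq^2(\omega^4),\]
using Lemma \ref{lem:étale_cycle_class_torsion}.

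Next we compute $\Sq^2(\omega^4)$ by the Cartan formula. Since $\omega$ has degree $1$, we have $\Sq^1\omega=\omega^2$ and $\Sq^i\omega=0$ for $i\geq 2$. Hence the total square is $\Sq(\omega)=\omega+\omega^2$, and
\[\Sq(\omega^4)=(\omega+\omega^2)^4=\omega^4+\omega^8.\]
This would give $\Sq^2(\omega^4)=0$, so the naive étale computation has to be wrong. The discrepancy comes from $\tau$: over $\Rb$, the motivic Steenrod operations involve the class $\rho=\omega$ (for instance $\Sq^2(\tau)=\rho\tau\cdot$ stuff, and $\Sq^1\tau=\rho$). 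The correct realisation is therefore the twisted one. Writing $\gamma_\et^2(\alpha)=\mathrm{real}(\alpha\tau^2)$, we have
\[\Sq^2(\alpha\tau^2)=\Sq^2(\alpha)\tau^2+\Sq^1\alpha\,\Sq^1(\tau^2)+\alpha\Sq^2(\tau^2),\]
with $\Sq^1(\tau^2)=0$ and $\Sq^2(\tau^2)=\rho^2\tau$. After realisation this yields
\[\gamma_\et^3(\Sq^2\alpha)=\Sq^2(\omega^4)+\omega^2\gamma_\et^2(\alpha)=0+\omega^6.\]

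This correction term is exactly the formula we expect: the realisation of motivic $\Sq^2$ differs from topological $\Sq^2$ by cup product with $\omega^2$ in this degree, consistent with the Wu-type relations of \cite{benoistWuRelationsReal2026}. The main obstacle is pinning down this correction precisely, including the coefficient $\rho^2$ and the absence of other terms. I would first try to extract it from the formula $\Sq^2(\tau^2)=\rho^2\tau$ (valid since $\Sq^2\tau=0$ and $\Sq^1\tau=\rho$, using the Cartan formula with the $\tau\Sq^1\otimes\Sq^1$ correction), or cite the corresponding statement in \cite{benoistSteenrodOperationsAlgebraic2025}.

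As a fallback, one can use Theorem \ref{theo:Totaro} and the identity $\Sq^2=\Phi_{2,2}$. Lemma \ref{lem:projection_dimension_filtration_isomorphism} with $X(\Rb)=\emptyset$ lets us compute $\Phi_{2,2}(\alpha)$ via the Pardon/Bloch–Ogus identification. Then $\Sq^2\alpha\neq 0$ is equivalent to $\alpha\notin\Ker\partial^{2,2}$, which can be checked by showing that $\alpha$ does not lift to $\Hr^2(Q_3,\Ibf^2)$.
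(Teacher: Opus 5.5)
\paragraph{Verdict.} Your skeleton is the paper's proof. Injectivity of $\gamma_\et^3$ and $\gamma_\et^3(h^3)=\lambda^3=\omega^6$ reduce the claim to $\gamma_\et^3(\Sq^2\alpha)=\omega^6$. This then follows from the comparison formula
\[\Sq_\et^2(\gamma_\et^2(\alpha))=\gamma_\et^3(\Sq^2\alpha)+\omega^2\gamma_\et^2(\alpha),\]
together with $\gamma_\et^2(\alpha)=\omega^4$ and $\Sq_\et^2(\omega^4)=0$. The paper takes the comparison formula directly from \cite[Theorem 3.4]{benoistSteenrodOperationsAlgebraic2025}, and the vanishing of $\Sq_\et^2(\omega^4)$ from Benoist's Cartan formula. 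If you cite that theorem, as you suggest, your proof is complete and coincides with the paper's.

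\paragraph{The realisation step is not yet a proof.} Your own derivation of the correction term does not work as written. You never specify which étale $\Sq^2$ you mean. A realisation that is multiplicative, sends $\tau$ to $1$, and commutes with $\Sq^2$ in every weight cannot exist: your own identity $\Sq^2(\tau^2)=\rho^2\tau$ realises to $\omega^2\neq 0=\Sq^2(1)$. So the compatibility you invoke can hold for at most one weight normalisation. You then evaluate $\Sq^2(\omega^4)$ with the classical rule $\Sq^1\omega=\omega^2$, whereas motivically $\Sq^1\rho=0$. The two conventions happen to agree on $\omega^4$, but nothing in your text justifies this. Note also that your first, naive attempt is rejected only because it contradicts the expected answer.

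\paragraph{A rigorous motivic fix.} Your idea does become rigorous without any étale Steenrod operations. The map $\mathrm{cl}_\Mr^{4,4}$ is an isomorphism (Section \ref{section:preliminaries}). Since $\mathrm{cl}_\Mr(\tau^2\alpha)=\gamma_\et^2(\alpha)=\omega^4=\mathrm{cl}_\Mr(\rho^4)$, one gets $\tau^2\alpha=\rho^4$ in $\Hr_\Mr^{4,4}(Q_3)$. Now apply the motivic $\Sq^2$ and Voevodsky's Cartan formula, using $\Sq^1\tau=\rho$, $\Sq^2\tau=0$, $\Sq^1(\tau^2)=0$ and $\Sq^1\rho=\Sq^2\rho=0$. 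This gives
\[\tau^2\Sq^2(\alpha)+\tau\rho^2\alpha=\Sq^2(\rho^4)=0\quad\text{in }\Hr_\Mr^{6,5}(Q_3).\]
Applying $\mathrm{cl}_\Mr^{6,5}$ then yields $\gamma_\et^3(\Sq^2\alpha)=\omega^2\gamma_\et^2(\alpha)=\omega^6$.

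This is the same manipulation the paper uses for $x$ in Lemma \ref{lem:steenrod_square_torsion_motivic}. It is more self-contained than the appeal to Benoist, at the cost of using the Beilinson--Lichtenbaum isomorphism in bidegree $(4,4)$.

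\paragraph{The fallback.} Your last paragraph adds nothing. It only restates $\Sq^2\alpha\neq 0$ as the non-liftability of $\alpha$ to $\Hr^2(Q_3,\Ibf^2)$, with no way to check it.
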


\begin{proof}
According to \cite[Theorem 3.4]{benoistSteenrodOperationsAlgebraic2025}, we have \[\Sq_\et^2(\gamma_\et^2(\alpha))=\gamma_\et^3(\Sq^2(\alpha))+\omega^2\gamma_\et^2(\alpha)\] where $\Sq_\et^2$ is the second Steenrod square in étale cohomology of \cite[§2.1 and 2.2]{benoistSteenrodOperationsAlgebraic2025} (following \cite{guillouOperationsEtaleMotivic2019}). We have $\gamma_\et^2(\alpha)=\omega^4$ by Lemma \ref{lem:étale_cycle_class_torsion}. The Cartan formula \cite[(2-6)]{benoistSteenrodOperationsAlgebraic2025} shows that $\Sq_\et^2(\omega^4)=0$ so \[\gamma_\et^3(\Sq^2(\alpha))+\omega^2\gamma_\et^2(\alpha)=\gamma_\et^3(\Sq^2(\alpha))+\omega^6=0.\] Thus $\gamma_\et^3(\Sq^2(\alpha))=\omega^6=\lambda^3=\gamma_\et^3(h^3)$ in view of (\ref{eq:relations_étale_cohomology_quadric_threefold}): since $\gamma_\et^3$ is injective by \cite[Theorem 3.2 (c)]{colliot-theleneZerocyclesCohomologyReal1996}, we conclude that $\Sq^2(\alpha)=h^3$.
\end{proof}

\begin{rema}
Lemma \ref{lem:steenrod_square_torsion} is also implied by \cite[Remarks 5.1 and 5.4]{hautionFirstSteenrodSquare2013}. The above proof is more internal to the methods of the present paper.
\end{rema}

\begin{lem}\label{lem:third_and_fifth_étale_cohomology_quadric_threefold}
One has $\Hr_\et^3(Q_3)=\Zb/2\cdot\omega^3\oplus\Zb/2\cdot\omega\lambda$ and $\Hr_\et^5(Q_3)=\Zb/2\cdot\omega^5=\Zb/2\cdot\omega\lambda^2$.
\end{lem}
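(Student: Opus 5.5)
The plan is to run the real-complex exact sequence (\ref{eq:real_complex_exact_sequence}) for $Q_3$ in degrees $2$ through $6$. We use that $Q_{3,\Cb}$ is a split quadric of dimension $3$, so $\Hr_\et^{odd}(Q_{3,\Cb})=0$ and $\Hr_\et^{2i}(Q_{3,\Cb})=\Zb/2$ for $0\le i\le 3$. The generators are $\lambda_\Cb$ in degree $2$, the class $[L]$ of a line in degree $4$ (note $\lambda_\Cb^2=2[L]=0$), and the point class in degree $6$.

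\textbf{Degree $3$.} Vanishing of $\Hr_\et^3(Q_{3,\Cb})$ gives an exact sequence
\[\Hr_\et^2(Q_{3,\Cb})\xrightarrow{\pi_*}\Hr_\et^2(Q_3)\xrightarrow{\cup\omega}\Hr_\et^3(Q_3)\to 0,\]
so $\Hr_\et^3(Q_3)=\omega\Hr_\et^2(Q_3)$. The proof of Lemma \ref{lem:étale_cycle_class_torsion} showed that $\cup\omega$ is injective on $\Hr_\et^2(Q_3)$. It therefore suffices to identify $\Hr_\et^2(Q_3)$.

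To do this, use Remark \ref{rema:real_complex_H1} and $\Hr_\et^1(Q_{3,\Cb})=0$ to get $\Hr_\et^1(Q_3)=\Zb/2\cdot\omega$. Then the sequence
\[0\to\Hr_\et^1(Q_3)\xrightarrow{\cup\omega}\Hr_\et^2(Q_3)\xrightarrow{\pi^*}\Hr_\et^2(Q_{3,\Cb})\]
has last map surjective, since $\pi^*\lambda=\lambda_\Cb$. Hence $\Hr_\et^2(Q_3)=\Zb/2\cdot\omega^2\oplus\Zb/2\cdot\lambda$, and consequently $\Hr_\et^3(Q_3)=\Zb/2\cdot\omega^3\oplus\Zb/2\cdot\omega\lambda$.

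\textbf{Degree $5$.} Since $\Hr_\et^5(Q_{3,\Cb})=0$, we get $\Hr_\et^5(Q_3)=\omega\Hr_\et^4(Q_3)$, and $\cup\omega:\Hr_\et^5\to\Hr_\et^6$ is injective. The plan is to compute $\Hr_\et^4(Q_3)$ by continuing the sequence:
\[0\to\Hr_\et^3(Q_3)\xrightarrow{\cup\omega}\Hr_\et^4(Q_3)\xrightarrow{\pi^*}\Hr_\et^4(Q_{3,\Cb})=\Zb/2\cdot[L].\]
Injectivity on the left holds because $\Hr_\et^3(Q_{3,\Cb})=0$. From the relations (\ref{eq:relations_étale_cohomology_quadric_threefold}) we have $\omega^3\lambda=0$. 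So $\omega\Hr_\et^3(Q_3)$ is spanned by $\omega^4$ and $\omega^2\lambda$, and it is $2$-dimensional by the injectivity just noted.

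Next, decide whether $\pi^*$ hits $[L]$ in degree $4$. This is the main obstacle. I would try to avoid deciding it by looking directly at degree $5$.
\begin{itemize}
\item Any class in degree $4$ not in $\omega\Hr_\et^3(Q_3)$ maps to $[L]$ under $\pi^*$. Multiplying it by $\omega$ gives a class $\omega x$.
\item For such $x$, $\omega\cdot\omega x$ equals $\omega^2 x$, and its image in $\Hr_\et^6(Q_3)$ can be computed.
\item Alternatively, for $u$ a degree-$4$ class with $\pi^*u=[L]$, check whether $\omega u\in\omega^2\Hr_\et^3(Q_3)$ using the degree-$6$ sequence and $\pi_*\pi^*=0$ modulo $\omega$.
\end{itemize}

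Concretely, the cleanest route is a dimension count in degree $6$. Since $Q_3(\Rb)=\emptyset$, Lemma \ref{lem:top_étale_cohomology_proper_empty} and Corollary \ref{cor:end_real_complex_proper_empty} apply (with $\Hr_\et^5(Q_{3,\Cb})=0$): $\Hr_\et^6(Q_3)\cong\Zb/2$ and $\cup\omega:\Hr_\et^5\to\Hr_\et^6$ is an isomorphism. Hence $\Hr_\et^5(Q_3)=\Zb/2$.

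Finally, use the relations to pin down the generator:
\begin{itemize}
\item $\omega^6=\lambda^3\neq0$, which was shown in the proof of Lemma \ref{lem:étale_cycle_class_torsion}. Therefore $\omega^5\neq0$ and it generates $\Hr_\et^5(Q_3)$.
\item Multiplying $\omega^4+\omega^2\lambda+\lambda^2=0$ by $\omega$ and using $\omega^3\lambda=0$ gives $\omega^5=\omega\lambda^2$.
\end{itemize}
This yields $\Hr_\et^5(Q_3)=\Zb/2\cdot\omega^5=\Zb/2\cdot\omega\lambda^2$, and the open question about $\pi^*$ in degree $4$ is bypassed.
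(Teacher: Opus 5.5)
Your argument is correct. In degree $3$ it is the paper's argument; the only difference is that you get $\Hr_\et^1(Q_3)=\Zb/2\cdot\omega$ from Remark~\ref{rema:real_complex_H1} instead of from Bloch--Ogus and Kahn, which is fine.

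In degree $5$ you take a different route. The paper resolves the degree-$4$ question you set aside.

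\emph{The paper's route.} Since $\pi_*[L]=\omega^2\lambda\neq 0$ (proof of Lemma~\ref{lem:étale_cycle_class_torsion}), the norm $\pi_*$ is injective on $\Hr_\et^4(Q_{3,\Cb})$. So $\pi^*$ vanishes in degree $4$ and never hits $[L]$. Hence $\Hr_\et^4(Q_3)=\omega\Hr_\et^3(Q_3)=\Zb/2\cdot\omega^4\oplus\Zb/2\cdot\omega^2\lambda$. Then $\Hr_\et^5(Q_3)$ is the quotient of this group by $\Ker(\cup\omega)=\Im\pi_*=\Zb/2\cdot\omega^2\lambda$. It is therefore $\Zb/2$, generated by $\omega^5$ because $\omega^4\notin\Im\pi_*$. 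Finally $\omega\lambda^2=\omega^5$ because $\lambda^2\equiv\omega^4$ modulo $\omega^2\lambda$.

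\emph{Your route.} You bound $\Hr_\et^5(Q_3)$ through the top degree: Lemma~\ref{lem:top_étale_cohomology_proper_empty} and Corollary~\ref{cor:end_real_complex_proper_empty} give $\Hr_\et^5(Q_3)\cong\Hr_\et^6(Q_3)\cong\Zb/2$. You then detect $\omega^5\neq 0$ via $\omega^6=\lambda^3=\gamma_\et^3(h^3)\neq 0$.

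\emph{Comparison.} Your route is shorter and never needs the class $[L]$ or the group $\Hr_\et^4(Q_3)$. The paper's route uses only the real--complex sequence and the already established identity $\pi_*[L]=\omega^2\lambda$, with no top-degree input. As by-products it gives $\Hr_\et^4(Q_3)$ and the vanishing of $\pi^*$ in degree $4$.

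In a write-up, drop your exploratory bullets about degree $4$; your final argument does not use them. In particular, the phrase about $\pi_*\pi^*$ vanishing ``modulo $\omega$'' has no precise meaning.
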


\begin{proof}
For the first claim, since $\pi^*:\Hr_\et^2(Q_3)\to\Hr_\et^2(Q_{3,\Cb})$ is surjective, the map $\cup\omega:\Hr_\et^2(Q_3)\to\Hr_\et^3(Q_3)$ is injective by (\ref{eq:real_complex_exact_sequence}); it is also surjective since $\Hr_\et^3(Q_{3,\Cb})=0$. Since $\Hr_\et^1(Q_{3,\Cb})=0$, we also have an exact sequence \[0\to\Hr_\et^1(Q_3)\xrightarrow{\cup\omega}\Hr_\et^2(Q_3)\xrightarrow{\pi^*}\Hr_\et^2(Q_3)=\Zb/2\cdot\lambda_\Cb^2\to 0\] where $\Hr_\et^1(Q_3)=\Hr^0(Q_3,\Hsc^1)=\Zb/2\cdot\omega$ by (\ref{eq:bloch_ogus}) and \cite[Theorem 4.3]{kahnMotivicCohomologyUnramified2000}. Since $\pi^*\lambda=\lambda_\Cb$, this exact sequence yields $\Hr_\et^2(Q_3)=\Zb/2\cdot\omega^2\oplus\Zb/2\cdot\lambda$. These two facts yield the claim.

Since $\Hr_\et^3(Q_{3,\Cb})=0$, the map $\cup\omega:\Hr_\et^3(Q_{3,\Cb})\to\Hr_\et^4(Q_{3,\Cb})$ is injective; since $\pi_*([L])\neq 0$ as noted in the proof of Lemma \ref{lem:étale_cycle_class_torsion}, the map $\pi_*:\Hr_\et^4(Q_{3,\Cb})\to\Hr_\et^4(Q_3)$ is injective so the real-complex exact sequence shows that $\pi^*:\Hr_\et^4(Q_3)\to\Hr_\et^4(Q_{3,\Cb})$ is the zero homomorphism. Thus $\cup\omega:\Hr_\et^3(Q_{3,\Cb})\to\Hr_\et^4(Q_{3,\Cb})$ is surjective so $\Hr_\et^4(Q_{3,\Cb})=\Zb/2\cdot\omega^4\oplus\Zb/2\cdot\omega^2\lambda$. As $\Hr_\et^5(Q_{3,\Cb})=0$, the map $\cup\omega:\Hr_\et^4(Q_{3,\Cb})\to\Hr_\et^5(Q_{3,\Cb})$ is surjective with kernel $\Zb/2\cdot\pi_*[L]=\Zb/2\cdot\omega^2\lambda$. Since $\lambda^2=\omega^4$ modulo $\Zb/2\cdot\omega^2\lambda$ by (\ref{eq:relations_étale_cohomology_quadric_threefold}), we conclude that $\omega^5=\omega\lambda^2$ in $\Hr_\et^5(Q_3)=\Zb/2$. On the other hand, one has $\omega^4\neq\omega^2\lambda$ so $\omega^4\notin\Im\pi_*$: therefore $\omega^5$ is nonzero as required.
\end{proof}

\begin{lem}\label{lem:motivic_to_étale}
The comparison morphisms $\Hr_\Mr^{3,2}(Q_3)\to\Hr_\et^3(Q_3)$ and $\Hr_\Mr^{5,3}(Q_3)\to\Hr_\et^5(Q_3)$ are isomorphisms.
\end{lem}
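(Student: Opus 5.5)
We use the comparison results of the preliminaries. For $\Hr_\Mr^{5,3}(Q_3)\to\Hr_\et^5(Q_3)$: here $p=5=2q-1$ with $q=3$. By Totaro's theorem as recalled there, $\Hr_\Mr^{5,3}(Q_3)\cong\Hr^2(Q_3,\Hsc^3)$, and the comparison map is this isomorphism followed by the edge morphism $\Hr^2(Q_3,\Hsc^3)\to\Hr_\et^5(Q_3)$ of (\ref{eq:bloch_ogus}). So it suffices to prove that this edge morphism is an isomorphism. We argue as in Lemma \ref{lem:bloch_ogus_isotropic_quadric}. The coniveau filtration on $\Hr_\et^5(Q_3)$ has graded pieces $\Er_\infty^{p,5-p}$ with $p\le 2$, since $\Er_1^{p,q}=0$ for $p>q$. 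For $q>3=\dim Q_3$, the main theorem of \cite{colliot-theleneRealComponentsAlgebraic1990} gives $\Hr^p(Q_3,\Hsc^q)\cong\Hr^p(Q_3(\Rb),\Zb/2)=0$, because $Q_3(\Rb)=\emptyset$. Hence $\Er_2^{0,5}=\Er_2^{1,4}=0$, and also $\Er_2^{0,4}=0$, so no differential hits $\Er^{2,3}$. Differentials out of $\Er^{2,3}$ land in $\Er^{p,q}$ with $p\ge4>$ the dimension, so they vanish. Therefore $\Hr^2(Q_3,\Hsc^3)=\Er_\infty^{2,3}\cong\Hr_\et^5(Q_3)$.

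For $\Hr_\Mr^{3,2}(Q_3)\to\Hr_\et^3(Q_3)$: here $p=3=2q-1$ with $q=2$. As above, the map factors as $\Hr_\Mr^{3,2}\cong\Hr^1(Q_3,\Hsc^2)$ followed by the edge map to $\Hr_\et^3(Q_3)$. By the general fact quoted in the preliminaries, the motivic cycle class map $\cl_\Mr^{3,2}$ is a monomorphism since $p=q+1$. Alternatively, the edge map is injective because $\Hr^1(Q_3,\Hsc^2)=\Er_\infty^{1,2}$: the only incoming differential would come from $\Er^{-1,*}$, and the outgoing $d_2$ lands in $\Er^{3,1}=0$.

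So only surjectivity remains, and we expect this to be the main step. The other graded pieces of $\Hr_\et^3(Q_3)$ are $\Er_\infty^{0,3}$, a subgroup of $\Hr^0(Q_3,\Hsc^3)$. We must show it vanishes, or equivalently that $\Hr_\et^3(Q_3)$ is spanned by classes of coniveau $\ge1$. By Lemma \ref{lem:third_and_fifth_étale_cohomology_quadric_threefold}, $\Hr_\et^3(Q_3)$ is spanned by $\omega^3$ and $\omega\lambda$. The class $\omega\lambda$ has coniveau $\ge1$, since $\lambda$ is supported on a hyperplane section. For $\omega^3$, we use the relation $\omega^4+\omega^2\lambda+\lambda^2=0$. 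It does not immediately say $\omega^3$ vanishes on the generic point, so instead we use an analogue of Lemma \ref{lem:vanishing_cohomology_function_field_anisotropic_quadric}. The form $5\langle1\rangle$ is isotropic over $F_3$ and $2^3\ge5$, so $\llangle-1\rrangle^{\otimes3}$ is hyperbolic. Hence $\omega^3=0$ in $\Hr_\et^3(F_3)$, i.e. $\omega^3$ restricts to zero at the generic point and has coniveau $\ge1$. Thus the coniveau-$\ge1$ part $\Fr^1\Hr_\et^3(Q_3)$ is all of $\Hr_\et^3(Q_3)$.

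We must also account for the piece $\Er_\infty^{2,1}$, but $\Er_1^{2,1}=0$ since $p>q$. Therefore $\Fr^1\Hr_\et^3(Q_3)=\Er_\infty^{1,2}$, which is the image of the edge map, and surjectivity follows.
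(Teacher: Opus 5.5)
Your proof is correct, but it takes a different route from the paper's. You make the same reduction to the edge maps, via Totaro's isomorphism $\Hr_\Mr^{p,q}\cong\Hr^{p-q}(Q_3,\Hsc^q)$ for $p\geq 2q-1$, whose composite with the edge map is $\cl_\Mr^{p,q}$. The paper then simply invokes the proof of \cite[Theorem 4.4 b), c)]{kahnMotivicCohomologyUnramified2000} for the bijectivity of the two edge maps. You instead check bijectivity directly in the Bloch--Ogus spectral sequence.

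In degree $5$, your argument is that $\Er_2^{0,5}$, $\Er_2^{1,4}$ and $\Er_2^{0,4}$ vanish because $Q_3(\Rb)=\emptyset$, so $\Hr^2(Q_3,\Hsc^3)=\Er_\infty^{2,3}=\Hr_\et^5(Q_3)$. In degree $3$, you reduce surjectivity to the vanishing of the generic piece $\Er_\infty^{0,3}$. You obtain this by showing that both generators $\omega^3$ and $\omega\lambda$ of $\Hr_\et^3(Q_3)$ restrict to zero at the generic point. For $\omega^3$ this is exactly Lemma \ref{lem:vanishing_cohomology_function_field_anisotropic_quadric} with $d=3$, not merely an analogue of it.

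The gain is self-containedness. You use only results already in the paper: the earlier computation of $\Hr_\et^3(Q_3)$ does not depend on the present lemma, so there is no circularity. You also need only that $\Hr_\et^3(Q_3)\to\Hr_\et^3(F_3)$ is zero, i.e.\ that $d_2^{0,3}$ is injective. You do not need the stronger unramified statement $\Hr^0(Q_3,\Hsc^3)=0$ underlying Kahn--Sujatha's computation. The cost is that your argument relies on the explicit basis of $\Hr_\et^3(Q_3)$ and so is tailored to this quadric. Note also that the paper cites $\Hr^0(Q_3,\Hsc^3)=0$ from Kahn--Sujatha shortly afterwards for the Pardon spectral sequence, so the external input is not removed from the section as a whole.
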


\begin{proof}
It suffices to prove that the edge morphisms $\Hr^1(Q_3,\Hsc^2)\to\Hr_\et^3(Q_3)$ and $\Hr^2(Q_3,\Hsc^3)\to\Hr_\et^5(Q_3)$ in (\ref{eq:bloch_ogus}) are isomorphisms. This is (implicit in the proof of) \cite[Theorem 4.4 b), c)]{kahnMotivicCohomologyUnramified2000}.
\end{proof}

Thus $\Hr_\Mr^{3,2}(Q_3)=\Zb/2\cdot\rho h\oplus\Zb/2\cdot x$ where $\rho\in\Hr_\Mr^{1,1}(Q_3)$ maps to $\omega$ under the map $\Hr_\Mr^{1,1}(Q_3)\to\Hr_\et^1(Q_3)$, we still denote by $h$ the class of a hyperplane section in $\Hr_\Mr^{2,1}(Q_3)\cong\Ch^1(Q_3)$ and $x$ is characterised by the fact that $x\tau=\rho^3$ where $\tau\in\Hr_\Mr^{0,1}(Q_3)=\Hr_\et^0(Q_3,\mu_2)$ is the weight-shifting class. Note that $\rho$ and $\tau$ are pulled back from the motivic cohomology of $\Rb$.

\begin{lem}\label{lem:steenrod_square_torsion_motivic}
One has $\tau^2\Sq^1(x)=\rho^4$ and $\tau^2\Sq^2(x)=\rho^5$.
\end{lem}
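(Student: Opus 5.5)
The plan is to compute everything after multiplying by $\tau^2$, by comparing with étale cohomology through the cycle class maps. The motivic Steenrod operations commute with multiplication by $\tau$ up to correction terms. By the Cartan formula and Voevodsky's computations, one has $\Sq^1(\tau)=\rho$ and $\Sq^2(\tau)=0$, and $\Sq^1\rho=0$. The key tool is that the étale realisation of motivic Steenrod operations is the étale Steenrod operation $\Sq_\et^i$ of \cite{benoistSteenrodOperationsAlgebraic2025}, and that under realisation $\tau$ maps to $1$.

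\textbf{Step 1.} Compute the étale Steenrod squares of the realisation of $x$, which is $\omega^3$ (since $x\tau=\rho^3$ and $\rho\mapsto\omega$). We have $\Sq_\et^1(\omega)=\omega^2$, since $\Sq^1$ is the Bockstein and $\omega^2=\omega\cup\omega$ in degree one. The Cartan formula then gives
\[\Sq_\et^1(\omega^3)=3\omega^4=\omega^4,\qquad \Sq_\et^2(\omega^3)=3\omega^5=\omega^5.\]
So the realisations of $\Sq^1(x)$ and $\Sq^2(x)$ are $\omega^4$ and $\omega^5$ respectively.

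\textbf{Step 2.} Transfer this back to motivic cohomology. The class $\Sq^2(x)$ lies in $\Hr_\Mr^{5,3}(Q_3)$. By Lemma \ref{lem:motivic_to_étale}, the comparison map $\Hr_\Mr^{5,3}(Q_3)\to\Hr_\et^5(Q_3)$ is an isomorphism, and it is compatible with multiplication by powers of $\tau$. The class $\rho^5$ lies in $\Hr_\Mr^{5,5}$ and $\tau^2\Sq^2(x)$ also lies in $\Hr_\Mr^{5,5}(Q_3)$. Both realise to $\omega^5$, because $\tau$ realises to $1$. Since $\Hr_\Mr^{5,5}\to\Hr_\et^5$ is an isomorphism ($p\le q$), we get $\tau^2\Sq^2(x)=\rho^5$.

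Similarly, $\Sq^1(x)$ lies in $\Hr_\Mr^{4,2}(Q_3)$, so $\tau^2\Sq^1(x)$ lies in $\Hr_\Mr^{4,4}(Q_3)$, which maps isomorphically onto $\Hr_\et^4(Q_3)$. Both $\tau^2\Sq^1(x)$ and $\rho^4$ realise to $\omega^4$, hence they are equal.

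\textbf{Main obstacle.} The delicate point is justifying that étale realisation intertwines the motivic squares $\Sq^i$ on $\Hr_\Mr^{3,2}$ with $\Sq_\et^i$. It is simpler to first multiply $x$ by $\tau$: then $\tau x=\rho^3$ lies in weight $\ge$ degree, where the comparison is an isomorphism. One then applies the motivic Cartan formula to $\Sq^i(\tau x)$:
\[\Sq^1(\tau x)=\tau\Sq^1 x+\rho x,\qquad \Sq^2(\tau x)=\tau\Sq^2x+\rho\Sq^1 x\]
(the latter up to the $\tau$-Cartan correction terms). On the other side, $\Sq^1(\rho^3)=\rho^4$ and $\Sq^2(\rho^3)=\rho^5$ hold directly, by the motivic Cartan formula with $\Sq^1\rho=\rho^2$ up to $\tau$. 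After multiplying by $\tau$, one solves these equations inductively: first for $\Sq^1$, then for $\Sq^2$. I expect the bookkeeping of the $\tau$/$\rho$ correction terms in the motivic Cartan formula to be the main difficulty. The cleanest route is to do all comparisons in bidegrees $(p,q)$ with $p\le q$, where $\mathrm{cl}_\Mr$ is injective, so that every identity can be checked in étale cohomology using Step 1.
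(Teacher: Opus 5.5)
Your Step 2 is fine: $\Hr_\Mr^{p,p}\to\Hr_\et^p$ is an isomorphism and $\tau$ realises to $1$. So the lemma is equivalent to $\mathrm{cl}(\Sq^1 x)=\omega^4$ and $\mathrm{cl}(\Sq^2 x)=\omega^5$, and the whole content sits in Step 1.

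\textbf{The gap is in Step 1.} Étale realisation does \emph{not} intertwine the motivic squares with the étale squares $\Sq^i_\et$ under $\tau\mapsto 1$. Your own inputs show this:
\begin{itemize}
\item $\Sq^1(\tau)=\rho$ realises to $\omega\neq 0$, whereas $\Sq^1_\et(1)=0$.
\item More decisively, $\rho^3$ and $x$ both realise to $\omega^3$. Yet $\Sq^1(\rho^3)$ and $\Sq^2(\rho^3)$ are pulled back from $\Hr_\Mr^{4,3}(\Rb)=0$ and $\Hr_\Mr^{5,4}(\Rb)=0$, so they vanish.
\end{itemize}
Hence the realisation of $\Sq^i(u)$ is not a function of the realisation of $u$, and no rule $\mathrm{cl}(\Sq^i u)=\Sq^i_\et(\mathrm{cl}(u))$ can hold. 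The true comparisons carry bidegree-dependent correction terms; compare the term $\omega^2\gamma_\et^2(\alpha)$ in Benoist's formula used for Lemma \ref{lem:steenrod_square_torsion}. No such formula is available for $x\in\Hr_\Mr^{3,2}(Q_3)$. That Step 1 happens to output $\omega^4$ and $\omega^5$ is an accident, not a proof.

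\textbf{Your fallback is the right idea but uses wrong inputs.}
\begin{itemize}
\item $\Sq^1(\rho^3)=\rho^4$ and $\Sq^2(\rho^3)=\rho^5$ are impossible for bidegree reasons: $\Sq^1(\rho^3)\in\Hr_\Mr^{4,3}$ while $\rho^4\in\Hr_\Mr^{4,4}$. In fact both squares of $\rho^3$ vanish.
\item $\Sq^1(\rho)=0$, not $\rho^2$ ``up to $\tau$''.
\item Voevodsky's Cartan formula gives $\Sq^2(\tau x)=\Sq^2(\tau)x+\tau\Sq^1(\tau)\Sq^1(x)+\tau\Sq^2(x)$. With $\Sq^2(\tau)=0$ and $\Sq^1(\tau)=\rho$, this is $\tau\Sq^2(x)+\tau\rho\Sq^1(x)$. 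You dropped the factor $\tau$ on the cross term, which makes your equation inhomogeneous in weight.
\end{itemize}

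\textbf{With correct values the argument closes at once, and this is the paper's route.}
\begin{itemize}
\item $0=\Sq^1(\rho^3)=\Sq^1(\tau x)=\rho x+\tau\Sq^1(x)$. Multiplying by $\tau$ gives $\tau^2\Sq^1(x)=\rho\tau x=\rho^4$. The paper cites Yagita for this first equality, but it falls out of the same computation.
\item $0=\Sq^2(\rho^3)=\tau\Sq^2(x)+\tau\rho\Sq^1(x)$. Multiplying by $\tau$ gives $\tau^2\Sq^2(x)=\rho\,\tau^2\Sq^1(x)=\rho^5$.
\end{itemize}
No passage to étale cohomology is needed.
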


\begin{proof}
The first equality is contained in \cite[Lemma 10.2]{yagitaNonStableRationality2024} (Yagita writes $a'$ for the class that we denote by $x$ and $a$ for $\rho^3$, and $Q_0$ for $\Sq^1$; see \cite[Theorem 9.5, Lemma 9.6, Lemma 13.5]{voevodskyReducedPowerOperations2003} for the equality $Q_0=\Sq^1$). We then have \[\Sq^2(\tau x)=\Sq^2(\tau)x+\tau\Sq^1(\tau)\Sq^1(x)+\tau\Sq^2(x)\] by \cite[Proposition 9.7]{voevodskyReducedPowerOperations2003}. By \cite[Corollary 6.2]{rondigsSlicesHermitianKtheory2016}, the equalities $\Sq^2(\tau)=0$ and $\Sq^2(\rho^3)=0$ in the mod $2$ motivic cohomology of $\Rb$ hence in $\Hr_\Mr^{*,*}(Q_3)$ by functoriality of Steenrod operations. It follows that $\Sq^2(\tau x)=0$. Moreover, one has $\Sq^1(\tau)=\rho$ again by \cite[Corollary 6.2]{rondigsSlicesHermitianKtheory2016}. Therefore the previous equality yields \[\tau\Sq^2(x)+\tau\rho\Sq^1(x)=0.\] In particular, taking the product with $\tau$, one has \[\tau^2\Sq^2(x)=\tau^2\rho\Sq^1(x)=\rho^5\] as required.
\end{proof}

\begin{lem}\label{lem:steenrod_square_hyperplane}
One has $\Sq^2(\rho h)=\rho h^2$.
\end{lem}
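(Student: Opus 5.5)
We compute $\Sq^2(\rho h)$ with the motivic Cartan formula \cite[Proposition 9.7]{voevodskyReducedPowerOperations2003}, which reads
\[\Sq^2(ab)=\Sq^2(a)b+\tau\Sq^1(a)\Sq^1(b)+a\Sq^2(b).\]
We take $a=\rho\in\Hr_\Mr^{1,1}(Q_3)$ and $b=h\in\Hr_\Mr^{2,1}(Q_3)$.

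The first term vanishes by instability, since $\Sq^2$ is zero on classes of bidegree $(p,q)$ with $p<2q$ or $p\le q$ \cite[Lemma 9.9]{voevodskyReducedPowerOperations2003}. Here $\rho$ has bidegree $(1,1)$, so $\Sq^2(\rho)=0$. Alternatively, this follows from pulling back from $\Rb$ and using \cite[Corollary 6.2]{rondigsSlicesHermitianKtheory2016}.

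The middle term involves $\Sq^1(\rho)$. Since $\Sq^1$ is the Bockstein, $\Sq^1(\rho)=\rho^2$ by Voevodsky's computation over $\Rb$, again via \cite{rondigsSlicesHermitianKtheory2016} and pullback. The factor $\Sq^1(h)$ vanishes because $h$ is the reduction of an integral class: it is the mod $2$ reduction of $c_1(\Osc(1))\in\Hr_\Mr^{2,1}(Q_3,\Zb)=\CH^1(Q_3)$, so its Bockstein is zero. So the middle term is $0$.

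The last term is $\rho\Sq^2(h)$. Since $h$ has bidegree $(2,1)$, $\Sq^2(h)=h^2$ by \cite[Lemma 9.8]{voevodskyReducedPowerOperations2003}, which is the relation already used in the proof of Proposition \ref{prop:easy_exact_sequence_W1_surface}.

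Adding the three terms gives $\Sq^2(\rho h)=\rho h^2$ in $\Hr_\Mr^{5,3}(Q_3)$.

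The only step needing care is the middle term of the Cartan formula. We must check that $\Sq^1(h)=0$, which holds because $h$ lifts to integral motivic cohomology, and make sure no stray $\rho$-correction appears. Voevodsky's formula for $\Sq^2$ on products has only the $\tau$-correction written above, with no $\rho$-term. In any case the middle term contains $\Sq^1(h)=0$ as a factor, so it vanishes regardless of the coefficient in front.
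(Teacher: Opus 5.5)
Your argument is essentially the paper's: apply Voevodsky's Cartan formula to $\rho\cdot h$, kill the first two terms, and use $\Sq^2(h)=h^2$ from \cite[Lemma 9.8]{voevodskyReducedPowerOperations2003}. The one real difference is how you dispose of the middle term $\tau\Sq^1(\rho)\Sq^1(h)$:
\begin{itemize}
\item the paper uses $\Sq^1(\rho)=0$, pulled back from $\Rb$ via Röndigs--Østvær;
\item you use $\Sq^1(h)=0$, because $h$ is the reduction of the integral class $c_1(\Osc(1))\in\CH^1(Q_3)=\Hr_\Mr^{2,1}(Q_3,\Zb)$ and $\Sq^1$ is the Bockstein.
\end{itemize}
That is a valid alternative, so the proof stands.

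However, your side claim $\Sq^1(\rho)=\rho^2$ is false and should be deleted. $\Sq^1$ has bidegree $(1,0)$, so $\Sq^1(\rho)$ lies in $\Hr_\Mr^{2,1}$, whereas $\rho^2$ lies in $\Hr_\Mr^{2,2}$. In fact $\Sq^1(\rho)=0$: $\rho$ is the reduction of $-1\in\Rb^\times=\Hr_\Mr^{1,1}(\Rb,\Zb)$, or more simply $\Hr_\Mr^{2,1}(\Rb)=0$. You are probably thinking of $\Sq^1(\tau)=\rho$, or of the topological formula $\Sq^1(x)=x^2$ in degree $1$.

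The instability criterion you quote is also misstated. \cite[Lemma 9.9]{voevodskyReducedPowerOperations2003} gives $\Sq^{2i}(u)=0$ for $u\in\Hr_\Mr^{p,q}$ only when $i>p-q$ and $i\geq q$. For example, $\Sq^2(\tau^2)=\tau\rho^2\neq 0$, even though $\tau^2$ has bidegree $(0,2)$ and so satisfies your condition. The correct criterion does apply to $\rho$, so $\Sq^2(\rho)=0$ still holds. Neither slip affects your conclusion, since the vanishing of the middle term rests on $\Sq^1(h)=0$ and not on the value of $\Sq^1(\rho)$.
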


\begin{proof}
Indeed one has \[\Sq^2(\rho h)=\Sq^2(\rho)h+\tau\Sq^1(\rho)\Sq^1(h)+\rho\Sq^2(h)\] by \cite[Proposition 9.7]{voevodskyReducedPowerOperations2003}. By \cite[Corollary 6.2]{rondigsSlicesHermitianKtheory2016} and by functoriality, one has $\Sq^1(\rho)=0$ and $\Sq^2(\rho)=0$ so $\Sq^2(\rho h)=\rho\Sq^2(h)$. Now $\Sq^2(h)=h^2$ by \cite[Lemma 9.8]{voevodskyReducedPowerOperations2003} since $h\in\Hr_\Mr^{2,1}(Q_3)$. This completes the proof.
\end{proof}

We can now compute the shifted Witt groups of $Q_3$. Since $\Ch^1(Q_3)=\Zb/2\cdot h$, there are only two twists to consider: the trivial one and the twist by $\Osc_{Q_3}(1)$ (until the end of this subsection, we write $\Osc$ for $\Osc_{Q_3}$). We begin with the top Witt group, which is controlled by Corollary \ref{cor:small_d_w_d}.

\begin{lem}\label{lem:steenrod_square_top_threefold}
Let $\Leu\in\Pic(Q_3)/2$. The Steenrod square $\Sq_\Leu^2:\Ch^2(Q_3)\to\Ch^3(Q_3)$ is nontrivial.
\end{lem}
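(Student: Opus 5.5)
The plan is to evaluate $\Sq^2_\Leu$ on the torsion class $\alpha$ of Proposition \ref{prop:chow_groups_anisotropic_quadric_threefold} for each of the two twists. By that proposition, $\Ch^2(Q_3)=\Zb/2\cdot h^2\oplus\Zb/2\cdot\alpha$ and $\Ch^3(Q_3)=\Zb/2\cdot h^3$. Since $\Ch^1(Q_3)=\Zb/2\cdot h$, the class $\Leu$ in $\Pic(Q_3)/2$ is either trivial or $\Osc(1)$, so $\bar{c}_1(\Leu)\in\{0,h\}$. By Remark \ref{rema:twisted_steenrod_square}, $\Sq^2_\Leu=\Sq^2+\bar{c}_1(\Leu)\cup$. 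It therefore suffices to exhibit, for each of the two possible values of $\bar{c}_1(\Leu)$, one element of $\Ch^2(Q_3)$ whose image is $h^3\neq 0$.

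First compute $\Sq^2$ on the basis. Lemma \ref{lem:steenrod_square_torsion} gives $\Sq^2(\alpha)=h^3$. For $h^2$, use the Cartan formula $\Sq^2(h^2)=2h\Sq^2(h)+\tau(\Sq^1 h)^2$, or simply the general fact that $\Sq^2(x)=x^2$ only in degree $1$. Instead of pinning this down, I would compute $\gamma_\et^3$ of the relevant products and use the injectivity of $\gamma_\et^3$ from \cite[Theorem 3.2 (c)]{colliot-theleneZerocyclesCohomologyReal1996}.

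Next compute the cup products with $h$. Lemma \ref{lem:étale_cycle_class_torsion} gives $\gamma_\et^3(h\alpha)=\lambda\omega^4$. Lemma (\ref{eq:relations_étale_cohomology_quadric_threefold}) gives $\omega^3\lambda=0$, hence $\lambda\omega^4=0$ and $h\alpha=0$. Also $h\cdot h^2=h^3\neq 0$.

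Now split into cases. In the untwisted case, $\Sq^2(\alpha)=h^3\neq0$, so the map is nontrivial. In the twist by $\Osc(1)$, $\Sq^2_{\Osc(1)}(\alpha)=h^3+h\alpha=h^3\neq 0$, so this map is nontrivial as well.

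The only delicate point is the vanishing $h\alpha=0$. It rests on the injectivity of $\gamma_\et^3$ together with the relation $\omega^3\lambda=0$, both of which are already available. Consequently, evaluating on $\alpha$ avoids computing $\Sq^2(h^2)$ altogether.
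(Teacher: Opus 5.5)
Your proof is correct, but for the twist by $\Osc(1)$ you use a different witness than the paper. The paper evaluates $\Sq^2_{\Osc(1)}$ on $h^2$: the Cartan formula gives $\Sq^2(h^2)=0$, hence $\Sq^2_{\Osc(1)}(h^2)=h\cdot h^2=h^3\neq 0$. You instead evaluate on $\alpha$ in both cases, which replaces the computation of $\Sq^2(h^2)$ by the vanishing $h\alpha=0$.

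Your étale justification of that vanishing is valid. The map $\gamma_\et^*$ is multiplicative, so $\gamma_\et^3(h\alpha)=\lambda\omega^4=\omega\cdot\omega^3\lambda=0$, and $\gamma_\et^3$ is injective on $\Ch^3(Q_3)$. It also uses nothing beyond what Lemma \ref{lem:steenrod_square_torsion} already depends on. It is, however, heavier than necessary: $\alpha$ is $2$-torsion and $\CH^3(Q_3)=\Zb\cdot h^3$ is torsion free by Proposition \ref{prop:chow_groups_anisotropic_quadric_threefold}, so $h\alpha=0$ already holds integrally. This is exactly how the paper argues in the proof of Proposition \ref{prop:2_witt_group_anisotropic_quadric_threefold}. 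There both identities $\Sq^2_{\Osc(1)}(h^2)=h^3=\Sq^2_{\Osc(1)}(\alpha)$ are needed anyway, to identify the kernel $\Zb/2\cdot(h^2+\alpha)$.

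Your route shows that the single class $\alpha$ detects nontriviality for every twist. The paper's route keeps the twisted case elementary, using only the Cartan formula.

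The aside about $\Sq^2(h^2)$ in your second paragraph is muddled but unused, so you can drop it.
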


\begin{proof}
Suppose first that $\Leu$ is trivial. Then by Lemma \ref{lem:steenrod_square_torsion}, we have $\Sq^2(\alpha)=h^3\neq 0$. On the other hand, the Cartan formula shows that $\Sq^2(h^2)=0$ thus \[\Sq^2_{\Osc(1)}(h^2)=h\cup h^2=h^3\neq 0.\] This completes the proof.
\end{proof}

\begin{prop}\label{prop:top_witt_group_anisotropic_quadric_threefold}
We have $\Wr^3(Q_3,\Leu)=0$ for every $\Leu\in\Pic(Q_3)/2$.
\end{prop}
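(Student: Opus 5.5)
The plan is to apply Corollary \ref{cor:small_d_w_d} with $d=3$. The quadric $Q_3$ is proper, and its real locus $Q_3(\Rb)$ is empty because $x_0^2+\cdots+x_4^2$ has no nontrivial real zeros. So $Q_3$ does not satisfy $(*)$, and the corollary gives a canonical isomorphism
\[\Wr^3(Q_3,\Leu)\cong\Coker\bigl(\Sq^2_\Leu:\Ch^2(Q_3)\to\Ch^3(Q_3)\bigr).\]
Since $\Wr^3(Q_3,\Leu)$ only depends on the class of $\Leu$ in $\Pic(Q_3)/2$, and $\Ch^1(Q_3)=\Zb/2\cdot h$ by Proposition \ref{prop:chow_groups_anisotropic_quadric_threefold}, there are exactly two cases: $\Leu$ trivial and $\Leu=\Osc(1)$.

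Next I would determine the target. By Proposition \ref{prop:chow_groups_anisotropic_quadric_threefold}, $\CH^3(Q_3)=\Zb\cdot h^3$, so $\Ch^3(Q_3)=\Zb/2\cdot h^3$. The class $h^3$ is nonzero there: its étale cycle class $\lambda^3=\omega^6$ is nonzero, as shown in the proof of Lemma \ref{lem:étale_cycle_class_torsion}, and this also follows from the injectivity of $\gamma_\et^3$. Thus $\Ch^3(Q_3)\cong\Zb/2$.

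Lemma \ref{lem:steenrod_square_top_threefold} shows that $\Sq^2_\Leu:\Ch^2(Q_3)\to\Ch^3(Q_3)$ is nontrivial in both cases. For the trivial twist, $\Sq^2(\alpha)=h^3$ by Lemma \ref{lem:steenrod_square_torsion}. For $\Leu=\Osc(1)$, $\Sq^2(h^2)=0$ by the Cartan formula, so $\Sq^2_{\Osc(1)}(h^2)=h\cdot h^2=h^3$. A nontrivial homomorphism into $\Zb/2$ is surjective, so the cokernel vanishes and $\Wr^3(Q_3,\Leu)=0$ for both twists.

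The only real obstacle is the nontriviality of $\Sq^2$ in the untwisted case. There $\Sq^2(h^2)=0$, so the torsion class $\alpha$ is needed, and that input is precisely Lemma \ref{lem:steenrod_square_torsion}. Given that lemma, the remaining steps are bookkeeping.
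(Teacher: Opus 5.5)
Your proposal is correct and is essentially the paper's proof: Corollary \ref{cor:small_d_w_d} identifies $\Wr^3(Q_3,\Leu)$ with $\Coker(\Sq^2_\Leu:\Ch^2(Q_3)\to\Ch^3(Q_3)=\Zb/2)$, and Lemma \ref{lem:steenrod_square_top_threefold} gives the nontriviality of this map in both cases, via $\Sq^2(\alpha)=h^3$ and $\Sq^2_{\Osc(1)}(h^2)=h^3$. One minor remark: $h^3\neq 0$ in $\Ch^3(Q_3)$ follows directly from $\CH^3(Q_3)=\Zb\cdot h^3$, whereas the paper uses this very fact to deduce $\omega^6\neq 0$ (in the proof that $\gamma_\et^2(\alpha)=\omega^4$), so citing that argument for $h^3\neq 0$ is circular, though it is also redundant and does not affect your proof.
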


\begin{proof}
According to Corollary \ref{cor:small_d_w_d}, one has \[\Wr^3(Q_3,\Leu)=\Coker(\Sq_\Leu^2:\Ch^2(Q_3)\to\Ch^3(Q_3)=\Zb/2).\] Consequently, to prove the claim, it suffices to show that $\Sq_\Leu^2$ is nonzero for every $\Leu\in\Pic(Q_3)/2$. This was done in Lemma \ref{lem:steenrod_square_top_threefold}
\end{proof}

To proceed further, it will be convenient to use the Pardon spectral sequence (\ref{eq:pardon}). Recall that by Proposition \ref{prop:convergence_pardon_no_odd} that it weakly converges in the case of $Q_3$ as this variety has empty real locus. Note that given $\Leu\in\Pic(Q_3)/2$, one has \[\Er_2^{p,q}(\Leu)=\Hr^p(Q_3,\Hsc^q)\cong\Hr^p(Q_3(\Rb),\Zb/2)=0\] for $q>\dim(Q_3)=3$ by the main result of \cite{colliot-theleneZerocyclesCohomologyReal1996}. Moreover, one has $\Hr^0(Q_3,\Hsc^3)=\Hr^1(Q_3,\Hsc^3)=0$ by \cite[Theorem 4.4 c), Theorem 4.5]{kahnMotivicCohomologyUnramified2000}. It follows from the $\Er_3$-page onward, that the only possibly nonzero differentials are the morphisms $\Er_3^{0,0}\to\Er_3^{1,2}$ and $\Er_3^{1,1}\to\Er_3^{2,3}$. The differential \[d_2^{p,q}(\Leu):\Hr^p(Q_3,\bar{\Ibf}^q)\to\Hr^{p+1}(Q_3,\bar{\Ibf}^{q+1})\] is equal to $\Phi_{p,q,\Leu}$ by construction of this spectral sequence. Consequently, they are identified with the twisted Steenrod square $\Sq_\Leu^2$ whenever $p\geq q-1$; note that all the nontrivial differentials on the $\Er_2$-page are precisely of this form according to the previous reductions.

\begin{lem}\label{lem:cohomological_operation_surjective}
The operation $\Phi_{1,2,\Leu}:\Hr^1(Q_3,\bar{\Ibf}^2)\to\Hr^2(Q_3,\bar{\Ibf}^3)$ is surjective for every $\Leu\in\Pic(Q_3)/2$.
\end{lem}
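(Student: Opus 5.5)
Since $p=1\geq q-1=1$, Remark \ref{rema:twisted_steenrod_square} identifies $\Phi_{1,2,\Leu}$ with the twisted Steenrod square $\Sq^2_\Leu=\Sq^2+\bar{c}_1(\Leu)\cup$ acting as a map $\Hr^1(Q_3,\bar{\Ibf}^2)\to\Hr^2(Q_3,\bar{\Ibf}^3)$. Via Totaro's isomorphisms $\Hr_\Mr^{2q-1,q}(Q_3)\cong\Hr^{q-1}(Q_3,\bar{\Ibf}^q)$, together with Theorem \ref{theo:differential_pardon_steenrod} and Theorem \ref{theo:twisted_differential_pardon}, this is the map $\Hr_\Mr^{3,2}(Q_3)\to\Hr_\Mr^{5,3}(Q_3)$, $y\mapsto\Sq^2(y)+hy$ (with the $h$ term present only when $\Leu=\Osc(1)$ in $\Pic(Q_3)/2$). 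By Lemma \ref{lem:motivic_to_étale} and Lemma \ref{lem:third_and_fifth_étale_cohomology_quadric_threefold}, the target is $\Zb/2$, generated by the image of $\omega^5$ in étale cohomology. Surjectivity is therefore equivalent to exhibiting a single class in $\Hr_\Mr^{3,2}(Q_3)=\Zb/2\cdot\rho h\oplus\Zb/2\cdot x$ with nonzero image.

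In the untwisted case, use Lemma \ref{lem:steenrod_square_torsion_motivic}: $\tau^2\Sq^2(x)=\rho^5$. The comparison map to étale cohomology is multiplication by a power of $\tau$ followed by $\cl_\Mr$, compatibly with the commutative square in the preliminaries. Hence the étale image of $\Sq^2(x)\in\Hr_\Mr^{5,3}$ equals the image of $\rho^5$, namely $\omega^5$, which is nonzero by Lemma \ref{lem:third_and_fifth_étale_cohomology_quadric_threefold}. So $\Sq^2(x)\neq 0$.

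In the twisted case $\Leu=\Osc(1)$, compute $\Sq^2(x)+hx$. Under the comparison, $hx$ maps to $\lambda\cdot\omega^3=\omega^3\lambda$ (since $x\tau=\rho^3$), and this vanishes by (\ref{eq:relations_étale_cohomology_quadric_threefold}). The image is then again $\omega^5\neq 0$. As a cross-check, one can also use $\rho h$: by Lemma \ref{lem:steenrod_square_hyperplane}, $\Sq^2(\rho h)+h\rho h=\rho h^2+\rho h^2=0$, so $x$ is the right test class in both cases.

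The main obstacle is bookkeeping of weights. One must make sure that products and Steenrod squares in motivic cohomology map correctly to étale cohomology after multiplying by powers of $\tau$. This needs multiplicativity of $\cl_\Mr$ and the identity $\cl_\Mr(\tau)=1$, both of which are standard. One must also check that the isomorphism $\Hr_\Mr^{5,3}(Q_3)\to\Hr_\et^5(Q_3)$ is the same map used to identify $\Hr^2(Q_3,\bar{\Ibf}^3)$, which follows from Lemma \ref{lem:motivic_to_étale}.
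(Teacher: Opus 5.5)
Your proof is correct and follows the paper's argument. You identify $\Phi_{1,2,\Leu}$ with $\Sq^2_\Leu\colon\Hr_\Mr^{3,2}(Q_3)\to\Hr_\Mr^{5,3}(Q_3)\cong\Hr_\et^5(Q_3)=\Zb/2$ and exhibit a class with nonzero image; in the twisted case you use $x$ together with $\omega^3\lambda=0$, exactly as the paper does. The only difference is that in the untwisted case the paper tests on $\omega h$ (i.e.\ $\rho h$, via Lemma \ref{lem:steenrod_square_hyperplane} and $\omega\lambda^2=\omega^5\neq 0$) rather than on $x$; both choices work, since $\Sq^2(x)$ and $\Sq^2(\rho h)$ both map to $\omega^5$.
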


\begin{proof}
Note that $\Hr^2(Q_3,\Hsc^3)=\Zb/2$. Recall from Theorem \ref{theo:twisted_differential_pardon} that $\Phi_{1,2,\Leu}=\Phi_{1,2}+\bar{c}_1(\Leu)\cup$ where, by Theorem \ref{theo:differential_pardon_steenrod}n the map $\Phi_{1,2}$ coincides with the Steenrod square $\Sq^2$ modulo the isomorphisms $\Hr_\Mr^{p,q}(Q_3)\cong\Hr^p(Q_3,\Hsc^q)$ for $p\geq q-1$. By the proof of Lemma \ref{lem:motivic_to_étale}, one has $\Hr^2(Q_3,\Hsc^3)=\Hr_\et^5(Q_3)=\Zb/2$ so $\Phi_{1,2,\Leu}$ is surjective if, and only if, it is nonzero. If $\Leu$ is trivial, then $\Phi_{1,2}(\omega h)=\omega\lambda^2\in\Hr_\et^5(Q_3)=\Hr^2(Q_3,\Hsc^3)$ so $\Phi_{1,2}$ is nontrivial. Suppose that $\Leu=\Osc(1)$ in $\Pic(Q_3)/2$. Then $\Phi_{1,2,\Leu}(x)=\Sq^2(x)+x\bar{c_1}(\Leu)$ so $\Phi_{1,2,\Leu}(x)=\omega^5+\omega^3\lambda$ in $\Hr_\et^5(Q_3)$. But $\omega^3\lambda=0$ by (\ref{eq:relations_étale_cohomology_quadric_threefold}) so $\Phi_{1,2,\Leu}(x)\neq 0$: in particular, the twisted operation $\Phi_{1,2,\Leu}$ is nontrivial.
\end{proof}

\begin{prop}\label{prop:2_witt_group_anisotropic_quadric_threefold}
We have $\Wr^2(Q_3)=0$ and $\Wr^2(Q_3,\Osc(1))=\Zb/2$.
\end{prop}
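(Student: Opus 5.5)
The plan is to read off $\Wr^2(Q_3,\Leu)$ from column $p=2$ of the Pardon spectral sequence (\ref{eq:pardon}). Since $Q_3$ has dimension $\leq 3$, the Gersten--Witt spectral sequence gives $\Wr^2(Q_3,\Leu)\cong\Hr^2(Q_3,\Wbf_\Leu)$. Since $Q_3(\Rb)=\emptyset$, the remark following Proposition \ref{prop:convergence_pardon_no_odd} shows that $\Cr(Q_3,\Ibf^t_\Leu)=0$ for $t\geq 4$. So the filtration $\Fr^q\Hr^2(Q_3,\Wbf_\Leu)$ is finite and its graded pieces are exactly the groups $\Er_\infty^{2,q}(\Leu)$. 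It therefore suffices to compute these for every $q$.

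The $\Er_2$-terms in column $2$ are as follows.
\begin{itemize}
\item $\Er_2^{2,q}=0$ for $q<2$.
\item $\Er_2^{2,q}=\Hr^2(Q_3(\Rb),\Zb/2)=0$ for $q\geq 4$.
\item $\Er_2^{2,3}=\Hr^2(Q_3,\Hsc^3)=\Zb/2$.
\item $\Er_2^{2,2}=\Ch^2(Q_3)=\Zb/2\cdot h^2\oplus\Zb/2\cdot\alpha$, by Proposition \ref{prop:chow_groups_anisotropic_quadric_threefold}.
\end{itemize}
By Lemma \ref{lem:cohomological_operation_surjective}, the incoming $d_2^{1,2}(\Leu)=\Phi_{1,2,\Leu}$ is surjective, so $\Er_3^{2,3}(\Leu)=0$. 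Every later differential entering or leaving position $(2,2)$ has its other end in a zero group: the targets are $\Er^{3,*}=0$ and the sources are $\Er^{1,q}$ with $q\leq 0$. Hence $\Er_\infty^{2,2}(\Leu)=\Er_3^{2,2}(\Leu)$. By Remark \ref{rema:twisted_steenrod_square}, this is
\[\Ker(\Sq^2_\Leu:\Ch^2(Q_3)\to\Ch^3(Q_3))/\Im(\Sq^2_\Leu:\Ch^1(Q_3)\to\Ch^2(Q_3)),\]
and $\Wr^2(Q_3,\Leu)$ is isomorphic to this quotient.

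It remains to evaluate these Steenrod squares.

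In the untwisted case, $\Sq^2(h)=h^2$, so the image is $\Zb/2\cdot h^2$. The Cartan formula gives $\Sq^2(h^2)=0$, and Lemma \ref{lem:steenrod_square_torsion} gives $\Sq^2(\alpha)=h^3\neq 0$. So the kernel is also $\Zb/2\cdot h^2$, and the quotient vanishes.

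In the case $\Leu=\Osc(1)$, we have $\Sq^2_{\Osc(1)}(h)=h^2+h^2=0$, so the image is zero. Next, $\Sq^2_{\Osc(1)}(h^2)=h^3$ and $\Sq^2_{\Osc(1)}(\alpha)=h^3+h\alpha$. To see that $h\alpha=0$ in $\Ch^3(Q_3)$, I will apply $\gamma_\et^3$, which is injective by \cite[Theorem 3.2 (c)]{colliot-theleneZerocyclesCohomologyReal1996}. Lemma \ref{lem:étale_cycle_class_torsion} gives $\gamma_\et^3(h\alpha)=\lambda\omega^4$. This vanishes because $\omega^3\lambda=0$ by (\ref{eq:relations_étale_cohomology_quadric_threefold}). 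Consequently the kernel is $\Zb/2\cdot(h^2+\alpha)$ and $\Wr^2(Q_3,\Osc(1))=\Zb/2$.

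The main obstacle is the bookkeeping that shows $\Er_\infty^{2,2}=\Er_3^{2,2}$ and that no other graded piece contributes. The key inputs are the vanishing of $\Er_3^{2,3}$ from Lemma \ref{lem:cohomological_operation_surjective} and the boundedness of the filtration coming from $X(\Rb)=\emptyset$. The vanishing of $h\alpha$ is the only genuinely new computation, and it reduces to the étale relations already established.
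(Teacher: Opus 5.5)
Your proposal is correct and follows the paper's proof essentially verbatim. Like the paper, you kill $\Er_3^{2,3}(\Leu)$ using the surjectivity of $\Phi_{1,2,\Leu}$ (Lemma \ref{lem:cohomological_operation_surjective}), identify $\Wr^2(Q_3,\Leu)$ with $\Ker\Sq^2_\Leu/\Im\Sq^2_\Leu$ at $\Ch^2(Q_3)$, and evaluate the squares on $h$, $h^2$ and $\alpha$. The only deviation is your proof that $h\alpha=0$ via $\gamma_\et^3$ and the relation $\omega^3\lambda=0$: it is valid, but the paper gets it more directly, since $h\alpha$ is a $2$-torsion element of the torsion-free group $\CH^3(Q_3)=\Zb\cdot h^3$.
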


\begin{proof}
Note first that $\Er_3^{2,3}(\Leu)=0$ for any $\Leu$. Indeed the differential into $\Er_2^{2,3}(\Leu)$ in (\ref{eq:pardon}) is \[\Sq_\Leu^2:\Hr^1(Q_3,\Hsc^2)\to\Hr^2(Q_3,\Hsc^3).\] This morphism is surjective by Lemma \ref{lem:cohomological_operation_surjective}, proving the claimed vanishing. By inspection of (\ref{eq:pardon}) and since $\Hr^2(Q_3,\Wbf_\Leu)=\Wr^2(Q_3,\Leu)$ by (\ref{eq:GW}), this yields \[\Wr^2(Q_3,\Leu)=\Hr^2(Q_3,\Wbf_\Leu)=\frac{\Ker(\Sq_\Leu^2:\Ch^2(Q_3)\to\Ch^3(Q_3))}{\Im(\Sq_\Leu^2:\Ch^1(Q_3)\to\Ch^2(Q_3))}.\] Suppose first that $\Leu=\Osc$. Then $\Sq^2(h^2)=0$ by the Cartan formula and $\Sq^2(\alpha)=h^3$ by Lemma \ref{lem:steenrod_square_torsion}. Thus $\Ker(\Sq^2:\Ch^2(Q_3)\to\Ch^3(Q_3))=\Zb/2\cdot h^2$. Since $h^2=\Sq^2(h)$, the above relation yields $\Wr^2(Q_3)=0$. Suppose instead that $\Leu=\Osc(1)$. Then $\Sq_\Leu^2(h^2)=\Sq^2(h^2)+h^3=h^3\neq 0$ and \[\Sq_\Leu^2(\alpha)=\Sq^2(\alpha)+\alpha h.\] But since $\CH^3(Q_3)$ is torsion free and $\alpha$ is $2$-torsion, one has $\alpha h=0$ in $\CH^3(Q_3)$. Therefore $\Sq_\Leu^2(\alpha)=\Sq^2(\alpha)=h^3$. We conclude that $\Ker\Sq_\Leu^2=\Zb/2\cdot(h+\alpha)$. On the other hand, one has $\Sq_\Leu^2(h)=\Sq^2(h)+h\cup h=2h^2=0$ and thus the map $\Sq_\Leu^2:\Ch^1(Q_3)\to\Ch^2(Q_3)$ vanishes. It follows that $\Hr^2(Q_3,\Wbf_\Leu)\cong\Zb/2\cdot(h+\alpha)$ as required.
\end{proof}

\begin{prop}\label{prop:1_witt_group_anisotropic_quadric_threefold}
One has $\Wr^1(Q_3)=\Zb/2$ and $\Wr^1(Q_3,\Osc(1))=0$.
\end{prop}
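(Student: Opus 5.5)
The plan is to read off $\Wr^1(Q_3,\Leu)=\Hr^1(Q_3,\Wbf_\Leu)$ from the column $p=1$ of the Pardon spectral sequence (\ref{eq:pardon}), exactly as was done for $\Wr^2$ in Proposition \ref{prop:2_witt_group_anisotropic_quadric_threefold}. Since $Q_3(\Rb)=\emptyset$, the filtration of $\Cr(Q_3,\Wbf_\Leu)$ by the $\Cr(Q_3,\Ibf^q_\Leu)$ is finite (the remark following Proposition \ref{prop:convergence_pardon_no_odd}). Hence the graded pieces of $\Hr^1(Q_3,\Wbf_\Leu)$ are the groups $\Er_\infty^{1,q}(\Leu)$, and it suffices to show that their total order is $2$ for $\Leu=\Osc$ and $1$ for $\Leu=\Osc(1)$.

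\textbf{Step 1: the column $p=1$.} By the vanishing results recalled before Lemma \ref{lem:cohomological_operation_surjective}, together with $\Er_2^{p,q}=0$ for $q<p$, only two entries can be nonzero:
\[\Er_2^{1,1}=\Ch^1(Q_3)=\Zb/2\cdot h,\qquad \Er_2^{1,2}=\Hr^1(Q_3,\Hsc^2).\]
By Lemmas \ref{lem:motivic_to_étale} and \ref{lem:third_and_fifth_étale_cohomology_quadric_threefold}, the second group is $\Hr_\et^3(Q_3)$, with basis $\omega\lambda=\rho h$ and $\omega^3$ (the image of $x$).

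\textbf{Step 2: the entry $\Er^{1,1}$.} I would compute the $d_2$-differentials in and out. The incoming map $\Sq^2_\Leu:\Ch^0\to\Ch^1$ sends $1$ to $\bar c_1(\Leu)$. The outgoing map $\Sq^2_\Leu:\Ch^1\to\Ch^2$ sends $h\mapsto h^2+\bar c_1(\Leu)h$.
\begin{itemize}
\item For $\Leu=\Osc$, the image $h^2$ is nonzero, since $h^2$ generates the free part of $\CH^2$ by Proposition \ref{prop:chow_groups_anisotropic_quadric_threefold}. So the outgoing map is injective and $\Er_3^{1,1}=0$.
\item For $\Leu=\Osc(1)$, the outgoing map vanishes, but $h=\Sq^2_\Leu(1)$ is a boundary, so again $\Er_3^{1,1}=0$.
\end{itemize}
In both cases $\Er_\infty^{1,1}=0$.

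\textbf{Step 3: the entry $\Er^{1,2}$.} The incoming $d_2$ is $\Phi_{0,1,\Leu}=\cup\bar c_1(\Leu)$ (Remark \ref{rema:steenrod_trivial}) on $\Hr^0(Q_3,\Hsc^1)=\Zb/2\cdot\omega$, so its image is spanned by $\omega c_1^\et(\Leu)$. The outgoing $d_2$ is $\Phi_{1,2,\Leu}$, which is surjective onto $\Hr_\et^5(Q_3)=\Zb/2$ by Lemma \ref{lem:cohomological_operation_surjective}, so its kernel is one-dimensional. I would evaluate it on the basis as follows.
\begin{itemize}
\item On $\omega h$: Lemma \ref{lem:steenrod_square_hyperplane} gives $\Sq^2(\rho h)=\rho h^2$, which is $\omega\lambda^2\neq0$. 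The twisted version adds $\omega\lambda\cdot\lambda=\omega\lambda^2$, giving $0$.
\item On $x$: Lemma \ref{lem:steenrod_square_torsion_motivic} gives $\tau^2\Sq^2(x)=\rho^5$, i.e.\ $\omega^5\neq0$. The twisted version adds $\omega^3\lambda=0$, so the value stays $\omega^5\neq0$.
\end{itemize}
Hence the kernel is $\Zb/2\cdot(x+\omega h)$ in the untwisted case and $\Zb/2\cdot\omega h$ in the twisted case.

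\textbf{Step 4: conclusion.}
\begin{itemize}
\item For $\Leu=\Osc(1)$, the incoming image is $\omega\lambda\neq0$, which spans the kernel found in Step 3. So $\Er_3^{1,2}=0$ and $\Wr^1(Q_3,\Osc(1))=0$.
\item For $\Leu=\Osc$, the incoming $d_2$ is zero, so $\Er_3^{1,2}=\Zb/2$. The only later differential that could hit it is $d_3:\Er_3^{0,0}\to\Er_3^{1,2}$. This vanishes because $\langle1\rangle\in\Hr^0(Q_3,\Wbf)$ is a permanent cycle, as in Theorem \ref{theo:witt_groups_anisotropic_quadric_surface}. Thus $\Er_\infty^{1,2}=\Zb/2$ and $\Wr^1(Q_3)=\Zb/2$.
\end{itemize}

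The main obstacle is Step 3. One must correctly transport the motivic Steenrod computations of Lemmas \ref{lem:steenrod_square_torsion_motivic} and \ref{lem:steenrod_square_hyperplane} through the identifications $\Hr_\Mr^{3,2}\cong\Hr^1(Q_3,\Hsc^2)$ and $\Hr_\Mr^{5,3}\cong\Hr^2(Q_3,\Hsc^3)$, which involve multiplication by powers of $\tau$. One must then combine these with the relations (\ref{eq:relations_étale_cohomology_quadric_threefold}) in $\Hr_\et^5(Q_3)$.
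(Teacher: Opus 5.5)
Your proposal is correct and follows the paper's proof essentially step by step. You use the $p=1$ column of the Pardon spectral sequence, kill $\Er^{1,1}$ using $\Sq^2_\Leu$ on $\Ch^0$ and $\Ch^1$, compute $\Ker\Phi_{1,2,\Leu}$ on the basis $\{x,\rho h\}$ via Lemmas \ref{lem:steenrod_square_torsion_motivic} and \ref{lem:steenrod_square_hyperplane} and the relation $\omega^3\lambda=0$, and compare this kernel with the image of $\cup\bar{c}_1(\Leu)$ on $\Zb/2\cdot\omega$; you also make explicit two points the paper leaves to ``inspection'', namely the vanishing of $d_3^{0,0}\colon\Er_3^{0,0}\to\Er_3^{1,2}$ in the untwisted case (via the permanent cycle $\langle 1\rangle$) and convergence (via the finite filtration when $Q_3(\Rb)=\emptyset$).
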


\begin{proof}
First consider the case $\Leu=\Osc$. The Steenrod square $\Sq^2:\Ch^1(Q_3)\to\Ch^2(Q_3)$ carries $h$ to $h^2$ and is therefore injective. Consequently, one has \[\Er_2^{1,1}(\Osc)=0\] in (\ref{eq:pardon}). Moreover, it follows from Lemma \ref{lem:Sujatha} that $\Phi_{0,1}=0$ so the differential into $\Er_2^{1,2}(\Osc)$ is trivial. By inspection of (\ref{eq:pardon}), this yields \[\Wr^1(Q_3)=\Hr^1(Q_3,\Wbf)=\Ker(\Sq^2:\Hr^1(Q_3,\Hsc^2)\to\Hr^2(Q_3,\Hsc^3)).\] The computations of Lemmas \ref{lem:steenrod_square_torsion_motivic} and \ref{lem:steenrod_square_hyperplane} show that $\Ker\Sq^2=x+\rho h\in\Hr_\Mr^{3,2}\cong\Hr^1(Q_3,\Hsc^2)$ and thus $\Wr^1(Q_3)=\Zb/2$.

Now assume that $\Leu=\Osc(1)$. Then $\Sq_\Leu^2:\Ch^1(Q_3)\to\Ch^2(Q_3)$ vanishes, and $\Sq_\Leu^2:\Ch^0(Q_3)\to\Ch^1(Q_3)$ carries the unit $1\in\Ch^0(Q_3)$ to $\bar{c}_1(\Leu)=h$ by \cite[Lemma 9.9]{voevodskyReducedPowerOperations2003}. In particular, the quotient \[\frac{\Ker(\Sq_\Leu^2:\Ch^1(Q_3)\to\Ch^2(Q_3))}{\Im(\Sq_\Leu^2:\Ch^0(Q_3)\to\Ch^1(Q_3))},\] which is precisely the term $\Er_3^{1,1}(\Leu)$ of the $\Er_3$-page of (\ref{eq:pardon}), vanishes, and the differential $d_2^{0,0}=\Sq_\Leu^2:\Ch^0(Q_3)\to\Ch^1(Q_3)$ is injective so $\Er_3^{0,0}(\Leu)=0$. Inspection of the Pardon spectral sequence then yields \[\Wr^1(Q_3,\Leu)\cong\Hr^1(Q_3,\Wbf_\Leu)=\Er_3^{1,2}(\Leu)=\frac{\Ker(\Sq_\Leu^2:\Hr^1(Q_3,\bar{\Ibf}^2)\to\Hr^2(Q_3,\bar{\Ibf}^3)}{\Im(\Sq_\Leu^2:\Hr^0(Q_3,\bar{\Ibf}^1)\to\Hr^1(Q_3,\bar{\Ibf}^2)}.\] Lemma \ref{lem:Sujatha} implies that $\Phi_{0,1}=0$ so by Theorem \ref{theo:Totaro}, the map $\Sq_\Leu^2:\Hr^0(Q_3,\bar{\Ibf}^1)\to\Hr^1(Q_3,\bar{\Ibf}^2)$ is simply cup-product with $\bar{c}_1(\Leu)=h$. Since $\Hr^0(Q_3,\Hsc^1)=\Zb/2\cdot\omega$ (see \cite[Theorem 4.5]{kahnMotivicCohomologyUnramified2000}; this follows immediately from inspection of (\ref{eq:bloch_ogus}) and from the real-complex exact sequence (\ref{eq:real_complex_exact_sequence})), it has image $\omega h\in\Hr_\et^1(Q_3,\Hsc^2)$. On the other hand, by Lemma \ref{lem:steenrod_square_torsion_motivic} and (\ref{eq:relations_étale_cohomology_quadric_threefold}), we have \[\Sq_\Leu^2(x)=\omega^5+\omega^3\lambda=\omega^5\neq 0\] in $\Hr_\et^5(Q_3)$ and \[\Sq_\Leu^2(\rho h)=\Sq^2(\rho h)+\omega\lambda^2=2\omega\lambda^2=0\] in $\Hr_\et^5(Q_3)$ by Lemma \ref{lem:steenrod_square_hyperplane}. We conclude that \[\Ker(\Sq_\Leu^2:\Hr^1(Q_3,\bar{\Ibf}^2)\to\Hr^2(Q_3,\bar{\Ibf}^3))=\Zb/2\cdot\omega h.\] In particular, the quotient $\Ker\Sq_\Leu^2/\Im\Sq_\Leu^2$ is indeed trivial so that $\Wr^1(Q_3,\Leu)=0$, as required.
\end{proof}

\begin{prop}\label{prop:witt_group_anisotropic_quadric_threefold}
One has $\Wr(Q_3)=\Zb/8\cdot\langle 1\rangle$ and $\Wr(Q_3,\Osc(1))=\Zb/2$.
\end{prop}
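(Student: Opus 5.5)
We read off the column $p=0$ of the Pardon spectral sequence (\ref{eq:pardon}) for $Q_3$ with twist $\Leu\in\{\Osc,\Osc(1)\}$. It converges (the filtration is bounded because $Q_3(\Rb)=\emptyset$), and $\Hr^0(Q_3,\Wbf_\Leu)=\Wr(Q_3,\Leu)$ by (\ref{eq:GW}). The groups in this column are $\Er_2^{0,q}=\Hr^0(Q_3,\Hsc^q)$. These are $\Zb/2\cdot 1$ for $q=0$ and $\Zb/2\cdot\omega$ for $q=1$. For $q=2,3$ they vanish by \cite[Theorem 4.5]{kahnMotivicCohomologyUnramified2000}, since $\Hr^0(Q_3,\Hsc^3)=0$ as recalled above and the unramified $\Hr^2$ of $Q_3$ is spanned by $\omega^2$. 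For $q>3$ they vanish by \cite{colliot-theleneRealComponentsAlgebraic1990}. The first thing to pin down is the class $\omega^2\in\Hr^0(Q_3,\Hsc^2)$. By Lemma \ref{lem:vanishing_cohomology_function_field_anisotropic_quadric} only $\omega^3$ is forced to vanish at the generic point, so we must decide whether $\omega^2$ is zero in $\Hr_\et^2(F_3)$. We expect it to be nonzero, because $\llangle -1\rrangle^{\otimes 2}$ is a $4$-dimensional Pfister form that stays anisotropic over the function field of the $5$-dimensional form $5\langle 1\rangle$ by the Cassels--Pfister subform theorem. This step is the main obstacle: we need $\Er_2^{0,2}=\Zb/2\cdot\omega^2$ and not $0$, so we would invoke Kahn's computation, or the subform theorem, directly.

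\emph{Untwisted case.} Here $\Er_2^{0,q}$ is $\Zb/2$ for $q=0,1,2$ and $0$ for $q\geq 3$. The differential $d_2^{0,0}=\Sq^2$ vanishes on $\Ch^0$ by \cite[Lemma 9.9]{voevodskyReducedPowerOperations2003}, and $d_2^{0,1}=\Phi_{0,1}=0$ by Remark \ref{rema:steenrod_trivial}. The remaining differentials are $d_2^{0,2}$ and the higher ones out of $\Er^{0,*}$. All of them vanish because every $\omega^q$ is the image of $\llangle -1\rrangle^{\otimes q}\in\Hr^0(Q_3,\Ibf^q)$, hence a permanent cycle. So $\Wr(Q_3)$ has order $8$ and is generated by $\langle 1\rangle$, since its graded pieces are generated by the images of $\llangle -1\rrangle^{q}=2^q\langle 1\rangle$. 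It remains to see that $\langle 1\rangle$ has order exactly $8$. We have $8\langle 1\rangle=\llangle -1\rrangle^{\otimes 3}=0$: this Pfister form is isotropic over $F_3$, hence hyperbolic as in the proof of Lemma \ref{lem:vanishing_cohomology_function_field_anisotropic_quadric}, and $\Hr^0(Q_3,\Wbf)\subseteq\Wr(F_3)$. On the other hand $4\langle 1\rangle\neq 0$, since it is detected by $\omega^2\neq 0$ in $\Er_\infty^{0,2}$. Hence $\Wr(Q_3)=\Zb/8\cdot\langle 1\rangle$.

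\emph{Twist $\Osc(1)$.} Now $d_2^{0,0}=\Sq^2_{\Osc(1)}$ sends $1$ to $\bar c_1(\Osc(1))=h\neq 0$, so $\Er_3^{0,0}=0$. Next, $d_2^{0,1}=\Phi_{0,1,\Osc(1)}$ is cup product with $h$ by Theorem \ref{theo:twisted_differential_pardon} and Remark \ref{rema:steenrod_trivial}. It sends $\omega$ to $\omega h$, which corresponds to $\omega\lambda\neq 0$ in $\Hr_\et^3(Q_3)$ by Lemma \ref{lem:third_and_fifth_étale_cohomology_quadric_threefold} together with the edge isomorphism of Lemma \ref{lem:motivic_to_étale}. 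Therefore $\Er_3^{0,1}=0$. Finally $d_2^{0,2}$ is $\Phi_{0,2,\Osc(1)}$, landing in $\Hr^1(Q_3,\Hsc^3)=0$, so $\omega^2$ survives to $\Er_3$. No later differential out of it can be nonzero, since all targets $\Er_r^{1,2+r-1}$ with $r\geq 3$ lie in rows $q\geq 4$ and vanish. This gives $\Wr(Q_3,\Osc(1))\cong\Er_\infty^{0,2}=\Zb/2$, and Lemma \ref{lem:baby_pardon} confirms that it is all of $\Hr^0(Q_3,\Ibf_{\Osc(1)})$.
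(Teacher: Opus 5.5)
Your proposal is correct and follows the paper's proof. Both arguments read off column $p=0$ of the Pardon spectral sequence. In the untwisted case $d_2^{0,0}$ and $d_2^{0,1}$ vanish; for $\Osc(1)$ they are injective, via $\Sq^2_{\Osc(1)}(1)=h$ and $\omega\mapsto\omega\lambda\neq 0$. In both cases $\Er_2^{0,2}=\Zb/2\cdot\omega^2$ (from Kahn) survives because $\Hr^1(Q_3,\Hsc^3)=0$ and the higher rows vanish. Your permanent-cycle argument via $\llangle -1\rrangle^{\otimes q}\in\Hr^0(Q_3,\Ibf^q)$ is only a slightly cleaner way to dispose of the untwisted differentials.

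One wording slip to fix: you first say $\Er_2^{0,q}$ vanishes for $q=2,3$, but afterwards you correctly use $\Er_2^{0,2}=\Zb/2\cdot\omega^2$.
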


\begin{proof}
Since $\Hr^1(Q_3,\Hsc^3)=0$, the operation $d_2^{0,2}$ in the untwisted Pardon spectral sequence $\Er_2^{*,*}(\Osc)$ is trivial. The operation $d_2^{0,1}=\Phi_{0,1}$ is trivial by Lemma \ref{lem:Sujatha} and the operation $d_2^{0,0}=\Sq^2:\Ch^0(Q_3)\to\Ch^1(Q_3)$ is trivial by \cite[Lemma 9.9]{voevodskyReducedPowerOperations2003}. We conclude that the $n$-th graded piece $\Gra^n\Hr^0(Q_3,\Wbf)$ of the filtration of $\Hr^0(Q_3,\Wbf)$ by its subgroups $\Hr^0(Q_3,\Ibf^t)$ is given by $\Hr^0(Q_3,\bar{\Ibf}^n)=\Zb/2\cdot\llangle -1\rrangle^{\otimes n}$ if $n\leq 2$ and vanishes if $n\geq 3$. The equality $\Wr(Q_3)=\Hr^0(Q_3,\Wbf)=\Zb/8\cdot\langle 1\rangle$ easily follows from these remarks (see \cite[Theorem 5, p. 175]{kahnMotivicCohomologyUnramified2000}).

We now turn to the case $\Leu=\Osc(1)$. Again since $\Hr^1(Q_3,\Hsc^3)=0$, the twisted operation $d_2^{0,2}$ in $\Er_2^{*,*}(\Leu)$ is trivial. To show that $\Wr(Q_3,\Leu)=\Zb/2$, it therefore suffices to show that the differentials $d_2^{0,1}$ and $d_2^{0,0}$ are injective. Note that $d_2^{0,0}=\Sq_\Leu^2:\Ch^0(Q_3)\to\Ch^1(Q_3)$; this map is injective as was observed in the course of the proof of Proposition \ref{prop:1_witt_group_anisotropic_quadric_threefold}. The differential $d_2^{0,1}:\Hr^0(Q_3,\Hsc^1)\to\Hr^1(Q_3,\Hsc^2)$ is simply cup-product with $\bar{c}_1(\Leu)=h$ so it takes the generator $\omega$ of $\Hr^0(Q_3,\Hsc^1)$ to $\omega\lambda$ in $\Hr^1(Q_3,\Hsc^2)$ regarded as a subset of $\Hr_\et^3(Q_3)$. In particular, it is injective, as claimed.
\end{proof}

\subsection{Chow--Witt groups of anisotropic quadrics}

The computations in the previous subsections allow us to give rather complete descriptions of the Chow--Witt groups of the quadrics $Q_i$. To this end, we collect some notation. Let $X$ be a connected smooth $\Rb$-variety of dimension $d$ with function field $F$ and let $\Leu$ be a line bundle. Recall from \cite{milnorAlgebraicKtheoryQuadratic1970} the Milnor $\Kr$-theory $\Kr_*^\Mr$ of fields. This construction defines a sheaf $\Kbf_*^\Mr$ of graded rings on $X_\Zar$ whose value at $U\subseteq X$ is the set of $\alpha\in\Kr_n^\Mr(F)$ such that for every $x\in U^{(1)}$, viewed as a discrete valuation on $F$, the residue of $\alpha$ at $x$ in the sense of \cite[§2]{milnorAlgebraicKtheoryQuadratic1970} is trivial. Milnor's homomorphism $s_*$ from Milnor $\Kr$-theory to the graded pieces $\bar{\Ir}^*$ of the filtration by the powers of the fundamental ideals of the Witt ring \cite[§5]{milnorAlgebraicKtheoryQuadratic1970} then determines a morphism $s_*:\Kbf_*^\Mr\to\bar{\Ibf}^*$ of sheaves of graded rings on $X_\Zar$. We define the sheaf $\Jbf^*(\Leu)$ of graded abelian groups by the fibre product square
\begin{equation}\label{eq:definition_MW}
\begin{tikzcd}
	{\Jbf^n(\Leu)} & {\Kbf_*^\Mr} \\
	{\Ibf^*(\Leu)} & {\bar{\Ibf}^*}
	\arrow[from=1-1, to=1-2]
	\arrow[from=1-1, to=2-1]
	\arrow["\lrcorner"{anchor=center, pos=0.125}, draw=none, from=1-1, to=2-2]
	\arrow[from=1-2, to=2-2]
	\arrow[from=2-1, to=2-2]
\end{tikzcd}
\end{equation}
of sheaves on $X_\Zar$. The $n$-th Chow--Witt group $\widetilde{\CH}{}^n(X,\Leu)$ of $X$ twisted by $\Leu$ is then defined as \[\widetilde{\CH}{}^n(X,\Leu)=\Hr^n(X,\Jbf^n(\Leu))\] (this is the original definition of \cite{bargeGroupeChowCycles2000}, developed in \cite{faselGroupesChowWitt2008}). We note that there is a commutative ladder
\[\begin{tikzcd}
	0 & {\Ibf^{n+1}(\Leu)} & {\Jbf^n(\Leu)} & {\Kbf_n^\Mr} & 0 \\
	0 & {\Ibf^{n+1}(\Leu)} & {\Ibf^n(\Leu)} & {\bar{\Ibf}^n} & 0
	\arrow[from=1-1, to=1-2]
	\arrow[from=1-2, to=1-3]
	\arrow["{=}"{description}, from=1-2, to=2-2]
	\arrow[from=1-3, to=1-4]
	\arrow[from=1-3, to=2-3]
	\arrow[from=1-4, to=1-5]
	\arrow[from=1-4, to=2-4]
	\arrow[from=2-1, to=2-2]
	\arrow[from=2-2, to=2-3]
	\arrow[from=2-3, to=2-4]
	\arrow[from=2-4, to=2-5]
\end{tikzcd}\]
of abelian sheaves with exact rows by definition and thus a commutative diagram
\begin{equation}\label{eq:ladder_MW}
\tiny{\begin{tikzcd}
	{\Hr^{n-1}(X,\Kbf_n^\Mr)} & {\Hr^n(X,\Ibf^{n+1}_\Leu)} & {\widetilde{\CH}{}^n(X,\Leu)} & {\Hr^n(X,\Kbf_n^\Mr)} & {\Hr^{n+1}(X,\Ibf^{n+1}_\Leu)} \\
	{\Hr^{n-1}(X,\bar{\Ibf}^n)} & {\Hr^{n}(X,\Ibf^{n+1}_\Leu)} & {\Hr^{n}(X,\Ibf^n_\Leu)} & {\Hr^{n}(X,\bar{\Ibf}^n)} & {\Hr^{n+1}(X,\Ibf^{n+1}_\Leu)} \\
	& {\Hr^n(X,\bar{\Ibf}^{n+1})} &&& {\Hr^{n+1}(X,\bar{\Ibf}^{n+1})}
	\arrow[from=1-1, to=1-2]
	\arrow[from=1-1, to=2-1]
	\arrow[from=1-2, to=1-3]
	\arrow["{=}"{description}, from=1-2, to=2-2]
	\arrow[from=1-3, to=1-4]
	\arrow[from=1-3, to=2-3]
	\arrow[from=1-4, to=1-5]
	\arrow[from=1-4, to=2-4]
	\arrow["{=}"{description}, from=1-5, to=2-5]
	\arrow[from=2-1, to=2-2]
	\arrow["{\Sq_\Leu^2}"{description}, from=2-1, to=3-2]
	\arrow[from=2-2, to=2-3]
	\arrow[from=2-2, to=3-2]
	\arrow[from=2-3, to=2-4]
	\arrow[from=2-4, to=2-5]
	\arrow["{\Sq_\Leu^2}"{description}, from=2-4, to=3-5]
	\arrow[from=2-5, to=3-5]
\end{tikzcd}}\normalsize
\end{equation}
of cohomology groups with exact rows. One has $\Hr^n(X,\Kbf_n^\Mr)=\CH^n(X)$ (Bloch's formula) in the previous diagram and modulo this identification, the vertical map $\Hr^n(X,\Kbf_n^\Mr)\to\Hr^n(X,\bar{\Ibf}^n)\cong\Ch^n(X)$ is the reduction mod $2$ map.

\begin{lem}\label{lem:image_comparison_steenrod}
Let $X$ be a smooth $\Rb$-variety of dimension $d$ and let $\Leu$ be a line bundle on $X$. Let $n\in\{0,d-1,d\}$. The image of the comparison morphism $r_\Leu^n(X)=r_\Leu^n:\widetilde{\CH}{}^n(X,\Leu)\to\CH^n(X)$ is the set of $x\in\CH^n(X)$ whose image $\bar{x}$ in $\Ch^n(X)$ satisfies $\Sq_\Leu^2(\bar{x})=0$.
\end{lem}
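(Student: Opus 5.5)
The plan is to read off the image of $r_\Leu^n$ from the top row of the diagram (\ref{eq:ladder_MW}). Exactness there shows that
\[\Im r_\Leu^n=\Ker\bigl(\CH^n(X)=\Hr^n(X,\Kbf_n^\Mr)\to\Hr^{n+1}(X,\Ibf^{n+1}_\Leu)\bigr).\]
By commutativity of the right-hand square, this connecting map factors as reduction mod $2$, $x\mapsto\bar{x}\in\Ch^n(X)$, followed by $\partial_\Leu^{n,n}:\Hr^n(X,\bar{\Ibf}^n)\to\Hr^{n+1}(X,\Ibf^{n+1}_\Leu)$. It will therefore suffice to show, for $n\in\{0,d-1,d\}$, that
\[\Ker\partial_\Leu^{n,n}=\Ker\Phi_{n,n,\Leu}=\Ker\Sq_\Leu^2,\]
where the last equality is Remark \ref{rema:twisted_steenrod_square} (with $p=q=n$). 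The inclusion $\Ker\partial_\Leu^{n,n}\subseteq\Ker\Phi_{n,n,\Leu}$ is automatic, since $\Phi_{n,n,\Leu}=\pi_\Leu^{n+1,n+1}\circ\partial_\Leu^{n,n}$. The converse requires injectivity of $\pi_\Leu^{n+1,n+1}$ on the image of $\partial_\Leu^{n,n}$, which is $2$-torsion.

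\textbf{Case $n=d$.} Here $\Hr^{d+1}(X,\Ibf^{d+1}_\Leu)=0$ for dimension reasons, since the Gersten-type complex vanishes in degree $d+1$. Likewise $\Ch^{d+1}(X)=0$, so both kernels equal $\Ch^d(X)$ and $r_\Leu^d$ hits everything, consistent with $\Sq^2_\Leu=0$ on $\Ch^d(X)$.

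\textbf{Case $n=d-1$.} This is exactly Corollary \ref{cor:kernel_connecting_steenrod} with $i=d$. Its hypotheses are verified in Remark \ref{rema:cases_mod_2_cycle_class_surjective}: $\bar{\gamma}_d^{d-1}$ is surjective, and $\Hr^d(X(\Rb),\Zb(L))[4]=\Hr^d(X(\Rb),\Zb(L))[2]$ by Remark \ref{rema:top_twisted_cohomology}. It yields $\Ker\partial_\Leu^{d-1,d-1}=\Ker\Phi_{d-1,d-1,\Leu}$, as required.

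\textbf{Case $n=0$.} Now $\partial_\Leu^{0,0}:\Hr^0(X,\bar{\Wbf})\to\Hr^1(X,\Ibf_\Leu)$, and I would argue directly via Lemma \ref{lem:baby_pardon}, working componentwise so that $X$ is connected and $\Ch^0(X)=\Zb/2$. The generator $\langle 1\rangle$ maps to the Euler class $e(\Leu)$, whose reduction in $\Ch^1(X)$ is $\bar{c}_1(\Leu)=\Sq_\Leu^2(1)$ (using $\Sq^2(1)=0$). If $\Leu$ is a square, both $e(\Leu)$ and $\bar{c}_1(\Leu)$ vanish, so both kernels are everything. If $\Leu$ is not a square, then $\bar{c}_1(\Leu)\neq0$, since $\Pic/2\to\Ch^1$ is the identity, and both kernels are $0$. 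In either case the kernels agree.

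The main subtle point is the $n=d-1$ case, since it needs the injectivity statement of Lemma \ref{lem:projection_dimension_filtration_isomorphism}; this is already packaged in Corollary \ref{cor:kernel_connecting_steenrod}, so it only requires checking the hypotheses. One should also note that when $d\le1$ the values of $n$ coincide, and the cases above cover them consistently.
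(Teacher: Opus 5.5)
Your proposal is correct and follows essentially the same route as the paper. It reduces via (\ref{eq:ladder_MW}) to showing that $\pi_\Leu^{n+1,n+1}$ is injective on $\Im\partial_\Leu^{n,n}$. It then handles $n=d$ by vanishing, $n=0$ via the Euler class $e(\Leu)$ and $\bar{c}_1(\Leu)$, and $n=d-1$ via Lemma \ref{lem:projection_dimension_filtration_isomorphism}, which you reach through Corollary \ref{cor:kernel_connecting_steenrod}. Your componentwise reduction for $n=0$ adds a little care that the paper leaves implicit when it writes $\Hr^0(X,\bar{\Wbf})=\Zb/2\cdot\langle 1\rangle$.
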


\begin{proof}
By inspection of (\ref{eq:ladder_MW}), the image of $r$ is the set of $x\in\CH^n(X)$ whose reduction mod $2$ lies in the kernel of the connecting homomorphism \[\partial_\Leu^{n,n}:\Hr^n(X,\bar{\Ibf}^n)\to\Hr^{n+1}(X,\Ibf^{n+1}_\Leu).\] Let $\pi_\Leu^{n+1,n+1}:\Hr^{n+1}(X,\Ibf^{n+1}_\Leu)\to\Hr^{n+1}(X,\bar{\Ibf}^{n+1})$ be induced by the epimorphism $\Ibf^{n+1}(\Leu)\to\bar{\Ibf}^{n+1}$, so that $\pi_\Leu^{n+1,n+1}\circ\partial_\Leu^{n,n}=\Sq_\Leu^2$ as in (\ref{eq:ladder_MW}). We then have to show that $\pi_\Leu^{n+1,n+1}$ is injective on $\Im\partial_\Leu^{n,n+1}$ to conclude.

If $n=d$, then both the source and the target of $\pi_\Leu^{n+1,n+1}$ vanish for cohomological dimension reasons. If $n=0$, then either $\Leu$ is a square, thus $\partial_\Leu^{0,0}$ is the zero morphism as $\langle 1\rangle$ is mapped to the nontrivial element of $\Hr^0(X,\bar{\Wbf})$ under the reduction map $\Hr^0(X,\Wbf)\to\Hr^0(X,\bar{\Wbf})$; or $\Leu$ is not a square and \[\partial_\Leu^{0,0}:\Hr^0(X,\bar{\Wbf})=\Zb/2\cdot\langle 1\rangle\to\Hr^1(X,\Ibf_\Leu)\] carries $\langle 1\rangle$ to the Euler class $e(\Leu)$ of $\Leu$, which is $2$-torsion since $\Hr^0(X,\bar{\Wbf})$ is $2$-torsion, and $\pi_\Leu^{1,1}$ carries $e(\Leu)$ to $\bar{c}_1(\Leu)$, which is nonzero since $\Leu$ is not a square. In particular, the homomorphism $\pi_\Leu^{1,1}$ is indeed injective on $\Im\partial_\Leu^{0,0}$. If $n=d-1$, since the source of $\partial_\Leu^{n,n+1}$ is $2$-torsion, so is its image, hence the required injectivity property was established in Lemma \ref{lem:projection_dimension_filtration_isomorphism} in view of Remark \ref{rema:top_twisted_cohomology}. This completes the proof.
\end{proof}

\begin{rema}
Of course, in applications to anisotropic quadrics, we do not need the case $X(\Rb)\neq\emptyset$ in the proof above. We gave a proof of the general statement for completeness.
\end{rema}

\begin{cor}\label{cor:chow_witt_0}
Let $X$ be a smooth $\Rb$-variety and let $\Leu$ be a line bundle on $X$. If $\Leu$ is a square, then the pair $(\langle 1\rangle,1)\in\Hr^0(X,\Wbf)\times\CH^0(X)$ defines an element of $\Hr^0(X,\Jbf^0)=\widetilde{\CH}{}^0(X)$ that maps to $1$ under the comparison homomorphism $r^0:\widetilde{\CH}{}^0(X)\to\CH^0(X)$. In particular, the map $r^0$ is surjective and \[\widetilde{\CH}{}^0(X)=\Hr^0(X,\Ibf)\oplus\Zb\cdot\langle 1\rangle.\] If $\Leu$ is not a square, then the morphism $r_\Leu^0:\widetilde{\CH}{}^0(X,\Leu)\to\CH^0(X)$ has image $2\Zb$ and the pair $(\Hr_\Leu(\Osc_X),2)$ where $\Hr_\Leu(\Osc_X)$ is $\Leu$-valued hyperbolic form on $\Osc_X$ maps to $2$ under $r_\Leu^0$. In particular, there is a decomposition \[\widetilde{\CH}{}^0(X,\Leu)=\Hr^0(X,\Ibf_\Leu)\oplus 2\Zb\cdot\Hr_{\Leu}(\Osc_X)\] of $\widetilde{\CH}{}^0(X,\Leu)$.
\end{cor}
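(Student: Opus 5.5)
The statement is Corollary \ref{cor:chow_witt_0}, so I will work in degree $n=0$. The tool is the top row of (\ref{eq:ladder_MW}), which gives the exact sequence
\[0\to\Hr^0(X,\Ibf_\Leu)\to\widetilde{\CH}{}^0(X,\Leu)\xrightarrow{r_\Leu^0}\CH^0(X).\]
It is exact on the left because the preceding term $\Hr^{-1}$ vanishes. The image of $r_\Leu^0$ is described by Lemma \ref{lem:image_comparison_steenrod} with $n=0$: an element $x\in\CH^0(X)$ is in the image exactly when $\Sq^2_\Leu(\bar x)=0$. Here $\Sq^2_\Leu(\bar x)=\Sq^2(\bar x)+\bar c_1(\Leu)\bar x$.

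I will assume $X$ is connected, so that $\CH^0(X)=\Zb$. The disconnected case follows by working one component at a time.

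\emph{Case 1: $\Leu$ is a square.} By \cite[Lemma 9.9]{voevodskyReducedPowerOperations2003}, $\Sq^2$ vanishes on $\Ch^0(X)$. Since $\bar c_1(\Leu)=0$, the twisted square $\Sq^2_\Leu$ is zero and $r^0$ is surjective. For an explicit preimage of $1$, I will check that the pair $(\langle 1\rangle,1)$ is a section of the fibre product (\ref{eq:definition_MW}) in degree $0$. In that degree the fibre product reads $\Wbf\times_{\bar{\Wbf}}\Kbf_0^\Mr$. The element $\langle 1\rangle$ reduces to the generator of $\bar{\Wbf}=\Zb/2$, and $1\in\Kbf_0^\Mr=\Zb$ maps to the same generator under $s_0$, so the pair is compatible. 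Because $\CH^0(X)=\Zb$ is free, the sequence splits, with the splitting $1\mapsto(\langle 1\rangle,1)$. This gives $\widetilde{\CH}{}^0(X)=\Hr^0(X,\Ibf)\oplus\Zb\cdot\langle 1\rangle$.

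\emph{Case 2: $\Leu$ is not a square.} Now $\bar c_1(\Leu)\neq0$, so $\Sq^2_\Leu(\bar 1)=\bar c_1(\Leu)\neq0$, while $\Sq^2_\Leu(\bar 2)=0$. By the lemma, the image of $r_\Leu^0$ is exactly $2\Zb$. The hyperbolic form $\Hr_\Leu(\Osc_X)$ is a global $\Leu$-valued form of rank $2$. Its class in $\bar{\Wbf}$ is trivial, and $2\in\Kbf_0^\Mr$ maps to $0$ in $\Zb/2$, so $(\Hr_\Leu(\Osc_X),2)$ is a global section of $\Jbf^0(\Leu)$ that maps to $2$ under $r_\Leu^0$. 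Since $2\Zb$ is free, sending $2$ to this element splits the sequence and yields the stated decomposition.

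The only step needing care is compatibility in the fibre product. Each form must reduce in $\bar{\Wbf}=\Wbf/\Ibf$ to its rank modulo $2$, and this holds at the generic point and hence as global sections. That is the point I expect to be the main obstacle, and it is essentially a bookkeeping check.
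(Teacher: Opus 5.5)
Your proof is correct and follows essentially the paper's route: the degree-$0$ exact sequence (equivalently, the cartesian square expressing $\widetilde{\CH}{}^0(X,\Leu)$ as $\Hr^0(X,\Wbf_\Leu)\times_{\Hr^0(X,\overline{\Wbf})}\CH^0(X)$), Lemma~\ref{lem:image_comparison_steenrod} to identify $\Im r_\Leu^0$, and the compatible pairs $(\langle 1\rangle,1)$ and $(\Hr_\Leu(\Osc_X),2)$ to split onto the free image. The only difference is that you compute $\Sq^2_\Leu(1)=\bar{c}_1(\Leu)$ directly from the vanishing of $\Sq^2$ on $\Ch^0$, whereas the paper cites Lemma~\ref{lem:baby_pardon}; both come down to whether $e(\Leu)=\partial_\Leu\langle 1\rangle$ vanishes (one small caveat: your componentwise reduction for disconnected $X$ does not give ``image $2\Zb$'' when $\Leu$ is a square on only some components, so, as in the paper, the statement should be read with $X$ connected, i.e.\ $\CH^0(X)=\Zb$).
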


The factor $2$ in the expression $2\Zb\cdot\Hr_{\Leu}(\Osc_X)$ means that the subgroup generated by $\Hr_{\Leu}(\Osc_X)$ is free abelian, but that the comparison map $r_\Leu^0$ has image $2\Zb$ in $\CH^0(X)=\Zb$.

\begin{proof}
There is a commutative diagram
\[\begin{tikzcd}
	0 & {\Hr^0(X,\Ibf_\Leu)} & {\widetilde{\CH}^0(X,\Leu)} & {\CH^0(X)} \\
	&& {\Hr^0(X,\Wbf_\Leu)} & {\Hr^0(X,\overline{\Wbf})}
	\arrow[from=1-1, to=1-2]
	\arrow[from=1-2, to=1-3]
	\arrow["{r_\Leu^0}", from=1-3, to=1-4]
	\arrow[from=1-3, to=2-3]
	\arrow[from=1-4, to=2-4]
	\arrow[from=2-3, to=2-4]
\end{tikzcd}\]
with exact top row. The square is cartesian by definition of $\Jbf^0(\Leu)$ since the global sections functor is left exact. In particular, the pair $(\langle 1\rangle,1)$ (resp. $(\Hr_\Leu(\Osc_X),2)$) indeed defines an element of $\widetilde{\CH}{}^0(X)$ (resp. of $\widetilde{\CH}{}^0(X,\Leu)$). It follows from Lemma \ref{lem:image_comparison_steenrod} and Lemma \ref{lem:baby_pardon} that $\Im r_\Leu^0=\CH^0(X)=\Zb$ if $\Leu$ is a square and $\Im r_\Leu^0=2\CH^0(X)$ else. In particular, the morphism $r_\Leu^0$ is split surjective onto its image. This easily implies the statements of the corollary.
\end{proof}

Since we only consider quadrics of dimension $\leq 3$, this means that the image of the comparison map $r_\Leu^n$ is completely determined by Steenrod squares in all cohomological degrees and in all dimensions except for the image of the map \[r_\Leu^1:\widetilde{\CH}{}^1(Q_3,\Leu)\to\CH^1(Q_3).\] However, in this case:

\begin{lem}\label{lem:projection_injective_quadric_threefold}
The map $\pi_\Leu^{2,2}:\Hr^2(Q_3,\Ibf^2_\Leu)\to\Hr^2(Q_3,\bar{\Ibf}^2)$ is injective.
\end{lem}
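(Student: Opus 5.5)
Since $Q_3(\Rb)=\emptyset$ and $Q_3$ has dimension $3$, the sheaf $\Ibf^4(\Leu)$ vanishes (Jacobson, as recalled before (\ref{eq:projection_isomorphism})). Hence there is a short exact sequence $0\to\bar{\Ibf}^3\to\Ibf^2(\Leu)\to\bar{\Ibf}^2\to 0$, using $\Ibf^3(\Leu)\cong\bar{\Ibf}^3$. The kernel of $\pi_\Leu^{2,2}$ is then the image of $\Hr^2(Q_3,\bar{\Ibf}^3)\to\Hr^2(Q_3,\Ibf^2_\Leu)$. By exactness this image is the cokernel of the connecting map $\partial_\Leu^{1,2}:\Hr^1(Q_3,\bar{\Ibf}^2)\to\Hr^2(Q_3,\Ibf^3_\Leu)\cong\Hr^2(Q_3,\bar{\Ibf}^3)$. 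So the plan is to show that this connecting map is surjective.

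Modulo the identification $\Hr^2(Q_3,\Ibf^3_\Leu)\cong\Hr^2(Q_3,\bar{\Ibf}^3)$ given by $\pi_\Leu^{2,3}$, an isomorphism by (\ref{eq:projection_isomorphism}), the connecting map is exactly $\pi_\Leu^{2,3}\circ\partial_\Leu^{1,2}=\Phi_{1,2,\Leu}$. One check is needed here: the connecting map for $\Ibf^2/\Ibf^4$ composed with this identification agrees with the connecting map for $\Ibf^2/\Ibf^3$. This holds because $\Ibf^4(\Leu)=0$, so the two extensions coincide. Lemma \ref{lem:cohomological_operation_surjective} states that $\Phi_{1,2,\Leu}$ is surjective for every $\Leu\in\Pic(Q_3)/2$. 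Therefore the image of $\Hr^2(Q_3,\bar{\Ibf}^3)$ in $\Hr^2(Q_3,\Ibf^2_\Leu)$ is zero, and $\pi_\Leu^{2,2}$ is injective.

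Equivalently, one could argue with the long exact sequence for $0\to\Ibf^3_\Leu\to\Ibf^2_\Leu\to\bar{\Ibf}^2\to0$ directly. It reads $\Hr^1(Q_3,\bar{\Ibf}^2)\xrightarrow{\partial}\Hr^2(Q_3,\Ibf^3_\Leu)\to\Hr^2(Q_3,\Ibf^2_\Leu)\xrightarrow{\pi}\Hr^2(Q_3,\bar{\Ibf}^2)$. Injectivity of $\pi$ amounts to surjectivity of $\partial$, and composing with the isomorphism $\pi_\Leu^{2,3}$ turns $\partial$ into $\Phi_{1,2,\Leu}$. The only real input is therefore Lemma \ref{lem:cohomological_operation_surjective}, which handles the two twists separately via the étale computations. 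No step here looks delicate: the identification of $\pi_\Leu^{2,3}$ as an isomorphism is immediate from the empty real locus.
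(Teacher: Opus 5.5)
Your proposal is correct and is essentially the paper's proof. The paper uses the long exact sequence for $0\to\Ibf^3(\Leu)\to\Ibf^2(\Leu)\to\bar{\Ibf}^2\to 0$ to identify $\Ker\pi_\Leu^{2,2}$ with $\Coker\partial_\Leu^{1,2}$, turns $\partial_\Leu^{1,2}$ into $\Phi_{1,2,\Leu}=\pi_\Leu^{2,3}\circ\partial_\Leu^{1,2}$ via the isomorphism (\ref{eq:projection_isomorphism}), and concludes by Lemma \ref{lem:cohomological_operation_surjective}; your detour through $\Ibf^4(\Leu)=0$ and the compatibility of connecting maps is harmless but unnecessary, since that identity holds by definition.
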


\begin{proof}
There is an exact sequence \[\Hr^1(Q_3,\bar{\Ibf}^2)\xrightarrow{\partial_\Leu^{1,2}}\Hr^2(Q_3,\Ibf^3_\Leu)\to\Hr^2(Q_3,\Ibf^2_\Leu)\xrightarrow{\pi_\Leu^{2,2}}\Hr^2(Q_3,\bar{\Ibf}^2).\] It yields an identification $\Ker\pi_\Leu^{2,2}\cong\Coker\partial_\Leu^{1,2}$ so it suffices to prove that $\partial_\Leu^{1,2}$ is surjective. The homomorphism $\pi_\Leu^{2,3}:\Hr^2(Q_3,\Ibf^3_\Leu)\to\Hr^2(Q_3,\bar{\Ibf}^3)$ is an isomorphism by (\ref{eq:projection_isomorphism}) so $\partial_\Leu^{1,2}$ is identified with $\Phi_{1,2,\Leu}:\Hr^1(Q_3,\bar{\Ibf}^2)\to\Hr^2(Q_3,\bar{\Ibf}^3)$. Now the surjectivity of $\Phi_{1,2,\Leu}$ was established in Lemma \ref{lem:cohomological_operation_surjective}, which completes the proof.
\end{proof}

The injectivity of $\pi_\Leu^{2,2}$ lets us conclude that \[\Im(r_\Leu^1:\widetilde{\CH}{}^1(Q_3,\Leu)\to\CH^1(Q_3))=\{x\in\CH^1(Q_3),\Sq_\Leu^2(\bar{x})=0\in\Ch^2(Q_3)\}\] exactly as in the proof of Lemma \ref{lem:image_comparison_steenrod}.

To obtain $\widetilde{\CH}{}^n(Q_i,\Leu)$, we also need to compute the cokernel of the connecting morphism \[\Hr^{n-1}(X,\Kbf_n^\Mr)\to\Hr^n(X,\Ibf^{n+1}_\Leu)\] in (\ref{eq:ladder_MW}). Again we do this by relating this homomorphism to the map $\Phi_{n-1,n,\Leu}$. To do so, we must understand the image of the vertical homomorphism $\alpha^n(X)=\alpha^n:\Hr^{n-1}(X,\Kbf_n^\Mr)\to\Hr^{n-1}(X,\bar{\Ibf}^n)$ in (\ref{eq:ladder_MW}).

\begin{lem}
There is a natural isomorphism $\Coker\alpha^n\cong\CH^n(X)[2]$.
\end{lem}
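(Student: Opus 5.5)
The cokernel of $\alpha^n:\Hr^{n-1}(X,\Kbf_n^\Mr)\to\Hr^{n-1}(X,\bar{\Ibf}^n)$ should come from the mod $2$ reduction of Milnor $\Kr$-theory. Milnor's homomorphism $s_n$ factors through $\Kbf_n^\Mr/2$, and by the affirmation of the Milnor conjecture the induced map $\Kbf_n^\Mr/2\to\bar{\Ibf}^n\cong\Hsc^n$ is an isomorphism. I would verify this on the Gersten complexes: on fields $\Kr_*^\Mr(F)/2\cong\bar{\Ir}^*(F)$, and Milnor residues are compatible with the residues on $\bar{\Ir}^*$. So $\alpha^n$ factors as
\[\Hr^{n-1}(X,\Kbf_n^\Mr)\to\Hr^{n-1}(X,\Kbf_n^\Mr/2)\xrightarrow{\sim}\Hr^{n-1}(X,\bar{\Ibf}^n),\]
and it suffices to compute the cokernel of the first map.

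For this I would use the short exact sequence of sheaves on $X_\Zar$
\[0\to\Kbf_n^\Mr\xrightarrow{\cdot 2}\Kbf_n^\Mr\to\Kbf_n^\Mr/2\to 0.\]
Its long exact cohomology sequence contains
\[\Hr^{n-1}(X,\Kbf_n^\Mr)\to\Hr^{n-1}(X,\Kbf_n^\Mr/2)\to\Hr^n(X,\Kbf_n^\Mr)\xrightarrow{\cdot 2}\Hr^n(X,\Kbf_n^\Mr),\]
and Bloch's formula $\Hr^n(X,\Kbf_n^\Mr)=\CH^n(X)$ identifies the cokernel of the first arrow with $\CH^n(X)[2]$. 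Naturality in $X$ follows because every map involved is functorial for flat pullbacks, and in any case natural with respect to the sheaf maps.

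The main obstacle is exactness on the left: multiplication by $2$ must be injective on $\Kbf_n^\Mr$ as a Zariski sheaf, so that the sequence above really is exact. The map $\Kr_n^\Mr(F)\xrightarrow{2}\Kr_n^\Mr(F)$ is not injective for a general field; for instance $\{-1\}$ is $2$-torsion over $\Rb$. So I would instead work with the Gersten complexes directly. Define $\Cr(X,\Kbf_n^\Mr)/2$ termwise, which is the Gersten complex of $\Kbf_n^\Mr/2$ (this is Rost's cycle module $\Kr_*^\Mr/2$, and Gersten exactness holds for it). There is then a short exact sequence of complexes
\[0\to 2\Cr\to\Cr\to\Cr/2\to 0,\]
together with an exact sequence of complexes
\[0\to\Cr[2]\to\Cr\xrightarrow{2}2\Cr\to 0.\]
In degree $n$ the complex $\Cr(X,\Kbf_n^\Mr)$ is $\bigoplus_{x\in X^{(n)}}\Zb$, which is torsion-free, and degree $n$ is the top degree of this complex. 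Hence $\Hr^n(2\Cr)\to\Hr^n(\Cr)$ is multiplication by $2$ on $\CH^n(X)$ after identifying $\Hr^n(2\Cr)$ with $\Hr^n(\Cr)$ via $2$. For this last identification I need $\Hr^n(\Cr[2])=0$ and $\Hr^{n+1}(\Cr[2])=0$, which holds because $\Cr[2]$ vanishes in degrees $\geq n$. Chasing the connecting map $\Hr^{n-1}(\Cr/2)\to\Hr^n(2\Cr)\cong\CH^n(X)$ then gives the exact sequence
\[\Hr^{n-1}(\Cr)\to\Hr^{n-1}(\Cr/2)\to\CH^n(X)\xrightarrow{2}\CH^n(X),\]
up to checking that the image of $\Hr^{n-1}(2\Cr)\to\Hr^{n-1}(\Cr)$ doesn't matter. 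It doesn't, since we only need the image of $\Hr^{n-1}(\Cr)$ in $\Hr^{n-1}(\Cr/2)$, and exactness at $\Hr^{n-1}(\Cr/2)$ uses only the first short exact sequence. This yields $\Coker\alpha^n\cong\CH^n(X)[2]$.
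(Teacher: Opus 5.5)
Your argument is correct and follows the paper's proof. The paper likewise uses the Milnor conjecture to get $0\to 2\Kbf_n^\Mr\to\Kbf_n^\Mr\to\bar{\Ibf}^n\to 0$ together with $0\to\Kbf_n^\Mr[2]\to\Kbf_n^\Mr\xrightarrow{\cdot 2}2\Kbf_n^\Mr\to 0$. It kills the degree $n$ and $n+1$ cohomology of the $2$-torsion part because $\Kr_0^\Mr=\Zb$ is torsion-free and $\Kr_{-1}^\Mr=0$, and then reads off $\Coker\alpha^n\cong\Ker(\cdot 2)=\CH^n(X)[2]$ from the connecting map. The only difference is that you run these two sequences on the Gersten complexes rather than on the sheaves, after rightly discarding the non-exact sequence $0\to\Kbf_n^\Mr\xrightarrow{2}\Kbf_n^\Mr$ from your second paragraph. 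If anything this is slightly cleaner, since it avoids needing a Gersten resolution of the sheaf $\Kbf_n^\Mr[2]$.
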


\begin{proof}
The affirmation of the Milnor conjecture on quadratic forms \cite{voevodskyMotivicCohomology2coefficients2003} implies that the morphism $\Kbf_n^\Mr\to\bar{\Ibf}^n$ induces an isomorphism $\Kbf_n^\Mr/2\cong\bar{\Ibf}^n$. In other words, there is an exact sequence \[0\to 2\Kbf_n^\Mr\to\Kbf_n^\Mr\to\bar{\Ibf}^n\to 0\] of sheaves on $Q_{i,\Zar}$. On the other hand, multiplication by $2$ induces an exact sequence \[0\to\Kbf_n^\Mr[2]\to\Kbf_n^\Mr\xrightarrow{\cdot 2}\Kbf_n^\Mr/2\to 0\] of sheaves. We deduce a commutative diagram
\[\begin{tikzcd}
	&& {\Hr^{n}(X,\Kbf_n^\Mr[2])} & \\
	&& {\Hr^{n}(X,\Kbf_n^\Mr)} \\
	{\Hr^{n-1}(X,\Kbf_n^\Mr)} & {\Hr^{n-1}(X,\bar{\Ibf}^n)} & {\Hr^{n}(X,2\Kbf_n^\Mr)} & {\Hr^{n}(X,\Kbf_n^\Mr)} \\
	&& {\Hr^{n+1}(X,\Kbf_n^\Mr[2])}
	\arrow[from=1-3, to=2-3]
	\arrow[from=2-3, to=3-3]
	\arrow["{\cdot 2}", from=2-3, to=3-4]
	\arrow["{\alpha^n}", from=3-1, to=3-2]
	\arrow[from=3-2, to=3-3]
	\arrow[from=3-3, to=3-4]
	\arrow[from=3-3, to=4-3]
\end{tikzcd}\]
with exact row and column. One has $\Hr^i(X,\Kbf_n^\Mr[2])=0$ for $i\in\{n,n+1\}$. Indeed, the sheaf $\Kbf_n^\Mr[2]$ has a Gersten resolution $\Cr(X,\Kbf_n^\Mr[2])$ with degree $i$ term given by \[\bigoplus_{x\in X^{(i)}}\Kr_{n-i}(\kappa(x))[2],\] so that $\Hr^i(C(X,\Kbf_n^\Mr[2]))=\Hr^i(X,\Kbf_n^\Mr[2])$. One then has $\Kr_{-1}^\Mr(F)=0$ for any field $F$ by definition and $\Kr_0^\Mr(F)=\Zb$ is torsion free so the degree $i=n$ and $i=n+1$ of $\Cr(X,\Kbf_n^\Mr[2])$ vanish. \emph{A fortiori}, so does $\Hr^i(X,\Kbf_n^\Mr[2])$. Therefore the map $\Hr^n(X,\Kbf_n^\Mr)\to\Hr^n(X,2\Kbf_n^\Mr)$ is an isomorphism. Modulo this isomorphism and in view of Bloch's formula, the map $\Hr^{n}(X,2\Kbf_n^\Mr)\to\Hr^n(X,\Kbf_n^\Mr)$ can be identified with the multiplication by $2$ endomorphism of $\CH^n(X)$. The connecting homomorphism $\Hr^{n-1}(X,\bar{\Ibf}^n)\to\Hr^n(X,2\Kbf_n^\Mr)$ in the exact sequence above thus induces an isomorphism \[\Coker\alpha^n\cong\Ker(\CH^n(X)\xrightarrow{\cdot 2}\CH^n(X))=\CH^n(X)[2],\] as required.
\end{proof}

\begin{lem}\label{lem:surjectivity_milnor_to_I}
If $(n,i)\neq(2,3)$, then $\alpha_i^n=\alpha^n(Q_i)$ is surjective. Moreover, the image of $\alpha_2^3:\Hr^1(Q_3,\Kbf_2^\Mr)\to\Hr^1(Q_3,\bar{\Ibf}^2)$ is $\Zb/2\cdot\llangle -1\rrangle h$.
\end{lem}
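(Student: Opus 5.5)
The plan is to compute $\Coker\alpha_i^n$ directly, using the preceding lemma which identifies it with $\CH^n(Q_i)[2]$. Surjectivity of $\alpha_i^n$ is then equivalent to $\CH^n(Q_i)$ having no $2$-torsion. The relevant range is $1\leq n\leq i$; for $n=0$ the source and target of $\alpha^0$ vanish in degree $-1$, and for $n>i$ everything vanishes.

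\textbf{Step 1: surjectivity.} I check the torsion-freeness case by case.
\begin{itemize}
\item For $Q_1$, $\CH^1(Q_1)\cong\Zb$ by \cite[(2.6) Proposition]{karpenkoAlgebrogeometricInvariantsQuadratic1991}.
\item For $Q_2$, $\CH^1(Q_2)$ is free by Lemma \ref{lem:picard_group_quadric_surface}. Also $\CH^2(Q_2)$ is torsion free by \cite[(2.6)]{karpenkoAlgebrogeometricInvariantsQuadratic1991} and \cite[(5.3)]{karpenkoAlgebrogeometricInvariantsQuadratic1991}; indeed $\CH_0$ of an anisotropic quadric is generated by $h^2$, of infinite order.
\item For $Q_3$, Proposition \ref{prop:chow_groups_anisotropic_quadric_threefold} shows that $\CH^1$ and $\CH^3$ are free.
\end{itemize}
In all these cases $\alpha_i^n$ is therefore surjective. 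The remaining case is $n=2$ on $Q_3$, where $\CH^2(Q_3)[2]=\Zb/2\cdot\alpha$. So $\alpha_2^3$ has index $2$ in $\Hr^1(Q_3,\bar{\Ibf}^2)$.

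\textbf{Step 2: the image of $\alpha_2^3$.} By Lemma \ref{lem:motivic_to_étale} and Lemma \ref{lem:third_and_fifth_étale_cohomology_quadric_threefold}, the group $\Hr^1(Q_3,\bar{\Ibf}^2)\cong\Hr_\et^3(Q_3)$ is $(\Zb/2)^2$, spanned by $\omega h$ (equivalently $\rho h$) and $x$, with $x\tau=\rho^3$. The image is therefore a line. Since $\rho\in\Hr^0(Q_3,\Kbf_1^\Mr)$ (the symbol $\{-1\}$) and $h$ lifts to $\Hr^1(Q_3,\Kbf_1^\Mr)=\CH^1(Q_3)$, the product $\{-1\}\cdot h\in\Hr^1(Q_3,\Kbf_2^\Mr)$ maps under the multiplicative map $s_*$ to $\llangle -1\rrangle h$. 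This class is nonzero, since $\omega\lambda\neq 0$ in $\Hr_\et^3(Q_3)$. Hence it spans the image.

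To justify the multiplicativity, I would use the product structure of the Gersten complexes: $s_*$ is a morphism of sheaves of graded rings and cup products are compatible. The identification of $\rho h$ with $\omega\lambda$ then goes through the edge maps.

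\textbf{Main obstacle.} The delicate point is the counting in Step 2. It relies on the exactness of the connecting-map diagram in the preceding lemma, which gives $\dim\Coker\alpha_2^3=1$ exactly. If that count were unavailable, one could instead show that $x$ is not in the image. The argument would be that any element of the image lifts integrally, so applying $\Sq^1$ to it must vanish. But Lemma \ref{lem:steenrod_square_torsion_motivic} gives $\tau^2\Sq^1(x)=\rho^4\neq 0$. The same holds for $x+\rho h$, since $\Sq^1(\rho h)=0$ because $\Sq^1(\rho)=\Sq^1(h)=0$. This forces the image to be exactly $\Zb/2\cdot\llangle -1\rrangle h$.
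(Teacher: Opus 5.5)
Your proposal is correct and follows the paper's proof. Both deduce surjectivity from $\Coker\alpha^n\cong\CH^n(Q_i)[2]$ and the torsion-freeness of the relevant Chow groups. In the case $(n,i)=(2,3)$, both combine $\Coker\alpha_2^3\simeq\Zb/2$ with the lift $\{-1\}\cdot h$ of $\llangle -1\rrangle h$ through multiplication by $\{-1\}$. Your explicit check that $\llangle -1\rrangle h\neq 0$, via $\omega\lambda\neq 0$ in $\Hr_\et^3(Q_3)$, makes precise a point the paper leaves implicit, and the $\Sq^1$ backup argument is not needed.
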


\begin{proof}
If $(n,i)\neq(2,3)$, then $\CH^n(Q_i)$ is $2$-torsion free by \cite[(2.6)]{karpenkoAlgebrogeometricInvariantsQuadratic1991} (for $n=i$, namely for $0$-cycles) and Lemma \ref{lem:picard_group_quadric_surface} (for $1$-cycles on $Q_2$) so $\alpha_i^n$ is surjective by the previous lemma. Since $\CH^2(Q_3)[2]=\Zb/2\cdot\alpha$ by Proposition \ref{prop:chow_groups_anisotropic_quadric_threefold}, there is an exact sequence \[\Hr^1(Q_3,\Kbf_2^\Mr)\to\Hr^1(Q_3,\bar{\Ibf}^2)\to\Zb/2\to 0.\] Moreover, one has $\Hr^1(Q_3,\bar{\Ibf}^2)=\Zb/2\oplus\Zb/2\cdot\llangle -1\rrangle h$ where $h\in\Hr^1(Q_3,\bar{\Ibf})^1)=\Ch^1(Q_3)$. Now if $F$ is a field, the morphism $s_1:\Kr_1^\Mr(F)\to\bar{\Ir}^1(F)$ carries $\{-1\}\in\Kr_1^\Mr(F)$ to $\llangle -1\rrangle$ by definition. We have a multiplication by $\{-1\}$ morphism from $\Kbf_1$ to $\Kbf_2^\Mr$ and a commutative square
\[\begin{tikzcd}
	{h\in\CH^1(Q_3)=\Hr^1(Q_3,\Kbf_1^\Mr)} & {\Hr^1(Q_3,\Kbf_2^\Mr)} \\
	{h\in\Ch^1(Q_3)=\Hr^1(Q_3,\bar{\Ibf})} & {\Hr^1(Q_3,\bar{\Ibf}^2)}
	\arrow["{\cdot\{-1\}}", from=1-1, to=1-2]
	\arrow[from=1-1, to=2-1]
	\arrow["{\alpha_3^2}", from=1-2, to=2-2]
	\arrow["{\otimes\llangle -1\rrangle}"', from=2-1, to=2-2]
\end{tikzcd}\]
This square shows that $\llangle -1\rrangle h$ lies in the image of $\alpha_2^3$. Since this image has dimension $1$ as a $\Zb/2$-vector space by the previous exact sequence, we must have $\Im\alpha_2^3=\Zb/2\cdot\llangle -1\rrangle h$.
\end{proof}

We now compute the Chow--Witt groups of the quadrics $Q_i$. In dimension $0$, since $\Ir(\Cb)=0$, we have $\widetilde{\CH}{}^0(Q_0)=\CH^0(\Spec\Cb)=\Zb$. We also have the following general observation for the top Chow--Witt group of varieties whose real locus has no compact connected component, which evidently applies to smooth real varieties with empty real locus such as the quadrics $Q_i$.

\begin{prop}\label{prop:top_chow_witt_group_anisotropic}
Let $X$ be a smooth real algebraic variety of dimension $d$ and let $\Leu\in\Pic(X)$; set $L=\Leu(\Rb)$. Suppose that $\Hr^d(X(\Rb),\Zb/2)=0$. The comparison morphism $r_\Leu^d:\widetilde{\CH}{}^d(X,\Leu)\to\CH^d(X)$ is then an isomorphism.
\end{prop}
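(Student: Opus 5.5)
We read off the statement from the top row of the diagram (\ref{eq:ladder_MW}) with $n=d$:
\[\Hr^{d-1}(X,\Kbf_d^\Mr)\to\Hr^d(X,\Ibf^{d+1}_\Leu)\to\widetilde{\CH}{}^d(X,\Leu)\xrightarrow{r_\Leu^d}\CH^d(X)\to\Hr^{d+1}(X,\Ibf^{d+1}_\Leu).\]
The last group vanishes, because the Gersten complex $\Cr(X,\Ibf^{d+1}_\Leu)$ is zero in degrees $>d$. So $r_\Leu^d$ is surjective, and injectivity amounts to showing that the image of $\Hr^d(X,\Ibf^{d+1}_\Leu)$ in $\widetilde{\CH}{}^d(X,\Leu)$ is zero. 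The simplest route is to show that $\Hr^d(X,\Ibf^{d+1}_\Leu)$ itself vanishes.

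For this we use Jacobson's theorem \cite[Corollary 8.11]{jacobsonRealCohomologyPowers2017}: $\sign_{d+1}:\Ibf^{d+1}(\Leu)\to\iota_*\Zb(L)$ is an isomorphism of sheaves on $X_\Zar$. Combined with the Coste--Roy identification, it gives
\[\Hr^d(X,\Ibf^{d+1}_\Leu)\cong\Hr^d(X(\Rb),\Zb(L)).\]
By Remark \ref{rema:top_twisted_cohomology}, $\Hr^d(X(\Rb),\Zb(L))\cong\Zb^{T\setminus T'}\oplus(\Zb/2)^{T'}$, where $T$ is the set of compact connected components of $X(\Rb)$. Similarly, by Poincaré duality mod $2$, $\Hr^d(X(\Rb),\Zb/2)\cong(\Zb/2)^T$. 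The hypothesis $\Hr^d(X(\Rb),\Zb/2)=0$ therefore forces $T=\emptyset$, hence $\Hr^d(X(\Rb),\Zb(L))=0$ and $\Hr^d(X,\Ibf^{d+1}_\Leu)=0$.

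If one prefers not to invoke twisted Poincaré duality, there is a direct alternative. Look at the long exact sequence for $0\to\Zb(L)\xrightarrow{2}\Zb(L)\to\Zb/2\to 0$. The group $\Hr^{d+1}(X(\Rb),\Zb(L))$ vanishes because $X(\Rb)$ is a $d$-manifold. The vanishing of $\Hr^d(X(\Rb),\Zb/2)$ then shows that multiplication by $2$ is surjective on $\Hr^d(X(\Rb),\Zb(L))$. This group is finitely generated (finite CW structure of the semialgebraic set $X(\Rb)$), so a $2$-divisible finitely generated group with this structure is zero. This last step needs care: it requires excluding odd torsion, which Remark \ref{rema:top_twisted_cohomology} does. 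So the cleaner route is just to cite that remark.

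Once $\Hr^d(X,\Ibf^{d+1}_\Leu)=0$, the exact sequence shows that $r_\Leu^d$ is an isomorphism. The only delicate point is the vanishing of $\Hr^d(X(\Rb),\Zb(L))$; the rest is formal.
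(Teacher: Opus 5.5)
Your proposal is correct and is essentially the paper's own proof. Surjectivity of $r_\Leu^d$ comes from $\Hr^{d+1}(X,\Ibf^{d+1}_\Leu)=0$, and injectivity comes from Jacobson's identification $\Hr^d(X,\Ibf^{d+1}_\Leu)\cong\Hr^d(X(\Rb),\Zb(L))$ together with Remark~\ref{rema:top_twisted_cohomology}, applied once the hypothesis has ruled out compact components of $X(\Rb)$; your caveat that $2$-divisibility alone leaves possible odd torsion, so the Poincaré duality remark is the right way to finish, is also accurate.
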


\begin{proof}
Note that $\Hr^d(X(\Rb),\Zb(L))=0$ (this follows from Remark \ref{rema:top_twisted_witt_group_poincaré_duality} as the set $T$ of connected components of $X(\Rb)$ is then empty). The kernel of $r_\Leu^d$ is a quotient of $\Hr^d(X,\Ibf^{d+1}_\Leu)$. By \cite[Corollary 8.11]{jacobsonRealCohomologyPowers2017}, this group is isomorphic to $\Hr^d(X(\Rb),\Zb(L))$ and thus vanishes by assumption. Moreover, the cokernel of $r_\Leu^d$ is a subgroup of $\Hr^{d+1}(X,\Ibf^{d+1}_\Leu)$, which vanishes for cohomological dimension reasons.
\end{proof}

Thus by \cite[(2.6) Proposition]{karpenkoAlgebrogeometricInvariantsQuadratic1991}, for any $d\geq 0$, we have $\widetilde{\CH}{}^d(Q_d,\Leu)=\Zb$, generated by any closed point of $Q_d$. Together with Corollary \ref{cor:chow_witt_0}, this completely computes the Chow--Witt groups of $Q_1$. Indeed, note that $\Hr^0(Q_1,\Ibf_\Leu)=\Zb/2$ for any line bundle $\Leu$ on $Q_1$ as $Q_1$ has geometric genus $0$ and is proper with empty real locus (Corollary \ref{cor:computation_i_cohomology}).

\subsubsection*{The quadric surface $Q_2$}

Here four twists must be considered: the trivial one, the twist by the line bundle $\Leu_0$ with $c_1(\Leu_0)=f$ (with the notation of Lemma \ref{lem:picard_group_quadric_surface}), the twist by $\Osc_{Q_2}(1)=\Osc(1)$ (to ease notation, in this subsubsection, we denote the structure sheaf of $Q_2$ by $\Osc$) and the twist by $\Osc(1)\otimes\Leu_0$. The top Chow--Witt group is covered by Proposition \ref{prop:top_chow_witt_group_anisotropic}. Next, we denote by $\Pic'(Q_2)/2$ the subgroup of $\Pic(Q_2)/2$ generated by $f$; this is precisely the kernel of the extension of scalars homomorphism from $\Pic(Q_2)/2$ to $\Pic(Q_{2,\Cb})/2$.

\begin{theo}\label{theo:chow_witt_surface}
There are isomorphisms \[
\left\{\begin{array}{rcl}
\widetilde{\CH}{}^0(Q_2) &=& \Zb/2\oplus\Zb\cdot\langle 1\rangle,\\
\widetilde{\CH}{}^1(Q_2) &\cong& \Zb/2\oplus\Zb\oplus 2\Zb 
\end{array}\right.\]
\[\left\{\begin{array}{rcl}
\widetilde{\CH}{}^0(Q_2,\Leu_0) &=& \Zb/2\oplus 2\Zb\cdot\Hr_\Leu(\Osc), \\
\widetilde{\CH}{}^1(Q_2,\Leu_0) &\cong& \Zb/2\oplus\Zb\oplus\Zb 
\end{array}\right.\]
and
\[\left\{\begin{array}{rcl}
\widetilde{\CH}{}^0(Q_2,\Leu) &=& 2\Zb\cdot\Hr_{\Leu}(\Osc), \\
\widetilde{\CH}{}^1(Q_2,\Leu) &=& \Zb\oplus 2\Zb
\end{array}\right.\]
if $\Leu=\Osc(1)$ modulo $\Pic'(Q_2)/2$.
\end{theo}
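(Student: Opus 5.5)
The degree $0$ groups come from Corollary \ref{cor:chow_witt_0}, so the first step is to compute $\Hr^0(Q_2,\Ibf_\Leu)$ for the four twists. Since $Q_2(\Rb)=\emptyset$, Proposition \ref{prop:exact_sequence_twisted_I} together with $\Hr^0(Q_2,\Ibf^2_\Leu)\cong\Hr^0(Q_2,\bar{\Ibf}^2)=0$ (Lemma \ref{lem:global_sections_torsion_quadric_surface} and (\ref{eq:projection_isomorphism})) identifies $\Hr^0(Q_2,\Ibf_\Leu)$ with $\Ker(\Phi_{0,1,\Leu})\subseteq\Hr^0(Q_2,\bar{\Ibf})=\Zb/2\cdot\omega$. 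By Lemma \ref{lem:differentials_pardon_quadric_surface} this kernel is $\Zb/2$ exactly when $\bar c_1(\Leu)\in\Zb/2\cdot f$, and $0$ otherwise. Feeding this into Corollary \ref{cor:chow_witt_0} gives the three displayed $\widetilde{\CH}{}^0$ formulas, the $\langle 1\rangle$ summand appearing only in the square case.

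For $\widetilde{\CH}{}^1$, I will use the row of (\ref{eq:ladder_MW}) with $n=1$,
\[\Hr^0(Q_2,\Kbf_1^\Mr)\to\Hr^1(Q_2,\Ibf^2_\Leu)\to\widetilde{\CH}{}^1(Q_2,\Leu)\xrightarrow{r_\Leu^1}\CH^1(Q_2).\]
The image of $r_\Leu^1$ is given by Lemma \ref{lem:image_comparison_steenrod} with $n=d-1$: it consists of those $x\in\CH^1(Q_2)=\Zb f\oplus\Zb h$ with $\Sq^2_\Leu(\bar x)=0$. The values of $\Sq^2_\Leu$ are read off from the proof of Lemma \ref{lem:differentials_pardon_quadric_surface}, using $f^2=0$ and $fh=h^2\neq0$:
\begin{itemize}
\item for trivial $\Leu$ the image is $\Zb f\oplus2\Zb h$;
\item for $\Leu_0$ it is all of $\CH^1(Q_2)$;
\item for $\Leu\equiv\Osc(1)$ the condition is $\Sq^2_h(af+bh)=ah^2=0$, so the image is $2\Zb f\oplus\Zb h$;
\item for $\Leu\equiv\Osc(1)\otimes\Leu_0$ the condition is $(a+b)h^2=0$, which is again an index-$2$ subgroup.
\end{itemize}
In every case the image is free of rank $2$, so the sequence splits on the right, and it remains to compute $\Coker(\Hr^0(Q_2,\Kbf_1^\Mr)\to\Hr^1(Q_2,\Ibf^2_\Leu))$.

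By (\ref{eq:projection_isomorphism}), $\Hr^1(Q_2,\Ibf^2_\Leu)\cong\Hr^1(Q_2,\bar{\Ibf}^2)=\Zb/2\cdot\omega h$ (Lemma \ref{lem:global_sections_torsion_quadric_surface}). Under this identification the connecting map from $\Hr^0(Q_2,\Kbf_1^\Mr)$ factors through $\alpha^1$ followed by $\Phi_{0,1,\Leu}$. By Lemma \ref{lem:surjectivity_milnor_to_I}, $\alpha^1_2$ is surjective onto $\Hr^0(Q_2,\bar{\Ibf})=\Zb/2\cdot\omega$, since $\CH^1(Q_2)$ is torsion free by Lemma \ref{lem:picard_group_quadric_surface}. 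So the cokernel is $\Coker\Phi_{0,1,\Leu}$, which is $\Zb/2$ when $\bar c_1(\Leu)\in\Zb/2\cdot f$ and $0$ otherwise, again by Lemma \ref{lem:differentials_pardon_quadric_surface}. This yields $\Zb/2\oplus\Zb\oplus2\Zb$, $\Zb/2\oplus\Zb\oplus\Zb$, and $\Zb\oplus2\Zb$ in the three cases, where $2\Zb$ records that the corresponding generator maps to twice a basis element.

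The main obstacle is bookkeeping rather than any conceptual difficulty. One point to check is that the factorization of the connecting map through $\Phi_{0,1,\Leu}$ is compatible with the identification (\ref{eq:projection_isomorphism}). The other is that the last twist $\Osc(1)\otimes\Leu_0$ really behaves like $\Osc(1)$ modulo $\Pic'(Q_2)/2$; this holds because both $\Phi_{0,1,\Leu}$ and the index of the image of $r^1_\Leu$ depend only on whether $\bar c_1(\Leu)\in\Zb/2\cdot f$.
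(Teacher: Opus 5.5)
Your proposal is correct and follows the paper's proof. The $\widetilde{\CH}{}^1$ computation is the same: the image of $r^1_\Leu$ comes from Lemma \ref{lem:image_comparison_steenrod}, and the cokernel is identified with $\Coker\Phi_{0,1,\Leu}$ via Lemma \ref{lem:surjectivity_milnor_to_I} and (\ref{eq:projection_isomorphism}). The only variation is in degree $0$: you obtain $\Hr^0(Q_2,\Ibf_\Leu)=\Ker\Phi_{0,1,\Leu}$ directly from Proposition \ref{prop:exact_sequence_twisted_I}, whereas the paper reads it off from the Witt groups in Theorem \ref{theo:witt_groups_anisotropic_quadric_surface}; both are valid.

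One small slip in your last paragraph: the index of $\Im r^1_\Leu$ does not depend only on whether $\bar{c}_1(\Leu)\in\Zb/2\cdot f$, since it is $1$ for $\Leu_0$ but $2$ for $\Osc$. What you actually use, and correctly verify in your list, is that the index equals $2$ whenever $\bar{c}_1(\Leu)\notin\Zb/2\cdot f$.
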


As in Corollary \ref{cor:chow_witt_0}, we use factors $2$ before $\Zb$ to indicate that the image of the comparison homomorphism $\widetilde{\CH}{}^n(Q_2,\Leu)\to\CH^n(Q_2)$ has index $2$. For example, this means that the homomorphism $\widetilde{\CH}{}^1(Q_2,\Leu_0)\to\CH^1(Q_2)$ is surjective while the image of the homomorphism $\widetilde{\CH}{}^1(Q_2,\Leu)\to\CH^1(Q_2)$ for $\Leu\neq\Leu_0$ in $\Pic(Q_2)/2$ is a subgroup of index $2$ of $\CH^1(Q_2)=\Zb\cdot f\oplus\Zb\cdot h$ (this subgroup is given by $\Zb\cdot c_1(\Leu)+2\CH^1(Q_2)$ if $\Leu=\Leu_0,\Leu_0\otimes\Osc(1)$ and by $\Zb\cdot f+2\CH^1(Q_2)$ if $\Leu=\Osc$, see the proof below). We deduce from the above theorem that the isomorphism type of $\widetilde{\CH}{}^*(Q_2,\Leu)$ depends only on the class of $\Leu$ modulo the subgroup $\Pic'(Q_2)/2$ of $\Pic(Q_2)/2$. The groups $\widetilde{\CH}{}^1(Q_2,\Leu)$ for $\Leu\in\Pic(Q_2)/2$ are pairwise distinguished by the image of the comparison homomorphism to $\CH^1(Q_2)$.

\begin{proof}
To obtain the statements about $\widetilde{\CH}{}^0(Q_2,\Leu)$, in view of Corollary \ref{cor:chow_witt_0}, it suffices to prove that $\Hr^0(Q_2,\Ibf_\Leu)$ is isomorphic to $\Zb/2$ if $\Leu=\Osc,\Leu_0$ and vanishes if $\Leu=\Osc(1),\Leu_0\otimes\Osc(1)$. Since $\Hr^0(Q_2,\Ibf_\Leu)$ is a subgroup of $\Hr^0(Q_2,\Wbf_\Leu)$ and since $\Hr^0(Q_2,\Wbf_\Leu)=0$ if $\Leu=\Osc(1),\Leu_0\otimes\Osc(1)$ by Theorem \ref{theo:witt_groups_anisotropic_quadric_surface}, the second statement is clear. Since the line bundle $\Leu_0$ is nontrivial, we have $\Hr^0(Q_2,\Ibf(\Leu_0))=\Hr^0(Q_2,\Wbf(\Leu_0))$ and thus $\Hr^0(Q_2,\Ibf(\Leu_0))=\Zb/2$ by Theorem \ref{theo:witt_groups_anisotropic_quadric_surface}. Finally, by the same theorem, one has $\Hr^0(Q_2,\Wbf)=\Zb/4\cdot\langle 1\rangle$: it easily follows that $\Hr^0(Q_2,\Ibf)=\Zb/2\cdot\llangle -1\rrangle$.

We now consider the group $\widetilde{\CH}{}^1(Q_2,\Leu)$. It sits in an exact sequence \[\Hr^0(Q_2,\Kbf_1^\Mr)\to\Hr^1(Q_2,\Ibf^2_\Leu)\to\widetilde{\CH}{}^1(Q_2,\Leu)\xrightarrow{r_\Leu^1}\CH^1(Q_2).\] By Lemma \ref{lem:image_comparison_steenrod}, the image of $r_\Leu^1$ is the kernel of the composite map \[\CH^1(Q_2)\to\Ch^1(Q_2)\xrightarrow{\Sq_\Leu^2}\Ch^2(Q_2).\] If $\Leu=\Leu_0$, then $\Sq_\Leu^2=0$ by Lemma \ref{lem:differentials_pardon_quadric_surface} so $r_{\Leu_0}^1$ is surjective. Else, one has \[\Ker\Sq^2=\Zb/2\cdot f,\;\;\Ker\Sq_{\Osc(1)}^2=\Zb/2\cdot h,\;\;\Ker\Sq_{\Osc(1)\otimes\Leu_0}^2=\Zb/2\cdot(f+h)\] by Lemma \ref{lem:differentials_pardon_quadric_surface} (indeed, each specified element lies in the kernel and since $\Sq_\Leu^2$ is a surjection onto $\Ch^2(Q_2)$ by the cited lemma, so that its kernel is isomorphic to $\Zb/2$, this element generates the kernel). Therefore $\Im r^1=\Zb\cdot f+2\CH^1(Q_2)$, $\Im r_{\Osc(1)}^1=\Zb\cdot h+2\CH^1(Q_2)$ and $\Im r_{\Osc(1)\otimes\Leu_0}^1=\Zb\cdot(h+f)+2\CH^1(Q_2)$. Moreover, one has \[\Hr^1(Q_2,\Ibf^2_\Leu)=\Hr^1(Q_2,\bar{\Kbf}^2)=\Hr^1(Q_2,\bar{\Ibf}^2)=\Zb/2\] as observed in the proof of Lemma \ref{lem:differentials_pardon_quadric_surface}. Lemma \ref{lem:surjectivity_milnor_to_I} and (\ref{eq:ladder_MW}) yield an identification \[\Coker(\Hr^0(Q_2,\Kbf_1^\Mr)\to\Hr^1(Q_2,\Ibf^2_\Leu))\cong\Coker(\Hr^0(Q_2,\bar{\Ibf})\xrightarrow{\partial_\Leu^{0,1}}\Hr^1(Q_2,\Ibf^2_\Leu)).\] The homomorphism $\pi_\Leu^{1,2}:\Hr^1(Q_2,\Ibf^2_\Leu)\to\Hr^1(Q_2,\bar{\Ibf}^2)$ is an isomorphism by (\ref{eq:projection_isomorphism}). It induces an identification between $\Coker(\Hr^0(Q_2,\bar{\Ibf})\xrightarrow{\partial_\Leu^{0,1}}\Hr^1(Q_2,\Ibf^2_\Leu))$ and \[\Coker(\Hr^0(Q_2,\bar{\Ibf})\xrightarrow{\partial_\Leu^{0,1}}\Hr^1(Q_2,\Ibf^2_\Leu)\xrightarrow{\pi_\Leu^{1,2}}\Hr^1(Q_2,\bar{\Ibf}^2)).\] We conclude that $\Coker(\Hr^0(Q_2,\Kbf_1^\Mr)\to\Hr^1(Q_2,\Ibf^2_\Leu))\cong\Coker\Phi_{0,1,\Leu}$ so by Lemma \ref{lem:differentials_pardon_quadric_surface}, the map $\Hr^0(Q_2,\Kbf_1^\Mr)\to\Hr^1(Q_2,\Ibf^2_\Leu)$ is trivial if $\Leu=\Osc,\Leu_0$ and is surjective if $\Leu=\Osc(1),\Leu_0\otimes\Osc(1)$. As $\Hr^1(Q_2,\Ibf^2_\Leu)=\Zb/2$, this completes the proof.
\end{proof}

\subsubsection*{The quadric threefold $Q_3$}

Here $\CH^1(Q_3)$ is freely generated by the class of a hyperplane section so we only need to consider the twists by $\Osc_{Q_3}$ (to simplify the notation, we write $\Osc$ for $\Osc_{Q_3}$ in the sequel) and by $\Osc(1)$. This gives the following results.

\begin{theo}\label{theo:chow_witt_threefold}
We have the following description
\[
\left\{
\begin{array}{rcl}
\widetilde{\CH}{}^0(Q_3) &=& \Zb/4\oplus\Zb\cdot\langle 1\rangle, \\
\widetilde{\CH}{}^1(Q_3) &\cong& \Zb/2\oplus 2\Zb, \\
\widetilde{\CH}{}^2(Q_3) &=& \Zb\cdot h^2,
\end{array}\right.
\]
\[
\left\{
\begin{array}{rcl}
\widetilde{\CH}{}^0(Q_3,\Osc(1)) &=& \Zb/2\oplus 2\Zb\cdot\Hr_{\Osc(1)}(\Osc),\\
\widetilde{\CH}{}^1(Q_3,\Osc(1)) &=& \Zb\cdot h, \\
\widetilde{\CH}{}^2(Q_3,\Osc(1)) &\cong& \Zb/2\oplus\Zb\cdot(h^2+\alpha)
\end{array}\right.
\]
of the Chow--Witt groups of $Q_3$.
\end{theo}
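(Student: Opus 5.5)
We treat the three cohomological degrees separately, always via the ladder (\ref{eq:ladder_MW}) and the earlier Witt and Steenrod computations for $Q_3$. The top group $\widetilde{\CH}{}^3$ is already handled by Proposition \ref{prop:top_chow_witt_group_anisotropic}, so we only need $n\in\{0,1,2\}$.

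\textbf{Degree $0$.} We use Corollary \ref{cor:chow_witt_0}, which reduces the computation to $\Hr^0(Q_3,\Ibf_\Leu)$. For $\Leu=\Osc$, Proposition \ref{prop:witt_group_anisotropic_quadric_threefold} gives $\Hr^0(Q_3,\Wbf)=\Zb/8\cdot\langle 1\rangle$, and $\Hr^0(Q_3,\Ibf)$ is the index-two subgroup generated by $\llangle -1\rrangle=2\langle 1\rangle$, hence $\Zb/4$. For $\Leu=\Osc(1)$, Lemma \ref{lem:baby_pardon} gives $\Hr^0(Q_3,\Ibf_\Leu)=\Hr^0(Q_3,\Wbf_\Leu)=\Zb/2$, again by Proposition \ref{prop:witt_group_anisotropic_quadric_threefold}.

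\textbf{Degree $n\in\{1,2\}$.} Here we use the exact row
\[\Hr^{n-1}(Q_3,\Kbf_n^\Mr)\to\Hr^n(Q_3,\Ibf^{n+1}_\Leu)\to\widetilde{\CH}{}^n(Q_3,\Leu)\xrightarrow{r^n_\Leu}\CH^n(Q_3).\]
The image of $r^n_\Leu$ is read off from the Steenrod computations. For $n=2$ we use Lemma \ref{lem:image_comparison_steenrod}, since $n=d-1$. For $n=1$ we use the remark after Lemma \ref{lem:projection_injective_quadric_threefold}.

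For $n=1$, $\Sq^2(h)=h^2\neq 0$, so $\Im r^1=2\Zb h$. On the other hand $\Sq^2_{\Osc(1)}(h)=0$, so $r^1_{\Osc(1)}$ is surjective.

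For $n=2$, the proof of Proposition \ref{prop:2_witt_group_anisotropic_quadric_threefold} gives $\Ker\Sq^2=\Zb/2\cdot h^2$. Hence $\Im r^2=\Zb h^2\oplus 2(\Zb/2)\alpha=\Zb h^2$. For the twist, $\Ker\Sq^2_{\Osc(1)}=\Zb/2\cdot(h^2+\alpha)$, so $\Im r^2_{\Osc(1)}=\Zb(h^2+\alpha)+2\CH^2(Q_3)$. Since $2\alpha=0$, this is the span of $h^2+\alpha$, which is isomorphic to $\Zb$.

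\textbf{The kernels.} It remains to compute $\Coker\bigl(\Hr^{n-1}(Q_3,\Kbf_n^\Mr)\to\Hr^n(Q_3,\Ibf^{n+1}_\Leu)\bigr)$.

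For $n=2$: $\Hr^2(Q_3,\Ibf^3_\Leu)\cong\Hr^2(Q_3,\bar{\Ibf}^3)=\Zb/2$ by (\ref{eq:projection_isomorphism}), because $Q_3(\Rb)=\emptyset$. Via this identification, the connecting map becomes $\Phi_{1,2,\Leu}\circ\alpha^2$. Lemma \ref{lem:surjectivity_milnor_to_I} gives $\Im\alpha^2=\Zb/2\cdot\omega h$. We then evaluate $\Phi_{1,2,\Leu}(\omega h)$:
\begin{itemize}
\item untwisted: $\Sq^2(\rho h)=\rho h^2$, which is $\omega\lambda^2\neq 0$ by Lemma \ref{lem:third_and_fifth_étale_cohomology_quadric_threefold};
\item twisted: $\Sq^2(\rho h)+\rho h\cdot h=0$, by the computation in the proof of Proposition \ref{prop:1_witt_group_anisotropic_quadric_threefold}.
\end{itemize}
So the cokernel is $0$ untwisted and $\Zb/2$ twisted, giving $\widetilde{\CH}{}^2(Q_3)=\Zb h^2$ and $\widetilde{\CH}{}^2(Q_3,\Osc(1))\cong\Zb/2\oplus\Zb$. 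The extension splits because the quotient is free.

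For $n=1$: $\alpha^1$ is surjective by Lemma \ref{lem:surjectivity_milnor_to_I}, so the cokernel equals $\Coker\partial^{0,1}_\Leu$. To identify it, we look at $\Hr^1(Q_3,\Ibf^2_\Leu)$.
\begin{itemize}
\item Lemma \ref{lem:easy_higher_pardon}-type arguments give $\Hr^1(Q_3,\Ibf^3_\Leu)\cong\Hr^1(Q_3,\bar{\Ibf}^3)=0$.
\item The sequence $\Hr^0(Q_3,\bar{\Ibf}^2)\to\Hr^1(Q_3,\Ibf^3_\Leu)\to\Hr^1(Q_3,\Ibf^2_\Leu)\to\Hr^1(Q_3,\bar{\Ibf}^2)\xrightarrow{\partial}\Hr^2(Q_3,\Ibf^3_\Leu)$ then identifies $\Hr^1(Q_3,\Ibf^2_\Leu)$ with $\Ker\Phi_{1,2,\Leu}$, using (\ref{eq:projection_isomorphism}) once more.
\item By the proof of Proposition \ref{prop:1_witt_group_anisotropic_quadric_threefold}, this kernel is $\Zb/2\cdot(x+\rho h)$ untwisted and $\Zb/2\cdot\omega h$ twisted.
\end{itemize}
The image of $\partial^{0,1}_\Leu$ maps to $\Phi_{0,1,\Leu}(\omega)$, which is $0$ untwisted and $\omega h$ twisted. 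Hence the kernel of $r^1_\Leu$ is $\Zb/2$ untwisted and $0$ twisted. This gives $\widetilde{\CH}{}^1(Q_3)\cong\Zb/2\oplus 2\Zb$ and $\widetilde{\CH}{}^1(Q_3,\Osc(1))\cong\Zb h$, again with split extensions because the quotients are free.

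The main obstacle is this last identification of $\Hr^1(Q_3,\Ibf^2_\Leu)$ and of the image of $\partial^{0,1}_\Leu$, specifically the injectivity of $\pi^{1,2}_\Leu$ on that image. I would get it from the vanishing of $\Hr^1(Q_3,\Ibf^3_\Leu)$, which makes $\pi^{1,2}_\Leu$ itself injective.
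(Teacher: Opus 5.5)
Your proposal is correct and follows essentially the same route as the paper: Corollary \ref{cor:chow_witt_0} in degree $0$, then in degrees $1$ and $2$ the row of (\ref{eq:ladder_MW}), with the image of $r^n_\Leu$ read off from twisted Steenrod squares and the kernel computed as $\Coker\partial^{0,1}_\Leu$ (via injectivity of $\pi^{1,2}_\Leu$), resp.\ as $\Hr^2(Q_3,\bar{\Ibf}^3)/\Phi_{1,2,\Leu}(\omega h)$. One small correction: the isomorphism $\Hr^1(Q_3,\Ibf^3_\Leu)\cong\Hr^1(Q_3,\bar{\Ibf}^3)$ comes from (\ref{eq:projection_isomorphism}), i.e.\ $\Ibf^4_\Leu=0$ because $Q_3(\Rb)=\emptyset$, not from Lemma \ref{lem:easy_higher_pardon}; the vanishing of the right-hand side is the fact $\Hr^1(Q_3,\Hsc^3)=0$ recalled before Lemma \ref{lem:cohomological_operation_surjective}.
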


For the class $\alpha$, recall that $\CH^2(Q_3)=\Zb\cdot h^2\oplus\Zb/2\cdot\alpha$ (Proposition \ref{prop:chow_groups_anisotropic_quadric_threefold}).

\begin{proof}
As in the proof of Theorem \ref{theo:chow_witt_surface}, to prove the statements regarding $\widetilde{\CH}{}^0$, it suffices to show that \[\Hr^0(Q_3,\Ibf)=\Zb/4,\;\;\Hr^0(Q_3,\Ibf_{\Osc(1)})=\Zb/2.\] By Proposition \ref{prop:witt_group_anisotropic_quadric_threefold}, one has $\Hr^0(Q_3,\Wbf)=\Zb/8\cdot\langle 1\rangle$ which easily implies that $\Hr^0(Q_3,\Ibf)=\Zb/4\cdot\llangle -1\rrangle$. Since $\Osc(1)$ is not a square, the inclusion $\Hr^0(Q_3,\Ibf_{\Osc(1)})\subseteq\Hr^0(Q_3,\Wbf_{\Osc(1)})$ is an isomorphism so $\Hr^0(Q_3,\Ibf_{\Osc(1)})=\Zb/2$ again by Proposition \ref{prop:witt_group_anisotropic_quadric_threefold}.

Let $\Leu\in\Pic(X)/2$. By Lemma \ref{lem:image_comparison_steenrod}, the image of the comparison morphism $r_\Leu^1:\widetilde{\CH}{}^1(Q_3,\Leu)\to\CH^1(Q_3)$ is the set of $x\in\CH^1(Q_3)$ such that $\Sq^2(\bar{x})=0$ where $\bar{x}$ is the reduction mod $2$ of $x$. Since $\Sq^2$ maps $h$ to $h^2$, which is nonzero in $\Ch^2(Q_3)$, we obtain that $\Im r^1=2\CH^1(Q_3)$, while $\Sq_{\Osc(1)}^2:\Ch^1(Q_3)\to\Ch^2(Q_3)$ is the zero morphism so $r_{\Osc(1)}^1$ is surjective. On the other hand, since $\alpha_3^1:\Hr^0(Q_3,\Kbf_1^\Mr)\to\Hr^0(Q_3,\bar{\Ibf}^1)$ is surjective by Lemma \ref{lem:surjectivity_milnor_to_I}, there is an identification \[\Coker\left(\Hr^0(Q_3,\Kbf_1^\Mr)\to\Hr^1(Q_3,\Ibf^2_\Leu)\right)\cong\Coker\left(\partial_\Leu^{0,1}:\Hr^0(Q_3,\bar{\Ibf})\to\Hr^1(Q_3,\Ibf^2_\Leu)\right)\] of cokernels. The morphism $\pi_\Leu^{2,3}:\Hr^2(Q_3,\Ibf^3_\Leu)\to\Hr^2(Q_3,\bar{\Ibf}^3)$ is an isomorphism by (\ref{eq:projection_isomorphism}): since $\Phi_{1,2,\Leu}=\pi_\Leu^{2,3}\circ\partial_{\Leu}^{1,2}$ and $\Im\pi_{\Leu}^{2,3}=\Ker\partial_\Leu^{1,2}$, the morphism $\pi_\Leu^{1,2}$ then sits in an exact sequence \[\Hr^1(Q_3,\Ibf^2_\Leu)\xrightarrow{\pi_\Leu^{1,2}}\Hr^1(Q_3,\bar{\Ibf}^2)\xrightarrow{\Phi_{1,2,\Leu}}\Hr^2(Q_2,\bar{\Ibf}^3).\] The morphism $\pi_\Leu^{1,2}$ is furthermore injective: indeed, its kernel is a quotient of $\Hr^1(Q_3,\Ibf^3_\Leu)$, which is isomorphic to $\Hr^1(Q_3,\bar{\Ibf}^3)$ by (\ref{eq:projection_isomorphism}) and thus vanishes by \cite[4.4 Theorem]{kahnMotivicCohomologyUnramified2000}. We deduce that $\Hr^1(Q_3,\Ibf^2_\Leu)\cong\Ker\Phi_{1,2,\Leu}$. The operation $\Phi_{1,2,\Leu}$ is surjective by Lemma \ref{lem:cohomological_operation_surjective}, and by \cite[4.4, 4.6 Theorem]{kahnMotivicCohomologyUnramified2000}, its source and target are given by \[\Hr^1(Q_3,\bar{\Ibf}^2)\cong(\Zb/2)^2,\;\;\Hr^2(Q_3,\bar{\Ibf}^3)=\Zb/2\] up to isomorphism. Therefore $\Ker\Phi_{1,2,\Leu}=\Zb/2=\Hr^1(Q_3,\Ibf^2_\Leu)$. More precisely, the computations of Lemma \ref{lem:cohomological_operation_surjective} show that $\Im\pi_{\Osc(1)}^{2,2}=\Zb/2\cdot\omega\lambda$, which is the image of $\omega\in\Hr^0(Q_3,\bar{\Ibf})$ under $\Phi_{0,1,\Osc(1)}$. Therefore $\Coker\partial_{\Osc(1)}^{0,1}=0$ and we conclude that the morphism $r_{\Osc(1)}^1$ is an isomorphism. On the other hand, if $\Leu=\Osc$, then $\partial^{0,1}=0$ by Lemma \ref{lem:Sujatha} so $\Coker\partial^{0,1}=\Zb/2$. This completes the description of the groups $\widetilde{\CH}{}^1(Q_3,\Leu)$.

Recall that $\CH^2(Q_3)=\Zb\cdot h^2\oplus\Zb/2\cdot\alpha$ for a class $\alpha$ such that $\Sq^2(\alpha)\neq 0$ and $\Sq_{\Osc(1)}^2(\alpha)\neq 0$ (Lemma \ref{lem:steenrod_square_torsion} and proof of Proposition \ref{prop:2_witt_group_anisotropic_quadric_threefold}), while $\Sq^2(h^2)=0$ by the Cartan formula. It follows that the kernel of the composite \[\CH^2(Q_3)\to\Ch^2(Q_3)\xrightarrow{\Sq^2}\Ch^3(Q_2)\] is $\Zb\cdot h^2$ and the kernel of the composite \[\CH^2(Q_3)\to\Ch^2(Q_3)\xrightarrow{\Sq^2_{\Osc(1)}}\Ch^3(Q_2)\] is $\Zb\cdot(h^2+\alpha)$. On the other hand, by Lemma \ref{lem:surjectivity_milnor_to_I}, the image of the map $\Hr^1(Q_3,\Kbf_2^\Mr)\to\Hr^1(Q_3,\bar{\Ibf}^2)$ is generated by $\omega h$. Therefore \[\Coker(\Hr^1(Q_3,\Kbf_2^\Mr)\to\Hr^2(Q_3,\Ibf^3_\Leu))=\Hr^2(Q_3,\Ibf^3_\Leu)/\partial_\Leu^{1,2}(\omega h).\] The map $\pi_\Leu^{2,3}:\Hr^2(Q_3,\Ibf^3(\Leu)\to\Hr^2(Q_3,\bar{\Ibf}^3)$ is an isomorphism by (\ref{eq:projection_isomorphism}) and thus induces an isomorphism \[\frac{\Hr^2(Q_3,\Ibf^3_\Leu)}{\partial_\Leu^{1,2}(\omega h)}\cong\frac{\Hr^2(Q_3,\bar{\Ibf}^3)}{\Phi_{1,2,\Leu}(\omega h)}.\] Recall that $\Hr^2(Q_3,\bar{\Ibf}^3)=\Hr_\et^5(Q_3)=\Zb/2\cdot\omega\lambda^2$ (the second equality follows from Lemma \ref{lem:third_and_fifth_étale_cohomology_quadric_threefold}). We note that $\Phi_{1,2}(\omega h)=\omega\lambda^2$ by the proof of Lemma \ref{lem:cohomological_operation_surjective} so the quotient group $\Hr^2(Q_3,\bar{\Ibf}^3)/\Phi_{1,2}(\omega h)$ is trivial. Since $\Phi_{1,2,\Osc(1)}=\Phi_{1,2}+\bar{c}_1(\Osc(1))\cup$ where $\bar{c}_1(\Osc(1))=h$, we then see that $\Phi_{1,2,\Osc(1)}(\omega h)=2\omega\lambda^2=0$, hence \[\frac{\Hr^2(Q_3,\bar{\Ibf}^3)}{\Phi_{1,2,\Osc(1)}(\omega\lambda)}=\Hr^2(Q_3,\bar{\Ibf}^3)=\Zb/2.\] This completes the proof.
\end{proof}

\appendix

\section{Remarks on transfers}\label{appendix:transfers}

As mentioned in the introduction, Grothendieck--Witt groups are functorial with respect to proper morphisms. In this appendix, we study this functoriality in the particular case of the finite étale map $\pi:X_\Cb=X\times_\Rb\Spec\Cb\to X$ when $X$ is a smooth variety over $\Rb$ and $\Leu$ be a line bundle on $X$. In this case, the relative orientation sheaf $\omega_\pi$ is trivial so we obtain a pushforward homomorphism $\pi_*:\Wr(X_\Cb,\pi^*\Leu)\to\Wr(X,\Leu)$. In fact, it will be convenient to normalise this transfer slightly differently: we set $\pi_*'=\pi_*\circ(\otimes\langle\mathrm{i}\rangle)$ where $\mathrm{i}^2=-1$. Let $\Ibf^n(\pi^*\Leu)$ be the sheaf on $(X_\Cb)_\Zar$ associated with the presheaf $U\mapsto\Ir^n(U)\Wr(U,\pi^*\Leu)$. The normalised transfer then induces a morphism $\pi'_*:\pi_*\Ibf^n(\pi^*\Leu)\to\Ibf^n(\Leu)$ of sheaves on $X_\Zar$ (\cite[Corollary 21.5]{elmanAlgebraicGeometricTheory2008}). On the other hand, extension of scalars along $\pi$ induces a morphism $\pi^*:\Ibf^n(\Leu)\to\pi_*\Ibf^n(\pi^*\Leu)$. By \cite[Theorem 40.3]{elmanAlgebraicGeometricTheory2008}, these fit into an exact sequence \[\cdots\to\Ibf^n(\Leu)\xrightarrow{\pi^*}\pi_*\Ibf^n(\pi^*\Leu)\xrightarrow{\pi'_*}\Ibf^n(\Leu)\xrightarrow{\otimes\llangle -1\rrangle}\Ibf^{n+1}(\Leu)\to\cdots\] of sheaves on $X_\Zar$.\footnote{This exact sequence is the reason for the introduction of the \emph{normalised} transfer. Without the normalisation, the image of $\pi^*$ is not quite the kernel of the transfer homomorphism.} This sequence induces an exact sequence \[0\to\bar{\Wbf}\to\pi_*\bar{\Wbf}\to\bar{\Wbf}\to\cdots\to\bar{\Ibf}^n\xrightarrow{\pi^*}\pi_*\bar{\Ibf}^n\xrightarrow{\pi_*}\bar{\Ibf}^n\xrightarrow{\otimes\llangle -1\rrangle}\bar{\Ibf}^{n+1}\to\cdots\] of sheaves on $X_\Zar$.\footnote{The normalised and unnormalised transfer $\pi_*$ and $\pi'_*$ both induce the same morphisms after passing to the graded pieces of the filtration by the power of the fundamental ideal, again by \cite[Corollary 21.5]{elmanAlgebraicGeometricTheory2008}.} Modulo the isomorphism $\bar{\Ibf}^n\cong\Hsc^n$, this is the same exact sequence as that of \cite[Lemma 2.2.1 (b)]{colliot-theleneZerocyclesCohomologyReal1996} (see in particular \cite[Lemma 40.1, Proposition 101.9]{elmanAlgebraicGeometricTheory2008} for a comparison of transfer homomorphisms). We note that $\Ibf^n(\pi^*\Leu)=0$ as a sheaf on $(X_\Cb)_\Zar$ for every $n>\dim(X)$ because of \cite[Proposition 2.32]{lerbetImageHigherSignature2026}. In particular, the quotient map $\Ibf^d(\pi^*\Leu)\to\bar{\Ibf}^d$ is an isomorphism of sheaves on $X_\Cb$.

If $n\in\Zb$, we let $\Qbf^n(\Leu)$ or $\Qbf_\Leu^n$ denote the kernel of $\pi'_*:\pi_*\Ibf^n(\pi^*\Leu)\to\Ibf^n(\Leu)$ and we set $\bar{\Qbf}^n=\Ker(\pi_*:\pi_*\bar{\Ibf}^n\to\bar{\Ibf}^n)$. The quotient map $\Ibf^n(\pi^*\Leu)\to\bar{\Ibf}^n$ induces a morphism $\Qbf^n(\Leu)\to\bar{\Qbf}$ of sheaves on $X_\Zar$. Finally, we set \[\Cbf^n(\Leu)=\Cbf_\Leu^n=\Ker(\pi^*:\Ibf^n(\Leu)\to\pi_*\Ibf^n_\Leu)\;\;\text{and}\;\;\bar{\Cbf}^n=\Ker(\pi^*:\bar{\Ibf}^n\to\pi_*\bar{\Ibf}^n).\] The following result is a consequence of Lemma \ref{lem:top_cohomology_2_torsion}.

\begin{lem}
Let $d$ be the dimension of $X$. The morphism $\Qbf^d(\Leu)\to\bar{\Qbf}^d$ is then an isomorphism of sheaves on $X$.
\end{lem}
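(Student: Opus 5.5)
The plan is to compare the two exact sequences defining $\Qbf^d(\Leu)$ and $\bar{\Qbf}^d$ via the quotient maps, and to reduce to the case of $\Kbf^d$ treated in Lemma \ref{lem:2_torsion_to_(-1)_torsion}.

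From the long exact transfer sequence of sheaves, $\Qbf^d(\Leu)=\Ker\pi'_*$ equals the image of $\pi^*:\Ibf^d(\Leu)\to\pi_*\Ibf^d(\pi^*\Leu)$. So there is a short exact sequence
\[0\to\Cbf^d(\Leu)\to\Ibf^d(\Leu)\to\Qbf^d(\Leu)\to 0,\]
and similarly
\[0\to\bar{\Cbf}^d\to\bar{\Ibf}^d\to\bar{\Qbf}^d\to 0.\]
Exactness of the sequence also identifies $\Cbf^d(\Leu)$ with the image of the normalised transfer $\pi'_*$ in degree $d$ coming from the step before. Alternatively, and more usefully, one can read $\Qbf^d(\Leu)$ as $\Ker\pi'_*$ and use the next term of the sequence: $\Im\pi'_*=\Ker(\otimes\llangle -1\rrangle)=\Kbf^d(\Leu)$. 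This gives
\[0\to\Qbf^d(\Leu)\to\pi_*\Ibf^d(\pi^*\Leu)\xrightarrow{\pi'_*}\Kbf^d(\Leu)\to 0,\]
together with the analogous sequence $0\to\bar{\Qbf}^d\to\pi_*\bar{\Ibf}^d\to\bar{\Kbf}^d\to 0$, where $\Im\pi_*=\Ker(\cup\omega)=\bar{\Kbf}^d$ by the bar exact sequence. The quotient maps give a morphism between these two short exact sequences of sheaves on $X_\Zar$.

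The key inputs are then the following. The middle vertical map $\pi_*\Ibf^d(\pi^*\Leu)\to\pi_*\bar{\Ibf}^d$ is an isomorphism, because $\Ibf^{d+1}(\pi^*\Leu)=0$ on $X_\Cb$, as recalled just before the statement, and $\pi_*$ preserves isomorphisms. The right vertical map $\Kbf^d(\Leu)\to\bar{\Kbf}^d$ is an isomorphism by Lemma \ref{lem:2_torsion_to_(-1)_torsion}. The five lemma (here just the snake lemma) then shows that the induced map on kernels $\Qbf^d(\Leu)\to\bar{\Qbf}^d$ is an isomorphism.

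The main point to check is the compatibility of the squares, namely that reduction mod $\Ir^{d+1}$ intertwines $\pi'_*$ with $\pi_*$. This is the footnoted fact from \cite[Corollary 21.5]{elmanAlgebraicGeometricTheory2008}, and it requires that $\pi'_*$ really lands in $\Kbf^d(\Leu)$ onto, which is exactness of the Elman--Karpenko--Merkurjev sequence at the level of sheaves.

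The mention of Lemma \ref{lem:top_cohomology_2_torsion} suggests the author may instead go through cohomology. The sheaf-level argument above should suffice, and that is the approach I would follow.
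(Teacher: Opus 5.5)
Your proposal is correct and is essentially the paper's proof. The paper uses exactly the ladder $0\to\Qbf^d(\Leu)\to\pi_*\Ibf^d(\pi^*\Leu)\to\Kbf^d(\Leu)\to 0$ mapping to $0\to\bar{\Qbf}^d\to\pi_*\bar{\Ibf}^d\to\bar{\Kbf}^d\to 0$, with the middle map an isomorphism because $\Ibf^{d+1}(\pi^*\Leu)=0$ and the right one an isomorphism by Lemma \ref{lem:2_torsion_to_(-1)_torsion}, then concludes by the five lemma at the sheaf level without passing through cohomology. One unused aside is slightly off: exactness gives $\Cbf^d(\Leu)=\Im(\otimes\llangle -1\rrangle:\Ibf^{d-1}(\Leu)\to\Ibf^d(\Leu))$, not an image of $\pi'_*$, but this plays no role in your argument.
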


\begin{proof}
We have a commutative ladder
\begin{center}
\begin{tikzcd}
0 \arrow[r] & \Qbf^d(\Leu) \arrow[r] \arrow[d] & \pi_*\Ibf^d(\Leu) \arrow[r,"\pi'_*"] \arrow[d] & \Kbf^d(\Leu) \arrow[r] \arrow[d] & 0 \\
0 \arrow[r] & \bar{\Qbf}^d \arrow[r]          & \pi_*\bar{\Ibf}^d \arrow[r,"\pi'_*"] & \bar{\Kbf}^d \arrow[r] & 0
\end{tikzcd}
\end{center}
with exact rows whose middle and right vertical homomorphisms are isomorphisms (in the case of the middle homomorphism, this is because $\Ibf^{d+1}(\pi^*\Leu)=0$ as a sheaf on the complex variety $X_\Cb$ for cohomological dimension reasons). By the five lemma, this implies that the map $\Qbf^d(\Leu)\to\bar{\Qbf}^d$ is an isomorphism.
\end{proof}

We now assume that $X$ is a curve. We use the notations $g$, $r$, $c$, $s$, $t$ and $m$ from Section \ref{section:curves}; we recall that $Y$ is a smooth compactification of $X$.

\begin{lem}
If $X$ satisfies $(*)$, then \[h^1(X,\bar{\Qbf}^1)=1\;\;\text{and}\;\;h^0(X,\bar{\Qbf}^1)=g.\] Else, one has $h^1(X,\bar{\Qbf}^1)=t$ and $h^0(X,\bar{\Qbf}^1)=g+c+s-1$.
\end{lem}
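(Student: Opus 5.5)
The plan is to identify $\bar{\Qbf}^1$ with a quotient of $\Hsc^1$ by a constant sheaf, and then to read off its cohomology from the values of $h^i(X,\bar{\Ibf})$ already computed in the proof of Lemma \ref{lem:all_cohomology_of_2_torsion}. One warning before starting: this computation seems to produce the two cases of the statement in the opposite order (see the last paragraph).

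\emph{Step 1: rewrite $\bar{\Qbf}^1$ as a quotient.} By the exact sequence of sheaves $\cdots\to\bar{\Ibf}^n\xrightarrow{\pi^*}\pi_*\bar{\Ibf}^n\xrightarrow{\pi_*}\bar{\Ibf}^n\xrightarrow{\otimes\llangle -1\rrangle}\cdots$ recalled above, we have $\bar{\Qbf}^1=\Ker\pi_*=\Im\pi^*\cong\bar{\Ibf}/\bar{\Cbf}^1$. Moreover $\bar{\Cbf}^1$ is the image of $\otimes\llangle -1\rrangle:\bar{\Wbf}\to\bar{\Ibf}$. The preceding map $\pi_*:\pi_*\bar{\Wbf}\to\bar{\Wbf}$ is the trace on $\Zb/2$. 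Since $X$ is geometrically connected, $\pi_*\bar{\Wbf}=\Zb/2$ on $X_\Zar$, so this trace is multiplication by $2$, hence zero. It follows that $\bar{\Cbf}^1\cong\bar{\Wbf}=\Zb/2$, generated by $\omega$. This is consistent with the fact that $\omega\neq 0$ in $\Hr_\et^1(\kappa(X))$, because $\sqrt{-1}\notin\kappa(X)$ when $X$ is geometrically connected. We therefore get a short exact sequence of sheaves on $X_\Zar$:
\[0\to\Zb/2\to\bar{\Ibf}\to\bar{\Qbf}^1\to 0.\]

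\emph{Step 2: take cohomology.} A constant sheaf on the irreducible space $X_\Zar$ is flasque, so $\Hr^1(X,\Zb/2)=0$. Moreover $\Hr^0(X,\Zb/2)=\Zb/2\cdot\omega\to\Hr^0(X,\bar{\Ibf})$ is injective. The long exact sequence then gives
\[h^0(X,\bar{\Qbf}^1)=h^0(X,\bar{\Ibf})-1,\qquad h^1(X,\bar{\Qbf}^1)=h^1(X,\bar{\Ibf})=\dim\Ch^1(X).\]

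\emph{Step 3: insert the known values.} Under $(*)$, the proof of Lemma \ref{lem:all_cohomology_of_2_torsion} gives $h^0(X,\bar{\Ibf})=g+c+s$, and \cite[Theorem 3.2 (d)]{colliot-theleneZerocyclesCohomologyReal1996} gives $h^1(X,\bar{\Ibf})=t$. If $(*)$ fails, then $X=Y$ is proper with empty real locus. In that case $h^0(X,\bar{\Ibf})=g+1$ by the same proof, and $\Ch^1(X)=\Zb/2$ by \cite[Theorem 1.3 (b)]{colliot-theleneZerocyclesCohomologyReal1996}.

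\emph{Expected obstacle.} The only genuinely delicate point is Step 1, namely the claim that $\bar{\Cbf}^1$ is the constant sheaf $\Zb/2$. This needs the vanishing of the trace $\pi_*\bar{\Wbf}\to\bar{\Wbf}$ on $X_\Zar$, which rests on geometric connectedness. With that established, the computation yields $h^0=g+c+s-1$ and $h^1=t$ under $(*)$, and $h^0=g$, $h^1=1$ otherwise. This is the statement with its two cases interchanged, so I would re-check the labelling of the cases in the statement, which looks like a typo.
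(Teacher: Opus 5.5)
Your computation is correct, and you are right about the labelling. The paper's own proof derives $h^1(X,\bar{\Qbf}^1)=t$ and $h^0(X,\bar{\Qbf}^1)=g+c+s-1$ under $(*)$, and $h^1=1$, $h^0=g$ otherwise. The example right after the lemma also uses the value $g+c+s-1$ for $\Abb^1$ and for an elliptic curve with real points. So the two cases in the statement are interchanged, and your version is the one actually proved.

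For $h^1$ you argue as the paper does: both use $0\to\bar{\Cbf}^1\to\bar{\Ibf}\to\bar{\Qbf}^1\to 0$ together with $\Hr^1(X,\bar{\Cbf}^1)=0$. The paper gets this vanishing from the Gersten complex of $\bar{\Cbf}^1$ having no degree-$1$ term. That is the same fact as your identification $\bar{\Cbf}^1\cong\Zb/2\cdot\omega$.

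For $h^0$ the routes genuinely differ.
\begin{itemize}
\item The paper switches to $0\to\bar{\Qbf}^1\to\pi_*\bar{\Ibf}\to\bar{\Kbf}^1\to 0$. It computes $h^0(X_\Cb,\bar{\Ibf})=2g+r+2c-1+\varepsilon$ by localisation on $Y_\Cb$, then takes an alternating sum over a six-term sequence involving $h^i(X,\bar{\Kbf}^1)$ and $h^i(X_\Cb,\bar{\Ibf})$.
\item You stay with the first sequence and read off $h^0(X,\bar{\Qbf}^1)=h^0(X,\bar{\Ibf})-1$ directly. This needs only the values of $h^0(X,\bar{\Ibf})$ already obtained in the proof of Lemma \ref{lem:all_cohomology_of_2_torsion}. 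In the subcase where $X$ is not proper and $Y(\Rb)=\emptyset$, that proof gives $g+c$, which agrees with $g+c+s$ since $s=0$ there; you might say so explicitly.
\end{itemize}

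Your route is shorter and avoids the complexification entirely. It also shows in passing that $\pi^*:\Hr^0(X,\bar{\Ibf})\to\Hr^0(X,\bar{\Qbf}^1)$ is surjective, which is the mod $2$ shadow of the cokernel question treated in the next lemma of the appendix. The paper's route instead exhibits $\Hr^0(X,\bar{\Qbf}^1)$ as a subspace of $\Hr^0(X_\Cb,\bar{\Ibf})$ of known codimension. That is exactly the comparison used in the subsequent example on the vanishing of the transfer when $g+c+\varepsilon=t$.
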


\begin{proof}
To compute $h^1(X,\bar{\Qbf}^1)$, we analyse the exact sequence \[0\to\bar{\Cbf}^1\to\bar{\Ibf}\to\bar{\Qbf}^1\to 0\] of sheaves on $X_\Zar$. The sheaf $\bar{\Cbf}^1$ is the kernel of the morphism $\pi^*:\bar{\Ibf}\to\pi_*\bar{\Ibf}$ so its cohomology is computed by the complex of abelian groups given in degree $p$ by \[\Cr(X,\bar{\Cbf}^1)^p=\bigoplus_{x\in X^{(p)}}\Ker\left(\pi^*:\bar{\Ir}^{1-p}(\kappa(x))\to\bar{\Ir}^{1-p}(\kappa(x)[\sqrt{-1}])\right).\] But the map $\pi^*$ is injective for every $x\in X^{(1)}$. Indeed, the ring $\kappa(x)[\sqrt{-1}]$ is nonzero, hence its Witt ring is also nonzero: in particular, the unit $\langle 1\rangle$ in $\overline{\Wr}(\kappa(x)[\sqrt{-1}])$ is nonzero. Since $\overline{\Wr}(\kappa(x))=\Zb/2\cdot\langle 1\rangle$ (as $\kappa(x)$ is a field) and $\pi^*(\langle 1\rangle)=\langle 1\rangle$ by definition, it follows that $\pi^*$ is injective. We conclude that $\Cr(X,\bar{\Cbf}^1)^1=0$. It follows that the group $\Hr^1(X,\bar{\Cbf}^1)$ vanishes, yielding an isomorphism $\Ch^1(X)=\Hr^1(X,\bar{\Ibf}^1)\cong\Hr^1(X,\bar{\Qbf}^1)$ by considering the long exact sequence associated to the epimorphism $\bar{\Ibf}\to\bar{\Qbf}^1$. This isomorphism completes the computation of $h^1(X,\bar{\Qbf}^1)$ in view of the formula for $\Ch^1(X)$ given in \cite[Theorem 1.3 (b)]{colliot-theleneZerocyclesCohomologyReal1996}.

It remains to compute $h^0(X,\bar{\Qbf}^1)$. For this, we consider the exact sequence \[0\to\bar{\Qbf}^1\to\pi_*\bar{\Ibf}\to\bar{\Kbf}^1\to 0\] of sheaves. Now there is a natural isomorphism \[\Hr^*(Y,\pi_*\bar{\Ibf})\cong\Hr^*(Y,\pi_*\Hsc^1)\cong\Hr^*(Y_\Cb,\Hsc^1)\] by \cite[Lemma 2.2.1]{colliot-theleneZerocyclesCohomologyReal1996}. Examination of the Bloch--Ogus spectral sequence (\ref{eq:bloch_ogus}) shows that the group $\Hr^0(Y_\Cb,\bar{\Ibf})\cong\Hr^0(Y_\Cb,\Hsc^1)$ is isomorphic to $\Hr_\et^1(Y_\Cb)$ and thus to $(\Zb/2)^{2g}$. Using the localisation exact sequence for the open immersion $X_\Cb\hookrightarrow Y_\Cb$ and the sheaf $\bar{\Ibf}$, one then sees that $h^0(X_\Cb,\bar{\Ibf})=2g+r+2c-1+\varepsilon$. We then consider the cohomology long exact sequence 
\begin{equation}\label{eq:kernel_transfer}
\begin{tikzcd}
	0 & {\Hr^0(X,\bar{\Qbf}^1)} & {\Hr^0(X_\Cb,\bar{\Ibf}^1)} & {\Hr^0(X,\bar{\Kbf}^1)} & \\
	& {\Hr^1(X,\bar{\Qbf}^1)} & {\Hr^1(X_\Cb,\bar{\Ibf}^1)} & {\Hr^1(X,\bar{\Kbf}^1)} & 0
	\arrow[from=1-1, to=1-2]
	\arrow[from=1-2, to=1-3]
	\arrow[from=1-3, to=1-4]
	\arrow[from=1-4, to=2-2]
	\arrow[from=2-2, to=2-3]
	\arrow[from=2-3, to=2-4]
	\arrow[from=2-4, to=2-5]
\end{tikzcd}
\end{equation}
If $X$ is proper and $X(\Rb)$ is empty, then the previous computations yield \[h^0(X_\Cb,\bar{\Ibf})=2g,\;h^0(X,\bar{\Kbf}^1)=g+1,\;h^1(X,\bar{\Qbf}^1)=1,\;h^1(X_\Cb,\bar{\Ibf})=1,\;h^1(X,\bar{\Kbf}^1)=1.\] Taking the alternating sum of the dimensions in (\ref{eq:kernel_transfer}) then gives $h^0(X,\bar{\Qbf}^1)=g$. Suppose now that $X$ is not proper or that $X(\Rb)$ is not empty. Then \[h^0(X_\Cb,\bar{\Ibf})=2g+r+2c-1+\varepsilon,\;h^0(X,\bar{\Kbf}^1)=g+c,\]\[h^1(X,\bar{\Qbf}^1)=t,\;h^1(X_\Cb,\bar{\Ibf})=\varepsilon,\;h^1(X,\bar{\Kbf}^1)=0.\] Using the equality $r+t=s$, one sees taking the alternating sum of the dimensions in (\ref{eq:kernel_transfer}) that $h^0(X,\bar{\Qbf}^1)=g+c+s-1$ as required.
\end{proof}

\begin{exe}
If the equality $g+c+\varepsilon=t$ holds, then the normalised transfer $\pi'_*:\Hr^0(X_\Cb,\Ibf_\Leu)\to\Hr^0(X,\Ibf_\Leu)$ vanishes (hence so does the usual transfer $\pi_*$ since their kernels are isomorphic). Indeed this follows from the fact that $\Hr^0(X,\bar{\Qbf}^1)\cong\Hr^0(X,\Qbf^1_\Leu)$, which is the kernel of $\pi'_*$, has dimension $g+c+s-1$ and is a sub-$\Zb/2$-space of $\Hr^0(X_\Cb,\Ibf_\Leu)$ which has dimension $2g+r+2c-1+\varepsilon$: consequently, the equality $\Hr^0(X,\Qbf_\Leu)=\Hr^0(X_\Cb,\Ibf_\Leu)$ holds if $2g+r+2c-1+\varepsilon=g+c+s-1$, namely if $g+c+\varepsilon=t$. For example, this condition is satisfied if $X=\Abb^1$, since $g=c=\varepsilon=t=0$ in this case, or if $X$ is an elliptic curve with disconnected real locus, such as the curve $E=\{y^2z=x^3-xz^2\}$ in $\mathbb{P}_\Rb^2$: in this case, we have $c=0$, $g=1$, $\varepsilon=1$, $t=2$.
\end{exe}

The exact sequence for quadratic extensions shows that the pullback homomorphism $\pi^*:\Hr^0(X,\Ibf_\Leu)\to\Hr^0(X_\Cb,\Ibf_\Leu)$ factors through $\Hr^0(X,\Qbf^1_\Leu)$ (this is the reason for the introduction of the \emph{normalised} transfer). It appears interesting to identify the contribution of $\Hr^0(X,\Ibf_\Leu)$ to $\Ker\pi'_*$, namely to compute the cokernel of the homomorphism $\pi^*:\Hr^0(X,\Ibf_\Leu)\to\Hr^0(X,\Qbf^1_\Leu)$. The following lemma shows that the answer is again essentially governed by topology. Observe that in this problem, we cannot replace $\Qbf^1(\Leu)$ by $\bar{\Qbf}^1$ as $\Ibf(\Leu)$ and the kernel of the homomorphism $\pi^*:\Ibf(\Leu)\to\Qbf^1(\Leu)$ generally depend on $\Leu$. We let $\epsilon$ be equal to $1$ if $X$ satisfies $(*)$ and $\Leu$ is not a square, and to $0$ otherwise.

\begin{lem}
There is an exact sequence \[\Hr^0(X,\Ibf_\Leu)\xrightarrow{\pi^*}\Hr^0(X,\Qbf^1_\Leu)\to(\Zb/2)^{m-\epsilon}\to 0\] of abelian groups.
\end{lem}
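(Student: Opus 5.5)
The natural route is to compute the cokernel of $\pi^*:\Hr^0(X,\Ibf_\Leu)\to\Hr^0(X,\Qbf^1_\Leu)$ from the short exact sequence of sheaves on $X_\Zar$ extracted from the quadratic-extension exact sequence, namely
\[0\to\Ibf(\Leu)/\Ibf(\Leu)\cap\Im(\otimes\llangle -1\rrangle)\to\Qbf^1(\Leu)\to\cdots,\]
or more concretely
\[0\to\Cbf^1(\Leu)\to\Ibf(\Leu)\xrightarrow{\pi^*}\Qbf^1(\Leu)\to 0 .\]
Exactness of the second sequence at $\Qbf^1(\Leu)$ holds because the image of $\pi^*$ is exactly the kernel of $\pi'_*$. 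The associated long exact sequence identifies $\Coker(\pi^*)$ on global sections with the kernel of $\Hr^1(X,\Cbf^1_\Leu)\to\Hr^1(X,\Ibf_\Leu)$. By exactness of the quadratic-extension sequence one step earlier, $\Cbf^1(\Leu)=\Im(\pi'_*:\pi_*\Wbf(\pi^*\Leu)\to\Ibf(\Leu))$, which is the same as $\Ibf(\Leu)\cdot\llangle -1\rrangle$ shifted, that is, the image of $\otimes\llangle -1\rrangle:\Wbf\to\Ibf$ twisted by $\Leu$. Write $\Cbf^1(\Leu)\cong\Wbf(\Leu)/\Kbf^0(\Leu)$, where $\Kbf^0(\Leu)=\Ker(\otimes\llangle -1\rrangle)$.

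The main step is to identify $\Hr^1(X,\Cbf^1_\Leu)$ and its map to $\Hr^1(X,\Ibf_\Leu)$ topologically. On a curve, $\Ibf^2(\Leu)\cong\iota_*\Zb(L)$ (Jacobson) and $\Ibf(\Leu)\otimes\llangle -1\rrangle\subseteq\Ibf^2(\Leu)$. The plan is to show that the signature identifies $\Cbf^1(\Leu)$ with $2\iota_*\Zb(L)\cong\iota_*\Zb(L)$; this is plausible since $\sign_2$ of $\llangle -1\rrangle\varphi$ is the signature of $\varphi$, and every even section is hit locally. Granting this, $\Hr^1(X,\Cbf^1_\Leu)\cong\Hr^1(X(\Rb),\Zb(L))$. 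The composite $\Hr^1(X,\Cbf^1_\Leu)\to\Hr^1(X,\Ibf_\Leu)$, followed by the isomorphism $\gamma_1^1$ (valid under $(*)$; in the other case both sides vanish when $X(\Rb)=\emptyset$, so the cokernel is $0=(\Zb/2)^{m-\epsilon}$ with $m=0$), becomes multiplication by $2$ on $\Hr^1(X(\Rb),\Zb(L))$ modulo the contribution of $\Hr^1(X,\Kbf^1_\Leu)$, which vanishes by Lemma \ref{lem:top_cohomology_2_torsion}. The cokernel is therefore the $2$-torsion of $\Hr^1(X(\Rb),\Zb(L))$ intersected with the kernel of the comparison, and one must be careful here.

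Concretely, $\Hr^1(X(\Rb),\Zb(L))\cong\Zb^{t-t'}\oplus(\Zb/2)^{t'}$, where $t'$ is the number of compact components on which $L$ is nontrivial; note that noncompact components with $L$ nontrivial contribute nothing. The appearance of $m$ in the statement rather than $t'$ suggests that the true identification of $\Hr^1(X,\Cbf^1_\Leu)$ is not exactly $\Hr^1(X(\Rb),\Zb(L))$, and that the kernel of $\Hr^1(X,\Cbf^1_\Leu)\to\Hr^1(X,\Ibf_\Leu)$ instead receives the image of the connecting map $\Hr^0(X(\Rb),\Zb/2\text{-type quotient})\to\Hr^1$. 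I would therefore redo the computation via the sequence $0\to\Cbf^1(\Leu)\to\Ibf(\Leu)\to\Qbf^1(\Leu)\to0$ compared, through the ladder (\ref{eq:exact_sequence_signature_curve}), with $0\to2\iota_*\Zb(L)\to\iota_*\Zb(L)\to\iota_*\Zb/2\to0$. The cokernel of $\pi^*$ on $\Hr^0$ then maps to $\Coker(\Hr^0(X(\Rb),\Zb(L))\to\Hr^0(X(\Rb),\Zb/2))=(\Zb/2)^{m}$, using that $\Hr^0(X,\Kbf^1_\Leu)\to\Hr^0(X,\bar\Qbf^1)$ contributes nothing new. From this I would remove the one relation coming from $d_L$ of the fundamental class, namely $\rho(L)d_L(1)=w_1(L)$, which is nonzero exactly when $(*)$ holds and $\Leu$ is not a square (Corollary \ref{cor:W1_curve}); this accounts for $-\epsilon$. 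The main obstacle is this snake-lemma bookkeeping: checking surjectivity of $\gamma_1^0$ onto $\Hr^0(X(\Rb),\Zb(L))$ (Corollary \ref{cor:computation_i_cohomology}) and exactness on the right, so that the cokernel is exactly $(\Zb/2)^{m-\epsilon}$.
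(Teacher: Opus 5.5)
Your reduction of $\Coker\pi^*$ to $\Ker\bigl(v:\Hr^1(X,\Cbf^1_\Leu)\to\Hr^1(X,\Ibf_\Leu)\bigr)$ is the paper's starting point. Your identification $\Cbf^1(\Leu)\cong\Wbf(\Leu)/\Kbf^0(\Leu)$ is also correct: $\Cbf^1(\Leu)$ is the image of $\otimes\llangle -1\rrangle$, whereas the image of $\pi'_*$ is $\Kbf^0(\Leu)$.

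\textbf{The step that fails.} The problem is the identification of $\Cbf^1(\Leu)$ with $\iota_*\Zb(L)$. You treat $\Cbf^1(\Leu)=\llangle -1\rrangle\Wbf(\Leu)$ as if it were $\llangle -1\rrangle\Ibf(\Leu)\subseteq\Ibf^2(\Leu)$. This loses the classes $\llangle -1\rrangle\varphi$ with $\rk\varphi$ odd, for which $\sign_1(\llangle -1\rrangle\varphi)=\sign_0(\varphi)$ is odd. When $X(\Rb)\neq\emptyset$, $\sign_1$ maps $\Cbf^1(\Leu)$ isomorphically onto $\Im(\sign_0)$. This image is an extension $0\to 2\iota_*\Zb(L)\to\Im(\sign_0)\to\Zb/2\to 0$ by the constant sheaf $\Zb/2$ (rank mod $2$). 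When $X(\Rb)=\emptyset$, one has $\iota_*\Zb(L)=0$ but $\Cbf^1(\Leu)\neq 0$.

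\textbf{Consequences.} $\Hr^1(X,\Cbf^1_\Leu)$ is $\Hr^1(X(\Rb),\Zb(L))/\Zb/2\cdot d_L(1)$, not $\Hr^1(X(\Rb),\Zb(L))$. So your first computation, which yields $(\Zb/2)^m$, is wrong exactly when $\epsilon=1$. Your diagnosis of the discrepancy is also mistaken. Noncompact components of $X(\Rb)$ are intervals, on which every line bundle is trivial, so $m=t'$. Similarly, when $X$ is proper with $X(\Rb)=\emptyset$, it is false that both sides vanish, since $\Hr^1(X,\Ibf_\Leu)=\Ch^1(X)=\Zb/2$. The vanishing of $\Hr^1(X,\Cbf^1_\Leu)$ there needs a real argument. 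The paper shows it is a quotient of $\Hr^1(X,\Wbf_\Leu)=0$ if $\Leu$ is not a square, and the cokernel of the isomorphism $\Hr^1(X,\Kbf^0)\to\Hr^1(X,\Wbf)$ otherwise.

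\textbf{The repair does not close the gap.} The ladder to $0\to 2\iota_*\Zb(L)\to\iota_*\Zb(L)\to\iota_*\Zb/2\to 0$ does not exist. As soon as $X(\Rb)\neq\emptyset$, $\sign_1(\Cbf^1(\Leu))\not\subseteq 2\iota_*\Zb(L)$: locally $\sign_1(\llangle -1\rrangle\langle 1\rangle)=1$. Moreover, ``removing the relation $d_L(1)$'' is read off from the expected answer rather than derived. What is missing is the mechanism that produces this relation.

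\textbf{How the paper gets it.} Under $(*)$ with $\Leu$ not a square, $\Hr^1(X,\Kbf^0_\Leu)\cong\Hr^1(X,\Kbf^1_\Leu)=0$ by Lemmas \ref{lem:2-torsion_for_W_and_I_agree} and \ref{lem:top_cohomology_2_torsion}. Hence $\Hr^1(X,\Wbf_\Leu)\to\Hr^1(X,\Cbf^1_\Leu)$ is an isomorphism. Then $\Ker v$ is the kernel of the signature on $\Hr^1(X,\Wbf_\Leu)=\Hr^1(X,\Ibf_\Leu)/\Zb/2\cdot e(\Leu)$ (Lemma \ref{lem:baby_pardon}). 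The kernel of $2\gamma^1$ on $\Hr^1(X,\Ibf_\Leu)$ is $(\Zb/2)^m$. The Euler class $e(\Leu)$, which satisfies $\gamma^1(e(\Leu))=d_L(1)$, accounts for the $-\epsilon$. In the square case, $v\circ u$ followed by $\Hr^1(X,\Ibf)\cong\Hr^1(X,\Wbf)$ is multiplication by $2$ on the torsion-free group $\Hr^1(X(\Rb),\Zb)$. So $v$ is injective.

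\textbf{Completing your route instead.} Your sheaf-theoretic approach can be finished using the correct extension, for $X(\Rb)\neq\emptyset$. Its connecting map sends $1$ to $d_L(1)$. The constant sheaf $\Zb/2$ has no $\Hr^1$ on the irreducible curve $X$. This gives $\Ker v\cong\Hr^1(X(\Rb),\Zb(L))[2]/\Zb/2\cdot d_L(1)\cong(\Zb/2)^{m-\epsilon}$.
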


\begin{rema}
In particular, if $\Leu$ is a square, then so is $L=\Leu(\Rb)$: since $L$ is a real topological line bundle, this implies that $L$ is trivial so $m=0$ and $\pi^*$ is surjective.
\end{rema}

\begin{proof}
First assume that $X$ does not satisfy $(*)$, namely that $X$ is proper and that $X(\Rb)$ is empty. Then the lemma states that the map $\Hr^0(X,\Ibf_\Leu)\to\Hr^0(X,\Qbf^1_\Leu)$ is surjective. To prove this, it suffices to show that $\Hr^1(X,\Cbf^1_\Leu)=0$. If $\Leu$ is not trivial, then $\Hr^1(X,\Wbf_\Leu)=0$ so $\Hr^1(X,\Cbf^1_\Leu)$, being a quotient of $\Hr^1(X,\Wbf_\Leu)$, must vanish. If $\Leu$ is trivial, then the map $\Hr^1(X,\Kbf^0)\to\Hr^1(X,\Wbf)$ is an isomorphism so it is an epimorphism and $\Hr^1(X,\Cbf^1)$, which is its cokernel since $X$ has dimension $1$, vanishes again.

We now assume that $X$ satisfies $(*)$. First assume that $\Leu$ is a square; as usual, we then omit it from the notation. In this case, the line bundle $\Leu(\Rb)$ is also a square so it is trivial and thus $m=0$. Our aim is then to prove that the morphism $\pi^*:\Hr^0(X,\Ibf)\to\Hr^0(X,\Qbf^1)$ is surjective. Considering the cohomology long exact sequence associated with the short exact sequence \[0\to\Cbf^1\to\Ibf\to\Qbf^1\to 0\] of sheaves on $X_\Zar$, we see that it is equivalent to prove that the homomorphism $v:\Hr^1(X,\Cbf^1)\to\Hr^1(X,\Ibf)$ is injective. Now the epimorphism $\llangle -1\rrangle\otimes:\Wbf\to\Cbf^1$ of sheaves induces an map $u:\Hr^1(X,\Wbf)\to\Hr^1(X,\Cbf^1)$, whose cokernel is a subgroup of $\Hr^2(X,\Ker\llangle -1\rrangle$: this group vanishes since $X$ has dimension $1$ so $u$ is an epimorphism. Consequently, the kernel of $v$ is a quotient of $\Ker(v\circ u)$. To prove that $\Ker v=0$, it suffices to prove that $\Ker(v\circ u)=0$. The composite \[\Hr^1(X,\Wbf)\xrightarrow{v\circ u}\Hr^1(X,\Ibf)\to\Hr^1(X,\Wbf)\] is then induced by the multiplication by $2=\llangle -1\rrangle$ endomorphism of the sheaf $\Wbf$ so it is the multiplication by $2$ endomorphism of $\Hr^1(X,\Wbf)$. To prove that $\Ker(v\circ u)=0$, it therefore suffices to prove that $\Hr^1(X,\Wbf)$ is $2$-torsion free. Since $\Leu$ is trivial, the map $\Hr^1(X,\Ibf)\to\Hr^1(X,\Wbf)$ is an isomorphism. Since $X$ satisfies $(*)$, the group $\Hr^1(X,\Ibf)$ is isomorphic to $\Hr^1(X(\Rb),\Zb)$ by \cite[Theorem 3.5]{lerbetImageHigherSignature2026} and is therefore $2$-torsion free (isomorphic to $\Zb^t$).

Now assume that $\Leu$ is not a square; the map $\Pic(X)/2\to\Hr^1(X(\Rb),\Zb/2)$ taking $\Leu$ to the first Stiefel--Whitney class of $L=\Leu(\Rb)$ is then an isomorphism, so this hypothesis implies that $X(\Rb)$ is nonempty, and in fact that $m\geq 1$. In this case, we have to show that the kernel of the map $v:\Hr^1(X,\Cbf^1_\Leu)\to\Hr^1(X,\Ibf_\Leu)$ induced by the inclusion $\Cbf^1(\Leu)\to\Ibf(\Leu)$ is isomorphic to $(\Zb/2)^{m-1}$. By Lemma \ref{lem:2-torsion_for_W_and_I_agree} below, the group $\Hr^1(X,\Kbf^0_\Leu)$ is isomorphic to $\Hr^1(X,\Kbf^1_\Leu)$ which vanishes by Lemma \ref{lem:top_cohomology_2_torsion}. We conclude that $\Hr^1(X,\Kbf^0_\Leu)=0$: in view of the exact sequence \[0\to\Kbf^0(\Leu)\to\Wbf(\Leu)\xrightarrow{\otimes\llangle -1\rrangle}\Cbf^1(\Leu)\to 0,\] this implies that the morphism $u:\Hr^1(X,\Wbf_\Leu)\to\Hr^1(X,\Cbf^1_\Leu)$ induced by the epimorphism $\Wbf(\Leu)\to\Cbf^1(\Leu)$ is injective. As noted before, the map $u$ is also surjective because $X$ is a curve, and thus $u$ is an isomorphism. Therefore we have $\Ker(v\circ u)=\Ker v$ and we are reduced to studying the homomorphism $v\circ u=w:\Hr^1(X,\Wbf_\Leu)\to\Hr^1(X,\Ibf_\Leu)$ induced by the morphism $\otimes\llangle -1\rrangle:\Wbf(\Leu)\to\Ibf(\Leu)$.  The map $w$ fits in a commutative triangle
\begin{center}
\begin{tikzcd}
\Hr^1(X,\Wbf_\Leu) \arrow[rr,"w=(\otimes\llangle -1\rrangle)_*"] \arrow[rd,swap,"\gamma_0^1"] &                    & \Hr^1(X,\Ibf_\Leu) \arrow[ld,"\gamma^1"] \\
                                                          & \Hr^1(X(\Rb),\Zb(L)) &
\end{tikzcd}
\end{center}
in which $\gamma^1$ is an isomorphism since $X(\Rb)\neq\emptyset$. Therefore $\Ker w=\Ker\gamma_0^1$. The map $\gamma_0^1$ fits in a commutative diagram
\begin{center}
\begin{tikzcd}
0 \arrow[r] & \Zb/2 \arrow[r] & \Hr^1(X,\Ibf_\Leu) \arrow[rd,swap,"2\cdot\gamma^1"] \arrow[r] & \Hr^1(X,\Wbf_\Leu) \arrow[d,"\gamma_0^1"] \arrow[r] & 0 \\
            &                 &                                                               & \Hr^1(X(\Rb),\Zb(L)) &
\end{tikzcd}
\end{center}
whose top row is exact since $\Leu$ is not a square (see Lemma \ref{lem:baby_pardon}). Therefore there is an exact sequence \[0\to\Zb/2\to\Ker(2\cdot\gamma^1)\to\Ker\gamma_1^0\to 0.\] It then suffices to show that $\Ker(2\cdot\gamma^1)\simeq(\Zb/2)^m$: indeed, if this holds, then $\Ker\gamma_1^0$ is $2$-torsion so the above exact sequence is a sequence of $\Zb/2$-vector spaces and thus $\Ker\gamma_1^0\simeq(\Zb/2)^{m-1}$ by dimension count. The map $\gamma^1$ is an isomorphism by \cite[Theorem 5.3]{lerbetImageHigherSignature2026} and thus determines an isomorphism of $\Ker(2\cdot\gamma^1)$ onto the kernel of the endomorphism $\cdot 2:\Hr^1(X(\Rb),\Zb(L))\to\Hr^1(X(\Rb),\Zb(L))$ of $\Hr^1(X(\Rb),\Zb(L))$ given by multiplication by $2$. Since $\Hr^1(X(\Rb),\Zb(L))=(\Zb/2)^{\oplus m}\oplus\Zb^{t-m}$, we obtain that $\Ker(2\cdot\gamma^1)$ is a $2$-torsion group isomorphic to $(\Zb/2)^{\oplus m}$, as required.
\end{proof}

We now prove the lemma used in the previous proof.

\begin{lem}\label{lem:2-torsion_for_W_and_I_agree}
Let $X$ be a geometrically connected smooth variety over $\Rb$ and let $\Leu$ be a line bundle on $X$. The inclusion $\Kbf^1(\Leu)\to\Kbf^0(\Leu)$ is an isomorphism of sheaves on $X_\Zar$.
\end{lem}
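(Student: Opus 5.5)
The inclusion $\Kbf^1(\Leu)\subseteq\Kbf^0(\Leu)$ is automatically injective, so the content of the statement is surjectivity. Here $\Kbf^0(\Leu)$ is the kernel of $\otimes\llangle -1\rrangle$ on $\Wbf(\Leu)$, which is the sheaf $\Ibf^0(\Leu)$. I would therefore show that every local section of $\Wbf(\Leu)$ killed by $\llangle -1\rrangle$ is already a section of $\Ibf(\Leu)$.

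First I would reduce to the generic point. All sheaves involved have Gersten-type resolutions, so their sections over an open $U$ are subgroups of the corresponding groups of the function field $F=\kappa(X)$. Concretely, $\Kbf^t(\Leu)(U)=\Ibf^t(\Leu)(U)\cap\Ker(\otimes\llangle -1\rrangle)$ inside $\Wr(F,\Leu_F)$. Moreover $\Ibf(\Leu)(U)=\Wbf(\Leu)(U)\cap\Ir(F,\Leu_F)$, because the degree $0$ terms of $\Cr(X,\Ibf_\Leu)$ and $\Cr(X,\Wbf_\Leu)$ differ only in the condition at the generic point, while the degree $1$ terms agree. So it suffices to prove the field statement: if $q\in\Wr(F,L)$ satisfies $\llangle -1\rrangle q=2q=0$, then $q$ has even rank, i.e.\ lies in $\Ir(F,L)$. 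Choosing a trivialisation of $\Leu$ at the generic point, we may take $L=F$.

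Next I would prove the field statement by a parity argument. Suppose $q$ has odd rank $n$. Then $2q=q\perp q$ is hyperbolic of rank $2n$, and $2q=\llangle -1\rrangle\otimes q$. Reducing modulo $\Ir(F)$, the class of $q$ in $\bar{\Wr}(F)=\Zb/2$ is $1$, so $q=\langle a\rangle+i$ with $i\in\Ir(F)$. Then $2q=0$ gives $\llangle -1\rrangle\langle a\rangle=-\llangle -1\rrangle i\in\Ir^2(F)$. Since $\llangle -1\rrangle\langle a\rangle=\langle a\rangle\otimes\llangle -1\rrangle$ and multiplication by the unit $\langle a\rangle$ preserves $\Ir^2$, this yields $\llangle -1\rrangle\in\Ir^2(F)$, i.e.\ $\omega=0$ in $\bar{\Ir}^1(F)=F^\times/F^{\times 2}$. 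That means $-1$ is a square in $F$.

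The final step, which I expect to be the main point, is to rule out $\sqrt{-1}\in F$ using geometric connectedness. If $-1$ were a square in $F=\kappa(X)$, then $\Cb=\Rb(\sqrt{-1})$ would embed in $F$, so $\Rb$ would not be algebraically closed in $F$. This contradicts geometric connectedness of the normal variety $X$: for such $X$ the algebraic closure of $\Rb$ in $\kappa(X)$ is $\Rb$, and for smooth $X$ connected plus geometrically connected is equivalent to this condition. Hence $q$ has even rank, and the inclusion $\Kbf^1(\Leu)\to\Kbf^0(\Leu)$ is an isomorphism. A more careful version of the generic-point reduction would check that it is compatible with the twisted residues; since both sheaves are defined as subsheaves of $\Wbf(\Leu)$, this only uses the membership description above.
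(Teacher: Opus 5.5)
Your proof is correct, but its key step differs from the paper's. The reduction to the function field $F=\kappa(X)$ is the same in both; you justify it more explicitly through the Gersten complexes, whereas the paper just notes that rank mod $2$ can be computed generically.

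\textbf{Paper's route.} At the field level, the paper uses the exact sequence $\Wr(F(\sqrt{-1}))\xrightarrow{\pi'_*}\Wr(F)\xrightarrow{\otimes\llangle -1\rrangle}\Ir(F)$ for the quadratic extension $F(\sqrt{-1})/F$. This identifies the $\llangle -1\rrangle$-torsion of $\Wr(F)$ with the image of the normalised transfer. It then observes that a transfer along a degree $2$ extension always has even rank, since $\dim_F V=2\dim_K V$.

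\textbf{Your route.} You argue by contradiction with the discriminant: an odd-rank $2$-torsion class forces $\llangle -1\rrangle\in\Ir^2(F)$, hence $-1\in F^{\times 2}$.

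\textbf{Comparison.} Your argument is more elementary. It only needs $\Ir(F)/\Ir^2(F)\cong F^\times/F^{\times 2}$, or even just a comparison of the signed discriminants of $q\perp q$ and of a hyperbolic form, rather than the exactness theorem for quadratic extensions. It also makes visible that the only possible obstruction is $-1$ being a square. The paper's argument fits the transfer formalism of the appendix, since it reuses the exact sequence that defines the sheaves $\Qbf^n$ and $\Cbf^n$.

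Both proofs rest on the same input: geometric connectedness of $X$ forces $-1\notin F^{\times 2}$. Your justification via algebraic closedness of $\Rb$ in $\kappa(X)$ is fine. Alternatively, by normality $\sqrt{-1}\in F$ would lie in $\Gamma(X,\Osc_X)$, which would make $X_\Cb\cong X\sqcup X$ disconnected.
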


\begin{proof}
Let $U$ be an open subset of $X$. It suffices to show that if $\alpha\in\Hr^0(U,\Wbf_\Leu)$ is $2$-torsion, then $\alpha$ lies in $\Hr^0(U,\Ibf_\Leu)$, in other words that $\alpha$ has even rank. Note that $\alpha$ determines a $2$-torsion element of $\Wr(\kappa(X),\Leu\otimes\kappa(X))$ where $\kappa(X)$ is the function field of $X$. Since the rank mod $2$ can be evaluated after localisation at $\kappa(X)$, it suffices to show that the $2$-torsion subgroup of $\Wr(\kappa(X),\Leu\otimes\kappa(X))$ is contained in $\Ir(\kappa(X),\Leu\otimes\kappa(X))$. Choosing a rational section of $\Leu$, we see that we may ignore twists. Set $F=\kappa(X)$ and $K=\kappa(X)[\sqrt{-1}]$. The exact sequence \[\Wr(K)\xrightarrow{\pi'_*}\Wr(F)\xrightarrow{\otimes\llangle -1\rrangle}\Ir(F)\] (and the fact that $\langle 1,1\rangle=2$ in $\Wr(\kappa(X))$) shows that $\Kr^0(F)$ is the image of the transfer homomorphism $\pi'_*:\Wr(K)\Wr(F)$. Thus it suffices to show that $\Im\pi'_*$ is contained in $\Ir(F)$. Since $X$ is geometrically connected, the element $-1$ of $F$ is not a square so $K$ is a degree $2$ field extension of $F$. Set $s=\Tra_{K/F}(\mathrm{i}\text{--})$ where $\Tra_{K/F}$ is the trace map for the extension $K/F$ and $\mathrm{i}^2=-1$; then $\pi'_*(b)$ is the Witt class of the bi-$F$-linear form $s\circ b$ for every symmetric bi-$K$-linear form $b$. We then have to show that for every non-degenerate symmetric bi-$K$-linear form $b$, the bi-$F$-linear form $s_*(b)=\pi'_*(b)$ has even rank. Let $V$ denote the $K$-vector space underlying $V$. Then $s_*(b)$ has underlying vector space the $F$-vector space underlying $V$. We then have \[\rk s_*(b)=\dim_{F}V=\dim_{K}(V)\cdot\dim_{F}(K)=2\dim_K V\] which is even.
\end{proof}

\printbibliography

\end{document}